\newcommand\BB{{\mathbb B}}
\newcommand\CC{{\mathbb C}}
\newcommand\cA{{\cal A}}
\newcommand\cB{{\cal B}}
\newcommand\cC{{\cal C}}
\newcommand\cD{{\cal D}}
\newcommand\cE{{\cal E}}
\newcommand\cF{{\cal F}}
\newcommand\cI{{\cal I}}
\newcommand\cL{{\cal L}}
\newcommand\cN{{\cal N}}
\newcommand\cO{{\cal O}}
\newcommand\cP{{\cal P}}  
\newcommand\cQ{{\cal Q}}
\newcommand\cR{{\cal R}}
\newcommand\cS{{\cal S}}
\newcommand\cT{{\cal T}}
\newcommand\cU{{\cal U}}
\newcommand\cV{{\cal V}} 
\newcommand\cW{{\cal W}}
\newcommand\cX{{\cal X}} 
\newcommand\cY{{\cal Y}}
\newcommand\cZ{{\cal Z}}
\newcommand\DD{{\mathbb D}}
\newcommand\EE{{\mathbb E}}
\newcommand\es{\emptyset}
\newcommand\FF{{\mathbb F}}
\newcommand\GG{{\mathbb G}}
\newcommand\gB{\mathfrak{B}}
\newcommand\gI{\mathfrak{I}}
\newcommand\gM{\mathfrak{M}}
\newcommand\gp{\mathfrak{p}}
\newcommand\gS{\mathfrak{S}}
\newcommand\gt{\mathfrak{t}}
\newcommand\gX{\mathfrak{X}}
\newcommand\gx{\mathfrak{x}}
\newcommand\gy{\mathfrak{y}}
\newcommand\gz{\mathfrak{z}}
\newcommand\HH{{\mathbb H}}
\newcommand\hra{\hookrightarrow}
\newcommand\la{\langle}
\newcommand\lagr{\mathbb{LG}(\bigwedge^ 3 V)}
\newcommand\lagrdual{\mathbb{LG}(\bigwedge^ 3 V^{\vee})}
\newcommand\lagre{\mathbb{LG}(\cE_W)}
\newcommand\lagrhat{\widehat{\mathbb{LG}}(\bigwedge^ 3 V)}
\newcommand\LL{{\mathbb L}}
\newcommand\lra{\longrightarrow}
\newcommand\MM{{\mathbb M}}
\newcommand\n{\noindent}
\newcommand\NN{{\mathbb N}}
\newcommand\ov{\overline}
\newcommand\PP{{\mathbb P}}
\newcommand\ra{\rangle}
\newcommand\RR{{\mathbb R}}
\newcommand\sB{{\mathsf B}}
\newcommand\sD{{\mathsf D}}
\newcommand\sF{{\mathsf F}}
\newcommand\sG{{\mathsf G}}
\newcommand\TT{{\mathbb T}}
\newcommand\VV{{\mathbb V}}
\newcommand\XX{{\mathbb X}}
\newcommand\YY{{\mathbb Y}}
\newcommand\WW{{\mathbb W}}
\newcommand\wh{\widehat}
\newcommand\wt{\widetilde}
\newcommand\ZZ{{\mathbb Z}}
\newcommand{\Gr}{\mathrm{Gr}}
\newcommand{\GL}{\mathrm{GL}}
\newcommand{\Sp}{\mathrm{Sp}}
\newcommand{\mapor}[1]{{\stackrel{#1}{\longrightarrow}}}
\newcommand{\mapver}[1]{\Big\downarrow
\vcenter{\rlap{$\scriptstyle#1$}}}
\theoremstyle{plain}
\newtheorem{thm}{Theorem}[subsection]
\newtheorem{clm}[thm]{Claim}
\newtheorem{crl}[thm]{Corollary}
\newtheorem{lmm}[thm]{Lemma}
\newtheorem{prp}[thm]{Proposition}
\newtheorem{prp-dfn}[thm]{Proposition-Definition}
\theoremstyle{definition}
\newtheorem{ass}[thm]{Assumption}
\newtheorem{dfn}[thm]{Definition}
\newtheorem{notaz}[thm]{Notation}
\theoremstyle{remark}
\newtheorem{rmk}[thm]{Remark}
\DeclareMathOperator{\Ann}{Ann}
\DeclareMathOperator{\Aut}{Aut}
\DeclareMathOperator{\cod}{cod}
\DeclareMathOperator{\coker}{coker}
\DeclareMathOperator{\cork}{cork}
\DeclareMathOperator{\divisore}{div}
\DeclareMathOperator{\diag}{diag}
\DeclareMathOperator{\Hom}{Hom}
\DeclareMathOperator{\Id}{Id}
\DeclareMathOperator{\im}{im}
\DeclareMathOperator{\mult}{mult}
\DeclareMathOperator{\Pic}{Pic}
\DeclareMathOperator{\PGL}{PGL}
\DeclareMathOperator{\PSO}{PSO}
\DeclareMathOperator{\rk}{rk}
\DeclareMathOperator{\SL}{SL}
\DeclareMathOperator{\sing}{sing}
\DeclareMathOperator{\Spec}{Spec}
\DeclareMathOperator{\SO}{SO}
\DeclareMathOperator{\Stab}{Stab}
\DeclareMathOperator{\supp}{supp}
\DeclareMathOperator{\Sym}{S}
\DeclareMathOperator{\vol}{vol}
\newcommand{\cit}[1]{{\rm \textbf{#1}}}
\newcommand{\Ref}[2]{\cit{%
\ifthenelse{\equal{#1}{thm}}{Theorem}{}%
\ifthenelse{\equal{#1}{ass}}{Assumption}{}%
%\ifthenelse{\equal{#1}{asswn}}{$W_n$-Assumption}{}%
%\ifthenelse{\equal{#1}{asswnplus}}{$W^{+}_n$-Assumption}{}%
\ifthenelse{\equal{#1}{chp}}{Chapter}{}%
\ifthenelse{\equal{#1}{prp}}{Proposition}{}%
\ifthenelse{\equal{#1}{lmm}}{Lemma}{}%
\ifthenelse{\equal{#1}{crl}}{Corollary}{}%
\ifthenelse{\equal{#1}{dfn}}{Definition}{}%
\ifthenelse{\equal{#1}{expl}}{Example}{}%
\ifthenelse{\equal{#1}{hyp}}{Hypothesis}{}%
\ifthenelse{\equal{#1}{rmk}}{Remark}{}%
\ifthenelse{\equal{#1}{clm}}{Claim}{}%
\ifthenelse{\equal{#1}{notaz}}{Notation}{}%
\ifthenelse{\equal{#1}{exe}}{Exercise}{}%
\ifthenelse{\equal{#1}{sec}}{Section}{}%
\ifthenelse{\equal{#1}{subsec}}{Subsection}{}%
\ifthenelse{\equal{#1}{subsubsec}}{Subsubsection}{}%
\ifthenelse{\equal{#1}{univ}}{Universal Property}{}%
\ifthenelse{\equal{#1}{trm}}{Terminology}{}%
\ifthenelse{\equal{#1}{tbl}}{Table}{}%
\  \ref{#1:#2}%
}}
\begin{document}
 \title{Moduli of double EPW-sextics}
 \author{Kieran G. O'Grady\thanks{Supported by
 PRIN 2007}\\\\
\lq\lq Sapienza\rq\rq Universit\`a di Roma\\\\
\texttt{ogrady@mat.uniroma1.it}}
\date{April 5 2014
\vskip 4mm \small{\it Dedicato alla piccola Emma}}
 \maketitle
 \begin{abstract}
We will study the GIT quotient of the symplectic grassmannian parametrizing lagrangian subspaces of $\bigwedge^3{\mathbb C}^6$ modulo the natural action of $\SL_6$, call it $\gM$. This is a compactification of the moduli space of smooth double EPW-sextics and hence birational to the moduli space of HK $4$-folds of Type $K3^{[2]}$ polarized by a divisor of square $2$ for the Beauville-Bogomolov quadratic form.  We will determine the stable points. Our work bears a strong analogy with the work of Voisin, Laza and Looijenga on moduli and periods of cubic $4$-folds. We will prove a result which is analogous to a theorem of Laza asserting that  cubic $4$-folds with simple singularities are stable. We will also describe the irreducible components of the GIT boundary of $\gM$. Our final goal (not achieved in this work) is to understand completely the period map from $\gM$ to the Baily-Borel compactification of the relevant period domain modulo an arithmetic group. We will analyze the locus in the GIT-boundary of $\gM$ where the period map is not regular. Our results suggest that $\gM$ is isomorphic to Looijenga's compactification associated to $3$ specific hyperplanes in the period domain.

\bigskip
\n 
\small{\emph{Key words and phrases:} GIT quotient, period map, hyperk\"ahler varieties.}

\bigskip
\n 
\small{\emph{$2010$ MSC numbers:} 14J10, 14L24, 14C30.}
\end{abstract}
 \tableofcontents
 \section{Introduction}\label{sec:prologo}
 \setcounter{equation}{0}
A compact  K\"ahler manifold $X$ is \emph{hyperk\"ahler} if it is  simply connected and it carries a holomorphic symplectic form whose cohomology class spans $H^{2,0}(X)$. Two-dimensional hyperk\"ahler manifolds are nothing else but $K3$ surfaces. Beauville~\cite{beau} has constructed two classes of examples in each even dimension $2n>2$:  the Douady space $S^{[n]}$ parametrizing length-$n$ analytic subspaces of a $K3$ surface $S$, and the generalized Kummer $K^n(T)\subset T^{[n+1]}$ consisting of  length-$(n+1)$  analytic subspaces  $Z$ of a $2$-dimensional (compact) torus $T$ such that the associated cycle $\sum_{p\in T}\ell(\cO_{Z,p})p$ sums up to $0$ in the additive group $T$. These two examples are not deformation-equivalent since their second Betti numbers are $23$ and $7$ respectively. The author has constructed two other classes of examples,  in dimensions $6$ and $10$, which have second Betti numbers equal to $8$ and $24$ respectively~\cite{ogprimo,ogsecondo,rap2}.  Up to deformation there are no other known examples. 

A compact K\"ahler manifold with torsion first Chern class has a finite \'etale cover which is a product of factors which are complex tori, hyperk\"ahler manifolds or Calabi-Yau manifolds\footnote{A compact K\"ahler manifold $X$ is Calabi-Yau if it has trivial canonical bundle  and no non-zero holomorphic $p$-form in the range $0<p<\dim X$}: this is the \lq\lq Beauville-Bogomolov decomposition Theorem\rq\rq(see~\cite{beau}), and it shows that hyperk\"ahler manifolds are fundametal objects in K\"ahler geometry.  

Projective hyperk\"ahler manifolds (we call them \emph{hyperk\"ahler varieties}) are dense in each deformation class of hyperk\"ahler manifolds and they have a very rich geometry as demonstrated by the case of $K3$ surfaces. Hyperk\"ahler varieties belonging to a single deformation class break up into a countable family of polarized-deformation classes, indexed by the discrete invariants of the polarization such as the degree of its highest self-intersection, see~\cite{hulek} for more details. 

In the present work we will study  moduli   of polarized 
hyperk\"ahler varieties belonging to a specific class: they are deformations of $S^{[2]}$, and they are polarized by a class of Beauville-Bogomolov square $2$ (this means that the $4$-tuple intersection has degree $12$). Why this particular class? The reason is that the generic such variety is a double EPW-sextic i.e.~a double cover of a particular kind of sextic hypersurface (an EPW-sextic, first introduced by Eisenbud-Popescu-Walter in~\cite{epw}) and hence it has an explicit description. We should point out that the generic double EPW-sextic is \emph{not} isomorphic (nor birational) to the Hilbert square of a $K3$ surface, and that only a handful of explicit  locally complete families of hyperk\"ahler varieties of dimension greater than $2$ have been constructed, 
see~\cite{beaudon,debvoi,iliran1,iliran2,llsv} for the other families. 

Let $V$ be a complex vector-space of dimension $6$; an EPW-sextic in $\PP(V)$ is determined by the choice of a lagrangian subspace  of $\bigwedge^3 V$ (the symplectic form is given by wedge-product), see~\Ref{subsec}{eccoepw} for details. Moreover the double covers of two EPW-sextics are polarized-isomorphic  only if the EPW-sextics are projectively equivalent. 
  Thus we will study the quotient of the symplectic grassmannian parametrizing lagrangian subspaces  of $\bigwedge^3 V$, call it $\lagr$, by the natural action of  $\PGL(V)$. There is a unique linearization of this action; we let
\begin{equation}\label{lospazio}
 \mathfrak{M}:=\lagr// \PGL(V) 
\end{equation}
be the  GIT quotient. The open dense subset of $\lagr$ parametrizing smooth double EPW-sextics is  contained in the stable locus. It follows that $ \mathfrak{M}$ is a compactification of the moduli space of smooth double EPW-sextics, and that it is birational to the moduli space of polarized deformations of $S^{[2]}$ with a polarization of square $2$.  

In order to explain our approach in studying $\gM$  we must recall that there is a strong analogy between the family of cubic hypersurfaces in $\PP^5$ and the family of double EPW-sextics. In fact  Beauville and Donagi~\cite{beaudon} proved that the variety of lines on a smooth cubic $4$-fold is a hyperk\"ahler variety deformation equivalent to the Hilbert square of a $K3$ and that the (Pl\"ucker) polarization  has square $6$ for the Beauville-Bogomolov quadratic form (and divisibility $2$). By varying the cubic $4$-fold one gets a locally complete family of projective deformations of such varieties, moreover the Hodge structure of the primitive $H^4$ of a smooth cubic $4$-fold is isomorphic to the primitive $H^2$  of the variety of lines on the cubic. Summarizing: the GIT quotient of the space of cubic $4$-folds is birational to the  moduli space of polarized deformations of $S^{[2]}$ with a polarization of square $6$ and divisibility $2$, and the period map   which  associates to a smooth cubic $Z$ the Hodge structure on $H^4_{prim}(Z)$  is identified with the period map   which associates to  $Z$ the Hodge structure on $H^2_{prim}$ of the variety of lines on $Z$. 
 Our work   was greatly influenced by the  results of Voisin, Laza and Looijenga on moduli and periods of cubic $4$-folds, see~\cite{claire,laza1,laza2,looij}. Following are some of their results. First Voisin~\cite{claire} (see also~\cite{looij}) proved that the period map for cubic $4$-folds is birational (but it is not an isomorphism, nor is it regular). After that 
Laza and Looijenga~\cite{laza1,laza2,looij}  analyzed the GIT quotient of the space of cubic $4$-folds, and they examined the (birational) period map, in particular they  proved that the GIT quotient is identified with Looijenga's compactification associated to a particular  hypersurface in the relevant quotient of a bounded symmetric domain of Type IV. 

The present work  deals with the GIT side of the story for double EPW-sextics, with a view towards proving that $\gM$ is  identified with Looijenga's compactification associated to a particular  arrangement of  hypersurfaces in the relevant quotient of a bounded symmetric domain of Type IV  (the period map is birational by~\cite{verb,huyglobtor,markman1,markman2}), namely the   hyperfaces ${\mathbb S}'_2$, ${\mathbb S}''_2$ and ${\mathbb S}_4$ of~\cite{ogperiodi}, but let me stress that this is still far from being proved. 

Now let us  describe the main results of the present work. In~\Ref{sec}{sottoparam} we will give a description of stable points of $\lagr$ in terms of linear algebra. More precisely we will 
 prove that the locus of non-stable $A\in\lagr$ is the union of $12$ locally closed subsets of $\lagr$  (the standard non-stable strata) defined by \lq\lq flag conditions\rq\rq. Two examples of standard non-stable strata are the following: the set $\BB^{*}_{\cA}$ of $A$   for which there exists $0\not= v_0$   such that $\dim(A\cap (v_0\wedge\bigwedge^2 V))\ge 5$, the set $\BB^{*}_{\cA^{\vee}}$ of $A$   for which  there exists a codimension-$1$ subspace $V_0\subset V$ such that $\dim(A\cap\bigwedge^3 V_0)\ge 5$. In order to show that the standard non-stable strata parametrize non-stable lagrangians
 it will suffice to express  the numerical function $\mu(A,\lambda)$ of a lagrangian $A$ with respect to a $1$-PS $\lambda\colon\CC^{\times}\to \SL(V)$ in terms of the dimension of the intersections of $A$ with the isotypical summands of $\bigwedge^3\lambda$. 
The proof  that any non-stable lagrangian belongs to one of the standard non-stable strata requires  more work. 
First we will prove the  
 Cone Decomposition Algorithm: it applies whenever we have a linearly reductive group $G$  acting on a product of Grassmannains $\Gr(n_0,U^0)\times\ldots\times\Gr(n_r,U^r) $ via a representation
$ G\to \GL(U^0)\times\ldots\times \GL(U^r)$. The algorithm provides a finite list  of $1$-PS's of $G$ (ordering $1$-PS's)  with the property that if $A_{\bullet}=(A_0,\ldots,A_r)$ is non-stable then it is destabilized by a $1$-PS conjugated to one of the  ordering $1$-PS's; that result should be of independent interest because it is applicable to other GIT problems.  We will apply the Cone Decomposition Algorithm to the case of interest to us:  using a computer we will get the finite list of ordering $1$-PS's of $\SL(V)$. Another computation  will give the following result: if $A$ is not stable then it is destabilized by a $1$-PS conjugated to one among the simplest ordering $1$-PS's, where simplicity is measured by the magnitude of the weights of the $1$-PS. The \lq\lq simplest\rq\rq ordering $1$-PS's are exactly those defining the $12$ standard non-stable strata. 

After having obtained a linear algebra description of  stable lagrangians we will ask for a characterization of  stable lagrangians   
  via geometric properties of the corresponding  double EPW-sextic. In order to explain the relevant results we must introduce some notation. Given $A\in\lagr$ one defines a subscheme $Y_A\subset\PP(V)$ which is either a sextic hypersurface  - in this case it is an EPW-sextic -  or all of $\PP(V)$ if $A$ is \lq\lq pathological\rq\rq, see~\Ref{subsec}{eccoepw}. If $Y_A$ is an EPW-sextic then it comes equipped with a degree-$2$ cover $X_A\to Y_A$, loc.~cit. There is a dense open subset $\lagr^0\subset\lagr$ parametrizing $A$'s such that $X_A$ is a smooth deformations of $K^{[2]}$, see~\cite{ogdoppio}.  
One consequence of the results of~\Ref{sec}{sottoparam}  is that if $Y_A=\PP(V)$ then $A$ is unstable and hence every point of $\gM$ represents an equivalence class of double EPW-sextics. Ideally we would like  a characterization of (semi)stability in terms of the geometry of $X_A$. What we will provide is  a partial answer in terms of the period map
\begin{equation*}
\cP\colon\lagr\dashrightarrow\DD^{BB}
\end{equation*}
Here $\DD$ is the quotient of a $20$-dimensional bounded symmetric domain of Type IV by a suitable arithmetic group, see~\cite{ogperiodi}, and $\DD^{BB}$ is its Baily-Borel compactification. On the open  $\lagr^0$ parametrizing  smooth double EPW-sextics the map $\cP$ associates to $A$ the Hodge structure on the primitive $H^2(X_A)_{\rm pr}$ modulo Hodge isometries.  The map $\cP$ induces the period map of the moduli space:
 \begin{equation}\label{permod}
 \gp\colon\gM\dashrightarrow\DD^{BB}.
\end{equation}
There is a prime divisor  $\Sigma\subset\lagr$ which is analogous to the prime divisor parametrizing singular cubic $4$-folds: it is the locus of $A$ which contain a non-zero decomposable vector $w_0\wedge w_1\wedge w_2$. 
 Away from $\Sigma$  the map $\cP$ is regular and it lands into $\DD$, the interior of the Baily-Borel compactification, see~\cite{og4} and~\cite{ogdoppio}. Let $A\in\Sigma$ be semistable: 
 one may analyze the behaviour of $\cP$ at $A$ as follows~\cite{ogperiodi}. Let $0\not=w_0\wedge w_1\wedge w_2\in A$ and  $W\subset V$ be the span of  $w_0, w_1, w_2$: one defines a Lagrangian degeneracy locus $C_{W,A}\subset\PP(W)$ 
which  is generically a sextic curve and in pathological cases is all of $\PP(W)$, see~\Ref{subsec}{sestinw} for details. Notice that if $C_{W,A}$ is smooth the double cover of $\PP(W)$ branched over $C_{W,A}$ is a $K3$ surface of degree $2$. Let $\DD^{BB}_{K3,2} $ be the Baily-Borel compactification of the period space for  
$K3$ surfaces of degree $2$ and
\begin{equation}\label{persestiche}
|\cO_{\PP(W)}(6)|\dashrightarrow \DD^{BB}_{K3,2} 
\end{equation}
be the compactified period map. Our (semi)stability geometric criteria will be guided by the following result.
\begin{thm}[\cite{ogperiodi}]\label{thm:periodi}
Let $A\in\Sigma$ be semistable with closed orbit  and suppose that for all  $W\in\Gr(3,V)$ such that $\bigwedge^3W\subset A$ 
  the following holds: $C_{W,A}$ is a sextic curve  and it belongs to the regular locus of the compactified period map~\eqref{persestiche}.
Then $\gp$ is regular at $[A]$. Moreover $\gp([A])\in\DD$ if and only if $C_{W,A}$ has simple singularities for all $W\in\Gr(3,V)$ such that $\bigwedge^3W\subset A$. 
\end{thm}
The above result  motivates the following definition.  
\begin{dfn}
Let $\lagr^{ADE}\subset\lagr$  be the set of $A$ such that  for every $W\in\Gr(3,V)$ such that $\bigwedge^3W\subset A$ we have that  $C_{W,A}$ is a curve with simple singularities. 
\end{dfn}
 Below is the main result of~\Ref{sec}{lageometria} -  it  is analogous to Proposition~3.2 of R.~Laza~\cite{laza2} on periods of cubic $4$-folds with simple singularities.  
\begin{thm}\label{thm:adestable}
$\lagr^{ADE}$ is   contained in the  stable locus  $\lagr^{st}$. 
\end{thm}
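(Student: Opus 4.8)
The plan is to argue by contraposition, using the description of the non-stable locus obtained in \Ref{sec}{sottoparam}. Suppose $A\in\lagr$ is not stable; then $A$ lies in one of the twelve standard non-stable strata, each cut out by a flag condition bounding from below the dimensions of the intersections of $A$ with the summands $\bigwedge^3 V_i$ (and their wedge-analogues) attached to a distinguished flag $V_\bullet$ of $V$. It suffices to treat the strata one at a time and to show that a lagrangian $A$ in a given stratum is not in $\lagr^{ADE}$, i.e.\ to exhibit a single $W\in\Theta_A$ for which $C_{W,A}$ fails to be a curve with simple singularities. Since membership in $\lagr^{ADE}$ is a condition ranging over all of $\Theta_A$, one bad $W$ per stratum suffices.

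The first task, for each stratum, is to show that its defining flag condition already forces $\Theta_A\neq\es$ (so that $A\in\Sigma$) and to name a distinguished $W$. The lower bounds built into the flag conditions make the relevant intersection $A\cap\bigwedge^3 V_i$ large enough to contain a decomposable vector, i.e.\ a vector of the form $\bigwedge^3 W$ with $W\in\Gr(3,V_i)$; in most strata this $W$ is visibly part of the flag data $V_\bullet$ itself. In either case $\bigwedge^3 W\subset A$, so $(W,A)\in\wt\Sigma$, and this flag-adapted $W$ is the one along which I expect $C_{W,A}$ to degenerate.

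Next I would analyze $C_{W,A}$ for this $W$ through the Lagrangian degeneracy description of \Ref{subsec}{sestinw} together with the support formula \eqref{supses}. Two outcomes arise. In the most degenerate strata the flag forces $\dim(A\cap F_w)\geq 2$ for every $[w]\in\PP(W)$ --- typically because $W\subset V_i$, so that $F_w\cap\bigwedge^3 V_i$ contributes a second dimension to $A\cap F_w$ uniformly in $w$; then $\supp C_{W,A}=\PP(W)$, $C_{W,A}$ is not a curve, and $A\notin\lagr^{ADE}$ immediately. In the remaining strata $C_{W,A}$ is a genuine plane sextic, and the flag singles out a point $[w_0]\in\PP(W)$ at which the defining Pfaffian matrix of the degeneracy locus drops rank abnormally; there the task is to compute the local equation of $C_{W,A}$ at $[w_0]$ and to check that the singularity is not of ADE type.

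This last, stratum-by-stratum local computation is the main obstacle. Concretely one chooses coordinates on $V$ adapted to $V_\bullet$ and to $W$, writes $A$ in block form subject to the Lagrangian (skew) constraint, and extracts the local equation of $C_{W,A}$ near $[w_0]$. The skew-symmetry imposed by the Lagrangian condition lowers the effective order of vanishing and should yield at $[w_0]$ either a point of multiplicity $\geq 4$, or a triple point whose tangent cone is too degenerate to be of type $D$ or $E$ (of type $\widetilde E$ or worse), or a non-reduced local branch --- in every case a non-simple singularity. The delicate cases are the borderline strata, where the singularity sits just past the simple threshold; there one must use the full strength of the flag inequalities, not merely $\Theta_A\neq\es$, to certify the extra degeneration. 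Carrying out these finitely many local computations --- with the classical GIT of plane sextics under $\PGL(W)$ acting on $|\cO_{\PP(W)}(6)|$ as a guiding heuristic for which singularity to expect --- shows that no $A\in\lagr^{ADE}$ can be non-stable, which is the assertion.
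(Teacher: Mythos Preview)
Your plan is correct and matches the paper's approach: contrapose via \Ref{thm}{tuttisemi}, then run a stratum-by-stratum analysis exhibiting for each standard non-stable $A$ some $W\in\Theta_A$ with $C_{W,A}$ either equal to $\PP(W)$ or carrying a non-simple singularity. The paper organizes this into \Ref{prp}{nonridotta} (the eight strata where $C_{W,A}$ is forced to be singular along a line, conic, or cubic --- hence non-reduced --- or to acquire a point of multiplicity $\ge 4$) and \Ref{prp}{singiso} (the remaining four strata $\BB_{\cD},\BB_{\cE_1},\BB_{\cE_2},\XX_{\cN_3}$, where one finds either a point of multiplicity $\ge 4$ or consecutive triple points via the local description of \Ref{clm}{athos} and the quadratic-form machinery of \Ref{sec}{discquad}); your dichotomy between ``$C_{W,A}=\PP(W)$'' and ``non-simple singularity'' is slightly off in emphasis --- the dominant mechanism is that $C_{W,A}$ is a sextic singular along a curve (so non-reduced), not that it fills $\PP(W)$.
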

   Now let's pass to the contents of the remaining sections. First we will describe the results of~\Ref{sec}{frontiera}, \Ref{sec}{bounduno}, \Ref{sec}{bounddue}, and then those of~\Ref{sec}{menagerie}. One of 
the main results stated in~\Ref{sec}{frontiera} and proved in~\Ref{sec}{bounduno} and~\Ref{sec}{bounddue} is the description of the irreducible components of the GIT-boundary $\partial\gM:=(\gM\setminus\gM^{st})$ where $\gM^{st}$ is the open subset of $\gM$ parametrizing isomorphism classes of stable double EPW-sextics. The statement below is somewhat vague because in order to give a precise statement one first needs  to define the  standard non-stable strata - see~\Ref{thm}{eccofron} for a complete statement. 
\begin{thm}\label{thm:frontvago}
Each irreducible component of $\partial\gM$ is the image of the semistable points of a standard non-stable strata, and viceversa the semistable points of each standard non-stable strata parametrize an irreducible component of $\partial\gM$. 
There are $8$ irreducible components of $\partial\gM$ and their dimensions are given by the entries in the first row of Table~\eqref{dimcomp}. 
\end{thm}
A word of explanation regarding the number of irreducible components of $\partial\gM$. There are $12$  standard non-stable strata but only $8$  irreducible components of $\partial\gM$. The reason for this discrepancy is that the generic semistable point with closed orbit in one standard non-stable strata can be also the generic semistable point with closed orbit of a different standard non-stable strata: for example this happens for the non-stable strata $\BB_{\cA}$ and $\BB_{\cA^{\vee}}$ that we described above. 
The main tool that we will use in order to prove~\Ref{thm}{frontvago} will be the Cone Decomposition Algorithm.

In order to explain the other main result stated in~\Ref{sec}{frontiera}  
 we give a definition.
\begin{dfn}\label{dfn:gotico}
Let $\gI\subset \gM$    be the subset of points represented by $A\in\lagr^{ss}$ for which the following hold:
\begin{enumerate}
\item[(1)]
The  orbit $\PGL(V)A$ is closed in $\lagr^{ss}$.
\item[(2)]
 There exists $W\in\Theta_A$  such that $C_{W,A}$ is either $\PP(W)$ or a sextic curve 
in the indeterminacy locus of the period map~\eqref{persestiche}.  
\end{enumerate}
\end{dfn}
By~\Ref{thm}{periodi} the indeterminacy locus of the period map~\eqref{permod} is contained in $\gI$ - an educated guess is that they are actually equal. 
The second main result  proved in~\Ref{sec}{bounduno} and~\Ref{sec}{bounddue} is the following.
\begin{thm}\label{thm:gelosia}
The intersection  $\partial\gM\cap\gI$ has two irreducible components, $\gX_{\cV}$ and $\gX_{\cZ}$ of dimensions  $3$ and $1$,defined 
in~\Ref{subsubsec}{carnevale} and~\Ref{subsubsec}{giallorosso} respectively. 
\end{thm}
 Our results  suggest  that the period map~\eqref{permod} may be understood via Looijenga's compactifications of hyperplane arrangements~\cite{looijhyp} i.e.~$\gM$ might be isomorphic to Looijenga's compactification of the complement of $3$ specific hyperplanes in $\DD$. 
 
 In order to give some details about this and explain the contents of~\Ref{sec}{menagerie} 
we will go through some preliminaries. Let $A\in\Sigma$ and suppose that $W_1,W_2\in\Theta_A$: then $W_1\cap W_2\not=\{0\}$ because $A$ is lagrangian. Suppose that $W_1\not= W_2$
 and let $p\in\PP(W_1\cap W_2)$: then $p\in C_{W_i,A}$ for $i=1,2$ and a local equation of $C_{W_i,A}$ at $p$ has vanishing linear term. Thus
  either $C_{W_i,A}=\PP(W_i)$ or else every point of $\PP(W_1\cap W_2)$ is a singular point of $C_{W_i,A}$.  This explains the relevance of those $A\in\lagr$ such that $\dim\Theta_A>0$ when determining $\gI$. Suppose that $\Theta$ is an irreducible component of $\Theta_A$ of strictly positive dimension. Since the planes $\PP(W)$ for $W\in\Theta$ are pairwise incident   Morin's Theorem~\cite{morin}  gives that $\Theta$ is contained in one of $6$ families of pairwise incident planes, $3$ elementary  families defined by Schubert conditions and three more interesting families, namely one of the two rulings  of a smooth quadric hypersurface $\cQ\subset\PP(V)$ by planes, the family of planes tangent to a Veronese surface $\cV^2\subset\PP(V)$ and the family of planes which cut $\cV^2$ in a conic. There are uniquely  determined  lagrangians   
   $A_{+}$, $A_k$ and $A_h$ such that  $\Theta_{A_{+}}$, $\Theta_{A_k}$  and $\Theta_{A_h}$ are the three interesting families described above, see~\eqref{piumenomap} and~\eqref{kappacca}. In~\Ref{sec}{menagerie} we will prove that  $A_{+}$, $A_k$, $A_h$ are semistable with closed orbits in $\lagr^{ss}$; the corresponding points $\gy:=[A_{+}]$, $\gx:=[A_k]$, $\gx^{\vee}:=[A_h]$ are distinct. (\Ref{sec}{menagerie} contains also results about other semistable lagrangians with large stabilizer.) We have $\gy=\gX_{\cV}\cap \gX_{\cZ}$ and $\gx,\gx^{\vee}\in\gX_{\cZ}$. 
   
Now we come to the relation with one of Looijenga's compactifications of complements of hyperplane arrangements.  Suppose that $A$ approaches $A_{+}$  generically: then $X_A$ will approach the Hilbert square of a  quartic $K3$ surface, see~\cite{ferretti}.  Similarly if $A$ approaches $A_{k}$  or $A_h$ generically then $X_A$ will approach the Hilbert square of a $K3$ of genus $2$  or a moduli space of pure sheaves on such a $K3$. The corresponding periods will approach  the divisor in $\DD$ parametrizing points in the  perpendicular to an element of square $-4$ in the relevant lattice   in the first case and of square $-2$ (and divisibility $2$) in the remaining two cases (there are two orbits of such elements under the action of the relevant arithmetic group): these are the hyperplanes that we mentioned above. What about the other points of $\gX_{\cV}\cup\gX_{\cZ}$ ? The picture that emerges from our results is the following: if $A$ approaches generically a  point  in $(\gX_{\cW}\setminus\{\gy\})$ ($\gX_{\cW}$ is a curve in $\gX_{\cV}$, see~\Ref{dfn}{ixdoppiovu}) then $X_A$ approaches the Hilbert square of a double cover of a smooth quadric surface,  if $A$ approaches generically a  point  in $(\gX_{\cV}\setminus \gX_{\cW})$ then $X_A$ approaches the Hilbert square of a $K3$ which is a double cover of the Hirzebruch surface $\FF_2$,  if $A$ approaches generically a  point  in $(\gX_{\cZ}\setminus \{\gy,\gx,\gx^{\vee}  \})$ then $X_A$ approaches the Hilbert square of a  $K3$ which is a double cover of the Hirzebruch surface $\FF_4$. 
\vskip 3mm
\n
{\bf Notation and conventions:} 
Throughout the paper $V$ is a complex vector-space of dimension $6$. We choose a volume-form $\vol$ on $V$ and we let $(,)_V$ be the corresponding symplectic form on $\bigwedge^3 V$ i.e.
\begin{equation*}
(\alpha,\beta)_V:=\vol(\alpha\wedge\beta).
\end{equation*}
\vskip 2mm
\noindent
Let $W$ be a finite-dimensional complex vector-space. The span of a subset $S\subset W$ is denoted by $\la S\ra$. Let $S\subset\bigwedge^q W$: the smallest subspace $U\subset W$ such that $S\subset\im(\bigwedge^{q}U\lra \bigwedge^q W)$ is the {\it support of $S$}, we denote it by $\supp(S)$. If $S=\{\alpha\}$ is a singleton we let $\supp(\alpha)=\supp(\{\alpha\})$ (thus if $q=1$ we have $\supp(\alpha)=\la\alpha\ra$).
\vskip 2mm
\noindent
Let $U$ be a complex vector-space.  Let $U_1,\ldots,U_{\ell}\subset U$ be  a collection of subspaces and $i_1+\cdots+ i_{\ell}=d$  a partition of $d$; the  associated
{\it wedge subspace} of $\bigwedge^d U$ is defined to be 
\begin{equation}
\bigwedge^{i_1} U_1\wedge\cdots \wedge\bigwedge^{i_{\ell}} U_{\ell}
:=\la\alpha_1\wedge\cdots\wedge\alpha_{\ell}\mid \alpha_s\in\bigwedge^{i_s}U_s\ra
\end{equation}
\vskip 2mm
\noindent
Let $W$ be a finite-dimensional complex vector-space. We will adhere to pre-Grothendieck conventions: $\PP(W)$ is the set of $1$-dimensional vector subspaces of $W$. Given  a non-zero $w\in W$ 
we will denote the span of $w$ by $[w]$ rather than $\la w\ra$; this agrees with  standard notation. Given a non-empty subset $Z\subset\PP(W)$ we let $\la Z\ra \subset \PP(W)$ be the linear span of $Z$ and $\la \la Z \ra \ra \subset W$ be the cone over $\la Z \ra$ i.e.~the span of the set of $w\in W$ such that $[w]\in Z$.   
 
Schemes  are defined over $\CC$, the topology is the Zariski topology unless we state the contrary, points are closed points. As customary we  identify locally-free sheaves with vector-bundles.
\vskip 3mm
\n
{\bf Acknowledgments:} It is a pleasure to thank Andrea Ferretti for helping me out with the computations of~\Ref{subsec}{luogostabile}. I would also like to thank  Corrado De Concini for a series of tutorials on group representations and Paolo Papi for the interest he took in the present work. Thanks go to the referee for making many suggestions on how to improve the exposition 
\clearpage
\section{Preliminaries}\label{sec:sesticadoppia}
\setcounter{equation}{0}
\subsection{EPW-sextics and their double covers}\label{subsec:eccoepw}
\setcounter{equation}{0}
Let $V$ be a complex vector-space of dimension $6$. Choose a volume-form $\vol$ on $V$. 
Wedge-product followed by $\vol$ defines a symplectic form on $\bigwedge^3 V$: let $\lagr$ be the symplectic grassmannian parametrizing lagrangian subspaces  of $\bigwedge^3 V$ (of course $\lagr$ is independent of the choice of $\vol$).  Let 
\begin{equation}\label{eccoeffe}
F\subset\bigwedge^3 V\otimes\cO_{\PP(V)}
\end{equation}
be  the locally-free subsheaf whose fiber at  $[v]\in\PP(V)$ is equal to
\begin{equation}
F_v:=\{\alpha\in\bigwedge^3 V\mid v\wedge\alpha=0\}.
\end{equation}
Choose $A\in\lagr$: we let  
\begin{equation}\label{diecidieci}
F\overset{\lambda_A}{\lra}(\bigwedge^3 V/A)\otimes\cO_{\PP(V)}
\end{equation}
be  Inclusion~\eqref{eccoeffe} 
followed by the obvious quotient map, and
 $Y_A$ be the degeneracy locus of $\lambda_A$. Thus $Y_A=V(\det\lambda_A)$ and since $\det F\cong\cO_{\PP(V)}(-6)$ it follows that $Y_A$ is either a  sextic hypersurface or $\PP(V)$.  As is easily checked $Y_A$ is a  sextic for $A$ generic, on the other hand there do exist $A$ such that $Y_A=\PP(V)$, e.g.~$A=F_w$. A sextic hypersurface in a $5$-dimensional projective space is  an \emph{EPW-sextic} if it is projectively equivalent to $Y_A$ for a certain $A\in\lagr$. We let
\begin{equation}
\NN(V):=\{A\in\lagr\mid Y_A=\PP(V)\}.
\end{equation}
It follows from the definitions  that $\NN(V)$ is a proper closed $\PGL(V)$-invariant subset of $\lagr$. If $A\in(\lagr\setminus\NN(V))$
there is a double cover $f_A\colon X_A\to Y_A$. We will recall the definition of $X_A$, full details are in~\cite{ogdoppio}. Since $A$ is Lagrangian the symplectic form defines a canonical isomorphism $\bigwedge^3 V/A\cong A^{\vee}$; thus~\eqref{diecidieci} defines  a map of vector-bundles $\lambda_A\colon F\to A^{\vee}\otimes\cO_{\PP(V)}$.
Let $i\colon Y_A\hra\PP(V)$ be the inclusion map: 
since a local generator of $\det\lambda_A$ annihilates $\coker  (\lambda_A)$  there is a unique sheaf $\zeta_A$ on $Y_A$ such that  we have an exact sequence
\begin{equation}\label{eccozeta}
0\lra F\overset{\lambda_A}{\lra} A^{\vee}\otimes\cO_{\PP(V)}\lra
i_{*}\zeta_A\lra 0.
\end{equation}
Let $\xi_A:=\zeta_A(-3)$. We will define a map $\xi_A\otimes\xi_A\to\cO_{Y_A}$ that will equip $\cO_{Y_A}\oplus\xi_A$ with the structure of  a (commutative)  $\cO_{Y_A}$-algebra. Given this one sets
\begin{equation}
X_A:=\Spec(\cO_{Y_A}\oplus\xi_A)
\end{equation}
and we will let $f_A\colon X_A\to Y_A$ be the structure map. 
Choose $B\in\lagr$ transversal to $A$; the  associated projection  $\bigwedge^3 V\to A$ defines   a map $ \mu_{A,B}\colon F\to A\otimes\cO_{\PP(V)}$. 
 There is a  commutative diagram with exact rows
\begin{equation}\label{spqr}
\begin{array}{ccccccccc}
0 & \to & F&\mapor{\lambda_A}& A^{\vee}\otimes\cO_{\PP(V)} & \lra & i_{*}\zeta_A
&
\to & 0\\
 & & \mapver{\mu_{A,B}}& &\mapver{\mu^{t}_{A,B}} &
&
\mapver{\beta_{A}}& & \\
0 & \to & A\otimes\cO_{\PP(V)}& \mapor{\lambda_A^{t}}& F^{\vee} & \lra &
Ext^1(i_{*}\zeta_A,\cO_{\PP(V)}) & \to & 0\,.
\end{array}
\end{equation}
As is  suggested by our notation the map $\beta_A$ is independent of the choice of $B$. Composing the canonical isomorphism
\begin{equation}\label{extugualehom}
Ext^1(i_{*}\zeta_A,\cO_{\PP(V)})\overset{\sim}{\lra} i_{*}Hom(\zeta_A,\cO_{Y_A}(6))
\end{equation}
with $\beta_A$ we get a homomorphism $\zeta_A\to  Hom(\zeta_A,\cO_{Y_A}(6))$. Equivalently we have defined a homomorphism $\zeta_A\otimes\zeta_A  \to  \cO_{Y_A}(6)$; tensoring with $\cO_{Y_A}(-3)\otimes\cO_{Y_A}(-3)$ we get a homomorphism $\xi_A\otimes\xi_A  \to  \cO_{Y_A}$. 
   This homomorphism provides  $\cO_{Y_A}\oplus\xi_A$ with the structure of a (commutative) $\cO_{Y_A}$-algebra.   A \emph{double EPW-sextic}  is given by the double cover $f_A\colon X_A\to Y_A$ for a certain $A\in(\lagr\setminus\NN(V))$. 

\bigskip
\noindent
Below we list notation that will be used throughout this work. 
Given an isotropic subspace $A\subset\bigwedge^3 V$ (e.g.~a lagrangian) we let
\begin{equation}\label{eccoteta}
\Theta_A:=\{W\in\Gr(3, V)\mid \bigwedge^3 W\subset A\}.
\end{equation}
 Let $\Sigma\subset\lagr$ and $\wt{\Sigma}\subset\Gr(3, V)\times\lagr$ be defined by
 \begin{eqnarray}
\Sigma &:= & \{A \in\lagr\mid \Theta_A\not=\es\},\label{eccosigma} \\
\wt{\Sigma} &:= & \{(W,A) \in \Gr(3, V)\times\lagr \mid W\in \Theta_A\}.
\end{eqnarray}
\subsection{Double EPW-sextics modulo isomorphisms}\label{subsec:sesticadoppia}
\setcounter{equation}{0}
Let $A_1,A_2\in(\lagr\setminus\NN(V))$. The double covers $f_{A_1}$, $f_{A_2}$ are {\it isomorphic} if there exists a commutative diagram
\begin{equation}\label{isoquad}
\xymatrix{ X_{A_1}\ \ar_{f_{A_1}}[d]  \ar^{\sim}[r]  & 
X_{A_2}\ \ar^{f_{A_2}}[d]  \\   
 Y_{A_1}\  \ar^{\sim}[r] &  Y_{A_2} }
\end{equation}
with horizontal isomorphisms.
We will prove the following result.
\begin{prp}\label{prp:isodop}
Let $A_1,A_2\in(\lagr\setminus\NN(V))$. The double covers $f_{A_1}$, $f_{A_2}$ are isomorphic  if and only if $A_1,A_2$ are $\PGL(V)$-equivalent.
\end{prp}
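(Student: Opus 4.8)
\emph{The plan.} The implication ``$\PGL(V)$-equivalent $\Rightarrow$ isomorphic double covers'' is formal: the whole Eisenbud--Popescu--Walter construction --- the bundle $F$ of~\eqref{eccoeffe}, the map $\lambda_A$ of~\eqref{diecidieci}, and the double cover $f_A\colon X_A\to Y_A$ of~\cite{ogdoppio} --- is natural in the pair $(V,A)$, hence $\GL(V)$-equivariant. Thus if $A_2=g\cdot A_1$ with $g\in\PGL(V)$, the projectivity $g$ carries $Y_{A_1}$ isomorphically onto $Y_{g\cdot A_1}=Y_{A_2}$ and lifts canonically to an isomorphism $X_{A_1}\xrightarrow{\sim}X_{A_2}$ over it, which is precisely Diagram~\eqref{isoquad}. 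All the content is in the converse.

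\emph{From an isomorphism of double covers to an isomorphism of the sextics respecting $\cO(1)$.} Suppose we are given Diagram~\eqref{isoquad}, with horizontal isomorphisms $\Phi\colon X_{A_1}\xrightarrow{\sim}X_{A_2}$ and $\psi\colon Y_{A_1}\xrightarrow{\sim}Y_{A_2}$ satisfying $f_{A_2}\circ\Phi=\psi\circ f_{A_1}$. The first point is that $\psi$ is automatically an isomorphism of \emph{polarized} varieties, i.e.~$\psi^*\cO_{Y_{A_2}}(1)\cong\cO_{Y_{A_1}}(1)$. For $A\notin\NN(V)$ the sextic $Y_A$ is an integral normal hypersurface of dimension $4$ (see~\cite{epw}), so by the Grothendieck--Lefschetz theorem $\Pic(Y_A)=\ZZ\cdot\cO_{Y_A}(1)$, with $\cO_{Y_A}(1)$ the ample generator. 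Since any isomorphism carries the ample generator to the ample generator, $\psi^*\cO_{Y_{A_2}}(1)\cong\cO_{Y_{A_1}}(1)$ follows. I expect this step --- showing that the merely \emph{abstract} isomorphism $\psi$ furnished by the double covers respects the embedding line bundle --- to be the main obstacle: a priori $\psi$ need not preserve $\cO(1)$, and it is exactly the rank-one-ness of $\Pic(Y_A)$ (hence the need to quote integrality and normality of $Y_A$ from the EPW theory) that forces it to.

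\emph{Upgrading $\psi$ to a projectivity.} Because $\cO_{Y_A}(1)$ is very ample with $H^0(Y_A,\cO_{Y_A}(1))=V^\vee$ inducing the given inclusion $Y_A\hookrightarrow\PP(V)$, the isomorphism of line bundles of the previous step lets $\psi$ act on global sections as a linear isomorphism $V^\vee\to V^\vee$. Dualizing, I obtain a unique $g\in\PGL(V)$ whose restriction to $Y_{A_1}$ is $\psi$; in particular $g(Y_{A_1})=Y_{A_2}$ as subschemes of $\PP(V)$.

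\emph{Reconstructing the Lagrangian.} By the equivariance recalled above, $g$ also induces an isomorphism of double covers $X_{A_1}\xrightarrow{\sim}X_{g\cdot A_1}$ over $g\colon Y_{A_1}\to Y_{g\cdot A_1}=g(Y_{A_1})=Y_{A_2}$. Composing with the given $\Phi$, which lies over the \emph{same} base isomorphism $g|_{Y_{A_1}}=\psi$, I get an isomorphism $X_{g\cdot A_1}\xrightarrow{\sim}X_{A_2}$ over the identity of $Y_{A_2}$; hence $f_{g\cdot A_1}$ and $f_{A_2}$ are the same double cover of $Y_{A_2}$. It then remains to invoke the reconstruction statement that a Lagrangian $A\in(\lagr\setminus\NN(V))$ is determined by its double cover $f_A$ (equivalently, that $A$ is recovered from $Y_A$ together with the EPW-sheaf; see~\cite{epw,og2}), which yields $g\cdot A_1=A_2$ and completes the argument. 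The two places where external input is unavoidable are thus the integrality and normality of $Y_A$ (used to get $\Pic(Y_A)=\ZZ$, the heart of the proof) and this final reconstruction of $A$ from $f_A$; the double-cover hypothesis, rather than a bare isomorphism $Y_{A_1}\cong Y_{A_2}$, is what both keeps us in $\lagr\setminus\NN(V)$ and supplies the matching of covers needed for the reconstruction.
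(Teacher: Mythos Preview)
Your outline matches the paper's: show the abstract isomorphism $\psi\colon Y_{A_1}\to Y_{A_2}$ extends to a projectivity, reduce to $Y_{A_1}=Y_{A_2}$ with the same double cover, then recover the Lagrangian. But there are two problems, one minor and one serious.

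First, your justification for $\psi^*\cO(1)\cong\cO(1)$ is wrong as written. You claim $Y_A$ is integral and normal for $A\notin\NN(V)$ and invoke Grothendieck--Lefschetz. Neither adjective holds in general: $Y_{A_{III}}=V(X_0X_1X_2X_3X_4X_5)$ is a union of six hyperplanes (see \Ref{prp}{atrechiuso}), and $Y_{A_k}=2\cV_2$ is a non-reduced double cubic (see~\eqref{cubicadisc}). The conclusion $\Pic(Y_A)=\ZZ\cdot\cO_{Y_A}(1)$ is nonetheless true for \emph{any} degree-$6$ hypersurface in $\PP^5$ --- this is what the paper asserts --- but not via your route. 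This is repairable.

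Second, and this is the real gap: you black-box the entire content of the proposition. After reducing to the case where $g\cdot A_1$ and $A_2$ have literally the same double cover over the same $Y$, you ``invoke the reconstruction statement that $A$ is determined by $f_A$'', citing~\cite{epw,og2}. But this reconstruction is exactly what is being proved; it is not in~\cite{epw}, and even if a version is in~\cite{og2} you have only relocated the problem. The paper's argument here has genuine substance: the isomorphism of double covers gives an isomorphism of the $(-1)$-eigensheaves $\xi_{A_1}\cong\xi_{A_2}$, hence of $\zeta_{A_1}\cong\zeta_{A_2}$. Since $H^i(F)=0$ (Bott vanishing via $F\cong\Omega^3_{\PP(V)}(3)$), the resolutions $0\to F\to A_k^\vee\otimes\cO\to i_*\zeta_{A_k}\to 0$ produce a commutative ladder, in particular an automorphism $\psi\colon F\to F$. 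The crux is \Ref{clm}{stabeffe}: $F$ is slope-\emph{stable}, so $\psi=c\cdot\Id_F$. Tracing back through $H^0(F^\vee)\cong\bigwedge^3 V^\vee$ one finds the induced map on $\bigwedge^3 V$ is scalar, whence $A_1=A_2$. Without this stability argument you have not proved reconstruction, only named it.
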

Before proving the above proposition we go through a few preliminaries. Let $F$ be the vector-bundle on $\PP(V)$ given by~(\ref{eccoeffe}): a straightforward computation involving the Euler sequence (see Proposition~5.11 of~\cite{og2}) gives an isomorphism
\begin{equation}\label{sorpresa}
F\cong\Omega^3_{\PP(V)}(3).
\end{equation}
Moreover (op.~cit.) the transpose of Inclusion~(\ref{eccoeffe}) induces an isomorphism
\begin{equation}\label{sezioni}
\bigwedge^3 V^{\vee}\cong H^0( F^{\vee}).
\end{equation}
\begin{clm}\label{clm:stabeffe}
 The vector-bundle $F$ is slope-stable.
\end{clm}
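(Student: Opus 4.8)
The plan is to exploit the fact that $F$ is an $\SL(V)$-equivariant (homogeneous) vector-bundle on the homogeneous space $\PP(V)$, that its fibre is an \emph{irreducible} representation of the isotropy group, and that non-stability is always witnessed by a \emph{canonical} subsheaf. Throughout, slope is measured with respect to the unique polarization $\cO_{\PP(V)}(1)$.

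First I would record the equivariant structure. The group $\GL(V)$ acts transitively on $\PP(V)$, the isotropy subgroup of $[v]$ being a maximal parabolic $P\subset\GL(V)$; since the fibre $F_v=v\wedge\bigwedge^2 V$ is visibly $P$-invariant, the bundle $F$ is $\GL(V)$-, hence $\SL(V)$-, homogeneous. The map $\beta\mapsto v\wedge\beta$ identifies $F_v$ with $\bigwedge^2(V/\CC v)$ as a $P$-module, up to the character of the one-dimensional factor of the Levi, which is irrelevant for irreducibility. As $\bigwedge^2$ of the standard representation of the $\GL_5$-factor of the Levi is irreducible and the unipotent radical of $P$ acts trivially on the associated graded, I conclude that $F_v$ is an irreducible $P$-module.

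The key lemma I would then prove is that $F$ has \emph{no nonzero proper $\SL(V)$-invariant coherent subsheaf}. Indeed, if $\cE\subset F$ were such a subsheaf, then $F/\cE$ is again $\SL(V)$-equivariant, so the function $x\mapsto\dim_{\CC}(F/\cE)\otimes\CC(x)$ is $\SL(V)$-invariant, hence constant on the single orbit $\PP(V)$; on a smooth variety constant fibre dimension forces $F/\cE$ to be locally free, whence $\cE=\ker(F\to F/\cE)$ is a genuine subbundle. Its fibre $\cE_v\subset F_v$ is then a $P$-subrepresentation, so irreducibility of $F_v$ gives $\cE_v=0$ or $\cE_v=F_v$, i.e. $\cE=0$ or $\cE=F$.

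Finally I would convert the absence of invariant subsheaves into stability by invoking canonical—and therefore automatically invariant—subsheaves. If $F$ were not semistable, its maximal destabilizing subsheaf would be a nonzero proper subsheaf, unique by Harder--Narasimhan and hence $\SL(V)$-invariant, contradicting the lemma; so $F$ is semistable. If $F$ were semistable but not stable, the socle (the sum of all stable subsheaves of slope $\mu(F)$) is nonzero, unique, hence invariant, so the lemma forces it to equal $F$; thus $F$ is polystable. Its isotypic components are canonical, hence invariant, so the lemma leaves a single one, $F\cong E_0^{\oplus m}$ with $E_0$ stable; since $F=\Omega^3_{\PP(V)}(3)$ is simple ($\dim_{\CC}\Hom(F,F)=1$, a standard consequence of Bott's formulas), we get $m=1$ and $F=E_0$ is stable. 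I expect the main obstacle to be the strictly-semistable case: the whole argument rests on packaging non-stability into a \emph{canonical} subsheaf (maximal destabilizing subsheaf, socle, isotypic component) so that invariance is automatic, together with the simplicity of $\Omega^3_{\PP(V)}(3)$ needed to dispose of the polystable possibility.
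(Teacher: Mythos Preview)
Your proof is correct and takes a genuinely different route from the paper's. The paper first cites the polystability of $\Omega^3_{\PP(V)}$ (from stability of the cotangent bundle), then uses arithmetic: since $\mu(F)=-3/5$, any stable summand has rank $5$, so if $F$ is not stable then $F=\cA\oplus\cB$; the odd value $\chi(F(-3))=-1$ forces $g^{*}\cA\not\cong\cB$ for all $g$, so each summand is $\SL(V)$-invariant, and $H^0(F^{\vee})=H^0(\cA^{\vee})\oplus H^0(\cB^{\vee})$ with both pieces nonzero contradicts the irreducibility of $\bigwedge^3 V^{\vee}$. You instead derive everything from the irreducibility of the fibre $F_v$ as a $P$-module, climbing through canonical subsheaves (Harder--Narasimhan, socle, isotypic components) to $F\cong E_0^{\oplus m}$ and then using simplicity of $\Omega^3$ to get $m=1$. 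Your route is essentially the Ramanan--Umemura argument that irreducible homogeneous bundles on $G/P$ are stable, and is more conceptual and portable; the paper's trades simplicity of $\Omega^3$ for the specific numerics (slope, rank, $\chi$) and the irreducibility of $\bigwedge^3 V^{\vee}$. Two minor points: the individual isotypic components are invariant only because $\SL(V)$ is connected (a priori they could be permuted), which you use implicitly; and simplicity of $\Omega^p_{\PP^n}$ is not literally a consequence of Bott's formula for $H^q(\Omega^p(k))$---use Borel--Weil--Bott on the endomorphism bundle, or the fact that $\Omega^p(p)$ is exceptional in Beilinson's collection.
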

\begin{proof}
 Since the (co)tangent bundle of a projective space is  slope-stable~\cite{huylehn} the vector-bundle $\Omega^3_{\PP(V)}$ is poly-stable i.e.~a direct sum of stable bundles of equal slope (op.~cit.); by~(\ref{sorpresa}) it follows that $F$ is poly-stable. The slope of $F$ is $\mu(F)=-3/5$ and the rank is $r(F)=10$; it follows that if $F$ is not slope-stable then 
 \begin{equation}\label{somdir}
F=\cA\oplus\cB,\qquad \mu(\cA)=\mu(\cB)=-3/5,\quad r(\cA)=r(\cB)=5.
\end{equation}
By~(\ref{sorpresa}) we have $\chi(F(-3))=-1$; since it is odd we get that for any $g\in \PGL(V)$ we have $g^{*}\cA\not\cong\cB$. The action of $\SL(V)$ on $\PP(V)$ lifts to an action on $F$ and hence on $F^{\vee}$; this action is induced by $\SL(V)$-actions on $\cA^{\vee}$ and $\cB^{\vee}$ because $\cA,\cB$ are slope-stable and  $g^{*}\cA\not\cong\cB$ for any $g\in \PGL(V)$.
Hence the  induced $\SL(V)$-action on $H^0(F^{\vee})$ is the direct-sum of representations $H^0(\cA^{\vee})$ and $H^0(\cB^{\vee})$. Since $F^{\vee}$ is globally generated each of $H^0(\cA^{\vee})$, $H^0(\cB^{\vee})$ is non-zero; that is a contradiction because  by~(\ref{sezioni}) the $\SL(V)$-representation
$H^0(F^{\vee})$ is the standard representation $\bigwedge^3 V^{\vee}$ and hence is irreducible. 
\end{proof}
\n
{\it Proof of~\Ref{prp}{isodop}.\/}
It follows from the definition of double EPW-sextic that if $A_1$ and $A_2$ are $\PGL(V)$-equivalent then $f_{A_1}$ and $f_{A_2}$ are isomorphic. Let's prove the converse. 
Since $Y_{A_k}$ is a hypersurface in $\PP(V)\cong\PP^5$ its Picard group is generated by the hyperplane class and moreover $Y_{A_k}$ is linearly normal. It follows that $Y_{A_1}$ is projectively equivalent to $Y_{A_2}$ and hence by acting with a suitable element of $\PGL(V)$   we may assume that $Y_{A_1}=Y_{A_2}=Y$. We will prove that with this hypothesis $A_1=A_2$. First notice that if $A\in(\lagr\setminus\NN(V))$ then 
  $\xi_A$  is the $(-1)$-eigensheaf of $f_{A}\colon X_{A}\to Y_{A}$.
By~(\ref{isoquad}) we get that there exists an isomorphism $\xi_{A_1}\overset{\sim}{\lra}\xi_{A_2}$ and hence also an isomorphism $\phi\colon \zeta_{A_1}\overset{\sim}{\lra}\zeta_{A_2}$. Isomorphism~\eqref{sorpresa} and Bott vanishing give that $h^i(F)=0$ for all $i$; by~\eqref{eccozeta} we get an isomorphism $A_k^{\vee}\overset{\sim}{\lra} H^0(\zeta_{A_k})$. Thus we have  a commutative diagram with exact rows and vertical isomorphisms
\begin{equation}\label{aridaje}
\begin{array}{ccccccccc}
0 & \to & F &\mapor{\lambda_{A_1}}& A_1^{\vee}\otimes\cO_{\PP(V)} & 
\lra & i_{*}\zeta_{A_1} &
\to & 0\\
 & & \mapver{\psi}& &\mapver{H^0(\phi)\otimes \Id _{\cO}} &
&
\mapver{\phi}& & \\
0 & \to & F & \mapor{\lambda_{A_2}}& A_2^{\vee}\otimes\cO_{\PP(V)} & \lra &
i_{*}\zeta_{A_2} & \to & 0
\end{array}
\end{equation}
  By~\eqref{sezioni} the transpose $\psi^t\colon F^{\vee}\to F^{\vee}$ induces an automorphism 
  \begin{equation*}
H^0(\psi^t)\colon\bigwedge^3 V^{\vee}\overset{\sim}{\lra}\bigwedge^3 V^{\vee}.
\end{equation*}
By~\eqref{aridaje} we have 
\begin{equation}\label{oblonsky}
H^0(\psi^t)\circ H^0(\lambda_{A_2}^t)=
H^0(\lambda_{A_1}^t)\circ H^0(\phi)^t.
\end{equation}
 Let $s\colon \bigwedge^3 V\overset{\sim}{\lra}\bigwedge^3 V^{\vee}$ be the isomorphism defined by the symplectic form $(,)_V$ i.e.~$s(v)(w):=(v,w)_V$.
 Letting $j_k\colon A_k\hra\bigwedge^3 V$ be inclusion we have 
 \begin{equation}\label{levin}
s\circ j_k= H^0(\lambda_{A_k}^t).
\end{equation}
Let $\epsilon:=s^{-1}\circ H^0(\psi^t)\circ s$; we claim that
\begin{equation}\label{cicikov}
\epsilon(A_2)=A_1.
\end{equation}
In fact 
 by (\ref{oblonsky}) and~(\ref{levin}) we have
\begin{multline}
\epsilon\circ j_2=s^{-1}\circ H^0(\psi^t)\circ s\circ j_2=
s^{-1}\circ H^0(\psi^t)\circ H^0(\lambda_{A_2}^t)=\\
=s^{-1}\circ H^0(\lambda_{A_1}^t)\circ H^0(\phi)^t=j_1\circ H^0(\phi)^t
\end{multline}
and this proves~(\ref{cicikov}).   By~\Ref{clm}{stabeffe} the vector-bundle $F$ is slope-stable and hence  $\psi=c \Id _F$ for some $c\in\CC^{\times}$. It follows that $H^0(\psi^t)=c \Id _{H^0(F^{\vee})}$ and hence $\epsilon=c \Id _{\bigwedge^3 V}$ by~\eqref{sezioni}. Thus $\epsilon(A_2)=A_2$ and therefore  $A_2=A_1$ by~(\ref{cicikov}).
\qed
\subsection{The GIT quotient}\label{subsec:eccomoduli}
\setcounter{equation}{0}
Let $\Pic^{\PGL}(\lagr)$ be the group of $\PGL_6(\CC)$-linearized line-bundles on $\lagr$. We have a homomorphism $\Pic^{\PGL}(\lagr)\to \Pic(\lagr)$, which is injective because there are non non-trivial regular maps $\PGL_6(\CC)\to \CC^{*}$, see Prop.~1.4 of~\cite{mum}. Now $\Pic(\lagr)\cong\ZZ$, and the (non-trivial) canonical line-bundle of $\lagr$ is  $\PGL_6(\CC)$-linearized. It follows that up to multiples there is a unique $\PGL_6(\CC)$-linearized ample line-bundle on $\lagr$ and hence~\eqref{lospazio} defines $\gM$ unambiguously. The unique linearized ample line-bundle defines the open subsets of $\lagr$ of stable and semistable points, we will denote them by $\lagr^{st}$ and $\lagr^{ss}$ respectively. \Ref{crl}{tuttisemi} gives that if $A\in(\lagr\setminus\Sigma)$ then $A$ is stable. The open dense subset $\lagr^0\subset\lagr$ parametrizing EPW-sextics whose natural double cover is smooth  is contained in  $(\lagr\setminus\Sigma)$, see Prop.~3.4 of~\cite{ogdoppio}.  
Thus we get that the open dense $(\lagr^0//\PGL_6(\CC))\subset\gM$ is the moduli space of smooth double EPW-sextics, and hence $\gM$ is a compactification of the moduli space of smooth double EPW-sextics.

\medskip
\noindent
We showed in~\cite{og2} that there is a non-trivial involution  $\delta\colon\gM\to\gM$; we will recall  the definition.
 Let
\begin{equation}\label{specchio}
\begin{matrix}
 \bigwedge^3 V  & \overset{\delta_V}{\overset{\sim}{\lra}} & \bigwedge^3 V^{\vee} \\
 \alpha & \mapsto & \beta\mapsto \vol(\alpha\wedge\beta)
\end{matrix}
\end{equation}
be the isomorphism   
 defined by  $(,)_V$. 
We notice that $\delta_V$ sends isotropic subspaces of $ \bigwedge^3 V$ to 
isotropic subspaces of $ \bigwedge^3 V^{\vee}$; in particular it induces  an isomorphism $\lagr\overset{\sim}{\lra}\lagrdual$. 
We notice the following: given $E\in\Gr(5,V)$  
 \begin{equation}\label{uaidelta}
\text{$E\in Y_{\delta_V(A)}$ if and only if 
$(\bigwedge^3 E)\cap A\not=\{0\}$.}
\end{equation}
Let  $A\in \lagr$ be generic:    then $Y_{\delta_V(A)}$ is 
 the classical dual  $Y^{\vee}_A$  of $Y_A$, see~\cite{og2}.
The map $\delta_V$  induces a regular involution
\begin{equation}\label{invmoduli}
\begin{matrix}
\mathfrak{M}& \overset{\delta}{\lra} & \mathfrak{M} \\
[A] & \mapsto & [\delta_V(A)]
\end{matrix}
\end{equation}
We showed in~\cite{og2} that a generic EPW-sextic is not self-dual and hence $\delta$ is not the identity. 
\subsection{Moduli of plane sextics}\label{subsec:tipishah}
\setcounter{equation}{0}
The GIT quotient of the space of plane sextic curves (analyzed by Shah~\cite{shah}) may be considered as a compactification of the moduli space of polarized $K3$ surfaces $(S,H)$ where $\deg(H\cdot H)=2$ and $|H|$ is free of base curves. In fact the double cover  $f\colon S\to \PP^2$ branched over a smooth sextic is such a $K3$ (with $H$ any element of $|f^{*}\cO_{\PP^2}(1)|$), and conversely if $|H|$ is free of base curves then $|H|$ is free and the  map $S\to|H|^{\vee}\cong\PP^2$ is of degree $2$ branched over smooth sextic. The period map 
\begin{equation}\label{magdalene}
|\cO_{\PP^2}(6)|\dashrightarrow \DD^{BB}_{K3,2} 
\end{equation}
associates to a smooth sextic the period point of the associated double cover. The GIT quotient $|\cO_{\PP^2}(6)|//\PGL_3(\CC)$ and the period map~\eqref{magdalene} should be viewed as simpler models of our moduli space $\gM$ and the period map $\cP\colon \lagr\dashrightarrow \DD^{BB}$.  In addition one may determine whether the period map $\cP$ is regular at a semistable lagrangian with minimal orbit by examining  plane sextic curves associated to the lagrangian, and hence it is useful  to translate (semi)stability of lagrangians into geometric conditions on  the associated plane sextics. 
Here we will recall  Shah's results and terminology for semistable plane sextics with closed orbit.
First let us recall that a curve $C$  has a {\it simple singularity} at $p\in C$ if  the following hold:
\begin{itemize}
\item[(i)] 
$p$ is a planar singularity i.e.~$\dim\Theta_p C\le 2$.
\item[(ii)] 
$C$ is reduced in a neighborhood of $p$. 
\item[(iii)] 
$\mult _p(C)\le 3$ and if equality holds 
 the blow-up of $C$ at $p$ does not have  a point of multiplicity  $3$ lying over $p$. 
\end{itemize}
\begin{rmk}\label{rmk:semade}
Let $C\subset\PP^2$ be a curve. Then $C$ has simple singularities if and only if   the double cover $S\to\PP^2$ branched over $C$ is a normal surface with DuVal singularities;  in particular if $C$ is a sextic then the minimal desingularization  of $S$ is a $K3$ surface with A-D-E curves lying over the singularities of $S$. 
\end{rmk}
\begin{thm}[Shah~\cite{shah}]\label{thm:listashah}
Let $C\subset\PP^2$ be a  sextic curve. Then $C$ is $\PGL(3)$-semistable with minimal orbit  (i.e.~orbit closed in $|\cO_{\PP^2}(6)|^{ss}$) if and only if it belongs to one of the following classes:
\begin{enumerate}
\item[{\rm I}.]
$C$ has simple singularities.
\item[{\rm II}.]
In suitable coordinates 
\begin{enumerate}
\item[{\rm (1)}]
$C=V((X_0X_2+a_1 X_1^2)(X_0X_2+a_2 X_1^2)(X_0X_2+a_3 X_1^2))$ where $a_1,a_2,a_3$ are distinct.
\item[{\rm (2)}]
$C=V(X_0^2 F(X_1,X_2))$ where $F$ has has no multiple factors.
\item[{\rm (3)}]
$C= V((X_0X_2+ X_1^2)^2 F(X_0,X_1,X_2))$ and $V(X_0X_2+ X_1^2)$, $V(F)$  intersect transversely.
\item[{\rm (4)}]
$C=V(F(X_0,X_1,X_2)^2)$ where $V(F(X_0,X_1,X_2)$ is a smooth cubic curve.
\end{enumerate}
\item[{\rm III}.]
In suitable coordinates 
\begin{enumerate}
\item[{\rm (1)}]
$C=V((X_0X_2+ X_1^2)^2(X_0X_2+a X_1^2))$ where $a\not=1$.
\item[{\rm (2)}]
$C=V(X_0^2 X_1^2 X_2^2)$.
\end{enumerate}
\item[{\rm IV}.]
$C=3D$ where $D$ is a smooth conic.
\end{enumerate}
\end{thm}
\begin{rmk}\label{rmk:pasquetta}
The following will be useful in detecting sextic curves of Type II-1, II-2, III-1, III-2 or IV.
Let $P\in \CC[X_0,X_1,X_2]_6$. Suppose that  $G<\SL_3(\CC)$ and 
$g P=P$ for all  $g\in G$. 
\begin{enumerate}
\item[(1)]
Assume that (in the standard   basis) $G=\{\diag(t^{-2},t,t) \mid t\in\CC^{\times}\}$. Then  $P=X_0^2 F(X_1,X_2)$.
\item[(2)]
Assume that  (in the standard   basis) $G=\{\diag (t,1,t^{-1}) \mid t\in\CC^{\times}\}$. Then  
\begin{equation}\label{treconiche}
P=(b_1X_0X_2+a_1 X_1^2)(b_2X_0X_2+a_2 X_1^2)(b_3X_0X_2+a_3 X_1^2).
\end{equation}
\item[(3)]
Assume that $G$  is the maximal torus diagonal in the  standard   basis. Then $P=c X_0^2 X_1^2 X_2^2$. 
\end{enumerate}
\end{rmk}
\begin{rmk}\label{rmk:pershah}
The period map~\eqref{magdalene} is regular at $C$ if and only if $C$ is semistable and the unique semistable sextic with closed orbit $\PGL(3)$-equivalent to $C$ is not of Type IV. 
Equivalently:  $C$  
is in the indeterminacy of~\eqref{magdalene} if and only if 
\begin{enumerate}
\item[(1)]
there exists $p\in C$ such that  $C$ has consecutive triple points\footnote{A plane singularity $(C,p)$ has \emph{consecutive triple points} if $\mult_p C=3$ and the blow-up of $C$ at $p$ has one triple point lying over $p$.} at $p$ and moreover letting $\wt{C}$ be the strict transform of $C$ in the blow-up of $\PP^2$ at $p$,  the tangent cone to $\wt{C}$ at its unique singular point lying over $p$ is a triple line, or 
\item[(2)]
there exists $p\in C$ such that $\mult_p C\ge 4$ and if equality holds 
the tangent cone to $C$ at $p$ equals $3 l_1+l_2$ ($l_1,l_2$ are lines through $p$).
\end{enumerate}
\end{rmk}
\begin{rmk}\label{rmk:numshah}
Let $C\in|\cO_{\PP^2}(6)|$ be semistable and assume that the period map~\eqref{magdalene} is regular at $C$. Let $C_0$ be the unique semistable sextic with closed orbit $\PGL(3)$-equivalent to $C$; then $C_0$ is of Type I, II or III by~\Ref{rmk}{pershah}. The type of $C_0$ is related to the image of $C$ under the period map. First recall that the  boundary $\partial\DD^{BB}_{K3,2}:=(\DD^{BB}_{K3,2}\setminus\DD_{K3,2})$ is the union of boundary components of Type II, each of which is the quotient of the upper-half plane by an arithmetic group, and boundary components of Type III, i.e.~points. There are four Type II boundary components and one Type III boundary component and  the following hold (see~\cite{shah} and Rmk.~5.6 of~\cite{friedman}):
\begin{enumerate}
\item[(1)]
If $C_0$ is of Type I  the period map takes it to a point of $\DD_{K3,2}$.
\item[(2)]
If $C_0$ is of Type II  the period map takes it to a point of a Type II boundary component of $\DD^{BB}_{K3,2}$, determined by the numbering (1),...,(4) in~\Ref{thm}{listashah}.
\item[(3)]
If $C_0$ is of Type III  the period map takes it to the unique Type III boundary component of $\DD^{BB}_{K3,2}$.
\end{enumerate}
\end{rmk}
\clearpage
 \section{One-parameter subgroups and stability}\label{sec:sottoparam}
 \setcounter{equation}{0}
\subsection{Outline of the section}
\setcounter{equation}{0}
Below is the main result of the present section.
\begin{thm}\label{thm:tuttisemi}
The non-stable locus $\lagr\setminus\lagr^{st}$ is   the union of the standard non-stable strata, which are defined in~\Ref{subsec}{nonstabili} and are listed in Table~\eqref{stratflaguno}:
\begin{equation*}
\lagr\setminus\lagr^{st}=\BB^{*}_{\cA}\cup\BB^{*}_{\cA^{\vee}}\cup\BB^{*}_{\cC_1}
\cup\BB^{*}_{\cC_2}\cup\BB^{*}_{\cD}\cup\BB^{*}_{\cE_1}\cup\BB^{*}_{\cE_2}
\cup\BB^{*}_{\cE_1^{\vee}}\cup\BB^{*}_{\cE_2^{\vee}}
\cup\BB^{*}_{\cF_1}\cup\BB^{*}_{\cF_2}\cup\XX^{*}_{\cN_3}.
\end{equation*}
\end{thm}
\Ref{thm}{tuttisemi} will be proved in~\Ref{subsec}{luogostabile}. The standard non-stable strata are irreducible locally-closed subsets of $\lagr$, and each is defined by imposing a certain \lq\lq flag condition\rq\rq\ on $A\in\lagr$: by way of example $\BB^{*}_{\cA}$ is the set of $A$ such that there exists $[v_0]\in\PP(V)$ for which $\dim(A\cap F_{v_0})\ge 5$. 
The section is organized as follows. In~\Ref{subsec}{bandiere} we will consider  a linearly reductive group acting on a product of grassmannians and we will write out explicitely Mumford's numerical function associated to a $1$-PS, then we will examine the numerical function when the group acts  on the lagrangian subspaces of a symplectic vector   space. In~\Ref{subsec}{algocono} we will  introduce the Cone Decomposition Algorithm: it applies to a linearly reductive group acting on a product of Grassmannians. The output of the algorithm is an explicit description of the non-stable locus as a finite union of translates of Schubert cells; it will be the key ingredient in the proof of~\Ref{thm}{tuttisemi}, and also in the proof of many results of~\Ref{sec}{bounduno} and~\Ref{sec}{bounddue}. 
In~\Ref{subsec}{nonstabili} we will define the standard non-stable strata that enter into the statement of~\Ref{thm}{tuttisemi}, and also corresponding standard unstable strata; we will show that a lagrangian belonging to a standard non-stable (unstable) stratum is non-stable (unstable). \Ref{subsec}{luogostabile} is devoted to the proof of~\Ref{thm}{tuttisemi}: by applying the Cone Decomposition Algorithm we will show that if $A$ is non-stable then it belongs to one of the standard non-stable strata. 

\medskip
\noindent
Let us fix our choice of sign for the numerical function associated to a $1$-PS. Let $W$ be a (finite-dimensional) complex vector space and 
$\lambda\colon\CC^{\times}\to GL(W)$ a  homomorphism. Let
\begin{equation}\label{isotipo}
W=\oplus_{a\in\ZZ} W_a,\qquad 
\lambda(t)|_{W_a}=t^a \Id _{W_a},
\end{equation}
 be the decomposition into isotypical addends. Given $[w]\in\PP(W)$ let $w=\sum_{a\in \ZZ}w_a$ be the decomposition according to~\eqref{isotipo}; we set
\begin{equation}\label{defpeso}
\mu([w],\lambda):=\min\{a\mid w_a\not=0\}.
\end{equation}
(Warning: our $\mu$  is the opposite of Mumford's $\mu$, see~\cite{mum}.)
  \begin{rmk}
Keep notation as above. Then $\mu([w],\lambda)\ge 0$ if and only if $\lim_{t\to 0}\lambda(t)w$ exists. Suppose that $\mu([w],\lambda)\ge 0$ and let $\ov{w}:=\lim_{t\to 0}\lambda(t)w$. Then $\ov{w}=0$  if and only if $\mu([w],\lambda)> 0$.
\end{rmk}
Below is the formulation of the  Hilbert-Mumford Criterion that goes  with our choice of $\mu$. 
 \begin{thm}[Hilbert-Mumford's Criterion~\cite{mum}]\label{thm:hilmum}
Let $G$ be a linearly reductive group  acting on a projective variety $Z\subset\PP(W)$  via a homomorphism  $\rho\colon G\to \SL(W)$. Then 
\begin{itemize}
\item[(1)]
$[w]$ is stable if and only if $\mu([w],\rho\circ\lambda)< 0$ for all $1$-PS's $\lambda\colon\CC^{\times}\to G$.
\item[(2)]
$[w]$ is semistable if and only if   $\mu([w],\rho\circ\lambda)\le 0$  for all $1$-PS's $\lambda\colon\CC^{\times}\to G$.
\item[(3)]
$[w]$ is unstable if and only if there exists a $1$-PS $\lambda\colon\CC^{\times}\to G$   for which $\mu([w],\rho\circ\lambda)>0$.
\end{itemize}
\end{thm}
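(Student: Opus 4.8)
The plan is to reduce the three projective statements to orbit-closure statements in the affine cone $W$, and then to isolate and prove the one genuinely hard input. First I would invoke the dictionary recalled in the footnote above: since the linearization comes from $\rho\colon G\to\SL(W)$, a point $[w]$ is semistable if and only if $0\notin\ov{Gw}$ (closure taken in $W$), and $[w]$ is stable if and only if $Gw$ is closed in $W$ and $\Stab(w)$ is finite (finiteness of $\Stab(w)$ and of $\Stab([w])$ being equivalent, as they differ by scalars). Combined with the Remark preceding the statement, this converts the numerical conditions into limit conditions: $\mu([w],\rho\circ\lambda)>0$ means precisely $\lim_{t\to0}(\rho\circ\lambda)(t)w=0$, the condition $\mu\ge0$ means the limit exists, and $\mu<0$ means it does not exist.

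Next I would dispose of the easy directions, which need no reductivity. A $1$-PS $\lambda$ with $\mu([w],\rho\circ\lambda)>0$ exhibits $0=\lim_{t\to0}(\rho\circ\lambda)(t)w\in\ov{Gw}$, so its existence forces $[w]$ to be unstable; this is the backward implication of (3) and, by negation, the forward implication of (2). For the forward implication of (1), suppose $[w]$ is stable and that some $1$-PS $\lambda$ had $w_0:=\lim_{t\to0}(\rho\circ\lambda)(t)w$ existing. If $w_0=0$ then $\mu>0$ contradicts semistability; if $w_0\neq0$ then $\mu=0$ and $\rho\circ\lambda$ fixes $w_0$, so $\Stab(w_0)$ contains the infinite group $\lambda(\CC^{\times})$. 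Then either $w_0\in Gw$, forcing $\Stab(w)$ infinite, or $w_0\in\ov{Gw}\setminus Gw$, forcing $Gw$ non-closed; both contradict stability. Hence $\mu<0$ for every $1$-PS.

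The crux is the Hilbert--Mumford \emph{fundamental theorem}: if $w'\in\ov{Gw}$ there is a $1$-PS $\lambda$ such that $\lim_{t\to0}(\rho\circ\lambda)(t)w$ exists and lies in the orbit of $w'$; in particular $0\in\ov{Gw}$ produces a $\lambda$ with limit $0$. I would prove the case $w'=0$ as follows. A curve-selection (valuative) argument gives a formal arc $g(t)\in G(\CC((t)))$ with $g(t)w\in W(\CC[[t]])$ reducing to $0$ modulo $t$. The Cartan decomposition over the local field $\CC((t))$ writes $g(t)=k(t)\,\lambda(t)\,h(t)$ with $k,h\in G(\CC[[t]])$ and $\lambda$ an honest $1$-PS; here reductivity of $G$ is exactly what makes this decomposition available, and this is the main obstacle. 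Since $k(t)$ is invertible over $\CC[[t]]$, the relation $g(t)w\in t\,W(\CC[[t]])$ gives $(\rho\circ\lambda)(t)(h(t)w)\in t\,W(\CC[[t]])$. Decomposing $h(t)w=\sum_a v_a(t)$ into $\rho\circ\lambda$-weight components yields $(\rho\circ\lambda)(t)(h(t)w)=\sum_a t^a v_a(t)$, and membership in $t\,W(\CC[[t]])$ forces $v_a(0)=0$ for all $a\le0$. Thus $h(0)w=\sum_{a>0}v_a(0)$ has only strictly positive weights, so $\lim_{t\to0}(\rho\circ\lambda)(t)(h(0)w)=0$, and the conjugate $1$-PS $h(0)^{-1}\lambda\,h(0)$ destabilizes $w$. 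The general $w'$ is handled identically, bookkeeping the weights of $w'$.

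Finally I would assemble the equivalences. The fundamental theorem with $w'=0$ is the forward implication of (3), hence the backward implication of (2), completing (2) and (3). For the backward implication of (1), assume $\mu<0$ for every $1$-PS. Then no $1$-PS has $\mu>0$, so by (3) the point is semistable and $0\notin\ov{Gw}$. If $Gw$ were not closed, choose a closed orbit $Gw'$ inside $\ov{Gw}\setminus Gw$, necessarily with $w'\neq0$; the fundamental theorem then yields a $1$-PS with $\lim_{t\to0}(\rho\circ\lambda)(t)w$ existing, i.e. $\mu\ge0$, a contradiction. Hence $Gw$ is closed; by Matsushima's criterion $\Stab(w)$ is reductive, and were it infinite it would contain a $\GG_m$ fixing $w$, giving a $1$-PS with $\mu=0$, again a contradiction. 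Therefore $\Stab(w)$ is finite and $[w]$ is stable, completing (1).
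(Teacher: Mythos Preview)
The paper does not prove this theorem; it is stated with attribution to Mumford's book~\cite{mum} and used as a black box throughout. There is therefore no paper proof to compare against. Your outline is the standard argument (easy directions by direct limit inspection, hard direction via the Iwahori/Cartan decomposition $g(t)=k(t)\lambda(t)h(t)$ over $\CC((t))$, then assembly using closedness of orbits and reductivity of stabilizers), and it is essentially correct as written. The one place worth tightening is the appeal to Matsushima in the backward implication of (1): you need that $Gw$ closed in the affine $W$ makes $G/\Stab(w)$ affine, hence $\Stab(w)$ reductive, hence if infinite its identity component is a positive-dimensional connected reductive group and so contains a nontrivial torus; you have this, but it is the step a reader is most likely to query.
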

\subsection{(Semi)stability and flags}\label{subsec:bandiere}
\setcounter{equation}{0}
Let $U^0,\ldots,U^r$ be finite-dimensional complex vector spaces. 
Let $G$ be a linearly reductive group  and
\begin{equation}\label{gruppogi}
 G\to \GL(U^0)\times\ldots\times \GL(U^r)
\end{equation}
be a homomorphism. Let $m_p,n_p>0$ be integers for $0\le p\le r$; we assume that $n_p<\dim U^p$.  Homomorphism~\eqref{gruppogi}  gives a representation $\rho$ of $G$ on $\Sym^{m_0}(\bigwedge^{n_0}U^0) \otimes\ldots\otimes \Sym^{m_r}(\bigwedge^{n_r}U^r)$: we assume that 
 \begin{equation}\label{detuno}
\rho\colon G\to \SL\left(\Sym^{m_0}(\bigwedge^{n_0}U^0) \otimes\ldots\otimes \Sym^{m_r}(\bigwedge^{n_r}U^r)\right).
\end{equation}
Let $\cL_p$ be the Pl\"ucker ample line-bundle on $\Gr(n_p,U^p)$. We have the embedding 
\begin{equation}\label{tantegrass}
\Gr(n_0,U^0)\times\ldots\times\Gr(n_r,U^r)  \hra  
\PP\left(\Sym^{m_0}(\bigwedge^{n_0}U^0) \otimes\ldots\otimes \Sym^{m_r}(\bigwedge^{n_r}U^r)\right)
\end{equation}
associated to $\cL_0^{m_0}\otimes\ldots\otimes\cL_r^{m_r}$. Homomorphism~\eqref{gruppogi} induces an action of $G$ on $\Gr(n_0,U^0)\times\ldots\times\Gr(n_r,U^r)$.  The main example for us is the action of $G=\SL(V)$  on $\bigwedge^3 V$ and the induced action on $\Gr(10,\bigwedge^3 V)$: we will be interested in the  closed $\SL(V)$-invariant subset $\lagr\subset\Gr(10,\bigwedge^3 V)$. On the other hand we will examine more general homomorphisms in~\Ref{sec}{bounduno} and~\Ref{sec}{bounddue}. Let $\lambda\colon\CC^{\times}\to G$ be a $1$-PS. Let $\mu^{\bf m}(\cdot,\rho\circ\lambda)$ be the Hilbert-Mumford numerical function defined by Embedding~\eqref{tantegrass} - here ${\bf m}=(m_0,\ldots,m_r)$ and  the input is a point  $(A_0,\ldots,A_r)\in\Gr(n_0,U^0)\times\ldots\times\Gr(n_r,U^r)$. One expands $\mu^{\bf m}$ as follows. Let $\pi_p\colon G\to \GL(U^p)$ be projection. Then $\pi_p\circ\lambda\colon\CC^{\times}\to \GL(U^p)$ and we have the numerical function $\mu(A_p,\pi_p\circ\lambda)$ (relative to $\cL_p$): abusing notation we will denote it by $\mu(A_p,\lambda)$. We have
\begin{equation}\label{pendemult}
\mu^{\bf m}((A_0,\ldots,A_r),\rho\circ\lambda)=\sum_{p=0}^r m_p \mu(A_p,\lambda).
\end{equation}
Next we   will write out explicitly $\mu(A_p,\lambda)$. First we must introduce the $\lambda$-type of $A_p$.
To simplify notation we set $U=U^p$. 
Thus we suppose that $\lambda\colon\CC^{\times}\to GL(U)$ is a homomorphism ($\pi_p\circ\rho\circ\lambda$ in the notation used above). Let 
\begin{equation}\label{decompu}
U=U_{e_0}\oplus\ldots\oplus U_{e_s}
\end{equation}
be the decomposition  into isotypical summands for the action of $\lambda$. 
We assume throughout that  the weights are numbered  in decreasing order: 
\begin{equation}\label{decrescenti}
e_0>e_1>\ldots>e_s.
\end{equation}
For $0\le i\le s$ we let 
\begin{equation}\label{rubaband}
L_i:=U_{e_0}\oplus\ldots\oplus U_{e_i}. 
\end{equation}
\begin{dfn}\label{dfn:patsy}
Let $\lambda\colon \CC^{\times}\to \GL(U)$ be a homomorphism.  Keep notation as above, in particular~\eqref{decompu} and~\eqref{decrescenti}. Let $0<n<\dim U$ and $A\in\Gr(n,U)$. 
We let  
\begin{equation}\label{incrementi}
d_i^{\lambda}(A):=\dim (A\cap L_i/A\cap L_{i-1})\qquad 0\le i\le s.
\end{equation} 
The vector $d^{\lambda}(A):=(d_0^{\lambda}(A),\ldots, d_s^{\lambda}(A))$ is the $\lambda$-type of $A$. 
More generally let $\lambda\colon\CC^{\times}\to \GL(U^0)\times\ldots\times \GL(U^r)$
be a homomorphism and $(A_0,\ldots,A_r)\in\Gr(n_0,U^0)\times\ldots\times\Gr(n_r,U^r)$: the collection of   
vectors
\begin{equation*}
(d^{\pi_0\circ\lambda}(A_0),\ldots,d^{\pi_r\circ\lambda}(A_r))
\end{equation*}
is the $\lambda$-type of $(A_0,\ldots,A_r)$. 
Whenever possible we omit reference to $\lambda$ i.e.~we denote the $\lambda$-type of $(A^0,\ldots,A^r)$ by  $(d(A^0),\ldots,d(A^r))$.
\end{dfn}
Let  $\lambda\colon\CC^{\times}\to GL(U)$ be a homomorphism - we assume that~\eqref{decompu} and~\eqref{decrescenti} hold. Let $A\in\Gr(n,U)$. Then $\mu(A,\lambda)$ is determined by the $\lambda$-type of $A$:
\begin{equation}\label{caramellamu}
\mu(A,\lambda)=\sum_{i=0}^s e_i d^{\lambda}_i(A).
\end{equation}
In order to examine  $\lim_{t\to 0}\lambda(t)(A_0,\ldots,A_p)$ we introduce a definition.
\begin{dfn}\label{dfn:mispezzo}
Keep notation as in~\Ref{dfn}{patsy}. Let $0<n<\dim U$ and $A\in\Gr(n,U)$. 
Then $A$ is {\it $\lambda$-split} if $A=(A\cap U_{e_0})\oplus (A\cap U_{e_1})\oplus\ldots\oplus (A\cap U_{e_s})$. 
\end{dfn} 
\begin{rmk}\label{rmk:banana}
Keep notation as above. Then $A\in\Gr(n,U)$ is  $\lambda$-split if and only if $\lambda(t)A=A$ for all $t\in\CC^{\times}$. 
\end{rmk}
Next assume that $\lambda$ is a $1$-PS of $G$. Let  $(A_0,\ldots,A_r)\in\Gr(n_0,U^0)\times\ldots\times\Gr(n_r,U^r)$ and suppose that $\mu^{\bf m}((A_0,\ldots,A_r),\rho\circ\lambda)=0$. 
 Let $\omega$ be a generator of  $(\bigwedge^{\max} A_0)^{m_0}\otimes\ldots\otimes (\bigwedge^{\max} A_r)^{m_r}$. Then 
$\lim_{t\to 0}\rho\circ\lambda(t)\omega$ exists and is non-zero by~\Ref{clm}{dipsey}: call it $\ov{\omega}$. 
Of course there exists a unique $(\ov{A}_0,\ldots,\ov{A}_r)\in\Gr(n_0,U^0)\times\ldots\times\Gr(n_r,U^r)$
such that $(\bigwedge^{\max} \ov{A}_0)^{m_0}\otimes\ldots\otimes (\bigwedge^{\max} \ov{A}_r)^{m_r}=\CC\ov{\omega}$.
The result below follows directly from the definitions.
\begin{clm}\label{clm:limite}
For $0\le p\le r$ the subspace $\ov{A}_p$ is $\lambda$-split of type equal to $d^{\lambda}(A_p)$.
\end{clm}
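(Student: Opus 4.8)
The plan is to compute $\ov{\omega}:=\lim_{t\to 0}\rho\circ\lambda(t)\omega$ explicitly, one factor at a time, and to recognize it as a generator of $\bigotimes_p(\bigwedge^{\max}\ov{A}_p)^{m_p}$ for a subspace $\ov{A}_p$ that is visibly $\lambda$-split of the asserted type. First I would pass from the global weight decomposition on $\Sym^{m_0}(\bigwedge^{n_0}U^0)\otimes\cdots\otimes\Sym^{m_r}(\bigwedge^{n_r}U^r)$ to the separate weight decompositions of the individual $\bigwedge^{n_p}U^p$. Writing $\eta_p$ for a generator of $\bigwedge^{\max}A_p$, the vector $\omega$ equals, up to scalar, $\eta_0^{m_0}\otimes\cdots\otimes\eta_r^{m_r}$; by~\eqref{defpeso} the hypothesis $\mu^{\bf m}=0$ together with~\eqref{pendemult} says that the minimal weight occurring in $\omega$ is $\sum_p m_p\mu(A_p,\lambda)=0$. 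Since the minimal weight of a tensor product (or symmetric power) of weight vectors is the sum of the minimal weights, and the minimal-weight graded piece of such a product is the product of the minimal-weight graded pieces, the weight-$0$ component $\ov{\omega}=\omega_0$ factors as $\bigotimes_p\big((\eta_p)_{\min}\big)^{m_p}$, where $(\eta_p)_{\min}\in\bigwedge^{n_p}U^p$ is the minimal-weight component of $\eta_p$. This reduces the statement to identifying $(\eta_p)_{\min}$ in a single factor.

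For a single factor I would set $U=U^p$, $A=A_p$, $n=n_p$, and keep the weight decomposition~\eqref{decompu} with $e_0>\cdots>e_s$ as in~\eqref{decrescenti} and the flag $L_i$ of~\eqref{rubaband}. Choose a basis $a_1,\dots,a_n$ of $A$ adapted to the induced flag $A\cap L_\bullet$, so that exactly $d_i^\lambda(A)$ of the $a_j$ lie in $A\cap L_i$ but not in $A\cap L_{i-1}$. For such an $a_j$, decomposing along~\eqref{decompu} gives $a_j=\sum_{k\le i}u_k$ with $u_k\in U_{e_k}$ and $u_i\ne 0$, whence $\lim_{t\to 0}t^{-e_i}\lambda(t)a_j=u_i=q_i(a_j)$, the weight-$e_i$ projection $q_i\colon L_i\to U_{e_i}$ applied to $a_j$. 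Because $q_i$ has kernel $A\cap L_{i-1}$ on $A\cap L_i$, it induces an isomorphism $(A\cap L_i)/(A\cap L_{i-1})\overset{\sim}{\lra}q_i(A\cap L_i)$; in particular the $n$ leading terms $q_{i(j)}(a_j)$ are linearly independent, so their wedge is nonzero, the minimal weight of $\eta=a_1\wedge\cdots\wedge a_n$ is $\sum_i e_i d_i^\lambda(A)$ (recovering~\eqref{caramellamu}), and $(\eta)_{\min}=\bigwedge_j q_{i(j)}(a_j)$ up to scalar. This decomposable vector generates the subspace $\ov{A}:=\bigoplus_{i=0}^s q_i(A\cap L_i)$, which is independent of the chosen basis and satisfies $\dim(\ov{A}\cap U_{e_i})=d_i^\lambda(A)$.

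Finally I would assemble the pieces. By the first paragraph $\ov{\omega}=\bigotimes_p\big((\eta_p)_{\min}\big)^{m_p}=\bigotimes_p(\bigwedge^{\max}\ov{A}_p)^{m_p}$, so the uniqueness of $(\ov{A}_0,\dots,\ov{A}_r)$ forces each $\ov{A}_p$ to be the subspace constructed above. By its very construction $\ov{A}_p=\bigoplus_i(\ov{A}_p\cap U_{e_i})$, hence $\ov{A}_p$ is $\lambda$-split in the sense of~\Ref{dfn}{mispezzo} (equivalently $\lambda(t)$-fixed, by~\Ref{rmk}{banana}), and its $\lambda$-type is $(d_0^\lambda(A_p),\dots,d_s^\lambda(A_p))=d^\lambda(A_p)$ by~\eqref{incrementi}. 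The nonvanishing of $\ov{\omega}$, which is what makes $(\ov{A}_0,\dots,\ov{A}_r)$ well defined at all, is exactly~\Ref{clm}{dipsey}. The one point I expect to need care — and the step I would write out most explicitly — is the passage between the single limit in the ambient $\SL$-representation and the $r+1$ separate limits in the Grassmannian factors: one must check that no cross-cancellation between factors can push the minimal total weight below $\sum_p m_p\mu(A_p,\lambda)$. This is immediate from the additivity of minimal weights under tensor product and symmetric power, once one notes that inside each factor every weight is bounded below by that factor's minimal weight, so the global minimal-weight piece is forced to be the product of the factorwise minimal-weight pieces.
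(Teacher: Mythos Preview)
Your argument is correct. The paper itself offers no proof of this claim --- it simply asserts that ``the result below follows directly from the definitions'' --- and you have written out exactly the adapted-basis computation that makes this precise: the factorization of the minimal-weight piece of $\omega$ across the tensor and symmetric factors, and the identification of the minimal-weight piece of $\eta_p=a_1\wedge\cdots\wedge a_n$ with a generator of $\bigwedge^{\max}\bigoplus_i q_i(A_p\cap L_i)$.

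One small remark: your appeal to \Ref{clm}{dipsey} for the nonvanishing of $\ov{\omega}$ merely echoes the paper's own cross-reference in the paragraph setting up the claim, but that reference is to the symplectic formula and is not really the relevant fact. The nonvanishing is simply the statement that $\mu^{\bf m}=0$ forces the weight-$0$ component of $\omega$ to be nonzero, i.e.\ the content of the unnamed Remark preceding \Ref{thm}{hilmum}; your factorization argument in fact establishes this directly, since the factorwise minimal-weight pieces $(\eta_p)_{\min}$ are nonzero and tensor/symmetric powers of nonzero vectors are nonzero.
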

Next we consider the case in which we are given a symplectic form $\sigma\in\bigwedge^2 U^{\vee}$ and $G$ acts via a homomorphism
\begin{equation*}
G\lra \Sp(U,\sigma):=\{g\in \GL(U) \mid g^{*}\sigma=\sigma\}.
\end{equation*}
The main example for us is $G=\SL(V)$, $U=\bigwedge^3 V$ and $\sigma=(,)_V$. 
 Let's go through some elementary facts regarding Decomposition~\eqref{decompu}. 
 If a weight $e$  occurs then so does $-e$: by~\eqref{decrescenti} we get that
\begin{equation}\label{opposti}
e_i+e_{s-i}=0,\qquad 0\le i\le s. 
\end{equation}
 Moreover
\begin{equation*}
\text{$U_{e_i}\bot U_{e_k}$ if $i+k\not=s$}
\end{equation*}
and
\begin{equation*}
\begin{matrix}
U_{e_i}\times U_{e_{s-i}} & \lra & \CC \\
(\alpha,\beta) & \mapsto & \sigma(\alpha,\beta)
\end{matrix}
\end{equation*}
is a perfect pairing -  in particular
$\dim U_{e_i}=\dim U_{e_{s-i}}$
and   the restriction of $(,)_V$ to $U_0$ is a symplectic form. 
Now assume that $A\in\LL\GG(U)$ where \lq\lq lagrangian\rq\rq\ refers to the symplectic form $\sigma$. 
Then the first half of the $d_i(A)$'s determine the remaining ones - this is a well-known fact, we recall the proof for the reader's convenience.
\begin{clm}\label{clm:nickcarter}
Let $U$ be a finite-dimensional complex vector-space and $\sigma\in\bigwedge^2 U^{\vee}$ a symplectic form. Let $\lambda\colon\CC^{\times}\to\Sp(U,\sigma)$ be a homomorphism. Let~\eqref{decompu} be the isotypical decomposition of $\lambda$ and suppose that~\eqref{decrescenti} holds. 
For $A\in\LL\GG(U)$ we have that
\begin{equation}\label{simmetrici}
d^{\lambda}_i(A)+d^{\lambda}_{s-i}(A)=\dim U_{e_i},\qquad 0\le i\le s.
\end{equation}
\end{clm}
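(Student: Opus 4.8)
The plan is to reduce \eqref{simmetrici} to a standard dimension identity for the intersection of a Lagrangian with a subspace and its symplectic-orthogonal, after first identifying the orthogonals of the flag spaces $L_i$. Throughout I write $S^{\perp}$ for the symplectic-orthogonal of a subspace $S\subset U$ with respect to $\sigma$.

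First I would compute $L_i^{\perp}$. Using the orthogonality relations recalled just above the claim — namely $U_{e_j}\perp U_{e_k}$ for $j+k\neq s$ and the fact that the pairing $U_{e_j}\times U_{e_{s-j}}\to\CC$ is perfect — one checks that a vector lies in $U_{e_j}^{\perp}$ exactly when its isotypical component in $U_{e_{s-j}}$ vanishes. Intersecting over $j=0,\dots,i$ then gives $L_i^{\perp}=U_{e_0}\oplus\dots\oplus U_{e_{s-i-1}}=L_{s-i-1}$, where I use the conventions $L_{-1}=\{0\}$ and $L_s=U$ (so that $L_{-1}^{\perp}=U=L_s$).

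Next I would invoke the elementary facts that for a nondegenerate form one has $(S\cap T)^{\perp}=S^{\perp}+T^{\perp}$ and $\dim S^{\perp}=\dim U-\dim S$. Taking $S=A$ (so that $A^{\perp}=A$ since $A\in\LL\GG(U)$) and $T=L$ an arbitrary subspace, the relation $(A\cap L)^{\perp}=A+L^{\perp}$ together with a short dimension count yields the identity $\dim(A\cap L)-\dim(A\cap L^{\perp})=\dim L-\tfrac12\dim U$. Finally I would specialize to $L=L_i$, whose orthogonal is $L_{s-i-1}$ by the first step. Writing $D_i:=\dim(A\cap L_i)=\sum_{j=0}^{i}d_j^{\lambda}(A)$, the identity becomes $D_i-D_{s-i-1}=\dim L_i-\tfrac12\dim U$. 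Subtracting the analogous relation for $i-1$ from the one for $i$, the left-hand side telescopes to $d_i^{\lambda}(A)+d_{s-i}^{\lambda}(A)$ (using $D_{s-i-1}-D_{s-i}=-d_{s-i}^{\lambda}(A)$) while the right-hand side collapses to $\dim L_i-\dim L_{i-1}=\dim U_{e_i}$, which is exactly \eqref{simmetrici}.

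As the statement itself notes, this is a well-known fact, so I expect no serious obstacle. The only points demanding care are the index bookkeeping in the identification $L_i^{\perp}=L_{s-i-1}$ and in the telescoping step; in particular I would verify the boundary cases $i=0$ and $i=s$ against the conventions $D_{-1}=0$ and $D_s=\dim A=\tfrac12\dim U$, which confirm that the master identity $D_i-D_{s-i-1}=\dim L_i-\tfrac12\dim U$ holds for the full range $-1\le i\le s$ needed to extract every instance of \eqref{simmetrici}.
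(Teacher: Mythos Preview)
Your argument is correct, but it takes a somewhat different route from the paper's. Both proofs begin with the identification $L_i^{\perp}=L_{s-i-1}$. After that the paper passes to the induced perfect pairing $(L_i/L_{i-1})\times(L_{s-i}/L_{s-i-1})\to\CC$: since $A$ is isotropic, the images of $A\cap L_i$ and $A\cap L_{s-i}$ in these quotients pair to zero, giving the \emph{inequality} $d_i^{\lambda}(A)+d_{s-i}^{\lambda}(A)\le\dim U_{e_i}$; summing over $i$ and comparing with $\dim U=2\dim A$ then forces every inequality to be an equality. You instead invoke the Lagrangian dimension identity $\dim(A\cap L)-\dim(A\cap L^{\perp})=\dim L-\tfrac12\dim U$ to obtain equalities $D_i-D_{s-i-1}=\dim L_i-\tfrac12\dim U$ directly, and then telescope. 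Your approach is a touch more streamlined in that it never passes through an inequality; the paper's is slightly more transparent about \emph{why} the identity holds, via the orthogonality of the projected pieces under the quotient pairing.
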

\begin{proof}
We have $L_i^{\bot}=L_{s-i-1}$  where orthogonality is with respect to the symplectic form $\sigma$. Thus $\sigma$ induces a perfect pairing 
\begin{equation*}
(L_i/L_{i-1})\times(L_{s-i}/L_{s-i-1})\lra\CC. 
\end{equation*}
Intersecting $A$ with $L_i$ and with $L_{s-i}$ we get that
\begin{equation}\label{crisette}
d^{\lambda}_i(A)+d^{\lambda}_{s-i}(A)\le\dim U_{e_i}=\dim U_{e_{s-i}}
\end{equation}
because projection defines an isomorphism $U_{e_i}\cong L_i/L_{i-1}$ and $A$ is lagrangian. 
 On the other hand 
\begin{equation*}
\sum_{i=0}^s\dim U_{e_i}=\dim U=2\dim A=\sum_{i=0}^s(d^{\lambda}_i(A)+d^{\lambda}_{s-i}(A))\le 
\sum_{i=0}^s\dim U_{e_i}.
\end{equation*}
It follows that~\eqref{crisette} is an equality for $0\le i\le s$. 
\end{proof}
\begin{dfn}\label{dfn:lamtip}
Keep assumptions as in~\Ref{clm}{nickcarter}.
The {\it reduced $\lambda$-type of $A$} is 
\begin{equation*}
d^{\lambda}_{red}(A):=(d^{\lambda}_0(A),\ldots,d^{\lambda}_{[(s-1)/2]}(A)).
\end{equation*}
(In other words we truncate the $\lambda$-type of $A$ right before the middle.) 
\end{dfn}
By~\Ref{clm}{nickcarter} the reduced $\lambda$-type of $A$ determines the $\lambda$-type of $A$. 
\begin{clm}\label{clm:dipsey}
Let $U$ be a finite-dimensional complex vector-space and $\sigma\in\bigwedge^2 U^{\vee}$ a symplectic form. Let $\lambda\colon\CC^{\times}\to\Sp(U,\sigma)$ be a homomorphism.  
Let $A\in\LL\GG(U)$.
Then 
\begin{equation}\label{formutile}
\mu(A,\lambda)=
2\left(\sum_{0\le i<s/2}  e_i d^{\lambda}_i (A)- \sum_{i<s/2}\frac{e_i \dim U_{e_i} }{2}\right).
\end{equation}
\end{clm}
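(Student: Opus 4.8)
The plan is to derive~\eqref{formutile} by a direct manipulation of Equation~\eqref{caramellamu}, exploiting the two symmetries already at our disposal: the weight symmetry~\eqref{opposti} and the dimension symmetry~\eqref{simmetrici} established in~\Ref{clm}{nickcarter}. The whole statement is essentially a bookkeeping identity, so no deep input is needed beyond those two facts.

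First I would start from \[\mu(A,\lambda)=\sum_{i=0}^s e_i d^{\lambda}_i(A)\] and reorganize the sum by pairing the index $i$ with the index $s-i$. By~\eqref{opposti} we have $e_{s-i}=-e_i$, and by~\eqref{simmetrici} we have $d^{\lambda}_{s-i}(A)=\dim U_{e_i}-d^{\lambda}_i(A)$. Hence, for each pair with $i<s/2$, the combined contribution of the two indices is \[e_i d^{\lambda}_i(A)+e_{s-i}d^{\lambda}_{s-i}(A)=e_i\bigl(d^{\lambda}_i(A)-d^{\lambda}_{s-i}(A)\bigr)=2e_i d^{\lambda}_i(A)-e_i\dim U_{e_i}.\] Summing these paired contributions over $i<s/2$ and factoring out a $2$ already produces the right-hand side of~\eqref{formutile}; it remains only to check that nothing is lost in the reorganization.

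The one point requiring care is the middle of the range, i.e.~the parity of $s$. When $s$ is odd the indices $0,\ldots,s$ pair up perfectly into the two ranges $i<s/2$ and $s-i>s/2$, so every term of~\eqref{caramellamu} has been accounted for. When $s$ is even there is a single unpaired index $i=s/2$; but then~\eqref{opposti} forces $2e_{s/2}=0$, hence $e_{s/2}=0$, so the leftover term $e_{s/2}d^{\lambda}_{s/2}(A)$ vanishes and again contributes nothing. This vanishing of the central weight is exactly what makes the truncation \lq\lq right before the middle\rq\rq\ in~\Ref{dfn}{lamtip} lose no information, and it is the only subtlety in an otherwise routine computation.
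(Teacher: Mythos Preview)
Your proof is correct and follows essentially the same approach as the paper: both start from~\eqref{caramellamu}, pair the index $i$ with $s-i$ using~\eqref{opposti} and~\eqref{simmetrici}, and observe that any middle term vanishes since $e_{s/2}=0$. Your treatment of the parity issue is slightly more explicit, but the argument is the same.
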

\begin{proof}
By~\eqref{caramellamu},  \eqref{opposti} and~\eqref{simmetrici} we have
\begin{multline*}
\mu(A,\lambda)=\sum_{i=0}^s e_i d^{\lambda}_i(A)=
\sum_{0\le i<s/2} e_i d^{\lambda}_i(A)+\sum_{s/2<i\le s}^s e_i d^{\lambda}_i(A)= \\
=\sum_{0\le i<s/2} e_i d^{\lambda}_i(A)-\sum_{0\le i<s/2} e_i (\dim U_{e_i}-d^{\lambda}_i(A)).
\end{multline*}
The last term on the right is clearly equal to the right-hand side of~\eqref{formutile}.
\end{proof}
 \subsection{The Cone Decomposition Algorithm}\label{subsec:algocono}
 \setcounter{equation}{0}
We will study (semi)stability of points in $\Gr(n_0,U^0)\times\ldots\times\Gr(n_r,U^r)$ with respect to Embedding~\eqref{tantegrass}. Let $T<G$ be a maximal torus. Let ${\check X}(T)$ be the lattice of $1$-PS of $T$ (thus we include the trivial homomorphism) - the  structure of  free finitely generated group is given by pointwise multiplication in $T$. Let ${\check X}(T)_{\RR}:= {\check X}(T)\otimes_{\ZZ}\RR$. 
\begin{notaz}\label{notaz:conopos}
Let $C\subset {\check X}(T)_{\RR}$ be a Weyl chamber for the action of the Weyl group $N_G(T)/T$.
\end{notaz} 
Thus $C$ is a closed convex cone in ${\check X}(T)_{\RR}$. Let's be explicit in the case $G=\SL(V)$. Choose a basis $\sF=\{v_0,\ldots,v_5\}$  of $V$. We have an associated maximal torus and corresponding ${\check X}(T)$:
\begin{equation*}
T=\{\diag(t_0,\ldots,t_5) \mid t_0\cdots t_5=1\},\qquad 
{\check X}(T)=\{\lambda(t)=\diag(t^{r_0},\ldots,t^{r_5}) \mid r_0+\ldots+r_5=0\}.
\end{equation*}
The choice of $C$ corresponds to an ordering of the $r_i$'s. Our choice will be the standard one: 
\begin{equation}\label{conostan}
C=\{(r_0,\ldots,r_5)\in\RR^6   \mid r_0+\ldots+r_5=0,\quad r_0\ge r_1\ge\ldots\ge r_5\}.
\end{equation}
Next let $T\to \GL(U^p)$ be the composition of the inclusion $T< G$, Homomorphism~\eqref{gruppogi} and the projection $\GL(U^0)\times\ldots\times \GL(U^r)\to \GL(U^p)$.  The $T$-module $U^p$ decomposes as a  weight spaces
\begin{equation}\label{upidecomp}
U^p=\bigoplus_{\chi\in M^p} U^{\oplus a_{\chi}}_{\chi}
\end{equation}
where the action  on $U_{\chi}$ is given by $\chi$ and $M^p$ is a (finite) set of characters of $T$. For $\chi_1\not=\chi_2\in M^p$ let 
\begin{equation}\label{pianogei}
J_{\chi_1,\chi_2}:=\{\lambda\in {\check X}(T) \mid \chi_1\circ\lambda=\chi_2\circ\lambda\}.
\end{equation}
Then $J_{\chi_1,\chi_2}$ is a subgroup of ${\check X}(T)$ and $\rk J_{\chi_1,\chi_2}=(\rk {\check X}(T)-1)$. Thus
\begin{equation}\label{pianoacca}
H_{\chi_1,\chi_2}:=J_{\chi_1,\chi_2}\otimes\RR \subset {\check X}(T)_{\RR}
\end{equation}
is a codimension-$1$ vector subspace: we name it  an {\it ordering hyperplane} for  Homomorphism~\eqref{gruppogi}. Let $0\not=v\in {\check X}(T)_{\RR}$: then
\begin{equation}
[v[:=\{xv \mid x\ge 0\}
\end{equation}
is the {\it half-line} generated by $v$. 
\begin{dfn}
Let $C$ be as in~\Ref{notaz}{conopos}. A half-line $[v[ \subset C$ is an {\it ordering ray} for Homomorphism~\eqref{gruppogi} if the subspace $\la v\ra $ is the intersection of a collection of ordering hyperplanes for Homomorphism~\eqref{gruppogi}. (We let $0\le p\le r$ be arbitrary.) A $1$-PS $\lambda\colon\CC^{\times}\to T$ contained in $C$ is an ordering $1$-PS for  Homomorphism~\eqref{gruppogi} if it generates an ordering ray.
\end{dfn}
The Cone Decomposition Algorithm states that if certain (weak) conditions hold then a point of  $\Gr(U^0)\times\ldots\times \Gr(U^r)$ is non-stable (unstable) if and only if it is projectively equivalent to a point which is destabilized (desemistabilized) by an ordering $1$-PS.
Since the set of ordering rays is finite the algorithm allows us (in theory) to list all the non-stable (unstable) points.   
First we define a subdivision of $C$ into chambers as follows. An open {\it ordering-chamber} is a connected component of
\begin{equation*}
C\setminus\bigcup_{\chi_1\not=\chi_2\in M} H_{\chi_1,\chi_2}.
\end{equation*}
The closure (in $C$) of an open chamber is a {\it closed ordering-chamber}.
Let ${\bf m}=(m_0,\ldots,m_r)\in\NN_{+}^{r+1}$ correspond to  a choice of very ample line-bundle on $\Gr(n_0,U^0)\times\ldots\times\Gr(n_r,U^r) $ - see~\Ref{subsec}{bandiere}.
\begin{lmm}\label{lmm:numlin}
Let $(A_0,\ldots,A_r)\in \Gr(n_0,U^0)\times\ldots\times\Gr(n_r,U^r) $. Let $C_k\subset C$ be a closed ordering-chamber. There exists a {\bf linear} function $\varphi_k\colon {\check X}(T)_{\RR}\to \RR$ such that 
\begin{equation}
\mu^{\bf m}((A_0,\ldots,A_r),\lambda)=\varphi_k(\lambda)
\end{equation}
for all $\lambda\in C_k$. 
\end{lmm}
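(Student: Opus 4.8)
The plan is to reduce to a single Grassmannian factor and then exhibit $\mu(A_p,\cdot)$ as the restriction to $C_k$ of an honest linear form, obtained by freezing the combinatorial data on which $\mu$ depends. By the additivity formula~\eqref{pendemult} we have $\mu^{\bf m}((A_0,\ldots,A_r),\lambda)=\sum_{p=0}^r m_p\,\mu(A_p,\lambda)$, so it suffices to produce, for each $p$, a linear function $\varphi_k^{(p)}$ on $\check{X}(T)_\RR$ agreeing with $\mu(A_p,\cdot)$ on $C_k$; then $\varphi_k:=\sum_p m_p\varphi_k^{(p)}$ does the job. Fix $p$ and write $U=U^p$, $A=A_p$, and let $U=\bigoplus_{\chi\in M^p}U_\chi^{\oplus a_\chi}$ be the weight-space decomposition~\eqref{upidecomp}. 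The key observation is that for each character $\chi$ the $\lambda$-weight is the value $\langle\chi,\lambda\rangle$, which is linear in $\lambda\in\check{X}(T)_\RR$, and that two such weights cross precisely along the ordering hyperplane $H_{\chi_1,\chi_2}$ of~\eqref{pianoacca}.

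First I would fix the combinatorics on the open chamber $C_k^\circ$. Since $C_k^\circ$ is a connected component of $C$ with all the ordering hyperplanes removed, it lies in no $H_{\chi_1,\chi_2}$; hence for every pair of distinct characters the sign of $\langle\chi_1-\chi_2,\lambda\rangle$ is constant on $C_k^\circ$. This gives a fixed total ordering of the characters of $M^p$ by decreasing weight, say $\chi_{(0)},\chi_{(1)},\ldots$, and a corresponding fixed flag $F_j:=U_{\chi_{(0)}}\oplus\cdots\oplus U_{\chi_{(j-1)}}$ independent of $\lambda$. Since on $C_k^\circ$ the weights are pairwise distinct, each isotypical summand entering~\eqref{caramellamu} is a single weight space $U_{\chi_{(j)}}$, so with $c_j:=\dim(A\cap F_{j+1}/A\cap F_j)$ — an integer depending only on $A$ and the fixed flag, not on $\lambda$ — one gets $\mu(A,\lambda)=\sum_j\langle\chi_{(j)},\lambda\rangle\,c_j=\langle\sum_j c_j\chi_{(j)},\lambda\rangle$ for $\lambda\in C_k^\circ$. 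The right-hand side is the restriction of a genuine linear form $\varphi_k^{(p)}$ on $\check{X}(T)_\RR$.

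The hard part is extending this identity from the open chamber $C_k^\circ$ to its closure $C_k$, where distinct characters may acquire equal weights and the isotypical summands of~\eqref{upidecomp}, grouped via the subgroups $J_{\chi_1,\chi_2}$ of~\eqref{pianogei}, merge. I would argue that passing to the closure preserves the ordering weakly: if $\langle\chi_{(i)},\lambda\rangle>\langle\chi_{(j)},\lambda\rangle$ for $i<j$ on $C_k^\circ$, then $\ge$ holds on $C_k$ by continuity, so the fixed flag $F_\bullet$ still refines the $\lambda$-isotypical decomposition for every $\lambda\in C_k$. For a boundary point $\lambda_0$, the actual isotypical blocks are consecutive runs of the $\chi_{(j)}$ sharing a common weight $e_i(\lambda_0)=\langle\chi_{(j)},\lambda_0\rangle$; since the dimension increments are additive along the refinement $F_\bullet$, the $i$-th block of~\eqref{caramellamu} contributes $e_i(\lambda_0)\sum_{j\in\text{block }i}c_j=\sum_{j\in\text{block }i}\langle\chi_{(j)},\lambda_0\rangle\,c_j$. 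Summing over blocks recovers exactly $\langle\sum_j c_j\chi_{(j)},\lambda_0\rangle=\varphi_k^{(p)}(\lambda_0)$. Thus $\mu(A_p,\cdot)=\varphi_k^{(p)}$ on all of $C_k$, and summing with the weights $m_p$ yields the desired linear $\varphi_k$.
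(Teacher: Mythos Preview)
Your proof is correct and follows essentially the same approach as the paper: fix a total order on the characters compatible with the chamber, define the dimension increments $c_j$ (the paper's $d'_j$) with respect to the resulting $\lambda$-independent flag, check the formula on the open chamber via~\eqref{caramellamu}, and then extend to the closure. Your explicit treatment of the boundary case, grouping consecutive characters with equal weight into blocks and using additivity of the increments, is precisely the verification that the paper leaves as ``one easily checks.''
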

\begin{proof}
Let $0\le p\le r$. We may give an ordering  $M^p=\{\chi_1,\ldots,\chi_u\}$ such that the following holds. For $1\le j\le u$ let $\chi_j\circ \lambda(t)=t^{e_j(\lambda)}$. Then
\begin{equation}\label{ordcost}
\text{if $\lambda\in C_k$ and  $i>j$ then $e_i(\lambda)\ge e_j(\lambda)$.}
\end{equation}
 In fact the ordering-chambers have been defined so that~\eqref{ordcost} holds. Let $\lambda\in C_k$: then $U^p$ is a $\CC^{\times}$ module via the homomorphism $\lambda\colon\CC^{\times}\to T$. We have the decomposition into sub-representations of $\CC^{\times}$:
 \begin{equation*}
U^p=U_{\chi_1}^{\oplus a_{\chi_1}}\oplus \ldots\oplus U_{\chi_u}^{\oplus a_{\chi_u}}
\end{equation*}
where $U_{\chi_j}$ corresponds to the character $t^{e_j(\lambda)}$. For $1\le j\le u$ let
\begin{equation*}
L'_j:=U_{\chi_1}\oplus\ldots\oplus U_{\chi_j}.
\end{equation*}
Let $d'_j:=\dim(A\cap L'_j/A\cap L'_{j-1})$. We claim that
\begin{equation}\label{lineare}
\mu(A_p,\lambda)=\sum_{j=1}^u d'_j e_j(\lambda).
\end{equation}
In fact if $\lambda$ is in the open ordering chamber whose closure is $C_k$ then $d'_j=d^{\lambda}(A_p)$ and hence~\eqref{lineare} holds by~\eqref{caramellamu}. One easily checks that~\eqref{lineare} holds as well for $\lambda$ in the boundary of $C_k$. The function from the set of $1$-PS's in $C_k$ to $\ZZ$ which assigns $e_j(\lambda)$  to  $\lambda$ is the restriction of a linear function on ${\check X}(T)_{\RR}$. Thus the lemma follows from Equation~\eqref{pendemult}.
\end{proof}
Before proving the key result we introduce some notation. Suppose first that $G=T_0\times G_1$ where $T_0$ is a torus and $G_1$ is a semisimple group. Then $T=T_0\times T_1$ where $T_1$ is a maximal torus of $G_1$. Thus we may define
\begin{equation}\label{daltoro}
P=\{ H_{\chi_1,\chi_2} \mid \chi_1,\chi_2\in \wh{T}_0 \}.
\end{equation}
In general $G$ is isogenous to a product of a torus $T_0$ and a  semisimple group and the same definition makes sense.
\begin{prp}\label{prp:algcon}
Keep notation and assumptions as above, in particular choose a maximal torus $T<G$ and a cone $C$ as in~\Ref{notaz}{conopos}.   Suppose that the following hold:
\begin{enumerate}
\item[(1)]
Each face of $C$ spans an  ordering-hyperplane. 
\item[(2)]
Let $P$ be as in~\eqref{daltoro}: then the intersection $\cap_{H\in P} H$ is equal to $Z\times N(T_1)$ where $\dim Z\le 1$.   
\end{enumerate}
Let $(A_0,\ldots,A_r)\in \Gr(n_0,U^0)\times\ldots\times\Gr(n_r,U^r) $. Then $(A_0,\ldots,A_r)$ is non-stable (unstable) if and only if its $G$-orbit contains  $(A'_0,\ldots,A'_r)$ which is destabilized (desemistabilized) by an ordering $1$-PS of $G$.
\end{prp}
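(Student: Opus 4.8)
The plan is to deduce the statement from the Hilbert--Mumford criterion~\Ref{thm}{hilmum}, the chamber-wise linearity of $\mu^{\bf m}$ established in~\Ref{lmm}{numlin}, and a convexity argument that pushes the relevant sign of $\mu^{\bf m}$ onto an extreme ray of a chamber, which one then recognizes as an ordering ray. The converse implication is immediate and I would dispose of it first: if the $G$-orbit of $(A_0,\ldots,A_r)$ contains a point destabilized (desemistabilized) by a $1$-PS $\lambda$, then since $\mu^{\bf m}$ is constant along orbits up to conjugation of $\lambda$, namely $\mu^{\bf m}(g(A_0,\ldots,A_r),\lambda)=\mu^{\bf m}((A_0,\ldots,A_r),g^{-1}\lambda g)$, the criterion~\Ref{thm}{hilmum} gives directly that $(A_0,\ldots,A_r)$ is non-stable (unstable).

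For the forward implication I would first move a destabilizing $1$-PS into the fixed chamber $C$. By~\Ref{thm}{hilmum}, non-stability (instability) means there is a $1$-PS $\lambda$ of $G$ with $\mu^{\bf m}((A_0,\ldots,A_r),\lambda)\ge 0$ (resp.\ $>0$). Every $1$-PS of $G$ lands in a maximal torus, all maximal tori are $G$-conjugate, and every $1$-PS of $T$ is carried into the closed chamber $C$ by the Weyl group $N_G(T)/T$, which is realized inside $G$. Using the covariance above I may therefore replace $(A_0,\ldots,A_r)$ by a representative $(A'_0,\ldots,A'_r)$ of its orbit and assume the destabilizing $\lambda$ lies in $C$, hence in some closed ordering-chamber $C_k$.

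Next I would propagate the sign to an extreme ray. By~\Ref{lmm}{numlin} the restriction of $\mu^{\bf m}((A'_0,\ldots,A'_r),-)$ to $C_k$ coincides with a linear function $\varphi_k$, so $\varphi_k(\lambda)\ge 0$ (resp.\ $>0$). The cone $C_k$ is cut out by ordering hyperplanes and by the faces of $C$; by hypothesis~(2) the only direction it may fail to cut is the at-most-one-dimensional central subspace $Z$, on which the condition that $\rho$ lands in $\SL$ forces a central $1$-PS to act by a scalar, hence trivially, so $\varphi_k$ vanishes on $Z$. Thus $C_k$ is pointed modulo $Z$, i.e.\ it is the non-negative span of $Z$ together with finitely many rational extreme rays with primitive generators $w_1,\ldots,w_\ell$. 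Writing $\lambda=z+\sum_i c_i w_i$ with $z\in Z$ and $c_i\ge 0$, and using $\varphi_k(z)=0$, I get $\varphi_k(\lambda)=\sum_i c_i\varphi_k(w_i)$, so some generator satisfies $\varphi_k(w_i)\ge 0$ (resp.\ $>0$); rationality makes $w_i$ an honest $1$-PS, and $\mu^{\bf m}((A'_0,\ldots,A'_r),w_i)\ge 0$ (resp.\ $>0$) by the linearity identity.

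It remains to identify each such extreme ray as an \emph{ordering} ray, and this is where hypotheses~(1) and~(2) are essential and where I expect the genuine difficulty to lie. Every facet of $C_k$ lies on an ordering hyperplane: facets coming from the walls $H_{\chi_1,\chi_2}$ are ordering hyperplanes by definition, while facets lying on $\partial C$ lie on faces of $C$, which span ordering hyperplanes by~(1). Hence $\langle w_i\rangle$ is contained in the intersection of the ordering hyperplanes through it, and the point to verify is that enough of them can be selected so that this intersection is exactly the line $\langle w_i\rangle$. Decomposing $\check{X}(T)_{\RR}=\check{X}(T_0)_{\RR}\oplus\check{X}(T_1)_{\RR}$ into the central-torus and semisimple directions, the hyperplanes in $P$ collapse the $T_0$-direction to $Z$ with $\dim Z\le 1$ by~(2), while the hyperplanes spanned by the faces of $C$ pin down the semisimple $T_1$-direction; together they force the relevant intersection to be at most one-dimensional, hence equal to $\langle w_i\rangle$. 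The main obstacle is precisely this dimension count, including the delicate bookkeeping of the central direction $Z$ on which $\mu^{\bf m}$ vanishes identically: one must check that the ordering hyperplanes meeting a given minimal face of a chamber are numerous and independent enough to isolate it as a single ordering ray, which is exactly what~(1) and~(2) are designed to guarantee. The reduction from the split case $G=T_0\times G_1$ to the general isogenous case is then routine, since an isogeny induces an isomorphism on $\check{X}(T)_{\RR}$ and carries ordering hyperplanes to ordering hyperplanes.
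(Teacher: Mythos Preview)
Your proposal is correct and follows the same route as the paper: Hilbert--Mumford, conjugation into the chosen chamber $C$, chamber-wise linearity via~\Ref{lmm}{numlin}, and a convexity argument pushing the sign of $\varphi_k$ onto an extreme ray, which is then identified as an ordering ray. The paper organizes the convexity step slightly differently: it writes the closed ordering-chamber as $C_k=L\times K$ with $L$ the lineality space and $K$ pointed, observes (using~(1)) that the extremal rays of $K$ are ordering rays, uses~(2) to bound $\dim L\le 1$, notes that a generator $\lambda_0$ of $L$ is itself an ordering $1$-PS, and then writes $\lambda'=x(\pm\lambda_0)+\sum z_i\lambda_i$ so that one of $\pm\lambda_0,\lambda_1,\ldots,\lambda_c$ does the job.

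The one genuine variation is your handling of the lineality direction: rather than carrying $\pm\lambda_0$ along as an ordering $1$-PS, you drop it by arguing $\varphi_k$ vanishes there. Your conclusion is right but the phrase ``a central $1$-PS acts by a scalar, hence trivially'' is not quite the reason. The correct mechanism is that any $\lambda_0$ in the lineality of $C_k$ satisfies $\chi_1\circ\lambda_0=\chi_2\circ\lambda_0$ for every pair of weights $\chi_1,\chi_2\in M^p$ (since both $\lambda_0$ and $-\lambda_0$ lie on the same side of every ordering hyperplane), so $\lambda_0$ acts on each $U^p$ by a single scalar $t^{a_p}$; then $\mu^{\bf m}((A'),\lambda_0)=\sum_p m_p n_p a_p$, and this sum vanishes precisely because $\rho$ lands in $\SL$. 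With this fix your alternative is a clean substitute for the paper's device. Your final paragraph on why extreme rays are ordering rays is the paper's one-line assertion ``by Item~(1) each extremal ray is spanned by an ordering $1$-PS'' unpacked: every facet of $C_k$ lies on an ordering hyperplane (interior walls by construction, boundary walls of $C$ by hypothesis~(1)), so each extremal ray of the pointed part is cut out by ordering hyperplanes down to a line.
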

\begin{proof}
Suppose that $(A_0,\ldots,A_r)$ is non-stable (unstable): we must prove that its orbit contains an element  which is destabilized (desemistabilized) by an ordering $1$ PS. By the Hilbert-Mumford criterion there exists a $1$-PS $\lambda$ of $G$ such that 
\begin{equation}\label{melania}
\mu^{\bf m}((A_0,\ldots,A_r),\lambda_0)\ge 0\quad (\mu^{\bf m}((A_0,\ldots,A_r),\lambda_0)> 0).
\end{equation}
Since $T$ is a maximal torus there exists $g_1\in G$ such that $g_1\circ\lambda\circ g_1^{-1}\colon\CC^{\times}\to T$. By our choice of cone $C$ (see~\Ref{notaz}{conopos}) there exists $g_2\in G$ such that $\lambda':=g_2\circ g_1\circ\lambda\circ g_1^{-1}\circ g_2^{-1}\in C$. Let ${\bf a}:= g_2\circ g_1(A_0,\ldots,A_r)$: by~\eqref{melania} we have $\mu^{\bf m}({\bf a},\lambda')\ge 0$ (respectively $\mu^{\bf m}({\bf a},\lambda')> 0$). Let's prove that there exists an ordering $1$-PS $\ov{\lambda}$ such that $\mu^{\bf m}({\bf a},\ov{\lambda})\ge 0$ (respectively $\mu^{\bf m}({\bf a},\ov{\lambda})> 0$). There exists a closed ordering cone $C_k$ such that $\lambda'\in C_k$. Since $C_k$ is a closed convex cone (with vertex $0$) we may write $C_k=L\times K$ where $L\subset {\check X}(T)_{\RR}$ is a vector subspace and $K$ is a pointed cone with vertex $0$ (i.e.~it contains no lines). Thus $K$ is the convex envelope of its extremal rays (see for example Prop.~1.35 of~\cite{decoproc}); by Item~(1) each extremal ray is spanned by an ordering $1$-PS and hence $K$ is the convex envelope of $[\lambda_1[,\ldots, [\lambda_c[$ where  $\lambda_1,\ldots, \lambda_c$ are ordering $1$-PS's. 
On the other hand all vector-subspaces of $C$ are contained in $\gt_0$; thus $L\subset\gt_0$. It follows that $\dim L\le 1$. In fact suppose that $\dim L\ge 2$. By Item~(2) there exists  $f\in {\check X}(T)_{\RR}^{\vee}$ such that $\ker f$ is an ordering hyperplane and $f$ takes strictly positive and strictly negative valuse on $L$; that implies that $C_k$ is not an ordering cone, contradiction. We have proved that  $\dim L\le 1$. Thus  $L=\{0\}$ or $L=\la \lambda_0\ra$ where $\lambda_0$ is an ordering $1$-PS. Since $\lambda'\in C_k$ we have
\begin{equation}\label{baricentrico}
0\not=\lambda'=x (\pm\lambda_0)+\sum_{i=1}^c z_i \lambda_i,\qquad x\ge 0,\ z_i\ge 0.
\end{equation}
Now let $\varphi_k\in {\check X}(T)_{\RR}^{\vee}$  be the linear function associated to ${\bf a}$ as in~\Ref{lmm}{numlin}. By hypothesis $\varphi_k(\lambda')\ge 0$ (respectively $\varphi_k(\lambda')> 0$) and hence~\eqref{baricentrico} gives that there exists one of $\pm\lambda_0,\lambda_1,\ldots,\lambda_c$, say $\ov{\lambda}$ such that $\varphi_k(\ov{\lambda})\ge 0$ (respectively $\varphi_k(\ov{\lambda})> 0$). Then $\ov{\lambda}$ is an ordering ray and $\mu^{\bf m}({\bf a},\ov{\lambda})\ge 0$ (respectively $\mu^{\bf m}({\bf a},\ov{\lambda})> 0$) by~\Ref{lmm}{numlin}.
\end{proof}
\subsection{The standard non-stable strata}\label{subsec:nonstabili}
\setcounter{equation}{0}
We will define  the standard non-stable  strata (and the standard unstable strata). In~\Ref{subsec}{luogostabile} we will prove that $A\in\lagr$ is stable if and only if it does not belong to one of the standard non-stable  strata.  Some of the standard non-stable  and unstable strata have appeared in~\cite{ogtasso} as loci of lagrangians containing a strictly positive-dimensional set of decomposable elements - we will make the connection in~\Ref{subsubsec}{infiniti}. In~\Ref{sec}{lageometria}  we will give geometric meaning to all of the standard  non-stable   strata.
\subsubsection{The definitions}\label{subsubsec:esempi}
Let $\lambda$ be a $1$-PS of $\SL(V)$ and 
\begin{equation}\label{basedivu}
\sF:=\{v_0,\ldots,v_5\}
\end{equation}
be a basis of $V$ which diagonalizes $\lambda$. Thus
\begin{equation}\label{esplam}
\lambda(t) v_i=t^{r_i} v_i\qquad 0\le i\le 5\qquad \sum_{i=0}^5 r_i=0.
\end{equation}
Let 
\begin{equation}\label{isodec}
\bigwedge^3 V=U_{e_0}\oplus\ldots\oplus U_{e_s},\quad 
\bigwedge^3\lambda(t)|_{U_{e_i}}=t^{e_i}\Id_{U_{e_i}}
\end{equation}
be the decomposition of $\bigwedge^3\lambda$ into isotypical summands. Notation is as in~\eqref{decompu} but notice the potential for confusion between $\lambda$ and $\bigwedge^3\lambda$. In particular the weights are in decreasing order - see~\eqref{decrescenti}.
Let
 \begin{equation*}
\cP_{\lambda}:=\{(d_0,\ldots,d_{[(s-1)/2]})\mid  d_i\in\NN,\quad d_i\le \dim U_{e_i}\}.
\end{equation*}
The reduced $\lambda$-type of $A\in\lagr$ belongs to $\cP_{\lambda}$;  viceversa every $[(s+1)/2]$-tuple in $\cP_{\lambda}$ is the reduced $\lambda$-type of some $A$.
Let ${\bf d}=(d_0,\ldots,d_{[(s-1)/2]})\in\PP_{\lambda}$;  we let
\begin{equation}\label{simbolico}
\mu({\bf d},\lambda):=2\left(\sum_{0\le i<s/2}  e_i d_i - \sum_{i<s/2}\frac{e_i \dim U_{e_i} }{2}\right).
\end{equation}
The above definition is motivated by~\eqref{formutile}. 
\begin{dfn}\label{dfn:ordpar}
Let $\succeq$ be  the partial ordering on $\cP_{\lambda}$  defined by ${\bf a}\succeq{\bf b}$ if 
\begin{equation*}
(a_0+a_1+\ldots+a_i)\geq (b_0+b_1+\ldots+b_i),\qquad 0\le i<s/2.
\end{equation*}
\end{dfn}
 \begin{clm}\label{clm:domina}
Keep notation as above.  Let  ${\bf a},{\bf b}\in\cP_{\lambda}$. If ${\bf a}\succeq{\bf b}$ then $\mu({\bf a},\lambda)\succeq\mu({\bf b},\lambda)$ and equality holds if only if ${\bf a}={\bf b}$. 
\end{clm}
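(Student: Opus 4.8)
The plan is to reduce the whole statement to a single summation-by-parts estimate. First I would note that in the defining formula~\eqref{simbolico} the term $\sum_{i<s/2} e_i \dim U_{e_i}/2$ does not depend on the type, so it cancels when we form the difference of $\mu({\bf a},\lambda)$ and $\mu({\bf b},\lambda)$. Writing $k:=[(s-1)/2]$ (the largest index with $i<s/2$) and $c_i:=a_i-b_i$, the assertion becomes purely combinatorial: if all partial sums $C_i:=c_0+\cdots+c_i$ are nonnegative for $0\le i\le k$ — which is exactly the hypothesis ${\bf a}\succeq{\bf b}$ of~\Ref{dfn}{ordpar} — then $\sum_{i=0}^k e_i c_i\ge 0$, with equality precisely when every $c_i$ vanishes.

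Next I would apply Abel summation to $\sum_{i=0}^k e_i c_i$. Using $c_i=C_i-C_{i-1}$ with the convention $C_{-1}=0$, one rewrites the sum as $e_k C_k+\sum_{i=0}^{k-1}(e_i-e_{i+1})C_i$. This rearrangement is the heart of the argument: it trades the bare weights $e_i$ for the consecutive differences $e_i-e_{i+1}$ plus a boundary term $e_k C_k$, which are exactly the quantities whose signs we control.

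The sign analysis is then immediate. By~\eqref{decrescenti} the weights strictly decrease, so $e_i-e_{i+1}>0$ for every $i$; and since $k<s/2$, the symplectic symmetry~\eqref{opposti}, namely $e_i+e_{s-i}=0$, forces $e_i>0$ for all $i<s/2$ and in particular $e_k>0$. As the hypothesis supplies $C_i\ge 0$ for $0\le i\le k$, every summand in $e_k C_k+\sum_{i=0}^{k-1}(e_i-e_{i+1})C_i$ is nonnegative, which yields $\mu({\bf a},\lambda)\ge\mu({\bf b},\lambda)$.

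Finally, for the equality clause I would use that all the coefficients $e_k$ and $e_i-e_{i+1}$ are \emph{strictly} positive. Hence the sum vanishes if and only if $C_k=0$ and $C_i=0$ for all $0\le i\le k-1$, i.e.\ all partial sums $C_i$ vanish; then $c_i=C_i-C_{i-1}=0$ for each $i$, so ${\bf a}={\bf b}$. There is no genuine obstacle in this argument — the only point demanding care is checking that the truncation index $k$ stays strictly below the midpoint $s/2$, so that $e_k>0$; this is what simultaneously guarantees the inequality and the strictness needed for the equality statement.
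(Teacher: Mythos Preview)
Your proof is correct and follows essentially the same approach as the paper: both reduce to showing $\sum_{i\le k} e_i(a_i-b_i)\ge 0$, both apply Abel summation to rewrite this as $e_k C_k+\sum_{i<k}(e_i-e_{i+1})C_i$ (the paper writes $x_i$ for your $C_i$), and both conclude using $e_i-e_{i+1}>0$ and $e_k>0$. Your explicit invocation of~\eqref{opposti} to justify $e_k>0$ is a nice touch that the paper leaves implicit.
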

\begin{proof}
By~\eqref{formutile} we need to show that
\begin{equation*}
\sum_{0\le i<s/2}  e_i (a_i-b_i)\ge 0
\end{equation*}
and that equality holds if and only if ${\bf a}={\bf b}$. 
Let $x_i:=(a_0-b_0)+\ldots+(a_i-b_i)$. Since  ${\bf a}\succeq{\bf b}$  we have $x_i\ge 0$ for $0\le i<s/2$, moreover $x_i=0$ for all $0\le i<s/2$ if and only if ${\bf a}={\bf b}$. A straightforward computation gives that
\begin{equation*}
\sum_{0\le i<s/2}  e_i (a_i-b_i)=\left(\sum_{0\le i\le [(s-3)/2]}  (e_i-e_{i+1})x_i \right)+
e_{[(s-1)/2]}x_{[(s-1)/2]}.
\end{equation*}
The claim follows because  $e_0>e_1>\ldots>e_{[(s-1)/2]}>0$.
\end{proof}
Let ${\bf r}=(r_0,\ldots,r_5)$ be the sequence (counted with multiplicities) of weights of $\lambda$. Given 
 ${\bf d}\in\cP_{\lambda}$ we let  
\begin{equation}\label{ostrato}
\EE_{{\bf r},{\bf d}}^{\sF}:=\{A\in\lagr\mid  d^{\lambda}_{red}(A)\succeq{\bf d}\}.
\end{equation}
\begin{clm}\label{clm:cucs}
The Schubert variety $\EE_{{\bf r},{\bf d}}^{\sF}$ is closed and irreducible. If in addition  $\mu({\bf d},\lambda)\ge 0$ ($\mu({\bf d},\lambda)>0$) then $\EE_{{\bf r},{\bf d}}^{\sF}$ is contained in the  non-stable locus (respectively the  unstable locus)  of $\lagr$.
\end{clm}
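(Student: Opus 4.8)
The claim has two parts, and I'd prove them in order.

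\textbf{Part 1: $\EE_{{\bf r},{\bf d}}^{\sF}$ is closed and irreducible.} The plan is to realize $\EE_{{\bf r},{\bf d}}^{\sF}$ as the intersection of $\lagr$ with a Schubert variety in the ambient Grassmannian $\Gr(10,\bigwedge^3 V)$. First I would observe that, by~\Ref{dfn}{ordpar}, the condition $d^{\lambda}_{red}(A)\succeq{\bf d}$ says $\sum_{i=0}^j d_i^{\lambda}(A)\ge\sum_{i=0}^j d_i$ for each $0\le j<s/2$. Since $\sum_{i=0}^j d_i^{\lambda}(A)=\dim(A\cap L_j)$ by the telescoping definition~\eqref{incrementi}, each inequality is precisely a Schubert (rank) condition $\dim(A\cap L_j)\ge c_j$ with respect to the fixed flag $L_0\subset L_1\subset\cdots$ determined by $\bigwedge^3\lambda$. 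The locus in $\Gr(10,\bigwedge^3 V)$ cut out by finitely many such conditions $\dim(A\cap L_j)\ge c_j$ is a classical Schubert variety, which is closed and irreducible. Intersecting with the closed irreducible $\lagr$ need not stay irreducible in general, so the genuine content here is that $\EE_{{\bf r},{\bf d}}^{\sF}$ remains irreducible; I expect this to follow from the remark right after~\Ref{dfn}{ordpar} that every reduced type in $\cP_\lambda$ is realized, together with a standard fibration argument showing the Lagrangian Schubert cells (indexed by reduced type, via~\Ref{clm}{nickcarter}) are irreducible and that the $\succeq$-closure is their union with a unique dense stratum. Closedness is immediate since $\succeq$ is a closed condition.

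\textbf{Part 2: the (semi)stability containment.} This part is a direct application of the monotonicity established in~\Ref{clm}{domina} together with the Hilbert-Mumford criterion~\Ref{thm}{hilmum}. Suppose $A\in\EE_{{\bf r},{\bf d}}^{\sF}$, so $d^{\lambda}_{red}(A)\succeq{\bf d}$. By~\Ref{clm}{domina} this gives $\mu(d^{\lambda}_{red}(A),\lambda)\ge\mu({\bf d},\lambda)$. But by~\eqref{formutile} (equivalently the definition~\eqref{simbolico} applied to the actual reduced type) we have $\mu(A,\lambda)=\mu(d^{\lambda}_{red}(A),\lambda)$. Hence
\begin{equation*}
\mu(A,\lambda)=\mu(d^{\lambda}_{red}(A),\lambda)\ge\mu({\bf d},\lambda).
\end{equation*}
If $\mu({\bf d},\lambda)\ge 0$ then $\mu(A,\lambda)\ge 0$, so $A$ violates the stability inequality $\mu(A,\lambda)<0$ of~\Ref{thm}{hilmum}(1) for this single $1$-PS $\lambda$, whence $A$ is non-stable. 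If instead $\mu({\bf d},\lambda)>0$ then $\mu(A,\lambda)>0$, which by~\Ref{thm}{hilmum}(3) exhibits $\lambda$ as a destabilizing $1$-PS and shows $A$ is unstable.

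\textbf{Main obstacle.} The (semi)stability half is essentially bookkeeping once~\Ref{clm}{domina} is in hand, so the real work is the irreducibility assertion in Part 1. The subtlety is that a Schubert-type rank stratification of the full Grassmannian can meet the Lagrangian subvariety $\lagr$ in several components a priori; I would need to exploit the symplectic structure — specifically the relation~\eqref{simmetrici} that makes the reduced type determine the full type — to index the strata of $\EE_{{\bf r},{\bf d}}^{\sF}$ by reduced $\lambda$-types alone, and then argue that the parabolic (or the Levi of the $1$-PS) acts with a dense orbit on the open stratum, forcing irreducibility. I would expect to invoke the standard theory of Schubert varieties in Lagrangian (symplectic) Grassmannians rather than redo it from scratch.
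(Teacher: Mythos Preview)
Your proposal is correct and follows essentially the same route as the paper. For Part~2 your argument via~\Ref{clm}{domina} and~\Ref{thm}{hilmum} is exactly what the paper does (it cites~\Ref{clm}{dipsey} and~\Ref{clm}{domina}); for Part~1 the paper likewise argues closedness by upper-semicontinuity and irreducibility by observing that the open stratum $\{A\in\lagr\mid d^{\lambda}(A)={\bf d}\}$ is irreducible and dense in $\EE_{{\bf r},{\bf d}}^{\sF}$, which is precisely the ``unique dense stratum'' you describe --- the paper simply writes ``one checks easily'' where you propose invoking the standard theory of isotropic Schubert cells.
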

\begin{proof}
$\EE_{{\bf r},{\bf d}}^{\sF}$ is closed by uppersemiconinuity of the dimension of the intersection of subspaces.
One checks easily that the locus of $A\in\lagr$ such that $ {\bf d}^{\lambda}(A)={\bf d}$ is open dense in $\EE_{{\bf r},{\bf d}}^{\sF}$ and irreducible; it follows that $\EE_{{\bf r},{\bf d}}^{\sF}$ is  irreducible. The statement about non-stability (respectively instability) follows at once from~\Ref{clm}{dipsey} and~\Ref{clm}{domina}.
\end{proof}
Let
\begin{equation}\label{grandechiuso}
\EE_{{\bf r},{\bf d}}^{*}:=\bigcup_{\sF}\EE_{{\bf r},{\bf d}}^{\sF},\qquad 
\EE_{{\bf r},{\bf d}}:=\overline{\EE_{{\bf r},{\bf d}}^{*}}
\end{equation}
where $\sF$ runs through the set of bases of $V$; thus $\EE_{{\bf r},{\bf d}}^{*}$ is locally closed and $\EE_{{\bf r},{\bf d}}$ is (tautologically) closed. 
If  $\mu({\bf d},\lambda)=0$ then  $\EE_{{\bf r},{\bf d}}^{*}$ and $\EE_{{\bf r},{\bf d}}$ are contained in the  non-stable locus by~\Ref{clm}{cucs}. Similarly if $\mu({\bf d},\lambda)>0$ then  both $\EE_{{\bf r},{\bf d}}^{*}$ and $\EE_{{\bf r},{\bf d}}$ are  contained in the  unstable locus of $\lagr$. We will define  non-stable  (unstable) strata by choosing certain  ${\bf r}$ and $\bf d$ such that $\mu({\bf d},\lambda)=0$ ($\mu({\bf d},\lambda)>0$).  
Table~\eqref{stratflaguno} defines the   {\it standard} non-stable strata by defining the corresponding $\EE_{{\bf r},{\bf d}}^{\sF}$ where $\sF$ is the basis~\eqref{basedivu}.  We explain the  notation of that table. We let $(5,-1_5)$ stand for  $(5,-1,-1,-1,-1,-1)$ and similarly for the other rows in the first column. To a given row we associate  the $1$-PS  $\lambda$  given by~\eqref{esplam} where ${\bf r}=(r_0,\ldots,r_5)$ is the entry in the first column. 
The second column
contains $\mu({\bf d},\lambda)$.
 The third column gives a ${\bf d}\in\cP_{\lambda}$ such that  $\mu({\bf d},\lambda)=0$. 
The fourth column gives a flag condition on $A\in\lagr$ which is equivalent to $A\in \EE_{{\bf r},{\bf d}}^{\sF}$ - for ${\bf r}$ and ${\bf d}$  in the same row. In that  column we adopt the notation
\begin{equation}\label{vuconij}
V_{ij}:=\la v_i,v_{i+1},\ldots,v_j\ra,\qquad 0\le i<j\le 5.
\end{equation}
An entry in the last column is the name that we have chosen for  $\EE_{{\bf r},{\bf d}}^{\sF}$ with ${\bf r}$ and ${\bf d}$ in the same row.  We let 
\begin{equation}\label{trotzky}
\BB_{\cA}^{*}:=\bigcup_{\sF}\BB_{\cA}^{\sF},\ 
\BB_{\cA}:=\ov{\BB}_{\cA}^{*},\ \ldots\ ,\BB_{\cF_2}^{*}:=\bigcup_{\sF}\BB_{\cF_2}^{\sF},\ \BB_{\cF_2}:=\ov{\BB}_{\cF_2}^{*},\quad 
\XX_{\cN_3}^{*}:=  \bigcup_{\sF}\XX_{\cN_3}^{\sF},\ \XX_{\cN_3}:=\ov{\XX}_{\cN_3}^{*}. 
\end{equation}
Table~\eqref{stratflagdue} defines the {\it standard} unstable strata; notation is as in Table~\eqref{stratflaguno} except that we have  $\XX$'s everywhere - the rationale for the distinction between $\BB$'s and $\XX$'s will be explained in~\Ref{sec}{lageometria}.  
\begin{rmk}\label{inclustrati}
Let $\cX\in\{\cA,\cA^{\vee},\ldots,\cF_1\}$ be one of the indices of the standard non-stable strata with the exception of $\cN_3$;  by definition we have $\XX_{\cX,+}\subset\BB_{\cX}$. Similarly $\XX_{\cN_{3,+}}\subset \XX_{\cN_{3}}$. 
\end{rmk}
\vskip 2mm
\n
{\bf Duality.}
Given a $1$-PS $\lambda$ let $\lambda^{-1}$ be the inverse $1$-PS i.e.~$\lambda^{-1}(t)=\lambda(t^{-1})$. The set of weights of $\bigwedge^3\lambda$ and of $\bigwedge^3\lambda^{-1}$ are the same and moreover $\dim U_{e}(\bigwedge^3\lambda)=\dim U_{e}(\bigwedge^3\lambda^{-1})$ for each weight $e$. Thus $\cP_{\lambda}=\cP_{\lambda^{-1}}$ and
\begin{equation*}
\mu({\bf d},\lambda)=\mu({\bf d},\lambda^{-1}),\qquad 
{\bf d}\in \cP_{\lambda}=\cP_{\lambda^{-1}}.
\end{equation*}
 This implies that the non-stable (or unstable) strata $\EE^{*}_{{\bf r},{\bf d}}$ come in couples, namely $\EE^{*}_{{\bf r},{\bf d}}$ and $\EE^{*}_{-{\bf r},{\bf d}}$. Notice that if  a non-stable (or unstable)  stratum $\EE^{*}_{{\bf r},{\bf d}}$ appears in Table~\eqref{stratflaguno} then so does $\EE^{*}_{-{\bf r},{\bf d}}$.    The remarkable fact is that the mirror of a stratum may be identified with the image of the stratum  
  when we apply the duality isomorphism $\lagr\overset{\sim}{\lra}\lagrdual$ induced by~\eqref{specchio}: more precisely we have
  \begin{equation*}
\delta_V(\EE^{*}_{{\bf r},{\bf d}}(V))=\EE^{*}_{-{\bf r},{\bf d}}(V^{\vee}),
\end{equation*}
  where $\EE^{*}_{{\bf r},{\bf d}}(V)$ is the non-stable (or unstable)  stratum in $\lagr$ indicized by ${\bf r},{\bf d}$ and similarly for $\EE^{*}_{-{\bf r},{\bf d}}(V^{\vee})$. The above equation explains our notation for coupled non-stable (or unstable)  strata in Tables~\eqref{stratflaguno} and~\eqref{stratflagdue}.  
\subsubsection{Geometric significance of certain strata}\label{subsubsec:infiniti}
Let
 \begin{equation}\label{sigminf}
\Sigma_{\infty}:= \{A\in\lagr \mid \dim\Theta_A>0\}.
\end{equation}
Theorem~2.37 of~\cite{ogtasso}  lists   the irreducible components of $\Sigma_{\infty}$, in particular it gives that
\begin{equation}\label{lista}
\BB_{\cA},\quad\BB_{\cA^{\vee}},\quad \BB_{\cC_2},\quad \BB_{\cD},\quad \BB_{\cE_2},\quad \BB_{\cE_2^{\vee}},\quad \BB_{\cF_1}
\end{equation}
are irreducible components of $\Sigma_{\infty}$, that they are pairwise distinct  and that if $A$ is generic in one of the above standard non-stable strata then $\Theta_A$ is an irreducible curve\footnote{Writing  $\Theta_A=\PP(A)\cap\Gr(3,V)$ we may give $\Theta_A$ a structure of scheme: it is generically reduced but not reduced everywhere.}. How do we distinguish geometrically the strata above? We consider a generic $A$ in the stratum and we  look  at the curve $\Theta_A$ and the ruled $3$-fold  $R_{\Theta_A}\subset\PP(V)$ swept out by $\PP(W)$ for $W\in\Theta_A$. A few examples: if $A\in\BB_{\cF_1}$  then $\Theta_A$ is a line, if $A\in\BB_{\cD}$ then $\Theta_A$ is a conic,  if $A\in\BB_{\cE_2}$ or $A\in\BB_{\cE^{\vee}_2}$ then $\Theta_A$ is a rational normal cubic curve, in the first case  $R_{\Theta_A}$ is a cone in the second it is not, etc.~- see Section~2 of~\cite{ogtasso} for a detailed discussion. In~\cite{ogtasso} we described also those $A$ such that $\dim\Theta_A>1$; it will turn out that they are not stable, actually unstable with a few explicit exceptions - see~\Ref{lmm}{sedimdue}. Below we will give a geometric consequence of the results of~\cite{ogtasso}. First we will recall the definition of a particular $\PGL(V)$-orbit in $\lagr$, see Section~1.5 of~\cite{ogtasso}.    We have  embeddings
\begin{equation}\label{piumenomap}
\begin{matrix}
\PP(U) & \overset{i_{+}}{\hra} & \Gr(3,\bigwedge^2 U)\\
[u] & \mapsto & \{u\wedge u'\mid u'\in U\}
\end{matrix},\qquad
\begin{matrix}
\PP(U^{\vee}) & \overset{i_{-}}{\hra} & \Gr(3,\bigwedge^2 U)\\
[f] & \mapsto & \bigwedge^2 (\ker f).
\end{matrix}.
\end{equation}
The pull-back to $\PP(U)$, $\PP(U^{\vee})$ of the Pl\"ucker line-bundle on $ \Gr(3,\bigwedge^2 U)$ is isomorphic to $\cO_{\PP(U)}(2)$, $\cO_{\PP(U^{\vee})}(2)$ respectively and the map on global sections is surjective; it follows that each of
$\im(i_{+})$, $\im(i_{-})$ spans a $9$-dimensional subspace of $\bigwedge^3(\bigwedge^2 U)$. Now choose an isomorphism $V\cong\bigwedge^2 U$ where $U$ is a complex vector-space of dimension $4$.
Let 
\begin{equation}\label{apiu}
A_{+}(U),\ A_{-}(U)\subset\bigwedge^3 V 
\end{equation}
be the affine cones over the linear spans of $\im(i_{+})$, $\im(i_{-})$; thus  $\dim A_{+}(U)=\dim A_{-}(U)=10$. Since each of $ A_{+}(U)$, $ A_{-}(U)$ is spanned by decomposable vectors and the supports of any two of them  intersect non-trivially it follows that $A_{+}(U),A_{-}(U)\in \lagr$. Let $\cQ:=\Gr (2,U)\subset \PP(\bigwedge^2 U)$ be the Grassmannian embedded by Pl\"ucker:  in Section~1.5 of~\cite{ogtasso}  we proved
\begin{equation}\label{epwquad}
Y_{A_{+}(U)}=3\cQ.
\end{equation}
Of course $A_{+}(U),A_{-}(U)$ is well-defined up to $\PGL(V)$; we denote it  by $A_{+}, A_{-}$. Moreover it is clear that the orbits $\PGL(V)A_{+}$ and $\PGL(V)A_{-}$ coincide (nonetheless it is useful to consider both lagrangians, see below).
We notice that $\Theta_{A_{+}}\cong \PP(U)$, $\Theta_{A_{-}}\cong \PP(U^{\vee})$, in particular  $\dim\Theta_{A_{+}}=\dim\Theta_{A_{-}}=3$.
Theorem~2.36 of~\cite{ogtasso} lists  those $A\in\lagr$ such that $\dim\Theta_A>2$: that classifiation together with Table~\eqref{stratflagdue} gives the following result.
\begin{prp}\label{prp:setetatre}
Let $A\in\lagr^{ss}$ and suppose that $\dim\Theta_A>2$; then $A$ is projectively equivalent to $A_{+}$.  
\end{prp}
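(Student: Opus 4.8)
The plan is to reduce the statement to the classification results already cited from \cite{ogtasso} and then check semistability of the few exceptional lagrangians by hand. Specifically, Theorem~2.36 of \cite{ogtasso} classifies all $A\in\lagr$ with $\dim\Theta_A>2$; so the first step is simply to read off from that theorem the complete (finite, up to $\PGL(V)$) list of such lagrangians. I expect this list to consist of $A_{+}$ (equivalently $A_{-}$) together with a small number of other lagrangians, each arising from a specific projective-geometric configuration. For $A_{+}$ we already know $\dim\Theta_{A_{+}}=3$, so it genuinely occurs; the content of the proposition is that every \emph{other} member of the list fails to be semistable.

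First I would treat each non-$A_{+}$ entry of the Theorem~2.36 list one at a time. For each such $A$, the strategy is to exhibit a $1$-PS $\lambda$ of $\SL(V)$ with $\mu(A,\lambda)>0$, which by the Hilbert-Mumford Criterion (\Ref{thm}{hilmum}(3)) shows $A$ is unstable, hence not in $\lagr^{ss}$. Concretely, each entry of the list is described by an explicit geometric structure (some flag or distinguished subspace configuration in $V$), and that structure should match one of the flag conditions defining a standard unstable stratum in Table~\eqref{stratflagdue}. So the mechanical step is: identify, for each exceptional $A$, a basis $\sF$ of $V$ adapted to its configuration, compute the reduced $\lambda$-type $d^{\lambda}_{red}(A)$ for the $1$-PS $\lambda$ attached to the relevant row ${\bf r}$, and verify that $d^{\lambda}_{red}(A)\succeq{\bf d}$ with $\mu({\bf d},\lambda)>0$. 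By \Ref{clm}{dipsey} and \Ref{clm}{domina} this forces $\mu(A,\lambda)>0$, giving instability. Equivalently one shows $A\in\XX_{\cX}$ for one of the unstable strata $\cX$.

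The remaining point is to confirm that $A_{+}$ itself is semistable, so that it is not vacuously excluded — but this is asserted in the introduction (each of $A_{+},A_k,A_h$ is semistable with closed orbit), and in any case it is not needed for the logical content of the proposition, which only rules out the other cases; if desired one records that $A_{+}$ does not satisfy any of the strictly-positive $\mu$ flag conditions. I would close by noting that, having eliminated every entry of the Theorem~2.36 list except $A_{+}$ on grounds of instability, any $A\in\lagr^{ss}$ with $\dim\Theta_A>2$ must be the $\PGL(V)$-translate of $A_{+}$.

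The main obstacle I anticipate is bookkeeping rather than conceptual: correctly transcribing the geometric description of each exceptional lagrangian from Theorem~2.36 of \cite{ogtasso} into an explicit adapted basis, and then matching its flag data against the correct row of Table~\eqref{stratflagdue}. The delicate sub-step is handling any configuration that is only \emph{weakly} destabilized — that is, a case where the naive $1$-PS gives $\mu=0$ (non-stable but possibly semistable) rather than $\mu>0$; such a case would require either a cleverer choice of $1$-PS with strictly positive weight or a separate argument (e.g.\ analyzing the closed orbit in $\lagr^{ss}$) to actually exclude semistability. I would want to check that no entry of the list other than $A_{+}$ lands on the semistable boundary, which is precisely what the interplay between Theorem~2.36 and the strictly-positive-$\mu$ rows of Table~\eqref{stratflagdue} is designed to guarantee.
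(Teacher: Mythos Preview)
Your proposal is correct and matches the paper's own argument exactly: the proposition is deduced directly from Theorem~2.36 of \cite{ogtasso} (listing all $A$ with $\dim\Theta_A>2$) together with Table~\eqref{stratflagdue} (checking that every non-$A_{+}$ entry lies in a standard unstable stratum). Your observation that semistability of $A_{+}$ itself is not needed for the logic of the statement is also right; the paper defers that to~\Ref{prp}{orbapiu}.
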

Later we will prove that $A_{+}$ is actually semistable.  
\begin{crl}\label{crl:enneinst}
Let $A\in\lagr^{ss}$. Then $Y_A\not=\PP(V)$ and $Y_{\delta(A)}\not=\PP(V^{\vee})$.  
\end{crl}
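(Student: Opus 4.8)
The plan is to prove the contrapositive of the first assertion, namely that $\NN(V)\subseteq(\lagr\setminus\lagr^{ss})$, i.e.\ every $A$ with $Y_A=\PP(V)$ is unstable, and then to obtain the statement about $Y_{\delta(A)}$ by duality. For the duality step, recall that $\delta_V$ is induced by the $\SL(V)$-equivariant isomorphism \eqref{specchio}, so it carries the semistable locus of $\lagr$ isomorphically onto that of $\lagrdual$ (this is implicit in the fact that $\delta$ descends to the regular involution \eqref{invmoduli} of $\gM$). Since $Y_{\delta(A)}=\PP(V^{\vee})$ means exactly $\delta_V(A)\in\NN(V^{\vee})$, the second inequality is just the first applied to $\delta_V(A)\in\lagrdual$. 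Thus everything reduces to the single claim $(*)$: if $A\in\NN(V)$ is semistable one reaches a contradiction.

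To prove $(*)$, assume $A\in\NN(V)\cap\lagr^{ss}$. If $\dim\Theta_A>2$ then by \Ref{prp}{setetatre} $A$ is projectively equivalent to $A_{+}$, whence $Y_A=Y_{A_{+}}=3\cQ\neq\PP(V)$ by \eqref{epwquad}, a contradiction; so we may assume $\dim\Theta_A\le 2$. It then suffices to exhibit a $1$-PS $\lambda$ of $\SL(V)$ with $\mu(A,\lambda)>0$, contradicting semistability via \Ref{thm}{hilmum}. By \Ref{clm}{cucs} producing such a $\lambda$ amounts to matching $A$ to a standard unstable stratum of Table~\eqref{stratflagdue}, i.e.\ to finding a basis $\sF$ of $V$, a weight vector ${\bf r}$ and a reduced type ${\bf d}$ with $\mu({\bf d},\lambda)>0$ and $d^{\lambda}_{red}(A)\succeq{\bf d}$; and by the formula of \Ref{clm}{dipsey} this is the same as saying that $A$ over-concentrates, more than half of its dimension, on the top (positive-weight) isotypical summands of $\bigwedge^3\lambda$.

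To manufacture the flag I would feed the everywhere-degeneracy of $\lambda_A$ into the support filtration of $A$. The starting observation is that each $F_v$ is itself Lagrangian, so $Y_A=\PP(V)$ says precisely that the fixed Lagrangian $A$ meets the $5$-dimensional family $\{F_v\}_{[v]\in\PP(V)}$ in codimension $0$ rather than the expected codimension $1$. Since $\lambda_A$ is a symmetric map of rank-$10$ bundles (symmetric because $A$ is Lagrangian), the jump of its top corank locus $Y_A[1]=Y_A$ above its expected codimension forces the deeper strata $Y_A[k]=\{[v]\mid\dim(A\cap F_v)\ge k\}$ to be nonempty of controlled dimension, by the dimension theory of symmetric degeneracy loci. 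Choosing a point $[v_0]$ where $\dim(A\cap F_{v_0})$ is large, the supports of the elements of $A\cap F_{v_0}$ single out a flag in $V$, and a $1$-PS adapted to that flag should realize the high-weight concentration measured by \Ref{clm}{dipsey}, placing $A$ in the closure of an $\XX$-stratum (cf.\ \Ref{rmk}{inclustrati}).

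The main obstacle is exactly this last translation: passing from the coordinate-free condition $\det\lambda_A\equiv 0$ to a concrete dimension inequality witnessing a destabilizing torus. The difficulty is that everywhere-degeneracy only guarantees $\dim(A\cap F_v)\ge 1$ pointwise, possibly in a spread-out manner, so one must genuinely exploit the symmetry of $\lambda_A$ and the resulting lower bounds on the $Y_A[k]$ to pin down a subspace of $V$ along which $A$ is forced to over-concentrate; the hypothesis $\dim\Theta_A\le 2$ is what should prevent the degeneracy from being ``diffuse''. Once a destabilizing $\lambda$ is produced for every such $A$, \Ref{thm}{hilmum} gives $(*)$, and together with the duality reduction this yields both inequalities of the corollary.
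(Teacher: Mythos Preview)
Your duality reduction is fine, and your treatment of the case $\dim\Theta_A>2$ is exactly the paper's argument. The gap is the other case: you assume $\dim\Theta_A\le 2$ and then try to build a destabilizing $1$-PS from the symmetric degeneracy-locus structure of $\lambda_A$, but you do not actually carry this out, and you yourself flag the ``translation'' step as the main obstacle. As written, the proof is incomplete precisely here.

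The paper avoids this case entirely by invoking an external result: Claim~1.11 of~\cite{ogtasso} states that if $Y_A=\PP(V)$ then $\dim\Theta_A\ge 3$. With this in hand the case $\dim\Theta_A\le 2$ is vacuous, and the whole corollary collapses to the single contradiction you already obtained via \Ref{prp}{setetatre} and~\eqref{epwquad}. Your degeneracy-locus heuristic (that everywhere-degeneracy of the symmetric map $\lambda_A$ should force large $\Theta_A$) is in fact the right intuition behind that claim from~\cite{ogtasso}, but the paper treats it as a known input rather than reproving it, and does not attempt to destabilize $A$ directly from the condition $Y_A=\PP(V)$.
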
  
\begin{proof}
The isomorphism $\lagr\overset{\sim}{\lra}\lagrdual$ induced by~\eqref{specchio} maps semi-stable points to semi-stable points hence it suffices to prove that $Y_A\not=\PP(V)$. Suppose that $A\in\lagr^{ss}$ and that $Y_A=\PP(V)$: by Claim~1.11 of~\cite{ogtasso} we have $\dim\Theta_A\ge 3$ . By~\Ref{prp}{setetatre} it follows that $A$ is projectively equivalent to $A_{+}$.  Claim~1.14 of~\cite{ogtasso} gives that $Y_{A_{+}}$ is a triple quadric (in fact the Pl\"ucker quadric), in particular $Y_{A_{+}}\not=\PP(V)$: that is a contradiction.
\end{proof}
\begin{rmk}\label{rmk:rapgru}
Let $U$ be as  above i.e.~$\dim U=4$. Then we have an isomorphism of $GL(U)$-modules
\begin{equation}\label{esseoquattro}
\bigwedge  ^3(\bigwedge  ^2 U)=\left(\Sym^2 U\otimes\det U\right) \oplus 
\left(\Sym^2 U^{\vee}\otimes(\det U)^2\right).
\end{equation}
 The direct summand $\Sym^2 U\otimes\det U$ is identified with $A_{+}(U)$ and $\Sym^2 U^{\vee}\otimes(\det U)^2$ is identified with $A_{-}(U)$.
\end{rmk}
\begin{table}[tbp]\scriptsize
\caption{Standard  non-stable strata.}\label{stratflaguno}
\vskip 1mm
\centering
\renewcommand{\arraystretch}{1.60}
\begin{tabular}{lllll}
\toprule
$(r_0,\ldots,r_5)$  &  $\mu({\bf d},\lambda)$ &  reduced type ${\bf d}$   & flag condition & name    \\
\midrule
 $(5,-1_5)$ & $2(3d_0-15)$ &  $(5)$ &
$\dim A\cap ([v_0]\wedge\bigwedge  ^2 V_{15})\ge 5$   & $\BB^{\sF}_{\cA}$    \\
\midrule
  $(1_5,-5)$    & $2(3d_0-15)$ &  $(5)$ & $\dim A\cap (\bigwedge  ^3 V_{04})\ge 5$  &   
  $\BB^{\sF}_{\cA^{\vee}}$   \\
\midrule
  \multirow{2}{*}{$(1_3,-1_3)$} & \multirow{2}{*}{$2(3d_0+d_1-6)$} &   $(1,3)$ &  $A\supset\bigwedge  ^3 V_{02}$ and  $\dim A\cap (\bigwedge  ^2 V_{02}\wedge V_{35})\ge 3$  &
  $\BB^{\sF}_{\cC_1}$   \\ \cmidrule{3-5}
 &  &   $(0,6)$ &  $\dim A \cap (\bigwedge  ^3 V_{02}\oplus
(\bigwedge  ^2 V_{02}\wedge V_{35}))\ge 6$  &  $\BB^{\sF}_{\cC_2}$  \\
\midrule
 $(1,0_4,-1)$ &  $2(d_0-3)$    & $(3)$ & $\dim A\cap([v_0]\wedge\bigwedge  ^2 V_{14})\ge 3$ 
& $\BB^{\sF}_{\cD}$   \\
\midrule
\multirow{2}{*}{$(4,1_2,-2_3)$} & \multirow{2}{*}{$2(6d_0+3d_1-12)$} &  $(1,2)$ &  $A\supset[v_0]\wedge\bigwedge  ^2 V_{12}$ and 
$\dim A\cap([v_0]\wedge V_{12}\wedge  V_{35})\ge 2$
 & $\BB^{\sF}_{\cE_1}$   \\ \cmidrule{3-5}
& & $(0,4)$ &  $\dim A\cap ([v_0]\wedge(\bigwedge  ^2 V_{12})\oplus ([v_0]\wedge V_{12}\wedge  V_{35}))\ge 4$  &  $\BB^{\sF}_{\cE_2}$\\ 
\midrule
\multirow{2}{*}{$(2_3,-1_2,-4)$} & \multirow{2}{*}{$2(6d_0+3d_1-12)$} &  $(1,2)$ & $A\supset\bigwedge^3 V_{02}$ and 
$\dim A\cap( \bigwedge  ^2 V_{02}\wedge V_{34} )\ge 2$ 
& $\BB^{\sF}_{\cE_1^{\vee}}$    \\  \cmidrule{3-5}
& & $(0,4)$ &     $\dim A\cap(\bigwedge  ^3 V_{02}\oplus (\bigwedge  ^2 V_{02}\wedge V_{34}) )\ge 4$  &  $\BB^{\sF}_{\cE^{\vee}_2}$   \\   
\midrule
\multirow{3}{*}{$(1_2,0_2,-1_2)$} & \multirow{3}{*}{$2(2d_0+d_1-4)$} &    $(2,0)$ & $A\supset(\bigwedge  ^2 V_{01}\wedge  V_{23})$ 
 &  $\BB^{\sF}_{\cF_1}$   \\ \cmidrule{3-5}
&  & \multirow{2}{*}{$(1,2)$} &    $\dim A\cap(\bigwedge  ^2 V_{01}\wedge  V_{23})\ge 1$  and & \multirow{2}{*}{$\BB^{\sF}_{\cF_2}$} \\ 
& & & $\dim A\cap(\bigwedge  ^2 V_{01}\wedge  V_{23}\oplus \bigwedge  ^2 V_{01}\wedge  V_{45}\oplus V_{01}\wedge  
   \bigwedge  ^2 V_{23})\ge 3$  &   \\ 
\midrule 
\multirow{3}{*}{$(2,1,0_2,-1,-2)$} &  \multirow{3}{*}{$2(3d_0+2d_1+d_2-7)$}    & 
\multirow{3}{*}{$(1,1,2)$} &   $\dim A\cap(\bigwedge  ^2 V_{01}\wedge  V_{23})\ge 1$  and 
 & \multirow{3}{*}{$\XX^{\sF}_{\cN_3}$} \\
 & & & $\dim A\cap(\bigwedge  ^2 V_{01}\wedge  V_{23}\oplus\la 
 v_0\wedge v_1\wedge v_4, v_0\wedge v_2\wedge v_3\ra)\ge 2$ and  & \\
& & & $\dim A\cap(\bigwedge  ^3 V_{03}\oplus
[v_0]\wedge V_{13}\wedge [v_4]\oplus 
 [ v_0\wedge v_1\wedge v_5])\ge 4$  & \\
\bottomrule 
\end{tabular}
\end{table} 
\begin{table}[]\scriptsize
\caption{Standard  unstable strata.}\label{stratflagdue}
\vskip 1mm
\centering
\renewcommand{\arraystretch}{1.60}
\begin{tabular}{lllll}
\toprule
$(r_0,\ldots,r_5)$  &  $\mu({\bf d},\lambda)$ &  reduced type ${\bf d}$   & flag condition & name    \\
\midrule
 $(5,-1_5)$ & $2(3d_0-15)$ & $(6)$ &
$\dim A\cap ([v_0]\wedge\bigwedge  ^2 V_{15})\ge 6$   & $\XX^{\sF}_{\cA_{+}}$    \\
\midrule
  $(1_5,-1_5)$    & $2(3d_0-15)$ & $(6)$ & $\dim A\cap (\bigwedge  ^3 V_{04})\ge 6$  &   
  $\XX^{\sF}_{\cA^{\vee}_{+}}$   \\
\midrule
 \multirow{2}*{$(1_3,-1_3)$} &  \multirow{2}*{$2(3d_0+d_1-6)$} &   $(1,4)$ &  $A\supset\bigwedge  ^3 V_{02}$ and  $\dim A\cap (\bigwedge  ^2 V_{02}\wedge V_{35})\ge 4$  &
  $\XX^{\sF}_{\cC_{1,+}}$   \\ \cmidrule{3-5}
 &  & $(0,7)$ &  $\dim A \cap (\bigwedge  ^3 V_{02}\oplus
(\bigwedge  ^2 V_{02}\wedge V_{35}))\ge 7$  &  $\XX^{\sF}_{\cC_{2,+}}$  \\
\midrule
 $(1,0_4,-1)$ &  $2(d_0-3)$    & $(4)$ & $\dim A\cap([v_0]\wedge\bigwedge  ^2 V_{14})\ge 4$ 
& $\XX^{\sF}_{\cD_{+}}$   \\
\midrule
\multirow{2}{*}{$(4,1_2,-2_3)$} & \multirow{2}{*}{$2(6d_0+3d_1-12)$} &  $(1,3)$ &  $A\supset[v_0]\wedge\bigwedge  ^2 V_{12}$ and 
$\dim A\cap([v_0]\wedge V_{12}\wedge  V_{35})\ge 3$
 & $\XX^{\sF}_{\cE_{1,+}}$   \\ \cmidrule{3-5}
& & $(0,5)$ &  $\dim A\cap ([v_0]\wedge(\bigwedge  ^2 V_{12})\oplus ([v_0]\wedge V_{12}\wedge  V_{35}))\ge 5$  &  $\XX^{\sF}_{\cE_{2,+}}$\\ 
\midrule
\multirow{2}{*}{$(2_3,-1_2,-4)$} & \multirow{2}{*}{$2(6d_0+3d_1-12)$} &   $(1,3)$ & $A\supset\bigwedge^3 V_{02}$ and 
$\dim A\cap( \bigwedge  ^2 V_{02}\wedge V_{34} )\ge 3$ 
& $\XX^{\sF}_{\cE_{1,+}^{\vee}}$    \\  \cmidrule{3-5}
& & $(0,5)$ &     $\dim A\cap(\bigwedge  ^3 V_{02}\oplus (\bigwedge  ^2 V_{02}\wedge V_{34}) )\ge 5$  &  $\XX^{\sF}_{\cE^{\vee}_{2,+}}$   \\   
\midrule
\multirow{2}{*}{$(1_2,0_2,-1_2)$} & \multirow{2}{*}{$2(2d_0+d_1-4)$} &   \multirow{2}{*}{$(2,1)$} &     $ A\supset\bigwedge  ^2 V_{01}\wedge  V_{23}$  and & \multirow{2}{*}{$\XX^{\sF}_{\cF_{1,+}}$} \\ 
& & & $\dim A\cap( \bigwedge  ^2 V_{01}\wedge  V_{45}\oplus V_{01}\wedge  
   \bigwedge  ^2 V_{23})\ge 1$  &   \\ 
\midrule
\multirow{2}{*}{$(1_2,0_2,-1_2)$} & \multirow{2}{*}{$2(2d_0+d_1-4)$} &   \multirow{2}{*}{$(1,3)$} &     $\dim A\cap(\bigwedge  ^2 V_{01}\wedge  V_{23})\ge 1$  and & 
\multirow{2}{*}{$\XX^{\sF}_{\cF_{2,+}}$} \\ 
& & & $\dim A\cap(\bigwedge  ^2 V_{01}\wedge  V_{23}\oplus \bigwedge  ^2 V_{01}\wedge  V_{45}\oplus V_{01}\wedge  
   \bigwedge  ^2 V_{23})\ge 4$  &   \\ 
\midrule 
\multirow{3}{*}{$(2,1,0_2,-1,-2)$} &  \multirow{3}{*}{$2(3d_0+2d_1+d_2-7)$}    & 
\multirow{3}{*}{$(1,1,3)$} &   $\dim A\cap(\bigwedge  ^2 V_{01}\wedge  V_{23})\ge 1$  and 
 & \multirow{3}{*}{$\XX^{\sF}_{\cN_{3,+}}$} \\
 & & & $\dim A\cap(\bigwedge  ^2 V_{01}\wedge  V_{23}\oplus\la 
 v_0\wedge v_1\wedge v_4, v_0\wedge v_2\wedge v_3\ra)\ge 2$ and  & \\
& & & $\dim A\cap(\bigwedge  ^3 V_{03}\oplus
[v_0]\wedge V_{13}\wedge [v_4]\oplus 
 [ v_0\wedge v_1\wedge v_5])\ge 5$  & \\
\bottomrule 
\end{tabular}
\end{table} 
 \subsection{The stable locus}\label{subsec:luogostabile}
 \setcounter{equation}{0}
\noindent
{\it Proof of~\Ref{thm}{tuttisemi}.\/}
We will apply the Cone Decomposition Algorithm of~\Ref{subsec}{algocono} to the action of $\SL(V)$ on $\lagr\subset\Gr(10,\bigwedge^3 V)$. We choose a basis $\sF=\{v_0,\ldots,v_5\}$ of $V$ and we let $T< \SL(V)$ be the maximal torus of elements diagonal in the basis $\sF$. We make  the standard choice of cone $ C\subset {\check X}(T)_{\RR}$ - see~\eqref{conostan}. First we list all ordering hyperplanes. 
Let  
\begin{equation}\label{indicicosi}
3=|\{i,j,k\}|=|\{l,m,n\}|,\qquad 0\le i,j,k,l,m,n\le 5
\end{equation}
and $\Phi^{i,j,k}_{l,m,n}\colon {\check X}(T)_{\RR}\to\RR$ be the linear function 
\begin{equation}\label{decopoly}
\Phi^{i,j,k}_{l,m,n}(r_0,r_1,\ldots,r_5):= r_i+r_j+r_k-r_l-r_m-r_n.
\end{equation}
It is clear that $H\subset {\check X}(T)_{\RR}$ is an ordering hyperplane if and only if there exist $i,j,k,l,m,n$ as above with $\{i,j,k\}\not=\{l,m,n\}$ such that $H=\ker(\Phi^{i,j,k}_{l,m,n})$. The faces of $C$ span the hyperplanes $\ker(r_a-r_b)$ for $0\le a<b\le 5$; since $r_a-r_b=\Phi^{a,j,k}_{b,j,k}$ we get that the hypotheses of~\Ref{prp}{algcon} are satisfied. Thus $A\in\lagr$ is not stable if and only if there exist $A'\in \SL(V)A$ and an ordering $1$-PS $\ov{\lambda}$ of $\SL(V)$ such 
that $\mu(A',\ov{\lambda})\ge 0$. 
Next let us  list all ordering $1$-PS's of $\SL(V)$ i.e.~those ${\bf r}\in C$ which span the zero-set of four linearly independent functions among the $\Phi^{i,j,k}_{l,m,n}$'s.
It is convenient to work with the coordinates $(x_1,\ldots,x_5)$ given by
\begin{equation}\label{radici}
x_i:=r_{i-1}-r_i,\qquad i=1,\ldots,5
\end{equation}
In the coordinates $x_1,\ldots,x_5$ the cone  $C$ is the  set    of vectors with non-negative coordinates. 
Following is the column of the linear functions $r_0,\ldots,r
_5$ (restricted to ${\check X}(T)_{\RR}$) in terms of the coordinates $(x_1,\ldots,x_5)$:
 \begin{equation}\label{cambiocoord}\scriptsize
\begin{pmatrix}
r_0 \\
r_1 \\
r_2 \\
r_3 \\
r_4 \\
r_5
\end{pmatrix} =
\left(\begin{array}{rrrrr}
5/6 & 2/3 & 1/2 & 1/3 & 1/6  \\
-1/6 & 2/3 & 1/2 & 1/3 & 1/6 \\
-1/6 & -1/3 & 1/2 & 1/3 & 1/6 \\
-1/6 & -1/3 & -1/2 & 1/3 & 1/6 \\
-1/6 & -1/3 & -1/2 & -2/3 & 1/6 \\
-1/6 & -1/3 & -1/2 & -2/3 & -5/6
\end{array}\right)
\cdot
\begin{pmatrix}
x_1 \\
x_2 \\
x_3 \\
x_4 \\
x_5
\end{pmatrix}
\end{equation}
By definition the linear functions $(r_{i-1}-r_i)$ are equal to the new coordinate functions $x_i$. We will rewrite the linear functions $\Phi^{i,j,k}_{l,m,n}$  in the new coordinates. First notice that whenever $\Phi^{i,j,k}_{l,m,n}$ is a linear combination of a collection of the $x_i$'s with coefficients of the same sign then it may be disregarded because its zero set is the zero set of a collection of the coordinate functions $x_1,\ldots,x_5$.  
If  $|\{i,j,k\}\cap\{l,m,n\}|=2$ then $\Phi^{i,j,k}_{l,m,n}$ is a sum of $x_i$'s with coefficients of the same sign and hence we disregard it. Next let's consider  the $\Phi^{i,j,k}_{l,m,n}$'s such that  $|\{i,j,k\}\cap\{l,m,n\}|=1$: up to $\pm 1$ we get the following functions
\begin{multline}\label{muriuno}
(x_1-x_3),(x_1-x_4),(x_1-x_5),(x_2-x_4),(x_2-x_5),(x_3-x_5),
(x_1+x_2-x_4),(x_1+x_2-x_5),(x_2+x_3-x_5),\\
(x_1-x_3-x_4),(x_1-x_4-x_5),(x_2-x_4-x_5),(x_1+x_2-x_4-x_5).
\end{multline}
Lastly assume that $\{i,j,k\}\cap\{l,m,n\}=\es$; then $\Phi^{i,j,k}_{l,m,n}({\bf r})=2(r_i+r_j+r_k)$. The  functions 
\begin{multline*}
\Phi^{0,1,2}_{3,4,5}({\bf x})= x_1+2x_2+3x_3+2x_4+x_5,\ 
 \Phi^{0,1,3}_{2,4,5}({\bf x})=x_1+2x_2+x_3+2x_4+x_5,\\
 \Phi^{2,3,5}_{0,1,4}({\bf x})=-(x_1+2x_2+x_3+x_5),\ 
\Phi^{1,4,5}_{0,2,3}({\bf x})=-(x_1+x_3+2x_4+x_5),
\ \Phi^{0,2,4}_{1,3,5}({\bf x})=x_1+x_3+x_5
\end{multline*}
have all non-zero coefficients  of the same sign and hence we may disregard them. 
Table~\eqref{muridue} lists the remaining such functions (with $\{i,j,k\}\cap\{l,m,n\}=\es$) modulo $\pm 1$.  
\begin{table}[tbp]\scriptsize
\caption{\lq\lq Essential\rq\rq functions $\Phi^{i,j,k}_{l,m,n}({\bf x})$ with $\{i,j,k\}\cap\{l,m,n\}=\es$.}\label{muridue}
\vskip 1mm
\centering
\renewcommand{\arraystretch}{1.60}
\begin{tabular}{llllll}
\toprule
  $\Phi^{1,2,3}_{0,4,5}$ &  $\Phi^{2,3,4}_{0,1,5}$   & 
   $\Phi^{1,2,4}_{0,3,5}$ &  
 $\Phi^{0,3,4}_{1,2,5}$ & 
$\Phi^{0,2,5}_{1,3,4}$ \\
\midrule
 $-x_1+x_3+2x_4+x_5$ &  $-x_1-2x_2-x_3+x_5$ &
    $-x_1+x_3+x_5$ & 
 $x_1-x_3+x_5$ & 
$x_1+x_3-x_5$ \\
\bottomrule 
\end{tabular}
\end{table} 
It follows that in order to list all ordering $1$-PS's we must find all non-zero solutions $(x_1,\ldots,x_5)\in C$ of $4$ linearly independent linear functions among the union of the set of coordinate functions, the set given by~\eqref{muriuno} and that given by Table~\eqref{muridue}. In practice we consider the $5\times 23$-matrix $M$ whose columns are the coordinates of the linear functions listed above i.e.
\begin{equation*}\tiny
\left[\begin{array}{rrrrrrrrrrrrrrrrrrrrrrr}
1 & 0 & 0 & 0 & 0 & 1 & 1 & 1 &  0 & 0 & 0 &  1 & 1 & 0 & 1 & 1 & 0 & 
1 & 1 & 1 & 1 & 1 & 1 \\
0 & 1 & 0 & 0 & 0 & 0 & 0 & 0 &  1 & 1 & 0 &  1 & 1 & 1 &  0 & 0 & 1 & 1 & 0 & 2 &
 0 & 0 & 0 \\
 0 & 0 & 1 &  0 & 0 & -1 &  0 & 0 & 0 & 0 & 1 &    0 & 0 & 1 & -1 &  0 & 0 & 0 & -1 & 
 1 & -1 & -1 & 1 \\
0 & 0 & 0 & 1 & 0 & 0 & -1 & 0 & -1 & 0 & 0 & -1 & 0 & 0 & -1 & -1 & -1 & -1 & -2 & 
0 & 0 & 0 & 0 \\
0 & 0 & 0 & 0 & 1 & 0 & 0 & -1 & 0  & -1 & -1 & 0 & -1 & -1 & 0 & -1 & -1 & -1 & -1 & -1 & -1 & 1 & -1
\end{array}\right]
\end{equation*}
and we proceed as follows. 
For each  $5\times 4$ minor  $M_I$ of $M$ we compute (actually we ask a computer to compute)  the vector in $\RR^5$  whose coordinates are the determinants with alternating signs of $4\times 4$ minors of  $M_I$ and discard all those vectors whose coordinates do not have the same sign.   The remaining vectors are the ${\bf x}$-coordinates of  ordering $1$-PS's (with many repetitions).  Multiplying each such vector by the matrix appearing in~\eqref{cambiocoord} one gets the weights of all ordering $1$-PS's. The outcome of the computations is as follows. First the $1$-PS's appearing in Table~\eqref{stratflaguno}   are among the ordering  $1$-PS's. For example the first three $1$-PS's of   Table~\eqref{stratflaguno} correspond in the ${\bf x}$-coordinates to the extremal rays of 
$C$ generated by $(1,0,0,0,0)$, $(0,0,0,0,1)$ and $(0,0,1,0,0)$ respectively. 
Tables~\eqref{critisotuno}, \eqref{critisotdue} and~\eqref{critisottre} in~\Ref{sec}{tavpit} list all the ordering $1$-PS's   up to rescaling  and duality (ordering  $1$-PS's  come in dual pairs $(r_0,\ldots,r_5)$ and $(-r_5,\ldots,-r_0)$). Tables~\eqref{critisotuno}, \eqref{critisotdue} and~\eqref{critisottre} give also the strictly-positive weight isotypical addends of $\bigwedge^3\lambda$  for each ordering $1$-PS in the list;  $abc$  denotes $v_a\wedge v_b\wedge v_c$ and an isotypical addend is determined via its monomial basis.  Next 
  one needs to examine, for each  ordering $1$-PS $\lambda$, the set  of $A\in\lagr$ such that $\mu(A,\lambda)\ge 0$.
  One finishes the proof of~\Ref{thm}{tuttisemi} by checking that each such $A$ belongs to one of the standard non-stable strata i.e.~those listed in Table~\eqref{stratflaguno}: details are in Tables~\eqref{bandsemi1},  \eqref{bandsemi2}, \eqref{bandsemi3} and~\eqref{bandsemi4} of~\Ref{sec}{tavpit}. One should read the tables as follows. The first column of each row gives the weights of an ordering $1$ PS $\lambda$, the second column contains an explicit expression for $\mu({\bf d},\lambda)$ (to get it use Tables~\eqref{critisotuno}, \eqref{critisotdue} and~\eqref{critisottre}), the third column contains a collection of subsets of $\cP_{\lambda}$ (to be precise a condition on ${\bf d}$ determining such a subset)
whose union is all of  
\begin{equation*}
\cP_{\lambda}^{\ge 0}:=\{{\bf d}\in\cP_{\lambda} \mid \mu({\bf d},\lambda)\ge 0\},
\end{equation*}
 the last column gives for each such subset of $\cP_{\lambda}^{\ge 0}$ a stratum  (or union of strata) containing all $A\in\lagr$ such that ${\bf d}^{\lambda}(A)$ belongs to the subset. We notice that  since Table~\eqref{stratflaguno} is  invariant under duality it suffices to examine one ordering $1$-PS in each dual pair. 
Following are a few remarks on how to check that the last step of the proof has been carried out  correctly.  One first  needs to make sure that every ${\bf d}\in\cP_{\lambda}^{\ge 0}$ belongs to one of the sets defined by the conditions on the third column: that is time-consuming but completely straightforward. Secondly one needs to verify that each  subset of  ${\bf d}\in\cP_{\lambda}^{\ge 0}$ listed in Tables~\eqref{bandsemi1},  \eqref{bandsemi2} and~\eqref{bandsemi3}  is contained in the stratum (or union of strata) on the same row and on the last column: that is completely routine except  in the two  cases below.
\vskip 2mm
\n
$\boxed{\text{$\lambda(t)=(t^7,t^4,t,t,t^{-5},t^{-8})$, ${\bf d}\in \cP_{\lambda}$ such that  $(d_0+d_1)\ge 1$ and $d_2\ge 2$}}$  We remark that the ordering $1$-PS  appears in Table~\eqref{bandsemi2}.  Suppose that ${\bf d}^{\lambda}(A)=(d_0,d_1,\ldots)$ is as above (notice  that  $d_2=2$ by Table~\eqref{critisotdue} in~\Ref{sec}{tavpit}). Then $A$ contains 
 \begin{equation*}
0\not=\alpha=v_0\wedge w_1\wedge w_2,\qquad
 \beta=v_0\wedge w'_1\wedge w'_2+v_1\wedge v_2\wedge v_3,\quad
 w_1,w_2,w'_1,w'_2\in \la v_1,v_2,v_3\ra.
\end{equation*}
We distinguish two cases according to whether $ w'_1\wedge w'_2$ is a multiple of $w_1\wedge w_2$ or not. If the former holds then $A$ contains $v_1\wedge v_2\wedge v_3$ and since $\la w_1,w_2\ra\subset \la v_1,v_2,v_3\ra $ it follows that $A\in\BB^{*}_{\cF_1}$. If the latter holds then we may complete $ w_1,w_2$ to a basis $\{ w_1,w_2,w_3\} $ of 
$\la v_1,v_2,v_3\ra $ in such a way that 
\begin{equation*}
\beta= v_0\wedge w_1\wedge w_3+w_1\wedge w_2\wedge w_3=
w_1\wedge w_3\wedge( v_0-w_2).
\end{equation*}
 Since 
\begin{equation*}
\dim(\supp\alpha\cap\supp\beta)=\dim(\la v_0, w_1, w_2\ra \cap 
\la w_1, w_3, ( v_0-w_2)\ra)=2
\end{equation*}
we get that $A\in\BB^{*}_{\cF_1}$.
\vskip 2mm
\n
$\boxed{\text{$\lambda(t)=(t^{10},t^7,t,t^{-2},t^{-5},t^{-11})$, ${\bf d}=(0,0,1,1,3,0)$}}$
We remark that the ordering $1$-PS appears in Table~\eqref{bandsemi3}. Let $\sF:=\{v_0,v_1,v_2,z_3,z_4,v_5\}$ be a basis of $V$. Let $\bf r$ be the set of weights of $\lambda$ in decreasing order and $\bf d$ be as above. Let $\lambda'$ be  the $1$-PS corresponding to $\XX_{\cN_3}$ according to Table~\eqref{stratflaguno} and ${\bf r}'$ its set of weights  in decreasing order. Let ${\bf d}'=(1,1,2)$ be the $\lambda'$-type  defining $\XX_{\cN_3}$.
Let $A\in\EE^{\sF}_{{\bf r},{\bf d}}$: we will exhibit a basis $\sF'$ of $V$ (depending on $A$) such that 
\begin{equation}\label{appartiene}
A\in\EE^{\sF'}_{{\bf r}',{\bf d}'}:=\{A\in\lagr\mid {\bf d}^{\lambda'}(A)\succeq(1,1,2)\}. 
\end{equation}
Since $A\in\EE^{\sF}_{{\bf r},{\bf d}}$ there exist $\alpha,\beta,\gamma,\delta\in A$ such that
\begin{equation*}
\begin{split}
\alpha=& v_0\wedge v_1\wedge \omega_1,\\
\beta= & v_0\wedge (v_1\wedge \omega_2+v_2\wedge z_3),\\
\gamma= & v_0\wedge (v_1\wedge \omega_3+v_2\wedge (az_3+z_4)),\\
\delta= & v_0\wedge (v_1\wedge \omega_4+b v_2\wedge z_3+
v_1\wedge v_5),\\
\end{split}
\end{equation*}
where
\begin{equation*}
\omega_1,\omega_2,\omega_3,\omega_4\in\la v_2,z_3,z_4\ra,\qquad \omega_1\not=0.
\end{equation*}
There exists $(x_0,y_0)\not=(0,0)$ such that 
\begin{equation*}
\omega_1\in\la v_2,x_0 z_3+y_0(az_3+z_4)\ra.
\end{equation*}
Let $v_3:=x_0 z_3+y_0(az_3+z_4)$. Notice that $v_2,v_3$ are linearly independent and they belong to $\la  v_2,z_3,z_4\ra$; thus there exists  $v_4\in \la  z_3,z_4\ra$ such that $\{v_2,v_3,v_4\}$ is a basis of $\la  v_2,z_3,z_4\ra$. We let $\sF':=\{v_0,v_1,v_2,v_3,v_4,v_5\}$. Let's prove that~\eqref{appartiene} holds. Let ${\bf d}'_{\lambda'}(A)=(d'_0(A),d'_1(A),d'_2(A))$. 
First $d'_0(A)\ge 1$ because $\alpha\not=0$. Next
\begin{equation*}
A\ni(x_0\beta+y_0\gamma)=v_0\wedge(v_1\wedge(x_0\omega_2+y_0\omega_3)+
v_2\wedge v_3),\qquad (x_0\omega_2+y_0\omega_3)\in\la v_2,v_3,v_4\ra.
\end{equation*}
It follows that $d'_1(A)\ge 1$. Lastly let $L_0\subset L_1\subset\ldots\subset L_6=\bigwedge^3 V$ be the filtration defined by the isotypical addends of $\bigwedge^3\lambda'$ in decreasing order, see~\eqref{rubaband}. Then $\beta,\gamma,\delta\in L_2$ and the image of $\la \beta,\gamma,\delta\ra$ in $L_2/L_1$ has dimension $2$, thus $d'_2(A)\ge 2$. This finishes the proof that~\eqref{appartiene} holds. 
\qed

\medskip
\noindent
For $d\ge 0$ let $\wt{\Sigma}[d]\subset\wt{\Sigma}$ be given by
\begin{equation}
\wt{\Sigma}[d]:= \{(W,A)\in\wt{\Sigma}\mid \dim(A\cap (\bigwedge^2 W\wedge V))\ge d+1\}.
\end{equation}
(Notice that $\wt{\Sigma}:=\wt{\Sigma}[0]$.) Let $\Sigma[d]\subset\lagr$ be the image of $\wt{\Sigma}[d]$ under the projection  
$$\Gr(3,V)\times\lagr\lra \lagr.$$
\begin{crl}\label{crl:tuttisemi}
If $A\in(\lagr\setminus\Sigma_{\infty}\setminus\Sigma[2])$ then $A$ is stable. 
\end{crl}
\begin{proof} 
By~\Ref{thm}{tuttisemi} it suffices to prove that if $A$ belongs to one of the standard non-stable strata then either $\dim\Theta_A>0$ (i.e.~$A\in\Sigma_{\infty}$) or $A\in\Sigma[2]$. By definition we may assume that $A\in\BB^{\sF}_{\cX}$ for $\cX$ one of $\cA,\cA^{\vee},\ldots,\cF_2$, or $A\in\XX^{\sF}_{\cN_3}$, where $\sF$ is the basis $\{v_0,\ldots,v_5\}$ of $V$. If
\begin{equation*}
A\in(\BB^{\sF}_{\cA}\cup\BB^{\sF}_{\cA^{\vee}}\cup \BB^{\sF}_{\cC_2}\cup 
\BB^{\sF}_{\cD}\cup \BB^{\sF}_{\cE_2}\cup \BB^{\sF}_{\cE_2^{\vee}}\cup \BB^{\sF}_{\cF_1})
\end{equation*}
then $A\in\Sigma_{\infty}$, see~\Ref{subsubsec}{infiniti}. It remains to consider  $A\in(\BB^{\sF}_{\cC_1}\cup\BB^{\sF}_{\cE_1}\cup\BB^{\sF}_{\cE_1^{\vee}}\cup\BB^{\sF}_{\cF_2}\cup\XX^{\sF}_{\cN_3})$. Going through Table~\eqref{stratflaguno} one easily checks the following:  
If $A\in(\BB^{\sF}_{\cC_1}\cup\BB^{\sF}_{\cE_1}\cup\BB^{\sF}_{\cE_1^{\vee}})$ then $\bigwedge^3 V_{02}\subset A$ and $\dim(A\cap (\bigwedge^2 V_{02}\wedge V))\ge 3$, 
if   $A\in(\BB^{\sF}_{\cF_2}\cup\XX^{\sF}_{\cN_3})$  there exists a $3$-dimensional subspace $W\subset V_{03}$ containing $V_{01}$ such that
 $\bigwedge^3 W\subset A$ and $\dim(A\cap (\bigwedge^2 W\wedge V))\ge 3$. 
\end{proof}
 \Ref{thm}{tuttisemi} provides an algorithm that decides whether a given $A\in\lagr$ is stable or not: see~\Ref{rmk}{stabalg} for details.
\subsection{The GIT-boundary}\label{sec:preambolo}
\setcounter{equation}{0}
Let $\gM^{st}\subset\gM$ be the (open) subset parametrizing $\PGL(V)$-orbits of  stable points; the {\it GIT-boundary} of $\gM$ is $\partial\gM:=(\gM\setminus\gM^{st})$.    Let $\BB^{*}_{\cA},\BB^{*}_{\cA^{\vee}},\ldots,\XX^{*}_{\cN_3}$  be the  standard non-stable strata.   If $\BB^{*}_{\cX}$ (or $\XX^{*}_{\cN_3}$) is such a stratum we let 
\begin{equation}
\gB_{\cX}:=\BB^{*}_{\cX}//\PGL(V),\qquad  \gX_{\cN_3}:=\XX^{*}_{\cN_3}//\PGL(V).
\end{equation}
   By~\Ref{thm}{tuttisemi} we have the equality
  \begin{equation}\label{antoleon}
\partial \gM=\gB_{\cA}\cup \gB_{\cA^{\vee}}\cup \gB_{\cC_1}\cup \gB_{\cC_2}\cup \gB_{\cD}
\cup \gB_{\cE_1}\cup \gB_{\cE_2}\cup \gB_{\cE^{\vee}_1}\cup \gB_{\cE^{\vee}_2}\cup 
\gB_{\cF_1} \cup \gB_{\cF_2}\cup \gX_{\cN_3}.
\end{equation}
We will show that there exist equalities among some of  the above sets.
Let $\sF=\{v_0,\ldots,v_5\}$ be a basis of $V$. Given a  subscript $\cX\in\{\cA,\cA^{\vee},\ldots,\cN_3\}$  we let $\BB^{\sF}_{\cX}$ be the corresponding Schubert varieties appearing in Table~\eqref{stratflaguno}  (if $\cX=\cN_3$  the Schubert variety is  denoted $\XX^{\sF}_{\cN_3}$).
Let $ \lambda_{\cX}\colon \CC^{\times}\lra \SL(V)$
be  the  standard ordering $1$-PS which is diagonal in the basis $\sF$ and whose weights  appear on the first  column of  the row of Table~\eqref{stratflaguno} that contains  $\BB^{\sF}_{\cX}$ (or $\XX_{\cN_3}$).  
 Let $U_{e_0},\ldots, U_{e_i},\ldots, U_{e_s}$ be the isotypical summands of $\bigwedge^3\lambda_{\cX}$ as in~\eqref{isodec}, with weights in decreasing order: $e_0>e_1>\ldots> e_s$. 
We  have  a $\lambda_{\cX}$-type 
\begin{equation}\label{chetipo}
{\bf d}_{\cX}=(d_0,d_1,\ldots,d_{[(s-1)/2]}).
\end{equation}
which appears in the  third column of the row of Table~\eqref{stratflaguno} that contains  $\BB^{\sF}_{\cX}$ (or $\XX_{\cN_3}$) .  
 Let ${\mathbb S}^{\sF}_{\cX}\subset\lagr$ be the set of $A$ which are $\lambda_{\cX}$-split of type ${\bf d}_{\cX}$. 
\Ref{clm}{limite} gives the following:
\begin{clm}\label{clm:bastaesse}
Every point of $\gB_{\cX}$ is represented by a point of ${\mathbb S}^{\sF}_{\cX}$ and  every point of $\gX_{\cN_3}$  is represented by a point of ${\mathbb S}^{\sF}_{\cN_3}$.
\end{clm}
 Next let $\sF'$ be the basis of $V$ obtained by reading the vectors in $\sF$ in reverse order: $\sF':=\{v_5,v_4,v_3,v_2,v_1,v_0\}$. As is easily checked we have
\begin{equation}\label{coincidenze}
{\mathbb S}^{\sF}_{\cA}={\mathbb S}^{\sF'}_{\cA^{\vee}},\qquad 
{\mathbb S}^{\sF}_{\cC_1}={\mathbb S}^{\sF'}_{\cC_2},\qquad
{\mathbb S}^{\sF}_{\cE_1}={\mathbb S}^{\sF'}_{\cE^{\vee}_2},\qquad
{\mathbb S}^{\sF}_{\cE^{\vee}_1}={\mathbb S}^{\sF'}_{\cE_2}
\end{equation}
and hence $\gB_{\cA}=\gB_{\cA^{\vee}}$,  $\gB_{\cC_1}=\gB_{\cC_2}$, $\gB_{\cE_1}=\gB_{\cE^{\vee}_2}$ and $\gB_{\cE^{\vee}_1}=\gB_{\cE_2}$. Thus~\eqref{antoleon}, \Ref{clm}{bastaesse}  and the above equalities give the following: 
\begin{equation}\label{suddivido}
\partial\gM=\gB_{\cA}\cup  \gB_{\cC_1}\cup  \gB_{\cD}\cup \gB_{\cE_1}\cup
 \gB_{\cE_1^{\vee}}\cup   \gB_{\cF_1}\cup \gB_{\cF_2}\cup   \gX_{\cN_3}.
\end{equation}
Since each ${\mathbb S}^{\sF}_{\cX}$ is closed and irreducible  each set on the right-hand side of~\eqref{suddivido} is closed and either irreducible or empty (this will hold if  there is no semistable point of ${\mathbb S}^{\sF}_{\cX}$). The above discussion  gives no answer to the following questions:  are any of the   sets appearing on the right-hand side   of~\eqref{suddivido} empty? are there inclusion relations between those sets? what are their dimensions? The answers are in~\Ref{sec}{frontiera}.
\clearpage
 \section{Plane sextics and stability of lagrangians}\label{sec:lageometria}
 \setcounter{equation}{0}
\subsection{The main result of the section}
\setcounter{equation}{0}
\Ref{thm}{tuttisemi} gives a description of non-stable elements  $A\in\lagr$ in terms of linear-algebra flag conditions on $A$. One would like to establish a link between  non-stability of $A$ and geometric properties of $X_A$ (we may assume that $Y_A\not=\PP(V)$ because if $Y_A=\PP(V)$ then $A$ is unstable by~\Ref{crl}{enneinst}).   A first hint of what an answer might be is given by~\Ref{crl}{tuttisemi}: if $\Theta_A=\es$ then $A$ is stable. In the present section we will refine that result. Assume that $A\in\Sigma$ and let $W\in\Theta_A$. In~\Ref{subsec}{sestinw} we will define a determinantal locus $C_{W,A}\subset \PP(W)$ with the property that
\begin{equation}\label{supses}
\supp C_{W,A}=\{[w]\in\PP(W)\mid \dim(A\cap F_w)\ge 2\}.
\end{equation}
Either $C_{W,A}$ is a sextic curve or (in pathological cases) it  equals $\PP(W)$. Below is the main result of the present section.
\begin{thm}\label{thm:zampa}
Let $A\in\lagr$ be non-stable, and hence  
\begin{equation}
A\in(\BB_{\cA}\cup\BB_{\cA^{\vee}}\cup\BB_{\cC_1}
\cup\BB_{\cC_2}\cup\BB_{\cD}\cup\BB_{\cE_1}\cup\BB_{\cE_2}
\cup\BB_{\cE_1^{\vee}}\cup\BB_{\cE_2^{\vee}}
\cup\BB_{\cF_1}\cup\BB_{\cF_2}\cup\XX_{\cN_3})
\end{equation}
by~\Ref{thm}{tuttisemi}. 
Then there exists $W\in\Theta_A$ such that  $C_{W,A}$ is not a curve with simple singularities, more precisely  either $C_{W,A}=\PP(W)$ or else $C_{W,A}$ is a sextic curve and
\begin{enumerate}
\item[(1)]
there exists $[v_0]\in C_{W,A}$ such that $\mult_{[v_0]}C_{W,A}\ge 4$ if $A\in (\BB_{\cA}\cup\BB_{\cD}\cup\BB_{\cE_1})$,
\item[(2)]
$C_{W,A}$ is singular along a line (and hence non-reduced) if $A\in(\BB_{\cC_2}\cup\BB_{\cE_2^{\vee}}\cup \BB_{\cF_1}\cup\BB_{\cF_2})$,
\item[(3)]
$C_{W,A}$ is singular along a conic (and hence non-reduced) if $A\in\BB_{\cE_1^{\vee}}$,
\item[(4)]
$C_{W,A}$ is singular along a cubic (and hence equal to a double cubic)) if $A\in(\BB_{\cA^{\vee}}\cup\BB_{\cC_1})$.
\item[(5)]
$C_{W,A}$ has consecutive triple points if $A\in(\BB_{\cE_2}\cup\XX_{\cN_3})$.
\end{enumerate}
\end{thm}
The statement of~\Ref{thm}{zampa} is obtained by putting together the statements of~\Ref{prp}{nonridotta} and~\Ref{prp}{singiso}.  Notice that~\Ref{thm}{adestable} follows at once from~\Ref{thm}{zampa}.
\subsection{Plane sextics}\label{subsec:sestinw}
\setcounter{equation}{0}
Let $W\in\Gr(3,V)$. Let  
\begin{equation}\label{econwu}
\cE_W:=(\bigwedge^3 W)^{\bot}/\bigwedge^3 W
\end{equation}
where $\bigwedge^3 W^{\bot}$ is the orthogonal of $\bigwedge^3 W$  with respect to $(,)_V$. The symplectic form $(,)_V$ induces a symplectic form on $\cE_W$ that we will denote by  $(,)_W$.
 Let $[w]\in\PP(W)$; since $F_w$  is a Lagrangian subspace of $\bigwedge^ 3 V$ containing $\bigwedge^ 3 W$  we have the lagrangian
\begin{equation}
G_w:=F_w/\bigwedge^ 3 W\in\lagre.
\end{equation}
Thus we have
a Lagrangian sub-vector-bundle $G$ of $\cE_W\otimes\cO_{\PP(W)}$ defined by
\begin{equation}
G:= F\otimes\cO_{\PP(W)}/\bigwedge^ 3 W\otimes\cO_{\PP(W)}.
\end{equation}
 We will  associate to $B\in\lagre$  a subscheme  $C_B\subset\PP(W)$ by mimicking the definition of EPW-sextic given in~\Ref{sec}{sesticadoppia}. Composing the inclusion $G\hra \cE_W\otimes\cO_{\PP(W)}$ and the quotient map $\cE_W\otimes\cO_{\PP(W)}\to (\cE_W/B)\otimes\cO_{\PP(W)}$ we get a map of vector-bundles
\begin{equation}\label{nubianche}
G\overset{\nu_B}{\lra} (\cE_W/B)\otimes\cO_{\PP(W)}.
\end{equation}
We let $C_B=V(\det\nu_B)$; thus $\supp C_B=\{[w]\in\PP(W)\mid G_w\cap B\not=\{0\}\}$.
A straightforward computation gives that   
\begin{equation}\label{ciunogi}
\det G\cong\cO_{\PP(W)}(-6).
\end{equation}
Thus $C_B$ is a sextic curve unless it is equal to $\PP(W)$.  Next suppose that $(W,A)\in\wt{\Sigma}$.
 Since  $\bigwedge^ 3 W\subset A\subset (\bigwedge^ 3 W)^{\bot}$ we have the lagrangian 
\begin{equation}\label{eccobi}
B:=(A/\bigwedge^ 3 W)\in \lagre.
\end{equation}
\begin{dfn}
Suppose that  $(W,A)\in\wt{\Sigma}$. We let
 $C_{W,A}:=C_B$ where $B$ is given by~\eqref{eccobi}. 
\end{dfn}
Notice that~\eqref{supses} holds by definition.
 Let  $B\in\lagre$ and $\nu_B$ be given by~\eqref{nubianche}: 
we will write out the first terms in the Taylor expansion of $\det\nu_B$ in a neighborhood of $[v_0]\in\PP(W)$. Let $W_0\subset W$  be complementary to $[v_0]$. We have an isomorphism
\begin{equation}\label{minnie}
\begin{matrix}
W_0 & \overset{\sim}{\lra} & \PP(W)\setminus\PP(W_0) \\
w & \mapsto & [v_0 +w]
\end{matrix}
\end{equation}
onto a neighborhood of $[v_0]$;
thus $0\in W_0$ is identified with $[v_0]$. We have
\begin{equation}\label{qumme}
C_B\cap W_0=V(g_0+g_1+\cdots+g_6),\qquad g_i\in \Sym^i W_0^{\vee}
\end{equation}
where the $g_i$'s are well-determined up to a common non-zero multiplicative factor. 
We will describe explicitly the 
  $g_i$'s  for $i\le \dim(B\cap G_{v_0})$.
Given  $w\in W$ we define the Pl\"ucker quadratic form $\psi^{v_0}_w$ on $G_{v_0}$ as follows. Let $\ov{\alpha}\in G_{v_0}$ be represented by $\alpha\in F_{v_0}$. Thus $\alpha=v_0\wedge\beta$ where $\beta\in\bigwedge^ 2 V$  is defined modulo $(\bigwedge^ 2 W+ [v_0]\wedge V)$: we let 
\begin{equation}\label{quadriwu}
\psi^{v_0}_w(\ov{\alpha}):=
\vol(v_0\wedge w\wedge\beta\wedge\beta).
\end{equation}
\begin{prp}\label{prp:primisarto}
Keep notation and hypotheses as above. Let  $\ov{K}:=B \cap G_{v_0}$ and $\ov{k}:=\dim \ov{K}$.   Then
\begin{itemize}
\item[(1)]
$g_i=0$ for $i<\ov{k}$, and
\item[(2)]
 there exists $\mu\in\CC^{*}$ such that
\begin{equation}\label{marzamemi}
g_{\ov{k}}(w)=\mu\det(\psi^{v_0}_w|_{\ov{K}}),\quad w\in W_0.
\end{equation}
\end{itemize}
\end{prp}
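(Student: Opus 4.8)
The plan is to read off the first terms of $\det\nu_B$ by the standard analysis of a degeneracy map, and then to match the leading coefficient with the Plücker form using the symplectic pairing on $\cE_W$. Note first that $\nu_B$ is a map between rank-$9$ bundles: both $G_{v_0}=F_{v_0}/\bigwedge^3 W$ and $B$ are Lagrangian in the $18$-dimensional symplectic space $\cE_W$. At $[v_0]$ the kernel of $\nu_{B,v_0}$ is $\ov K=B\cap G_{v_0}$ and its cokernel is $\cE_W/(B+G_{v_0})$, both of dimension $\ov k$ (here $(B+G_{v_0})^\bot=B\cap G_{v_0}=\ov K$ since Lagrangians are self-orthogonal). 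First I would choose a local frame $\sigma_1(w),\dots,\sigma_9(w)$ of $G$ on the chart \eqref{minnie} with $\sigma_1(0),\dots,\sigma_{\ov k}(0)$ spanning $\ov K$, trivialize the (constant) target and $\det G$, and write $\det\nu_B$ as the determinant of the matrix $[\nu_B(\sigma_j(w))]$ up to a nowhere-zero factor. Since the first $\ov k$ columns vanish at $w=0$, every monomial in the expansion of this determinant carries at least $\ov k$ factors vanishing at $0$; hence $\det\nu_B$ vanishes to order $\ge\ov k$ at $[v_0]$, which by \eqref{qumme} gives (1).

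For the leading term I would pass to block form with respect to the source decomposition $\ov K\oplus K'$ and a target decomposition $C'\oplus I$, where $I=\im\nu_{B,v_0}=(B+G_{v_0})/B$ and $C'$ maps isomorphically onto the cokernel. Choosing the $K'$-part of the frame so that $\sigma_k(0)\in G_{v_0}$ for $k>\ov k$, the blocks $a\colon\ov K\to C'$, $b\colon K'\to C'$ and $c\colon\ov K\to I$ all vanish at $w=0$, while the block $e\colon K'\to I$ is invertible there. The Schur complement identity $\det\nu_B=\det e\cdot\det(a-be^{-1}c)$ then shows, because $b(0)=c(0)=0$, that the Schur complement vanishes at $0$ with first-order part $a'(0)$; consequently the degree-$\ov k$ part is $g_{\ov k}(w)=\det e(0)\cdot\det(a'(0)w)$, where $a'(0)w\colon\ov K\to C'\cong\coker\nu_{B,v_0}$ is the intrinsic transverse derivative, independent of the residual frame choices precisely because $b(0)=c(0)=0$.

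The key step is to compute $a'(0)$ explicitly. For $\ov\alpha_j\in\ov K$ represented by $\alpha_j=v_0\wedge\beta_j\in F_{v_0}$ I would use the honest section $\sigma_j(w):=\overline{(v_0+w)\wedge\beta_j}$ of $G$, which is legitimate since $(v_0+w)\wedge(v_0+w)\wedge\beta_j=0$ gives $(v_0+w)\wedge\beta_j\in F_{v_0+w}$. Then $\nu_B(\sigma_j(w))$ is the class of $(v_0+w)\wedge\beta_j$ in $\cE_W/B$, which vanishes at $w=0$ and whose derivative is the class of $w\wedge\beta_j$; one checks $w\wedge\beta_j\in(\bigwedge^3 W)^\bot$, because $w\in W$ implies $w\wedge\eta=0$ for every $\eta\in\bigwedge^3 W$, so $\vol(w\wedge\beta_j\wedge\eta)=0$. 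Because $\ov K=(B+G_{v_0})^\bot$, the form $(,)_W$ identifies $\coker\nu_{B,v_0}$ with $\ov K^\vee$, and pairing the derivative with $\ov\alpha'=\overline{v_0\wedge\beta'}\in\ov K$ yields $(w\wedge\beta_j,v_0\wedge\beta')_V=\vol(w\wedge\beta_j\wedge v_0\wedge\beta')=-\vol(v_0\wedge w\wedge\beta_j\wedge\beta')$. This is, up to the sign, the polarization of the Plücker quadratic form $\psi^{v_0}_w$ of \eqref{quadriwu}. Hence, in a basis of $\ov K$, the matrix $a'(0)w$ is the Gram matrix of $\psi^{v_0}_w|_{\ov K}$ up to a nonzero scalar, and taking determinants gives $g_{\ov k}(w)=\mu\det(\psi^{v_0}_w|_{\ov K})$ with $\mu\in\CC^\times$ the direction-independent constant absorbing $\det e(0)$, the trivialization factors and the sign; this is (2).

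The hard part is this last identification: verifying that the leading coefficient of the determinant is genuinely the intrinsic map $\ker\to\coker$ (which rests on the vanishing $b(0)=c(0)=0$ that makes $a'(0)$ frame-independent), and checking that the symplectic identification $\coker\nu_{B,v_0}\cong\ov K^\vee$ transports the elementary derivative $w\wedge\beta_j$ into the polarization of $\psi^{v_0}_w$. The order-of-vanishing statement (1) and the Schur-complement bookkeeping are routine by comparison.
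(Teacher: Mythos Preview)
Your proof is correct. The route you take differs from the paper's, though the two converge on the same derivative computation.

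The paper does not work directly with the degeneracy map $\nu_B$ and Schur complements. Instead it chooses a Lagrangian $B_2\in\LL\GG(\cE_W)$ transverse to both $B$ and $G_{v_0}$, writes $\cE_W=B\oplus B_2$ with $B_2\cong B^\vee$, and for $w$ small presents $G_{v_0+w}$ as the graph of a symmetric map $\wt q(w)\colon B\to B^\vee$. Then $C_{W,A}$ is locally $V(\det q(w))$, $\ker q(0)=\ov K$, and the paper invokes its Appendix result (\Ref{prp}{conodegenere}) to get $g_i=0$ for $i<\ov k$ and $g_{\ov k}(w)=c\det\bigl(\tfrac{d}{dt}q(tw)|_{\ov K}\bigr|_{t=0}\bigr)$. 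The remaining work is to compute this $t$-derivative and identify it with $\psi^{v_0}_w|_{\ov K}$, via essentially the same manipulation $(v_0+tw)\wedge\zeta(tw)$ that you carry out with your sections $\sigma_j(w)=\overline{(v_0+w)\wedge\beta_j}$.

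What each approach buys: the paper's graph-of-quadratic-forms setup plugs directly into the machinery of Appendix~A (\Ref{prp}{conodegenere}, \Ref{prp}{zeronucleo}, \Ref{prp}{faraone}), which is reused later for finer Taylor terms, so it is the natural choice in context. Your Schur-complement argument is more self-contained --- it does not need the appendix lemma --- and makes the intrinsic nature of the map $\ker\to\coker$ explicit; your use of the symplectic duality $\coker\nu_{B,v_0}\cong\ov K^\vee$ is a clean way to see why the leading block is \emph{symmetric}, which in the paper's approach comes for free from the Lagrangian/graph presentation. Both arguments land on the same pairing $\vol(v_0\wedge w\wedge\beta\wedge\beta')$, so the final identification is identical.
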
 
\begin{proof}
Let $B_1:=B$ and $B_2\in\lagre$ be transversal both to $B_1$ and $G_{v_0}$. 
Then $\cE_W=B_1\oplus B_2$ and we have an isomorphism $B_2\cong B_1^{\vee}$ such that $(,)_W$ is identified with the standard symplectic form on $B_1\oplus B_1^{\vee}$. 
There exists an open $\cW\subset W_0$ containing $0$ such that $G_{v_0+w}$ is transversal to $B_2$ for all $w\in\cW$ and hence $G_{v_0+w}$ is the graph of a map $\wt{q}(w)\colon B_1\to B_2=B_1^{\vee}$. Since $G_{v_0+w}$ is Lagrangian the map $\wt{q}(w)$ is symmetric; we let $q(w)$ be the associated quadratic form. The map 
$\cW  \to  \Sym^2 B_1^{\vee}$ mapping $w$ to $q(w)$  
is regular and there exists $\rho\in H^0(\cO_{\cW}^{*})$ such that
\begin{equation}\label{piadina}
g(w)=\rho\det q(w),\quad w\in\cW.
\end{equation}
We have $\ker q(0)= B_1 \cap G_{v_0}$; by~\Ref{prp}{conodegenere}  we get that $\det q\in\mathfrak{m}^{\ov{k}}_0$ where $\mathfrak{m}_0\subset\cO_{\cW,0}$ is the maximal ideal; thus  Item~(1) follows from~(\ref{piadina}). 
Let's prove Item~(2). Let  $(\det q)_{\ov{k}}\in \left(\mathfrak{m}_0^{\ov{k}}/\mathfrak{m}_0^{\ov{k}+1}\right)\cong \Sym^{\ov{k}}W_0^{\vee}$ be the class of $\det q$; by~(\ref{piadina}) we have
\begin{equation}
g_{\ov{k}}(w)=\rho(0)(\det q)_{\ov{k}}(w),\quad w\in V_0.
\end{equation}
We have $\ker q(0)=\ov{K}$; by~\Ref{prp}{conodegenere} there exists $\theta\in\CC^{*}$ such that
\begin{equation}
(\det q)_{\ov{k}}(w)=\theta\det\left(\left.\frac{d\left(q(tw)|_{\ov{K}}\right)}{dt}\right|_{t=0}\right), \quad w\in W_0.
\end{equation}
Thus in order to finish the proof of Item~(2) it suffices to show that
\begin{equation}\label{cocacola}
\left.\frac{d\left(q(tw)|_{\ov{K}}\right)}{dt}\right|_{t=0}=\psi^{v_0}_w|_{\ov{K}},
\quad w\in W_0.
\end{equation}
Let $\wt{B}_i\in\lagr$ be such that $\wt{B}_i/\bigwedge^ 3 W=B_i$. 
Let $\ov{\alpha}\in \ov{K}$ be represented by $\alpha\in F_{v_0}$; thus we also have $\alpha\in\wt{B}_1$. 
 Assume that $tw\in\cW$ where $\cW$ is as above;  there exists $r(tw)(\alpha)\in \wt{B}_2$ well-defined modulo $\bigwedge^ 3 W$ such that
$(\alpha+r(tw)(\alpha))\in F_{v_0+tw}$. Thus 
\begin{equation}\label{fretta}
(\alpha+r(tw)(\alpha))=
(v_0+tw)\wedge\zeta(tw).
\end{equation}
  By definition of $q(tw)$ we have 
 \begin{equation}
 q(tw)(\alpha)=\vol(\alpha\wedge r(tw)(\alpha)).
\end{equation}
Now multiply~\eqref{fretta} on the left by $\alpha$; since $\alpha\in F_{v_0}$ we have  $v_0\wedge\alpha=0$ and hence
 \begin{equation}
 q(tw)(\alpha)= 
 t\cdot \vol(\alpha\wedge w\wedge\zeta(tw))
\end{equation}
for $w\in W_0$.
Differentiating with respect to $t$ and setting $t=0$ we get that
\begin{equation}\label{circasso}
\left.\frac{d\left(q(tw)|_{\ov{K}}\right)}{dt}\right|_{t=0}(\ov{\alpha})=
\vol(\alpha\wedge w\wedge\zeta(0)).
\quad w\in W_0.
\end{equation}
We may write $\alpha=v_0\wedge\beta$ because $\alpha\in F_{v_0}$. Setting $t=0$ in~(\ref{fretta})  we get that $v_0\wedge\zeta(0)=v_0\wedge\beta$. Thus~(\ref{circasso}) reads
\begin{equation}
\left.\frac{d\left(q(tw)|_{\ov{K}}\right)}{dt}\right|_{t=0}(\ov{\alpha})=
\vol(v_0\wedge w\wedge\beta\wedge\beta)=\psi^{v_0}_w(\ov{\alpha}),
\quad w\in W_0.
\end{equation}
This proves~(\ref{cocacola}).
\end{proof}
\begin{crl}\label{crl:molteplici}
 Let $(W,A)\in\wt{\Sigma}$ and $[v_0 ]\in\PP(W)$.  Then either $C_{W,A}=\PP(W)$ or  
 \begin{equation*}
\mult_{[v_0 ]}C_{W,A}\ge \dim(A\cap F_{v_0 })-1.
\end{equation*}
\end{crl}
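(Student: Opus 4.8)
The plan is to deduce the corollary directly from Proposition~\ref{prp:primisarto} together with the relationship between $C_{W,A}$ and the local equations $g_i$. First I would recall the setup: by definition $C_{W,A}=C_B$ where $B=A/\bigwedge^3 W\in\lagre$, so the multiplicity of $C_{W,A}$ at $[v_0]$ is governed by the Taylor expansion $\det\nu_B=g_0+g_1+\cdots+g_6$ of~\eqref{qumme}. The multiplicity of $C_B$ at $[v_0]$ (when $C_B\neq\PP(W)$) is exactly the smallest index $i$ such that $g_i\neq 0$.

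The key computation is to identify the integer $\ov{k}=\dim(B\cap G_{v_0})$ appearing in Proposition~\ref{prp:primisarto} in terms of the data in the statement, namely $\dim(A\cap F_{v_0})$. Since $B=A/\bigwedge^3 W$ and $G_{v_0}=F_{v_0}/\bigwedge^3 W$, and since $\bigwedge^3 W\subset A\cap F_{v_0}$ (because $W\in\Theta_A$ forces $\bigwedge^3 W\subset A$, while always $\bigwedge^3 W\subset F_{v_0}$ for $[v_0]\in\PP(W)$), I get
\begin{equation*}
\ov{k}=\dim(B\cap G_{v_0})=\dim\bigl((A\cap F_{v_0})/\bigwedge^3 W\bigr)=\dim(A\cap F_{v_0})-3.
\end{equation*}
This is the one genuine identification needed, and it is where I must be careful: I should verify that $B\cap G_{v_0}$ really equals $(A\cap F_{v_0})/\bigwedge^3 W$, which follows because intersection commutes with the quotient by the common subspace $\bigwedge^3 W$ sitting inside both $A$ and $F_{v_0}$.

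Granting this, Proposition~\ref{prp:primisarto}(1) gives $g_i=0$ for all $i<\ov{k}$, so that either $C_{W,A}=\PP(W)$ (if $\det\nu_B\equiv 0$) or the multiplicity of $C_{W,A}$ at $[v_0]$ is at least $\ov{k}=\dim(A\cap F_{v_0})-3$. I would then note that the claimed bound is $\dim(A\cap F_{v_0})-1$, which is two larger, so the naive substitution seems to fall short by $2$. The resolution — and the step I expect to be the main obstacle — is to account correctly for the dimension of $\bigwedge^3 W$ versus $G_{v_0}$: the multiplicity bound from the proposition should be read against the correct normalization. Concretely, since $C_{W,A}$ is cut out inside $\PP(W)$ (a $\PP^2$) rather than in the full degeneracy locus, and since $F_{v_0}\supset\bigwedge^3 W$ with $\dim\bigwedge^3 W=1$, I must track that $\ov{k}=\dim(A\cap F_{v_0})-1$ once the ambient $\bigwedge^3 W$ contributes a single dimension (not three) to the relevant quotient $G_{v_0}=F_{v_0}/\bigwedge^3 W$; here $\dim\bigwedge^3 W=\dim\bigwedge^3(\CC^3)=1$.

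Thus the corrected identification is
\begin{equation*}
\ov{k}=\dim(B\cap G_{v_0})=\dim(A\cap F_{v_0})-1,
\end{equation*}
and Proposition~\ref{prp:primisarto}(1) then yields immediately that $g_i=0$ for $i<\dim(A\cap F_{v_0})-1$, whence either $C_{W,A}=\PP(W)$ or $\mult_{[v_0]}C_{W,A}\ge\dim(A\cap F_{v_0})-1$, as required. The main obstacle is precisely this bookkeeping of $\dim\bigwedge^3 W=1$; once that is pinned down the corollary is an immediate reading of part~(1) of the proposition, with no further computation needed.
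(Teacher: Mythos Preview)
Your approach is correct and matches the paper's proof exactly: apply Proposition~\ref{prp:primisarto}(1) after identifying $\ov{k}=\dim(B\cap G_{v_0})=\dim(A\cap F_{v_0})-1$. The detour through the erroneous computation $\dim\bigwedge^3 W=3$ is unnecessary and should be excised---since $W$ is $3$-dimensional, $\bigwedge^3 W$ is a line, so the correct identification $\ov{k}=\dim(A\cap F_{v_0})-1$ is immediate.
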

\begin{proof}
Let $B$ be given by~\eqref{eccobi}. We apply~\Ref{prp}{primisarto}:  it suffices to notice that $\ov{k}=( \dim(A\cap F_{v_0 })-1)$.
\end{proof}
Our last result will be useful when we will describe  $C_{W,A}$  for properly semistable $A$  with closed orbit in $\lagr^{ss}$ - we will use it repeatedly in~\Ref{sec}{frontiera}.
 Choose a direct-sum decomposition $V=W\oplus U$; thus $\dim U=3$ and we have an identification
\begin{equation}\label{lostesso}
\cE_W\cong \cE^U_W:=\bigwedge^2 W\otimes U \oplus W\otimes \bigwedge^2 U. 
\end{equation}
Notice that $\cE^U_W$ is the direct-sum of a vector-space and its dual (after the choice of volume-forms on $W$ and on $U$) and hence it is equipped with a symplectic form (defined up to scalar). Under Isomorphism~\eqref{lostesso} the symplectic form on  $\cE^U_W$ is identified, up to a scalar, with 
the symplectic form on $\cE_W$. We have the embedding
\begin{equation}\label{modem}
\begin{matrix}
\PP(W) & \hra & \LL\GG(\cE^U_W) \\
[w] & \mapsto & G^U_w:=\{\alpha\in \cE^U_W \mid w\wedge\alpha=0\}
\end{matrix}
\end{equation}
and the pull-back map 
\begin{equation}\label{starbucks}
{\bf \Phi}\colon |\cO_{\LL\GG(\cE^U_W)}(1) |\dashrightarrow | \cO_{\PP(W)}(6) |.
\end{equation}
 Let  $(W,A)\in\wt{\Sigma}$:  thus $A=\bigwedge^3 W\oplus B$ where $B\in \cE^U_W$. Then $\bigwedge^9 B$ corresponds (via wedge-multiplication) to  a hyperplane $H_B\in  |\cO_{\LL\GG(\cE^U_W)}(1) |$ and 
 \begin{equation}\label{peetscoffee}
 C_{W,A}={\bf \Phi}(H_B).
\end{equation}
 (Notice that $C_{W,A}=\PP(W)$ if and only if $H_B$ in the indeterminacy locus of ${\bf\Phi}$.) Of course ${\bf \Phi}$ is the projectivization of the map $\Phi$ of global sections induced by~\eqref{modem}. We will write out $\Phi$ as a $GL(W)\times GL(U)$-equivariant map.
Write $G^U_w=G'_w\oplus G''_w$  where $G'_w=G^U_w\cap(\bigwedge^2 W\otimes U)$ and $G''_w=G^U_w\cap(W\otimes \bigwedge^2  U)$. 
We have  embeddings
\begin{equation*}
\begin{matrix}
\PP(W) & \hra & \Gr(6, \bigwedge^2 W\otimes U) \\
[w] & \mapsto & G'_w
\end{matrix}
\qquad
\begin{matrix}
\PP(W) & \hra & \Gr(3, W\otimes \bigwedge^2 U) \\
[w] & \mapsto & G''_w
\end{matrix}
\end{equation*}
They define $GL(W)\times GL(U)$-equivariant surjections
\begin{equation}\label{gibaud}
\scriptstyle
\bigwedge^6(\bigwedge^2 W^{\vee}\otimes U^{\vee})=H^0(\cO_{\Gr(6, \bigwedge^2 W\otimes U)}(1)) \twoheadrightarrow 
H^0(\cO_{\PP(W)}(3))\otimes(\det W)^{-3}\otimes(\det U)^{-2}= \Sym^3 W^{\vee}\otimes(\det W)^{-3}\otimes(\det U)^{-2}.
\end{equation}
and
\begin{equation}\label{cadonet}
\scriptstyle
\bigwedge^3(W^{\vee}\otimes \bigwedge^2 U^{\vee})=H^0(\cO_{\Gr(3, W\otimes \bigwedge^2 U)}(1)) \twoheadrightarrow 
H^0(\cO_{\PP(W)}(3))\otimes(\det U)^{-2}= \Sym^3 W^{\vee}\otimes(\det U)^{-2}.
\end{equation}
It follows from the definitions that $\Phi$ is identified with the composition of the following  
 $GL(W)\times GL(U)$-equivariant maps
\begin{multline}\label{vatussi}
\scriptstyle
\bigwedge^9 \cE^U_W\overset{\sim}{\lra} 
 \bigwedge^9 (\cE^U)^{\vee}_W\otimes(\det W)^9\otimes(\det U)^9 \twoheadrightarrow  \\
\scriptstyle
\twoheadrightarrow  (\Sym^3 W^{\vee}\otimes(\det W)^{-3}\otimes(\det U)^{-2})\otimes 
(\Sym^3 W^{\vee}\otimes(\det U)^{-2})\otimes(\det W)^9\otimes(\det U)^9 \twoheadrightarrow 
\Sym^6 W^{\vee}\otimes(\det W)^6\otimes(\det U)^5.
\end{multline}
(We get the first surjection  by writing the  exterior power of a direct-sum as direct-sum of tensors products of exterior powers, the second surjection follows from~\eqref{gibaud} and~\eqref{cadonet}, the last surjection is defined by  multiplication of polynomials.) We have
\begin{equation}\label{storietese}
C_{W,A}=V(\Phi(\omega_0)),\qquad  0\not=\omega_0\in\bigwedge^9 B.
\end{equation}
\begin{clm}\label{clm:azione}
Let $(W,A)\in\wt{\Sigma}$ and $\omega\in\bigwedge^{10} A$. Suppose that there exist a direct-sum decomposition $V=W\oplus U$ and  $g=(g_W,g_U)\in (GL(W)\times GL(U))\cap \SL(V)$ such that $g\omega=\omega$. Let $\ov{g}_W:=(\det g_W)^{-1/3} g_W$ - thus $\ov{g}_W\in \SL(W)$.  Write $C_{W,A}=V(P)$ where $P\in\Sym^6 W^{\vee}$; then $\ov{g}_W P=P$. 
\end{clm}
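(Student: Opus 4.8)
The plan is to play the two normalizations forced by the hypotheses $g\in\SL(V)$ and $g\omega=\omega$ against the $\GL(W)\times\GL(U)$-equivariance of the map $\Phi$ of~\eqref{vatussi}. Throughout set $d:=\det g_W$.

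First I would extract from $g\omega=\omega$ that $g$ preserves $A$ and scales $\omega_0$ in a controlled way. Since the nonzero $\omega$ generates the line $\bigwedge^{10}A$ and a nonzero decomposable vector determines the subspace it spans, $g$ fixing $\omega$ forces $gA=A$. As $g=(g_W,g_U)$ preserves $W$ and $U$, it preserves the decomposition of $\bigwedge^3 V$ into the summands $\bigwedge^p W\otimes\bigwedge^q U$ with $p+q=3$; because $A=\bigwedge^3 W\oplus B$ with $B\subset\cE^U_W$ (so $B=A\cap\cE^U_W$), this gives $gB=B$. Now $g$ acts on the line $\bigwedge^3 W$ by $d$ and on the line $\bigwedge^9 B$ by $\det(g|_B)$, and since $\bigwedge^{10}A=\bigwedge^3 W\otimes\bigwedge^9 B$, the identity $g\omega=\omega$ yields $\det(g|_B)=d^{-1}$. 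Hence, for the generator $\omega_0$ of $\bigwedge^9 B$ appearing in~\eqref{storietese}, we have $g\omega_0=d^{-1}\omega_0$. Finally, block-diagonality together with $g\in\SL(V)$ gives $\det g_U=d^{-1}$.

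Next I would push these scalars through $\Phi$. The map $\Phi$ is linear and $\GL(W)\times\GL(U)$-equivariant, so $\Phi(g\omega_0)=g\cdot\Phi(\omega_0)$. The left-hand side is $d^{-1}\Phi(\omega_0)$ by linearity. Writing $\Phi(\omega_0)=P\otimes\eta_W\otimes\eta_U$ with $P\in\Sym^6 W^{\vee}$ and $\eta_W,\eta_U$ nonzero generators of $(\det W)^6$, $(\det U)^5$, the right-hand side equals $d^{6}(\det g_U)^{5}(g_W\cdot P)\otimes\eta_W\otimes\eta_U=d\,(g_W\cdot P)\otimes\eta_W\otimes\eta_U$, using $\det g_U=d^{-1}$. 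Comparing the two expressions gives $g_W\cdot P=d^{-2}P$. Since a scalar $c\,\Id_W$ acts on $\Sym^6 W^{\vee}$ by $c^{-6}$ and $\ov{g}_W=d^{-1/3}g_W$, I conclude $\ov{g}_W\cdot P=(d^{-1/3})^{-6}(g_W\cdot P)=d^{2}\cdot d^{-2}P=P$, which is the assertion; the ambiguity in the cube root is harmless since $\zeta^{6}=1$ for any cube root of unity $\zeta$.

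The argument is essentially bookkeeping, and the only delicate points are getting the exact weight of each $\det$-twist in the target of $\Phi$ right and remembering that scalars act on sextic forms with the $(-6)$-th power; once these are pinned down, the two normalizations $\det g_U=d^{-1}$ and $\det(g|_B)=d^{-1}$ conspire to cancel all $d^{\pm1}$ factors. The one structural fact worth double-checking is that $\Phi$ is genuinely \emph{linear} as a map out of $\bigwedge^9\cE^U_W$ (rather than merely multiplicative), which is clear from~\eqref{vatussi}: every arrow there—the duality isomorphism, the projection onto $\bigwedge^6(\bigwedge^2 W^{\vee}\otimes U^{\vee})\otimes\bigwedge^3(W^{\vee}\otimes\bigwedge^2 U^{\vee})$ followed by~\eqref{gibaud} and~\eqref{cadonet}, and the polynomial multiplication $\Sym^3 W^{\vee}\otimes\Sym^3 W^{\vee}\to\Sym^6 W^{\vee}$—is a linear map.
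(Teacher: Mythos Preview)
Your proof is correct and follows essentially the same approach as the paper's: both reduce the claim to $g_W\cdot P=(\det g_W)^{-2}P$, obtain $g\omega_0=(\det g_W)^{-1}\omega_0$ from $g\omega=\omega$ via the factorization $\omega=\alpha\wedge\omega_0$, and then invoke the $\GL(W)\times\GL(U)$-equivariance of $\Phi$ in~\eqref{vatussi} together with~\eqref{storietese}. Your version simply spells out the scalar bookkeeping that the paper leaves implicit.
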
  
  \begin{proof}
The statement is equivalent to $g_W(P)=(\det g_W)^{-2}P$.
Write $A=\bigwedge^3 W\oplus B$ where $B\in \LL\GG(\cE^U_W)$. Then $\omega=\alpha\wedge\omega_0$ where $\alpha\in\bigwedge^3 W$ and  $\omega_0\in\bigwedge^9 B$. We have  $g\omega_0=(\det g_W)^{-1}\omega_0$ because $g\omega=\omega$. The claim follows from~\eqref{storietese} and the  $GL(W)\times GL(U)$-equivariance of $\Phi$ - see~\eqref{vatussi}.
\end{proof}
\subsection{Non-stable strata and plane sextics, I}\label{subsec:suitetti}
\setcounter{equation}{0}
In the present subsection we will prove  the following result.
\begin{prp}\label{prp:nonridotta}
Let $A\in\lagr$ and suppose that  it belongs to
\begin{equation}
\BB_{\cA}\cup\BB_{\cA^{\vee}}\cup\BB_{\cC_1}\cup\BB_{\cC_2}\cup\BB_{\cE^{\vee}_1}\cup\BB_{\cE^{\vee}_2}\cup\BB_{\cF_1}\cup\BB_{\cF_2}.
\end{equation}
Then there exists $W\in\Theta_A$ such that  $C_{W,A}$ is not a curve with simple singularities, more precisely  either $C_{W,A}=\PP(W)$ or else $C_{W,A}$ is a sextic curve and
\begin{enumerate}
\item[(1)]
there exists $[v_0]\in C_{W,A}$ such that $\mult_{[v_0]}C_{W,A}\ge 4$ if $A\in \BB_{\cA}$,
\item[(2)]
$C_{W,A}$ is singular along a line (and hence non-reduced) if $A\in(\BB_{\cC_2}\cup\BB_{\cE_2^{\vee}}\cup \BB_{\cF_1}\cup\BB_{\cF_2})$,
\item[(3)]
$C_{W,A}$ is singular along a conic (and hence non-reduced) if $A\in\BB_{\cE_1^{\vee}}$,
\item[(4)]
$C_{W,A}$ is singular along a cubic (and hence equal to a double cubic)) if $A\in(\BB_{\cA^{\vee}}\cup\BB_{\cC_1})$.
\end{enumerate}
\end{prp}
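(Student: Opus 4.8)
The plan is to argue stratum by stratum, using in each case the flag condition that defines the stratum to produce an explicit $W\in\Theta_A$ together with a distinguished subvariety $D\subset\PP(W)$ along which $C_{W,A}$ is forced to be singular. The two workhorses are~\Ref{crl}{molteplici}, which gives $\mult_{[v_0]}C_{W,A}\ge\dim(A\cap F_{v_0})-1$, and~\Ref{prp}{primisarto}, which identifies the leading term of the local equation as $g_{\ov{k}}(w)=\mu\det(\psi^{v_0}_w|_{\ov{K}})$ with $\ov{K}=(A/\bigwedge^3 W)\cap G_{v_0}$ and $\ov{k}=\dim\ov{K}=\dim(A\cap F_{v_0})-1$. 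The key refinement, used throughout, is that even when $\ov{k}$ is small the multiplicity can exceed this naive bound: if a generator of $\ov{K}$ is represented by $\alpha=v_0\wedge\beta$ with $\beta$ \emph{decomposable}, then $\beta\wedge\beta=0$, so $\psi^{v_0}_w|_{\ov{K}}\equiv 0$, whence $g_{\ov{k}}\equiv 0$ and $\mult_{[v_0]}C_{W,A}\ge\ov{k}+1$.

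First I would dispose of the point case $\BB_{\cA}$. Here $[v_0]\wedge\bigwedge^2 V_{15}=F_{v_0}$, so the flag condition reads $\dim(A\cap F_{v_0})\ge 5$; under the identification $F_{v_0}\cong\bigwedge^2(V/[v_0])$ a dimension count (a $\PP^4$ meets $\Gr(2,5)\subset\PP^9$, and $4+6>9$) shows $A\cap F_{v_0}$ contains a decomposable vector $v_0\wedge u_1\wedge u_2$, so $W:=\la v_0,u_1,u_2\ra\in\Theta_A$ passes through $v_0$. Then either $C_{W,A}=\PP(W)$ or~\Ref{crl}{molteplici} gives $\mult_{[v_0]}C_{W,A}\ge 4$, which is assertion~(1).

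For the remaining strata the flag condition always exhibits, directly or after a short dimension count, a decomposable vector in $A$ and hence an explicit $W\in\Theta_A$ (typically $W=V_{02}$ when the condition contains $A\supset\bigwedge^3 V_{02}$, and $W=\la v_0,v_1,v_2\ra$ for the $\cF$-strata), together with the positive-dimensional family $\Theta\subset\Theta_A$ recorded in~\eqref{lista}. I would then identify $D\subset\PP(W)$ with the subvariety cut out by the flag data: a line for $\cC_2,\cE_2^{\vee},\cF_1,\cF_2$, a conic for $\cE_1^{\vee}$, and a cubic for $\cA^{\vee},\cC_1$. For generic $[p]\in D$ one checks from the explicit generators of $A\cap F_p$ that either $\dim(A\cap F_p)\ge 3$ (so $\ov{k}\ge 2$ and $\mult_{[p]}C_{W,A}\ge 2$ by~\Ref{crl}{molteplici}), or $\dim(A\cap F_p)=2$ but the generator of $\ov{K}$ is represented by a decomposable $\beta$, so the Pl\"ucker form degenerates and $\mult_{[p]}C_{W,A}\ge 2$ by the refinement above. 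In either case $D\subset\sing C_{W,A}$; and since a plane sextic singular along an irreducible curve of degree $d$ must contain that curve doubled, we obtain $C_{W,A}=2\cdot(\text{line})+(\text{quartic})$, $2\cdot(\text{conic})+(\text{conic})$, or $2\cdot(\text{cubic})$, as in~(2)--(4) (or else $C_{W,A}=\PP(W)$).

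The main obstacle is the cubic cases $\BB_{\cA^{\vee}}$ and $\BB_{\cC_1}$: there $D$ is a whole cubic curve, so I must establish $\mult_{[p]}C_{W,A}\ge 2$ along \emph{all} of $D$ — combining the dimension count for $\dim(A\cap F_p)$ with the identical vanishing of $\det(\psi^p_w|_{\ov{K}})$ in $w$ — while simultaneously checking that the degenerate locus is a genuine cubic and not all of $\PP(W)$ (unless we are in the pathological $C_{W,A}=\PP(W)$ case). The delicate part is the uniform bookkeeping of which generators of $A$ lie in $F_p$ as $[p]$ varies over $D$, organized so that, thanks to the duality $\delta_V(\EE^{*}_{{\bf r},{\bf d}}(V))=\EE^{*}_{-{\bf r},{\bf d}}(V^{\vee})$, each dual pair of strata need only be treated once.
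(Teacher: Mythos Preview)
Your strategy is essentially the paper's: both arguments show $D\subset\cB(W,A)$ and then invoke $\cB(W,A)\subset\sing C_{W,A}$ (your ``refinement'' that $\psi^{v_0}_w(\ov\alpha)=0$ when $\beta$ is decomposable is exactly Item~(1) of~\Ref{dfn}{malvagio}, and your $\dim(A\cap F_p)\ge 3$ bound absorbs Item~(2)). The difference is organizational. The paper separates the strata into two groups. For $\BB_{\cC_1},\BB_{\cE_1^{\vee}},\BB_{\cF_2}$ (and $\BB_{\cA}$) the flag condition hands you $W$ directly and $D$ arises as the degeneracy locus of a matrix of linear forms via~\Ref{clm}{omofobia}: e.g.\ for $\BB_{\cC_1}$ with $W=V_{02}$, the three elements of $A\cap\bigwedge^2V_{02}\wedge V_{35}\cong\Hom(V_{02},V_{35})$ give a $3\times3$ matrix with linear entries whose determinant is the cubic $D$. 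For $\BB_{\cF_1},\BB_{\cC_2},\BB_{\cE_2^{\vee}},\BB_{\cA^{\vee}}$, however, the flag condition does \emph{not} directly exhibit $W$ or $D$; instead the paper imports from~\cite{ogtasso} the explicit description of the one-dimensional family $\Theta_A$ (a line, an elliptic sextic, a twisted cubic, an elliptic quintic respectively), picks $W\in\Theta_A$, and shows that $D=\bigcup_{W'\neq W}\PP(W\cap W')$ is a line or a cubic by exhibiting a plane $U$ with $\dim(W'\cap U)=2$ for all $W'$, or by an intersection-theory count.

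This means you have misplaced the difficulty: $\BB_{\cC_1}$ is the \emph{easy} cubic case (the $3\times3$ determinant is visibly a cubic), whereas $\BB_{\cA^{\vee}}$ is the hard one --- there is no $W$ handed to you by the flag condition $\dim(A\cap\bigwedge^3V_{04})\ge5$, and once you find $W\subset V_{04}$ you still need to know that $\Theta_{A'}\subset\Gr(3,V_{04})$ is an elliptic quintic to see that the trace of the $W'\cap W$ on $\PP(W)$ is a cubic. Your phrase ``the subvariety cut out by the flag data'' does not describe how $D$ arises in this case. Finally, the duality remark at the end does not help: $\delta_V$ exchanges $\BB_{\cA}$ with $\BB_{\cA^{\vee}}$, but it does not relate $C_{W,A}$ to $C_{W',\delta_V(A)}$ in a way that converts a point of multiplicity~$4$ into a double cubic, so the dual pairs must be treated separately, as the paper does.
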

The proof will be given at the end of the subsection. First we will identify the bad points of $C_{W,A}$ for $(W,A)\in\wt{\Sigma}$.
Let   $[v_0]\in\PP(W)$ and $W_0\subset W$ be a subspace complementary to $[v_0]$.
We  choose $V_0\in\Gr(5,V)$ such that 
\begin{equation}\label{trasverso}
V=[v_0]\oplus V_0,\quad V_0\cap W=W_0.
\end{equation}
We have an isomorphism
\begin{equation}\label{giquoz}
\begin{matrix}
\bigwedge^ 2 V_0/\bigwedge^ 2 W_0 & \overset{\sim}{\lra} & G_{v_0} \\
\overline{\beta} & \mapsto & \overline{v_0\wedge\beta}.
\end{matrix}
\end{equation}
Let  $\psi^{v_0}_w$ be as in~\eqref{quadriwu}: we will view it as a quadratic form on $\bigwedge^ 2 V_0/\bigwedge^ 2 W_0$ via Isomorphism~\eqref{giquoz}. Let $V(\psi^{v_0}_w)\subset\PP(\bigwedge^ 2 V_0/\bigwedge^ 2 W_0)$ be the zero-locus of $\psi^{v_0}_w$. \Ref{prp}{primisarto} suggests that in order to determine the local form of $C_{W,A}$ at $[v_0]$  we should examine the intersection of the  $V(\psi^{v_0}_w)$ for $w\in W_0$.
Let 
\begin{equation}\label{vaporub}
\wt{\mu}\colon\PP(\bigwedge^ 2 V_0)\dashrightarrow
\PP(\bigwedge^ 2 V_0/\bigwedge^ 2 W_0)
\end{equation}
be projection with center $\bigwedge^ 2 W_0$. 
Let  
\begin{equation}\label{prograss}
{\mathbb Gr}(2,V_0)_{W_0}:=
\wt{\mu}({\mathbb Gr}(2,V_0)).
\end{equation}
(The right-hand side is to be interpreted as the 
 closure of $\wt{\mu}({\mathbb Gr}(2,V_0)\setminus\{\bigwedge^ 2 W_0\})$.) Let $\mu$ be the restriction of $\wt{\mu}$ to ${\mathbb Gr}(2,V_0)$. The rational map
\begin{equation}\label{proraz}
\mu\colon{\mathbb Gr}(2,V_0)\dashrightarrow
{\mathbb Gr}(2,V_0)_{W_0}
\end{equation}
is birational because ${\mathbb Gr}(2,V_0)$ is cut out by quadrics.
We have
\begin{equation}\label{dimgradim}
\dim{\mathbb Gr}(2,V_0)_{W_0}=6,\qquad
\deg{\mathbb Gr}(2,V_0)_{W_0}=4.
\end{equation}
\begin{clm}\label{clm:roncisvalle}
Keep notation as above. Then 
\begin{equation}\label{grascop}
\bigcap_{w\in W_0}V(\psi^{v_0}_w)={\mathbb Gr}(2,V_0)_{W_0}
\end{equation}
and the scheme-theoretic intersection on the left is reduced.
\end{clm}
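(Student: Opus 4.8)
The plan is to reduce \eqref{grascop} to an explicit statement about the Pl\"ucker quadrics of ${\mathbb Gr}(2,V_0)$ and then finish by a dimension-and-degree comparison. First I would pick a basis $\{e_1,\dots,e_5\}$ of $V_0$ with $W_0=\langle e_1,e_2\rangle$, normalize $\vol$ so that $\vol(v_0\wedge\eta)=\vol_{V_0}(\eta)$ for $\eta\in\bigwedge^4 V_0\wedge V_0$, and write $\beta=\sum_{i<j}p_{ij}\,e_i\wedge e_j$. Pulling back the quadratic form $\psi^{v_0}_w$ of \eqref{quadriwu} along the projection \eqref{vaporub}, and using \eqref{giquoz}, one finds that it becomes $\beta\mapsto\vol_{V_0}(w\wedge\beta\wedge\beta)$, that is, the pairing of $w\in W_0\subset V_0$ with $\beta\wedge\beta\in\bigwedge^4 V_0\cong V_0^{\vee}$. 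By the classical description of the Grassmannian, the five components of $\beta\wedge\beta$ are precisely the sub-Pfaffians $P_1,\dots,P_5$ cutting out ${\mathbb Gr}(2,V_0)$, where $P_m$ is the Pfaffian of the skew matrix $(p_{ij})$ with row and column $m$ deleted. Hence the form attached to $w=e_m$ is a nonzero multiple of $P_m$, and as $w$ runs over $W_0$ these forms span $\langle P_1,P_2\rangle$.

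The key observation is that $P_1$ and $P_2$ do \emph{not} involve the coordinate $p_{12}$, which is exactly the coordinate of the centre $[\bigwedge^2 W_0]$ of the projection \eqref{vaporub}. Consequently $P_1,P_2$ descend to quadrics $\ov P_1,\ov P_2$ on $\PP(\bigwedge^2 V_0/\bigwedge^2 W_0)$; the common zero-scheme $\bigcap_{w\in W_0}V(\psi^{v_0}_w)$ in \eqref{grascop} is cut out by the ideal $(\ov P_1,\ov P_2)$, while $V(P_1,P_2)\subset\PP(\bigwedge^2 V_0)$ is the cone over it with vertex the centre. It then remains to identify $V(\ov P_1,\ov P_2)$ with ${\mathbb Gr}(2,V_0)_{W_0}$ of \eqref{prograss}.

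Next I would verify that $\ov P_1,\ov P_2$ form a regular sequence: each is an irreducible (rank-six) quadric, and they are not proportional, since $\ov P_1$ involves $p_{23}$ whereas $\ov P_2$ does not. Thus $V(\ov P_1,\ov P_2)$ is a complete intersection in $\PP(\bigwedge^2 V_0/\bigwedge^2 W_0)\cong\PP^8$, hence pure of dimension $6$ and of degree $4$. On the other hand, every decomposable $\beta$ satisfies $\beta\wedge\beta=0$, so $P_1=\cdots=P_5=0$ on ${\mathbb Gr}(2,V_0)$; projecting from the centre $[\bigwedge^2 W_0]\in{\mathbb Gr}(2,V_0)$ gives ${\mathbb Gr}(2,V_0)_{W_0}\subseteq V(\ov P_1,\ov P_2)$. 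By \eqref{dimgradim} the image ${\mathbb Gr}(2,V_0)_{W_0}$ is irreducible of dimension $6$ and degree $4$ as well. A pure six-dimensional scheme of degree $4$ that contains an irreducible subvariety of dimension $6$ and degree $4$ must coincide with it and carry multiplicity one; this yields simultaneously the equality \eqref{grascop} and the asserted reducedness.

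The only genuinely delicate points are the two identifications in the first paragraph: that the pulled-back forms $\psi^{v_0}_w$ are, up to scalars, the sub-Pfaffians $P_m$ (resting on the fact that $\beta\wedge\beta$ records the Pl\"ucker relations), and that $P_1,P_2$ are independent of the centre coordinate $p_{12}$. Once these are secured, the complete-intersection bookkeeping is routine, and it is the exact degree matching $4=4$ that upgrades the containment ${\mathbb Gr}(2,V_0)_{W_0}\subseteq V(\ov P_1,\ov P_2)$ to an equality of \emph{reduced} schemes.
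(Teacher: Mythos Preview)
Your proof is correct and follows essentially the same route as the paper's. Both arguments show the containment ${\mathbb Gr}(2,V_0)_{W_0}\subseteq\bigcap_{w\in W_0}V(\psi^{v_0}_w)$, observe that the intersection is a complete intersection of two (irreducible) quadrics in $\PP^8$ and hence is pure six-dimensional of degree $4$ (Cohen--Macaulay, so no embedded components), and then match against~\eqref{dimgradim} to force equality with multiplicity one. The only difference is presentational: the paper argues conceptually that each $V(\phi^{v_0}_w)$ is a Pl\"ucker quadric singular at $\bigwedge^2 W_0$ and hence descends, whereas you verify the same fact by the explicit sub-Pfaffian identification and the observation that $P_1,P_2$ omit $p_{12}$.
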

\begin{proof}
For $v_0,v\in V$ let $\phi^{v_0}_v$ be the Pl\"ucker
  quadratic form  on $F_{v_0}$ defined as follows. Let $\alpha\in F_{v_0}$; then $\alpha=v_0\wedge\beta$ for some $\beta\in\bigwedge^2 V$. We set
\begin{equation}\label{quadricapluck}
\phi^{v_0}_v(\alpha):=\vol(v_0\wedge v\wedge\beta\wedge\beta).
\end{equation}
(The above equation gives a well-defined quadratic form on $F_{v_0}$ because  $\beta$ is determined up to addition by an element of $F_{v_0}$.)  Let
\begin{equation}\label{trezeguet}
\begin{matrix}
\lambda^{v_0}_{V_0}\colon\bigwedge^2 V_0 & 
\overset{\sim}{\lra} & F_{v_0}\\
\beta & \mapsto & v_0\wedge\beta
\end{matrix}
\end{equation}
Now let $[v_0]\in\PP(W)$ be as above; we will identify $\bigwedge^2 V_0$ and $F_{v_0}$ via~\eqref{trezeguet}. If $w\in W_0$ then $V(\phi^{v_0}_w)\subset \PP(F_{v_0})=\PP(\bigwedge^ 2 V_0)$ is a Pl\"ucker quadric containing ${\mathbb Gr}(2,V_0)$ and singular at $\bigwedge^ 2 W_0$. The quadric $V(\psi^{v_0}_w)$ is the projection of $V(\phi^{v_0}_w)$ and hence it contains ${\mathbb Gr}_{W_0}(2,V_0)$. Thus the left-hand side of~\eqref{grascop} contains the right-hand side of~\eqref{grascop}. Since $V(\psi^{v_0}_w)$ is an irreducible quadric for every $w\in W_0$ the left-hand side of~\eqref{grascop} is of pure dimension $6$, Cohen-Macaulay and of degree $4$; thus the claim follows from~\eqref{dimgradim}.
\end{proof}
Next we will identify the points $[w]\in\PP(W)$ such that $C_{W,A}$ is not as nice as possible - see~\Ref{prp}{nonmalvagio}. First we give a few definitions.
Given a subspace $W\subset V$ we let
\begin{equation}\label{essewu}
S_W:= (\bigwedge^2 W)\wedge V.
\end{equation}
Now suppose that $W\in\Gr(3, V)$; then $S_W\in\lagr$ and $\PP(S_W)\subset\PP(\bigwedge^3 V)$ is the projective space tangent to $\Gr(3,V)$ at $W$. 
\begin{dfn}\label{dfn:malvagio}
Let $(W,A)\in\wt{\Sigma}$. We let $\cB(W,A)\subset\PP(W)$ be the set of $[w]$ such that
\begin{itemize}
\item[(1)]
 there exists $W'\in(\Theta_A\setminus\{W\})$ with $[w]\in W'$, or
\item[(2)]
$\dim(A\cap F_{w}\cap S_W)\ge 2$.
\end{itemize}
\end{dfn}
\begin{rmk}\label{catchiuso}
As is easily checked $\cB(W,A)$ is a closed subset of $\PP(W)$.
\end{rmk}
 Let  
\begin{equation}\label{ilpacciani}
\rho^{v_0}_{V_0}\colon F_{v_0}\overset{\sim}{\lra}\bigwedge^2 V_0
\end{equation}
be the inverse of~\eqref{trezeguet}. Now let $[v_0]\in\PP(W)$ be as above
 and let  
 \begin{equation}\label{kappanucleo}
 K:=\rho^{v_0 }_{V_0}(A\cap F_{v_0 }). 
\end{equation}
 Then $K\supset\bigwedge^ 2 W_0$ and hence 
\begin{equation}\label{magnogaudio}
\PP(K/\bigwedge^ 2 W_0)
\subset \PP(\bigwedge^ 2 V_0/\bigwedge^ 2 W_0). 
\end{equation}
\begin{clm}\label{clm:incalbi}
Keep notation as above. Then $[v_0 ]\in\cB(W,A)$ if and only if 
 \begin{equation}
\PP(K/\bigwedge^ 2 W_0)\cap {\mathbb Gr}(2,V_0)_{W_0}\not=\es.
\end{equation}
(The  intersection  above makes sense by~(\ref{magnogaudio}).)
\end{clm}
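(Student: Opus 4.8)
The plan is to pull everything back through the linear isomorphism $\rho^{v_0}_{V_0}\colon F_{v_0}\xrightarrow{\sim}\bigwedge^2 V_0$ of~\eqref{ilpacciani} and to exploit~\Ref{clm}{roncisvalle}, which identifies $\mathbb{Gr}(2,V_0)_{W_0}$ with $\bigcap_{w\in W_0}V(\psi^{v_0}_w)$. Here $w=v_0$ in~\Ref{dfn}{malvagio}, so conditions~(1) and~(2) are imposed at $[v_0]$. First I would record two computations. By definition $\rho^{v_0}_{V_0}(A\cap F_{v_0})=K$, and a direct check shows $\rho^{v_0}_{V_0}(F_{v_0}\cap S_W)=W_0\wedge V_0$, the affine (embedded) tangent space of $\mathbb{Gr}(2,V_0)$ at $[\,\bigwedge^2 W_0\,]$; since $\rho^{v_0}_{V_0}$ is an isomorphism it preserves intersections, so $\rho^{v_0}_{V_0}(A\cap F_{v_0}\cap S_W)=K\cap(W_0\wedge V_0)$, a space always containing $\bigwedge^2 W_0$. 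Likewise, for $w\in W_0$ and a lift $\beta\in\bigwedge^2 V_0$ of $\overline{\beta}\in\bigwedge^2 V_0/\bigwedge^2 W_0$, the value $\psi^{v_0}_w(\overline{\beta})=\vol(v_0\wedge w\wedge\beta\wedge\beta)$ vanishes for all $w\in W_0$ exactly when $w_1\wedge\beta\wedge\beta=w_2\wedge\beta\wedge\beta=0$ in $\bigwedge^5 V_0$.

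With these, I would reformulate the two defining conditions. Condition~(1) holds iff $K$ contains a decomposable $\widetilde w_1\wedge\widetilde w_2\notin\bigwedge^2 W_0$ (take $W'=\langle v_0,\widetilde w_1,\widetilde w_2\rangle$, so $\bigwedge^3 W'=\langle v_0\wedge\widetilde w_1\wedge\widetilde w_2\rangle\subset A$ and $W'\ne W$), equivalently iff $\PP(K/\bigwedge^2 W_0)$ meets $\wt{\mu}(\mathbb{Gr}(2,V_0)\setminus\{\bigwedge^2 W_0\})$; here one uses $\bigwedge^2 W_0\subset K$ to promote a matching projected decomposable to an honest element of $K$. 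Condition~(2) holds iff $\dim\bigl(K\cap(W_0\wedge V_0)\bigr)\ge 2$, equivalently iff $\PP(K/\bigwedge^2 W_0)$ meets $\PP\bigl((W_0\wedge V_0)/\bigwedge^2 W_0\bigr)$.

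The heart of the matter is the set-theoretic identity
\[
\mathbb{Gr}(2,V_0)_{W_0}=\wt{\mu}\bigl(\mathbb{Gr}(2,V_0)\setminus\{\textstyle\bigwedge^2 W_0\}\bigr)\ \cup\ \PP\bigl((W_0\wedge V_0)/\textstyle\bigwedge^2 W_0\bigr).
\]
For ``$\supseteq$'' the first piece is dense by definition, and for $\beta\in W_0\wedge V_0$ one has $\beta\wedge\beta\in(w_1\wedge w_2)\wedge\bigwedge^2 V_0$, hence $w_i\wedge\beta\wedge\beta=0$ and $[\overline{\beta}]\in\bigcap_w V(\psi^{v_0}_w)=\mathbb{Gr}(2,V_0)_{W_0}$ by~\Ref{clm}{roncisvalle}. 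For ``$\subseteq$'' I take $[\overline{\beta}]\in\mathbb{Gr}(2,V_0)_{W_0}$ with a lift $\beta$; by~\Ref{clm}{roncisvalle}, $w_1\wedge\beta\wedge\beta=w_2\wedge\beta\wedge\beta=0$. If $\beta\wedge\beta=0$ then $\beta$ is decomposable and $[\overline{\beta}]$ lies in the first piece. If $\beta\wedge\beta\ne0$ then $\beta$ has rank $4$, its support $V'$ is $4$-dimensional, and $w_i\wedge\beta\wedge\beta=0$ forces $W_0\subset V'$; I then examine the line $\ell=\langle[\,\bigwedge^2 W_0\,],[\beta]\rangle$ in $\PP(\bigwedge^2 V')$ against the smooth Plücker quadric $\mathbb{Gr}(2,V')$. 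If $\beta\in W_0\wedge V'$ then $[\overline{\beta}]$ lies in the second piece; otherwise $\ell$ is not tangent to the quadric at $[\,\bigwedge^2 W_0\,]$, so it meets it in a second point $[\beta_0]$ with $\beta_0=\beta+c\,w_1\wedge w_2$ decomposable (and $\beta_0\in K$ because $\bigwedge^2 W_0\subset K$), putting $[\overline{\beta}]$ in the first piece.

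Combining the identity with the two reformulations yields $[v_0]\in\cB(W,A)$ iff $\PP(K/\bigwedge^2 W_0)$ meets one of the two pieces, iff $\PP(K/\bigwedge^2 W_0)\cap\mathbb{Gr}(2,V_0)_{W_0}\ne\emptyset$, which is the claim. The main obstacle is the ``$\subseteq$'' half of the displayed identity: a point of the projected Grassmannian need not lift to a decomposable vector, so one must show that every non-decomposable lift is captured either by a secant line to $\mathbb{Gr}(2,V')$ (feeding condition~(1)) or by the tangent space $W_0\wedge V'$ (feeding condition~(2)). The trichotomy of a line through a point of a smooth quadric — contained, tangent, or transversal — is exactly what closes this gap.
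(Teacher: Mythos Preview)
Your proof is correct and follows essentially the same route as the paper's: translate conditions~(1) and~(2) of~\Ref{dfn}{malvagio} via $\rho^{v_0}_{V_0}$ into the statements that $\PP(K)$ meets, respectively, $\Gr(2,V_0)$ or its embedded tangent space at $[\bigwedge^2 W_0]$ outside that point, and then project from $\bigwedge^2 W_0$. The paper compresses the final step into a single sentence (``Since $\mathbb{Gr}(2,V_0)_{W_0}$ is obtained by projecting $\mathbb{Gr}(2,V_0)$ from $\bigwedge^2 W_0$ the claim follows''), tacitly invoking the fact that the projection of a variety from a smooth point is set-theoretically the image of the punctured variety together with the projectivized tangent space; you prove this identity explicitly via the secant/tangent trichotomy for lines through a point of the Pl\"ucker quadric $\Gr(2,V')$, which is a clean way to make that step rigorous.
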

\begin{proof}
Let's prove that $[v_0 ]\in\cB(W,A)$ if and only if 
\begin{itemize}
\item[(a)]
$\PP(K)\cap \Gr(2,V_0)$ is not equal to the singleton $\{\bigwedge^ 2 W_0\}$, or
\item[(b)]
$\PP(K)\cap \Theta_{\bigwedge^ 2 W_0}\Gr(2,V_0)$ is not equal to the singleton $\{\bigwedge^ 2 W_0\}$.
\end{itemize}
(Here $\Theta_{\bigwedge^ 2 W_0}\Gr(2,V_0)\subset\PP(\bigwedge^ 2 V_0)$ is the projective tangent space to 
$\Gr(2,V_0)$ at $\bigwedge^ 2 W_0$.) In fact (a)  holds if and only if Item~(1) of~\Ref{dfn}{malvagio} holds with $w=v_0 $. 
On the other hand (b) holds if and only if Item~(2) of~\Ref{dfn}{malvagio} holds (with $w=v_0 $) because
\begin{equation}
\Theta_{\bigwedge^ 2 W_0}\Gr(2,V_0)=\PP(\rho^{v_0}_{V_0}(F_{v_0}\cap S_W).
\end{equation}
 This proves that $[v_0 ]\in\cB(W,A)$ if and only if  one of Items~(a), (b) above holds.
 Since ${\mathbb Gr}(2,V_0)_{W_0}$ is obtained by projecting ${\mathbb Gr}(2,V_0)$ from $\bigwedge^ 2 W_0$ the claim follows.
\end{proof}
\begin{prp}\label{prp:nonmalvagio}
Let $(W,A)\in\wt{\Sigma}$ and $[v_0 ]\in\PP(W)$. Then $[v_0 ]\notin\cB(W,A)$ if and only if one of the following holds:
\begin{itemize}
\item[(1)]
$\dim(F_{v_0 }\cap A)=1$  i.e.~$[v_0 ]\notin C_{W,A}$ by~\eqref{supses},
\item[(2)]
$\dim(F_{v_0 }\cap A)=2$ and $C_{W,A}$ is a smooth curve at $[v_0 ]$,
\item[(3)]
$\dim(F_{v_0 }\cap A)=3$ and $C_{W,A}$ is a curve with an ordinary node at $[v_0 ]$.
\end{itemize}
\end{prp}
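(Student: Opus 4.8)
The plan is to prove both implications simultaneously by running through the possible values of $\ov{k}:=\dim(A\cap F_{v_0})-1$ and comparing each against \Ref{clm}{incalbi}. Fix $W_0\subset W$ complementary to $[v_0]$ and $V_0\in\Gr(5,V)$ as in~\eqref{trasverso}, and transport $A\cap F_{v_0}$ into $\bigwedge^2 V_0/\bigwedge^2 W_0$ via~\eqref{giquoz}, obtaining the subspace $\ov{K}$ with $\dim\ov{K}=\ov{k}$; thus $\PP(\ov{K})$ is a linear subspace of $\PP(\bigwedge^2 V_0/\bigwedge^2 W_0)\cong\PP^8$ of dimension $(\ov{k}-1)$. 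By \Ref{clm}{incalbi} we have $[v_0]\in\cB(W,A)$ if and only if $\PP(\ov{K})\cap{\mathbb Gr}(2,V_0)_{W_0}\ne\es$, so the entire statement is a dictionary between this incidence and the local structure of $C_{W,A}$ at $[v_0]$, which I would organize according to $\ov{k}$.

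The two extreme ranges are settled by dimension counts. If $\ov{k}=0$ then $\PP(\ov{K})=\es$, the incidence is empty, $[v_0]\notin\cB(W,A)$ automatically, and $\dim(A\cap F_{v_0})=1$ forces $[v_0]\notin C_{W,A}$ by~\eqref{supses}: this is alternative~(1). If instead $\ov{k}\ge 3$ then $\dim\PP(\ov{K})=\ov{k}-1\ge 2$, while $\dim{\mathbb Gr}(2,V_0)_{W_0}=6$ by~\eqref{dimgradim}; since $2+6\ge 8$ inside $\PP^8$, the irreducible projective variety ${\mathbb Gr}(2,V_0)_{W_0}$ must meet the linear space $\PP(\ov{K})$, whence $[v_0]\in\cB(W,A)$. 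As none of~(1)--(3) can hold once $\dim(A\cap F_{v_0})\ge 4$, both sides of the equivalence are then false, as required.

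It remains to treat $\ov{k}\in\{1,2\}$, where I would feed in \Ref{prp}{primisarto} together with \Ref{clm}{roncisvalle}. Writing the local expansion $C_{W,A}\cap W_0=V(g_0+\cdots+g_6)$ as in~\eqref{qumme}, \Ref{prp}{primisarto} gives $g_i=0$ for $i<\ov{k}$ and $g_{\ov{k}}(w)=\mu\det(\psi^{v_0}_w|_{\ov{K}})$ with $\mu\ne 0$. For $\ov{k}=1$, with $\ov{\alpha}$ a generator of $\ov{K}$, this reads $g_1(w)=\mu\,\psi^{v_0}_w(\ov{\alpha})$, so $g_1\equiv 0$ exactly when $\psi^{v_0}_w(\ov{\alpha})=0$ for all $w\in W_0$, i.e.\ by \Ref{clm}{roncisvalle} when $[\ov{\alpha}]\in{\mathbb Gr}(2,V_0)_{W_0}$, i.e.\ when $\PP(\ov{K})\cap{\mathbb Gr}(2,V_0)_{W_0}\ne\es$. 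Since $g_0=0$, the curve passes through $[v_0]$ and is smooth there precisely when $g_1\not\equiv 0$; combining, $[v_0]\notin\cB(W,A)$ iff $C_{W,A}$ is smooth at $[v_0]$, which together with $\dim(A\cap F_{v_0})=2$ is alternative~(2).

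The case $\ov{k}=2$ is the crux. Here $g_0=g_1=0$ and $g_2(w)=\mu\det(\psi^{v_0}_w|_{\ov{K}})$; since $g_2$ is the lowest-order term, $[v_0]$ is an ordinary node of $C_{W,A}$ exactly when the tangent cone $V(g_2)$ consists of two distinct lines, i.e.\ when the binary quadratic form $g_2$ on $W_0$ is nondegenerate. Consider the linear map $\iota\colon W_0\to \Sym^2\ov{K}^{\vee}$, $w\mapsto\psi^{v_0}_w|_{\ov{K}}$; then $g_2$ is, up to a nonzero scalar, the pullback $\Delta\circ\iota$, where $\Delta$ is the discriminant form on the $3$-dimensional space $\Sym^2\ov{K}^{\vee}$ (a nondegenerate quadratic form whose zero conic is the locus of perfect squares). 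By \Ref{clm}{roncisvalle}, restricted to $\PP(\ov{K})\cong\PP^1$, the incidence $\PP(\ov{K})\cap{\mathbb Gr}(2,V_0)_{W_0}$ is the common base locus on $\PP^1$ of the system $\iota(W_0)$. The decisive step is the equivalence that $\iota(W_0)$ has a base point on $\PP^1$ if and only if $\Delta|_{\iota(W_0)}$ is degenerate: when $\iota$ is not injective this is immediate, since a nonzero binary quadratic always has a root and $\Delta\circ\iota$ then has rank $\le 1$; and when $\iota$ is injective it is exactly projective duality for the smooth conic $V(\Delta)$, a base point $p$ forcing $\iota(W_0)=\ker(\mathrm{ev}_p)$, i.e.\ the line $\PP(\iota(W_0))$ tangent to $V(\Delta)$, i.e.\ $\Delta|_{\iota(W_0)}$ degenerate. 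Hence $[v_0]\in\cB(W,A)$ iff $g_2$ is degenerate iff $[v_0]$ is not an ordinary node, giving alternative~(3). I expect this conic-duality identification to be the main obstacle; the rest is the dimension count of the second paragraph and the formulas of \Ref{prp}{primisarto} and \Ref{clm}{roncisvalle}.
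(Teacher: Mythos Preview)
Your proof is correct and follows essentially the same strategy as the paper: translate $[v_0]\in\cB(W,A)$ into the incidence $\PP(\ov{K})\cap{\mathbb Gr}(2,V_0)_{W_0}\ne\es$ via \Ref{clm}{incalbi}, then analyze case-by-case using \Ref{prp}{primisarto} and \Ref{clm}{roncisvalle}. Your treatment of $\ov{k}\ge 3$ by a pure dimension count in $\PP^8$ against the $6$-fold ${\mathbb Gr}(2,V_0)_{W_0}$ is in fact cleaner than the paper's argument, which works upstairs with $\PP(K)\subset\PP(\bigwedge^2 V_0)\cong\PP^9$ and must separately invoke $\deg\Gr(2,V_0)=5$ and a tangency alternative to rule out the possibility that $\PP(K)\cap\Gr(2,V_0)$ collapses to the single point $\bigwedge^2 W_0$; your passage to the quotient eliminates that complication. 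For $\ov{k}=2$ you supply the conic-duality justification that the paper leaves implicit, and by arguing each case as an equivalence you also cover the converse direction that the paper leaves to the reader.
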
  
 \begin{proof}
Suppose that $[v_0 ]\notin\cB(W,A)$ - we will prove that one of Items~(1), (2), (3) holds. First let's show that
\begin{equation}\label{alpiutre}
\dim(F_{v_0 }\cap A)\le 3.
\end{equation}
Let $K:= \rho^{v_0 }_{V_0}(F_{v_0 }\cap A) $. Assume that~(\ref{alpiutre}) does not hold, i.e.~that $\dim \PP(K)\ge 3$. Since $\dim\Gr(2,V_0)=6$ we get that 
\begin{itemize}
\item[($\alpha$)]
$\dim (\PP(K)\cap \Gr(2,V_0))>0$, or
\item[($\beta$)]
$\dim \PP(K)=3$ and the intersection $\PP(K)\cap \Gr(2,V_0)$ is zero-dimensional.
\end{itemize}
If ($\alpha$) holds then $\PP(K)\cap \Gr(2,V_0)$ is not equal to the singleton $\bigwedge^ 2 W_0$ and hence $[v_0 ]\in\cB(W,A)$,  contradiction. Now suppose that ($\beta$) holds.  Suppose first that $\PP(K)$ is transverse  to $\Gr(2,V_0)$ at $\bigwedge^ 2 W_0$; then $\PP(K)\cap \Gr(2,V_0)$ is not equal to the singleton $\bigwedge^ 2 W_0$ because $\deg\Gr(2,V_0)=5$ and hence $[v_0 ]\in\cB(W,A)$, contradiction. If  $\PP(K)$ is not transverse  to $\Gr(2,V_0)$ at $\bigwedge^ 2 W_0$ then $[v_0 ]\in\cB(W,A)$ by~\Ref{clm}{incalbi} - again we get a contradiction.
 This proves that~\eqref{alpiutre} holds. If $\dim(F_{v_0 }\cap A)=1$ there is nothing to prove. If $\dim(F_{v_0 }\cap A)=2$ then by~\Ref{clm}{incalbi} we get that 
$\PP(K/\bigwedge^ 2 W_0)$ is a point not contained in $\Gr(2,V_0)_{W_0}$. By~\Ref{prp}{primisarto}  and~\eqref{grascop} we get that $C_{W,A}$ is a smooth curve at $[v_0 ]$. Lastly suppose that  $\dim(F_{v_0 }\cap A)=3$. By~\Ref{clm}{incalbi} we get that $\PP(K/\bigwedge^ 2 W_0)$ is a line that does not intersect  $\Gr(2,V_0)_{W_0}$. By~\Ref{prp}{primisarto}  and~\eqref{grascop} we get that $C_{W,A}$ is a  curve with a node at $[v_0 ]$. This proves that if $[v_0 ]\notin\cB(W,A)$ then one of Items~(1), (2), (3) holds. One verifies easily that the converse holds; we leave details to the reader.
\end{proof}
\begin{crl}\label{crl:cnesinerre}
Let $(W,A)\in\wt{\Sigma}(V)$. Then $C_{W,A}=\PP(W)$ if and only if 
$\cB(W,A)=\PP(W)$.  If $C_{W,A}\not=\PP(W)$ then  $\cB(W,A)\subset \sing C_{W,A}$.
\end{crl}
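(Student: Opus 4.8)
The plan is to reduce everything to \Ref{prp}{nonmalvagio}, read contrapositively: a point $[v_0]\in\PP(W)$ lies in $\cB(W,A)$ precisely when \emph{none} of the three alternatives (1)--(3) of that proposition holds. Since those alternatives are controlled by $\dim(F_{v_0}\cap A)$ together with the local geometry of $C_{W,A}$, the corollary should drop out by combining \Ref{prp}{nonmalvagio} with the multiplicity bound of \Ref{crl}{molteplici}.

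First I would establish the inclusion $\cB(W,A)\subset\sing C_{W,A}$ under the hypothesis $C_{W,A}\neq\PP(W)$. Let $[v_0]\in\cB(W,A)$. Because alternative (1) of \Ref{prp}{nonmalvagio} fails we have $\dim(F_{v_0}\cap A)\geq 2$, so $[v_0]\in C_{W,A}$ by~\eqref{supses}. Were $[v_0]$ a smooth point of the sextic $C_{W,A}$, then $\mult_{[v_0]}C_{W,A}=1$, and \Ref{crl}{molteplici} would force $\dim(F_{v_0}\cap A)\leq 2$, hence $\dim(F_{v_0}\cap A)=2$; but then alternative (2) of \Ref{prp}{nonmalvagio} holds, contradicting $[v_0]\in\cB(W,A)$. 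Thus $[v_0]$ is a singular point, as wanted.

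Next I would prove the equivalence $C_{W,A}=\PP(W)\iff\cB(W,A)=\PP(W)$. For the forward implication, if $C_{W,A}=\PP(W)$ then no $[v_0]$ can satisfy any of (1)--(3): alternative (1) fails because every point lies on $C_{W,A}$, while alternatives (2) and (3) presuppose that $C_{W,A}$ is a genuine curve near $[v_0]$, which is false; hence $[v_0]\in\cB(W,A)$ for all $[v_0]$ by \Ref{prp}{nonmalvagio}. Conversely, if $\cB(W,A)=\PP(W)$ then alternative (1) fails everywhere, so $\dim(F_{v_0}\cap A)\geq 2$ for all $[v_0]$ and therefore $\supp C_{W,A}=\PP(W)$ by~\eqref{supses}; since a sextic curve has one-dimensional support, the dichotomy recorded just after~\eqref{ciunogi} forces $C_{W,A}=\PP(W)$.

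The argument is essentially bookkeeping on top of the two cited results, so I expect no serious obstacle. The only point requiring care is the borderline case $\dim(F_{v_0}\cap A)=2$ in the singularity statement: there \Ref{crl}{molteplici} alone yields merely $\mult_{[v_0]}C_{W,A}\geq 1$, so one genuinely needs alternative (2) of \Ref{prp}{nonmalvagio} to rule out smoothness, rather than the multiplicity bound by itself.
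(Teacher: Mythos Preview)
Your proof is correct and follows essentially the same route as the paper's own proof, which is a terse appeal to \Ref{prp}{nonmalvagio} and \Ref{crl}{molteplici}. You have simply unpacked the case analysis that the paper leaves implicit; in particular your handling of the borderline case $\dim(F_{v_0}\cap A)=2$ via the failure of alternative~(2) is exactly the step the paper means when it says the second statement ``follows at once'' from those two results.
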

\begin{proof}
If $\cB(W,A)=\PP(W)$ then $\dim(A\cap F_w)\ge 2$ for all $[w]\in\PP(W)$ and hence $C_{W,A}=\PP(W)$ by~(\ref{supses}). 
If $C_{W,A}=\PP(W)$ then $\cB(W,A)=\PP(W)$ by~\Ref{prp}{nonmalvagio}. The second statement  follows at once from~\Ref{crl}{molteplici}  and~\Ref{prp}{nonmalvagio}.
\end{proof}
Given $W\in\Gr(3,V)$ we let
\begin{equation}\label{tiutiu}
T_W:= S_W/ \bigwedge^3 W\cong \bigwedge^2 W\otimes (V/W)\cong \Hom(W,V/W).
\end{equation}
(Recall~\eqref{essewu}.) 
Of course the second isomorphism is not canonical, it depends (up to multiplication by a scalar) on the choice of a volume form on $W$.
\begin{clm}\label{clm:omofobia}
Let  $(W,A)\in\wt{\Sigma}$ and suppose that $C_{W,A}\not=\PP(W)$. Let $[w]\in\PP(W)$. If there exists $\alpha\in (A\cap S_W)$ such that 
\begin{enumerate}
\item[(1)]
the equivalence class $\ov{\alpha}\in T_W$ is non-zero and
\item[(2)]
 $\ov{\alpha}(w)=0$ (we view $\ov{\alpha}$ as an element of $\Hom(W,V/W)$ thanks to~\eqref{tiutiu})
\end{enumerate}
 then $[w]\in\sing C_{W,A}$. 
\end{clm}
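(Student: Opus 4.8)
The plan is to deduce the statement from \Ref{dfn}{malvagio} and \Ref{crl}{cnesinerre} rather than to estimate the multiplicity of $C_{W,A}$ at $[w]$ directly. Concretely, I would show that the hypotheses on $\alpha$ force $[w]\in\cB(W,A)$ through condition~(2) of \Ref{dfn}{malvagio}, i.e.\ $\dim(A\cap F_w\cap S_W)\ge 2$; since we assume $C_{W,A}\ne\PP(W)$, the inclusion $\cB(W,A)\subset\sing C_{W,A}$ of \Ref{crl}{cnesinerre} then gives $[w]\in\sing C_{W,A}$ at once.

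The first step is to check that $\alpha\in F_w$, i.e.\ that $w\wedge\alpha=0$. Choose a complement $U$ with $V=W\oplus U$, so that $S_W=\bigwedge^3 W\oplus(\bigwedge^2 W\wedge U)$, and write $\alpha=\xi+\alpha_0$ with $\xi\in\bigwedge^3 W$ and $\alpha_0\in\bigwedge^2 W\wedge U$. Since $w\in W$ we have $w\wedge\xi\in\bigwedge^4 W=0$, so $w\wedge\alpha=w\wedge\alpha_0$. Unwinding the identification $T_W\cong\bigwedge^2 W\otimes(V/W)\cong\Hom(W,V/W)$ of~\eqref{tiutiu}, a direct computation gives $w\wedge\alpha_0=\omega_W\wedge\overline{\alpha}(w)$, where $\omega_W$ generates $\bigwedge^3 W$ and $\omega_W\wedge(\cdot)\colon V/W\to\bigwedge^4 V$, $\bar v\mapsto\omega_W\wedge v$, is well-defined (because $\omega_W\wedge W=\bigwedge^4 W=0$) and injective. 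As $\overline{\alpha}(w)=0$ by hypothesis~(2), we conclude $w\wedge\alpha_0=0$, hence $w\wedge\alpha=0$ and $\alpha\in A\cap F_w\cap S_W$.

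For the dimension count, observe that $\bigwedge^3 W\subset A\cap F_w\cap S_W$: indeed $\bigwedge^3 W\subset A$ because $W\in\Theta_A$, $\bigwedge^3 W\subset S_W$ by~\eqref{essewu}, and $w\wedge(\bigwedge^3 W)\subset\bigwedge^4 W=0$. Since $\overline{\alpha}\ne 0$ in $T_W=S_W/\bigwedge^3 W$ by hypothesis~(1), the vector $\alpha$ does not lie in $\bigwedge^3 W$, so $\alpha$ and a generator of $\bigwedge^3 W$ are linearly independent elements of $A\cap F_w\cap S_W$, giving $\dim(A\cap F_w\cap S_W)\ge 2$. This is exactly condition~(2) of \Ref{dfn}{malvagio}, so $[w]\in\cB(W,A)$, and then \Ref{crl}{cnesinerre} (applied since $C_{W,A}\ne\PP(W)$) yields $[w]\in\sing C_{W,A}$.

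The only computational point of substance is the identity $w\wedge\alpha_0=\omega_W\wedge\overline{\alpha}(w)$, i.e.\ the compatibility between wedging by $w$ and viewing $\overline{\alpha}$ as an element of $\Hom(W,V/W)$. Once~\eqref{tiutiu} is made explicit this is routine: writing $\alpha_0=\sum_j\eta_j\wedge u_j$ with $\eta_j\in\bigwedge^2 W$ and $u_j\in U$, and $w\wedge\eta_j=\ell_j(w)\,\omega_W$ for the functionals $\ell_j\in W^{\vee}$ that define $\overline{\alpha}$, one gets $w\wedge\alpha_0=\omega_W\wedge\sum_j\ell_j(w)u_j=\omega_W\wedge\overline{\alpha}(w)$. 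I expect this bookkeeping to be the main (and only minor) obstacle; the remainder is a direct appeal to the already established \Ref{dfn}{malvagio} and \Ref{crl}{cnesinerre}.
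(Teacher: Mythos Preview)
Your proof is correct and follows exactly the same approach as the paper: the paper's proof simply asserts that $\ov{\alpha}(w)=0$ if and only if $\alpha\in S_W\cap F_w$, then invokes Item~(2) of \Ref{dfn}{malvagio} and \Ref{crl}{cnesinerre}. You have spelled out in detail the two points the paper leaves implicit, namely the computation $w\wedge\alpha=\omega_W\wedge\ov{\alpha}(w)$ and the observation that $\bigwedge^3 W$ provides the second dimension in $A\cap F_w\cap S_W$.
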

\begin{proof}
We have $\ov{\alpha}(w)=0$ if and only if $\alpha\in   S_W\cap F_w$; thus Item~(2) of~\Ref{dfn}{malvagio} holds and the claim follows from~\Ref{crl}{cnesinerre}.
\end{proof}
\n
{\it Proof of~\Ref{prp}{nonridotta}.} We may assume throughout that $C_{W,A}\not=\PP(W)$. First we will consider 
\begin{equation}\label{priminc}
A\in(\BB_{\cA^{\vee}}\cup\BB_{\cC_2}\cup\BB_{\cE^{\vee}_2}\cup\BB_{\cF_1}).
\end{equation}
  By Section~2.3 of~\cite{ogtasso} we know the following:
\begin{enumerate}
\item[(1)]
If $A\in\BB_{\cF_1}$ is generic then $\Theta_A$ is a line.
\item[(2)]
If $A\in\BB_{\cE_2^{\vee}}$ is generic then $\Theta_A$ is a rational normal cubic and the ruled $3$-fold swept out by $\PP(W)$ for $W\in\Theta_A$ lies in a hyperplane of $\PP(V)$.
\item[(3)]
If $A\in\BB_{\cA^{\vee}}$ is generic then $\Theta_A$ is a projectively normal quintic elliptic curve and the ruled $3$-fold swept out by $\PP(W)$ for $W\in\Theta_A$ lies in a hyperplane of $\PP(V)$.
\item[(4)]
If $A\in\BB_{\cC_2}$ is generic then $\Theta_A$ is a projectively normal sextic elliptic curve and there exists a plane $\PP(U)\subset\PP(V)$ intersecting along a line each plane $\PP(W)$ for $W\in\Theta_A$.
\end{enumerate}
Suppose that~(1) holds and let $W\in\Theta_A$. Let $W'\in(\Theta_A\setminus\{W\})$; then $\PP(W\cap W')$ is a line. By~\Ref{crl}{cnesinerre}  $C_{W,A}$  is singular along $\PP(W\cap W')$.  Now suppose that one of Items~(2), (3) or~(4) holds. Let $W\in\Theta_A$ and
\begin{equation*}
C:=\bigcup_{W'\in(\Theta_A\setminus\{W\})}\PP(W\cap W').
\end{equation*}
If $A\in \BB_{\cA^{\vee}}$ is generic then $C$ is a cubic curve, this is easily checked. We claim that if 
 $A\in(\BB_{\cC_2}\cup\BB_{\cE^{\vee}_2})$ is generic then $C$ is a line. The fact is that in both cases there exists $U\in \Gr(3,V)$ such that $\dim(W'\cap U)=2$ for all $W'\in\Theta_A$ and hence $C=\PP(W\cap U)$.  Existence of such a $U$ for $A$ generic in $\BB_{\cC_2}$ was stated in Item~(4) above. 
Let's   prove that such a $U$ exists for  $A$ generic in $\BB_{\cE^{\vee}_2}$. Write $V=\Sym^2 L$ where $L$ is a complex vector-space of dimension $3$. We have embeddings
\begin{equation}\label{kappacca}
\qquad
\begin{matrix}
\PP(L) & \overset{k}{\hra} & \Gr(3,\Sym^2 L)\\
[l_0] & \mapsto & \{l_0\cdot l\mid l\in L\}
\end{matrix}
\qquad
\begin{matrix}
\PP(L^{\vee}) & \overset{h}{\hra} & \Gr(3,\Sym^2 L)\\
[f_0] & \mapsto & \{q\mid  f_0\in ker q\}.
\end{matrix}
\end{equation}
The maps $k$ and $h$ have the following geometric interpretation. Let $\cV_1\subset \PP(\Sym^2 L)$ be the subset of tensors of rank $1$ (modulo scalars) i.e.~the degree-$4$ Veronese surface: then
\begin{equation}
\im k=\{{\bf T}_{[\ell_0^2]} \cV_1 \mid [\ell_0^2]\in \cV_1\},\qquad
\im h=\{ \la C \ra  \mid \text{$C\subset \cV_1$ a conic }\}
\end{equation}
i.e.~$\im k$ is the set of projective tangent spaces to points of $\cV_1$ and $\im h$ is the set of planes spanned by conics on $\cV_1$.
Let $\cL$ be the Pl\"ucker(ample) line-bundle on  $\Gr(3,\Sym^2 L)$; one checks easily that
\begin{equation}\label{mentuccia}
k^{*}\cL\cong\cO_{\PP(L)}(3),\qquad
h^{*}\cL\cong\cO_{\PP(L^{\vee})}(3) 
\end{equation}
and that $H^0(k^{*})$, $H^0(h^{*})$ are surjective.
Let $R:=\PP(\ker f)$ where $[f]\in\PP(L^{\vee})$. Then $k(R)\subset \Gr(3,\Sym^2 L)$ is a rational normal cubic curve. Since the union of projective planes parametrized by $k(R)$ is contained in the hyperplane
\begin{equation*}
\{[\varphi]\in\PP(\Sym^2 L)\mid \la \varphi,f^2\ra=0\}
\end{equation*}
it is actually projectively equivalent to $\Theta_A$, see Proposition~2.12 of~\cite{ogtasso}. Let 
\begin{equation*}
U':=\{[\varphi]\in\PP(\Sym^2 L)\mid f\in\ker \varphi\}
\end{equation*}
Then $\dim(U'\cap W')=2$ for all $W'\in k(R)$; since $k(R)$ is projectively equivalent to $\Theta_A$ it follows that there exists $U\in \Gr(3,V)$ such that $\dim(W'\cap U)=2$ for all $W'\in\Theta_A$ as claimed.
 Now let's consider 
\begin{equation}\label{secondinc}
A\in(\BB_{\cA}\cup\BB_{\cC_1}\cup\BB_{\cE^{\vee}_1}\cup\BB_{\cF_2}).
\end{equation}
We may assume that $A$ is generic in $\BB^{\sF}_{\cX}$ for $\cX=\cA,\ldots,\cF_2$ where  $\sF$ is a basis of $V$ given by~\eqref{basedivu}.
Consider first $\BB_{\cA}^{\sF}$. By Table~\eqref{stratflaguno} we have
\begin{equation}\label{almenocinque}
\dim(A\cap[v_0]\wedge\bigwedge^2 V_{15})\ge 5.
\end{equation}
We have a natural embedding $\Gr(2,V_{15})\hra\PP([v_0]\wedge\bigwedge^2 V_{15})$ with image of codimension $3$; by~\eqref{almenocinque} it follows that there exists $W\in\Theta_A$ containing $v_0$ (actually a family of dimension at least $1$). By~\Ref{crl}{molteplici} and~\eqref{almenocinque} we get that  $\mult_{[v_0]}C_{W,A}\ge 4$. Now consider one of $\BB^{\sF}_{\cC_1}$ or $\BB^{\sF}_{\cE^{\vee}_1}$. Then $\Theta_A$ contains $W:=V_{02}$.  Let  $\ov{A}:=A/\bigwedge^3 W$ and $T_{W}$ be as in~\eqref{tiutiu}. We notice  that  the inequality which enters into the definition of $\BB^{\sF}_{\cC_1}$ or $\BB^{\sF}_{\cE^{\vee}_1}$ gives that  
\begin{equation}\label{kyoto}
\{[w]\in\PP(W)\mid \exists\ 0\not=\ov{\alpha}\in(T_{W}\cap\ov{A})\ \text{s.t.}\  \ov{\alpha}(w)=0\}
\end{equation}
has dimension at least $1$, in fact it contains a cubic curve in the case   of $\BB^{\sF}_{\cC_1}$ and it contains a conic in the case of $\BB^{\sF}_{\cE^{\vee}_1}$. This settles the case of $A\in(\BB^{\sF}_{\cC_1}\cup\BB^{\sF}_{\cE^{\vee}_1})$. 
 Lastly we consider  $\BB^{\sF}_{\cF_2}$. By the first inequality defining  $\BB^{\sF}_{\cF_2}$ we get that there exists $0\not=u\in V_{23}$ such that $W:=\la v_0,v_1,u\ra\in\Theta_A$. We claim that~\eqref{kyoto} has dimension at least $1$.
Let $v\in V_{23}$ be such that $\{u,v\}$ is a basis of $V_{23}$. 
Let
\begin{equation*}
\alpha\in(\bigwedge  ^2 V_{01}\wedge  V_{23}\oplus \bigwedge  ^2 V_{01}\wedge  V_{45}\oplus V_{01}\wedge  
   \bigwedge  ^2 V_{23}).
\end{equation*}
Then $\ov{\alpha}(v_0),\ov{\alpha}(v_1)\subset [\ov{v}]$ where $\ov{v}\in V/W$ is the class of $v$; in particular $\ov{\alpha}$ has non-trivial kernel. By the second inequality defining   $\BB^{\sF}_{\cF_2}$ we get that~\eqref{kyoto} has dimension at least $1$, in fact it contains a line. This concludes the proof.  
\qed
\subsection{Non-stable strata and plane sextics, II}\label{subsec:tramonto}
\setcounter{equation}{0}
In the present subsection we will prove  the following result.
\begin{prp}\label{prp:singiso}
Let $A\in\lagr$ and suppose that  it belongs to
\begin{equation}
\BB_{\cD}\cup\BB_{\cE_1}\cup\BB_{\cE_2}\cup\XX_{\cN_3}.
\end{equation}
Then there exists $W\in\Theta_A$ such that  $C_{W,A}$ is not a curve with simple singularities; more precisely   the following hold:
\begin{enumerate}
\item [(1)]
 If $A\in\BB_{\cD}$ or $A\in\BB_{\cE_1}$ then either $C_{W,A}=\PP(W)$ or else  $C_{W,A}$ has a point of multiplicity at least $4$.
\item [(2)]
 If $A$ is generic in $\BB_{\cE_2}$ or in $\XX_{\cN_3}$ then $C_{W,A}$ has  consecutive triple points.
\end{enumerate}
\end{prp}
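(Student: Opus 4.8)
The plan is to handle the four strata $\BB_{\cD}$, $\BB_{\cE_1}$, $\BB_{\cE_2}$, $\XX_{\cN_3}$ by producing, in each case, an explicit $W\in\Theta_A$ together with a point or a positive-dimensional locus of bad points on $C_{W,A}$, using the same flag-data bookkeeping that drove the proof of~\Ref{prp}{nonridotta}. Unlike the strata treated there, however, the obstruction here is not that $C_{W,A}$ is non-reduced or has a single point of very high multiplicity, but that it exhibits \emph{consecutive triple points} (two infinitely near triple points), which by the definition of simple singularity recalled before~\Ref{subsec}{sestinw} violates condition~(iii). So the real content is a more refined local analysis at a chosen $[v_0]\in\PP(W)$.

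\begin{proof}[Proof strategy]

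As in~\Ref{prp}{nonridotta} we may assume $C_{W,A}\not=\PP(W)$ throughout. For the multiplicity-$\ge 4$ statement (Item~(1)) the argument is parallel to the $\BB_{\cA}$ case: for $A$ generic in $\BB^{\sF}_{\cD}$ (respectively $\BB^{\sF}_{\cE_1}$) I would read off from Table~\eqref{stratflaguno} the flag inequality $\dim(A\cap([v_0]\wedge\bigwedge^2 V_{14}))\ge 3$ (resp.\ the $\cE_1$ inequalities), exhibit a $W\in\Theta_A$ with $v_0\in W$ by intersecting with the Pl\"ucker image $\Gr(2,\cdot)\hookrightarrow\PP([v_0]\wedge\bigwedge^2(\cdot))$, and then invoke~\Ref{crl}{molteplici} to conclude $\mult_{[v_0]}C_{W,A}\ge \dim(A\cap F_{v_0})-1\ge 4$. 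This step is routine once the right $W$ is located.

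The substantive part is Item~(2), the consecutive triple points for generic $A\in\BB_{\cE_2}$ and $A\in\XX_{\cN_3}$. Here I would fix $W$ and a point $[v_0]\in\PP(W)$ with $\ov{k}=\dim(A\cap F_{v_0})-1=3$, so that by~\Ref{prp}{primisarto} the sextic has $\mult_{[v_0]}C_{W,A}\ge 3$ and its leading term $g_3(w)=\mu\det(\psi^{v_0}_w|_{\ov{K}})$ is the determinant of a net of $3\times 3$ symmetric (Pl\"ucker) forms. To detect a second, infinitely near triple point I would blow up $[v_0]$ and analyze the strict transform: using the description of $\ov{K}=\rho^{v_0}_{V_0}(A\cap F_{v_0})/\bigwedge^2 W_0$ via~\eqref{kappanucleo} and the projected Grassmannian ${\mathbb Gr}(2,V_0)_{W_0}$ from~\Ref{clm}{roncisvalle}, I would show that for these two strata the plane cubic $V(g_3)$ is of a special type (a triple line, or a cubic with a triple point in the direction of the exceptional divisor) and that the next graded piece $g_4$ vanishes along the same point of the exceptional curve, forcing the blow-up to carry another point of multiplicity~$3$ over $[v_0]$. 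The flag conditions defining $\XX_{\cN_3}$ in Table~\eqref{stratflaguno} — the nested inequalities involving $\bigwedge^2 V_{01}\wedge V_{23}$, $\langle v_0\wedge v_1\wedge v_4, v_0\wedge v_2\wedge v_3\rangle$, and $\bigwedge^3 V_{03}$ — are tailored precisely to produce this two-step degeneration, so I would extract the explicit elements $\alpha,\beta,\gamma\in A$ they guarantee and compute $\psi^{v_0}_w$ on them directly, choosing coordinates on $W_0$ adapted to the filtration.

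The main obstacle I expect is the blow-up computation in Item~(2): \Ref{prp}{primisarto} only controls the Taylor expansion of $\det\nu_B$ through order $\ov{k}$, so to see the \emph{second} triple point one must compute $g_{\ov{k}}$ and $g_{\ov{k}+1}$ explicitly and track how $\det(\psi^{v_0}_w|_{\ov{K}})$ and its first correction behave along a chosen tangent direction. This requires pinning down the symmetric forms $\psi^{v_0}_w$ concretely from the generators of $A\cap F_{v_0}$ supplied by the $\XX_{\cN_3}$ (resp.\ $\BB_{\cE_2}$) flag data, and verifying that the resulting net of conics degenerates in the manner that yields consecutive triple points rather than, say, an ordinary triple point or a simpler singularity. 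I would organize this by reducing to the local model on $W_0\cong\CC^2$ and identifying $g_3$ with a binary cubic whose vanishing, together with the leading behaviour of $g_4$, matches the normal form for consecutive triple points.

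\end{proof}
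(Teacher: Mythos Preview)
Your plan for Item~(1) has a real gap. For generic $A\in\BB^{\sF}_{\cD}$ the flag condition gives only $\dim(A\cap[v_0]\wedge\bigwedge^2 V_{14})=3$, and for $\BB^{\sF}_{\cE_1}$ the analogous intersection again has dimension $3$; in both cases one gets $\dim(A\cap F_{v_0})=3$, not $\ge 5$. So~\Ref{crl}{molteplici} only yields $\mult_{[v_0]}C_{W,A}\ge 2$, which (as the paper explicitly remarks) ``is not good enough''. The parallel with $\BB_{\cA}$ you invoke breaks precisely here: in that case the flag inequality was $\ge 5$, here it is $\ge 3$. The paper's fix is the classical description of~\Ref{clm}{athos}: one writes $C_{W,A}$ locally as $V(\det(\ov{q}_A+\ov{q}_w))$ and observes that the line $\PP(\ker\ov{q}_A)\subset\PP(\bigwedge^2 V_0/\bigwedge^2 W_0)$ actually lies inside the projected Grassmannian $\Gr(2,V_0)_{W_0}$ (for $\BB_{\cD}$ it is the projection of the conic $\Theta_A$, for $\BB_{\cE_1}$ of a tangent plane). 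By~\Ref{clm}{roncisvalle} every Pl\"ucker quadric $\ov{q}_w$ then vanishes on this line, and~\Ref{prp}{conodegenere} forces the determinant to vanish to order $\ge 4$ at $[v_0]$. This geometric step---that the kernel of $\ov{q}_A$ sits on $\Gr(2,V_0)_{W_0}$---is the missing idea in your outline.

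For Item~(2) your setup is correct for $\BB_{\cE_2}$ (there $\dim(A\cap F_{v_0})=4$, so $\ov{k}=3$), but for $\XX_{\cN_3}$ the generic $A$ has $\dim(A\cap F_{v_0})=3$, i.e.\ $\ov{k}=2$. So the first nontrivial task is to show $g_2=0$; one cannot start from a triple point for free. The paper does this, and all the subsequent vanishing ($g_3=c_{03}y^3$ with $c_{03}\not=0$, then $c_{40}=c_{50}=c_{31}=0$), not by analyzing $\psi^{v_0}_w|_{\ov K}$ and the blow-up as you propose, but by a chain of explicit criteria for the low-order Taylor coefficients of $\det(\ov{q}_A+x\ov{q}_{w_1}+y\ov{q}_{w_2})$ developed in Appendix~A (\Ref{prp}{conodegenere}, \Ref{prp}{zeronucleo}, \Ref{prp}{faraone}, \Ref{clm}{abetedaddario}, \Ref{clm}{labomba}). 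These criteria translate each coefficient vanishing into a concrete statement about $\ov{q}_A^{\vee}$ restricted to certain images $\wt{\ov{q}}_{w_i}(K)$, which one checks against the explicit generators of $A$ supplied by the flag data. Your proposed route via \Ref{prp}{primisarto} and $g_{\ov k+1}$ could in principle be pushed through, but it would require reproving much of that appendix; the paper's approach is more systematic and is what the $\XX_{\cN_3}$ case actually uses.
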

We will prove~\Ref{prp}{singiso} at the end of the subsection: first we will go through some preliminaries. We start out by giving a \lq\lq classical\rq\rq description  of $C_{W,A}$ in a neighborhood of $[v_0 ]$ for $(W,A)\in\wt{\Sigma}$ and $[v_0]\in\PP(W)$.
 For this we will suppose that there 
 exists $V_0\in\Gr(5,V)$ such that
\begin{equation}\label{ipogenerica}
v_0\notin V_0,\qquad \bigwedge^ 3 V_0\pitchfork A.
\end{equation}
By~\eqref{uaidelta} the second requirement (transversality) is equivalent to $Y_{\delta_V(A)}\not=\PP(V^{\vee})$. Let $\sD$ be the
  direct-sum decomposition 
 \begin{equation}
V=[v_0 ]\oplus V_0.
\end{equation}
Under the above hypothesis there is a \lq\lq classical\rq\rq description of  $Y_A$ in a neighborhood of $[v_0 ]$ as the discriminant hypersurface of a linear system of quadrics - see Section~1.7 of~\cite{ogtasso} - that goes as follows. We have a quadratic form $q_A=q^{\sD}_A(0)\in \Sym^2(\bigwedge^ 2 V_0)^{\vee}$
characterized as follows:
\begin{equation}\label{giannibrera}
\wt{q}_A(\alpha)=\gamma\iff
(v_0\wedge\alpha-\gamma)\in A. 
\end{equation}
Here $\wt{q}_A\colon \bigwedge^ 2 V_0\to \bigwedge^ 2 V_0^{\vee}$ is the symmetric map associated to $q_A$ and we make the identification 
\begin{equation}
\begin{matrix}
\bigwedge^3 V_0 & \overset{\sim}{\lra} &  \bigwedge^ 2 V_0^{\vee} \\
\gamma & \mapsto &  \alpha\mapsto \vol(v_0\wedge\alpha\wedge\gamma).
\end{matrix}
\end{equation}
 For $v\in V$ let Let $q_v\in \Sym^2(\bigwedge^ 2 V_0)^{\vee}$ be the Pl\"ucker quadratic form defined by 
\begin{equation}\label{yara}
q_v(\alpha):=\vol(v_0\wedge v\wedge\alpha\wedge\alpha).
\end{equation}
Notice that (via the obvious identification) $q_v=\phi^{v_0}_{V_0}$ where $\phi^{v_0}_{V_0}$ is defined by~\eqref{quadricapluck}. 
Lastly we make the identification
\begin{equation}\label{apertoaffine}
\begin{matrix}
 V_0 & \overset{\sim}{\lra} & \PP(V)\setminus\PP(V_0) \\
v & \mapsto &  [v_0+v].
\end{matrix}
\end{equation}
(Thus $0\in V_0$ corresponds to $[v_0]$.)
By~\cite{ogtasso} we have the following local description of $Y_A$: 
\begin{equation}
Y_A\cap V_0=V(\det(q_A+q_v)).
\end{equation}
Now suppose that $v_0\in W$ and let $W_0:=W\cap V_0$; there is a similar description of $C_{W,A}\cap(\PP(W)\setminus\PP(W_0))$ which goes as follows. 
First notice that the restriction of~\eqref{apertoaffine} to $W_0$ may be identified with~\eqref{minnie}. 
Next notice that  $\bigwedge^ 2 W_0$ is in the kernel of $q_A$ and also in the kernel of $q_w$ for $w\in W_0$. Let
\begin{equation}\label{pioggia}
\ov{q}_A,\ov{q}_w\in \Sym^2(\bigwedge^ 2 V_0/\bigwedge^ 2 W_0)^{\vee},\qquad w\in W_0
\end{equation}
 be the induced quadratic forms. Below is our \lq\lq classical\rq\rq description of  $C_{W,A}$ near $[v_0 ]$.
\begin{clm}\label{clm:athos}
Keep hypotheses and notation as above - in particular assume that~\eqref{ipogenerica} holds. Then
\begin{equation}\label{cilocale}
C_{W,A}\cap(\PP(W)\setminus\PP(W_0))=V(\det(\ov{q}_A+\ov{q}_w))
\end{equation}
where $w\in W_0$ - see~\eqref{minnie}.  
\end{clm}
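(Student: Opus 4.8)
With hypotheses as above (in particular the genericity hypothesis \eqref{ipogenerica}: $v_0\notin V_0$ and $\bigwedge^3 V_0\pitchfork A$), we have
$$
C_{W,A}\cap(\PP(W)\setminus\PP(W_0))=V(\det(\ov{q}_A+\ov{q}_w)),\qquad w\in W_0.
$$

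Let me think about what the two sides actually are, and how they connect.

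**Understanding the right-hand side.** The quantity $q_A$ is a quadratic form on $\bigwedge^2 V_0$ characterized by \eqref{giannibrera}: $\wt q_A(\alpha)=\gamma$ iff $(v_0\wedge\alpha-\gamma)\in A$, where $\gamma\in\bigwedge^3 V_0$ is identified with an element of $(\bigwedge^2 V_0)^\vee$ via $\alpha\mapsto\vol(v_0\wedge\alpha\wedge\gamma)$. Under \eqref{trezeguet} $\lambda^{v_0}_{V_0}\colon\bigwedge^2 V_0\overset{\sim}{\to}F_{v_0}$, the graph of $\wt q_A$ is exactly $A$ (intersected with the relevant subspaces). The Plücker form $q_v$ is $\phi^{v_0}_v$ under the same identification. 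The earlier excerpt (the discussion around \eqref{giannibrera}) already establishes the analogous statement for the EPW-sextic itself, namely $Y_A\cap V_0=V(\det(q_A+q_v))$; so the right-hand side here is just the "restricted to $W_0$" version of that.

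**Understanding the left-hand side.** By definition $C_{W,A}=C_B$ with $B=A/\bigwedge^3 W$, cut out as the degeneracy locus of $\nu_B\colon G\to(\cE_W/B)\otimes\cO_{\PP(W)}$. So the left side is governed by the Lagrangian $G_{v_0+w}=F_{v_0+w}/\bigwedge^3 W$ inside $\cE_W=(\bigwedge^3 W)^\bot/\bigwedge^3 W$, and $[v_0+w]\in C_{W,A}$ iff $G_{v_0+w}\cap B\neq\{0\}$.

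**The plan.** The cleanest route is to run the same computation that produced $Y_A\cap V_0=V(\det(q_A+q_v))$, but carried out inside $\cE_W$ rather than inside all of $\bigwedge^3 V$, and then observe that passing to the quotient by $\bigwedge^2 W_0$ is exactly what turns $q_A,q_w$ into $\ov q_A,\ov q_w$. Concretely, first I would fix the identification $F_{v_0+w}\cong\bigwedge^2 V_0$ via a form analogous to \eqref{trezeguet} (with $v_0+w$ in place of $v_0$; note $v_0+w\notin V_0$ since $w\in W_0\subset V_0$ and $v_0\notin V_0$, so this is an isomorphism), and show that under it the condition $F_{v_0+w}\cap A\neq\bigwedge^3 W$ becomes the degeneracy of the symmetric map $\wt q_A+\wt q_w$ (this is precisely the local description of $Y_A$ recalled above, re-expanded at the nearby point $[v_0+w]$ using that $q_{v_0+w}$ differs from the base form by the linear-in-$w$ Plücker contribution $q_w$). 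Second, I would restrict attention to the subspace relevant to $W$: since $\bigwedge^2 W_0\subset F_{v_0}\cap\bigwedge^3 W$ lies in the kernel of both $q_A$ and $q_w$ (as already noted before \eqref{pioggia}), the induced forms $\ov q_A,\ov q_w$ on $\bigwedge^2 V_0/\bigwedge^2 W_0$ are well defined, and $G_{v_0+w}\cap B\neq\{0\}$ is equivalent to $\det(\ov q_A+\ov q_w)=0$. The key identification is that $\cE_W$ and the relevant graph construction there match the quotient $(\bigwedge^2 V_0/\bigwedge^2 W_0)$-picture, so that the degeneracy locus of $\nu_B$ restricted to the affine chart $W_0$ is literally $V(\det(\ov q_A+\ov q_w))$.

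**Where the hypothesis and the obstacle enter.** The transversality $\bigwedge^3 V_0\pitchfork A$ is what guarantees (via \eqref{uaidelta}, i.e.\ $Y_{\delta_V(A)}\neq\PP(V^\vee)$) that $\wt q_A$ is actually a well-defined finite symmetric map — equivalently that $A$ meets the graph chart transversally so the discriminant description holds at $[v_0]$ and all nearby $[v_0+w]$. The main technical obstacle I expect is bookkeeping the two quotients simultaneously: one must check that passing from the ambient $\bigwedge^3 V$ description to the $\cE_W$ description (quotient by $\bigwedge^3 W$, intersection with $(\bigwedge^3 W)^\bot$) is compatible, under \eqref{giquoz} and \eqref{trezeguet}, with passing from $\bigwedge^2 V_0$ to $\bigwedge^2 V_0/\bigwedge^2 W_0$ — and that $B\subset\cE_W$ corresponds to the graph of $\ov q_A$. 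Once that compatibility is pinned down, the determinantal description of $C_B=C_{W,A}$ transports verbatim from the determinantal description of $Y_A$, and both give $V(\det(\ov q_A+\ov q_w))$ in the chart \eqref{minnie}.
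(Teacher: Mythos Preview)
Your proposal is correct and follows essentially the same approach as the paper's proof. The paper first establishes the set-theoretic equality via the isomorphism $\ker(q_A+q_w)\overset{\sim}{\to}A\cap F_{v_0+w}$, $\alpha\mapsto(v_0+w)\wedge\alpha$, and then handles the scheme-theoretic equality (what you call the ``bookkeeping'' obstacle) by choosing $U\subset V$ complementary to $W$ and writing down the explicit Lagrangian direct-sum decomposition of $\cE_W$ into the images of $F_{v_0}$ and $\bigwedge^3 V_0$, which makes the identification of the family of quadratic forms with $\{\ov q_A+\ov q_w\}_{w\in W_0}$ transparent.
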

\begin{proof}
We have an isomorphism
\begin{equation}\label{rossetto}
\begin{matrix}
\ker(q_A+q_w) & \overset{\sim}{\lra} & A\cap F_{v_0+w} \\
\alpha & \mapsto & (v_0+w)\wedge\alpha
\end{matrix}
\end{equation}
The set-theoretic equality of the  two sides of~\eqref{cilocale} follows at once from~\eqref{supses} and~\eqref{rossetto}. 
In order to prove scheme-theoretic equality one may describe $C_{W,A}\cap  (\PP(W)\setminus\PP(W_0))$ as the degeneracy locus of a family of symmetric  maps parametrized by $W_0$ as follows. Let $U\subset V$ be complementary to $W$. We have a natural identification 
 \begin{equation}\label{warcraft}
(\bigwedge^ 2 W)\wedge U\oplus W\wedge(\bigwedge^ 2 U)\overset{\sim}{\lra}
\cE_W.
\end{equation}
Given the above identification we have a direct-sum decomposition into Lagrangian subspaces 
 \begin{equation}\label{hellokitty}
\cE_W=([v_0 ]\wedge W_0\wedge U\oplus [v_0 ]\wedge(\bigwedge^ 2 U))\oplus
((\bigwedge^ 2 W_0)\wedge U\oplus W_0\wedge(\bigwedge^ 2 U)).
\end{equation}
(The first and  second summand are the intersections of the left-hand side of~\eqref{warcraft} and  $F_{v_0 }$ and $\bigwedge^ 3 V_0$ respectively.) Given the above decomposition the scheme $C_{W,A}\cap(\PP(W)\setminus\PP(W_0))$ is described  as the degeneracy locus of a family of quadratic forms. One identifies the family of quadratic forms with $\{(\ov{q}_A+\ov{q}_w)\}_{w\in W_0}$ and the claim follows.
\end{proof}
\begin{rmk}\label{rmk:cigeom}
Let ${\mathbb Gr}(2,V_0)_{W_0}\subset\PP(\bigwedge^ 2 V_0/\bigwedge^ 2 W_0)$ be the projection of ${\mathbb Gr}(2,V_0)$ from $\bigwedge^ 2 W_0$ - see~(\ref{prograss}). Let
\begin{equation}
Z_{W_0,A}:=V(\ov{q}_A)\cap {\mathbb Gr}(2,V_0)_{W_0}\subset
\PP(\bigwedge^ 2 V_0/\bigwedge^ 2 W_0).
\end{equation}
 As $w$ varies in $W_0$ the quadrics $V(\ov{q}_A+\ov{q}_w)$ vary in an open affine neighborhood of $V(\ov{q}_A)$ in $ |\cI_{Z_{W_0,A}}(2)|$ - see~\Ref{clm}{roncisvalle}. Thus the singularity of $C_{W,A}$ at $[v_0]$ is determined by $Z_{W_0,A}$.
\end{rmk}
\n
{\it Proof of~\Ref{prp}{singiso}.} First we will prove the statement of the proposition for $A\in\BB_{\cD}\cup\BB_{\cE_1}$.  We may suppose  that  $C_{W,A}\not=\PP(W)$.
 We may assume that there is a  basis $\sF=\{v_0,\ldots,v_5\}$ of $V$ such that $A$ is generic in $\BB^{\sF}_{\cD}$ or in $\BB^{\sF}_{\cE_1}$ and hence one of the following holds:
\begin{enumerate}
\item[(1)]
$\dim A\cap([v_0]\wedge\bigwedge^2 V_{14})=3$ and 
  $\Theta_A$ is a smooth conic parametrizing planes containing $[v_0]$, see Section~2.3 of~\cite{ogtasso}.
\item[(2)]
$A\supset[v_0]\wedge\bigwedge^2 V_{12}$ and $\dim A\cap([v_0]\wedge V_{12}\wedge V_{35})=2$.
\end{enumerate}
 If~(1) holds let $W$ be an arbitrary element of $\Theta_A$, if~(2)  holds let $W:=V_{02}$.  We will prove that    $C_{W,A}$ has multiplicity at least $4$ at $[v_0]$. Notice that in both cases  
 \begin{equation}\label{almentre}
 \dim A\cap F_{v_0}\ge 3. 
\end{equation}
Since $A$ is generic  in $\BB^{\sF}_{\cD}$ or in $\BB^{\sF}_{\cE_1}$ we may assume that~\eqref{almentre} is an equality. Thus $\mult_{[v_0]}C_{W,A}\ge 2$ by~\Ref{crl}{molteplici}: that  is not good enough. We will apply~\Ref{clm}{athos}. First we must make sure that there exists $V_0\in\Gr(5,V)$ for which~\eqref{ipogenerica} holds. As is easily checked $V_{15}$ will do for $A$ generic in $\BB^{\sF}_{\cD}$ or in $\BB^{\sF}_{\cE_1}$.  Next we notice that the line $\PP(\ker \ov{q}_A)$ is contained in $\Gr(2,V_0)_{W_0}$ (notice that $W_0=V_{12}$ if Item~(2) holds). In fact if~(1) holds the projection   
$\mu\colon\Gr(2,V_0)\dashrightarrow \Gr(2,V_0)_{W_0}$ maps the conic $\rho^{v_0}_{V_0}(\Theta_A)$    to $\PP(\ker \ov{q}_A)$. If~(2) holds the plane $\PP(\rho^{v_0}_{V_0}(A\cap F_{v_0}))$ is tangent  to $\Gr(2,V_0)$ at $V_{12}$ and hence is mapped by
 $\mu$ to $\Gr(2,V_0)_{W_0}$; on the other hand the image by $\mu$ is exactly $\PP(\ker \ov{q}_A)$. Since the line $\PP(\ker \ov{q}_A)$ is contained in $\Gr(2,V_0)_{W_0}$ every $\ov{q}_w$ (for $w\in W_0$) vanishes on  $\PP(\ker \ov{q}_A)$ by~\Ref{clm}{roncisvalle};  by~\Ref{crl}{molteplici} and~\Ref{prp}{conodegenere}
  we get that $\mult_{[v_0]}C_{W,A}\ge 4$. 
Next we suppose that $A\in\BB_{\cE_2}$.  Thus we may assume that $A$ is generic in  $\BB^{\sF}_{\cE_2}$ where  $\sF=\{v_0,\ldots,v_5\}$ 
 is a  basis of $V$. By Proposition~2.20 of~\cite{ogtasso} we know that $\Theta_A$ is a rational normal cubic curve and that all planes parametrized by $\Theta_A$ contain $[v_0]$; as $W$ we choose an arbitrary element of $\Theta_A$.  We will prove that $C_{W,A}$ has  consecutive triple points at $[v_0]$; for the reader's convenience we notice that this holds if and only if  there exists a basis $\{x,y\}$ of $W^{\vee}_0$ such that
\begin{equation}\label{triplocons}
C_{W,A}\cap W_0= V( y^3+c_{22} x^2y^2+c_{13} xy^3+c_{04} y^4+
c_{41}x^4 y+c_{32}x^3 y^2+\ldots).
\end{equation}
More precisely: the tangent cone to $C_{W,A}$ at $[v_0]$ is $V(y^3)$ and  the coefficients of $x^4,x^3y,x^5$  (in the generator of the ideal of $C_{W,A}\cap W_0$) are zero. 
 First we notice that~\eqref{ipogenerica} holds with $V_0:=V_{15}$ (if $A$ is generic   in  $\BB^{\sF}_{\cE_2}$) and hence we may apply~\Ref{clm}{athos}. By genericity of $A$ in $\BB^{\sF}_{\cE_2}$ the inequality in the definition of $\BB^{\sF}_{\cE_2}$ is an equality; thus $\dim(\ker\ov{q}_A)=3$. Moreover $\PP(\ker\ov{q}_A)\cap\Gr(2,V_0)_{W_0}$ is a (smooth) conic $C$, namely the projection of $\rho^{v_0}_{V_0}(\Theta_A)$ from $W_0$. Let $\ov{K}:=\ker\ov{q}_A$.  By~\Ref{clm}{roncisvalle} the intersection with $\PP(\ov{K})$ of the quadrics $V(\ov{q}_w)$ (for $w\in W_0$) equals $C$. Thus there exists  $0\not= w_1\in W_0$ such that  $\ov{q}_{w_1}|_{\ov{K}}=0$. We complete $\{w_1\}$ to  a basis $\{w_1,w_2\}$ of $W_0$; thus  $V(\ov{q}_{w_2})\cap \PP(\ov{K})=C$ and hence
 $\ov{q}_{w_2}|_{\ov{K}}$ is a non-degenerate quadratic form.  In a suitable basis of $\bigwedge^2 V_0/\bigwedge^2 W_0$ we have
\begin{equation}\label{siffredi}
\ov{q}_A+x\ov{q}_{w_1}+y\ov{q}_{w_2}=
\begin{pmatrix}
y            & 0           &  0  &  m_{1,4} &      \cdots & m_{1,9} \\
0          &  y              & 0  &   m_{2,4}     &  \cdots & m_{2,9} \\
0 &       0        &      y   &   m_{3,4}         &  \cdots  & m_{3,9} \\
m_{4,1}            &    m_{4,2}              &  m_{4,3}    &   1+m_{4,4}            & \cdots  & m_{4,9} \\
\vdots     &      \vdots &    \vdots &   \vdots         & \ddots & \vdots \\
m_{9,1}            &  m_{9,2}           &   m_{9,3}   &    m_{9,4}      &  \cdots  & 1+m_{9,9} \\
\end{pmatrix}
\end{equation}
where each $m_{i,j}\in\CC[x,y]_1$ is  homogeneous of degree $1$. 
 A straightforward computation gives that
\begin{equation*}
\det(\ov{q}_A+x\ov{q}_{w_1}+y\ov{q}_{w_2})= y^3+c_{22} x^2y^2+c_{13} xy^3+c_{04} y^4+
c_{41}x^4 y+c_{32}x^3 y^2+\ldots
\end{equation*}
 and hence $C_{W,A}$ has consecutive triple points at $[v_0]$ - see~\eqref{triplocons}. It remains to prove the statement of~\Ref{prp}{singiso} regarding  $\XX_{\cN_3}$. We may assume   that $A$ is generic in $\XX_{\cN_3}^{\sF}$  where $\sF=\{v_0,v_1,\ldots,v_5\}$ is a basis of $V$. By genericity  all the dimension inequalities defining $\XX_{\cN_3}^{\sF}$ are in fact equalities, in particular 
 \begin{equation}\label{dimtre}
  \dim(A\cap F_{v_0})=3.
\end{equation}
Moreover $A$ contains
\begin{equation}\label{quattrogatti}
\begin{array}{l}
 v_0\wedge v_1\wedge (a v_2+b v_3) \\
 v_0\wedge(v_1\wedge (c v_2+d v_3) +v_1\wedge v_4 + v_2 \wedge v_3) \\
 v_0\wedge(e v_1\wedge v_4+f v_2\wedge v_3+ g v_1\wedge v_5 + h v_2\wedge v_4 +l v_3\wedge v_4) \\
 v_0\wedge(e' v_1\wedge v_4+f' v_2\wedge v_3+ g' v_1\wedge v_5 + h' v_2\wedge v_4 +l 'v_3\wedge v_4) + v_1\wedge v_2\wedge v_3.
\end{array}
\end{equation}
 (We have rescaled some of the $v_i$'s.) By genericity we also have
 \begin{equation}\label{invertibili}
a\not=0\not=(ad-bc).
\end{equation}
Define $v_2',v_4'\in V_{15}$ by
\begin{equation*}
\begin{array}{lll}
v_2 & = & v_2'-a^{-1}b v_3,\\
v_4 & =& -cv_2'+(a^{-1}bc-d)v_3+v_4'.
\end{array}
\end{equation*}
 Thus $\{v_0,v_1,v_2',v_3,v_4',v_5\}$ is a new basis of $V$. Replacing $v_2$ and $v_4$ by the above expressions   we get that $A$ contains
 \begin{equation}\label{dallamorandi}
\begin{array}{l}
 v_0\wedge v_1\wedge v'_2 \\
 v_0\wedge(v_1\wedge v'_4 +v_2'\wedge  v_3) \\
 v_0\wedge( v_1\wedge u+\omega) \\
 v_0\wedge(v_1\wedge x + \tau) + v_1\wedge v'_2\wedge v_3\\
\end{array}
\end{equation}
where $\omega,\tau\in\bigwedge^2\la v'_2,v_3,v'_4\ra$ and hence are decomposable. By genericity of $A$ 
we have $v'_2\notin\supp \omega$; thus after a suitable rescaling of $v_0\wedge( v_1\wedge u+\omega)$ 
we may assume that
\begin{equation*}
\omega=(sv_3+v'_4)\wedge(v_3+t v'_2)
\end{equation*}
where $s,t\in\CC$. 
Let 
\begin{equation*}
w_1:=v_1,\quad w_2:= v'_2-sv_1,\quad w'_1:=s v_3+v'_4,\quad w'_2:= v_3+t v'_2.
\end{equation*}
By genericity of $A$ the span $\la w_1,w_2,w'_1,w'_2\ra$ does not contain $u$; it follows that $\{v_0,w_1,w_2,w'_1,w'_2,u\}$ is yet another basis of $V$. Rewriting the elements of~\eqref{dallamorandi} in terms of the last basis we get that $A$ contains 
 \begin{equation}\label{capsula}
\begin{array}{l}
 v_0\wedge w_1\wedge w_2 \\
 v_0\wedge(w_1\wedge w'_1 +w_2\wedge  w'_2) \\
 v_0\wedge( w_1\wedge u+ w'_1\wedge w'_2) \\
 v_0\wedge(w_1\wedge \zeta + \xi) + w_1\wedge w_2\wedge w'_2\\
\end{array}
\end{equation}
where 
\begin{equation}\label{cruciale}
\xi\in \bigwedge^2\la w_2,w'_1,w'_2\ra
\end{equation}
(The last statement holds because $\tau\in\bigwedge^2\la v'_2,v_3,v'_4\ra$.) Let $W:=\la v_0,w_1,w_2\ra$; clearly $W\in\theta_A$. We will prove that $C_{W,A}$ has triple consecutive points at $[v_0]$. 
First notice that there exists  $V_0\in\Gr(5,V)$ such that~\eqref{ipogenerica} holds; in fact  $V_0:=V_{15}$ will do (for generic $A\in \XX_{\cN_3}^{\sF}$). Thus we may appply~\Ref{clm}{athos}. Let $W_0:=W\cap V_0$ and
 $\{x,y\}$ be the basis of $W_0^{\vee}$ dual to $\{w_1,w_2\}$. By~\eqref{dimtre} we have $\dim(A\cap F_{v_0})=3$; thus~\Ref{crl}{molteplici} gives that
\begin{equation*}
C_{W,A}\cap W_0=V(g_2+g_3+\ldots + g_6),\qquad g_d=\sum_{i+j=d}c_{ij}x^i y^j.
\end{equation*}
 Let  $K:=\ker\ov{q}_A=\rho^{v_0}_{V_0}(A\cap F_{v_0})/\bigwedge^2(W_0)$. Then $K=\la\ov{\alpha},\ov{\beta}\ra$ where
\begin{equation}\label{kappalives}
\alpha:= (w_1\wedge w'_1+w_2\wedge w'_2),\qquad
\beta:=(w_1\wedge u+w'_1\wedge w'_2).
\end{equation}
Let $w\in W_0$; the matrix of $\ov{q}_w|_K$ with respect to the basis given by~\eqref{kappalives} is given by Table~\eqref{dueperdue}. 
\begin{table}[t]\tiny
\caption{Matrix of $\ov{q}_w|_K$ }\label{dueperdue}
\centering
\renewcommand{\arraystretch}{1.60}
\begin{tabular}{c  c  c }
& $\ov{\alpha}$ &  $\ov{\beta}$ \\  \cmidrule{2-3}
$\ov{\alpha}$ & $0$ & $0$  \\  \cmidrule{2-3}
 $\ov{\beta}$ & $0$ & $2\vol(v_0\wedge w\wedge w_1\wedge w'_1\wedge w'_2\wedge u)$  \\  \cmidrule{2-3}
\end{tabular}
\end{table}
In particular $\ov{q}_w|_K$ is degenerate 
 and hence $g_2=0$ by~\eqref{marzamemi} and~\Ref{clm}{roncisvalle}. Let's prove that
\begin{equation}\label{cizerotre}
g_3=c_{03}y^3,\qquad c_{03}\not=0.
\end{equation}
The restriction $q_{w_1}|_{K}$ is zero and hence $g_3(w_1)=0$ by~\Ref{prp}{zeronucleo}; thus in order to prove~\eqref{cizerotre} it suffices to show that 
\begin{equation}\label{nonullo}
\text{$g_3(x_0, y_0 )\not=0$ if $y_0\not=0$.}
\end{equation}
Let $w=(x_0 w_1+ y_0 w_2)$ with $y_0\not=0$; thus $\ker(\ov{q}_w|_K)=\la (w_1\wedge w'_1+w_2\wedge w'_2)\ra$. The hypotheses of~\Ref{clm}{abetedaddario} are satisfied by $q_{*}:=\ov{q}_A$ and $q:=\ov{q}_w$; it follows that $g_3(x_0,y_0)=0$ if and only if
\begin{equation}\label{bellinzona}
\ov{q}_A^{\vee}((x_0 w_1+ y_0 w_2)\wedge (w_1\wedge w'_1+w_2\wedge w'_2))=0.
\end{equation}
Of course here we are tacitly identifying $(\bigwedge^2 V_0/\bigwedge^2 W_0)^{\vee}$ with $\Ann(\bigwedge^2 W_0)\subset \bigwedge^3 V_0$.   In order to compute the left-hand side of~\eqref{bellinzona} we notice that 
\begin{equation*}
\wt{\ov{q}}_A^{-1}(w_1\wedge w_2\wedge w'_2)=-w_1\wedge\zeta-\xi.
\end{equation*}
In fact the above equation follows from~\eqref{giannibrera} and~\eqref{capsula}. Let 
\begin{equation*}
\wt{\ov{q}}_A^{-1}(w_1\wedge w_2\wedge w'_1)=\ov{\gamma}\in\bigwedge^2 V_0/
\la w_1\wedge w_2, (w_1\wedge w_2,w_1\wedge w'_1+w_2\wedge w'_2),
(w_1\wedge u+w'_1\wedge w'_2)\ra.
\end{equation*}
(Here $\gamma\in \bigwedge^2 V_0$.)   
Then - see~\eqref{giannibrera} - we have
\begin{equation*}
(v_0\wedge\gamma-w_1\wedge w_2\wedge w'_1)\in A.
\end{equation*}
 We notice that  we have
\begin{equation}\label{urine}
v_0\wedge \gamma\wedge w_1\wedge w_2\wedge w'_2=0
\end{equation}
In fact the above equality holds because  $A$ is a lagrangian subspace containing the element on the fourth line of~\eqref{capsula} and because~\eqref{cruciale} holds. From the above equations we get that
\begin{equation*}
\ov{q}_A^{\vee}((x_0 w_1+ y_0 w_2)\wedge (w_1\wedge w'_1+w_2\wedge w'_2))=
y_0^2\vol(v_0\wedge\gamma\wedge w_1\wedge w_2\wedge w'_1).
\end{equation*}
Since $A$ is generic  
\begin{equation}\label{natale}
v_0\wedge\gamma\wedge w_1\wedge w_2\wedge w'_1\not=0
\end{equation}
and hence we get that~\eqref{nonullo} holds. We have proved~\eqref{cizerotre}. Next let's prove that $0=c_{40}=c_{50}$ i.e.
\begin{equation}\label{fincinque}
g(x w_1,0)\equiv 0\pmod{x^6}.
\end{equation}
First we apply~\Ref{prp}{zeronucleo} with $q_{*}:=\ov{q}_A$ and $q:=\ov{q}_{w_1}$. Let's show that $\ov{q}_A^{\vee}|_{\wt{\ov{q}}_{w_1}(K)}$ is degenerate. By definition  the map $\wt{\ov{q}}_A$ defines an isometry between $\wt{\ov{q}}_A^{-1}\circ \wt{\ov{q}}_{w_1}(K)$ equipped with the restriction of $\ov{q}_A$ and $\wt{\ov{q}}_{w_1}(K)$  equipped with the restriction of  $\ov{q}^{\vee}_A$. We have
\begin{equation*}
\begin{array}{rl}
\wt{\ov{q}}_A^{-1}(\wt{\ov{q}}_{w_1}(\ov{\alpha}))= & \wt{\ov{q}}_A^{-1}(w_1\wedge w_2\wedge w'_2)
= - w_1\wedge\zeta-\xi,  \\
\wt{\ov{q}}_A^{-1}(\wt{\ov{q}}_{w_1}(\ov{\beta}))= & \wt{\ov{q}}_A^{-1}(-w_1\wedge w_2\wedge w'_1)
=-\gamma.
\end{array}
\end{equation*}
From this it follows that the restriction of $\ov{q}^{\vee}_A$ to $ \wt{\ov{q}}_{w_1}(K)$ is given by Table~\eqref{endoscopia}.
\begin{table}[t]\tiny
\caption{Matrix of $\ov{q}^{\vee}_A$ restricted to $\wt{\ov{q}}_{w_1}(K)$ }\label{endoscopia}
\centering
\renewcommand{\arraystretch}{1.60}
\begin{tabular}{c  c  c }
& $\wt{\ov{q}}_{w_1}(\ov{\alpha})$ &  $\wt{\ov{q}}_{w_1}(\ov{\beta})$ \\  \cmidrule{2-3}
$\wt{\ov{q}}_{w_1}(\ov{\alpha})$ & $\vol(v_0\wedge (w_1\wedge\zeta+\xi)\wedge w_1\wedge w_2\wedge w'_2)$ & 
$\vol(v_0\wedge \gamma\wedge w_1\wedge w_2\wedge w'_2)$  \\  \cmidrule{2-3}
 $\wt{\ov{q}}_{w_1}(\ov{\beta})$ & $\vol(v_0\wedge \gamma\wedge w_1\wedge w_2\wedge w'_2)$ & 
 $\vol(v_0\wedge \gamma\wedge w_1\wedge w_2\wedge w'_1)$  \\  \cmidrule{2-3}
\end{tabular}
\end{table}
By~\eqref{cruciale} and~\eqref{urine} the entries vanish with the exception of the one on the second line and second column.
 Thus $\ov{q}_A^{\vee}|_{\wt{\ov{q}}_{w_1}(K)}$  is degenerate and hence $g(x w_1,0)\equiv 0\pmod{x^5}$ by~\Ref{prp}{zeronucleo}. Next we will apply~\Ref{prp}{faraone} in order to finish proving that~\eqref{fincinque} holds. By Table~\eqref{endoscopia} we have
 \begin{equation*}
\ker(\ov{q}_A^{\vee}|_{\wt{q}(K)})\ni \wt{\ov{q}}_{w_1}(\ov{\alpha})=w_1\wedge w_2\wedge w'_2=\wt{\ov{q}}_A(-w_1\wedge\zeta-\xi). 
\end{equation*}
Thus $v:=\ov{\alpha}$ satisfies~\eqref{utile} (one of the hypotheses of~\Ref{prp}{faraone})   and we may set . 
\begin{equation}\label{mondodigitale}
e(\ov{q}_{w_1};\ov{\alpha})=-(w_1\wedge\zeta+\xi)
\end{equation}
By~\eqref{cruciale} we get that $\ov{q}_{w_1}(w_1\wedge\zeta+\xi)=0$ and hence~\eqref{fincinque} holds by~\Ref{prp}{faraone}. It remains to prove that $c_{31}=0$. 
 Let's prove  that the hypotheses of~\Ref{clm}{labomba} are satisfied by $q_{*}:=\ov{q}_A$, $r:=\ov{q}_{w_1}$ and $s:=\ov{q}_{w_2}$. Item~(1) holds by Table~\eqref{dueperdue}, moreover the kernel of $\ov{q}_{w_2}|_K$ is spanned by $\ov{\alpha}$ and hence $v:=\ov{\alpha}$ in the notation of~\Ref{clm}{labomba}. Next consider Item~(2): then $\wt{\ov{q}}_{w_1}(\ov{\alpha})=w_1\wedge w_2\wedge w'_2$,  $\wt{\ov{q}}_{w_2}(\ov{\alpha})=-w_1\wedge w_2\wedge w'_1$, since they are linearly independent the first condition of that item is satisfied. Table~\eqref{cardano} gives the restriction of $\ov{q}^{\vee}_A$ to $\la \wt{\ov{q}}_{w_1}(\ov{\alpha}), \wt{\ov{q}}_{w_2}(\ov{\alpha}) \ra$.
\begin{table}[t]\tiny
\caption{Matrix of $\ov{q}^{\vee}_A$ restricted to 
$\la \wt{\ov{q}}_{w_1}(\ov{\alpha}), \wt{\ov{q}}_{w_2}(\ov{\alpha}) \ra$ }\label{cardano}
\centering
\renewcommand{\arraystretch}{1.60}
\begin{tabular}{c  c  c }
& $\wt{\ov{q}}_{w_1}(\ov{\alpha})$ &  $\wt{\ov{q}}_{w_2}(\ov{\alpha})$ \\  \cmidrule{2-3}
$\wt{\ov{q}}_{w_1}(\ov{\alpha})$ & $\vol(v_0\wedge (w_1\wedge\zeta+\xi)\wedge w_1\wedge w_2\wedge w'_2)$ & 
$\vol(v_0\wedge (w_1\wedge\zeta+\xi)\wedge w_1\wedge w_2\wedge w'_1)$  \\  \cmidrule{2-3}
 $\wt{\ov{q}}_{w_2}(\ov{\alpha})$ & $\vol(v_0\wedge  (w_1\wedge\zeta+\xi)\wedge w_1\wedge w_2\wedge w'_1)$ & 
 $\vol(v_0\wedge \gamma\wedge w_1\wedge w_2\wedge w'_1)$  \\  \cmidrule{2-3}
\end{tabular}
\end{table}
The entry on the second line and second column is non-zero by~\eqref{natale}, the others are zero by~\eqref{cruciale}, thus the second condition of Item~(2) is satisfied. Lastly we checked above that $\ov{q}_A^{\vee}|_{\wt{\ov{q}}_{w_1}(K)}$  is degenerate - see Table~\eqref{endoscopia} - and hence Item~(3)  is satisfied.   By~\Ref{clm}{labomba} we get that $c_{31}=0$ if and only if 
\begin{equation*}
0=\ov{q}_{w_1}(e(\ov{q}_{w_1};\ov{\alpha}))=\ov{q}_{w_1}(w_1\wedge\zeta+\xi).
\end{equation*}
(See~\eqref{mondodigitale} for the second equality.) The last term vanishes by~\eqref{cruciale} (as noticed above).
\qed 
\bigskip
We end the subsection by pointing out certain similarities between   $\BB_{\cE_1}$, $\BB_{\cE_1^{\vee}}$ and $\BB_{\cF_2}$. Let $\sF$ be a basis of $V$ and $A\in\BB^{\sF}_{\cE_1}\cup 
\BB^{\sF}_{\cE^{\vee}_1}\cup \BB^{\sF}_{\cF_2}$. Let     $W\in\Gr(3,V)$ be defined by requiring that
\begin{equation}\label{terremoto}
\bigwedge  ^3 W=
\begin{cases}
[v_0]\wedge\bigwedge  ^2 V_{12} & \text{if $A\in\BB^{\sF}_{\cE_1}$,} \\
\bigwedge  ^3 V_{02} & \text{if $A\in\BB^{\sF}_{\cE^{\vee}_1}$,} \\
A\cap (\bigwedge  ^2 V_{01}\wedge V_{23}) & \text{if $\in\BB^{\sF}_{\cF_2}$.}
\end{cases}
\end{equation}
Define $\wt{\cV}$ as
\begin{equation}\label{fleming}
\wt{\cV}:=
\begin{cases}
A\cap ([v_0]\wedge V_{12}\wedge V_{35}) & \text{if $A\in\BB_{\cE_1}$,} \\
A\cap(\bigwedge^2 V_{02}\wedge V_{34}) & \text{if $A\in\BB_{\cE^{\vee}_1}$,} \\
A\cap (\bigwedge  ^2 V_{01}\wedge V_{23}\oplus \bigwedge  ^2 V_{01}\wedge V_{45}\oplus
V_{01}\wedge\bigwedge  ^2 V_{23}) & \text{if $A\in\BB_{\cF_2}$.}
\end{cases}
\end{equation}
The projection 
\begin{equation}\label{ioio}
\cV:=\rho_W(\wt{\cV})\subset T_W\cong \Hom(W,V/W)
\end{equation}
is $2$-dimensional. Let $\Hom(W,V/W)_r\subset \Hom(W,V/W)$ be the subset of maps of rank at most $r$.
One easily checks that in each of the three cases appearing in~\eqref{fleming} we have $\cV\subset \Hom(W,V/W)_2$. The following observation 
is easily proved - we leave details to the reader.
\begin{rmk}\label{rmk:calfin}
Let $A$ be generic in one of $\BB^{\sF}_{\cE_1}$, $\BB^{\sF}_{\cE^{\vee}_1}$ or $\BB^{\sF}_{\cF_2}$. Let $W$ be as in~\eqref{terremoto},  $\ov{A}:=A/\bigwedge  ^3 W$  and $\cV\subset(\ov{A}\cap  T_W)$ be  given by~\eqref{ioio}. Then $\dim\cV=2$ and
\begin{equation}\label{rangug}
(\cV\setminus\{0\})\subset (\Hom(W,V/W)_2\setminus  \Hom(W,V/W)_1).
\end{equation}
By~\Ref{prp}{fascidege} $\cV$ is equivalent modulo the natural $GL(V/W)\times GL(W)$-action on  $\Gr(2,\Hom(W,V/W))$ to one of the susbpaces $\cV_l,\cV_c,\cV_p$ defined by~\eqref{eccofade1}-\eqref{eccofade2}-\eqref{eccofade3}. 
Then $\cV$ is equivalent to 
\begin{equation}
\begin{cases}
\cV_p & \text{if $A\in\BB^{\sF}_{\cE_1}$,} \\
\cV_c & \text{if $A\in\BB^{\sF}_{\cE^{\vee}_1}$,} \\
\cV_l & \text{if $A\in\BB^{\sF}_{\cF_2}$.} \\
\end{cases}
\end{equation}
Conversely let $A\in\lagr$ and $W\in\Theta_A$. Let $\ov{A}:=A/\bigwedge  ^3 W$. Suppose that there exists a $2$-dimensional subspace $\cV\subset (\ov{A}\cap T_W)$  such that~\eqref{rangug} holds; then  $A\in\BB^{*}_{\cE_1}\cup \BB^{*}_{\cE_1^{\vee}}\cup \BB^{*}_{\cF_2}$. More precisely $A\in\BB^{*}_{\cE_1}$ if $\cV$ is equivalent to $\cV_p$, $A\in\BB^{*}_{\cE^{\vee}_1}$ if $\cV$ is equivalent to $\cV_c$ and $A\in\BB^{*}_{\cF_2}$ if $\cV$ is equivalent to $\cV_l$.  
\end{rmk}
\begin{rmk}\label{rmk:stabalg}
Suppose that we wish   to decide whether a given $A\in\lagr$ is stable or not. \Ref{thm}{tuttisemi} provides the following algorithm:
\begin{enumerate}
\item
Compute $\dim\Theta_A$: if $\dim\Theta_A\ge 2$ then $A$ is not stable, if $\dim\Theta_A\le 1$ go to Step 2 . 
\item
If $\dim\Theta_A=1$ determine the irreducible components of $\Theta_A$ and hence determine whether $A$ belongs to one of the irreducible components of $\Sigma_{\infty}$ which appear in~\eqref{lista}: if it does then $A$ is not stable, if it doesn't (or $\dim\Theta_A<1$) go to Step 3.
\item
List all of the isolated elements $W\in\Theta_A$.  If $\dim(A\cap S_W)\ge 4$ for  one such $W$ then $A$ is not stable,  if $\dim(A\cap S_W)\le 3$ for  all such $W$ go to Step 4.
\item
If there exists an isolated  $W\in\Theta_A$ such that $\dim(A\cap S_W)= 3$ and  all $\alpha\in T_W$ are degenerate (as map $W\to V/W$) then $A$ is not stable, if there exists no such $W$ go to Step 5.
\item
If there exists an isolated  $W\in\Theta_A$ such that $\dim(A\cap S_W)= 3$ and $A\in\XX^{\sF}_{\cN_3}$ for a certain flag with $W=\la v_0,v_1,av_2+bv_3\ra$  then $A$ is not stable, if there is no such $W$ then $A$ is stable.
\end{enumerate}
\end{rmk}
\clearpage
\section{Lagrangians with large stabilizers}\label{sec:menagerie}
\setcounter{equation}{0}
\subsection{Main results}
\setcounter{equation}{0}
In the present section we will  analyze semistable lagrangians with minimal  orbit and large stabilizer.  Before stating the main results we will define certain elements of $\lagr$. Let $L$ be a three-dimensional complex vector space and $k$, $h$ be given by~\eqref{kappacca}. By~\eqref{mentuccia} and surjectivity of $H^0(k^{*})$ and $H^0(h^{*})$ we get that 
  $\im(k),\im(h)$ span $9$-dimensional subspaces of $\PP(\bigwedge^3 V)$.  
  \begin{dfn}\label{dfn:kappacca}
Let $A_k(L),A_h(L)\subset\bigwedge^3 V$ be  the affine cones over $\im(k),\im(h)$ respectively. 
\end{dfn}
Any two planes in $\im(k)$ are incident and similarly for  $\im(h)$:  it follows that  $A_k(L),A_h(L)\in\lagr$. The $\PGL(V)$-orbit of $A_k(L)$ (or of $A_h(L)$) is independent of $L$: often we will denote $A_k(L), A_h(L)$ by $A_k$ and $A_h$ respectively. The proposition below summarizes some of the main results  of the present section.
\begin{prp}\label{prp:vitolo}
\begin{enumerate}
\item
There exists  $A\in\lagr^{ss}$ which is  stabilized by a maximal torus, and  any two such lagrangians belong to the same $\PGL(V)$-orbit, which is closed in $\lagr^{ss}$. 
Let $A_{III}$ belong to that orbit: if $W\in\Theta_A$ then $C_{W,A}$ is a sextic of Type III-2 according to Shah's \Ref{thm}{listashah}.
\item
Let $U$ be a four-dimensional complex vector space and $A_{+}(U)\in\lagr$ be as in~\eqref{apiu}. Then $A_{+}(U)$ is semistable with $\PGL(V)$-orbit closed in $\lagr^{ss}$, and it is stabilized by $\PGL(U)$ embedded in $\PGL(V)$ via the identification $V=\bigwedge^2 U$.
\item
Both $A_k(L)$ and $ A_h(L)$ are semistable with $\PGL(V)$-orbit closed in $\lagr^{ss}$, and they are stabilized by $\PGL(L)$ embedded in $\PGL(V)$ via the identification $V=\Sym^2 L$.
\end{enumerate}
\end{prp}
We will notice that $[A_{III}]\notin\gI$ (this follows at once from Item~(1) of~\Ref{prp}{vitolo}) while $[A_{+}],[A_k],[A_h]\in\gI$. We will also introduce a curve $\gX_{\cW}\subset\gM$ containing $[A_{+}]$ and contained in $\gI$ - lagrangians representing points of this curve are stabilized by $\PSO(4)$ suitably embedded in $\PGL(V)$. 
\subsection{A result of Luna}\label{subsec:lunashah}
\setcounter{equation}{0}
 We start by stating  an important theorem of Luna~\cite{lunafissi} that will be used throughout the rest of this work.  Let $G$ be a linearly reductive group and $\wh{X}$   an affine variety acted on by $G$. Let  $H<G$ be a linearly reductive subgroup
and  $\wh{X}^H\subset \wh{X}$ be the closed subset of points fixed by $H$. Let $N_G(H)<G$ be the normalizer of $H$; then $N_G(H)$ acts on $\wh{X}^H$ and we have a natural regular map 
\begin{equation}\label{mappaluna}
\wh{X}^H// N_G(H):=\Spec\Gamma(\wh{X}^H,\cO_{\wh{X}^H})^{N_G(H)}\lra
\Spec\Gamma(\wh{X},\cO_{\wh{X}})^{G}=:\wh{X}//G.
\end{equation}
The following is Corollaire 1,  p.~237 of~\cite{lunafissi}.
 \begin{thm}[Luna~\cite{lunafissi}]\label{thm:littlestars}
Keep notation as above. Map~\eqref{mappaluna} is finite. If $x\in \wh{X}^H$ then $Gx$ is closed if and only if $N_G(H)x$ is closed. In particular  if 
$N_G(H)/H$ is finite then $Gx$ is closed. 
\end{thm}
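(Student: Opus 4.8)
The morphism~\eqref{mappaluna} is induced by the restriction homomorphism $\Gamma(\widehat X)^G\to\Gamma(\widehat X^H)^{N_G(H)}$: a $G$-invariant regular function, restricted to the closed subvariety $\widehat X^H$, is automatically $N_G(H)$-invariant because $N_G(H)\subseteq G$. The plan is to establish the finiteness assertion first, then deduce the equivalence about closed orbits, and finally read off the \lq\lq in particular\rq\rq\ as an immediate corollary. Since $H$ is linearly reductive, the formation of $H$-fixed loci and of invariants is exact, so $\widehat X^H$ is closed and the quotients on both sides of~\eqref{mappaluna} are honest affine varieties whose closed points parametrise the closed orbits (for $G$ on $\widehat X$, resp. for $N_G(H)$ on $\widehat X^H$).

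The finiteness of $q$ is the crux, and here I would invoke Luna's \'etale slice theorem. First embed $\widehat X$ $G$-equivariantly into a $G$-module $W$; then $\widehat X^H=\widehat X\cap W^H$ and $N_G(H)$ acts on $W^H$, which reduces the comparison to a local statement near a closed orbit. At a point $y$ whose $G$-orbit is closed the stabiliser $G_y$ is reductive (Matsushima), and the slice theorem produces a $G_y$-stable slice $S$ identifying $\widehat X\sslash G$ \'etale-locally with $S\sslash G_y$. The key bookkeeping step is to choose $S$ so that it is moreover $H$-stable, so that $S\cap\widehat X^H=S^H$ simultaneously serves as a slice for the $N_{G_y}(H)$-action and realises $\widehat X^H\sslash N_G(H)$ \'etale-locally; comparing the two \'etale charts shows that $q$ is quasi-finite and closed, hence finite. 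I expect this simultaneous choice of slice, and the verification that the two local models match up, to be the main obstacle.

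For the statement about orbits fix $x\in\widehat X^H$ and write $N:=N_G(H)$. The easy containment is $\ov{Nx}\subseteq\ov{Gx}\cap\widehat X^H$, using $N\subseteq G$ and the closedness of $\widehat X^H$; in particular if $Gx$ is closed then $\ov{Nx}\subseteq Gx\cap\widehat X^H$. Rigorously both implications follow from the \'etale-local identification supplied by the finiteness argument: under it the fibre of $\pi_N\colon\widehat X^H\to\widehat X^H\sslash N$ through $x$ corresponds to the part of the fibre of $\pi_G\colon\widehat X\to\widehat X\sslash G$ lying in $\widehat X^H$, and a $G$-orbit is closed exactly when it is the distinguished (unique closed) orbit of its fibre. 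Thus $Gx$ is the closed orbit of its $G$-fibre if and only if $Nx$ is the closed orbit of the corresponding $N$-fibre, which is the asserted equivalence.

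The final clause is then immediate and needs no slice theory. If $N_G(H)/H$ is finite, then since $x\in\widehat X^H$ forces $H\subseteq G_x$ and always $H\subseteq N_G(H)$, we have $H\subseteq N_x:=N\cap G_x$, whence $[N:N_x]\le[N:H]=|N_G(H)/H|<\infty$. Therefore the orbit $Nx\cong N/N_x$ is a finite set, hence a closed subset of $\widehat X^H$, and by the equivalence just proved $Gx$ is closed in $\widehat X$.
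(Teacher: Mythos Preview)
The paper does not prove this theorem: it is stated without proof as a citation of Luna's original result (Corollaire~1, p.~237 of~\cite{lunafissi}). So there is no proof in the paper to compare your proposal against.

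That said, your outline is essentially the standard argument, and indeed close to Luna's own: the finiteness of~\eqref{mappaluna} is obtained from the \'etale slice theorem, the orbit-closure equivalence is read off the local model, and the final clause is the elementary observation that a finite orbit is closed. Your sketch of the finiteness step is the right shape but, as you yourself flag, the delicate point is the simultaneous $H$-compatibility of the slice and the matching of the two \'etale charts; this is exactly where Luna's paper does the real work, and a full proof would need to supply those details rather than merely expect them.
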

Next suppose that $X\subset\PP(U)$ is a projective and that $G$ is a linearly reductive group acting on $X$ via a homomorphism $G\to \SL(U)$. 
Let $\wh{X}\subset U$ be the  affine cone over $X$; applying~\Ref{thm}{littlestars} to the induced action of $G$ on $\wh{X}$ one gets the following result.
 \begin{crl}[Luna]\label{crl:piupiccolo}
Keep notation and hypotheses as above. Let  $H<G$ be a linearly reductive subgroup. Let $[u]\in\PP( \wh{X}^H)$; then $[u]$ is $G$-semistable if and only if $[u]$ is $N_G(H)$-semistable, and in this case $G[u]$ is closed in $X^{ss}$ if and only if $N_G(H)[u]$ is closed in the set of $N_G(H)$-semistable points of $\PP( \wh{X}^H)$. 
The inclusion $\PP( \wh{X}^H)\hra X$ induces a finite map $\PP( \wh{X}^H)// N_G(H)\lra X//G$. 
\end{crl}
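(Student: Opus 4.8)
The plan is to transfer everything to the affine cone $\wh{X}\subset U$, where \Ref{thm}{littlestars} applies directly, and then translate back using the standard dictionary between projective GIT and the GIT of the cone. Since $H$ acts linearly on $U$ through $G\to SL(U)$, the fixed subspace $U^H$ is linear, $\wh{X}^H=\wh{X}\cap U^H$ is a cone, and $\PP(\wh{X}^H)$ is exactly its projectivization; moreover $N_G(H)$ preserves $\wh{X}^H$. I would repeatedly use the two facts recalled earlier: for $[u]\in\PP(U)$ one has that $[u]$ is $G$-semistable iff $0\notin\ov{Gu}$ (closure in $U$), and that a $G$-semistable $[u]$ has closed orbit in $X^{ss}$ iff $Gu$ is closed in $U$; the analogous statements hold for the $N_G(H)$-action on $\PP(\wh{X}^H)$ with closures taken in $\wh{X}^H$ (equivalently in $U$, as $U^H$ is closed).

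For the semistability equivalence, the implication ``$G$-semistable $\Rightarrow$ $N_G(H)$-semistable'' is immediate since $\ov{N_G(H)u}\subseteq\ov{Gu}$, so $0\notin\ov{Gu}$ forces $0\notin\ov{N_G(H)u}$. For the converse I would argue by orbit closures: let $N_G(H)v$ be the unique closed $N_G(H)$-orbit contained in $\ov{N_G(H)u}\subseteq\wh{X}^H$. By \Ref{thm}{littlestars}, since $v\in\wh{X}^H$ and $N_G(H)v$ is closed, the orbit $Gv$ is closed in $\wh{X}$; as $Gv\subseteq\ov{Gu}$ it is therefore the unique closed $G$-orbit in $\ov{Gu}$. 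If $[u]$ were not $G$-semistable then $0\in\ov{Gu}$, so this unique closed orbit would be $\{0\}$, giving $v=0$ and hence $0\in\ov{N_G(H)u}$, contradicting $N_G(H)$-semistability. Thus $[u]$ is $G$-semistable.

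The statement about closed orbits is then a short chain of equivalences for semistable $[u]$: $G[u]$ is closed in $X^{ss}$ iff $Gu$ is closed in $\wh{X}$ (first fact) iff $N_G(H)u$ is closed in $\wh{X}^H$ (the orbit-closure part of \Ref{thm}{littlestars}, noting $N_G(H)u\subseteq\wh{X}^H$, which is closed in $\wh{X}$) iff $N_G(H)[u]$ is closed in the $N_G(H)$-semistable locus of $\PP(\wh{X}^H)$ (the analogue of the first fact).

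Finally, for the finite map I would pass to graded coordinate rings. Writing $R:=\bigoplus_n H^0(X,\cO_X(n))$ and $S:=\bigoplus_n H^0(\PP(\wh{X}^H),\cO(n))$ for the homogeneous coordinate rings of the two cones, we have $X/\!/G=\Proj(R^G)$ and $\PP(\wh{X}^H)/\!/N_G(H)=\Proj(S^{N_G(H)})$, while Map~\eqref{mappaluna} is $\Spec$ of the graded restriction homomorphism $R^G\to S^{N_G(H)}$, which \Ref{thm}{littlestars} asserts to be finite. The induced morphism on $\Proj$ is therefore finite, provided it is everywhere defined, i.e. that the vertex of $\wh{X}^H/\!/N_G(H)$ is the only point lying over the vertex of $\wh{X}/\!/G$; this is exactly the semistability equivalence proved above (a closed $N_G(H)$-orbit $N_G(H)v$ maps to the vertex iff $0\in\ov{Gv}=Gv$, forcing $v=0$). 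The main point to handle with care is this last compatibility of irrelevant ideals, since finiteness of a graded ring extension yields a finite morphism of projective quotients only once one knows the semistable loci correspond — which is precisely why the semistability equivalence must be established first.
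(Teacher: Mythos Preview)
Your proof is correct and follows exactly the approach the paper indicates: the paper does not give a detailed argument but simply says that one obtains the corollary by ``applying~\Ref{thm}{littlestars} to the induced action of $G$ on $\wh{X}$,'' which is precisely what you do. Your treatment fills in the details carefully, including the compatibility of irrelevant ideals needed to pass from the finite map of affine cones to a finite map of $\Proj$'s.
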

\subsection{Lagrangians stabilized by a maximal torus}\label{subsec:toromass}
\setcounter{equation}{0}
Let 
\begin{equation}\label{matricenne}
N:={\scriptsize\left[\begin{array}{cccccccccc}
1 & 1 & 1 & 1 & 1 & 0 & 0  & 0  & 0 & 0\\
1 & 1 & 0 & 0 & 0 & 1 & 1  & 1 & 0  & 0  \\
 1 & 0 & 1 & 0 & 0 & 1  & 0  &  0  & 1 & 1\\
0 & 1 & 0 & 1 & 0  & 0  & 1  & 0  & 1 & 1\\
0 & 0 & 1  & 0 & 1 & 0 & 1  & 1  & 1 & 0\\ 
0 & 0 & 0 & 1 & 1  & 1  & 0  & 1  & 0 & 1
\end{array}\right]}
\end{equation}
The rows of $N$ will be indexed by $0\le i\le 5$, the columns   will be indexed by  $1\le j\le 10$, i.e.~$N=(n_{ij})$ where $0\le i\le 5$ and   $1\le j\le 10$. 
Let $\sF=\{v_0,\ldots,v_5\}$ be a basis of $V$.  For $j=1,\ldots,10$ let $\alpha_j,\beta_j\in\bigwedge^3 V$ be the decomposable vectors given by the wedge-product of the $v_i$'s such that $n_{ij}=1$ and  $n_{ij}=0$ respectively (notice that on each column of $N$ there are $3$ entries equal to $1$ and $3$ equal to $0$) in the order dictated by the ordering of the indices:
\begin{equation*}
\alpha_1=v_0\wedge v_1\wedge v_2,\ \beta_1=v_3\wedge v_4\wedge v_5,
\ \alpha_2=v_0\wedge v_1\wedge v_3,\ldots\ldots,
\beta_{10}=v_0\wedge v_1\wedge v_4.
\end{equation*}
 Let  $A^{\sF}_{III}\subset\bigwedge^3 V$ be  the  subspace  spanned by the $\alpha_j$'s.
Let $1\le j_0\le 10$. By inspecting  the matrix $N$ we see that  $\beta_{j_0}$ is not a multiple of any of the $\alpha_j$'s, that    it is 
perpendicular to each $\alpha_j$ with $j\not=j_0$  and that $\alpha_{j_0}\wedge\beta_{j_0}\not=0$. It follows that $A^{\sF}_{III}$ is $(,)_V$-isotropic and  that $\dim A^{\sF}_{III}=10$ i.e.~$A^{\sF}_{III}\in\lagr$.   Let $0\not=\omega\in\bigwedge^{10}A^{\sF}_{III}$ and $T< GL(V)$ be the maximal torus of automorphism which are diagonal in the basis $\sF$: then
\begin{equation}\label{torotre}
g(\omega)=(\det g)^5\omega\quad \forall g\in T.
\end{equation}
The above holds because the sum of the entries on each row of $N$ is equal to $5$. The following result will be useful in deciding whether a given $A\in\lagr$ is in the $\PGL(V)$-orbit of $A_{III}$. 
\begin{clm}\label{clm:unicotre}
Let $T$ be a maximal torus  of $\SL(V)$. Suppose that $A
\in\lagr$  is fixed by $T$ and that $T$ acts trivially on $\bigwedge^{10}A$.
Then the orbit $\PGL(V)A$ contains $A_{III}$.
\end{clm}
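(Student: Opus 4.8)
The plan is to reduce the statement to a purely combinatorial rigidity fact about $3$-subsets of $\{0,\dots,5\}$ and then to settle that fact by a short counting argument. First I would choose a basis $\sF=\{v_0,\dots,v_5\}$ of $V$ diagonalizing $T$, so that $T=\{\diag(t_0,\dots,t_5)\mid t_0\cdots t_5=1\}$. For a $3$-element subset $S\subset\{0,\dots,5\}$ write $v_S$ for the corresponding decomposable vector; these $20$ vectors are $T$-weight vectors, and the associated characters are pairwise distinct on $T$ (the difference of two distinct $\mathbb 1_S$ has entries in $\{-1,0,1\}$ not all equal, hence is never a multiple of $\det$). Thus every $T$-weight space in $\bigwedge^3V$ is a line $\CC v_S$, and since $A$ is $T$-invariant it is a coordinate subspace $A=\langle v_S\mid S\in\mathcal S\rangle$ for a set $\mathcal S$ of ten $3$-subsets. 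Because $(v_S,v_{S'})_V\ne0$ exactly when $S'$ equals the complement $S^{c}$, the isotropy of $A$ says that no two members of $\mathcal S$ are complementary; as there are ten complementary pairs, $\mathcal S$ meets each pair in exactly one subset. Finally a generator of $\bigwedge^{10}A$ has $T$-weight $\prod_{m}t_m^{c_m}$ with $c_m:=\#\{S\in\mathcal S\mid m\in S\}$, so triviality of the $T$-action forces $c_0=\cdots=c_5=5$.

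At this point the problem is to show that any such \emph{balanced transversal} $\mathcal S$ is carried to the configuration defining $A^{\sF}_{III}$ by a permutation of $\{0,\dots,5\}$, which realizes an element of $\PGL(V)$ via permutation matrices (these send $v_S\mapsto\pm v_{\sigma(S)}$ and so preserve coordinate spans). The key structural observation I would prove is that balance already forces $\mathcal S$ to be a $2$-$(6,3,2)$ design, i.e.\ every pair $\{i,j\}$ lies in exactly two members of $\mathcal S$. This follows from a short count: writing $a$ (resp.\ $b$) for the number of members of $\mathcal S$ containing $i$ but not $j$ (resp.\ $j$ but not $i$), the six complementary pairs whose two sets each meet $\{i,j\}$ in one point give $a+b=6$, while $c_i=c_{ij}+a=5$ and $c_j=c_{ij}+b=5$ force $c_{ij}=2$.

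With the design property in hand the classification up to $S_6$ is quick. Fixing the index $0$, the five members of $\mathcal S$ containing $0$ determine five distinct pairs in $\{1,\dots,5\}$, and since each such index lies in exactly two of them these pairs form a $2$-regular simple graph on five vertices, necessarily a pentagon; after a permutation fixing $0$ I may assume it is the standard pentagon. Then transversality (no complementary blocks) forces each of the five members of $\mathcal S$ \emph{not} containing $0$ to be the complement in $\{1,\dots,5\}$ of a diagonal of the pentagon, and these five sets are thereby uniquely determined. Hence all balanced transversals lie in a single $S_6$-orbit; since the columns of $N$ visibly define a balanced transversal (each row of $N$ sums to $5$, and no two columns are complementary), that orbit is the one of $A^{\sF}_{III}$, and therefore $\PGL(V)A$ contains $A_{III}$. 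The main obstacle is the combinatorial rigidity—passing from a balanced one-per-pair selection to a unique configuration—and the cleanest route is exactly this two-step argument through the $2$-$(6,3,2)$ design property followed by the pentagon normalization; some care is also needed to confirm that isotropy together with balance genuinely pin down the transversal-plus-design structure rather than merely constraining it.
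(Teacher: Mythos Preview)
Your proof is correct and follows essentially the same approach as the paper: reduce to a combinatorial problem about transversals $\cS$ of complementary pairs of $3$-subsets with each index appearing $5$ times, prove the key intermediate fact $|\cS_i\cap\cS_j|=2$ (the $2$-$(6,3,2)$ design property), and then conclude uniqueness up to $S_6$. Your counting argument for $c_{ij}=2$ is a bit cleaner than the paper's, and your pentagon normalization makes explicit what the paper leaves as a ``Sudoku-like argument,'' but the route is the same.
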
 
\begin{proof}
 Suppose that $T$ is diagonalized in the basis  $\{v_0,\ldots,v_5\}$. Since  $A$ is left invariant by $T$ it has a basis $\sB$ consisting of $10$ monomials $v_i\wedge v_j\wedge v_k$ (here $0\le i<j<k\le 5$). Let $\cT$ be the family of \lq\lq tripletons\rq\rq of $\{0,1,\ldots,5\}$ i.e.~subsets of cardinality $3$. We let $\sigma\colon\cT\to\cT$ be the involution defined by  $\sigma(I):=I^c:=(\{0,1,\ldots,5\}\setminus I)$. If $a\in\{0,1,\ldots,5\}$ and $\cS\subset\cT$ we let $\cS_a:=\{I\in\cS\mid a\in I\}$. By associating to $v_i\wedge v_j\wedge v_k$ the set $\{i,j,k\}\in \cT$    we get an identification between  the family of monomials and $\cT$.   With this identification $\sB$ corresponds to a subset $\cS\subset\cT$ with the following properties:
 \begin{enumerate}
\item[(1)]
$\cT=\cS\coprod\sigma(\cS)$, and
\item[(2)]
$\cS_a$ has cardinality $5$ for each $a\in\{0,1,\ldots,5\}$.
\end{enumerate}
We claim the following:
\begin{equation}\label{interabi}
\text{If  $a,b\in\{0,1,\ldots,5\}$ are distinct then $|\cS_a\cap\cS_b|=2$.}
\end{equation}
In fact let $a,b\in\{0,1,\ldots,5\}$: then  $|\cS_a \cap \cS_b|=5-|\cS_a\cap(\cS\setminus\cS_b)|$ and hence we get that
\begin{equation}\label{coincidono}
|\cS_a \cap \cS_b|=|(\cS\setminus\cS_a)\cap(\cS\setminus\cS_b)|,\qquad
|\cS_a\cap(\cS\setminus\cS_b)|=|(\cS\setminus\cS_a)\cap \cS_b|.
\end{equation}
Now suppose that $a\not=b$. The map $\sigma$ gives inclusions
\begin{equation*}
\sigma(\cS_a \cap \cS_b)\subset   (\cT\setminus\cT_a)\cap (\cT\setminus\cT_b),\qquad
\sigma(\cS_a\cap(\cS\setminus\cS_b))\subset  (\cT\setminus\cT_a)\cap \cT_b.
\end{equation*}
By~\eqref{coincidono} and Item~(1) we get that
\begin{multline}
2|\cS_a \cap \cS_b|=|\sigma(\cS_a \cap \cS_b)|+|(\cS\setminus\cS_a)\cap(\cS\setminus\cS_b)|\le 
|(\cT\setminus\cT_a)\cap (\cT\setminus\cT_b)|=4,\\
2|\cS_a\cap(\cS\setminus\cS_b)|=|\sigma(\cS_a\cap(\cS\setminus\cS_b))|+|(\cS\setminus\cS_a)\cap \cS_b|\le
|(\cT\setminus\cT_a)\cap \cT_b|=6. 
\end{multline}
Thus $|\cS_a \cap \cS_b|\le 2$ and $|\cS_a\cap(\cS\setminus\cS_b)|\le 3$; this proves~\eqref{interabi}. Now associate to  $\cS$ a $6\times 10$-matrix $M$ whose columns are the characteristic functions of the sets in $\cS$.    By~\eqref{interabi} and a Sudoku-like argument we get that after performing a sequence of row and column permutations we may transform $M$ into $N$; that proves the claim. 
\end{proof} 
\begin{prp}\label{prp:atrechiuso}
 $A^{\sF}_{III}$ is semistable and its $\PGL(V)$-orbit is  closed in $\lagr^{ss}$, moreover $Y_{A^{\sF}_{III}}=V(X_0\cdot X_1\cdot X_2\cdot X_3\cdot X_4\cdot X_5)$ where $\{X_0,\ldots,X_5\}$ is the basis of $V^{\vee}$ dual to $\sF$.  
\end{prp}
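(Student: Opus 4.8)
The plan is to establish semistability and closedness of the orbit in one stroke via Luna's theorem, and then to compute $Y_{A^{\sF}_{III}}$ by combining a weight argument with an elementary dimension count. Write $A:=A^{\sF}_{III}$ throughout, and let $T<\SL(V)$ be the maximal torus of elements diagonal in the basis $\sF$. Since $A$ is spanned by monomials $v_i\wedge v_j\wedge v_k$, each of which is a $T$-eigenvector, $A$ is $T$-invariant; and if $0\ne\omega\in\bigwedge^{10}A$ then \eqref{torotre} gives $g\omega=(\det g)^5\omega=\omega$ for every $g\in T$, as $\det g=1$ on $\SL(V)$. Thus $\omega$ lies in the $T$-fixed locus of the affine cone over $\lagr\subset\PP(\bigwedge^{10}\bigwedge^3V)$ (Pl\"ucker embedding), and I would apply \Ref{crl}{piupiccolo} with $G=\SL(V)$, $H=T$. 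The normalizer $N_G(T)$ has identity component $T$ and finite Weyl group $N_G(T)/T$, so every $1$-PS of $N_G(T)$ factors through $T$ and hence fixes $\omega$; therefore $\mu([\omega],\lambda)=0$ for all such $\lambda$, i.e.\ $[\omega]$ is $N_G(T)$-semistable, while the $N_G(T)$-orbit of $[\omega]$ is the image of the finite set $N_G(T)/T$, hence finite and closed. By \Ref{crl}{piupiccolo} it follows that $[A]$ is $\SL(V)$-semistable and that its orbit is closed in $\lagr^{ss}$ (one could equally invoke the ``$N_G(H)/H$ finite'' clause of \Ref{thm}{littlestars} on the cone). This settles the first two assertions.

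For the equation of $Y_A$, note first that semistability combined with \Ref{crl}{enneinst} gives $Y_A\ne\PP(V)$, so $\det\lambda_A$ is a nonzero sextic form. As $A$ is $T$-invariant so is $Y_A$, whence $\det\lambda_A$ is a $T$-semiinvariant in $\Sym^6V^{\vee}$. Distinct degree-$6$ monomials in $X_0,\ldots,X_5$ carry distinct $T$-weights — two exponent vectors of equal sum that differ by a constant vector must coincide — so each $T$-eigenspace of $\Sym^6V^{\vee}$ is spanned by a single monomial; hence $\det\lambda_A=c\,X_0^{a_0}\cdots X_5^{a_5}$ with $c\ne0$ and $\sum_i a_i=6$. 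It now suffices to show $a_i\ge1$ for each $i$, since then $\sum_i a_i=6$ forces every $a_i=1$.

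Equivalently I will show each coordinate hyperplane lies in $Y_A$. Fix $i$ and set $V_i':=\la v_k\mid k\ne i\ra$, so $\PP(V_i')=\{X_i=0\}$. Because each row of $N$ sums to $5$ (the remark following \eqref{torotre}), row $i$ has exactly five zero entries, so exactly five of the generating monomials $\alpha_j$ avoid $v_i$; these are five independent vectors in $\bigwedge^3V_i'$, giving $\dim(A\cap\bigwedge^3V_i')\ge5$. For $0\ne v\in V_i'$ one has $F_v\cap\bigwedge^3V_i'=v\wedge\bigwedge^2V_i'$, of dimension $6$. Both subspaces sit inside $\bigwedge^3V_i'$, which has dimension $\binom{5}{3}=10$, and $5+6>10$, so they meet nontrivially; hence $F_v\cap A\ne\{0\}$ and $[v]\in Y_A$. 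Thus $\PP(V_i')\subset Y_A$ for every $i$, so $X_i\mid\det\lambda_A$ and $a_i\ge1$, which yields $Y_A=V(X_0X_1X_2X_3X_4X_5)$.

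The conceptual crux is the closed-orbit assertion: a direct proof would require controlling $\lim_{t\to0}\lambda(t)A$ over all $1$-PS's, whereas noticing that $A$ is fixed by a maximal torus acting trivially on $\bigwedge^{10}A$ reduces everything to the finiteness of the Weyl group via Luna. The only other delicate point is excluding $Y_A=\PP(V)$ before invoking the monomial classification, and this is precisely where \Ref{crl}{enneinst} — hence the already-proven semistability — enters.
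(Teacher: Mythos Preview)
Your proof is correct and, for the semistability and closed-orbit claims, follows exactly the paper's argument via Luna's theorem: $T$ fixes $\omega$ by \eqref{torotre}, $N_{\SL(V)}(T)/T\cong\gS_6$ is finite, and \Ref{thm}{littlestars} (or \Ref{crl}{piupiccolo}) gives the conclusion.

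For the determination of $Y_A$ your route diverges slightly from the paper's. You argue that $\det\lambda_A$ is a $T$-\emph{semi}invariant, hence a single monomial, and then pin down which monomial by the nice dimension count showing $\PP(V_i')\subset Y_A$ for each $i$. The paper instead observes (implicitly from \eqref{torotre} applied to $T\subset\SL(V)$) that $\det\lambda_A$ is a genuine $T$-\emph{invariant} in $\Sym^6 V^{\vee}$, and the only $T$-invariant monomial of degree $6$ is $X_0\cdots X_5$; this bypasses the hyperplane argument entirely. Your approach has the virtue of producing an explicit geometric reason each hyperplane lies in $Y_A$, while the paper's is shorter. Both are valid; you might note that the same equation \eqref{torotre} you already invoked actually gives full invariance, which would let you skip the final dimension count if you wished.
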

\begin{proof}
Let $\lagrhat\subset\bigwedge  ^{10}(\bigwedge  ^3 V)$ be the affine cone over $\lagr$.  Let  $\omega$ be a generator of $\bigwedge  ^{10}A^{\sF}_{III}$; thus $\omega\in \lagrhat$.   Let $T<\SL(V)$ be the maximal torus of automorphisms which are diagonal in the basis $\sF$. 
 By~\eqref{torotre} we have $\omega\in \lagrhat^H$. The quotient
$N_{\SL(V)}(T)/T$ 
is the symmetric group $\gS_6$ and hence is finite. By~\Ref{thm}{littlestars} the orbit $\SL(V)\omega$ is closed; thus $A$ is semistable by the Hilbert-Mumford criterion, moreover as is well-known closedness of $\SL(V)\omega$ in $\lagrhat$  implies that $A$ is closed in $\lagr^{ss}$. Let $Y_{A^{\sF}_{III}}=V(P)$ where $P\in\CC[X_0,\ldots,X_5]_6$. 
Since $A^{\sF}_{III}$ is semistable we get that $P\not=0$  by~\Ref{crl}{enneinst}.  Since $T$ fixes $P$ we get that $P=c X_0\cdot X_1\cdot X_2\cdot X_3\cdot X_4\cdot X_5$ for some $c\not=0$. 
\end{proof}
By~\Ref{prp}{atrechiuso} it makes sense to let 
\begin{equation}
\gz:=[A_{III}]\in\gM.
\end{equation}
Our next goal is  to prove that
\begin{equation}\label{zetano}
\gz\notin\gI.
\end{equation}
By~\eqref{interabi} the following holds: given row indices $0\le s<t\le 5$ there exists exactly one set $\{s',t'\}\subset\{0,\ldots,5\}\setminus\{s,t\}$ of two indices   
 such that 
\begin{equation}\label{indiciuno}
v_s\wedge v_t\wedge v_{s'},\ v_s\wedge v_t\wedge  v_{t'}\in A.
\end{equation}
Thus we get the line
\begin{equation}\label{eccoretta}
L_{s,t}:=\{v_s\wedge v_t\wedge(\lambda_0 v_{s'}+\lambda_1 v_{t'})\mid [\lambda_0,\lambda_1]\in\PP^1\}
\subset \Theta_{A^{\sF}_{III}}.
\end{equation}
\begin{prp}\label{prp:rettetre}
Keeping notation as above  we have
\begin{equation}\label{quindici}
\Theta_{A^{\sF}_{III}}=\bigcup_{0\le s<t\le 5}L_{s,t}.
\end{equation}
 Let  $W\in\Theta_{A^{\sF}_{III}}$ and hence $W=\la v_s,v_t,(\lambda_0 v_{s'}+\lambda_1 v_{t'})\ra$  for a unique choice of $0\le s<t\le 5$, $s',t'$ as in~\eqref{indiciuno} and $[\lambda_0,\lambda_1]\in\PP^1$; then 
 \begin{equation}\label{doppiotriangolo}
C_{W,A^{\sF}_{III}}=2\la v_s,v_t\ra+2\la v_s,(\lambda_0 v_{s'}+\lambda_1 v_{t'})\ra+2\la v_t,(\lambda_0 v_{s'}+\lambda_1 v_{t'})\ra. 
\end{equation}
\end{prp}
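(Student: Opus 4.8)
The plan is to use throughout the explicit combinatorics of $A:=A^{\sF}_{III}$, namely that $A=\langle v_I\mid I\in\cS\rangle$ where $\cS$ is the set of ten tripletons recorded by the columns of $N$, together with the two properties established inside the proof of~\Ref{clm}{unicotre}: that $\cT=\cS\amalg\sigma(\cS)$, and that every pair of indices lies in exactly two members of $\cS$ (equation~\eqref{interabi}).

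For~\eqref{quindici}, the inclusion $\supseteq$ is the observation made just before the statement: by~\eqref{interabi} a pair $\{s,t\}$ completes to exactly two tripletons $\{s,t,s'\},\{s,t,t'\}\in\cS$, so $v_s\wedge v_t\wedge(\lambda_0 v_{s'}+\lambda_1 v_{t'})=\bigwedge^3 W\in A$ and $L_{s,t}\subset\Theta_A$. For the reverse inclusion I would argue through the matroid $\cB(W)=\{I\mid p_I(W)\neq 0\}$ of a given $W\in\Theta_A$, which satisfies $\cB(W)\subseteq\cS$. First, no two members of $\cS$ are disjoint (a tripleton disjoint from $I$ is $\sigma(I)\notin\cS$), so any two bases meet. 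The crucial point is that, by~\eqref{interabi}, the link of each index $a$ in $\cS$—the graph on the remaining five indices whose edges are the pairs completing $a$ to a member of $\cS$—is $2$-regular, hence a single $5$-cycle, and so has girth $5$. Consequently two bases of $W$ cannot meet in a single index $a$: feeding such a pair into the three-term Pl\"ucker relation based at $a$, the girth-$5$ condition makes both cross-terms vanish and forces a nonzero coordinate $p_J$ with $J\notin\cS$, a contradiction. The absence of triangles in the links likewise prevents three bases from pairwise sharing pairs without a common one. Hence all of $\cB(W)$ shares a single pair $\{s,t\}$, and a final application of~\eqref{interabi} identifies $W$ as an element of $L_{s,t}$.

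For~\eqref{doppiotriangolo} I would first check that $C_{W,A}\neq\PP(W)$ (equivalently $\dim(A\cap F_{w'})=1$ at a general $[w']\in\PP(W)$, a one-point computation), so that $C_{W,A}=V(P)$ with $0\neq P\in\Sym^6 W^{\vee}$. Write $P$ in the coordinates $X_s,X_t,X_3$ dual to the basis $v_s,v_t,w$ of $W$, where $w:=\lambda_0 v_{s'}+\lambda_1 v_{t'}$. The subtorus $T'=\{g\in T\cap SL(V)\mid t_{s'}=t_{t'}\}$ has $w$ as an eigenvector, hence preserves $W$ and the invariant complement $U=\langle v_p,v_q,v_{t'}\rangle$ (where $p,q$ are the two indices outside $\{s,t,s',t'\}$), and it fixes $\omega$ by~\eqref{torotre}. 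For $g\in T'$ we may therefore apply~\Ref{clm}{azione}, which gives $g_W(P)=(\det g_W)^{-2}P$ with $g_W=\diag(t_s,t_t,t_{s'})$ in the basis $(v_s,v_t,w)$. Since $P$ is then a common eigenvector of the full diagonal torus of $GL(W)$, it is a single monomial $X_s^a X_t^b X_3^c$; comparing the eigencharacter $t_s^{-a}t_t^{-b}t_{s'}^{-c}$ with $(t_s t_t t_{s'})^{-2}$ and letting $t_s,t_t,t_{s'}$ vary freely forces $a=b=c=2$. Thus $P$ is proportional to $(X_s X_t X_3)^2$, which is precisely the doubled triangle~\eqref{doppiotriangolo} with vertices $[v_s],[v_t],[w]$.

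The main obstacle is the reverse inclusion in part (1): all the geometry must be squeezed out of the single numerical input~\eqref{interabi}, and the naive route is to check the finitely many pairs and triples of tripletons by hand. The clean device is to recognize that~\eqref{interabi} forces every link to be a $5$-cycle, and to let the girth-$5$ property drive the Pl\"ucker relations uniformly. By contrast part (2) is short once one notices that~\Ref{clm}{azione} supplies not merely invariance but the exact eigencharacter $(\det g_W)^{-2}$, so that the single torus $T'$ already pins down all three exponents; the only point requiring care there is the preliminary verification that $C_{W,A}$ is a genuine sextic rather than all of $\PP(W)$.
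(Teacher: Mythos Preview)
Your approach to~\eqref{quindici} is correct and genuinely different from the paper's. The paper first bounds $\dim\Theta_{A_{III}}\le 1$ by invoking the classification (Theorems~2.26 and~2.36 of~\cite{ogtasso}) of lagrangians with $\dim\Theta_A\ge 2$, ruling out each case via the explicit shape of $Y_{A_{III}}$ from~\Ref{prp}{atrechiuso}; it then lets the maximal torus act on the irreducible components of $\Theta_{A_{III}}$ to force each into the form $L_{s,t}$. Your argument is entirely internal to the matroid $\cB(W)\subset\cS$ and avoids this external input. The $5$-cycle link observation is exactly right, and the three-term relation you want is the one with $I'=\{a,b\}$, $J'=\{a,c,d,e\}$: it reads $p_{abc}p_{ade}=\pm p_{abd}p_{ace}\pm p_{abe}p_{acd}$, and if either cross-product had both factors in $\cS$ then $\{b,d\},\{c,e\}$ (resp.~$\{b,e\},\{c,d\}$) together with $\{b,c\},\{d,e\}$ would close a $4$-cycle in the link of $a$, impossible in a $5$-cycle. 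Your triangle argument for three bases is likewise clean. The payoff is a self-contained proof of~\eqref{quindici}; the cost is the small case analysis, which you correctly flag as the main obstacle.

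For~\eqref{doppiotriangolo} your argument and the paper's coincide: your $T'$ is the paper's $T_0$, and your eigencharacter comparison is the paper's appeal to~\Ref{rmk}{pasquetta}(3) via $\overline{g}_W\in\SL(W)$. Two small points. Your complement $U=\langle v_p,v_q,v_{t'}\rangle$ fails when $\lambda_0=0$ (then $w=v_{t'}\in U$); take $v_{s'}$ instead there. And the ``one-point computation'' that $C_{W,A}\ne\PP(W)$ is genuine work you have not carried out---the paper handles it by computing $\cB(W,A)$ explicitly at the monomial endpoints via~\Ref{crl}{cnesinerre} and then deducing the generic case by a continuity argument along $L_{s,t}$.
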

\begin{proof}
First we will prove  that $\dim\Theta_{A^{\sF}_{III}}= 1$. By~\eqref{eccoretta} we know that $\dim\Theta_{A^{\sF}_{III}}\ge 1$. Suppose that $\dim\Theta_{A^{\sF}_{III}}\ge 2$ and let $\Theta$ be an irreducible component of $\Theta_{A^{\sF}_{III}}$ of dimension at least $2$. Theorem~2.26 and Theorem~2.36 of~\cite{ogtasso} give the classification of couples $(A,\Theta)$ with  $A\in\lagr$ and $\Theta$ an irreducible component of $\Theta_A$ such that $\dim\Theta\ge 2$. That classification together with 
semistability of $A^{\sF}_{III}$ gives that
\begin{equation}
A^{\sF}_{III}\in(\XX_{\cY}\cup\XX_{\cW}\cup \PGL(V) A_k(L)\cup \PGL(V) A_h(L)\cup \PGL(V) A_{+}(U)).
\end{equation}
(Notation as in~\cite{ogtasso}.)
 If $A^{\sF}_{III}\in(\PGL(V) A_k(L)\cup \PGL(V) A_h(L))$ then $Y_{A^{\sF}_{III}}$ is a double discriminant cubic, if $A^{\sF}_{III}\in(\XX_{\cW}\cup \PGL(V) A_{+}(U))$ then 
 $Y_{A^{\sF}_{III}}$ contains a quadric hypersurface: in both cases we  contradict~\Ref{prp}{atrechiuso}. This proves that $\dim\Theta_{A^{\sF}_{III}}= 1$. Let $T< \SL(V)$ be the connected maximal torus  of elements which are diagonal with respect to $\{v_0,\ldots,v_5\}$. By~\eqref{torotre} $T$ maps $A^{\sF}_{III}$ to itself and hence it maps each irreducible component of $\Theta_{A^{\sF}_{III}}$ to itself. It follows that 
  a $0$-dimensional irreducible component of  $\Theta_{A^{\sF}_{III}}$ must be of the form $v_i\wedge v_j\wedge v_k$ for $0\le i<j<k\le 5$ and 
 an irreducible $1$-dimensional component of $\Theta_{A^{\sF}_{III}}$ must be of the form~\eqref{eccoretta} for some choice of pairwise distinct $s,t,s',t'$; it follows that $s',t'$ satisfy~\eqref{indiciuno}. We have proved~\eqref{quindici}. Next we will prove the assertion about $C_{W,A}$ for  $W\in\Theta_{A^{\sF}_{III}}$. First suppose that $W=\la v_i,v_j,v_k\ra$. Then
 \begin{equation}\label{punticat}
\cB(W,A)=\la v_i,v_j\ra\cup\la v_i,v_k\ra\cup\la v_j,v_k\ra. 
\end{equation}
In fact it follows from~\eqref{quindici} that the set of $[w]\in\PP(W)$ such that Item~(1) of~\Ref{dfn}{malvagio} holds  is equal to the right-hand side of~\eqref{punticat}, moreover  a straightforward  
analysis of the matrix $N$ defining $A^{\sF}_{III}$ gives that  the set of $[w]\in\PP(W)$ such that Item~(2) of~\Ref{dfn}{malvagio} holds  is again equal to the right-hand side of~\eqref{punticat}. 
By~\Ref{crl}{cnesinerre} we get that~\eqref{doppiotriangolo} holds if $W=\la v_i,v_j,v_k\ra$. Lastly suppose that $W=W_{\lambda}:=\la v_s,v_t,(\lambda_0 v_{s'}+\lambda_1 v_{t'})\ra$ where $\lambda_0\not=0\not=\lambda_1$. Acting by the torus $T$ we get an isomorphism
\begin{equation}\label{isomorfi}
 C_{W_{\lambda},A^{\sF}_{III}}\overset{\sim}{\lra} C_{W_{\lambda'},A^{\sF}_{III}}
\end{equation}
where $\lambda'=[\lambda'_0,\lambda'_1]$ is arbitrary with $\lambda'_0\not=0\not=\lambda'_1$. It follows that  $C_{W_{\lambda},A^{\sF}_{III}}\not=\PP(W_{\lambda})$. In fact if we had equality then we would have $C_{W_{\lambda'},A^{\sF}_{III}}=\PP(W_{\lambda'})$ whenever $\lambda'_0\not=0\not=\lambda'_1$ and by continuity also for arbitrary $[\lambda'_0,\lambda'_1]$; 
since $W_{[1,0]}=\la v_s,v_t, v_{s'}\ra$ that contradicts 
 what we have proved above. This proves that  $C_{W_{\lambda}}\not=\PP(W_{\lambda})$. 
Let $T_0<T$ be the sub-torus of $g$ such that $g(v_{s'})/v_{s'}=g(v_{t'})/v_{t'}$. If $g\in T_0$ then $g(W_{\lambda})=W_{\lambda}$ for every $\lambda\in\PP^1$. Thus we have a homomorphism $\rho\colon T_0\lra GL(W_{\lambda})$. For  $g\in T_0$ let
\begin{equation*}
\ov{\rho}(g):=\rho(g)(\det g)^{-1/3}\in \SL(W_{\lambda}).
\end{equation*}
Write $C_{W_{\lambda},A^{\sF}_{III}}=V(P)$ where $P\in \Sym^3 W^{\vee}_{\lambda}$: by~\Ref{clm}{azione} we get that $\ov{\rho}(g)P=P$  for every  $g\in T_0$.
Since $\{\ov{\rho}(g) \mid g\in T_0\}$ is a maximal torus of $\SL(W_{\lambda})$ it follows that~\eqref{doppiotriangolo} holds for  $W=W_{\lambda}$.
\end{proof}
\subsection{Lagrangians stabilized by $\PGL(4)$ or $\PSO(4)$}\label{subsec:grupport}
\setcounter{equation}{0}
Choose an isomorphism $\phi\colon\bigwedge^2 U\overset{\sim}{\lra} V$. 
Let $A_{+}(U)\in\lagr$ be defined as in~\eqref{apiu} and similarly for $A_{-}(U)$: then $\SL(U)$ maps $A_{+}(U)$ to itself  and it acts trivially on $\bigwedge^{10}A_{+}(U)$. Of course the orbits $\PGL(V)A_{+}(U)$ and $\PGL(V)A_{-}(U)$ are equal. 
\begin{prp}\label{prp:orbapiu}
 $A_{+}(U)$  is semistable and it has minimal $\PGL(V)$-orbit.
\end{prp}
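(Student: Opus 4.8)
The plan is to mimic exactly the argument used for $A^{\sF}_{III}$ in \Ref{prp}{atrechiuso}, replacing the maximal torus $T$ there by the reductive subgroup $H$ arising from the $\SL(U)$-action, and invoking Luna's \Ref{thm}{littlestars}. First I would pass to the affine cone $\lagrhat\subset\bigwedge^{10}(\bigwedge^3 V)$ and choose a generator $\omega$ of $\bigwedge^{10}A_{+}(U)$. By \Ref{rmk}{rapgru} (see~\eqref{esseoquattro}) the summand $A_{+}(U)=\Sym^2 U\otimes\det U$ is $\SL(U)$-invariant, and since $\SL(U)$ is semisimple it carries no nontrivial characters, so it acts trivially on the line $\bigwedge^{10}A_{+}(U)$; thus $\omega\in\lagrhat^{H}$, where $H$ denotes the image of $\SL(U)$ in $\SL(V)$. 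It remains to prove that $\SL(V)\omega$ is closed in $\lagrhat$: semistability of $A_{+}(U)$ then follows from the Hilbert–Mumford criterion (as $\omega\neq 0$ forces $0\notin\SL(V)\omega=\overline{\SL(V)\omega}$), and closedness of the orbit in the cone gives that $\PGL(V)A_{+}(U)$ is minimal in $\lagr^{ss}$, exactly as in \Ref{prp}{atrechiuso}.

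The key structural input is the identification of $H$ as a special orthogonal group. Equip $V\cong\bigwedge^2 U$ with the nondegenerate symmetric form $Q(\eta):=\vol_U(\eta\wedge\eta)$ (after fixing a volume form on $U$), where $Q$ takes values in $\bigwedge^4 U\cong\CC$. The representation $\bigwedge^2\colon\SL(U)\to\GL(V)$ preserves $Q$, and the classical fact that $\SL(U)=\SL_4$ is the spin double cover $\mathrm{Spin}(V)$ of $\SO_6$, with $-\Id_U$ acting trivially on $\bigwedge^2 U$, shows that $H=\SO(V,Q)$. In particular $H$ is connected and semisimple, hence linearly reductive, so \Ref{thm}{littlestars} applies: $\SL(V)\omega$ is closed if and only if $N_{\SL(V)}(H)\omega$ is closed, and it suffices to show $N_{\SL(V)}(H)/H$ is finite, whence $N_{\SL(V)}(H)\omega$ is a finite (hence closed) orbit.

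Next I would compute $N_{\SL(V)}(H)$. Since $\SO(V,Q)$ acts irreducibly on the $6$-dimensional $V$, its space of invariant quadratic forms is spanned by $Q$; consequently, for $g\in\SL(V)$ one has $g\,\SO(V,Q)\,g^{-1}=\SO(V,Q\circ g^{-1})=\SO(V,Q)$ if and only if $Q\circ g^{-1}$ is proportional to $Q$, i.e.\ $g$ is a similitude, $g^{*}Q=c(g)Q$ for some $c(g)\in\CC^{\times}$. Thus $N_{\SL(V)}(H)=\mathrm{GO}(V,Q)\cap\SL(V)$. The Gram-matrix relation gives $(\det g)^2=c(g)^6$, so $\det g=1$ forces $c(g)$ to be a sixth root of unity; each fiber of the multiplier $c$ inside $\SL(V)$ is a single coset of $O(V,Q)\cap\SL(V)=\SO(V,Q)=H$. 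Hence $N_{\SL(V)}(H)$ is a union of finitely many $H$-cosets and $N_{\SL(V)}(H)/H$ is finite, completing the argument.

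The main obstacle I anticipate is the normalizer computation, specifically the clean identification $H=\SO(V,Q)$ (which rests on recognizing $\bigwedge^2\colon\SL_4\to\SO_6$ as the surjective spin cover) together with the uniqueness-up-to-scalar of the $H$-invariant quadratic form; once these are in place, the reduction to a finite orbit via Luna is immediate and parallels \Ref{prp}{atrechiuso} verbatim. As a sanity check one may note that semistability is at least consistent with \Ref{crl}{enneinst}, since $Y_{A_{+}(U)}=3\cQ\neq\PP(V)$ by~\eqref{epwquad}, although this weaker statement does not by itself yield the closed-orbit conclusion.
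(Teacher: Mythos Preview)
Your proof is correct and follows the same approach as the paper: apply Luna's criterion (\Ref{crl}{piupiccolo}) with $H$ the image of $\SL(U)$ in $\SL(V)$, using that $H$ acts trivially on $\bigwedge^{10}A_{+}(U)$ and that $N_{\SL(V)}(H)/H$ is finite. The paper's one-line proof merely asserts that the index is $2$, whereas your identification $H=\SO(V,Q)$ and computation of the normalizer as the similitudes in $\SL(V)$ actually justifies this finiteness (and in fact gives index $6$, not $2$, though of course either suffices).
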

\begin{proof}
The subgroup $\SL(U)<\SL(V)$ acts trivially on $\bigwedge^{10}A_{+}(U)$ and the index of $\SL(U)$ in the normalizer $N_{\SL(V)}(\SL(U))$ is $2$; thus $A_{+}(U)$ is $\SL(V)$-semistable  by~\Ref{crl}{piupiccolo}.
\end{proof}
Thus $A_{+}(U),A_{-}(U)$ are semistable points with minimal orbit stabilized by $\SL(4)$. Later on we will need to have at our disposal  explicit bases of  $A_{+}(U)$ and  $A_{-}(U)$: we define them as follows. Let $\{u_0,u_1,u_2,u_3\}$ be a basis of $U$ and $\sF=\{v_0,\ldots,v_5\}$ be the basis of $V$ given by
\begin{equation}\label{basewedu}
v_0=u_0\wedge u_1,\ v_1=u_0\wedge u_2,\ v_2=u_0\wedge u_3,\ v_3=u_1\wedge u_2,
\ v_4=u_1\wedge u_3,\ v_5=u_2\wedge u_3.
\end{equation}
(To be precise: $v_0=\phi(u_0\wedge u_1)$ etc.)
A straightforward computation gives that 
\begin{equation}\label{grancalc}
i_{+}([\xi_0 u_0+\xi_1 u_1+\xi_2 u_2+\xi_3 u_3])=[\sum_I \alpha_I  \xi^I],\quad
i_{-}([ \theta_0 u^{\vee}_0+\theta_1  u^{\vee}_1+\theta_2 u^{\vee}_2+\theta_3 u^{\vee}_3)=
[\sum_I \beta_I \theta^I]
\end{equation}
where $I=(i_0,i_1,i_2,i_3)$ runs through the set of multi-indices of length $2$ and $\alpha_I,\beta_I$ are given by Table~\eqref{basipluck}. 
\begin{table}[tbp]
\caption{ Bases of $A_{+}(U)$ and $A_{-}(U)$.}\label{basipluck}
\vskip 1mm
\centering
\renewcommand{\arraystretch}{1.60}
\begin{tabular}{lllc}
\toprule
  $I$ &  $\alpha_I$ &  $\beta_I$ &  $(\alpha_I,\beta_I)_V$ \\
 \midrule
    $(2,0,0,0)$ &    $ v_0\wedge v_1 \wedge v_2$ &  $v_3\wedge v_4 \wedge v_5$ &  $1$  \\
  $(0,2,0,0)$ &      $v_0\wedge v_3\wedge v_4$ &  $ v_1\wedge v_2 \wedge v_5$ &  $1$ \\
   $(0,0,2,0)$ &     $v_1\wedge v_3\wedge v_5$ &  $v_0\wedge v_2\wedge v_4$ &   $1$ \\
   $(0,0,0,2)$ &       $v_2\wedge v_4\wedge v_5$ &  
  $v_0\wedge  v_1\wedge v_3$ &   $1$ \\
  $(1,1,0,0)$ &     $v_0\wedge(v_1\wedge v_4 - v_2\wedge v_3)$ &   
  $v_5\wedge(v_2\wedge v_3- v_1\wedge v_4)$ &   $2$ \\
   $(1,0,1,0)$ &     $-v_1\wedge(v_0\wedge v_5 + v_2\wedge v_3)$ & 
    $-v_4\wedge(v_0\wedge v_5 + v_2\wedge v_3)$ &  $2$ \\
  $(1,0,0,1)$ &     $v_2\wedge(- v_0\wedge v_5+v_1\wedge v_4)$ &  
    $v_3\wedge(v_0\wedge v_5 - v_1\wedge v_4)$ &   $2$ \\
  $(0,1,1,0)$ &         $-v_3\wedge(v_0\wedge v_5+ v_1\wedge v_4)$ & 
    $v_2\wedge(v_0\wedge v_5 + v_1\wedge v_4)$ &   $2$ \\
   $(0,1,0,1)$ &     $v_4\wedge(- v_0\wedge v_5+v_2\wedge v_3)$ & 
       $v_1\wedge(- v_0\wedge v_5+v_2\wedge v_3 )$ &  $2$ \\
  $(0,0,1,1)$ &  $v_5\wedge (v_1\wedge v_4+v_2\wedge v_3)$ &
 $-v_0\wedge (v_1\wedge v_4+v_2\wedge v_3)$ &  $2$ \\
\bottomrule 
\end{tabular}
\end{table} 
\begin{rmk}\label{rmk:yujiro}
Let $T< \GL(U)$ be the maximal torus which is diagonalized in the basis $\{u_0,\ldots,u_3\}$: thus $T=\{\diag(t_0,\ldots,t_3) \mid t_0 t_1 t_2 t_3\not=0\}$. Then $T$ acts on $A_{+}(U)$ and on $A_{-}(U)$ and is diagonalized in the basis $\{\ldots,\alpha_I,\ldots\}$ (respectively   in the basis $\{\ldots,\beta_I,\ldots\}$); moreover it acts on $\alpha_I$ and $\beta_I$ according to $I$ or $-I$ respectively:
\begin{equation*}
(t_0,\ldots,t_3)\alpha_I= t_0^{i_0}t_1^{i_1}t_2^{i_2}t_3^{i_3}\alpha_I,\quad
(t_0,\ldots,t_3)\beta_I= t_0^{-i_0}t_1^{-i_1}t_2^{-i_2}t_3^{-i_3}\beta_I .
\end{equation*}
\end{rmk}
By~\Ref{rmk}{yujiro} the product $(\alpha_I,\beta_J)_V$ vanishes if $I\not=J$. The products $(\alpha_I,\beta_I)_V$ are listed in Table~\eqref{basipluck}. 
Next we will define a family of lagrangians  which are stabilized by $\SO(4)$ - as usual this means that if $A$ is such a lagrangian then there exists $\SO(4)<\SL(V)$ which acts trivially on $\bigwedge^{10}A$. The corresponding points in $\gM$ sweep out a curve. Let $U$ be a complex vector-space of dimension $4$ and choose an isomorphism 
\begin{equation}\label{identifico}
\varphi\colon V\cong \bigwedge^2 U.
\end{equation}
Let $i_{+}\colon\PP(U)\hra \Gr(3,V)$ be as 
 in~\eqref{piumenomap}. 
\begin{dfn}\label{dfn:ixdoppiovu}
Keeping notation as above  let $\XX^{*}_{\cW}(U)\subset\lagr$ be the  set of  $A\in\lagr$ such that $\PP(A)$ contains  $i_{+}(Z)$ where $Z\subset \PP(U)$ is a smooth quadric surface  (our notation is somewhat imprecise: $\XX^{*}_{\cW}(U)$ actually depends on Isomorphism~\eqref{identifico}). Let 
\begin{equation*}
\XX^{*}_{\cW}:=\PGL(V)\XX^{*}_{\cW}(U).
\end{equation*}
\end{dfn}
Notice that $A_{+}(U)\in \XX^{*}_{\cW}(U)$.
\begin{prp}\label{prp:duequad}
Let $A\in \XX^{*}_{\cW}$. Then $A$  is semistable and it has minimal $\PGL(V)$-orbit.
\end{prp}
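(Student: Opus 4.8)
The statement to prove is Proposition 4.3.6, asserting that every $A\in\XX^{*}_{\cW}$ is semistable with minimal (closed) $\PGL(V)$-orbit. The plan is to reduce to a single representative lagrangian fixed by a copy of $\SO(4)<\SL(V)$ and then invoke Luna's criterion exactly as was done for $A_{+}(U)$ in \Ref{prp}{orbapiu} and for $A^{\sF}_{III}$ in \Ref{prp}{atrechiuso}.

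First I would reduce to the case of a specific $A\in\XX^{*}_{\cW}(U)$, since $\XX^{*}_{\cW}=\PGL(V)\XX^{*}_{\cW}(U)$ and (semi)stability together with closedness of the orbit are $\PGL(V)$-invariant properties. Fix the identification $\varphi\colon V\cong\bigwedge^2 U$ of~\eqref{identifico} and let $Z\subset\PP(U)$ be a smooth quadric surface; by definition $\PP(A)\supset i_{+}(Z)$. The key point is to identify the subgroup of $\SL(V)$ stabilizing such an $A$. A smooth quadric surface $Z=V(Q)\subset\PP(U)$ has automorphism group (inside $\PGL(U)$) equal to $\PSO(Q)\cong\PSO(4)$, and the corresponding lift acts on $\bigwedge^2 U\cong V$. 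I would choose $A$ so that it is the lagrangian naturally associated to $Z$ (for instance via the span of $i_{+}(Z)$ together with its perpendicular data, analogously to the construction of $A_{+}(U)$), so that a copy $H=\SO(4)<\SL(V)$ preserves $A$ and acts trivially on $\bigwedge^{10}A$. The group $\SO(4)$ is reductive, so $H$ is a linearly reductive subgroup of $G=\SL(V)$, which is precisely what~\Ref{crl}{piupiccolo} requires.

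The main step is then to apply~\Ref{crl}{piupiccolo} (Luna) with this $H$. Let $\omega$ generate $\bigwedge^{10}A$, so $\omega\in\laghat^{H}$ where $\laghat$ is the affine cone over $\lagr$. Since $H$ fixes $\omega$, the point $[\omega]$ lies in $\PP(\laghat^{H})$, and by~\Ref{crl}{piupiccolo} it is $G$-semistable if and only if it is $N_G(H)$-semistable, and $G[\omega]$ is closed in the semistable locus if and only if $N_G(H)[\omega]$ is closed among $N_G(H)$-semistable points of $\PP(\laghat^{H})$. Thus everything reduces to the $N_G(H)$-action. Here the crucial numerical input is that $N_G(H)/H$ is small: the normalizer of $\SO(4)$ in $\SL(V)$, modulo $\SO(4)$, should be finite (or the relevant extra factor should act trivially or with closed orbit on $[\omega]$). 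If $N_G(H)/H$ is finite, then by the last sentence of~\Ref{thm}{littlestars} the orbit $N_G(H)[\omega]$ is automatically closed, whence $G[\omega]$ is closed and in particular $[\omega]$ is semistable; this gives both conclusions at once.

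The hard part will be the explicit computation of $N_{\SL(V)}(H)/H$ and verifying it is finite. This requires understanding how $N_G(H)$ acts: $\SO(4)$ has a two-dimensional center issue (it is not simple — $\SO(4)$ is isogenous to $\SL(2)\times\SL(2)$), so its normalizer inside $\SL(V)$ could a priori contain a positive-dimensional torus coming from the two $\SL(2)$-factors or from the swap of rulings of the quadric $Z$. I would compute the weight decomposition of $V\cong\bigwedge^2 U$ under a maximal torus of $\SO(4)$ and check that the centralizer of $\SO(4)$ in $\SL(V)$ is finite (equivalently that $V$ is a multiplicity-free or suitably rigid $\SO(4)$-module), which forces $N_G(H)/H$ to be finite up to the outer involution exchanging the two rulings. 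An alternative, if the normalizer turns out to be positive-dimensional, is to argue directly as in~\Ref{prp}{orbapiu}: exhibit that the extra torus acts on $\omega$ through a character and that $N_G(H)\omega$ is closed in $\laghat$ by a direct Hilbert--Mumford computation of $\mu(A,\lambda)$ for the finitely many relevant $1$-PS's, using the formula~\eqref{formutile}. Either way the conclusion follows from Luna's theorem once the orbit of $\omega$ under the normalizer is shown to be closed.
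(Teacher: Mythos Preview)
Your overall strategy—reduce to $A\in\XX^{*}_{\cW}(U)$, take $H=\SO(q)\cong\SO(4)<\SL(V)$, and invoke Luna's \Ref{crl}{piupiccolo}—matches the paper. However, your primary hope that $N_{\SL(V)}(H)/H$ is finite is wrong, and this is where your argument has a genuine gap. Under $\SO(4)$ the module $V\cong\bigwedge^2 U$ splits as $V'\oplus V''$ with $V',V''$ two \emph{non-isomorphic} $3$-dimensional irreducibles (the self-dual and anti-self-dual $2$-forms, equivalently $\Sym^2 U'\otimes\bigwedge^2 U''$ and $\Sym^2 U''\otimes\bigwedge^2 U'$ under the isogeny $\SL(U')\times\SL(U'')\to\SO(q)$). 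By Schur's lemma the centralizer of $\SO(4)$ in $\SL(V)$ therefore contains the $1$-PS $\lambda$ with $\lambda(t)|_{V'}=t\,\Id$, $\lambda(t)|_{V''}=t^{-1}\Id$. So $N_G(H)/H$ is positive-dimensional and your finite-index argument cannot close.

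Your fallback—a direct Hilbert--Mumford computation of $\mu(A,\lambda)$—is in the right direction but misses the key idea the paper uses. The paper does \emph{not} compute the weights from scratch. Instead it parametrizes $\XX^{*}_{\cW}(U)\cong\PP^1$ by $A_{\bf x}=\la A_q,\,x_0 q^{\vee}+x_1 q\ra$, so that $A_{[1,0]}=A_{+}(U)$, and observes that $\lambda$ acts on this $\PP^1$ (lifted to $\cO_{\PP^1}(1)$ with weights $a_0,a_1$ summing to $0$). Since $A_{+}(U)$ is the unique element of the family with $\dim\Theta=3$, the point $[1,0]$ is $\lambda$-fixed, so $x_1$ is an eigenvector. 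Now the already-established semistability of $A_{+}(U)$ (\Ref{prp}{orbapiu}, proved via the larger group $\SL(U)$ with genuinely finite-index normalizer) forces the weight $a_1=0$; hence $a_0=0$ as well and $\lambda$ acts trivially on the whole $\PP^1$. This gives closedness of every $N_G(H)$-orbit at once. The essential input you did not identify is that \Ref{prp}{orbapiu} is used as a \emph{result}, not merely as a template: the semistability of the single special point $A_{+}(U)$ is what propagates to the whole curve $\XX^{*}_{\cW}(U)$.
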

\begin{proof}
We may assume that $A\in \XX^{*}_{\cW}(U)$ and that we have chosen Identification~\eqref{identifico}.  Then $Z=V(q)$  where $q\in \Sym^2 U^{\vee}$ is non-degenerate. Let $A_q\subset\Sym^2 U$ be the annihilator of $q$. Let $q^{\vee}\in\Sym^2 U$ be the dual of $q$ (see~\Ref{sec}{discquad}); thus we have the decomposition into irreducible $O(q)$-representations $\Sym^2 U=A_q\oplus [q^{\vee}]$. 
We have an isomorphism
\begin{equation}\label{armando}
\begin{matrix}
\PP^1 & \overset{\sim}{\lra} & \XX^{*}_{\cW}(U) \\
{\bf x}:=[x_0,x_1] & \mapsto & A_{\bf x}:=\la A_q,x_0 q^{\vee}+x_1 q\ra
\end{matrix}
\end{equation}
 We have an embedding $\SL(U)<\SL(V)$; composing with the embedding 
$SO(q)<\SL(U)$ we get an embedding 
\begin{equation}\label{lapimpa}
SO(q)<\SL(V). 
\end{equation}
Since $SO(q)$ acts trivially on $\bigwedge^9 A_q$, $q^{\vee}$, $q$ it acts trivially on $\bigwedge^{10}A_{\bf x}$ for every ${\bf x}\in\PP^1$. The group $N_{\SL(V)}(SO(q))$  acts on $\XX^{*}_{\cW}(U)$. By~\Ref{crl}{piupiccolo}  in order to prove the proposition it suffices to show that every $A_{\bf x}$ is $N_{\SL(V)}(SO(q))$-semistable with closed orbit. Choose $2$-dimensional vector-spaces $ U',U''$ and an isomorphism $U\cong U'\otimes U''$ such that $Z$ is identified with the projectivization  of the subset of decomposable elements of $U'\otimes U''$. We have 
an isomorphism of $GL(U')\times GL(U'')$-representations
\begin{equation*}
V=\bigwedge^2 U=\bigwedge^2 (U'\otimes U'')\cong 
\underbrace{\Sym^2 U'\otimes\bigwedge^2 U''}_{V'} \oplus 
\underbrace{\Sym^2 U''\otimes\bigwedge^2 U'}_{V''}.
\end{equation*}
Composing the isogeny $\SL(U')\times \SL(U'')\lra SO(q)$ and Embedding~\eqref{lapimpa} we get the isogeny $\SL(U')\times \SL(U'')\lra SO(V')\times SO(V'')$. Thus it suffices to show that each $A_{\bf x}$ is $N_{\SL(V)}(SO(V')\times SO(V''))$-semistable with closed orbit. Let $\lambda\colon\CC^{\times}\to N_{\SL(V)}(SO(V')\times SO(V''))$ be the $1$-PS such that $\lambda(t)|_{V'}=t\Id_{V'}$, $\lambda(t)|_{V''}=t^{-1}\Id_{V''}$. The subgroup of  $N_{\SL(V)}(SO(V')\times SO(V''))$ generated by $SO(V')\times SO(V'')$ and $\im\lambda$ is of finite index; since $SO(V')\times SO(V'')$ acts trivially on $\bigwedge^{10}A_{\bf x}$ for each ${\bf x}$ it follows that it suffices to prove that each $A_{\bf x}$ is $\lambda$-semistable with closed orbit. Identifying $\XX^{*}_{\cW}(U)$ with $\PP^1$ 
via~\eqref{armando} we get that $\lambda$ acts on $\PP^1$ and on $\cO_{\PP^1}(1)$; let 
\begin{equation}\label{zebra}
H^0(\cO_{\PP^1}(1))=L_0\oplus L_1,\quad \dim L_i=1,\quad\lambda(t)|_{L_i}=t^{a_i},\quad a_0+a_1=0
\end{equation}
be a diagonalization of the action of $\lambda$. Of course $\{x_0,x_1\}$ is a basis of $H^0(\cO_{\PP^1}(1))$; we claim that one may assume that $L_1=[x_1]$.
In fact we have $A_{[1,0]}=A_{+}(U)$ and if ${\bf x}\not=[1,0]$ then $A_{\bf x}$ is not projectively equivalent to $A_{+}(U)$ because $\dim\Theta_{A_{\bf x}}=3$ if and only if ${\bf x}=[1,0]$ (this is an easy exercise); thus $x_1$ is an eigenvalue of $\lambda(t)$ for every $t\in\CC^{\times}$ and hence we may assume that $L_1=[x_1]$. On the other hand  $A_{+}(U)$ is $\SL(V)$-semistable   by~\Ref{prp}{orbapiu}.  Since $A_{[1,0]}=A_{+}(U)$ is $\SL(V)$-semistable and $L_1=[x_1]$ we get that $a_1=0$ and hence the $\lambda$-action on $\PP^1$ is trivial; this proves that each $A_{\bf x}$ is $\lambda$-semistable with minimal orbit.
\end{proof}
By~\Ref{prp}{duequad} it makes sense to let 
\begin{equation}\label{stratowu}
\gX_{\cW}:=\{[A]\in\gM \mid A\in\XX^{*}_{\cW} \},\quad
\gy:=[A_{+}(U)].
\end{equation}
Thus $\gy\in\gX_{\cW}$.
\begin{clm}\label{clm:stregaest}
Let $A\in \XX^{*}_{\cW}$ and $W\in\Theta_A$. Then $C_{W,A}$ is in the indeterminacy locus of Map~\eqref{persestiche}. In particular
$\gX_{\cW}\subset\gI$.
\end{clm}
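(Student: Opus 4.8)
The plan is to prove the stronger statement that $C_{W,A}=\PP(W)$ for \emph{every} $W\in\Theta_A$. Once this is known, $C_{W,A}$ is the degenerate alternative allowed in \Ref{dfn}{gotico}(2) (it is not a member of $|\cO_{\PP(W)}(6)|$ at all, so Map~\eqref{persestiche} cannot be evaluated at it); since $A$ is semistable with closed orbit in $\lagr^{ss}$ by \Ref{prp}{duequad}, this yields both the assertion about Map~\eqref{persestiche} and, taking any $W\in\Theta_A$, the inclusion $\gX_{\cW}\subset\gI$.

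First I would reduce to a normal form. The formation of $C_{W,A}$ is $\PGL(V)$-natural: an element $g\in\PGL(V)$ induces a projective isomorphism $\PP(W)\overset{\sim}{\lra}\PP(gW)$ carrying $C_{W,A}$ onto $C_{gW,gA}$, and the indeterminacy locus of~\eqref{persestiche} is $\PGL(W)$-invariant. Hence I may assume $A\in\XX^{*}_{\cW}(U)$, so that $\PP(A)\supseteq i_{+}(Z)$ for a smooth quadric surface $Z\subset\PP(U)$ (with $Z=\PP(U)$ in the case $A=A_{+}=A_{[1,0]}$ of the parametrization~\eqref{armando}). Using the description of $\Theta_A$ in~\cite{ogtasso} I would record that every $W\in\Theta_A$ has the form $W=i_{+}([u])$ with $[u]\in Z$.

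The heart of the matter is a short incidence computation. Fix $W=i_{+}([u])$ and let $[u_1]\in Z$ with $[u_1]\neq[u]$; then $i_{+}([u_1])\in\Theta_A$, and the two $\alpha$-planes $i_{+}([u])$ and $i_{+}([u_1])$ meet in the single point $[u\wedge u_1]\in\PP(W)$. Since $[u\wedge u_1]$ lies on both planes, $F_{u\wedge u_1}$ contains both $\bigwedge^3 i_{+}([u])$ and $\bigwedge^3 i_{+}([u_1])$, and these two \emph{distinct} lines of $\bigwedge^3 V$ both lie in $A$; hence $\dim(A\cap F_{u\wedge u_1})\geq 2$, i.e.\ $[u\wedge u_1]\in\supp C_{W,A}$ by~\eqref{supses}.

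It remains to see that these points fill up $\PP(W)$. The assignment $[u_1]\mapsto[u\wedge u_1]$ is the restriction to $Z$ of the linear projection $\PP(U)\dashrightarrow\PP(U/\langle u\rangle)=\PP(W)$ with center $[u]$; because $[u]$ lies on the smooth quadric surface $Z$ (respectively because $Z=\PP(U)$), this projection is dominant. Thus $\{[w]\in\PP(W)\mid\dim(A\cap F_w)\geq 2\}$ contains a dense subset of $\PP(W)$; being closed by upper-semicontinuity of $\dim(A\cap F_w)$, it equals all of $\PP(W)$, and $C_{W,A}=\PP(W)$ follows from~\eqref{supses}. The only inputs beyond routine verification are the determination of $\Theta_A$ (so that all $W$ really are of the form $i_{+}([u])$, which I take from~\cite{ogtasso}) and the classical fact that a smooth quadric surface projects dominantly from a point lying on it; the step most deserving of care is the former, precisely because the statement quantifies over all $W\in\Theta_A$.
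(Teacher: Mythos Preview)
Your stronger statement---that $C_{W,A}=\PP(W)$ for \emph{every} $W\in\Theta_A$---is false, and the error lies precisely in the step you flag as ``most deserving of care'': the description of $\Theta_A$. It is not true that every $W\in\Theta_A$ has the form $i_{+}([u])$ with $[u]\in Z$. If $\cF_1,\cF_2$ are the two rulings of the smooth quadric $Z$ by lines, then each $i_{+}(\cF_i)\subset\PP(\bigwedge^3 V)$ is a conic spanning a plane $\PP(W_i)$, and for suitable $A\in\XX^{*}_{\cW}(U)$ one has $W_i\in\Theta_A$. Concretely, the lagrangian $A_{\cR}$ of~\eqref{ragno} belongs to $\XX^{*}_{\cW}(U)$ and contains $\bigwedge^3\ov{W}$ with $\ov{W}$ one of the $W_i$; \Ref{clm}{canederli} computes $C_{\ov{W},A_{\cR}}=3\cR$, a triple smooth conic---not $\PP(\ov{W})$.

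The paper's proof accounts for this. For $W\in i_{+}(Z)$ it argues essentially as you do (via $\cB(W,A)=\PP(W)$ and \Ref{crl}{cnesinerre}) to get $C_{W,A}=\PP(W)$. But it treats $W\in\{W_1,W_2\}$ separately: there $C_{W_i,A}=3D$ with $D$ the conic $i_{+}(\cF_i)$, obtained via \Ref{prp}{primisarto}. A triple smooth conic is the Type~IV sextic of \Ref{thm}{listashah}, hence lies in the indeterminacy locus of~\eqref{persestiche} by \Ref{rmk}{pershah}. So your incidence argument is correct and matches the paper for $W\in i_{+}(Z)$, but an additional case is needed to cover all of $\Theta_A$.
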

\begin{proof}
It suffices to show that if $A\in\XX^{*}_{\cW}(U)$ then $C_{W,A}$ is in the indeterminacy locus of Map~\eqref{persestiche} for every $W\in\Theta_A$. By definition $\PP(A)$ contains $i_{+}(Z)$ where 
 $Z\subset\PP(U)$ is a smooth quadric. Let $\cF_1$ and $\cF_2$ be 
the two families of lines on $Z$. The conics $i_{+}(\cF_1)$ and $i_{+}(\cF_2)$ span planes $\Lambda_1,\Lambda_2\subset\PP(V)$ respectively. Let $W_1,W_2\in\Gr(3,V)$ be the subspaces such that $\PP(W_i)=\Lambda_i$. Suppose that $A=A_{+}(U)$: as is easily checked $\cB(W,A)=\PP(W)$ and hence $C_{W,A}=\PP(W)$  by~\Ref{crl}{cnesinerre}.  Now suppose that $A\not=A_{+}(U)$: then
 $W\in i_{+}(Z)\cup\{W_1,W_2\}$ (for generic $A\in \XX^{*}_{\cW}(U)$ we have $\Theta_A=i_{+}(Z)$). 
Suppose that $W\in i_{+}(Z)$. Then  there exists a dense set of $[v]\in\PP(W)$ for which Item~(1) of~\Ref{dfn}{malvagio} holds; thus  $\cB(W,A)=\PP(W)$.  By~\Ref{crl}{cnesinerre} we get that $C_{W,A}=\PP(W)$. Lastly let $i=1,2$:  applying~\Ref{prp}{primisarto} one gets that $C_{W_i,A}=3D$ where $D\subset \Lambda_i$ is the conic $i_{+}(\cF_i)$. 
\end{proof}
Below we will give a result for  two special elements of $\XX^{*}_{\cW}(U)$ - the result will be needed in the proof of~\Ref{prp}{sparisce}. Let $Z\subset \PP(U)$ be the smooth quadric of~\Ref{dfn}{ixdoppiovu}.  Let $\cR$ be one of the two rulings of $Z$ by lines. We view $\cR$ as a smooth conic in $\PP(\bigwedge^2 U)=\PP(V)$: it spans a plane $\PP(\ov{W})$ meeting the Pl\"ucker quadric hypersurface $\Gr(2,U)\subset\PP(V)$ in $\cR$. Let $p\in Z$: the unique line of $\cR$ containing $p$ belongs to $\PP(i_{+}(p))$ and hence 
$\PP(\ov{W})\cap \PP(i_{+}(p))\not=\es$. It follows that 
\begin{equation*}
\bigwedge^3\ov{W}\in \la\la i_{+}(Z)\ra \ra^{\bot}.
\end{equation*}
Here and in the following we think of $\Gr(3,\bigwedge^2 U)=\im  i_{+}$ as a subset  of $\PP(\bigwedge^3 V)$
via the Pl\"ucker embedding.
Byf~\eqref{epwquad} we know that $\bigwedge^3\ov{W}\notin A_{+}(U)$. Thus 
 \begin{equation}\label{ragno}
A_{\cR}:=\la\la i_{+}(Z) \ra\ra +\bigwedge^3 \ov{W} 
\end{equation}
is an element of $\XX^{*}_{\cW}(U)$. By definition we have $\ov{W}\in\Theta_{A_{\cR}}$. 
\begin{clm}\label{clm:canederli}
Keep notation as above. Then $C_{\ov{W},A_{\cR}}=3\cR$.
\end{clm}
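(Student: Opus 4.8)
The plan is to compute $C_{\ov{W},A_{\cR}}$ directly using the explicit structure of $A_{\cR}$ together with the symmetry argument provided by~\Ref{clm}{azione}, mirroring the technique used in the proof of~\Ref{prp}{rettetre} for the lagrangian $A^{\sF}_{III}$. First I would fix coordinates: choose a basis $\{u_0,u_1,u_2,u_3\}$ of $U$ so that the smooth quadric $Z\subset\PP(U)$ is the projectivized set of decomposable vectors under an identification $U\cong U'\otimes U''$ with $\dim U'=\dim U''=2$, exactly as in the proof of~\Ref{prp}{duequad}. Under this splitting the two rulings of $Z$ correspond to the two families of lines, and the chosen ruling $\cR$ spans the plane $\PP(\ov{W})$; I would identify $\ov{W}\subset V=\bigwedge^2 U$ concretely as one of the summands $\Sym^2 U'\otimes\bigwedge^2 U''$ or $\Sym^2 U''\otimes\bigwedge^2 U'$ appearing in the $GL(U')\times GL(U'')$-decomposition of $V$. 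This makes $A_{\cR}=\la\la i_{+}(Z)\ra\ra+\bigwedge^3\ov{W}$ completely explicit in terms of the basis given by Table~\eqref{basipluck}.

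The key step is the symmetry reduction. I would exhibit a large torus (or the full reductive group $SO(q)\cong (SL(U')\times SL(U''))/\{\pm1\}$, or at least its maximal torus) inside $SL(V)$ that stabilizes both $\la\la i_{+}(Z)\ra\ra$ and $\bigwedge^3\ov{W}$, hence stabilizes $A_{\cR}$ and fixes a generator $\omega\in\bigwedge^{10}A_{\cR}$ up to the scalar controlled by $\det$. The subtorus $T_0$ of elements acting with determinant $1$ on $V$ and preserving $\ov{W}$ will restrict to a maximal torus of $SL(\ov{W})$. Writing $C_{\ov{W},A_{\cR}}=V(P)$ with $P\in\Sym^3\ov{W}^{\vee}$ (this is a cubic, not a sextic, because $\ov{W}\in\Theta_{A_{\cR}}$ and the sextic curve $C_{W,A}$ on $\PP(W)$ drops to a cubic in the relevant chart — more precisely $C_{\ov W,A_\cR}$ is computed on $\PP(\ov W)$ and I should double-check the degree via~\eqref{ciunogi}, so the claimed answer $3\cR$ is a triple conic living in $\PP(\ov W)=\PP^2$, meaning $P$ is the square of the conic $\cR$ up to scalar). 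Applying~\Ref{clm}{azione} to each $g\in T_0$, with $\ov{g}_{\ov W}:=(\det g_{\ov W})^{-1/3}g_{\ov W}\in SL(\ov W)$, gives $\ov{g}_{\ov W}P=P$ for a maximal torus's worth of $\ov{g}_{\ov W}$ in $SL(\ov W)$.

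Once the invariance of $P$ under a maximal torus of $SL(\ov W)$ is established, the conclusion is a direct application of~\Ref{rmk}{pasquetta}(3) (or the analogous elementary weight computation): the only degree-$6$ polynomial in three variables fixed by the full diagonal torus is a monomial $cX_0^2X_1^2X_2^2$, which is the triple coordinate triangle; but here I expect only a two-dimensional subtorus fixing $P$, so the relevant statement is that a conic-invariant cubic form is forced to be (a power of) the invariant conic. Concretely, since $\cR$ is the $SO(q)$-invariant conic in $\PP(\ov W)$ and $P$ is $SO(q)$-invariant of degree $6$ in the coordinates of $\ov W$, I would conclude $P$ is a scalar multiple of the square of the defining equation of $\cR$, i.e.\ $C_{\ov{W},A_{\cR}}=3\cR$. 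I expect the main obstacle to be the bookkeeping in identifying $\ov{W}$ and the correct stabilizing torus in explicit coordinates and verifying that the induced action on $\Sym^\bullet\ov{W}^{\vee}$ is large enough to pin down $P$ uniquely; in particular I must rule out $C_{\ov W,A_\cR}=\PP(\ov W)$, which I would do by noting (as in~\Ref{clm}{stregaest}) that $C_{W_i,A}=3D$ for the two ruling-planes of a \emph{generic} $A\in\XX^{*}_{\cW}(U)$ and then invoking a semicontinuity/specialization argument, or simply checking $\bigwedge^3\ov W\notin A_\cR\cap(\bigwedge^3\ov W)^\bot$ fails to force degeneracy, guaranteeing $\nu_B$ is not identically degenerate.
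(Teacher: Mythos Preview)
Your symmetry idea via \Ref{clm}{azione} is a genuinely different route from the paper's direct local computation. The paper works at a point $[v]=[u\wedge u']\in\cR$: it first shows $F_v\cap\la\la i_+(Z)\ra\ra=\la\la i_+(\PP\la u,u'\ra)\ra\ra$ (bounding the dimension by invoking $Y_{A_+(U)}=3\cQ$), hence $F_v\cap A_{\cR}=\la\la i_{+}(\PP\la u,u'\ra)\ra\ra+\bigwedge^3\ov W$ is $4$-dimensional, and then applies \Ref{prp}{primisarto} together with \Ref{clm}{roncisvalle} to read off the local expansion $V(h^3+g_4+g_5+g_6)$ with $h\neq 0$; since $\cR$ is a conic this forces $C_{\ov W,A_\cR}=3\cR$. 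Your outline is cleaner: the image of $SL(U')\times SL(U'')\to SL(V)$ preserves the splitting $V=\ov W\oplus(\Sym^2 U''\otimes\bigwedge^2 U')$ and (being semisimple) acts trivially on the line $\bigwedge^{10}A_{\cR}$, so \Ref{clm}{azione} applies; its image in $SL(\ov W)=SL(\Sym^2 U')$ is the full $SO(3)$ fixing the Veronese conic $\cR$, and the only $SO(3)$-invariant in $\Sym^6\ov W^\vee$ is a scalar multiple of $q_{\cR}^3$. Two small slips: $P\in\Sym^6\ov W^\vee$, not $\Sym^3$ (the sextic $3\cR$ is three times a conic), and the image in $SL(\ov W)$ is \emph{not} a maximal torus but rather $SO(3)$, which is what you actually need.

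The real gap is the last step, ruling out $P=0$, and neither of your proposals does it. The semicontinuity argument fails because $A_{\cR}$ is the \emph{unique} element of the $\PP^1$-family $\XX^{*}_{\cW}(U)$ with $\ov W\in\Theta_A$: for every other $A_{\bf x}$ (including $A_+(U)$) one has $\ov W\notin\Theta_{A_{\bf x}}$, so there is no family of pairs $(A,\ov W)$ from which to specialize, and \Ref{crl}{senoncurva} is unavailable since $\dim\Theta_{A_\cR}\ge 2$. Your second suggestion (``$\bigwedge^3\ov W\notin A_\cR\cap(\bigwedge^3\ov W)^\bot$ fails to force degeneracy'') is not a usable criterion. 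To close your argument you must still verify directly that $C_{\ov W,A_{\cR}}\neq\PP(\ov W)$, for instance by exhibiting a single $[w]\in\PP(\ov W)\setminus\cR$ with $\dim(F_w\cap A_{\cR})=1$, or by carrying out enough of the paper's local computation to see that the leading term in the Taylor expansion at a point of $\cR$ is nonzero. Either way some hands-on work with $A_{\cR}$ is unavoidable; the paper's approach has the advantage of doing this once and getting both nonvanishing and the precise local form in the same stroke.
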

\begin{proof}
Clearly $\cR\subset\supp C_{\ov{W},A_{\cR}}$ and hence it suffices   to prove the following: if $[v]\in\cR$ then 
\begin{equation}\label{cubo}
C_{\ov{W},A_{\cR}}\cap \ov{W}_0=V(h^3+g_4+g_5+g_6),\qquad 
0\not=h \in  \ov{W}^{\vee}_0\quad g_i\in\Sym^i \ov{W}^{\vee}_0. 
\end{equation}
(Notation as in~\eqref{qumme}.)
Let  $v=u\wedge u'$. We claim that 
\begin{equation}\label{spreadbtp}
F_v\cap \la\la  i_{+}(Z) \ra\ra =\la\la  i_{+}(\PP\la u,u'\ra) \ra\ra .
\end{equation}
It is clear that the left-hand side contains the right-hand side.  If the containment is strict then $\dim(F_v\cap \la \la i_{+}(Z) \ra\ra)\ge 4$ because   the right-hand side of~\eqref{spreadbtp} has dimension $3$: a fortiori we have $\dim(F_v\cap A_{+}(U))\ge 4$. By Proposition~2.3 of~\cite{ogtasso} we get that either $Y_{A_{+}(U)}=\PP(V)$ or $\mult_{[v]}Y_{A_{+}(U)}\ge 4$: that contradicts~\eqref{epwquad}. This proves~\eqref{spreadbtp}. It follows that
\begin{equation*}
F_v\cap A_{\cR} =\la\la i_{+}(\PP\la u,u'\ra)\ra\ra+\bigwedge^3 \ov{W}.
\end{equation*}
 We get~\eqref{cubo}  by applying Items~(1) and~(2) of~\Ref{prp}{primisarto}. More precisely we may identify $\ov{K}$ of~\Ref{prp}{primisarto} with $\la\la i_{+}(\PP\la u,u'\ra)\ra\ra$ and~\eqref{cubo} holds  because the intersection of $\PP(\ov{K})$ with $\Gr(2,V_0)_{\ov{W}_0}$ (notation as in~\Ref{clm}{roncisvalle}) is identified with $\cR$.
\end{proof}
The following result shows that we will get nothing \lq\lq new\rq\rq if  the smooth quadric $Z$ of~\Ref{dfn}{ixdoppiovu} is replaced by a singular quadric.
\begin{prp}\label{prp:zetasing}
Let $Z\subset\PP(U)$ be either a plane or a quadric cone. Suppose that $A\in\lagr^{ss}$ and that $\PP(A)\supset \la i_{+}(Z)\ra$. Then $A$ is $\PGL(V)$-equivalent to $A_{+}(U)$. 
\end{prp}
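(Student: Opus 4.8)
The plan is to reduce the statement to \Ref{prp}{setetatre} by proving that the hypotheses force $\dim\Theta_A>2$.

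First I would pin down $\la\la i_{+}(Z)\ra\ra$ as a subspace of $A_{+}(U)$. By \Ref{rmk}{rapgru} the map $i_{+}$ is, after the identification $V\cong\bigwedge^2 U$, the degree-$2$ Veronese $[u]\mapsto[u^2]$ whose image lies in $\PP(\Sym^2 U\otimes\det U)=\PP(A_{+}(U))$; hence $\la\la i_{+}(Z)\ra\ra\subset A_{+}(U)$. A linear-algebra computation then gives that, writing $Z=V(q)$, the span is the annihilator $\ker q\subset\Sym^2 U$ (dimension $9$) when $q$ has rank $3$ (quadric cone), and $\Sym^2 U'\subset\Sym^2 U$ (dimension $6$) when $Z=\PP(U')$ is a plane. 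Moreover, since each Plücker point $[\bigwedge^3 i_{+}([u])]$ lies in $A$, we get $i_{+}(Z)\subset\Theta_A$; as $i_{+}$ identifies $\PP(U)$ with one ruling of the Plücker quadric $\cQ=\Gr(2,U)\subset\PP(V)$, which is precisely $\Theta_{A_{+}(U)}$, the locus $\Theta_A$ contains the degenerate quadric surface $i_{+}(Z)$ sitting inside this $\PP^3$-ruling.

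Next I would show $\dim\Theta_A>2$. Since $S:=\la\la i_{+}(Z)\ra\ra$ is isotropic (it lies in the Lagrangian $A_{+}(U)$), any Lagrangian $A\supset S$ is determined by a Lagrangian $A/S$ of the reduced symplectic space $S^{\perp}/S$, and one lists these explicitly: in the cone case $S^{\perp}/S$ is $2$-dimensional, so the Lagrangians $A\supset\ker q$ form a $\PP^1$, of the shape $\la\ker q,\ \mu\varphi_0+cq\ra$ with $\varphi_0\in\Sym^2 U\setminus\ker q$ and $q\in A_{-}(U)$. Using the $1$-PS $\lambda=\diag(t,t,t,t^{-3})\in\SL(U)\subset\SL(V)$ adapted to $\ker q=\la u_3\ra$, I would compute the weights of $\bigwedge^3\lambda$ on $A_{+}(U)\oplus A_{-}(U)$ and apply the Hilbert–Mumford criterion: the member $\la\ker q,q\ra$ is unstable, while every member with $\mu\neq 0$ is semistable and has $A_{+}(U)$ as its $\lambda$-limit. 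Combining this with the classification of pairs $(A,\Theta)$ with $\Theta$ an irreducible component of $\Theta_A$ of dimension $\ge 2$ (Theorems 2.26 and 2.36 of \cite{ogtasso}) and with the exclusion of the unstable strata of Table~\eqref{stratflagdue}, I would conclude that for a semistable such $A$ the component of $\Theta_A$ through $i_{+}(Z)$ cannot be a proper quadric (or Veronese) surface inside the ruling but must be the whole ruling $\cong\PP(U)$, so that $\dim\Theta_A\ge 3$. The plane case is analogous, the reduction space $S^{\perp}/S$ now being $8$-dimensional.

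Finally, once $\dim\Theta_A>2$, \Ref{prp}{setetatre} gives at once that $A$ is $\PGL(V)$-equivalent to $A_{+}(U)$. The hard part is the middle step, and specifically the passage from \emph{$S$-equivalence} to genuine \emph{orbit-equivalence}: the weight computation only shows that the semistable members $\la\ker q,\varphi_0+cq\ra$ degenerate to $A_{+}(U)$ under $\lambda$ with vanishing numerical function, hence map to $[A_{+}(U)]$ in $\gM$, but to obtain $A\in\PGL(V)A_{+}(U)$ one must rule out that these are merely \emph{properly} semistable with non-closed orbit. This is exactly where the singularity of $Z$ (the rank drop of $q$, or the special position of the plane) must be used — it is what distinguishes the present situation from the smooth-quadric case of \Ref{dfn}{ixdoppiovu}, where the analogous family $\XX^{*}_{\cW}(U)$ sweeps out the whole curve $\gX_{\cW}$ of inequivalent closed orbits. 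I expect to settle it either by showing directly that $\dim\Theta_A=3$ for every semistable member, so that \Ref{prp}{setetatre} applies, or by computing that the stabilizer of each such $A$ has the same dimension as $\Stab(A_{+}(U))=\SL(U)$, forcing the orbit to be closed and hence equal to $\PGL(V)A_{+}(U)$.
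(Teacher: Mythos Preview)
Your proposal takes a much more circuitous route than the paper, and the complication stems from a terminological misunderstanding. In this paper, ``$\PGL(V)$-equivalent'' means \emph{GIT-equivalent} (i.e.\ $S$-equivalent): see the conventions paragraph just before \Ref{subsec}{bandiere}, where $G$-equivalence is explicitly defined as the relation induced by the quotient map $Z^{ss}\to Z//G$. Hence showing that $A$ degenerates under some $1$-PS to $A_{+}(U)$ --- which you yourself observe (``every member with $\mu\neq 0$ \dots\ has $A_{+}(U)$ as its $\lambda$-limit'') --- is already the \emph{entire} proof, since $A_{+}(U)$ has closed orbit by \Ref{prp}{orbapiu}. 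Your entire final paragraph, the appeal to \cite{ogtasso}, to \Ref{prp}{setetatre}, and the stabilizer-dimension argument are all unnecessary.

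The paper's proof does exactly this direct limit computation. For the plane case $Z=\PP(U_0)$ it uses the $1$-PS $\mu(t)=\diag(t,t,t,t^{-3})$ on $U$ (with $u_3\notin U_0$), writes a basis of $A$ as $\{\alpha_1,\dots,\alpha_6,\alpha_7+\beta_7,\dots,\alpha_{10}+\beta_{10}\}$ with $\alpha_i\in\Sym^2 U=A_{+}(U)$ and $\beta_j\in\Sym^2 U^{\vee}\cap(\Sym^2 U_0)^{\bot}$, and checks that $\lim_{t\to 0}\lambda(t)\omega=\alpha_1\wedge\cdots\wedge\alpha_{10}$, a generator of $\bigwedge^{10}A_{+}(U)$. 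For the cone case the paper uses a \emph{different} $1$-PS, namely $\mu(t)=\diag(t^{-2},t^{-1},1,t^3)$ adapted to the normal form $Z=V(x_0x_2+x_1^2)$; your choice $\diag(t,t,t,t^{-3})$ for the cone is the one appropriate to the plane case and would not directly kill the $\beta$-component there. So: you had the right mechanism in hand, but buried it under machinery that the paper's terminology renders superfluous, and picked a non-adapted $1$-PS in the cone case.
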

\begin{proof}
Suppose first that $Z$ is the plane $\PP(U_0)$ where $U_0\subset U$ is a subspace of codimension $1$. 
Let $u_3\in (U\setminus U_0)$. 
 Let $\mu$ be the $1$-PS of $\SL(U)$ defined by
\begin{equation}
\mu(t) u=t u,\quad u\in U_0,\qquad \mu(t)u_3=t^{-3}u_3.
\end{equation}
Let $\lambda=\bigwedge  ^2\mu$ be the $1$-PS of $\SL(V)$ corresponding to $\mu$. There is a basis $\{\alpha_1,\ldots,\alpha_6,(\alpha_7+\beta_7),\ldots,
(\alpha_{10}+\beta_{10})\}$ of $A$ where $\alpha_i\in \Sym^2 U$ for all $i$, $\{\alpha_1,\ldots,\alpha_6\}$ is a basis of $\Sym^2 U_0$ and $\beta_j\in (\Sym^2 U^{\vee}\cap (\Sym^2 U_0)^{\bot})$ i.e.~$\beta_j=x_3 \phi_j$ where $x_3\in U^{\vee}$ spans $\Ann   U_0$ and $\phi_j\in U^{\vee}$. 
Let $\omega:=\alpha_1\wedge\ldots\wedge\alpha_6\wedge(\alpha_7+\beta_7)\wedge\ldots\wedge
(\alpha_{10}+\beta_{10})$. A straightforward computation  gives that
\begin{equation}\label{tanti}
\lim_{t\to 0}\lambda(t)\omega=\alpha_1\wedge\ldots\wedge\alpha_{10}.
\end{equation}
This proves that  $A$ is $\PGL(V)$-equivalent to $A_{+}(U)$.
 Now suppose that $Z$ is a quadric cone. Let $B^{\vee}:=\{x_0,x_1,x_2,x_3\}$ be a basis of $U^{\vee}$ such that $Z=V(x_0x_2+x_1^2)$. Let $B:=\{u_0,u_1,u_2,u_3\}$ be the basis of $U$ dual to $B$. Let $\mu$ be the $1$-PS of $\SL(U)$ defined by
\begin{equation}
\mu(t) u_0=t^{-2} u_0,\quad \mu(t) u_1=t^{-1} u_1,\quad
\mu(t) u_2= u_2,\quad \mu(t) u_3=t^{3} u_3.
\end{equation}
Let $\lambda=\bigwedge  ^2\mu$ be the $1$-PS of $\SL(V)$ corresponding to $\mu$. There is a basis $\{\alpha_1,\ldots,\alpha_9,(\alpha_{10}+\beta_{10})\}$ of $A$ where $\alpha_i\in \Sym^2 U$ for all $i$, $\{\alpha_1,\ldots,\alpha_9\}$ is a basis of $\Sym^2 U\cap(x_0x_2+x_1^2)^{\bot}$ and $\beta_{10}\in \la (x_0x_2+x_1^2)\ra$. Let $\omega:=\alpha_1\wedge\ldots\wedge\alpha_9\wedge(\alpha_{10}+\beta_{10})$. A straightforward computation gives that~\eqref{tanti} holds in this case as well and hence  $A$ is $\PGL(V)$-equivalent to $A_{+}(U)$.
\end{proof}
\subsection{Lagrangians stabilized by $\PGL(3)$}\label{subsec:speclin}
\setcounter{equation}{0}
For $i=1,2$ let $\cV_i\subset\PP(\Sym^2 L)$ be the closed subset of conics of rank at most $i$ modulo scalars; thus $\cV_1$ is a
Veronese surface and $\cV_2$ is a (discriminant) cubic hypersurface. In Section~1.5 of~\cite{ogtasso} we proved that
\begin{equation}\label{cubicadisc}
Y_{A_k(L)}=Y_{A_h(L)}=2\cV_2.
\end{equation}
\begin{prp}\label{prp:minorb}
$A_{k}$ and $A_{h}$  are semistable with minimal $\PGL(V)$-orbits. 
\end{prp}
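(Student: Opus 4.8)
The plan is to run, for both $A_k(L)$ and $A_h(L)$, the same argument used in the proof of \Ref{prp}{orbapiu} for $A_{+}(U)$: exhibit a linearly reductive subgroup $H<SL(V)$ that fixes a generator of the top exterior power of the lagrangian, check that $N_{SL(V)}(H)/H$ is finite, and then invoke Luna's \Ref{crl}{piupiccolo}. The natural choice here is $H=SL(L)$, embedded in $SL(V)$ through the isomorphism $V\cong\Sym^2 L$. By~\eqref{decoelle} the decomposition $\bigwedge^3(\Sym^2 L)=A_k(L)\oplus A_h(L)$ is $GL(L)$-equivariant, so in particular $SL(L)$ stabilizes each of $A_k(L)$ and $A_h(L)$; recall also that both lie in $\lagr$.

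First I would record that $SL(L)$ acts trivially on $\bigwedge^{10}A_k(L)$ and on $\bigwedge^{10}A_h(L)$: each is a one-dimensional representation of $SL(L)$, hence a character of a semisimple group, hence trivial. Fixing $0\neq\omega_k\in\bigwedge^{10}A_k(L)$ and $0\neq\omega_h\in\bigwedge^{10}A_h(L)$ we thus obtain points of $\lagrhat$ fixed by $SL(L)$, i.e.\ points of $\PP(\lagrhat^{SL(L)})$ in the notation of \Ref{crl}{piupiccolo}.

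The heart of the matter is the finiteness of $N_{SL(V)}(SL(L))/SL(L)$. Since $\Sym^2 L$ is an irreducible $SL(L)$-module, Schur's lemma forces the centraliser $Z:=Z_{SL(V)}(SL(L))$ to consist of scalars, hence to be finite. Conjugation gives a homomorphism $N_{SL(V)}(SL(L))\to\Aut(SL(L))$ with kernel $Z$; composing with $\Aut(SL(L))\to\operatorname{Out}(SL(L))\cong\ZZ/2$ yields a homomorphism whose kernel is $SL(L)\cdot Z$, the inner automorphisms being realised by $SL(L)$ itself. As both $SL(L)\cdot Z/SL(L)$ and $\operatorname{Out}(SL(L))$ are finite, so is $N_{SL(V)}(SL(L))/SL(L)$. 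Consequently, because $SL(L)$ fixes $\omega_k$, the orbit $N_{SL(V)}(SL(L))\,\omega_k$ is finite, hence Zariski closed in $\lagrhat$ and disjoint from $0$; thus $[\omega_k]$ is $N_{SL(V)}(SL(L))$-semistable with closed orbit, and \Ref{crl}{piupiccolo} gives that $A_k(L)$ is $SL(V)$-semistable with orbit closed in $\lagr^{ss}$. The identical argument applied to $\omega_h$ settles $A_h(L)$.

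The main obstacle is exactly this normaliser computation, and the contrast with \Ref{prp}{orbapiu} is instructive: for $A_{+}(U)$ the module $\bigwedge^2 U$ is self-dual, so duality realises the outer automorphism of $SL(U)$ and produces an index-$2$ normaliser, whereas for $A_k,A_h$ the module $\Sym^2 L$ is \emph{not} self-dual (the standard $SL_3$-representation is not), so the outer automorphism is not realised inside $SL(V)$. What I must verify is only the qualitative statement that no positive-dimensional subgroup of $SL(V)$ strictly containing $SL(L)$ normalises it, which is precisely the finiteness extracted above; the exact order of $N_{SL(V)}(SL(L))/SL(L)$ is immaterial for semistability and closedness of the orbit.
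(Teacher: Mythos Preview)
Your proof is correct and follows the same strategy as the paper: apply Luna's criterion (\Ref{crl}{piupiccolo}) with $H=\im(SL(L)\to SL(V))$, after checking that $H$ fixes a generator of $\bigwedge^{10}A_k(L)$ (resp.~$\bigwedge^{10}A_h(L)$) and that $N_{SL(V)}(H)/H$ is finite.

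The only substantive difference is in how finiteness of $N_{SL(V)}(H)/H$ is established. The paper invokes the geometric identity $Y_{A_k(L)}=Y_{A_h(L)}=2\cV_2$ from~\eqref{cubicadisc}: any $g\in N_{SL(V)}(H)$ must preserve this $H$-invariant hypersurface, so its image in $\PGL(V)$ lies in $\Aut(\cV_2)=\PGL(L)$, which is already the image of $H$; hence $N_{SL(V)}(H)=H\cdot Z(SL(V))$ and the quotient is finite. Your argument is purely representation-theoretic: Schur forces $Z_{SL(V)}(H)$ to be the scalars, and $N_{SL(V)}(H)/(H\cdot Z_{SL(V)}(H))$ injects into $\operatorname{Out}(SL(L))\cong\ZZ/2$. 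Your route is more self-contained (it does not use the prior computation of $Y_{A_k}$) and makes transparent the contrast with $A_{+}(U)$: since $\Sym^2 L$ and $\Sym^2 L^{\vee}$ are non-isomorphic $SL(L)$-modules, the outer automorphism is \emph{not} realised in $SL(V)$, whereas for $\bigwedge^2 U$ it is. Either way the quotient is finite (in fact of order $2$, from $Z(SL(V))/\bigl(Z(SL(V))\cap H\bigr)\cong\mu_6/\mu_3$), which is all that Luna requires.
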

\begin{proof}
Let $\lagrhat\subset\bigwedge  ^{10}(\bigwedge  ^3 V)$ be the affine cone over $\lagr$.  Let $A$ be one of  $A_{k}(L)$, $A_{h}(L)$,  and  
 $\omega$ be a generator of $\bigwedge  ^{10}A$; thus $\omega\in \lagrhat$.    Let
$H:=\im(\SL(L)\to \SL(V))$. Then $\omega\in \lagrhat^H$. We have
$N_{\SL(V)}(H)=
\Aut (\cV_2) $: in fact the  equality follow from~\eqref{cubicadisc}.
It follows that  $N_{\SL(V)}(H)/H$ is  trivial. By~\Ref{thm}{littlestars} the orbit $\SL(V)\omega$ is closed; thus $A$ is semistable by the Hilbert-Mumford criterion, moreover as is well-known closedness of $\SL(V)\omega$ in $\lagrhat$  implies that $A$ is closed in $\lagr^{ss}$.
\end{proof}
By~\Ref{prp}{minorb} it makes sense to let 
\begin{equation}\label{pisapia}
\gx:=[A_{k}],\qquad \gx^{\vee}:=[A_{h}].
\end{equation}
We claim that
\begin{equation}\label{stregaovest}
\gx\not=\gx^{\vee},\quad \gx, \gx^{\vee}\in\gI.
\end{equation}
 First we recall~\cite{ogtasso} that
\begin{equation}\label{nonaltro}
\Theta_{A_k(L)}=\im(k),\qquad \Theta_{A_h(L)}=\im(h). 
\end{equation}
Let $W\in\Theta_{A_{k}(L)}$; by~\eqref{nonaltro} there exists $[l_0]\in\PP(L)$ such that $W$ is given by~\eqref{kappacca}. Let $[l\cdot l_0]\in(\PP(W)\setminus\{[l_0^2]\})$. Then $[l\cdot l_0]\in\PP(W')$ where $W':=\{l\cdot l' \mid l'\in L\}$. Since $W'\not=W$ it follows that $(\PP(W)\setminus\{[l_0^2]\})\subset \cB(W,A)$: by~\Ref{crl}{cnesinerre} we get that 
\begin{equation}\label{ibla}
C_{W,A}=\PP(W) \qquad \forall \ W\in \Theta_{A_k}. 
\end{equation}
Next  let $W\in\Theta_{A_h(L)}$; by~\eqref{nonaltro} there exists $f_0\in L^{\vee}$ such that $W$ is given by~\eqref{kappacca}.  Let $D_W:=\{[l^2]\mid [l]\in\PP(L),\ l(f_0)=0\}$; thus  $D_W\subset\PP(W)$. Let $[l^2]\in D_W$: then $[l^2]\in h([f])$ for every $[f]\in\PP(\Ann(l))$. It follows that the (smooth) conic
$D_W$ is contained in  $C_{W,A}$. Applying~\Ref{prp}{conodegenere} we get that 
\begin{equation}\label{nopanino}
C_{W,A}=3D_W \qquad \forall \ W\in \Theta_{A_h}. 
\end{equation}
Equations~\eqref{ibla} and~\eqref{nopanino} show that $\gx,\gx^{\vee}\in\gI$ and that the orbits $\PGL(V)A_k$, $\PGL(V)A_h$ are distinct: since the orbits are minimal it follows that  $\gx\not=\gx^{\vee}$. We have proved~\eqref{stregaovest}. 
\clearpage
\section{Description of the GIT-boundary}\label{sec:frontiera}
\setcounter{equation}{0}
\subsection{Main results}
\setcounter{equation}{0}
Below are the two main results on the GIT-boundary of $\gM$.
\begin{thm}\label{thm:eccofron}
The irreducible irredundant decomposition of $\partial\gM$ is given by~\eqref{suddivido}, i.e.~it is
\begin{equation}\label{decofron}
\partial\gM=\gB_{\cA}\cup  \gB_{\cC_1}\cup  \gB_{\cD}\cup \gB_{\cE_1}\cup
 \gB_{\cE_1^{\vee}}\cup   \gB_{\cF_1}\cup \gB_{\cF_2}\cup   \gX_{\cN_3}.
\end{equation}
The dimensions of the irreducible components are given by the entries in the first row of Table~\eqref{dimcomp}. 
\end{thm}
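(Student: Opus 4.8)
The plan is to read off $\partial\gM$ directly from \Ref{thm}{tuttisemi} and then to collapse the resulting twelve pieces down to eight. Since the stable locus is saturated, $\gM^{st}$ is exactly the image of $\lagr^{st}$ under the quotient map $\lagr^{ss}\to\gM$, so $\partial\gM$ is the image of the properly semistable locus $\lagr^{ss}\setminus\lagr^{st}$. By \Ref{thm}{tuttisemi} the non-stable locus is the union of the twelve standard strata, whence
\begin{multline*}
\partial\gM=\gB_{\cA}\cup\gB_{\cA^{\vee}}\cup\gB_{\cC_1}\cup\gB_{\cC_2}\cup\gB_{\cD}\cup\gB_{\cE_1}\cup\gB_{\cE_2}\\
\cup\gB_{\cE_1^{\vee}}\cup\gB_{\cE_2^{\vee}}\cup\gB_{\cF_1}\cup\gB_{\cF_2}\cup\gX_{\cN_3},
\end{multline*}
where $\gB_\cX$ is the image of $\BB_\cX\cap\lagr^{ss}$ and $\gX_{\cN_3}$ that of $\XX_{\cN_3}\cap\lagr^{ss}$. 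Each $\BB_\cX$ is closed, irreducible and $\PGL(V)$-invariant by \Ref{clm}{cucs}, so its saturated image is an irreducible closed subset of $\gM$; thus $\partial\gM$ is exhibited as a union of twelve irreducible closed subsets, and it remains to discard the redundant ones and compute dimensions.

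The heart of the argument is to show that $\gB_{\cA^{\vee}},\gB_{\cC_2},\gB_{\cE_2},\gB_{\cE_2^{\vee}}$ are superfluous. I would exploit the duality involution $\delta$ of \eqref{invmoduli}: since $\delta_V$ carries the stratum indexed by $({\bf r},{\bf d})$ to the one indexed by $(-{\bf r},{\bf d})$, one gets $\delta(\gB_\cA)=\gB_{\cA^{\vee}}$ and $\delta(\gB_{\cE_i})=\gB_{\cE_i^{\vee}}$, while $\gB_{\cC_1},\gB_{\cC_2},\gB_\cD,\gB_{\cF_1},\gB_{\cF_2},\gX_{\cN_3}$ are $\delta$-invariant. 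As $\delta$ is an automorphism of $\gM$ preserving $\gM^{st}$, the set of irreducible components of $\partial\gM$ is $\delta$-stable, so it suffices to treat $\gB_{\cA^{\vee}}$, $\gB_{\cC_2}$ and $\gB_{\cE_2}$, the remaining cases following by applying $\delta$. For a generic properly semistable $A$ in one of these strata I would use the defining $1$-PS $\lambda$, which satisfies $\mu({\bf d},\lambda)=0$, together with \Ref{clm}{limite} to replace $A$ by its $\lambda$-split limit $\ov{A}$, lying in the same fibre of $\lagr^{ss}\to\gM$; one then verifies, by an explicit basis computation in the spirit of the boxed cases in the proof of \Ref{thm}{tuttisemi}, that a $\PGL(V)$-translate of $\ov{A}$ belongs to a surviving stratum. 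This should give $\gB_{\cA^{\vee}}\subseteq\gB_\cA$ (hence $\gB_{\cA^{\vee}}=\gB_\cA$ by duality), $\gB_{\cC_2}\subseteq\gB_{\cC_1}$ and $\gB_{\cE_2}\subseteq\gB_{\cE_1}$, and dually $\gB_{\cE_2^{\vee}}\subseteq\gB_{\cE_1^{\vee}}$.

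To finish I would establish irredundancy and the dimension table. Each surviving $\gB_\cX$ has dimension $\dim\BB_\cX-(35-\dim\Stab)$, with the generic stabilizer read off from the description of $\Theta_A$ in \Ref{subsubsec}{infiniti} and $\dim\BB_\cX$ from the flag conditions of Table~\eqref{stratflaguno}; this yields Table~\eqref{dimcomp} and in particular $\dim\partial\gM=5$. For irredundancy it suffices to check that the generic point of each of the eight pieces lies in none of the others, using $\PGL(V)$-invariants: the dimension and projective type of $\Theta_A$ (pairwise distinct for the components of $\Sigma_{\infty}$ in \eqref{lista}) together with the singularity type of the sextic $C_{W,A}$ furnished by \Ref{prp}{nonridotta} and \Ref{prp}{singiso} (a point of multiplicity $\ge 4$ for $\cA,\cD,\cE_1$; singular along a line for $\cF_1,\cF_2$; along a conic for $\cE_1^{\vee}$; a double cubic for $\cC_1$; consecutive triple points for $\cN_3$). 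Combining these invariants with the computed dimensions separates the eight components.

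The main obstacle is the redundancy step: for a generic semistable member of each of $\cA^{\vee},\cC_2,\cE_2$ one must produce a $1$-PS whose limit is simultaneously semistable and, after translation, a member of the prescribed surviving stratum — the exact analogue here of the delicate boxed computations in the proof of \Ref{thm}{tuttisemi}. Within this, the self-dual identification $\gB_\cA=\gB_{\cA^{\vee}}$, which forces the closed orbits of the $\cA$-stratum to be projectively self-dual, is the subtlest point.
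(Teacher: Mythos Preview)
Your redundancy step contains a genuine error: you claim $\gB_{\cE_2}\subseteq\gB_{\cE_1}$ (and dually $\gB_{\cE_2^{\vee}}\subseteq\gB_{\cE_1^{\vee}}$), but the correct collapses are $\gB_{\cE_2}=\gB_{\cE_1^{\vee}}$ and $\gB_{\cE_2^{\vee}}=\gB_{\cE_1}$. Since the paper later shows $\gB_{\cE_1}\cap\gI=\{\gx\}$ while $\gB_{\cE_1^{\vee}}\cap\gI=\{\gx^{\vee}\}$ with $\gx\ne\gx^{\vee}$, one has $\gB_{\cE_1}\ne\gB_{\cE_1^{\vee}}$; as both are $2$-dimensional and irreducible, your inclusion $\gB_{\cE_2}\subseteq\gB_{\cE_1}$ is therefore false. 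The paper obtains the four collapses not via the duality involution $\delta$ but by a base change: writing $\sF'=\{v_5,v_4,\dots,v_0\}$ one checks directly that the \emph{split} loci satisfy literal equalities
\[
{\mathbb S}^{\sF}_{\cA}={\mathbb S}^{\sF'}_{\cA^{\vee}},\quad
{\mathbb S}^{\sF}_{\cC_1}={\mathbb S}^{\sF'}_{\cC_2},\quad
{\mathbb S}^{\sF}_{\cE_1}={\mathbb S}^{\sF'}_{\cE_2^{\vee}},\quad
{\mathbb S}^{\sF}_{\cE_1^{\vee}}={\mathbb S}^{\sF'}_{\cE_2},
\]
so the corresponding $\gB$'s coincide with no further work. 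This sidesteps entirely the ``semistable limit lands in the right stratum'' verification you flagged as the main obstacle.

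Your approaches to dimensions and irredundancy are also substantially different from the paper's and, as written, too vague to go through. The paper does not work with the big strata $\BB_\cX$ but with the split loci ${\mathbb S}^{\sF}_{\cX}$, which are explicit products of (symplectic) Grassmannians acted on by reductive groups $G_\cX$; Luna's theorem gives that ${\mathbb S}^{\sF}_{\cX}//G_\cX\to\gB_\cX$ is finite surjective, and after proving case by case that the generic $A\in{\mathbb S}^{\sF}_{\cX}$ is $G_\cX$-stable one reads off $\dim\gB_\cX=\dim{\mathbb S}^{\sF}_{\cX}-\dim G_\cX$. For irredundancy the paper computes, for each $G_\cX$-stable $A$, the identity component of $\Stab(A)<\SL(V)$ and checks that these are pairwise non-conjugate (with additional arguments for $\cF_1,\cF_2,\cN_3$ using the full description of $\partial\gM\cap\gI$). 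Your proposed separation via $\Theta_A$ and the singularity type of $C_{W,A}$ does not by itself distinguish, for instance, $\gX_{\cN_3}$ from $\gB_{\cF_2}$; that step in the paper genuinely requires the determination of $\gX_{\cN_3}\cap\gI$ and $\gB_{\cF_2}\cap\gI$.
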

\begin{table}[tbp]\tiny
\caption{Irreducible components of $\partial\gM$.}\label{dimcomp}
\vskip 1mm
\centering
\renewcommand{\arraystretch}{1.60}
\begin{tabular}{ccccccccc}
%\toprule
  &  $\gB_{\cA}$ &  $\gB_{\cC_1}$   & $\gB_{\cD}$ &  $\gB_{\cE_1}$ &  $\gB_{\cE_1^{\vee}}$ &  
  $\gB_{\cF_1}$ &  $\gB_{\cF_2}$ &  $\gX_{\cN_3}$ \\
\cmidrule{2-9}
$\dim$ &  $1$ &  $2$ & $3$ & $2$ & $2$ & $1$ & $5$ & $3$ \\
\cmidrule{2-9}
\multirow{3}{*}{$\begin{matrix}
C_{W,A}\ \text{for} [A]\notin\gI\\
\PGL(V)A\ \text{closed}
\end{matrix}$}  &  II-2, II-4 &  II-2, II-4 & II-1, II-2, II-3 & II-2 & II-1, II-2, II-3 & II-2 & \multirow{3}{*}{?} & 
 \multirow{3}{*}{?} \\ \cmidrule{2-7}
  &  \multirow{2}{*}{III-2} & \multirow{2}{*}{III-2} & equiv. &  equiv. &  
equiv. &  equiv. &  &  \\ 
 &   &  &   to III-2 &  to III-2 &   to III-2 &  to III-2   &  & \\  \cmidrule{2-9}
$\cdot\cap\gI$ & $\es$ &  $\{\gy\}$   & $\gX_{\cW}$ &  $\{\gx\}$ &  $\{\gx^{\vee}\}$   &  
  $\es$ &  $\gX_{\cV}$ &  $\gX_{\cW}\cup\gX_{\cZ}$ \\
\cmidrule{2-9}
\end{tabular}
\end{table} 
Next we will be concerned with determining $\partial\gM\cap\gI$.
In~\Ref{subsec}{ixvu} we will define a $3$-dimensional irreducible closed $\gX_{\cV}\subset \gB_{\cF_2}\cap\gI$ and   in~\Ref{subsec}{ixboh}  we will define a $1$-dimensional irreducible closed 
$\gX_{\cZ}\subset \gX_{\cN_3}\cap\gI$. We will prove (see~\Ref{rmk}{diehard} and~\Ref{prp}{allafine}) that
\begin{equation}\label{inclusioni}
 \gX_{\cW}\subset \gX_{\cV},\qquad \gx,\gx^{\vee}\in\gX_{\cZ},\qquad \gX_{\cV}\cap\gX_{\cZ}=\{\gy\}.
\end{equation}
\begin{thm}\label{thm:fronind}
The irreducible irredundant decomposition of  $\partial\gM\cap\gI$ is given by
\begin{equation}\label{decoind}
\partial\gM\cap\gI=\gX_{\cV}\cup\gX_{\cZ} 
\end{equation}
\end{thm}
 The long  computations that are needed  in order to obtain these results are carried out in~\Ref{sec}{bounduno} 
and~\Ref{sec}{bounddue}. The first of those sections contains the analysis of all boundary components with the exception of $\gB_{\cF_2}$ and $\gX_{\cN_3}$, which are analyzed in~\Ref{sec}{bounddue}. The reason for the distinction is that boundary components other than  $\gB_{\cF_2}$ and $\gX_{\cN_3}$
 intersect $\gI$ in a subset of the known subset $\gX_{\cW}\cup\{\gx,\gx^{\vee}\}$, while in order to determine $\gB_{\cF_2}\cap\gI$ and $\gX_{\cN_3}\cap\gI$ one needs to introduce $\gX_{\cV}$ and $\gX_{\cZ}$. 
In the present section we will state some of the intermediate results proven in~\Ref{sec}{bounduno} 
and~\Ref{sec}{bounddue}, and then we will prove that~\Ref{thm}{eccofron} follows from those results.  \Ref{thm}{fronind} follows at once from  the descriptions, given in~\Ref{sec}{bounduno} 
and~\Ref{sec}{bounddue},  of the intersection of each boundary component with $\gI$ - they are summarized in 
Table~\eqref{dimcomp}.
\subsection{A GIT set-up for each standard non-stable stratum}\label{subsec:prelbound}
\setcounter{equation}{0}
\subsubsection{Set-up}
\setcounter{equation}{0}
 Let $\cX\in\{\cA,\cC_1,\cD,\cE_1,\cE_1^{\vee},\cF_1,\cF_2,\cN_3\}$ i.e.~one of the subscripts appearing in~\eqref{decofron}. Let $\sF=\{v_0,\ldots,v_5\}$ be a basis of $V$ and     $\lambda_{\cX}\colon \CC^{\times}\lra \SL(V)$  be  
  the  standard ordering $1$-PS which is diagonal in the basis $\sF$ and whose weights  appear on the first  column of  the row of Table~\eqref{stratflaguno} that contains  $\BB^{\sF}_{\cX}$ (or $\XX_{\cN_3}$).  We let ${\mathbb S}^{\sF}_{\cX}$ be the set of lagrangians $A\in\lagr$ which are $\lambda_{\cX}$-split,  see~\Ref{sec}{preambolo}. 
Let $A\in{\mathbb S}^{\sF}_{\cX}$: then
\begin{equation}\label{decompa}
A=A_0+A_1+\ldots + A_s
\end{equation}
with $A_i\in \Gr(d_i,U_{e_i})$ and $A_{s-i}=A_i^{\bot}$  (recall that the symplectic form on $\bigwedge^3 V$ defines a perfect pairing between $U_{e_i}$ and $U_{e_{s-i}}$). Thus we have an embedding  
 \begin{equation}\label{granprod}
{\mathbb S}^{\sF}_{\cX}  \hra  \Gr(d_0,U_{e_0})\times \Gr(d_1,U_{e_1})\times\ldots 
\times\LL\GG(U_{0}) \times \Gr(d_1,U_{e_{[(s+2)/2]}})\times\ldots 
\times \Gr(d_s,U_{e_s}).
\end{equation}
with image  the set of $(A_0,A_1,\ldots, A_s)$ such that for all $i$ we have $A_{s-i}=A_i^{\bot}$. 
Notice that  $U_0=\{0\}$ (i.e.~the central factor in~\eqref{granprod} is missing) if $\cX\in\{\cA,\cA^{\vee},\cC_1,\cC_2\}$. The group  $C_{\SL(V)}(\lambda_{\cX})$ acts naturally   on  ${\mathbb S}^{\sF}_{\cX}$.
Table~\eqref{spezzgruppi} gives a group $G_{\cX}$ for each $\cX$.
\begin{table}[tbp]\tiny
\caption{Parameter spaces for split non-stable lagrangians and the corresponding groups.}\label{spezzgruppi}
\vskip 1mm
\centering
\renewcommand{\arraystretch}{1.60}
\begin{tabular}{l l l }
\toprule
 $\cX$ &  ${\mathbb S}^{\sF}_{\cX}$ & $G_{\cX}$ \\
\midrule
\midrule 
 $\cA$ &  $\Gr(5,[v_0]\otimes \bigwedge  ^2 V_{15})$ & $\SL(V_{15})$ \\
 \midrule
 $\cC_1$   & $\Gr(3,\bigwedge  ^2 V_{02}\otimes V_{35})$ & $\SL(V_{02})\times \SL(V_{35})$ \\
 \midrule
 $\cD$ &  $\Gr(3,[v_0]\otimes\bigwedge  ^2 V_{14})
\times \LL\GG([v_0]\otimes V_{14}\otimes [v_5]\oplus\bigwedge  ^3 V_{14})$ & 
$\CC^{\times}\times \SL(V_{14})$\\
\midrule
 $\cE_1$ & $\Gr(2, [v_0]\otimes V_{12}\otimes V_{35})
\times\LL\GG([v_0]\otimes\bigwedge  ^2 V_{35}\oplus \bigwedge  ^2 V_{12}\otimes V_{35})$ & 
$\CC^{\times}\times \SL(V_{12})\times \SL(V_{35})$
 \\
\midrule
  $\cE_1^{\vee}$ &  $\Gr(2,\bigwedge^2V_{02}\otimes V_{34})
\times\LL\GG(\bigwedge^2 V_{02}\otimes[v_5]\oplus V_{02}\otimes \bigwedge  ^2 V_{34})$ & 
$\CC^{\times}\times \SL(V_{02})\times \SL(V_{34})$ \\
\midrule
 $\cF_1$ & $\LL\GG(V_{01}\otimes V_{23}\otimes V_{45})$ & 
$ \SL(V_{01})\times \SL(V_{23})\times \SL(V_{45}) $\\
 \midrule
 $\cF_2$ &   $\PP( \bigwedge  ^2 V_{01}\otimes V_{23})\times
{\mathbb Gr}(2, \bigwedge  ^2 V_{01}\otimes V_{45}\oplus V_{01}\otimes \bigwedge  ^2 V_{23})
\times \LL\GG(V_{01}\otimes V_{23}\otimes V_{45})$ & 
$\CC^{\times}\times \SL(V_{01})\times \SL(V_{23})\times \SL(V_{45}) $ \\
\midrule
\multirow{3}{*}{$\cN_3$} & $\PP([v_0\wedge v_1]\otimes V_{23})\times 
\PP([v_0\wedge v_1\wedge v_4]\oplus [v_0]\otimes \bigwedge^2 V_{23})\times $
 & \multirow{3}{*}{$(\CC^{\times})^3\times \SL(V_{23})$}  \\
&  $\times\Gr(2,[v_1]\otimes\bigwedge^2 V_{23}\oplus [v_0\wedge v_4]\otimes V_{23}\oplus [v_0\wedge v_1\wedge v_5])\times$ & \\
& $\times \LL\GG([v_0\wedge v_5]\otimes V_{23}\oplus [v_1\wedge v_5]\otimes V_{23})$ & \\
\bottomrule 
\end{tabular}
\end{table} 
 Let us  define a homomorphism
 \begin{equation}\label{stipendio}
\rho_{\cX}\colon G_{\cX}\lra C_{\SL(V)}(\lambda_{\cX})
\end{equation}
as follows. The group
 $G_{\cX}$ is defined as a direct product of factors and hence it suffices to define a homomorphism from each factor to $C_{\SL(V)}(\lambda_{\cX})$. Each factor of $G_{\cX}$ is 
  either $\SL(V_{ij})$  where $V_{ij}$ is one of the isotypical summands of $\lambda_{\cX}$ or else   a torus. 
The restriction of $\rho_{\cX}$ to an $\SL(V_{ij})$-factor is the obvious one. The restriction of $\rho_{\cX}$ to  
a  torus factor is as follows.   Let $\cX=\cD$; for $s\in\CC^{\times}$ we let
 \begin{equation}\label{torodi}
\rho_{\cD}(s)=(s^2 \Id_{[v_0]},s^{-1} \Id_{V_{14}},s^2 \Id_{[v_5]}).
\end{equation}
Let $\cX=\cE_1,\cE_1^{\vee}$; for $s\in\CC^{\times}$ we let
 \begin{equation}\label{toroeuno}
\rho_{\cE_1}(s)=(s \Id_{[v_0]},s^{-2} \Id_{V_{12}}, s\Id_{V_{35}}),
\quad \rho_{\cE^{\vee}_1}(s)=(s\Id_{V_{02}},s^{-2} \Id_{V_{34}},s \Id_{[v_5]}).
\end{equation}
Let $\cX=\cF_2$; for $s\in\CC^{\times}$ we let
 \begin{equation}\label{toroeffedue}
\rho_{\cF_2}(s)=(s \Id_{V_{01}},s^{-2} \Id_{V_{23}}, s\Id_{V_{45}}).
\end{equation}
Let $\cX=\cN_3$; for $(s_0,s_1,s_2)\in(\CC^{\times})^3$ we let
 \begin{equation}\label{toroennetre}
\rho_{\cN_3}(s_0,s_1,s_2)=(s_0 \Id_{[v_0]},\ s_1^2 \Id_{[v_1]},\ (s_0^{-1} s_1^{-1}s_2^{-1} ) \Id_{V_{23}}, 
\  s_2^2\Id_{[v_4]},\  s_0 \Id_{[v_5]}).
\end{equation}
We have completed the definition of~\eqref{stipendio}. Composing homomorphism  $C_{\SL(V)}(\lambda_{\cX})\to \Aut({\mathbb S}^{\sF}_{\cX})$ with $\rho_{\cX}$  we get  an action of  
$G_{\cX}$ on  ${\mathbb S}^{\sF}_{\cX}$.
The $G_{\cX}$-action is naturally linearized by the embedding of ${\mathbb S}^{\sF}_{\cX}$  in $\lagr$. 
\begin{clm}\label{clm:slagix}
Let $A\in{\mathbb S}^{\sF}_{\cX}$. Then $A$ is $\SL(V)$-semistable if and only if it is $G_{\cX}$-semistable, moreover  $\SL(V)A$ is closed in $\lagr^{ss}$ if and only if $G_{\cX}A$ is
closed  in  ${\mathbb S}^{\sF,ss}_{\cX}$. 
Lastly the  inclusion of ${\mathbb S}^{\sF}_{\cX}$ in $ \lagr$ induces \emph{finite surjective maps} 
$${\mathbb S}^{\sF}_{\cX}//G_{\cX}\twoheadrightarrow\gB_{\cX}\quad(\cX\not=\cN_3),\qquad 
{\mathbb S}^{\sF}_{\cN_3}//G_{\cN_3}\twoheadrightarrow\gX_{\cN_3}.$$
\end{clm}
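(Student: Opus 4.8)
The plan is to deduce all four assertions from Luna's Corollary~\Ref{crl}{piupiccolo} and Theorem~\Ref{thm}{littlestars}, applied to $G=SL(V)$ acting on the affine cone $\lagrhat\subset\bigwedge^{10}(\bigwedge^3 V)$ over $\lagr$, with $H:=\im(\lambda_{\cX})$. Since $H\cong\CC^{\times}$ is a torus it is linearly reductive. By~\Ref{rmk}{banana} the fixed locus $\lagr^H$ is exactly the set of $\lambda_{\cX}$-split lagrangians, and it is the disjoint union of the (irreducible, closed) loci of fixed $\lambda_{\cX}$-type, the one of type ${\bf d}_{\cX}$ being ${\mathbb S}^{\sF}_{\cX}$ as in~\eqref{granprod}. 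A split $A$ represents a point of $\PP(\lagrhat^H)$ precisely when a representative $\omega=\bigwedge^{10}A$ is genuinely $H$-fixed, i.e. when the weight of $H$ on $\omega$ vanishes; that weight is $\mu(A,\lambda_{\cX})$, and on ${\mathbb S}^{\sF}_{\cX}$ we have $\mu(A,\lambda_{\cX})=\mu({\bf d}_{\cX},\lambda_{\cX})=0$ by~\eqref{caramellamu} and the second column of Table~\eqref{stratflaguno}. Hence ${\mathbb S}^{\sF}_{\cX}$ is a union of components of $\PP(\lagrhat^H)$ and Luna's results apply verbatim there.

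First I would record the raw output of Luna: for $A\in\PP(\lagrhat^H)$, the point $A$ is $SL(V)$-semistable iff it is $N_{SL(V)}(H)$-semistable, $SL(V)A$ is closed in $\lagr^{ss}$ iff $N_{SL(V)}(H)A$ is closed in $\PP(\lagrhat^H)^{ss}$, and the inclusion induces a finite map $\PP(\lagrhat^H)//N_{SL(V)}(H)\to\gM$. Because $H$ is one-dimensional, every connected group normalizing it centralizes it, so $N_{SL(V)}(H)^{\circ}=C_{SL(V)}(\lambda_{\cX})=:C$ (which is connected), and the finite quotient $N_{SL(V)}(H)/C$ has order at most $2$, its nontrivial element conjugating $\lambda_{\cX}$ to $\lambda_{\cX}^{-1}$ and therefore carrying ${\mathbb S}^{\sF}_{\cX}$ to the dual-type component of $\lagr^H$ (this is exactly the coincidence behind~\eqref{coincidenze}). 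Restricting the three statements above to the single component ${\mathbb S}^{\sF}_{\cX}$ thus replaces $N_{SL(V)}(H)$ by $C$: I obtain that $A\in{\mathbb S}^{\sF}_{\cX}$ is $SL(V)$-semistable iff $C$-semistable, with matching orbit-closures, and a finite map ${\mathbb S}^{\sF}_{\cX}//C\to\gB_{\cX}$ (resp. $\to\gX_{\cN_3}$).

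It then remains to pass from $C$ to $G_{\cX}$ through $\rho_{\cX}$. By construction $\rho_{\cX}$ carries the $SL(V_{ij})$-factors of $G_{\cX}$ isomorphically onto the corresponding simple factors of $C$, and carries the torus factor (when present) into the central torus of $C$ via the explicit normalizations~\eqref{torodi}--\eqref{toroennetre}. I would verify, one stratum at a time, that the central torus of $C$ is generated by $\rho_{\cX}$ of the torus factor of $G_{\cX}$ together with a complementary subtorus $Z_0\supseteq H$ acting \emph{trivially} on the linearized component ${\mathbb S}^{\sF}_{\cX}$; concretely, each element of $Z_0$ acts by a scalar on every isotypical summand of $\bigwedge^3\lambda_{\cX}$, fixing each subspace $A\cap U_{e_i}$ and acting with total weight $0$ on $\omega$ (this is where the vanishing $\mu({\bf d}_{\cX},\lambda_{\cX})=0$ re-enters). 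Granting this, $C$ and $\rho_{\cX}(G_{\cX})$ have the same orbits and the same invariant sections on ${\mathbb S}^{\sF}_{\cX}$, so $C$-semistability, $C$-orbit-closedness and the quotient ${\mathbb S}^{\sF}_{\cX}//C$ coincide with their $G_{\cX}$-counterparts, yielding the first three assertions of the claim. For surjectivity I would argue that a point of $\gB_{\cX}$ is the class of a semistable $A$ with closed orbit which, after a projectivity, lies in $\BB^{\sF}_{\cX}=\EE^{\sF}_{{\bf r}_{\cX},{\bf d}_{\cX}}$, i.e. $d^{\lambda_{\cX}}_{red}(A)\succeq{\bf d}_{\cX}$; by~\Ref{clm}{domina} this forces $\mu(A,\lambda_{\cX})\ge\mu({\bf d}_{\cX},\lambda_{\cX})=0$, while semistability forces $\mu(A,\lambda_{\cX})\le0$, so equality holds and $d^{\lambda_{\cX}}_{red}(A)={\bf d}_{\cX}$. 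Then $\lim_{t\to0}\lambda_{\cX}(t)A\in{\mathbb S}^{\sF}_{\cX}$ by~\Ref{clm}{limite} and lies in $\overline{SL(V)A}$, hence is $SL(V)$-equivalent to $A$; so every class in $\gB_{\cX}$ has a representative in ${\mathbb S}^{\sF}_{\cX}$, and ${\mathbb S}^{\sF}_{\cX}//G_{\cX}\to\gB_{\cX}$ is surjective.

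The main obstacle is the stratum-by-stratum verification that the complementary central torus $Z_0$ acts trivially on the linearized ${\mathbb S}^{\sF}_{\cX}$; this is the one genuinely computational point, and it is precisely where the normalizations~\eqref{torodi}--\eqref{toroennetre} and the identity $\mu({\bf d}_{\cX},\lambda_{\cX})=0$ are used. A secondary, routine point is the reduction from $N_{SL(V)}(H)$ to its identity component $C$, which is harmless because the outer involution simply interchanges ${\mathbb S}^{\sF}_{\cX}$ with a different component of $\lagr^H$ and so does not affect the invariant theory on ${\mathbb S}^{\sF}_{\cX}$ itself.
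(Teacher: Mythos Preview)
Your approach is essentially the paper's: apply Luna's \Ref{crl}{piupiccolo} with $H=\im\lambda_{\cX}$, pass from $N_{SL(V)}(H)$ to $C:=C_{SL(V)}(\lambda_{\cX})$ by finite index, then from $C$ to $G_{\cX}$ by exhibiting a central subgroup of $C$ acting trivially on the linearized ${\mathbb S}^{\sF}_{\cX}$. The paper handles the last step more concretely: for $\cX\neq\cF_1$ it observes that $\rho_{\cX}(G_{\cX})$ surjects onto $C/\im\lambda_{\cX}$ with finite kernel and $\im\lambda_{\cX}$ itself acts trivially, while for $\cX=\cF_1$ the trivially-acting subgroup is the larger torus $H_{\cF_1}$ of~\eqref{calvin}; your uniform ``complementary central torus $Z_0$'' phrasing is equivalent. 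One small slip: the outer involution in $N_{SL(V)}(H)/C$ does \emph{not} always carry ${\mathbb S}^{\sF}_{\cX}$ to a different component of $\PP(\lagrhat^{H})$ (for the self-dual types $\cD,\cF_1,\cF_2,\cN_3$ it preserves the component), so your reduction from $N_{SL(V)}(H)$ to $C$ should rest solely on the finite-index fact, as the paper's proof does.
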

\begin{proof}
Let $\lambda$ be a $1$-PS which is diagonal in the basis $F$ and whose set of weights appears in the first column of Table~\eqref{stratflaguno}:   by~\Ref{rmk}{banana} the fixed locus $\PP(\lagrhat^{\lambda})$ is the disjoint union of the ${\mathbb S}_{\cX}^{\sF}$ such that $\lambda_{\cX}=\lambda$. As is easily checked the centralizer $C_{\SL(V)}(\lambda)$ has finite index in $N_{\SL(V)}(\lambda)$.  
 By~\Ref{crl}{piupiccolo} we get that  inclusion induces a {\it finite surjective} map 
\begin{equation}\label{istanbul}
{\mathbb S}^{\sF}_{\cX}//C_{\SL(V)}(\lambda_{\cX})\twoheadrightarrow\gB_{\cX}
\end{equation}
for every $\cX$ and that if $A\in{\mathbb S}^{\sF}_{\cX}$ then $\SL(V)A$ is closed in $\lagr^{ss}$ if and only if $C_{\SL(V)}(\lambda_{\cX})A$ is
closed  in 
 ${\mathbb S}^{\sF,ss}_{\cX}$. We claim that for our purposes  the action of  $G_{\cX}$ is equivalent to that of $C_{\SL(V)}(\lambda_{\cX})$. 
 Suppose first that $\cX\not=\cF_1$. Then the homomorphism
 \begin{equation}\label{isogenia}
G_{\cX}\lra C_{\SL(V)}(\lambda_{\cX})/\lambda_{\cX}
\end{equation}
induced by $\rho_{\cX}$ (see~\eqref{stipendio}) 
is surjective with finite kernel; since $\lambda_{\cX}$ acts trivially on ${\mathbb S}^{\sF,ss}_{\cX}$ we get the claim (for $\cX\not=\cF_1$). On the other hand if $\cX=\cF_1$  the subgroup 
\begin{equation}\label{calvin}
H_{\cF_1}:=\{(\alpha\Id_{V_{01}},\beta\Id_{V_{23}},\gamma \Id_{V_{45}}) \mid 
\alpha\beta\gamma=1\}
\end{equation}
 of $C_{\SL(V)}(\lambda_{\cF_1})$ acts trivially  on ${\mathbb S}^{\sF}_{\cF_1}$: since the restriction to $G_{\cF_1}$ of the quotient map  
 \begin{equation*}
C_{\SL(V)}(\lambda_{\cF_1})\lra C_{\SL(V)}(\lambda_{\cX})/H_{\cF_1}
\end{equation*}
  is surjective with finite kernel the claim follows for $\cX=\cF_1$ as well.  
\end{proof}
\begin{rmk}
 For each $\cX$ we will give a list of flag conditions which are equivalent to $A\in{\mathbb S}^{\sF}_{\cX}$ being  $G_{\cX}$-stable. In some cases, namely $\cX\in\{\cA,\cC_1,\cE_1,\cE_1^{\vee},\cF_1\}$, we will show that the flag conditions have a nice translation into a simple geometric condition, usually of the type \lq\lq a certain  curve of arithmetic genus $1$ associated to $A$ is non-singular\rq\rq - this it to be expected because the Baily-Borel boundary components of Type II are parametrized by the upper half-space $\HH_1$ modulo an arithemtic group. 
We will not list all the closed orbits of properly $G_{\cX}$-semistable points except for $\cX\in\{\cA,\cC_1,\cF_1\}$: the analysis could be carried out but is beyond what we wish to do - we beleive that it is more interesting to determine $\partial\gM\cap\gI$ in order to understand the period map $\gp\colon\gM\dashrightarrow\DD^{BB}$.  
\end{rmk}
\subsubsection{The Hilbert-Mumford numerical function}
\setcounter{equation}{0}
We will give formulae for the Hilbert-Mumford numerical function that will be handy later on.  Let $\cX\in\{\cA,\cC_1,\cD,\cE_1,\cE_1^{\vee},\cF_1,\cF_2,\cN_3\}$.   The action of $G_{\cX}$ on ${\mathbb S}^{\sF}_{\cX}$ is of the kind discussed in~\Ref{subsec}{bandiere}. Let $\lambda\colon\CC^{\times}\to G_{\cX}$ be a $1$-PS of $G_{\cX}$ and $A\in{\mathbb S}^{\sF}_{\cX}$: below we will make a few comments on the numerical function $\mu(A,\lambda)$. 
We may write
\begin{equation}
\bigwedge^3\lambda=(\alpha_0,\alpha_1,\ldots,\alpha_s),\qquad \alpha_i\colon\CC^{\times}\lra GL(U_{e_i}).
\end{equation}
Abusing notation we will set
\begin{equation}\label{parsem}
\mu(A_i,\lambda):=\mu(A_i,\alpha_i).
\end{equation}
\begin{dfn}\label{dfn:ipiu}
Keeping notation and hypotheses as above let $I_{+}(\lambda)\subset\{0,\ldots,s\}$ be the set of $i$ such that
\begin{equation}\label{grupspec}
\im\alpha_i\subset \SL(U_{e_i}).
\end{equation}
Let $I_{-}(\lambda):=\{0,\ldots,s\}\setminus I_{+}(\lambda)$. 
\end{dfn}
\begin{clm}\label{clm:mumeta}
Keep notation and hypotheses as above. Suppose that $i\in I_{+}(\lambda)$.   Then 
 \begin{equation}
\mu(A_i,\lambda)=\mu(A_{s-i},\lambda).
\end{equation}
\end{clm}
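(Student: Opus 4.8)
The plan is to reduce the claim to a general duality identity for the Hilbert--Mumford weight of a Grassmannian point and of its annihilator. First I would pin down the precise meaning of $\alpha_i$ and $\alpha_{s-i}$: each is the restriction to the isotypical summand $U_{e_i}$, respectively $U_{e_{s-i}}$, of $\bigwedge^3$ of the $1$-PS $t\mapsto \rho_{\cX}(\lambda(t))\in C_{\SL(V)}(\lambda_{\cX})$; these summands are invariant because $\rho_{\cX}(\lambda(t))$ commutes with $\lambda_{\cX}$. Since $\rho_{\cX}(\lambda(t))\in \SL(V)$ and the symplectic form $(\,,\,)_V=\vol(\cdot\wedge\cdot)$ on $\bigwedge^3 V$ is $\SL(V)$-invariant, the perfect pairing $U_{e_i}\times U_{e_{s-i}}\to\CC$ induced by $(\,,\,)_V$ is preserved by the pair $(\alpha_i(t),\alpha_{s-i}(t))$. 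Hence, under the resulting identification $U_{e_{s-i}}\cong U_{e_i}^{\vee}$, the homomorphism $\alpha_{s-i}$ is the contragredient of $\alpha_i$, and the subspace $A_{s-i}=A_i^{\bot}$ becomes the annihilator $\Ann(A_i)$.

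The key step is then a duality formula, which I would prove in general for a $1$-PS $\alpha$ of $\GL(U)$, a point $A\in\Gr(n,U)$, and the contragredient $\alpha^{\vee}$ on $U^{\vee}$:
\begin{equation*}
\mu(A,\alpha)=W_{\alpha}+\mu(\Ann(A),\alpha^{\vee}),
\end{equation*}
where $W_{\alpha}$ is the weight of $\alpha$ on $\det U=\bigwedge^{\dim U}U$. To establish this I would fix a generator $\Omega\in\bigwedge^{\dim U}U$ and use contraction $\psi\mapsto\psi\contr\Omega$, a linear isomorphism $\bigwedge^{\dim U-n}U^{\vee}\to\bigwedge^n U$ that carries the Pl\"ucker line $\bigwedge^{\dim U-n}\Ann(A)$ onto the Pl\"ucker line $\bigwedge^n A$ up to a nonzero scalar, and that is equivariant up to the uniform weight shift $W_{\alpha}$ contributed by $\Omega$. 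Recalling that $\mu(A,\alpha)=\min\{a\mid\omega_a\neq 0\}$ for a generator $\omega$ of $\bigwedge^n A$ decomposed into $\bigwedge^n\alpha$-weight components (and likewise for $\Ann(A)$), this weight shift yields the displayed identity at once.

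Finally, I would specialize to $U=U_{e_i}$, $\alpha=\alpha_i$, $A=A_i$, so that $\Ann(A_i)=A_{s-i}$ and $\alpha^{\vee}=\alpha_{s-i}$, giving $\mu(A_i,\lambda)=W_{\alpha_i}+\mu(A_{s-i},\lambda)$. The hypothesis $i\in I_{+}(\lambda)$ means exactly $\im\alpha_i\subset \SL(U_{e_i})$, i.e. $\det\alpha_i\equiv 1$, which is precisely $W_{\alpha_i}=0$; hence $\mu(A_i,\lambda)=\mu(A_{s-i},\lambda)$, as asserted.

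I expect the only delicate point to be the sign bookkeeping and, in particular, verifying that the single shift $W_{\alpha}$ applies uniformly to every weight component of the Pl\"ucker generator; this becomes routine once the equivariance of $\psi\mapsto\psi\contr\Omega$ is set up correctly. The degenerate case where $\rho_{\cX}\circ\lambda$ is trivial needs no argument, since then every $\alpha_i$ is trivial and both sides vanish.
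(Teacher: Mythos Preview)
Your proof is correct and follows essentially the same approach as the paper, which simply says ``a straightforward computation similar to that which proves \Ref{clm}{dipsey}.'' The paper's implicit computation would use the type formula $\mu(A_i,\alpha_i)=\sum_j f_j\,d_j^{\alpha_i}(A_i)$ together with the analogue of \Ref{clm}{nickcarter} for the perfect pairing $U_{e_i}\times U_{e_{s-i}}\to\CC$ (namely $d_j^{\alpha_{s-i}}(A_{s-i})=\dim U_{e_i,f_{r-j}}-d_{r-j}^{\alpha_i}(A_i)$); your contraction argument $\psi\mapsto\psi\contr\Omega$ is the same duality, packaged intrinsically, and the weight shift $W_{\alpha_i}$ you isolate is exactly the term $\sum_j f_j\dim U_{e_i,f_j}$ that the $\SL$ hypothesis kills.
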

\begin{proof}
A straightforward  computation similar to that which proves~\Ref{clm}{dipsey}. 
\end{proof}
 \Ref{clm}{mumeta} and~\eqref{pendemult} give that
 \begin{equation}\label{sommapend}
\mu(A,\lambda)=
\sum_{I_{+}(\lambda)\ni i<s/2}2\mu(A_i,\lambda)+\sum_{i\in I_{-}(\lambda)}\mu(A_i,\lambda).
\end{equation}
\subsection{Summary of results of {\bf Sections 6} and {\bf 7}}\label{subsec:risulfuturi}
\setcounter{equation}{0}
Below are the results proved in~\Ref{sec}{bounduno} and~\Ref{sec}{bounddue} that are needed in order to prove~\Ref{thm}{eccofron} and~\Ref{thm}{fronind}.  Let $\cX\in\{\cA,\cC_1,\cD,\cE_1,\cE_1^{\vee},\cF_1,\cF_2,\cN_3\}$ i.e.~one of the subscripts appearing in~\eqref{decofron}. Then the following hold:
\begin{enumerate}
\item[(1)]
The generic $A\in {\mathbb S}^{\sF}_{\cX}$ is $G_{\cX}$-stable. 
\item[(2)]
Let   $A\in {\mathbb S}^{\sF}_{\cX}$ be $G_{\cX}$-stable. The connected component of $\Id$ in $\Stab(A)<\SL(V)$ is equal to 
$\lambda_{\cX}$ if $\cX\not=\cF_1$ and is equal to $H_{\cF_1}$ (see~\eqref{calvin}) if $\cX=\cF_1$.
\item[(3)]
Let  $A\in {\mathbb S}^{\sF}_{\cX}$ have closed $\PGL(V)$-orbit (in $\lagr^{ss}$) and suppose that $[A]\notin\gI$. Then $C_{W,A}$ is described by the corresponding column of Table~\eqref{dimcomp}.
\item[(4)]
$\gB_{\cX}\cap\gI$ (or  $\gX_{\cN_3}\cap\gI$ if $\cX=\cN_3$) is described by the corresponding column of Table~\eqref{dimcomp}.
\end{enumerate}
\subsection{Proof of {\bf Theorem 5.1.1} assuming the results of {\bf Sections 6} and {\bf 7}}\label{subsec:}
\setcounter{equation}{0}
\subsubsection{Dimensions}
The dimensions appearing in Table~\eqref{dimcomp} are obtained as follows.    
For each $\cX$  the generic point of ${\mathbb S}^{\sF}_{\cX}$ is $G_{\cX}$-stable (see~\Ref{subsec}{risulfuturi}). By~\Ref{clm}{slagix} we get   that 
\begin{equation}\label{dimbix}
\dim\gB_{\cX}=\dim({\mathbb S}^{\sF}// G_{\cX})=\dim{\mathbb S}^{\sF}_{\cX}-\dim G_{\cX}.
\end{equation}
The dimensions of ${\mathbb S}^{\sF}_{\cX}$ and $\dim G_{\cX}$ are easily computed from Table~\eqref{spezzgruppi}: plugging the dimensions in~\eqref{dimbix}  we get the dimensions appearing in Table~\eqref{dimcomp}. 
\subsubsection{No inclusion relations}
We will show that no set appearing on the right-hand side of~\eqref{decofron} is contained in another set on the right-hand side of~\eqref{decofron}.
  Suppose first that 
\begin{equation}\label{ipassurdo}
\text{$\gB_{\cX}\subset\gB_{\cY}$ (or $\gB_{\cX}\subset\gX_{\cN_3}$) for $\cX\in\{\cA,\cC_1,\cD,\cE_1,\cE_1^{\vee}\}$ 
 and $\cY\not=\cX$.}
\end{equation}
We will reach a contradiction. Let $A\in{\mathbb S}^{\sF}_{\cX}$ be $G_{\cX}$-stable. Then the orbit $\PGL(V)A$ is closed in $\lagr^{ss}$ by~\Ref{clm}{slagix}.
By~\eqref{ipassurdo} it follows that there exists $A'\in\PGL(V)A$ which belongs to ${\mathbb S}^{\sF}_{\cY}$. Since $\lambda_{\cY}$ acts trivially on $\bigwedge^{10}A'$  the connected component of $\Id$ in $\Stab(A')<\SL(V)$ contains $\im\lambda_{\cY}$: by Item~(2) of~\Ref{subsec}{risulfuturi} we get that the subgroups $\im\lambda_{\cX},\im\lambda_{\cY}<\SL(V)$ are conjugated. Looking at Table~\eqref{stratflaguno} we get at once that $\{\cX,\cY\}=\{\cE_1,\cE^{\vee}_1\}$ and hence $\gB_{\cE_1}=\gB_{\cE^{\vee}_1}$. That is absurd because the last row of  Table~\eqref{dimcomp} gives that
 \begin{equation}
\gB_{\cE_1}\cap\gI=\{\gx\}\not=\{\gx^{\vee}\}=\gB_{\cE^{\vee}_1}\cap\gI. 
\end{equation}
This proves that~\eqref{ipassurdo} does not hold. Now consider the remaining $\cX$ i.e.~$\cX\in\{\cF_1,\cF_2,\cN_3\}$. 
Looking at the dimensions given by Table~\eqref{dimcomp} we see that it remains to rule out one of the following inclusions:
$$\gX_{\cN_3}\subset\gB_{\cD},\quad \gX_{\cN_3}\subset\gB_{\cF_2},\quad \gB_{\cF_1}\subset\gB_{\cX}\ (\cX\not=\cF_1,\cN_3),
\quad \gB_{\cF_1}\subset \gX_{\cN_3}.$$
Suppose that $\gX_{\cN_3}\subset\gB_{\cD}$: since $\gX_{\cN_3}$, $\gB_{\cD}$ are closed, irreducible of the same dimension it follows that $\gX_{\cN_3}=\gB_{\cD}$, and we have proved above that this is impossible. 
Next, $\gX_{\cN_3}\subset\gB_{\cF_2}$ cannot hold because  the last row of  Table~\eqref{dimcomp} gives that
\begin{equation*}
\gB_{\cF_2}\cap\gI=\gX_{\cV},\qquad  \gX_{\cN_3}\cap\gI=(\gX_{\cW}\cup\gX_{\cZ})
\end{equation*}
and $\gX_{\cZ}\not\subset\gX_{\cV}$ (see~\eqref{inclusioni}). It remains to deal with $\gB_{\cF_1}$. Suppose that $\gB_{\cF_1}\subset\gB_{\cY}$ where $\cY\not=\cF_1$ or $\gB_{\cF_1}\subset\gX_{\cN_3}$. Let $A\in {\mathbb S}^{\sF}_{\cF_1}$  be $G_{\cF_1}$-stable. 
Then the orbit $\PGL(V)A$ is closed in $\lagr^{ss}$ by~\Ref{clm}{slagix}.
Arguing as in the proof that~\eqref{ipassurdo} does not hold we get that $\bigwedge^{10}A$ is left  invariant by a subgroup $G<\SL(V)$ conjugated to $\im\lambda_{\cY}$. By Item~(2) of~\Ref{subsec}{risulfuturi} we get that $G<H_{\cF_1}$. 
Going through Table~\eqref{stratflaguno} we see that we must have $\cY=\cF_2$. It follows that the $\lambda_{\cF_1}$-type of $A$ is $(1,2)$ (by definition of ${\mathbb S}^{\sF'}_{\cF_2}$ for an arbitarry base $\sF'$ of $V$) and not 
 $(2,0)$ as we know it is by definition of ${\mathbb S}_{\cF_1}^{\sF}$.
\clearpage
\section{Boundary components meeting $\gI$ in a subset of $\gX_{\cW}\cup\{\gx,\gx^{\vee}\}$}\label{sec:bounduno}
\setcounter{equation}{0}
\subsection{$\gB_{\cC_1}$}\label{subsec:lagallina}
\setcounter{equation}{0}
 Let $A\in {\mathbb S}^{\sF}_{\cC_1}$; by definition 
\begin{equation}\label{sommaciuno}
A=\bigwedge^3 V_{02}\oplus  A'\oplus A'',\qquad A'\in\Gr(3,\bigwedge^2 V_{02}\wedge V_{35}),\quad 
A''=(A')^{\bot}\cap (V_{02}\wedge\bigwedge^2 V_{35}).
\end{equation}
Thus $A',A''$ are the summands of $A$ which were named $A_1,A_2$ in~\Ref{subsec}{prelbound}. 
We choose a volume-form on $V_{02}$ in order to have an identification  $\bigwedge^2 V_{02}\wedge V_{35}\overset{\sim}{\lra}\Hom(V_{02},V_{35})$.  Let $A'\in\Gr(3,\bigwedge^2 V_{02}\wedge V_{35})$. We let 
\begin{equation*}
E_{A'}:=\{[\alpha]\in\PP(A') \mid \rk\alpha\le 2\}
\end{equation*}
with its obvious scheme structure; thus $E_{A'}$ is  either all of $\PP(A')$ or a cubic curve. 
 Below is the main result of the present subsection. 
 \begin{prp}\label{prp:taliare}
 The following hold:
\begin{enumerate}
\item[(1)]
  $A\in {\mathbb S}^{\sF}_{\cC_1}$ is $G_{\cC_1}$-stable if and only if $E_{A'}$ is a smooth curve. 
\item[(2)]
The generic $A\in {\mathbb S}^{\sF}_{\cC_1}$ is $G_{\cC_1}$-stable.
\item[(3)]
If   $A\in {\mathbb S}^{\sF}_{\cC_1}$ is $G_{\cC_1}$-stable the connected component of $\Id$ in $\Stab(A)<\SL(V)$ is equal to $\im\lambda_{\cC_1}$.
\item[(4)]
Let  $A\in {\mathbb S}^{\sF}_{\cC_1}$ have closed $\PGL(V)$-orbit (in $\lagr^{ss}$), and suppose that $[A]\notin\gI$. Then $C_{W,A}$ is of Type II-2, II-4 or III-2. 
\item[(5)]
$\gB_{\cC_1}\cap\gI=\{\gy\}$ where  $\gy$ is defined by~\eqref{stratowu}.
\end{enumerate}
\end{prp}
The proof of~\Ref{prp}{taliare} is given in~\Ref{subsubsec}{dimociuno}. 
\subsubsection{First results} 
We claim that
 \begin{equation}\label{impagliazzo}
\gy\in\gB_{\cC_1}.
\end{equation}
In fact let   $U$ be a complex vector-space of dimension $4$, and  choose an isomorphism $V\cong \bigwedge^2 U$; then $\gy=[A_{+}(U)]$, where $A_{+}(U)$ is given by~\eqref{apiu}.
If $W\in \Theta_{A_{+}(U)}$ the affine cone over the projective tangent space to $\Theta_{A_{+}(U)}$ at $W$ is contained in $A_{+}(U)\cap S_W$. Since $\dim\Theta_{A_{+}(U)}=3$ it  follows that $\dim(A_{+}(U)\cap S_W)\ge 4$ (in fact equality holds because otherwise $A_{+}(U)$ is unstable by Table~\eqref{stratflagdue}):  this proves that~\eqref{impagliazzo} holds. Actually the above argument shows that for  suitable $U$ and isomorphism $V\cong \bigwedge^2 U$ we have
 \begin{equation}\label{impagliazzodue}
A_{+}(U)\in  {\mathbb S}^{\sF}_{\cC_1}.
\end{equation}
(A priori this  result is stronger than~\eqref{impagliazzo}, but in fact it is equivalent by~\Ref{crl}{piupiccolo}.)
Next we notice that there are  subschemes of $\PP(V_{02})$ and $\PP(V^{\vee}_{35})$ which are related to  $E_{A'}$. 
First $A'$  defines a map  $\varphi_{A'}\colon A'\otimes\cO_{\PP(V_{02})}(-1)\lra V_{35}\otimes\cO_{\PP(V_{02})}$ of locally-free sheaves. Similarly taking the transpose of elements of $A'$  we get a map  $\psi_{A'}\colon A'\otimes\cO_{\PP(V^{\vee}_{35})}(-1)\lra V_{02}\otimes\cO_{\PP(V_{02})}$.
Let  
\begin{equation*}
J_{V_{02}}(A'):=\divisore(\det\varphi_{A'}),\qquad J_{V^{\vee}_{35}}(A'):=\divisore(\det\psi_{A'}).
\end{equation*}
Thus $J_{V_{02}}(A')$ is either all of $\PP(V_{02})$ or a cubic curve and similarly for $J_{V^{\vee}_{35}}(A')$. If $E_{A'}$ is smooth then it is isomorphic to $J_{V_{02}}(A')$ and to $J_{V^{\vee}_{35}}(A')$.  By~\Ref{crl}{cnesinerre} we have the following:
\begin{equation}\label{erdoppio}
\text{$C_{V_{02},A}$ is equal to  $\PP(V_{02})$ or to $2 J_{V_{02}}(A')$}
\end{equation}
\begin{clm}\label{clm:vitorchiano}
 Let $\ov{A}\in {\mathbb S}^{\sF}_{\cC_1}$ and suppose that $E_{\ov{A}'}$ is a smooth curve. Then $\ov{A}$ is $G_{\cC_1}$-stable. In particular the generic $A\in {\mathbb S}^{\sF}_{\cC_1}$  is $G_{\cC_1}$-stable. 
\end{clm}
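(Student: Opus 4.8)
The plan is to reduce $G_{\cC_1}$-stability of $A$ to stability of the net $A'$ under $\SL(V_{02})\times\SL(V_{35})$, and then to identify the latter as the classical GIT problem for nets of $3\times 3$ matrices. First I would carry out the reduction. Since both factors of $G_{\cC_1}$ act through special linear groups, every $1$-PS $\lambda=(\lambda_1,\lambda_2)$ induces on each isotypical summand $U_{e_i}$ of $\bigwedge^3\lambda_{\cC_1}$ a map with image in $\SL(U_{e_i})$: this is clear on $U_{e_0}=\bigwedge^3 V_{02}$ and $U_{e_3}=\bigwedge^3 V_{35}$, and on $U_{e_1}\cong\Hom(V_{02},V_{35})$ the determinant of the action $\alpha\mapsto\lambda_2\alpha\lambda_1^{-1}$ equals $(\det\lambda_2)^3(\det\lambda_1)^{-3}=1$. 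Hence $I_{+}(\lambda)=\{0,1,2,3\}$ and $I_{-}(\lambda)=\es$ in the notation of~\Ref{dfn}{ipiu}. By~\Ref{clm}{nickcarter} one has $A\cap U_{e_3}=0$, so $A_0=\bigwedge^3 V_{02}=U_{e_0}$ and $A_1=A'$ (see~\eqref{sommaciuno}), and formula~\eqref{sommapend} gives $\mu(A,\lambda)=2\mu(A_0,\lambda)+2\mu(A_1,\lambda)$. As $A_0$ is the full $1$-dimensional summand $U_{e_0}$, on which $\lambda$ acts with weight $0$, we get $\mu(A_0,\lambda)=0$ and therefore $\mu(A,\lambda)=2\mu(A',\lambda)$. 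By~\Ref{thm}{hilmum} it follows that $A$ is $G_{\cC_1}$-stable if and only if $\mu(A',\lambda)<0$ for every nontrivial $\lambda$, i.e.\ if and only if $A'$ is a stable point of $\Gr(3,\Hom(V_{02},V_{35}))$ for the $\SL(V_{02})\times\SL(V_{35})$-action.

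Thus $A'$ is a net of $3\times 3$ matrices and $E_{A'}=V(\det|_{A'})$ is its discriminant cubic. I first note that smoothness of $E_{A'}$ is a $G_{\cC_1}$-invariant condition: for $g=(g_1,g_2)$ the linear isomorphism $\PP(A')\to\PP(gA')$ carries $E_{A'}$ isomorphically onto $E_{gA'}$, since $\det(g_2\alpha g_1^{-1})=\det\alpha$. I would then prove the implication \emph{$E_{A'}$ smooth $\Rightarrow$ $A'$ stable} by contraposition through Hilbert--Mumford. Assume $A'$ is not stable and pick a nontrivial $\lambda$ with $\mu(A',\lambda)\ge 0$; diagonalize by bases $\{v_i\}$ of $V_{02}$ with weights $a_0\ge a_1\ge a_2$, $\sum a_i=0$, and $\{w_j\}$ of $V_{35}$ with weights $b_0\ge b_1\ge b_2$, $\sum b_j=0$, so that the $(j,i)$ matrix entry has $\lambda$-weight $b_j-a_i$. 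If $\det|_{A'}\equiv 0$ then $E_{A'}=\PP(A')$ is not a curve and we are done (this is the $C_{V_{02},A}=\PP(V_{02})$ alternative of~\eqref{erdoppio}); so assume $E_{A'}$ is a genuine cubic.

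The core step is to read the inequality $\mu(A',\lambda)=\sum_k e_k d^{\lambda}_k(A')\ge 0$ through the weighted flags on $V_{02}$ and $V_{35}$ determined by $(a_i),(b_j)$. I expect that in every case this forces $A'$ to carry enough high-weight matrices — concretely a pencil of rank-$\le 2$ matrices passing through the distinguished point of $\PP(A')$ cut out by the top-weight part of $A'$ — so that $\det|_{A'}$ together with its first partials vanishes there, exhibiting a point of $\sing E_{A'}$; alternatively $A'$ is forced to contain a rank-$\le 1$ matrix, again producing a singular point. This weight bookkeeping is the main obstacle: one must run through the finitely many positive weighted-flag types compatible with $\mu\ge 0$ (up to the Weyl symmetry $\gS_3\times\gS_3$ and rescaling) and locate the singular point in each, exactly in the spirit of the tables of~\Ref{subsec}{luogostabile}. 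Granting it, $E_{A'}$ smooth implies $A'$, hence $A$, is stable.

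Finally, the genericity statement follows formally: the locus $\{A'\mid E_{A'}\ \text{smooth}\}$ is Zariski-open in $\Gr(3,\Hom(V_{02},V_{35}))$, being defined by non-vanishing of the discriminant of the cubic form $\det|_{A'}$, and it is nonempty since every smooth plane cubic admits a linear determinantal representation; it is therefore dense. Combined with the implication just proved, this shows that the generic $A\in{\mathbb S}^{\sF}_{\cC_1}$ is $G_{\cC_1}$-stable.
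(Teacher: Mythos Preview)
Your reduction $\mu(A,\lambda)=2\mu(A',\lambda)$ is correct and matches the paper's~\eqref{vaiabonn}. The difference lies in how you then prove that $E_{A'}$ smooth implies $A'$ is $\SL(V_{02})\times\SL(V_{35})$-stable, and here your argument has a genuine gap: the ``core step'' of running through all weighted-flag types with $\mu(A',\lambda)\ge 0$ and producing a singular point of $E_{A'}$ in each case is only described, not carried out. You yourself flag it as ``the main obstacle'' and proceed by ``granting it''. That case analysis is nontrivial --- it is essentially the content of~\Ref{prp}{ciunononstab}, which the paper proves separately and which takes real work.

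The paper avoids this entirely by a different and much shorter argument. It introduces the $\SL(V_{02})\times\SL(V_{35})$-equivariant rational map
\[
f\colon \Gr(3,\bigwedge^2 V_{02}\wedge V_{35})\dashrightarrow |\cO_{\PP(V_{02})}(3)|\times|\cO_{\PP(V_{35}^{\vee})}(3)|,\qquad A'\mapsto (E_{V_{02}},E_{V_{35}^{\vee}}).
\]
When $E_{A'}$ is smooth, so are the two discriminant cubics $E_{V_{02}}$ and $E_{V_{35}^{\vee}}$; hence $f$ is regular at $A'$ and $f(A')$ is a stable point for the product action (smooth plane cubics being classically stable). Proposition~1.18 of~\cite{mum} then pulls stability back to $A'$. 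This buys you the result in one stroke, with the case analysis deferred to the well-known GIT of plane cubics rather than re-done for nets of matrices. Your contrapositive route via Hilbert--Mumford would work, but it is precisely the harder direction that the paper packages into~\Ref{prp}{ciunononstab}; for~\Ref{clm}{vitorchiano} itself the morphism-to-a-stable-point trick is both quicker and complete.
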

\begin{proof}
Recall that $G_{\cC_1}=\SL(V_{02})\times \SL(V_{35})$. Consider the $\SL(V_{02})\times \SL(V_{35})$-equivariant rational map
\begin{equation*}
\begin{matrix}
\Gr(3,\bigwedge^2 V_{02}\wedge V_{35}) & \overset{f}{\dashrightarrow} & 
|\cO_{\PP(V_{02})}(3)| \times |\cO_{\PP(V^{\vee}_{35})}(3)| \\
A' & \mapsto & (J_{V_{02}}(A'),J_{V^{\vee}_{35}}(A'))
\end{matrix}
\end{equation*}
Since $E_{\ov{A}'}$ is a smooth curve so are $J_{V_{02}}(\ov{A}')$ and $J_{V^{\vee}_{35}}(\ov{A}')$. Thus   $f$ is regular at $E_{\ov{A}'}$ and it maps to a stable point for the $\SL(V_{02})\times \SL(V_{35})$-action  on $|\cO_{\PP(V_{02})}(3)| \times |\cO_{\PP(V^{\vee}_{35})}(3)| $ linearized on $\cL_1 \boxtimes \cL_2$ where $\cL_1$, $\cL_2$ are the ample generators of $\Pic(|\cO_{\PP(V_{02})}(3)| )$ and $\Pic(|\cO_{\PP(V^{\vee}_{35})}(3)| )$ respectively. It follows that  $E_{\ov{A}'}$ is $\SL(V_{02})\times \SL(V_{35})$-stable, say by Proposition~1.18, p.~44 of~\cite{mum} applied to the complement of the indeterminacy locus of $f$. It is clear that  for $A$ generic $E_{A'}$ is a smooth curve and hence  $A$  is $G_{\cC_1}$-stable. 
\end{proof}
\subsubsection{Properly semistable points of ${\mathbb S}^{\sF}_{\cC_1}$\/} 
We will analyze the $G_{\cC_1}$-properly semistable points of ${\mathbb S}^{\sF}_{\cC_1}$. First we will write out the Hilbert-Mumford numerical function of $A\in{\mathbb S}^{\sF}_{\cC_1}$ with respect to 
a $1$-PS $\lambda\colon\CC^{\times}\lra G_{\cC_1}=\SL(V_{02})\times \SL(V_{35})$. 
   Let $e'_0>\ldots >e'_j$ be the weights  of the action  of $\CC^{\times}$ on $\bigwedge^2 V_{02}\wedge V_{35}$ 
   defined by $\lambda$. 
Since  $I_{-}(\lambda)=\es$ (see~\Ref{dfn}{ipiu}) Equations~\eqref{sommapend} and~\eqref{caramellamu} give that
\begin{equation}\label{vaiabonn}
\mu(A,\lambda)=2\mu'(A',\lambda)=2\sum_{i=0}^{j} d^{\lambda}_i(A')e'_i.
\end{equation}
Next we will  define a closed subset  ${\mathbb S}^{\sF}_{\cC_1}$ - later on we will show that it contains every minimal orbit of $G_{\cC_1}$-properly semistable point of ${\mathbb S}^{\sF}_{\cC_1}$.
Given ${\bf p}=(p_1,p_2,p_3)\in \PP^1\times\PP^1\times\PP^1$ with $p_i=[a_i,b_i]$ we let $A'_{\bf p}\in\Gr(3,\bigwedge^2 V_{02}\wedge V_{35})$ be given by
\begin{equation}\label{puntistella}
A'_{\bf p} :=  \la a_1 v_0\wedge v_2\wedge v_3+ b_1 v_1\wedge v_2\wedge v_4,
a_2 v_1\wedge v_2\wedge v_5 + b_2  v_0\wedge v_1\wedge v_3, 
a_3 v_0\wedge v_1\wedge v_4 + b_3 v_0\wedge v_2\wedge v_5\ra   
\end{equation}
We let $A''_{\bf p} :=  (A'_{\bf p})^{\bot}\cap (V_{02}\wedge\bigwedge^2 V_{35})$. Explicitly 
\begin{equation}\label{perpstella}
\scriptstyle
A''_{\bf p} =   
\la v_0\wedge v_4\wedge v_5, v_1\wedge v_3\wedge v_5, v_2\wedge v_3\wedge v_4, 
(b_1 v_1\wedge v_4 - a_1 v_0 \wedge v_3)\wedge  v_5, 
(b_2 v_0\wedge v_3 + a_2 v_2 \wedge v_5)\wedge  v_4, 
(b_3 v_2\wedge v_5 - a_3 v_1 \wedge v_4)\wedge  v_3
\ra    
\end{equation}
We have an embedding
\begin{equation}\label{magodioz}
\begin{matrix}
 \PP^1\times\PP^1\times\PP^1 & \overset{\iota^{\sF}}{\hra} & {\mathbb S}^{\sF}_{\cC_1} \\
{\bf p} & \mapsto & A_{\bf p}:=(\bigwedge^3 V_{02}\oplus  A'_{\bf p}\oplus  A''_{\bf p})
\end{matrix}
\end{equation}
Let $\MM^{\sF}_{\cC_1}:=\im(\iota^{\sF})$. The closed subset $\MM^{\sF}_{\cC_1}$ is fixed by a certain torus in $G_{\cC_1}$ that we proceed to define. 
  Let $T'< \SL(V_{02})$ and $T''< \SL(V_{35})$ be the maximal tori which are diagonalized in the bases $\{v_0,v_1,v_2\}$ and $\{v_3,v_4,v_5\}$ respectively.  (We recall that $\lambda_{\cC_1}$ is diagonal in the basis $\sF=\{v_0,\ldots,v_5\}$.) 
Let $T_{\star}< T'\times T''$ be the torus
\begin{equation}\label{torostella}
T_{\star}:=\{(g,h)\in T'\times T'' \mid 
g(v_i)=s_i v_i,\ 0\le i\le 2,\quad 
h(v_j)=s^{-1}_{j-3} v_j,\ 3\le j\le 5\}
\end{equation}
A straightforward computation gives the following result.
\begin{clm}\label{clm:fissazione}
Let $A\in{\mathbb S}^{\sF}_{\cC_1}$: then $\bigwedge^{10}A$ is fixed by $T_{\star}$ if and only if $A\in \MM^{\sF}_{\cC_1}$ or 
$$A'= \la v_0\wedge v_1\wedge v_5, v_0\wedge v_2\wedge v_4, v_1\wedge v_2\wedge v_3 \ra.$$
\end{clm}
\begin{clm}\label{clm:jackbox}
If ${\bf p}=([1,0],[1,0],[1,0])$ or   ${\bf p}=([0,1],[0,1],[0,1])$ then $A_{\bf p}\in \PGL(V)A_{III}$.
\end{clm}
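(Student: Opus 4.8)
The plan is to verify the hypotheses of~\Ref{clm}{unicotre} for the lagrangian $A_{\bf p}$ in the two special cases. Recall that~\Ref{clm}{unicotre} says: if a maximal torus $T$ of $SL(V)$ fixes $A\in\lagr$ and acts trivially on $\bigwedge^{10}A$, then $\PGL(V)A$ contains $A_{III}$. So it suffices to exhibit such a torus for $A_{\bf p}$ when ${\bf p}=([1,0],[1,0],[1,0])$ (the other case being symmetric, or handled identically). First I would write out $A_{\bf p}$ explicitly for ${\bf p}=([1,0],[1,0],[1,0])$: by~\eqref{puntistella} the summand $A'_{\bf p}$ has basis $\{v_0\wedge v_2\wedge v_3,\ v_1\wedge v_2\wedge v_5,\ v_0\wedge v_1\wedge v_4\}$, by~\eqref{perpstella} the summand $A''_{\bf p}$ has basis $\{v_0\wedge v_4\wedge v_5,\ v_1\wedge v_3\wedge v_5,\ v_2\wedge v_3\wedge v_4,\ v_1\wedge v_4\wedge v_5,\ v_0\wedge v_3\wedge v_4,\ v_2\wedge v_3\wedge v_5\}$, and the third summand is $\bigwedge^3 V_{02}=\la v_0\wedge v_1\wedge v_2\ra$. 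Thus $A_{\bf p}$ is spanned by $10$ monomials $v_i\wedge v_j\wedge v_k$ in the basis $\sF$.

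The key observation is that since $A_{\bf p}$ is spanned by monomials in $\sF$, the \emph{entire} maximal torus $T<SL(V)$ of elements diagonal in $\sF$ fixes $A_{\bf p}$. It remains to check that $T$ acts trivially on $\bigwedge^{10}A_{\bf p}$, equivalently that for $g=\diag(t_0,\ldots,t_5)\in T$ the product of the $T$-weights of the ten spanning monomials is $(t_0\cdots t_5)^5=1$. Following the bookkeeping of~\Ref{subsec}{toromass}, I would record the $6\times 10$ incidence matrix $M$ whose columns are the characteristic functions of the index-triples $\{i,j,k\}$ of the ten monomials above, and verify that every row sum equals $5$ (each index $i\in\{0,\ldots,5\}$ appears in exactly five of the ten triples). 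This is exactly condition~(2) in the proof of~\Ref{clm}{unicotre}, and it gives $g(\omega)=(\det g)^5\omega=\omega$ for $g\in T\cap SL(V)$, where $\omega$ is a generator of $\bigwedge^{10}A_{\bf p}$; compare~\eqref{torotre}. Hence $T$ acts trivially on $\bigwedge^{10}A_{\bf p}$.

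With both hypotheses of~\Ref{clm}{unicotre} verified, I conclude $A_{\bf p}\in\PGL(V)A_{III}$. For the case ${\bf p}=([0,1],[0,1],[0,1])$ the argument is identical: the three generators of $A'_{\bf p}$ become $\{v_1\wedge v_2\wedge v_4,\ v_0\wedge v_1\wedge v_3,\ v_0\wedge v_2\wedge v_5\}$ and those of $A''_{\bf p}$ change correspondingly, but again $A_{\bf p}$ is monomial in $\sF$ and one checks that each index appears in exactly five of the ten spanning triples, so the same torus $T$ works. The only point requiring genuine care—and the main (if mild) obstacle—is confirming the row-sum count in both incidence matrices, i.e.\ that the ten monomials really do distribute each basis index exactly five times; this is a finite check but must be done correctly since it is precisely the content guaranteeing triviality of the $T$-action on $\bigwedge^{10}A_{\bf p}$. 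Once that is in hand, the claim follows immediately from~\Ref{clm}{unicotre}.
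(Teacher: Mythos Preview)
Your approach is correct and is exactly the paper's argument: exhibit a monomial basis of $A_{\bf p}$ in the basis $\sF$, observe that the diagonal maximal torus $T<SL(V)$ therefore fixes $A_{\bf p}$, verify that each index $0,\ldots,5$ occurs in exactly five of the ten triples so that $T$ acts trivially on $\bigwedge^{10}A_{\bf p}$, and conclude by~\Ref{clm}{unicotre}. One small slip: the explicit monomials you list for $A''_{\bf p}$ in the case ${\bf p}=([1,0],[1,0],[1,0])$ are actually those of the case ${\bf p}=([0,1],[0,1],[0,1])$; the correct last three generators (from~\eqref{perpstella} with $a_i=1$, $b_i=0$) are $v_0\wedge v_3\wedge v_5$, $v_2\wedge v_4\wedge v_5$, $v_1\wedge v_3\wedge v_4$. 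The row-sum check still goes through with the corrected list, so the conclusion is unaffected.
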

\begin{proof}
 A  computation gives a monomial basis of  $A_{\bf p}$. Let $\omega$ be a generator of $\bigwedge^{10}A_{\bf p}$.  Let $T< \SL(V)$ be the maximal torus diagonalized in the basis $\sF$. One checks that $g\omega=\omega$ for every $g\in T$ and hence the result   follows from~\Ref{clm}{unicotre}. 
\end{proof}
\begin{prp}\label{prp:ciunononstab}
Let $A\in{\mathbb S}^{\sF}_{\cC_1}$ be semistable and suppose that $E_{A'}$ is not a smooth curve. Then $A$ is not $G_{\cC_1}$-stable (i.e.~properly semistable)  and it is  $\PGL(V)$-equivalent to  an element of $\MM^{\sF}_{\cC_1}$. 
\end{prp}
\begin{proof}
Suppose first that $A'$ contains a non-zero decomposable element. Then there exist a subspace $U\subset V_{02}$ of dimension $2$ and $0\not=z_0\in V_{35}$ such that $\bigwedge^2 U\wedge[z_0]\subset A'$. Choose direct-sum decompositions
\begin{equation}\label{uzeta}
V_{02}=[u_0]\oplus U,\qquad V_{35}=[z_0]\oplus Z.
\end{equation}
Let $\lambda$ be the $1$-PS of $G_{\cC_1}$ defined by
\begin{equation}\label{lambert}
\lambda(t) u_0= t^{-2} u_0,\quad \lambda(t)|_{U}=t\Id_U,\quad \lambda(t) z_0= t^2 z_0,
\quad \lambda(t)|_{Z}=t^{-1} \Id_Z.
\end{equation}
The isotypical summands of the action of $\lambda$ on $\bigwedge^2 V_{02}\wedge V_{35}$ are the following:
\begin{equation}\label{vesuvia}
\begin{matrix}
\bigwedge^2 U \wedge[z_0] & ([u_0]\wedge U\wedge [z_0]\oplus \bigwedge^2 U\wedge Z) &
 [u_0]\wedge U\wedge Z \\
t^4 & t & t^{-2} 
\end{matrix}
\end{equation}
The $\lambda$-type of $A'$ is $(1,d'_1,d'_2)$ with $d'_1+d'_2=2$. Thus $\mu(A',\lambda)=6-3d'_2$.  By~\eqref{vaiabonn} we get  that $A'$ is not $G_{\cC_1}$-stable and that  
$d'_2=2$ (because by hypothesis $A$ is semistable). Moreover~\Ref{clm}{limite} gives that $A$ is $G_{\cC_1}$-equivalent to 
 \begin{equation*}
A_0=\bigwedge^3 V_{02}\oplus (\bigwedge^2 U \wedge[z_0] \oplus H)\oplus  
(\bigwedge^2 U \wedge[z_0] \oplus H)^{\bot}\cap(V_{02}\wedge \bigwedge^2 V_{35}),\quad 
H\in\Gr(2, [u_0]\wedge U\wedge Z).
\end{equation*}
The intersection $\Gr(3,[u_0]\oplus U\oplus Z)\cap \PP([u_0]\wedge U\wedge Z)$
 is a quadric hypersurface: it follows that the intersection  $\PP(H)\cap \Gr(3,[u_0]\oplus U\oplus Z)$ is one of the following: 
\begin{enumerate}
\item[(1)]
a set with exactly two elements, 
\item[(2)]
a set with exactly one element,
\item[(3)]
a line.
\end{enumerate}
Suppose that~(1) holds: there exist bases $\{u_1,u_2\}$, $\{z_1,z_2\}$ of $U$ and $Z$ respectively such that $H=\la u_0\wedge u_1\wedge z_1, u_0\wedge u_2\wedge z_2\ra$. A straightforward computation gives that $A$ is  $A_{III}^{\sF'}$ for some basis $\sF'$ of $V$ - see~\Ref{clm}{unicotre}. By~\Ref{clm}{jackbox} we get that $A$ is $\PGL(V)$-equivalent to  $A_{\bf p}$ for ${\bf p}$ equal to $([1,0],[1,0],[1,0])$ or 
 $([0,1],[0,1],[0,1])$. 
If~(2) or~(3) above hold then 
$A_{0}$ is in the closure of the set of $A$'s for which Item~(1) holds and hence  it belongs to the orbit $\SL(V)A_{III}^{\sF}$    by~\Ref{prp}{atrechiuso}.
This settles the case of $A'$ containing a non-zero decomposable element. Now assume that  $E_{A'}$ is not a smooth curve but it does not contain  non-zero decomposable elements. Then there exists $[\alpha]\in E_{A'}$ such that 
\begin{equation}\label{rubycuore}
\dim T_{[\alpha]} E_{A'}=2.
\end{equation}
In what follows we will identify $\bigwedge^2 V_{02}\wedge V_{35}$ with $\Hom(V_{02},V_{35})$. 
By hypothesis $\rk\alpha=2$; let $[u_0]=\ker\alpha$.
Equation~\eqref{rubycuore} is equivalent to  $\beta(u_0)\in \im\alpha$ for all $\beta\in A'$. Let $Z:=\im\alpha$; by hypothesis $\dim Z=2$. Choose direct-sum decompositions as in~\eqref{uzeta}. Let $\lambda$ be the $1$-PS of $G_{\cC_1}$ defined by~\eqref{lambert} and $\lambda^{-1}$ its inverse: $\lambda^{-1}(t):=\lambda(t^{-1})$. 
Replacing each weight appearing  in~\eqref{vesuvia} by its opposite we get the  isotypical decomposition of the representation of   $\lambda^{-1}$ on $\bigwedge^2 V_{02}\wedge V_{35}$. 
Notice that $\alpha\in [u_0]\wedge U\wedge Z$ and that $A'$ is contained in the second term of the $\lambda^{-1}$-weight filtration of $\bigwedge^2 V_{02}\wedge V_{35}$.
 It follows that the $\lambda^{-1}$-type of $A'$ is $(d'_0,3-d'_0,0)$ where $d'_0\ge 1$ and hence $\mu(A',\lambda^{-1})=3d'_0-3\ge 0$.   By~\eqref{vaiabonn}  we get that $A$ is not $G_{\cC_1}$-stable and that  its $\lambda^{-1}$-type  is $(1,2,0)$ (because it is semistable by hypothesis). Moreover~\Ref{clm}{limite} gives that if $A$ is  $G_{\cC_1}$-equivalent to 
\begin{equation*}
\text{$A_0=\bigwedge^3 V_{02}\oplus A'_0\oplus (A'_0)^{\bot}\cap V_{02}\wedge\bigwedge^2 V_{35}$ where $A'_0$ is $\lambda^{-1}$-split of type $(1,2,0)$.}  
\end{equation*}
Let $\alpha_0$ be a generator of $A'_0\cap [u_0]\wedge U\wedge Z$ and $\{\beta_0,\gamma_0\}$ be a basis of $A'_0\cap ([u_0]\wedge U\wedge [z_0]\oplus \bigwedge^2 U\wedge Z)$; a straightforward computation gives that $\det(x\alpha_0+y \beta_0+ w\gamma_0)=x\phi(y,w)$ where $\phi\in\CC[y,w]_2$. Suppose first that the zero-locus $V(\phi)$ is either all of $\CC^2$ or the union of two distinct lines. Let  $(y_1,w_1)$ and $(y_2,w_2)$ be linearly independent solutions of $\phi(y,w)=0$. We let $\delta_i:=y_i \beta_0+w_i\gamma_0$ for $i=1,2$. We may choose bases $\{u_1,u_2\}$, $\{z_1,z_2\}$ of $U$ and $Z$ respectively such that 
\begin{equation}\label{incrocio}
\alpha_0=u_0\wedge u_2\wedge z_1+u_0\wedge u_1\wedge z_2,\quad
\delta_1=u_1\wedge u_2\wedge z_1+a u_0\wedge u_1\wedge z_0,\quad 
\delta_2=u_1\wedge u_2\wedge z_2+b u_0\wedge u_2\wedge z_0.
\end{equation}
It follows at once that there exists ${\bf p}\in \PP^1\times\PP^1\times\PP^1$  such that $A_{\bf p}$ is $\SL(V)$-equivalent to $A$. Lastly suppose that the zero-locus $V(\phi)$ is a single line (with multiplicity $2$).  Arguing as above we get a basis of $A'_0$ given by
\begin{equation*}
u_0\wedge u_2\wedge z_1+u_0\wedge u_1\wedge z_2,\quad
u_1\wedge u_2\wedge z_1+a u_0\wedge u_1\wedge z_0,\quad 
u_1\wedge u_2\wedge z_2+b  u_0\wedge u_2\wedge z_0+ c u_0\wedge u_1\wedge z_0.
\end{equation*}
Let $g\in GL(V)$ be defined by $g(u_i)=v_{2-i}$, $g(z_0)=v_5$, $g(z_1)=v_3$ and $g(z_2)=v_4$. Consider the torus $g^{-1}T_{\star}g$ where $T_{\star}$ is defined by~\eqref{torostella}; applying it to $A'_0$ we get as limit a subspace generated by $\alpha_0,\delta_1,\delta_2$ given by~\eqref{incrocio} and hence we are done again.
\end{proof}
Next we notice that  $T'\times T''$     maps $\MM^{\sF}_{\cC_1}$ to itself and hence it acts on $\MM^{\sF}_{\cC_1}$. 
\begin{crl}\label{crl:ciunononstab}
The inclusion $\MM^{\sF}_{\cC_1}\hra {\mathbb S}^{\sF}_{\cC_1}$ induces a  finite map
\begin{equation}
 \MM^{\sF}_{\cC_1}// T'\times T'' \lra 
{\mathbb S}^{\sF}_{\cC_1}// \SL(V_{02}\times \SL(V_{35}).
\end{equation}
with image the equivalence classes of $G_{\cC_1}$-properly semistable points.
\end{crl}
\begin{proof}
The product $T'\times T''$ is of finite index in the normalizer of $T_{\star}$ in $\SL(V_{02})\times \SL(V_{35})$, hence the corollary follows from~\Ref{clm}{fissazione} and~\Ref{crl}{piupiccolo}. 
\end{proof}
We define an action of $T'$ on $({\bf P}^1)^3$ as follows.  Let $g\in T'$ be given by $g(v_i)=s_i v_i$ for $0\le i\le 2$, and $([a_1,b_1],[a_2,b_2],[a_3,b_3])$: then
\begin{equation}\label{cettola}
g([a_1,b_1],[a_2,b_2],[a_3,b_3])=[s_1^{-1} a_1,s_0^{-1}b_1],
[s_0^{-1} a_2, s_2^{-1} b_2],[s_2^{-1} a_3, s_1^{-1} b_3])
\end{equation}
A straightforward computation gives that~\eqref{magodioz} induces an isomorphism 
\begin{equation}
({\bf P}^1)^3// T'\cong \MM^{\sF}_{\cC_1}// T'\times T''.
\end{equation}
(Recall that $T_{\star}$ acts trivially on $\MM^{\sF}_{\cC_1}$.)
The quotient $({\bf P}^1)^3// T'$ is isomorphic to $\PP^1$ via the map
\begin{equation}\label{rettaquoz}
\begin{matrix}
({\bf P}^1)^3 & \lra &   \PP^1\\
([a_1,b_1],[a_2,b_2],[a_3,b_3]) & \mapsto & [a_1a_2 a_3, b_1 b_2 b_3]
\end{matrix}
\end{equation}
Before stating the next result  we notice that if ${\bf p}\in(\PP^1)^3$ and  $\{f,g,h\}$ is the basis of $A'_{\bf p}$ given by the elements on the right-hand side of~\eqref{puntistella} then
\begin{equation}\label{minetti}
E_{A'_{\bf p}}=V(\det(xf+yg+zh))=V((a_1a_2 a_3+b_1 b_2 b_3)xyz).
\end{equation}
\begin{crl}\label{crl:keith}
If ${\bf q}=([1,1],[1,-1],[1,1])$  then $A_{\bf q}\in \PGL(V)A_{+}$.
\end{crl}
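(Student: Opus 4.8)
The plan is to show that for this particular $\bf p$ the subspace $A_{\bf p}$ carries a full $\SL(V_{02})$-symmetry, and then to recognise it as $A_{+}$ through that extra symmetry. First I would write the generic element $xf+yg+zh$ of $A'_{\bf p}$ (with $f,g,h$ the generators in~\eqref{puntistella}, and $A'_{\bf p},A''_{\bf p}$ as in~\eqref{sommaciuno}) as a homomorphism $V_{02}\to V_{35}$, using $\bigwedge^2 V_{02}\cong V_{02}^{\vee}$ and rows indexed by $\{v_3,v_4,v_5\}$, columns by $\{v_0,v_1,v_2\}$. For ${\bf p}=([1,1],[1,-1],[1,1])$ the matrix is
\begin{equation*}
\begin{pmatrix} 0 & -x & -y \\ x & 0 & z \\ y & -z & 0 \end{pmatrix},
\end{equation*}
which is antisymmetric (in particular $a_1a_2a_3+b_1b_2b_3=0$, so $E_{A'}=\PP(A')$ by~\eqref{minetti}). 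Hence, under the pairing $V_{35}\cong V_{02}^{\vee}$ given by $v_i\leftrightarrow v_{i+3}$, we have $A'_{\bf p}=\bigwedge^2 V_{02}^{\vee}$, the space of antisymmetric forms on $V_{02}$.

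Second, I would use this to produce the symmetry. Let $\SL(V_{02})<\SL(V)$ act tautologically on $V_{02}$ and contragradiently on $V_{35}=V_{02}^{\vee}$. Then $\bigwedge^3 V_{02}$, $A'_{\bf p}=\bigwedge^2 V_{02}^{\vee}$ and $V_{02}\wedge\bigwedge^2 V_{35}$ are all invariant, so $A''_{\bf p}=(A'_{\bf p})^{\bot}\cap(V_{02}\wedge\bigwedge^2 V_{35})$ is invariant too; being the unique $6$-dimensional invariant subspace of $V_{02}\wedge\bigwedge^2 V_{35}\cong\Sym^2 V_{02}\oplus V_{02}^{\vee}$ it is forced to be the copy of $\Sym^2 V_{02}$. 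Thus
\begin{equation*}
A_{\bf p}=\det V_{02}\oplus V_{02}\oplus\Sym^2 V_{02}
\end{equation*}
as an $\SL(V_{02})$-module, the summands being $\bigwedge^3 V_{02}$, $A'_{\bf p}$ and $A''_{\bf p}$. The key point is that in the $\SL(V_{02})$-decomposition $\bigwedge^3 V\cong\CC^{\oplus 2}\oplus V_{02}\oplus V_{02}^{\vee}\oplus\Sym^2 V_{02}\oplus\Sym^2 V_{02}^{\vee}$ the constituents $V_{02}$ and $\Sym^2 V_{02}$ occur with multiplicity one, while the trivial constituent occurs twice but $A_{\bf p}$ selects the specific line $\bigwedge^3 V_{02}=\det V_{02}$.

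Finally, to pin down the orbit I would choose $U$ with $\dim U=4$ and an isomorphism $V\cong\bigwedge^2 U$ of the type~\eqref{basewedu}, adjusted so that $\SL(V_{02})$ is realised as the Levi $\SL(U')$ of the stabiliser in $\SL(U)$ of $[u_0]\in\PP(U)$, with $V_{02}=u_0\wedge U'$. Restricting $A_{+}(U)=\Sym^2 U\otimes\det U$ (see~\Ref{rmk}{rapgru}) to this Levi gives $\mathbf 1\oplus U'\oplus\Sym^2 U'\cong\det V_{02}\oplus V_{02}\oplus\Sym^2 V_{02}$, the trivial line again being $\bigwedge^3(u_0\wedge U')=\bigwedge^3 V_{02}$; so $A_{+}(U)$ contains exactly the same three multiplicity-one constituents as $A_{\bf p}$, whence $A_{+}(U)=A_{\bf p}$ and $A_{\bf p}\in\PGL(V)A_{+}$. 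The main obstacle is the bookkeeping in this last step: with the \emph{standard} labelling~\eqref{basewedu} the Levi acts on $V_{35}=\bigwedge^2 U'\cong(U')^{\vee}$ by the dual basis in the \emph{reversed} order relative to $v_i\leftrightarrow v_{i+3}$, so the Levi and the $\SL(V_{02})$ fixing $A_{\bf p}$ are a priori different copies of $\SL_3$; one must reorder $v_3,v_4,v_5$ by a Pl\"ucker relabelling (absorbing the determinantal twist between $\bigwedge^2 U'$ and $(U')^{\vee}$) so that the two $\SL_3$-structures coincide. Once they are aligned, multiplicity one forces $A_{+}(U)=A_{\bf p}$ with no further computation. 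As an alternative endgame one could instead deduce $\dim\Theta_{A_{\bf p}}\ge 3$ from the $\SL(V_{02})$-invariance and invoke~\Ref{prp}{setetatre}, but that route first requires establishing semistability of $A_{\bf p}$, whereas the identification above is unconditional.
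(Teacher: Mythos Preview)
Your argument is correct and genuinely different from the paper's. The paper argues indirectly: it first observes that $A_{+}\in{\mathbb S}^{\sF}_{\cC_1}$ with $E_{A'_{+}}=\PP(A'_{+})$ (the antisymmetry you found), then invokes \Ref{prp}{ciunononstab} to produce \emph{some} ${\bf p}_0$ with $A_{{\bf p}_0}\in\PGL(V)A_{+}$, uses the invariant $a_1a_2a_3+b_1b_2b_3$ from~\eqref{minetti} and~\eqref{rettaquoz} to force ${\bf p}_0$ into the $T'$-orbit of $([1,1],[1,-1],[1,1])$, and finally uses closedness of $T'{\bf p}_0$ (via~\Ref{prp}{duequad} and~\Ref{crl}{piupiccolo}) to conclude that $([1,1],[1,-1],[1,1])$ actually lies in that orbit. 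So the paper leans on the GIT classification already developed in the subsection plus Luna's theorem.

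Your route is constructive: you exhibit the $\SL(V_{02})$-symmetry of $A_{\bf p}$ directly from the antisymmetric matrix, decompose $\bigwedge^3 V$ as an $\SL(V_{02})$-module, and use multiplicity one of $V_{02}$ and $\Sym^2 V_{02}$ (together with the explicit identification of the trivial line as $\bigwedge^3 V_{02}$) to pin down $A_{\bf p}$ uniquely among $\SL(V_{02})$-invariant lagrangians. The comparison with $A_{+}(U)$ via the Levi $\SL(U')$ then gives an explicit projectivity rather than an existential one. The bookkeeping you flag is real but harmless: the element $\gamma\in\SL(V_{35})\subset\SL(V)$ sending $(v_3,v_4,v_5)$ to $(v_5,-v_4,v_3)$ conjugates the Levi onto your $\SL(V_{02})$, fixes $\bigwedge^3 V_{02}$, and carries each isotypic piece to its counterpart, so $\gamma(A_{+}(U))=A_{\bf p}$. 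Your approach is more self-contained (it needs neither~\Ref{prp}{ciunononstab} nor the closedness of the orbit of $A_{+}$), at the cost of the representation-theoretic bookkeeping; the paper's approach is shorter given the surrounding machinery.
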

\begin{proof}
By~\eqref{impagliazzodue} we know that  $A_{+}(U)\in 
{\mathbb S}^{\sF}_{\cC_1}$ for some choice of $4$-dimensional vector-space $U$ and isomorphism $V\cong\bigwedge^2 U$. 
 We claim that $E_{A'_{+}(U)}=\PP(A'_{+}(U))$; in fact one may easily give an isomorphism $V_{35}\cong V_{02}^{\vee}$ such that  $A'_{+}(U)\subset\Hom(V_{02},V_{35})$ consists of the subspace of skew-symmetric maps. By~\Ref{prp}{ciunononstab} it follows that there exists ${\bf p}\in \PP^1\times \PP^1\times \PP^1$ such that $A_{\bf p}\in \PGL(V)A_{+}(U)$. Let ${\bf p}=([a_1,b_1],[a_2,b_2],[a_3,b_3])$; 
since $E_{A'_{+}(U)}=\PP(A'_{+}(U))$ Equation~\eqref{minetti} gives that 
$$a_1a_2 a_3+b_1 b_2 b_3=0.$$
Since $A_{+}(U)$ is $\PGL(V)$-semistable the point ${\bf p}$ is $T'$-semistable by~\Ref{crl}{piupiccolo}, and hence $a_1a_2 a_3\not=0\not=b_1 b_2 b_3$ because~\eqref{rettaquoz} is the $T'$-quotient map. Thus we may assume that $1=a_1=a_2=a_3$ and hence $b_1b_2b_3=-1$. 
As is easily checked it follows that there exists $g\in T'$ such that $T'{\bf p}={\bf q}$. 
\end{proof}
\subsubsection{Semistable lagrangians $A$ with $\dim \Theta_A\ge 2$ or $C_{W,A}=\PP(W)$.}
We will prove results that will  be used several times  in order to describe  $C_{W,A}$.
\begin{lmm}\label{lmm:sedimdue}
Let  $A\in\lagr^{ss}$ and suppose that $\dim\Theta_A\ge 2$. Then $A$ is $\PGL(V)$-equivalent to an element of
\begin{equation}\label{blindmice}
\XX^{*}_{\cW}\cup \PGL(V)A_k \cup \PGL(V)A_h.
\end{equation}
On the other hand if $A$ belongs to~\eqref{blindmice} then $\dim\Theta_A\ge 2$.
\end{lmm}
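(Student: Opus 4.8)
The statement to prove is Lemma~\ref{lmm:sedimdue}: for $A\in\lagr^{ss}$, one has $\dim\Theta_A\ge 2$ if and only if $A$ is $\PGL(V)$-equivalent to an element of $\XX^{*}_{\cW}\cup\PGL(V)A_k\cup\PGL(V)A_h$.

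The plan is to exploit the classification of couples $(A,\Theta)$ with $\Theta$ an irreducible component of $\Theta_A$ of dimension at least $2$, which is provided by Theorem~2.26 and Theorem~2.36 of~\cite{ogtasso}. First I would treat the forward implication. Suppose $A\in\lagr^{ss}$ and $\dim\Theta_A\ge 2$; pick an irreducible component $\Theta\subset\Theta_A$ with $\dim\Theta\ge 2$. Morin's theorem (invoked in the introduction, since the planes $\PP(W)$ for $W\in\Theta$ are pairwise incident as $A$ is lagrangian) together with the cited classification from~\cite{ogtasso} gives a finite list of possibilities for the pair $(A,\Theta)$: up to $\PGL(V)$, the families governed by Schubert conditions, the rulings of a smooth quadric, the planes tangent to a Veronese, and the planes cutting the Veronese in a conic. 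The semistability hypothesis is what prunes this list. Concretely, I would cross-reference each possibility against the standard \emph{unstable} strata of Table~\eqref{stratflagdue}: the Schubert-type families force $A$ into one of the $\XX$-strata (hence $A$ is unstable, contradicting $A\in\lagr^{ss}$), so those are eliminated. What survives are exactly the three interesting Morin families, which by the discussion following Morin's theorem correspond respectively to $\XX^{*}_{\cW}$ (ruling of a smooth quadric, via $i_{+}$ applied to a smooth quadric surface, see~\Ref{dfn}{ixdoppiovu}), to $\PGL(V)A_k$ and to $\PGL(V)A_h$ (the Veronese families, see~\eqref{kappacca} and~\Ref{dfn}{kappacca}). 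This yields the containment in~\eqref{blindmice}.

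For the converse I would verify directly that each listed lagrangian has $\dim\Theta_A\ge 2$. For $A\in\XX^{*}_{\cW}$, by definition $\PP(A)\supset i_{+}(Z)$ for a smooth quadric surface $Z\subset\PP(U)$, so $i_{+}(Z)\subset\Theta_A$ and $\dim\Theta_A\ge\dim Z=2$. For $A_k$ and $A_h$, Equation~\eqref{nonaltro} gives $\Theta_{A_k(L)}=\im(k)$ and $\Theta_{A_h(L)}=\im(h)$, each the image of $\PP(L)$ or $\PP(L^{\vee})$ under the embeddings~\eqref{kappacca}, hence two-dimensional. Since $\dim\Theta_A$ is invariant under $\PGL(V)$, the converse follows for the whole orbits.

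The main obstacle is the forward direction, specifically the bookkeeping that matches the Schubert-type Morin families from the~\cite{ogtasso} classification against the unstable strata of Table~\eqref{stratflagdue} and confirms that semistability excludes precisely those families while retaining exactly the three geometric ones. This is where I must be careful that the classification of $\dim\Theta\ge 2$ components is genuinely exhaustive and that each non-surviving family really does land in an $\XX$-stratum (so that the flag conditions of Table~\eqref{stratflagdue} apply and~\Ref{clm}{cucs} forces instability). I would lean on~\Ref{prp}{setetatre} to dispose of the $\dim\Theta_A>2$ case (where $A$ must be projectively equivalent to $A_{+}\in\XX^{*}_{\cW}(U)$), reducing the delicate analysis to components of dimension exactly $2$; the remaining verification is then a finite, essentially routine comparison against the tables.
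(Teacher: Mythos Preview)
Your converse direction is fine and matches the paper. The forward direction, however, has a real gap. The classification from Theorems~2.26 and~2.36 of~\cite{ogtasso} does not cleanly split into ``Schubert-type families (unstable)'' versus ``the three interesting Morin families''. When one applies those theorems to a semistable $A$ with $\dim\Theta_A\ge 2$, the outcome the paper records is: either $A$ already lies in~\eqref{blindmice}, \emph{or} there is an isomorphism $V\cong\bigwedge^2 U$ and a \emph{singular} quadric $Z\subset\PP(U)$ (a plane or a quadric cone) with $\PP(A)\supset\la i_{+}(Z)\ra$. This second alternative is not a Schubert family, and such $A$ need not be unstable; your pruning-by-instability against Table~\eqref{stratflagdue} does not eliminate it.

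The paper closes this case with~\Ref{prp}{zetasing}: if $A\in\lagr^{ss}$ and $\PP(A)\supset\la i_{+}(Z)\ra$ for $Z$ a plane or a quadric cone, then a suitable $1$-PS of $\SL(U)$ degenerates $A$ to $A_{+}(U)$, so $A$ is $\PGL(V)$-equivalent (in the GIT sense of the paper) to $A_{+}(U)\in\XX^{*}_{\cW}$. You never invoke this proposition, and nothing in your outline substitutes for it. Invoking~\Ref{prp}{setetatre} only handles $\dim\Theta_A>2$; the singular-quadric case sits at $\dim\Theta_A=2$ and must be treated separately. Adding the appeal to~\Ref{prp}{zetasing} after the \cite{ogtasso} dichotomy would make your argument complete and essentially identical to the paper's.
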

\begin{proof}
Suppose that  $A\in\lagr^{ss}$ and that $\dim\Theta_A\ge 2$.
By Theorem~2.26 and Theorem~2.36 of~\cite{ogtasso} it follows that either $A$ itself belongs to~\eqref{blindmice} or else there exist an isomorphism $V\cong\bigwedge^2 U$ and a singular quadric $Z\subset \PP(U)$ such that $\PP(A)\supset \la i_{+}(Z)\ra$. By~\Ref{prp}{zetasing} we get that $A$ is $\PGL(V)$-equivalent to an element of~\eqref{blindmice}. Now suppose that   $A$ belongs to~\eqref{blindmice}. If  $A\in \XX^{*}_{\cW}$ then $\Theta_A$ contains $i_{+}(Z)$ where $Z\cong\PP^1\times\PP^1$ (notation as in~\Ref{dfn}{ixdoppiovu}), if $A\in(\PGL(V)A_k \cup \PGL(V)A_h)$ then $\Theta_A$ contains $k(\PP(L))$ or $h(\PP(L^{\vee}))$ i.e.~a Veronese surfaces (of degree $9$): in both cases we get  that $\dim\Theta_A\ge 2$.
\end{proof}
\begin{prp}\label{prp:senoncurva}
Let $A\in\lagr^{ss}$ and suppose that there exists $W\in\Theta_A$ such that $C_{W,A}=\PP(W)$. Then   $A$ is $\PGL(V)$-equivalent to an element of $\XX^{*}_{\cW}\cup \PGL(V) A_k$. 
\end{prp}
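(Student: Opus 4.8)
The statement to prove is: if $A\in\lagr^{ss}$ and there exists $W\in\Theta_A$ with $C_{W,A}=\PP(W)$, then $A$ is $\PGL(V)$-equivalent to an element of $\XX^{*}_{\cW}\cup\PGL(V)A_k$. The plan is to translate the hypothesis $C_{W,A}=\PP(W)$ into a statement about $\cB(W,A)$ via~\Ref{crl}{cnesinerre}, namely that $\cB(W,A)=\PP(W)$, and then to exploit the two alternatives in~\Ref{dfn}{malvagio} defining $\cB(W,A)$: either every $[w]\in\PP(W)$ lies on a second plane $W'\in\Theta_A$, or $\dim(A\cap F_w\cap S_W)\ge 2$.

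\textbf{Key steps.} First I would fix $W\in\Theta_A$ with $C_{W,A}=\PP(W)$ and apply~\Ref{crl}{cnesinerre} to conclude $\cB(W,A)=\PP(W)$. Next I would stratify $\PP(W)$ according to which of the two conditions of~\Ref{dfn}{malvagio} holds, and observe that since $\cB(W,A)$ is all of $\PP(W)$ (a closed irreducible surface), at least one of the two conditions must hold on a dense (hence, after taking closures, on a positive-dimensional) subset. The key dichotomy is:

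\begin{itemize}
\item[(a)] the locus of $[w]\in\PP(W)$ lying on some $W'\in(\Theta_A\setminus\{W\})$ is dense in $\PP(W)$, or
\item[(b)] that locus is not dense, in which case condition~(2) of~\Ref{dfn}{malvagio}, $\dim(A\cap F_w\cap S_W)\ge 2$, must hold on a dense open subset of $\PP(W)$.
\end{itemize}

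In case~(a), a generic point of $\PP(W)$ lies on a plane $\PP(W')$ distinct from $\PP(W)$, so the planes $\PP(W')$ for $W'\in\Theta_A$ sweep out a positive-dimensional family covering $\PP(W)$; this forces $\dim\Theta_A\ge 2$, and~\Ref{lmm}{sedimdue} then gives that $A$ is $\PGL(V)$-equivalent to an element of $\XX^{*}_{\cW}\cup\PGL(V)A_k\cup\PGL(V)A_h$. I would then need to rule out $\PGL(V)A_h$: by~\eqref{nopanino} we have $C_{W,A}=3D_W\ne\PP(W)$ for every $W\in\Theta_{A_h}$, which contradicts the hypothesis $C_{W,A}=\PP(W)$; hence $A$ is equivalent to an element of $\XX^{*}_{\cW}\cup\PGL(V)A_k$ as required. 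In case~(b), I would analyze the subspace $S_W/\bigwedge^3 W\cong T_W\cong\Hom(W,V/W)$ and the image $\ov{A}\cap T_W$, using that $\dim(A\cap F_w\cap S_W)\ge 2$ for generic $[w]$ translates (via~\eqref{tiutiu}) into a $2$-dimensional family of elements $\ov\alpha\in\ov A\cap T_W$ sharing a common kernel containing $w$; this again produces enough incident planes through each point or enough degenerate homomorphisms to force $\dim\Theta_A\ge 2$ and reduce to~\Ref{lmm}{sedimdue} as before, after which the same exclusion of $\PGL(V)A_h$ applies.

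\textbf{Main obstacle.} The hard part will be case~(b): showing that the pointwise condition $\dim(A\cap F_w\cap S_W)\ge 2$ holding on a dense subset of $\PP(W)$ actually forces $\dim\Theta_A\ge 2$ (so that~\Ref{lmm}{sedimdue} applies), rather than merely producing a large $C_{W,A}$ without extra planes in $\Theta_A$. I expect this requires a careful rank analysis of the family $\{\ov\alpha\in\ov A\cap T_W\}$ as a family of maps $W\to V/W$: a two-dimensional space of such maps, each singular along a moving point $[w]\in\PP(W)$, should be shown to contain rank-one elements (forcing a second plane $W'\in\Theta_A$ through a generic point and hence reducing to case~(a)) by invoking the classification of pencils/nets of degenerate homomorphisms, in the spirit of~\Ref{prp}{fascidege} and the discussion in~\Ref{subsec}{tramonto}. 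The bookkeeping to guarantee that this genuinely enlarges $\Theta_A$ (rather than only thickening $C_{W,A}$ along $W$ itself) is the delicate point; once $\dim\Theta_A\ge 2$ is established, the reduction via~\Ref{lmm}{sedimdue} and the exclusion of $A_h$ via~\eqref{nopanino} is routine.
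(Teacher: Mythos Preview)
Your case~(a) is correct and matches the paper exactly: if the first alternative of~\Ref{dfn}{malvagio} holds on a dense set, then $\dim\Theta_A\ge 2$, one applies~\Ref{lmm}{sedimdue}, and rules out $\PGL(V)A_h$ via~\eqref{nopanino}.

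Your case~(b), however, has a real gap. First a minor correction: the condition $\dim(A\cap F_w\cap S_W)\ge 2$ for each $[w]$ gives only a one-dimensional (not two-dimensional) space of nonzero $\ov\alpha\in\ov A\cap T_W$ with $\ov\alpha(w)=0$, since $\bigwedge^3 W$ already accounts for one dimension. More importantly, what actually follows from (b) together with the failure of (a) is that $\dim(\ov A\cap T_W)\ge 3$: if the dimension were $\le 2$ one checks (via an easy incidence count) that either $K(\cV)\subsetneq\PP(W)$, or every nonzero element of $\cV$ has rank $1$, which produces a $\PP^1$ of planes $W'\in\Theta_A$ whose intersections $\PP(W\cap W')$ sweep out $\PP(W)$ and force (a). So in case~(b) one is dealing with a net, not a pencil, and~\Ref{prp}{fascidege} does not apply.

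The decisive obstruction to your strategy is the skew-symmetric case: $\cV=\ov A\cap T_W$ can be a three-dimensional space of maps $W\to V/W$ all of rank exactly $2$, with \emph{no} rank-one elements whatsoever (see~\Ref{rmk}{antisim}), and this is precisely what occurs for $A=A_{+}$ (cf.~the proof of~\Ref{crl}{keith}). In that situation there are no rank-one elements to manufacture extra planes, so your reduction to case~(a) collapses. The paper handles case~(b) by a different route: from $\dim(A\cap S_W)\ge 4$ one has $A\in\BB^{*}_{\cC_1}$; passing to the limit $A_0\in{\mathbb S}^{\sF}_{\cC_1}$ one still has $C_{V_{02},A_0}=\PP(V_{02})$, hence $E_{A'_0}$ is not a smooth curve; then the GIT analysis of the $\cC_1$-stratum (\Ref{prp}{ciunononstab} together with the explicit computation~\eqref{minetti} and~\Ref{crl}{keith}) forces $[A]=[A_{+}]\in\gX_{\cW}$. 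This detour through the $\cC_1$-stratum is the essential missing ingredient in your plan.
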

\begin{proof}
By~\Ref{crl}{cnesinerre} we have $\cB(W,A)=\PP(W)$ i.e.~one of the following holds:
\begin{enumerate}
\item[{\rm (a)}]
For generic $[w]\in\PP(W)$ there exists  $W'\in(\Theta_A\setminus\{W\})$ with $[w]\in W'$.
\item[{\rm (b)}]
For all $[w]\in\PP(W)$ there exists $0\not=\ov{\alpha}\in T_W$ such that $\ov{\alpha}(w)=0$. (Recall~\eqref{tiutiu}.)
\end{enumerate}
Suppose that~(a) holds. It follows that $\dim\Theta_A\ge 2$. By~\Ref{lmm}{sedimdue} we get that $A$ is $\PGL(V)$-equivalent to an element of $\XX^{*}_{\cW}\cup \PGL(V)A_k\cup \PGL(V)A_h$. On the other hand if $W\in\Theta_{A_h}$ then $C_{W,A_h}\not=\PP(W)$ (it is a triple conic) and hence      $A$ is not $\PGL(V)$-equivalent to $A_h$. Now suppose that~(b) holds. We  may suppose  that~(a) does not hold. Then necessarily $\dim(A\cap S_W)\ge 4$. By Table~\eqref{stratflaguno} it follows that $A$ is $\PGL(V)$-equivalent to an element  $A_0\in {\mathbb S}^{\sF}_{\cC_1}$ such that $E_{A'_0}=\PP(V_{02})$. By~\Ref{prp}{ciunononstab} it follows that $A_0$ is $G_{\cC_1}$-equivalent to an element  $A_{\bf p}\in {\mathbb M}^{\sF}_{\cC_1}$ such that $E_{A'_{\bf p}}=\PP(V_{02})$. Now look at~\eqref{minetti}: by~\eqref{rettaquoz} and~\Ref{crl}{keith} we get that $A_{\bf p}$ is $G_{\cC_1}$-equivalent to $A_{+}$, and since  $\PGL(V)A_{+}\subset  \XX^{*}_{\cW}$ we are done.
\end{proof}
\begin{crl}\label{crl:senoncurva}
Let $A\in\lagr^{ss}$. Suppose that   $\dim\Theta_A\le 1$ and $A$ has minimal $\PGL(V)$-orbit. Let $W\in\Theta_A$: then $C_{W,A}\not=\PP(W)$. 
\end{crl}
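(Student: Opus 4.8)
The plan is to argue by contraposition, using \Ref{prp}{senoncurva} together with the second (easy) half of \Ref{lmm}{sedimdue}. Suppose, contrary to the assertion, that $C_{W,A}=\PP(W)$ for some $W\in\Theta_A$. Since $A\in\lagr^{ss}$, \Ref{prp}{senoncurva} applies verbatim and gives that $A$ is $\PGL(V)$-equivalent to an element of $\XX^{*}_{\cW}\cup\PGL(V)A_k$.

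The next step is to observe that membership in $\XX^{*}_{\cW}\cup\PGL(V)A_k$ forces $\dim\Theta$ to be at least $2$. Indeed $\XX^{*}_{\cW}\cup\PGL(V)A_k$ is contained in the set appearing in~\eqref{blindmice}, and the second assertion of \Ref{lmm}{sedimdue} states precisely that every lagrangian lying in that set has $\dim\Theta\ge 2$. Thus the element of $\XX^{*}_{\cW}\cup\PGL(V)A_k$ to which $A$ is equivalent has $\dim\Theta\ge 2$.

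Finally I would use that $\dim\Theta$ is constant along $\PGL(V)$-orbits. This is immediate from the definition~\eqref{eccoteta}: for $g\in\PGL(V)$ we have $W\in\Theta_A$ if and only if $gW\in\Theta_{gA}$, so $\Theta_{gA}=g\,\Theta_A$ and hence $\dim\Theta_{gA}=\dim\Theta_A$. Combining this with the previous step yields $\dim\Theta_A\ge 2$, contradicting the hypothesis $\dim\Theta_A\le 1$. Therefore no $W\in\Theta_A$ can satisfy $C_{W,A}=\PP(W)$, which is the desired conclusion.

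There is essentially no technical obstacle here, since the whole content has already been packaged into \Ref{prp}{senoncurva} and \Ref{lmm}{sedimdue}; the only point to be careful about is the orbit-invariance of $\dim\Theta_A$, which in fact makes the minimal-orbit hypothesis superfluous for this particular deduction (it is merely part of the standing situation in which the corollary will be applied in~\Ref{sec}{frontiera}). One should simply note that the conclusion of \Ref{prp}{senoncurva} lands inside $\XX^{*}_{\cW}\cup\PGL(V)A_k$, and this is in any case harmless, as all three pieces of~\eqref{blindmice}, including $\PGL(V)A_h$, have $\dim\Theta\ge 2$.
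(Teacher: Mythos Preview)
Your argument has a genuine gap stemming from a misreading of the phrase ``$\PGL(V)$-equivalent''. In this paper (see the conventions stated at the beginning of~\Ref{sec}{sottoparam}) $G$-equivalence is the equivalence relation induced by the GIT quotient map $Z^{ss}\to Z//G$, \emph{not} the relation of lying in the same $G$-orbit. Concretely, two semistable points are $\PGL(V)$-equivalent if and only if the closures of their orbits in $\lagr^{ss}$ share the same minimal orbit. The proof of~\Ref{prp}{senoncurva} (and of the results it invokes, e.g.~\Ref{prp}{zetasing} and~\Ref{prp}{ciunononstab}) proceeds by taking limits under suitable $1$-PS's; the output $A_0\in\XX^{*}_{\cW}\cup\PGL(V)A_k$ lies in the closure of the orbit of $A$, not a priori in the orbit itself.

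Now $\dim\Theta_A$ is indeed constant along orbits, as you note, but it is \emph{not} invariant under GIT-equivalence: passing to a specialization can only make $\Theta$ larger, and in fact $\dim\Theta$ does jump in the cases at hand. Thus from ``$A$ is $\PGL(V)$-equivalent to $A_0$ with $\dim\Theta_{A_0}\ge 2$'' you cannot conclude $\dim\Theta_A\ge 2$. This is precisely why the paper's proof invokes the minimal-orbit hypothesis: by~\Ref{prp}{duequad} and~\Ref{prp}{minorb} the lagrangian $A_0$ has minimal orbit, and since $A$ also has minimal orbit and is GIT-equivalent to $A_0$, the two closed orbits coincide, i.e.\ $\PGL(V)A=\PGL(V)A_0$. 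Only then does orbit-invariance of $\dim\Theta$ yield the contradiction. So the hypothesis you declared superfluous is in fact the crux of the argument.
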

\begin{proof}
Suppose that $C_{W,A}=\PP(W)$. By~\Ref{prp}{senoncurva} we get that  $A$ is $\PGL(V)$-equivalent to an element  $A_0\in(\XX^{*}_{\cW}\cup \PGL(V) A_k)$. By~\Ref{prp}{duequad} and~\Ref{prp}{minorb}  $A_0$ has minimal $\PGL(V)$-orbit: by our hypothesis $\PGL(V)A= \PGL(V)A_0$ i.e.~we may assume that $A_0=A$: that is a contradiction because by~\Ref{lmm}{sedimdue} we know that $\dim\Theta_A\ge 2$ for all $A\in (\XX^{*}_{\cW}\cup \PGL(V) A_k)$.
\end{proof}
\subsubsection{Analysis of $\Theta_A$ and $C_{W,A}$}  
Let $A\in{\mathbb S}_{\cC_1}^{\sF}$ and  $A''$ be as in~\eqref{sommaciuno}; then
\begin{equation}\label{bjornmaria}
\Theta_A\supset \{V_{02}\}\coprod\Theta_{A''}.
\end{equation}
Now suppose that ${\bf p}\in\PP^1\times \PP^1\times \PP^1$: we will describe curves in $\Theta_{A_{\bf p}}$ which are not contained in the right-hand side of~\eqref{bjornmaria}. Let $C_{{\bf p},i}\subset\Gr(3,V)$ for $i=0,1,2$ be the conics given by
\begin{equation}\label{zigozago}
\begin{array}{lll}
C_{{\bf p},0}:= & \{\la v_0,(\lambda v_1-b_3 \mu v_5), 
(\lambda v_2+a_3 \mu v_4)\ra \mid [\lambda,\mu]\in\PP^1 \} \\
C_{{\bf p},1 }:= & \{\la v_1,(\lambda v_0+a_2 \mu v_5), 
(\lambda v_2+b_2 \mu v_3)\ra \mid [\lambda,\mu]\in\PP^1 \} \\
C_{{\bf p},2 }:= & \{\la v_2,(\lambda v_0+b_1 \mu v_4), 
(\lambda v_1-a_1 \mu v_3)\ra \mid [\lambda,\mu]\in\PP^1 \}.
\end{array}
\end{equation}
A straightforward computation (use~\eqref{perpstella}) shows that $C_{{\bf p},i}\subset\Theta_{A_{\bf p}}$ for $i=0,1,2$. 
\begin{prp}\label{prp:farnese}
Let $A\in{\mathbb S}^{\sF}_{\cC_1}$  be semistable  (and hence by~\Ref{prp}{ciunononstab} either $E_{A'}$ is smooth or else there exist $g\in\PGL(V)$ and ${\bf p}\in \PP^1\times\PP^1\times\PP^1$ such that $gA=A_{\bf p}$) 
with  minimal  orbit,  not equal to  that of
$A_{III}$ nor to that of $A_{+}$. 
\begin{enumerate}
\item[(1)]
If $E_{A'}$ is a smooth curve then $\Theta_{A''}$ is a smooth curve and moreover~\eqref{bjornmaria} is an equality. 
\item[(2)]
Suppose that $gA=A_{\bf p}$ where $g\in\PGL(V)$ and ${\bf p}\in\PP^1\times \PP^1\times \PP^1$. Then 
\begin{equation*}
g\Theta_A=\{V_{02}\}\cup\Theta_{A''}\cup C_{{\bf p},0}\cup C_{{\bf p},1}\cup C_{{\bf p},2}.
\end{equation*}
\item[(3)]
$\dim\Theta_{A}=1$.
\end{enumerate}
\end{prp}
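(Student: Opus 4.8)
The plan is to prove the three assertions of Proposition~5.12 together, reducing the general case to the split model $A_{\bf p}$ and handling the two regimes ($E_{A'}$ smooth versus $A$ properly semistable) in parallel. First I would record the containment~\eqref{bjornmaria}, namely $\Theta_A\supset\{V_{02}\}\coprod\Theta_{A''}$, which is immediate from the direct-sum decomposition~\eqref{sommaciuno}: any $W\in\Theta_{A''}$ satisfies $\bigwedge^3 W\subset A''\subset A$, and $V_{02}$ itself lies in $\Theta_A$ by construction. The substance of Item~(1) is then the reverse inclusion when $E_{A'}$ is smooth: I must show no $W\in\Theta_A$ other than those already listed can occur. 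The idea is that a hypothetical extra $W$ would force a decomposable element of $\bigwedge^3 V$ lying in $A$ but meeting the $A'$-summand nontrivially, which would produce either a decomposable element of $A'$ or a tangency, contradicting smoothness of $E_{A'}$ via the identification $\bigwedge^2 V_{02}\wedge V_{35}\cong\Hom(V_{02},V_{35})$ and the cubic $E_{A'}=V(\det)$ description. I would also need that $\Theta_{A''}$ is itself a smooth curve; since $A''=(A')^\perp\cap(V_{02}\wedge\bigwedge^2 V_{35})$ is the ``transpose'' configuration, $\Theta_{A''}$ is governed by $E_{V^\vee_{35}}$, which is isomorphic to $E_{A'}$ when the latter is smooth, so it too is a smooth (elliptic) curve.

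For Item~(2), the strategy is a direct computation in the split model. Having reduced via~\Ref{prp}{ciunononstab} to $gA=A_{\bf p}$, I would verify that the three conics $C_{{\bf p},i}$ of~\eqref{zigozago} lie in $\Theta_{A_{\bf p}}$ — this is the ``straightforward computation using~\eqref{perpstella}'' already flagged in the text, checking $\bigwedge^3 W\subset A_{\bf p}$ for each $W$ on the conic by pairing against the explicit basis~\eqref{perpstella} of $A''_{\bf p}$ and against $\bigwedge^3 V_{02}$ and $A'_{\bf p}$. The harder direction is that these, together with $\{V_{02}\}$ and $\Theta_{A''}$, exhaust $\Theta_{A_{\bf p}}$. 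Here I would exploit the torus $T_\star$ of~\eqref{torostella}: since $A_{\bf p}$ is $T_\star$-invariant, $\Theta_{A_{\bf p}}$ is $T_\star$-invariant, so each irreducible component is either pointwise fixed (forcing a monomial $v_i\wedge v_j\wedge v_k$, an isolated point) or is a $T_\star$-orbit closure, which by the weight bookkeeping must be one of the listed conics or a component of $\Theta_{A''}$. A dimension/degree count, using that $\Theta_A$ has dimension at most $1$, closes the enumeration.

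Item~(3), that $\dim\Theta_A=1$, is the keystone and I would establish it \emph{first}, since both other items presuppose it. The lower bound $\dim\Theta_A\ge 1$ is automatic from either the smooth cubic $E_{A'}$ (which injects into $\Theta_A$ through the $A'$-configuration of planes) or from any single conic $C_{{\bf p},i}$. For the upper bound I would argue by contradiction: if $\dim\Theta_A\ge 2$, then by~\Ref{lmm}{sedimdue} $A$ is $\PGL(V)$-equivalent to an element of $\XX^{*}_{\cW}\cup\PGL(V)A_k\cup\PGL(V)A_h$. The hypotheses of the proposition explicitly exclude the orbits of $A_{III}$ and $A_{+}$; I must further rule out $A_k$ and $A_h$ and the rest of $\XX^{*}_{\cW}$. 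Since $A$ has minimal orbit and lies in ${\mathbb S}^{\sF}_{\cC_1}$, I would compare invariants: $\XX^{*}_{\cW}$ and $\PGL(V)A_k$ each have $C_{W,A}=\PP(W)$ for suitable $W$ (by~\Ref{clm}{stregaest} and~\eqref{ibla}), whereas for $A\in{\mathbb S}^{\sF}_{\cC_1}$ with minimal orbit and $\dim\Theta_A\le 1$ we have $C_{W,A}\ne\PP(W)$ by~\Ref{crl}{senoncurva}; and $A_h$ gives a triple conic $C_{W,A}=3D_W$ rather than the configuration forced here.

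The main obstacle I anticipate is the upper-bound argument in Item~(3): disentangling exactly which members of the list in~\Ref{lmm}{sedimdue} are compatible with membership in ${\mathbb S}^{\sF}_{\cC_1}$ and minimality of the orbit, without circularity relative to~\Ref{crl}{senoncurva}. The cleanest route is probably to run the contradiction through the geometry of $C_{W,A}$ rather than through $\Theta_A$ directly: assuming $\dim\Theta_A\ge 2$ places $A$ among the high-symmetry lagrangians whose associated sextics $C_{W,A}$ (or the EPW-sextic $Y_A$ itself, via~\eqref{epwquad} or~\eqref{cubicadisc}) are of a rigidly prescribed special form, and matching this against the form forced by the $\cC_1$ flag condition yields the contradiction. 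Once $\dim\Theta_A=1$ is secured, Items~(1) and~(2) follow by the enumeration sketched above.
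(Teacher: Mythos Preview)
Your overall instinct for Item~(1) --- that an extra $W\in\Theta_A$ forces a singular point of $E_{A'}$ --- is correct and matches the paper. But the logical architecture you propose (prove Item~(3) first via \Ref{lmm}{sedimdue}, then deduce (1) and (2)) is inverted relative to the paper and contains a genuine gap, which you yourself flag but do not close.

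\medskip
\textbf{The circularity in your Item~(3) argument.} You want to assume $\dim\Theta_A\ge 2$, invoke \Ref{lmm}{sedimdue} to land in $\XX^{*}_{\cW}\cup\PGL(V)A_k\cup\PGL(V)A_h$, and then contradict this using \Ref{crl}{senoncurva}. But \Ref{crl}{senoncurva} has $\dim\Theta_A\le 1$ as a \emph{hypothesis}; it cannot be invoked inside the branch where you assume the opposite. Your fallback (``match the rigidly prescribed form of $C_{W,A}$ or $Y_A$ against the $\cC_1$ flag condition'') is not a proof: one would have to exclude every element of $\XX^{*}_{\cW}$ other than $A_{+}$, and every $\PGL(V)$-translate of $A_k$, $A_h$, from ${\mathbb S}^{\sF}_{\cC_1}$ --- and in fact $\gx,\gx^{\vee}\in\gB_{\cC_1}$ is only ruled out \emph{after} the present proposition, via \Ref{prp}{taliare}(4), so this route is also logically unavailable here.

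\medskip
\textbf{A separate error.} Your lower bound ``$\dim\Theta_A\ge 1$ is automatic from the smooth cubic $E_{A'}$, which injects into $\Theta_A$'' is false. An element $\alpha\in A'\subset\bigwedge^2 V_{02}\wedge V_{35}$ is decomposable in $\bigwedge^3 V$ only when the corresponding map $V_{02}\to V_{35}$ has rank $\le 1$, not rank $\le 2$; so $E_{A'}$ does \emph{not} parametrize planes in $\Theta_A$. The lower bound comes instead from $\Theta_{A''}$, which has expected (hence actual) dimension $\ge 1$ by~\eqref{interlinea}. Likewise your claim that $\Theta_{A''}$ is ``governed by $E_{V_{35}^{\vee}}$'' is not correct as stated: $\Theta_{A''}=\PP(A'')\cap(\PP(V_{02})\times\PP(\bigwedge^2 V_{35}))$ is not the same determinantal locus.

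\medskip
\textbf{How the paper actually proceeds.} The paper proves (1) and (2) \emph{first}, by a direct, uniform argument that does not assume $\dim\Theta_A=1$. The opening step is to show $E_{A'}\ne\PP(A')$ (otherwise, via~\eqref{minetti} and \Ref{crl}{keith}, minimality forces $A\in\PGL(V)A_{+}$, excluded by hypothesis). Then for $W\in\Theta_A\setminus\{V_{02}\}$ one writes a generator $\omega=\alpha+\beta+\gamma$ according to the isotypical decomposition~\eqref{quattradd} and wedges against $\xi\in W\cap V_{02}$: this forces $\xi\wedge\beta=\xi\wedge\gamma=0$. A case analysis on $\dim(W\cap V_{02})$ and $W\cap V_{35}$ shows that either $W\in\Theta_{A''}$, or one produces a \emph{singular} point of the cubic $E_{A'}$ (using that $A'=(A'')^{\bot}$). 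In the smooth case this is the contradiction giving (1); in the $A_{\bf p}$ case the three singular points of $E_{A'_{\bf p}}$ account exactly for the three conics $C_{{\bf p},i}$, giving (2). No torus-orbit enumeration is needed.

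For (3), the paper then observes that by (1)--(2) it suffices to show $\dim\Theta_{A''}=1$. This is done by a tangent-space computation: if $\dim T_W\Theta_{A''}>1$ at $W=\la\xi_0\ra\oplus U$ with $U\in\Gr(2,V_{35})$, one finds $0\ne\alpha_0\in A'\cap[\xi_0]\wedge Z\wedge U$ with $[\alpha_0]\in\sing E_{A'}$, and moreover $W$ is \emph{determined} by $\alpha_0$. Hence $\Theta_{A''}$ has at most three singular points (one per singular point of $E_{A'}$) and is therefore a curve. This avoids \Ref{lmm}{sedimdue} entirely.
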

\begin{proof}
Let's show that 
\begin{equation}\label{nontutto}
E_{A'}\not=\PP(A').
\end{equation}
 In fact suppose that $E_{A'}=\PP(A')$. By~\Ref{prp}{ciunononstab} there exist $g\in\PGL(V)$ and ${\bf p}\in\PP^1\times \PP^1\times \PP^1$ such that $gA=A_{\bf p}$. By~\eqref{minetti} we get that $A'_{\bf p}$ is $T'$-equivalent to $([1,1],[1,-1],[1,1])$. By hypothesis $A_{\bf p}$ has   minimal  orbit: it follows that ${\bf p}\in T'([1,1],[1,-1],[1,1])$ and  by~\Ref{crl}{keith} that contradicts the hypothesis that $gA\not=g A_{+}$. We have proved~\eqref{nontutto}. 
Let $W\in(\Theta_A\setminus\{V_{02}\})$. Let $0\not=\omega\in \bigwedge^3 W$; then
\begin{equation}\label{decoga}
\omega=\alpha+\beta+\gamma,\qquad \alpha\in\bigwedge^3 V_{02},
\quad \beta\in A',\quad \gamma\in A'',\quad \beta+\gamma\not=0.
\end{equation}
Since   $V_{02}\in\Theta_A$ we know that $\dim W\cap V_{02}>0$. Let $\xi\in W\cap V_{02}$; multiplying both sides of the equality of~\eqref{decoga} by $\xi$ we get that $0=\xi\wedge\beta=\xi\wedge\gamma$. It follows that if $\dim W\cap V_{02}=2$ then $\gamma=0$ and $\beta$ is  non-zero decomposable. Thus $[\beta]\in E_{A'}$:  by~\eqref{nontutto} it follows that $E_{A'}$ is singular at $[\beta]$.  By~\Ref{prp}{ciunononstab} it follows that the orbt $\PGL(V)A$ intersects $\MM^{\sF}_{\cC_1}$ and hence we might as well assume that  $A\in\MM^{\sF}_{\cC_1}$.   In the proof of~\Ref{prp}{ciunononstab} we showed 
 that if there exists $[\beta]\in E_{A'}$ with $\beta$ decomposable then the $T'$-orbit of $A'$ contains $A'_{\bf p}$ where ${\bf p}$ is either $([1,0],[1,0],[1,0])$ or $([0,1],[0,1],[0,1])$; by~\Ref{clm}{jackbox} it follows that $\PGL(V)A$ contains $A_{III}$, that  contradicts our hypothesis. This proves that if $W\in(\Theta_A\setminus\{V_{02}\})$  then $\dim W\cap V_{02}=1$. We claim that either  $W\in\Theta_{A''}$ or else $W\cap V_{35}=\{0\}$. In fact 
if $W\cap V_{35}\not=\{0\}$ let $0\not=\eta\in W\cap V_{35}$; then $0=\eta\wedge\alpha=\eta\wedge\beta=\eta\wedge\gamma$. Thus
$\alpha=0$ and $\beta$ is decomposable (it is a multiple of $\xi\wedge\eta$ where $0\not=\xi\in W\cap V_{02}$), if $\beta\not=0$ we get a contradiction as above, if $\beta=0$ then $W\in\Theta_{A''}$. Thus from now on we may assume that $W\cap V_{35}=\{0\}$. It follows that there exist a basis $\{\xi_0,\xi_1,\xi_2\}$ of $V_{02}$ and linearly independent $\eta_1,\eta_2\in V_{35}$ such that
\begin{equation*}
W=\la \xi_0,\xi_1+\eta_1,\xi_2+\eta_2\ra.
\end{equation*}
 Thus $\omega:=\xi_0\wedge(\xi_1+\eta_1)\wedge(\xi_2+\eta_2)\in A$. Decomposing $\omega$ according to the direct-sum decomposition $\bigwedge^3 V=\bigoplus_{i}\bigwedge^{3-i}V_{02}\wedge\bigwedge^i V_{35}$ we get that 
 \begin{equation*}
\xi_0\wedge(\xi_1\wedge\eta_2-\xi_2\wedge\eta_1)\in A',\qquad \xi_0\wedge\eta_1\wedge\eta_2\in A''.
\end{equation*}
In particular $[\xi_0\wedge(\xi_1\wedge\eta_2-\xi_2\wedge\eta_1)]\in E_{A'}$. Since $\xi_0\wedge\eta_1\wedge\eta_2\in A''$ we have $A'\subset(\xi_0\wedge\eta_1\wedge\eta_2)^{\bot}$; it follows that $[\xi_0\wedge(\xi_1\wedge\eta_2-\xi_2\wedge\eta_1)]$ is a singular point of $E_{A'}$ (recall that $E_{A'}$ is a curve by~\eqref{nontutto}). This proves Item~(1). Next let $A=A_{\bf p}$. Let $W\in(\Theta_A\setminus\{V_{02}\}\setminus\Theta_{A''})$; the argument above shows that  $W\in (C_{{\bf p},0}\cup C_{{\bf p},1}\cup C_{{\bf p},2})$. This proves Item~(2).   Let's prove Item~(3). By Items~(1) and~(2) it suffices to show that $\dim\Theta_{A''}=1$. We have 
\begin{equation}\label{interlinea}
\Theta_{A''}=\PP(A'')\cap (\PP( V_{02})\times\PP( \bigwedge^2 V_{35}))\subset 
\PP(V_{02}\wedge \bigwedge^2 V_{35})
\end{equation}
and hence the expected dimension of $\Theta_{A''}$ is $1$. Suppose that $W\in\Theta_{A''}$ and  $\dim T_W\Theta_{A''}>1$. Let $W=\la [\xi_0],U\ra$ where $\xi_0\in V_{02}$ and $U\in\Gr(2,V_{35})$. Since $A'=(A'')^{\bot}$ we get that for every $\alpha\in A'$ we have $\alpha(\xi_0)\subset U$ (we view $\alpha$ as an element of $\Hom(V_{02},V_{35})$). Since $\dim T_W\Theta_{A''}>1$ we have
 \begin{equation}\label{almenotre}
 \dim(A''\cap ([\xi_0]\wedge\bigwedge^2 V_{35}+ V_{02}\wedge\bigwedge^2 U))\ge 3.
\end{equation}
 Let $Z\subset V_{02}$ be a subspace complementary to $[\xi_0]$. Then
 \begin{equation*}
([\xi_0]\wedge\bigwedge^2 V_{35}+ V_{02}\wedge\bigwedge^2 U)^{\bot}= [\xi_0]\wedge Z\wedge U.
\end{equation*}
By~\eqref{almenotre}  we get that $0\not=\alpha_0\in(A'\cap [\xi_0]\wedge Z\wedge U)$    (recall that $A'=(A'')^{\bot}$). Then $[\alpha_0]\in E_{A'}$ and $E_{A'}$ is singular at $[\alpha_0]$ because $\alpha(\xi_0)\subset U$ for every $\alpha\in A'$. Moreover we get that $[\xi_0]\wedge\bigwedge^2 U=\bigwedge^3 W$ i.e.~$W$ is determined by $\alpha_0$. 
This proves  that if  $E_{A'}$ is a smooth curve then $\Theta_{A''}$ is a smooth (irreducible) curve of genus $1$ and that if    $A=A_{\bf p}$ is as in Item~(2) then there are exactly $3$ singular points of $\Theta_{A''}$ (they are in one-to-one correspondence with the singular points of $E_{A'}$) and hence $\dim\Theta_{A''}=1$. It follows that in both cases $\dim\Theta_A=1$. 
\end{proof}
\begin{crl}\label{crl:treconiche}
Let  $A_{\bf p}$ be as in Item~(2) of~\Ref{prp}{farnese}.  Then
\begin{equation}\label{conteduca}
\scriptstyle
\Theta_{A''_{\bf p}}=
\{\la v_3,x v_1 + y v_2,  a_3 y v_4 - b_3 x v_5  \mid [x,y]\in\PP^1  \ra \} \cup
\{\la v_4,x v_0 + y v_2,  b_2 y v_3 + a_2 x v_5  \mid [x,y]\in\PP^1  \ra  \} \cup
\{\la v_5,x v_0 + y v_1,  a_1 y v_3 - b_1 x v_4  \ra \mid [x,y]\in\PP^1 \}.
\end{equation}
\end{crl}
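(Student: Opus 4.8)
The plan is to compute $\Theta_{A''_{\bf p}}$ directly from the explicit basis~\eqref{perpstella} of $A''_{\bf p}$, exploiting the structure of $\Theta_{A''}$ already established in~\Ref{prp}{farnese}. Recall from~\eqref{interlinea} that $\Theta_{A''_{\bf p}}=\PP(A''_{\bf p})\cap(\PP(V_{02})\times\PP(\bigwedge^2 V_{35}))$, so every $W\in\Theta_{A''_{\bf p}}$ has the form $W=\la[\xi_0],U\ra$ with $\xi_0\in V_{02}$ and $U\in\Gr(2,V_{35})$, and $\bigwedge^3 W=\xi_0\wedge\bigwedge^2 U\in A''_{\bf p}$. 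First I would set up the bilinear pairing: since $A'_{\bf p}=(A''_{\bf p})^{\bot}$, a decomposable $\xi_0\wedge\eta_1\wedge\eta_2$ (with $\eta_i\in V_{35}$) lies in $A''_{\bf p}$ if and only if it is orthogonal under $(,)_V$ to each of the three generators $f,g,h$ of $A'_{\bf p}$ listed in~\eqref{puntistella}. This converts membership $W\in\Theta_{A''_{\bf p}}$ into three bilinear equations in the coordinates of $\xi_0\in V_{02}$ and $\eta_1\wedge\eta_2\in\bigwedge^2 V_{35}$.

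The key computational step is to solve these equations. I expect the solution set to decompose according to which coordinate of $\xi_0$ is distinguished, mirroring the three-fold symmetry already visible in the three conics $C_{{\bf p},i}$ of~\eqref{zigozago} and in the three families appearing on the right-hand side of~\eqref{conteduca}. Concretely, I would examine the three branches where $\xi_0$ is (proportional to) $v_3$, $v_4$, or $v_5$ respectively — wait, more precisely $\xi_0\in V_{02}=\la v_0,v_1,v_2\ra$, so the three branches correspond to the three ``missing'' directions. For each branch one fixes the planar component $\la\xi_0\ra$ and solves the orthogonality conditions for the allowed $U\in\Gr(2,V_{35})$; the orthogonality against the two generators not ``supported'' on $\xi_0$ pins down $U$ as a one-parameter family $[x,y]\in\PP^1$, exactly producing one of the three rational curves in~\eqref{conteduca}. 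The coefficients $a_i,b_i$ entering each family come directly from the corresponding entries of the generators in~\eqref{puntistella}, via the perfect pairing between $U_{e_i}$ and $U_{e_{s-i}}$.

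A clean bookkeeping device is to use the explicit basis~\eqref{perpstella} of $A''_{\bf p}$ itself: the three ``mixed'' generators $(b_1 v_1\wedge v_4 - a_1 v_0\wedge v_3)\wedge v_5$, $(b_2 v_0\wedge v_3 + a_2 v_2\wedge v_5)\wedge v_4$, and $(b_3 v_2\wedge v_5 - a_3 v_1\wedge v_4)\wedge v_3$ already have the shape $\xi\wedge\eta\wedge(\text{fixed }v_j)$, which after grouping give precisely the three parametrized families of~\eqref{conteduca}; one then checks that each such $\xi_0\wedge\eta_1\wedge\eta_2$ together with a suitable combination of the three pure generators $v_0\wedge v_4\wedge v_5$, $v_1\wedge v_3\wedge v_5$, $v_2\wedge v_3\wedge v_4$ spans a genuine $\bigwedge^3 W$ in $A''_{\bf p}$. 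The degree/dimension count from~\Ref{prp}{farnese}(3), namely $\dim\Theta_{A''_{\bf p}}=1$ with exactly three singular points, guarantees that these three conics exhaust $\Theta_{A''_{\bf p}}$, so no further components can appear.

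The main obstacle I anticipate is purely organizational rather than conceptual: keeping the indices and the scalars $a_i,b_i$ consistent across the three symmetric branches, and verifying that the one-parameter solution in each branch is genuinely a conic in $\Gr(3,V)$ (i.e.\ that the map $[x,y]\mapsto W$ is an embedding of $\PP^1$ with Pl\"ucker image a conic) rather than a line or a degenerate family. I would handle this by fixing the cyclic symmetry $v_0\to v_1\to v_2$, $v_3\to v_4\to v_5$ (together with the induced permutation of the pairs $(a_i,b_i)$) that permutes the three generators of~\eqref{puntistella} cyclically, so that it suffices to carry out the computation for a single branch and then transport the result by symmetry. This reduces the verification to one explicit solve of the orthogonality conditions, after which~\eqref{conteduca} follows.
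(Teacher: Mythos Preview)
Your proposal is correct and follows essentially the same approach as the paper: verify by direct computation (using the explicit basis~\eqref{perpstella} or equivalently the orthogonality against~\eqref{puntistella}) that the three families on the right-hand side of~\eqref{conteduca} lie in $\Theta_{A''_{\bf p}}$, and then invoke the dimension/degree information from~\Ref{prp}{farnese} to conclude they exhaust it. The paper's argument is slightly more streamlined: it simply notes that $\Theta_{A''_{\bf p}}=\PP(A''_{\bf p})\cap(\PP(V_{02})\times\PP(\bigwedge^2 V_{35}))$ is a curve of degree $6$ (a codimension-$3$ linear section of the degree-$6$ Segre fourfold), so once the three conics are shown to be contained the total degree matches and there can be no further components---this makes your worry about checking that each family is a genuine conic rather than a line, and your plan to solve the orthogonality equations branch by branch, unnecessary.
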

\begin{proof}
A computation gives  that $\Theta_{A''_{\bf p}}$ contains the three conics appearing in the right-hand side of~\eqref{conteduca}. By~\Ref{prp}{farnese} we know that $\Theta_{A''_{\bf p}}$ is a curve of degree $6$: the corollary follows.
\end{proof}
\begin{crl}\label{crl:vuzerodue}
Let $A\in{\mathbb S}^{\sF}_{\cC_1}$ be semistable with minimal orbit. Suppose that $\PGL(V)A$ does not contain $A_{+}$. Then one of the following holds:
\begin{enumerate}
\item[(1)]
$E_{A'}$ is a smooth curve and $C_{V_{02},A}$ is a semistable sextic curve of Type  II-4.
\item[(2)]
$E_{A'}$ is a triangle (the union of $3$ non concurrent lines) and $C_{V_{02},A}$ is a semistable sextic curve of Type  III-2.
\end{enumerate}
\end{crl}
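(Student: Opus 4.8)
The plan is to run the dichotomy furnished by \Ref{prp}{ciunononstab} and \Ref{clm}{vitorchiano} on $E_{A'}$, and then to read off the projective type of the sextic $C_{V_{02},A}$ from \eqref{erdoppio} together with Shah's list \Ref{thm}{listashah}. Since $\bigwedge^3 V_{02}\subset A$ we always have $V_{02}\in\Theta_A$, so $C_{V_{02},A}$ is defined and by \eqref{erdoppio} it is either $\PP(V_{02})$ or the double cubic $2E_{V_{02}}$. First I would rule out $C_{V_{02},A}=\PP(V_{02})$ by showing $\dim\Theta_A\le 1$ and invoking \Ref{crl}{senoncurva}: if $A$ is $\PGL(V)$-equivalent to $A_{III}$ then $\dim\Theta_A=1$ by \Ref{prp}{rettetre}, while otherwise $A$ meets the hypotheses of \Ref{prp}{farnese} (semistable, minimal orbit, not equivalent to $A_{III}$ nor, by assumption, to $A_{+}$), so $\dim\Theta_A=1$ by item (3) of that proposition. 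Hence $C_{V_{02},A}=2E_{V_{02}}$ in all cases.

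In the case $E_{A'}$ smooth I would argue as follows: then $A$ is $G_{\cC_1}$-stable by \Ref{clm}{vitorchiano}, and the cubic $E_{V_{02}}$ is isomorphic to $E_{A'}$ (recorded just before \eqref{erdoppio}), hence is a smooth plane cubic. Thus $C_{V_{02},A}=2E_{V_{02}}=V(F^2)$ with $V(F)$ a smooth cubic, i.e. a sextic of Type II-4 in the notation of \Ref{thm}{listashah}; being on Shah's list it is semistable. This yields alternative (1).

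In the case $E_{A'}$ not smooth I would first note $E_{A'}\ne\PP(A')$, which is exactly \eqref{nontutto}: were $E_{A'}=\PP(A')$, then via \Ref{prp}{ciunononstab}, \eqref{minetti} and minimality of the orbit we would get $A\cong A_{+}$ by \Ref{crl}{keith}, against the hypothesis. By \Ref{prp}{ciunononstab} there are $g$ and $\mathbf{p}$ with $gA=A_{\mathbf p}$; I would then use that the maps realizing this equivalence can be taken in $G_{\cC_1}$ and block-diagonal in $GL(V_{02})\times GL(V_{35})$ (the explicit $g$ in the proof of \Ref{prp}{ciunononstab} carries $V_{02}$ to $V_{02}$ and $V_{35}$ to $V_{35}$), so that they transport $E_{A'}$ and $C_{V_{02},A}$ to $E_{A'_{\mathbf p}}$ and $C_{V_{02},A_{\mathbf p}}$ preserving projective type. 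Since $E_{A'}\ne\PP(A')$ forces $a_1a_2a_3+b_1b_2b_3\ne 0$, by \eqref{minetti} the cubic $E_{A'_{\mathbf p}}=V(xyz)$ is a triangle and by \eqref{poveritalia} $E_{V_{02}}=\la v_0,v_1\ra+\la v_0,v_2\ra+\la v_1,v_2\ra$ is a triangle. Therefore $C_{V_{02},A}=2E_{V_{02}}=V(X_0^2X_1^2X_2^2)$ is a sextic of Type III-2, semistable by \Ref{thm}{listashah}, giving alternative (2). As an independent confirmation one may instead apply \Ref{clm}{azione} with the maximal torus $T_{\star}$ of \eqref{torostella} fixing a generator of $\bigwedge^{10}A_{\mathbf p}$ and conclude via \Ref{rmk}{pasquetta}(3) that $C_{V_{02},A_{\mathbf p}}=cX_0^2X_1^2X_2^2$.

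The hard part will be the transfer of projective type from the model $A_{\mathbf p}$ back to the given $A$: a priori the $\PGL(V)$-equivalence of \Ref{prp}{ciunononstab} need neither respect the distinguished splitting $V=V_{02}\oplus V_{35}$ nor the flag defining $\mathbb{S}^{\sF}_{\cC_1}$, so that $E_{A'}$ and $C_{V_{02},A}$ are not obviously those of $A_{\mathbf p}$. The plan resolves this by verifying that the equivalence is block-diagonal with respect to $V_{02}\oplus V_{35}$ and keeps $V_{02}\in\Theta$, which is precisely what the explicit construction in the proof of \Ref{prp}{ciunononstab} provides; this is the step I would write out with the most care. A secondary, routine point is the clean handling of $A\cong A_{III}$ (which falls into alternative (2)) when establishing $\dim\Theta_A\le 1$, since \Ref{prp}{farnese} deliberately excludes $A_{III}$.
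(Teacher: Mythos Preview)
Your proposal is correct and follows essentially the same route as the paper: split on whether $E_{A'}$ is smooth, invoke \Ref{prp}{farnese}/\Ref{prp}{rettetre} plus \Ref{crl}{senoncurva} to get $C_{V_{02},A}=2E_{V_{02}}$ via \eqref{erdoppio}, and in the non-smooth case pass to the model $A_{\bf p}$ from \Ref{prp}{ciunononstab} to read off the triangle from \eqref{minetti}--\eqref{poveritalia}. The paper's proof is terser (it treats $A_{III}$ separately via \eqref{doppiotriangolo} and then simply writes ``we may assume $A=A_{\bf p}$''), but the content is the same. Your worry about the transfer step is well-placed but easily resolved: the construction in the proof of \Ref{prp}{ciunononstab} produces $A_{\bf p}$ as a limit under $1$-PS's of $G_{\cC_1}=SL(V_{02})\times SL(V_{35})$, so $A_{\bf p}\in\overline{G_{\cC_1}\cdot A}$; since $A$ has minimal orbit this forces $A_{\bf p}\in G_{\cC_1}\cdot A$, and elements of $G_{\cC_1}$ preserve $V_{02}$, $E_{A'}$, $E_{V_{02}}$ and $C_{V_{02},A}$ up to projectivity.
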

\begin{proof}
By~\Ref{clm}{slagix} we know that $A$ is $\PGL(V)$-semistable with minimal orbit.  Suppose first that $\PGL(V)A$  contains $A_{III}$: then Item~(2) holds by~\eqref{doppiotriangolo} and~\eqref{erdoppio}. Next suppose  that $\PGL(V)A$ does not contain $A_{III}$. By~\Ref{prp}{farnese} we have $\dim\Theta_A= 1$
and hence $C_{V_{02},A}\not=\PP(V_{02})$ by~\Ref{crl}{senoncurva}. We have proved  that $C_{V_{02},A}\not=\PP(V_{02})$: by~\eqref{erdoppio} we get that $C_{V_{02},A}=2J_{V_{02}}(A')$ and that $\dim J_{V_{02}}(A')=1$. Suppose that $E_{A'}$ is a smooth curve: it follows that $J_{V_{02}}(A')\cong E_{A'}$ and hence Item~(1) holds. Now suppose that  $E_{A'}$ is not a smooth  curve: by~\Ref{prp}{ciunononstab} we may assume that $A=A_{\bf p}$  and hence Item~(2) holds by~\eqref{minetti}. 
\end{proof}
\begin{prp}\label{prp:lagonemi}
Let  $A\in{\mathbb S}^{\sF}_{\cC_1}$ and suppose that $E_{A'}$ is a smooth curve. Let $W\in\Theta_{A''}$: then $C_{W,A}$ is a semistable sextic curve of Type II-2.
\end{prp}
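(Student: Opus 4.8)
The plan is to combine the structural description of $A$ as an element of ${\mathbb S}^{\sF}_{\cC_1}$ (see~\eqref{sommaciuno}) with the explicit characterization of Type~II-2 sextics from~\Ref{thm}{listashah}, namely that $C=V(X_0^2 F(X_1,X_2))$ with $F$ having no multiple factors. Concretely, given $W\in\Theta_{A''}$, I would first locate a distinguished point $[v_0]\in\PP(W)$ at which $C_{W,A}$ acquires a high-multiplicity tangent cone of the form $X_0^2 F$, and then verify that $F$ is square-free by a transversality/genericity argument tied to smoothness of $E_{A'}$.

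First I would analyze the geometry of $W\in\Theta_{A''}$ using~\eqref{interlinea}: such a $W$ has the form $\la[\xi_0],U\ra$ with $\xi_0\in V_{02}$ and $U\in\Gr(2,V_{35})$, and by the proof of~\Ref{prp}{farnese} (specifically~\eqref{almenotre} and the subsequent analysis) $\Theta_{A''}$ is a smooth genus-$1$ curve when $E_{A'}$ is smooth, so $W$ meets $V_{02}$ in exactly the line $[\xi_0]$ and meets $V_{35}$ in exactly the plane $U$. The natural candidate for the bad point is $[v_0]=[\xi_0]\in\PP(W)$, the unique point of $\PP(W)\cap\PP(V_{02})$. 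I would compute $\dim(A\cap F_{\xi_0})$ at this point: since $A$ contains all of $\bigwedge^3 V_{02}$, and $F_{\xi_0}\supset\bigwedge^2 V\wedge[\xi_0]$ absorbs a large piece of $\bigwedge^3 V_{02}$, one expects $\dim(A\cap F_{\xi_0})\ge 3$, giving $\mult_{[\xi_0]}C_{W,A}\ge 2$ by~\Ref{crl}{molteplici}. The key refinement is to show the tangent cone is a \emph{double} line, i.e.\ a perfect square $X_0^2$, which I would extract from~\Ref{prp}{primisarto}: the leading form $g_{\bar k}$ is the determinant of the restricted Pl\"ucker form $\psi^{v_0}_w|_{\ov K}$, and the presence of $\bigwedge^3 V_{02}\subset A$ forces this restriction to be degenerate in a controlled way producing a square.

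The cleanest route to the square-free factor $F$ is to invoke~\Ref{clm}{azione} together with the torus symmetry available on ${\mathbb S}^{\sF}_{\cC_1}$. The centralizer contains a one-dimensional torus acting on $W=\la v_0\ra\oplus U$ scaling $[\xi_0]$ against $U$; by~\Ref{rmk}{pasquetta}(1), any sextic $P$ invariant under $\{\diag(t^{-2},t,t)\}$ must have the form $X_0^2 F(X_1,X_2)$. Thus once I identify the appropriate stabilizing torus (the restriction to $W$ of an element of $C_{SL(V)}(\lambda_{\cC_1})$ fixing $\bigwedge^{10}A$), \Ref{clm}{azione} immediately gives $C_{W,A}=V(X_0^2 F(X_1,X_2))$ for a quartic $F$. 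It then remains only to show $F$ has no repeated factor, which by~\Ref{rmk}{pasquetta} and~\Ref{thm}{listashah} is exactly the condition distinguishing Type~II-2 from more degenerate loci; I would derive this from the smoothness of $E_{A'}$, which controls the incidence of $W$ with the other members of $\Theta_A$ and hence (via~\Ref{crl}{cnesinerre} and~\Ref{prp}{nonmalvagio}) the singularities of $C_{W,A}$ away from $[\xi_0]$.

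The main obstacle I anticipate is the square-free verification for $F$: establishing the exact shape $X_0^2 F$ via symmetry is routine once the stabilizing torus is pinned down, but ruling out multiple factors of $F$ requires translating smoothness of the cubic $E_{A'}\subset\PP(A')$ into a statement about the zeros of $F$ along the line $\PP(U)\subset\PP(W)$. I would handle this by relating the repeated roots of $F$ to extra members $W'\in\Theta_A$ incident to $W$ (which would force singularities of $\Theta_{A''}$, contradicting its smoothness from~\Ref{prp}{farnese}) and to degenerate Pl\"ucker forms via~\Ref{clm}{roncisvalle} and~\Ref{clm}{incalbi}. If a direct symmetry argument does not cleanly yield square-freeness, the fallback is the local analysis of~\Ref{clm}{athos}, writing $C_{W,A}$ explicitly as $\det(\ov q_A+\ov q_w)$ and checking that the discriminant of the restricted family has only simple zeros along $\PP(U)$ for $E_{A'}$ smooth.
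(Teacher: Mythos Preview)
Your torus-symmetry argument for the shape $C_{W,A}=V(X_0^2 F(X_1,X_2))$ is correct and is exactly what the paper does: one applies \Ref{clm}{azione} with $g=\lambda_{\cC_1}(t)=\diag(t,t,t,t^{-1},t^{-1},t^{-1})$, whose restriction to $W=\la\xi_0\ra\oplus U$ (with $\xi_0\in V_{02}$, $U\subset V_{35}$) normalizes to $\diag(s^2,s^{-1},s^{-1})\in\SL(W)$, and then \Ref{rmk}{pasquetta}(1) gives the required factorization. Your identification of the torus is essentially right, and the preliminary step $C_{W,A}\neq\PP(W)$ follows from \Ref{crl}{senoncurva} since $A$ has minimal orbit (\Ref{clm}{vitorchiano}, \Ref{clm}{slagix}) and $\dim\Theta_A=1$ (\Ref{prp}{farnese}).

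The gap is in your square-free argument. Your proposal to use \Ref{prp}{nonmalvagio} and \Ref{crl}{cnesinerre} along $\PP(U)=V(X_0)$ cannot succeed: every point of $V(X_0)$ has tangent cone at least $X_0^2$ (a double line, never a node), hence lies in $\cB(W,A)$ regardless of whether it is a root of $F$. Thus the $\cB$-analysis does not distinguish simple from multiple roots of $F$. Your further claim that a repeated root would produce ``extra members $W'\in\Theta_A$ incident to $W$'' or ``force singularities of $\Theta_{A''}$'' misreads the geometry: for \emph{every} $[\eta]\in\PP(U)$ there are generically two $W'\in\Theta_{A''}\setminus\{W\}$ with $[\eta]\in W'$, since the lines in $\PP(V_{35})$ through $[\eta]$ form a pencil meeting the cubic $Z=\tau(\Theta_{A''})\subset\PP(V_{35}^{\vee})$ in two residual points. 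The roots of $F$ are precisely where these two coincide.

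The paper's argument is constructive rather than by contradiction. Let $L=W\cap V_{35}\in Z$ and let $f\colon Z\to\PP(L)$ be the extension of $L'\mapsto L\cap L'$; this is a degree-$2$ map from a smooth genus-$1$ curve (smoothness of $Z$ coming from \Ref{prp}{farnese}), hence has exactly four distinct branch points $[\eta_1],\ldots,[\eta_4]\in\PP(L)$. One then shows $\mult_{[\eta_i]}C_{W,A}\ge 3$ for each $i$: at such a point one computes $\dim(A\cap F_{\eta_i})$ (which is $\ge 3$ since $\dim A''=6$ forces $\dim(A''\cap F_{\eta})\ge 3$ for all $[\eta]\in\PP(V_{35})$), and then applies \Ref{prp}{primisarto} to see that the initial quadratic term $g_2$ vanishes --- the point being that $\PP(\ov K)$ is a line meeting $\Gr(2,V_0)_{W_0}$ in a single point with multiplicity two (this is exactly the tangency condition encoded by $[\eta_i]$ being a branch point). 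Thus each $[\eta_i]$ is a root of $F$, and since there are four of them and $\deg F=4$, $F$ is square-free. Your fallback via \Ref{clm}{athos} would in principle reach the same endpoint, but only after reproducing this branch-point computation; the key missing idea is that the four roots of $F$ are supplied by Riemann--Hurwitz applied to the degree-$2$ projection of the smooth elliptic curve $Z$.
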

\begin{proof}
By~\Ref{clm}{vitorchiano} and~\Ref{clm}{slagix} we know that $A$ is $\PGL(V)$-semistable with minimal orbit. By~\Ref{prp}{farnese} we have $\dim\Theta_A=1$ and hence we get that $C_{W,A}\not=\PP(W)$ by~\Ref{crl}{senoncurva}. Let $\{\xi_0, \xi_1,\xi_2\}$ be a basis of $W$ with $\xi_0\in V_{02}$ and  $\xi_1,\xi_2\in V_{35}$. Let $\{X_0, X_1,X_2\}$ be the dual basis of $W^{\vee}$; then $C_{W,A}=V(P)$ where $0\not= P\in\CC[X_0,X_1,X_2]_6$.
Let $t\in\CC^{\times}$: then $\diag(t,t,t,t^{-1},t^{-1},t^{-1})\in \SL(V)$   (the basis is $\sF$)   acts trivially on $\bigwedge^{10}A$ and moreover it sends $W$ to itself.  By~\Ref{clm}{azione} we get that $\diag(s^2,s^{-1},s^{-1})\in \SL(W)$ acts trivially on $P$: by~\Ref{rmk}{pasquetta} we get that $P=X_0^2 F(X_1,X_2)$. It remains to prove that $F$ has no multiple factors. Let $Z\subset \PP(V_{35}^{\vee})$ be the image of the intersection map
\begin{equation*}
\begin{matrix}
\Theta_{A''} & \overset{\tau}{\lra} & \PP(V_{35}^{\vee}) \\
W' & \mapsto & \PP(W'\cap V_{35}).
\end{matrix}
\end{equation*}
By~\Ref{prp}{farnese} we get that $Z$ is a smooth cubic. Let $L=W\cap V_{35}=V(X_0)$; then $L\in Z$. We have a regular map $f_0\colon (Z\setminus\{L\})\to \PP(L)$ given by  intersection with $L$: since $Z$ is smooth it extends to a regular map $f\colon Z\to \PP(L)$. Let $[\eta_1],\ldots,[\eta_4]\in L$ be the branch points of $f$. We claim that 
\begin{equation}\label{piuditre}
\mult_{[\eta_i]} C_{W,A}\ge 3
\end{equation}
and hence 
the $(X_1,X_2)$-coordinates of $[\eta_1],\ldots,[\eta_4]$ are zeroes of $F$; since $\deg F=4$ it will  follow that $F$ has no multiple factors. First notice that if $[\eta]\in \PP(V_{35})$ then $\dim (F_{\eta}\cap A)\ge 3$: in fact $\cod(F_{\eta}\cap V_{02}\wedge\bigwedge^2 V_{35}, V_{02}\wedge\bigwedge^2 V_{35})=3$ and hence $\dim (F_{\eta}\cap A'')\ge 3$ because 
 $\dim  A''= 6$. Now let $i=1,\ldots,4$. If $\dim (F_{\eta_i}\cap A)> 3$ then~\eqref{piuditre} holds by~\Ref{crl}{molteplici}. Thus we may suppose that  $\dim (F_{\eta_i}\cap A)= 3$  (in fact one can show that $\dim (F_{\eta}\cap A)= 3$ for all $[\eta]\in V_{35}$).   We will apply~\Ref{prp}{primisarto} in order to compute the term $g_2$ of the Taylor expansion~\eqref{qumme} of $C_{W,A}$ near $[\eta_i]$. Let $\ov{K}$ be as in~\Ref{prp}{primisarto}; the projection $\wt{\mu}$ of~\eqref{vaporub} realizes $\PP(\ov{K})$ as a $1$-dimensional linear subspace of $\PP(\bigwedge^2 V_0/\bigwedge^2 W_0)$ which intersects $\Gr(2,V_0)_{W_0}$ in   one point with multiplicity $2$. By~\eqref{marzamemi} and~\eqref{grascop} we get that $g_2=0$  and hence~\eqref{piuditre} holds.
\end{proof}
\begin{prp}\label{prp:genzano}
Let  $A'_{\bf p}\in \MM^{\sF}_{\cC_1}$ be $T'\times T''$-semistable with minimal orbit. 
Suppose that $A_{\bf p}\notin \PGL(V)A_{+}$. Let $W\in\Theta_{A_{\bf p}}$: then $C_{W,A}$ is a semistable sextic curve of Type III-2.
\end{prp}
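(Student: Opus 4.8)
The plan is to compute $C_{W,A}$ explicitly for each type of $W\in\Theta_{A_{\bf p}}$ and verify it lands in Shah's Type III-2 class, namely $V(X_0^2 X_1^2 X_2^2)$ (three double lines in general position). By the analysis of $\Theta_{A_{\bf p}}$ in~\Ref{prp}{farnese} and~\Ref{crl}{treconiche}, there are several strata of planes to treat: the distinguished plane $V_{02}$, the planes $W\in\Theta_{A''_{\bf p}}$ (the three conics of~\eqref{conteduca}), and the three conics $C_{{\bf p},i}$ of~\eqref{zigozago}. First I would handle $W=V_{02}$: here~\Ref{crl}{vuzerodue}(2) already gives the answer, since $A_{\bf p}\notin\PGL(V)A_{+}$ forces $E_{A'_{\bf p}}$ to be a triangle (by~\eqref{minetti}, as $a_1a_2a_3+b_1b_2b_3\neq 0$ for minimal orbit not equal to $A_{+}$), whence $C_{V_{02},A}=2E_{V_{02}}$ is a double triangle of Type III-2 by~\eqref{poveritalia}.

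For the remaining planes the key tool is~\Ref{clm}{azione} together with a suitable torus stabilizing both $A_{\bf p}$ and $W$. The idea is that $T_{\star}$ (see~\eqref{torostella}) acts trivially on $\bigwedge^{10}A_{\bf p}$ by construction of $\MM^{\sF}_{\cC_1}$, and for each plane $W$ in the list one can identify a one-parameter (or two-parameter) subtorus of $T_{\star}$ preserving $W$. Applying~\Ref{clm}{azione}, the induced torus in $SL(W)$ fixes the equation $P$ of $C_{W,A}$, and~\Ref{rmk}{pasquetta} then pins down the shape of $P$. Concretely, for a generic $W$ one expects the full diagonal maximal torus of $SL(W)$ to fix $P$, so~\Ref{rmk}{pasquetta}(3) gives $P=cX_0^2X_1^2X_2^2$, i.e.~Type III-2. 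I would verify that $C_{W,A}\neq\PP(W)$ first via~\Ref{crl}{senoncurva} (legitimate since $\dim\Theta_{A_{\bf p}}=1$ by~\Ref{prp}{farnese}(3) and the orbit is minimal), so that $P\neq 0$ and the torus-invariance conclusion is meaningful.

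The main obstacle will be checking that for \emph{each} of the three conic families in~\eqref{conteduca} and~\eqref{zigozago}, one genuinely has a maximal torus of $SL(W)$ (not merely a one-dimensional torus) fixing $P$ — a one-dimensional torus alone only yields Type II-2 shapes via~\Ref{rmk}{pasquetta}(1). For a typical $W=\la v_3, xv_1+yv_2, a_3 y v_4 - b_3 x v_5\ra$ with $xy\neq 0$, I expect that the subtorus of $T_{\star}$ fixing $W$ is only one-dimensional, so a direct application of~\Ref{clm}{azione} gives too little. The fix is to argue as in the last paragraph of the proof of~\Ref{prp}{rettetre}: one exploits the $T_{\star}$-action to reduce to special members of the conic (e.g.~$[x,y]=[1,0]$), establishes an isomorphism $C_{W_{[x,y]},A}\cong C_{W_{[1,0]},A}$ by a torus element, and computes $C_{W,A}$ at the special member where the stabilizing subtorus is larger. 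Alternatively, one notes that the three lines of the triangle $E_{V_{02}}$ propagate: by the incidence structure of $\Theta_{A_{\bf p}}$ the plane $\PP(W)$ meets three other planes of $\Theta_{A_{\bf p}}$ along three non-concurrent lines, and~\Ref{crl}{cnesinerre} together with~\Ref{crl}{molteplici} forces $C_{W,A}$ to be singular along each, so $C_{W,A}$ is a double triangle. This second, more geometric route — tracking how many planes of $\Theta_{A_{\bf p}}$ any given $W$ is incident to, and invoking~\Ref{crl}{cnesinerre} — is what I would rely on to establish that all three lines of the tangent cone appear with multiplicity two, yielding Type III-2 uniformly.
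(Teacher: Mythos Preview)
Your overall strategy---apply \Ref{clm}{azione} with a torus stabilizing both $A_{\bf p}$ and $W$, then invoke \Ref{rmk}{pasquetta}(3)---is exactly the paper's approach. The case $W=V_{02}$ via \Ref{crl}{vuzerodue}(2) is fine, and so is the use of \Ref{crl}{senoncurva} to rule out $C_{W,A}=\PP(W)$ (the paper does the same, after first disposing of $A_{\bf p}\in\PGL(V)A_{III}$ via \Ref{prp}{rettetre}, and invoking Luna's theorem to pass from $T'$-minimal orbit to $\PGL(V)$-minimal orbit).

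Where you go slightly astray is in your diagnosis of the ``main obstacle'' and your proposed fixes. You are right that the subtorus of $T_{\star}$ preserving a generic $W$ on one of the conics is only $1$-dimensional. But you are forgetting that $A_{\bf p}\in{\mathbb S}^{\sF}_{\cC_1}$, so $\lambda_{\cC_1}$ also acts trivially on $\bigwedge^{10}A_{\bf p}$. The torus generated by $T_{\star}$ and $\lambda_{\cC_1}$ is $3$-dimensional, and a direct check on the explicit descriptions in~\eqref{zigozago} and~\eqref{conteduca} shows that for every $W$ in those families the stabilizer of $W$ in this $3$-torus is $2$-dimensional and maps onto a maximal torus of $\PGL(W)$. (For instance, for $W\in\Theta_{A''_{\bf p}}$ the $1$-PS $\lambda_{\cC_1}$ itself preserves $W$; for $W\in C_{{\bf p},i}$ one needs a suitable combination of $\lambda_{\cC_1}$ and $T_{\star}$.) This gives the Type III-2 conclusion directly via \Ref{rmk}{pasquetta}(3), with no need for either of your workarounds. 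This is the paper's argument---though the paper's phrasing ``$T_{\bf p}<T_{\star}$'' is imprecise, since the extra $\lambda_{\cC_1}$ direction is essential.

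Your workaround (a), reducing to a special member $W_{[1,0]}$ by a limit, is not quite what \Ref{prp}{rettetre} does (there too the argument for the generic member is via a stabilizing subtorus, not a degeneration), and a pure limiting argument is delicate: the special member being Type III-2 does not by itself prevent the generic member from being of a different semistable type that specializes to III-2. Your workaround (b), the incidence approach, is a genuinely different route and could in principle be made to work, but it would require checking for each $W$ that $\cB(W,A)$ contains three non-concurrent lines, which is not immediate from \Ref{prp}{farnese} alone (e.g.\ $W\cap V_{02}$ is typically a point, not a line). The torus argument is both shorter and closer to the paper.
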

\begin{proof}
If  $A_{\bf p}\in \PGL(V)A_{III}$ then $C_{W,A}$ is a semistable sextic curve of Type III-2 by~\Ref{prp}{rettetre}.  Thus we may assume that  $A_{\bf p}\notin \PGL(V)A_{III}$. By~\Ref{prp}{farnese} we know that $\dim\Theta_A=1$ and by~\Ref{thm}{littlestars} $A_{\bf p}$ is $\PGL(V)$-semistable with minimal orbit: it follows from~\Ref{crl}{senoncurva} that $C_{W,A}\not=\PP(W)$. Thus $C_{W,A}=V(P)$ where $0\not= P\in \Sym^6 W^{\vee}$. Looking at the explicit description of $C_{{\bf P},i}$ and $\Theta_{A''}$ provided by~\eqref{zigozago} and~\Ref{crl}{treconiche} we get that there is a $2$-dimensional torus $T_{\bf p}< T_{\star}$ which sends $W$ to itself. Applying~\Ref{clm}{azione} one gets that $P$ is fixed by a maximal torus in $\SL(W)$ and hence  $C_{W,A}$ is  of Type III-2 by~\Ref{rmk}{pasquetta}.
\end{proof} 
 \subsubsection{Wrapping it up}\label{subsubsec:dimociuno}
We will prove~\Ref{prp}{taliare}. Item~(1) and Item~(2)  are gotten by putting together the statements of~\Ref{clm}{vitorchiano} and~\Ref{prp}{ciunononstab}. Let's prove Item~(3). 
Since $A$ is  $G_{\cC_1}$-stable the stabilizer of $A$ in $G_{\cC_1}$ is a finite group. Thus it suffices to show that if $g\in\Stab(A)$ then $g$ belongs to  the centralizer $C_{\SL(V)}(\lambda_{\cC_1})$ of $\lambda_{\cC_1}$ in $\SL(V)$. 
 $E_{A'}$ is a smooth curve because $A$ is $G_{\cC_1}$-stable. By~\Ref{prp}{farnese}  we get that $\Theta_A=\{V_{02}\}\cup\Theta_{A''}$, moreover $\Theta_{A''}$ is a smooth curve. It follows that $V_{35}$ is the unique $3$-dimensional vector subspace of $V$ intersecting every $W\in \Theta_{A''}$ in a subspace of dimension $2$. From these facts we get that if $g\in\Stab(A)$ then $g(V_{02})=V_{02}$ and $g(V_{35})=V_{35}$ i.e.~$g\in C_{\SL(V)}(\lambda_{\cC_1})$. We have proved Item~(3). 
Lastly let's prove Items~(4) and~(5). First we notice that $\gy\in \gB_{\cC_1}$ by~\eqref{impagliazzo} and $\gy\in\gI$ by~\Ref{clm}{stregaest}: thus $\{\gy\}\subset\gB_{\cC_1}\cap\gI$. 
Now suppose that $A\in{\mathbb S}^{\sF}_{\cC_1}$,  that  the orbit $\PGL(V)A$ is  closed in $\lagr^{ss}$ and  not equal to that of $A_{+}$. By~\Ref{clm}{vitorchiano} and~\Ref{prp}{ciunononstab} either $E_{A'}$ is smooth or else we may assume that $A=A_{\bf p}$ where ${\bf p}\in\PP^1\times\PP^1\times\PP^1$. By~\eqref{doppiotriangolo}  we may assume from now on that $\SL(V)A\not=\SL(V)A_{III}$. Suppose that $E_{A'}$ is smooth:  by~\Ref{prp}{farnese} either $W=V_{02}$ or $W\in\Theta_{A''}$. If $W=V_{02}$ then $C_{W,A}$ is a sextic curve of Type II-4 by~\Ref{crl}{vuzerodue} and if $W\in \Theta_{A''}$ then  $C_{W,A}$ 	 
  is a sextic curve of Type II-2 by~\Ref{prp}{lagonemi}.  Suppose that $A=A_{\bf p}$ (and $A_{+}\notin \PGL(V)A$): if $W\in\Theta_A$ then $C_{W,A}$ is of Type III-2 by~\Ref{prp}{genzano}. 
\subsection{$\gB_{\cA}$}\label{subsec:afronta}
\setcounter{equation}{0}
 Let $A\in {\mathbb S}^{\sF}_{\cA}$; by definition 
\begin{equation}\label{sommadiretta}
A=A'\oplus A'',\qquad A'\in\Gr(5,[v_0] \wedge \bigwedge^2 V_{15}),\quad 
A''=(A')^{\bot}\cap(\bigwedge^3 V_{15}).
\end{equation}
In other words $A',A''$ are the summands denoted $A_0,A_1$ in~\Ref{subsec}{prelbound}. Notice that $\Theta_{A'}$ and $\Theta_{A''}$ both have expected dimension $1$. The following is the main result of the present subsection.
\begin{prp}\label{prp:montalbano}
The following hold:
\begin{enumerate}
\item[(1)]
  $A\in {\mathbb S}^{\sF}_{\cA}$ is $G_{\cA}$-stable if and only if $\Theta_{A'}$ is a smooth curve.
\item[(2)]
The generic $A\in {\mathbb S}^{\sF}_{\cA}$ is $G_{\cA}$-stable.
\item[(3)]
If   $A\in {\mathbb S}^{\sF}_{\cA}$ is $G_{\cA}$-stable the connected component of $\Id$ in $\Stab(A)<\SL(V)$ is equal to $\im\lambda_{\cA}$.
\item[(4)]
Let  $A\in {\mathbb S}^{\sF}_{\cA}$ have closed $\PGL(V)$-orbit (in $\lagr^{ss}$). Then $C_{W,A}$ is of Type II-2, II-4 or III-2. In particular $\gB_{\cA}\cap\gI=\es$.
\end{enumerate}
\end{prp}
The proof of~\Ref{prp}{montalbano} will be given in~\Ref{subsubsec}{dimobia}. 
\subsubsection{The GIT analysis} 
Let $\lambda$ be a $1$-PS of $G_{\cA}$. 
 By definition 
$G_{\cA}$ is identified  with $\SL(V_{15})$: it follows that $I_{-}(\lambda)=\es$, see~\Ref{dfn}{ipiu}.
The $1$-PS $\lambda$ defines an action of $\CC^{\times}$ on $[v_0] \wedge \bigwedge^2 V_{15}$: let 
$e'_0>\ldots > e'_j$ be the weights of the action. 
Now let $A\in {\mathbb S}^{\sF}_{\cA}$: 
by~\eqref{sommapend} and~\eqref{caramellamu}  we have
\begin{equation}\label{mupera}
\mu(A,\lambda)=2\mu(A',\lambda)=2\sum_{i=0}^{j} d'_i(A')e'_i.
\end{equation}
Next we notice that $A_{III}^{\sF}\in {\mathbb S}^{\sF}_{\cA}$, see~\eqref{matricenne}. 
\begin{prp}\label{prp:versotre}
Suppose that $A\in {\mathbb S}^{\sF}_{\cA}$ is semistable and that $\Theta_{A'}$ is not a smooth curve.  Then $A$ is not $G_{\cA}$-stable  and it is $G_{\cA}$-equivalent to $A^{\sF}_{III}$. 
\end{prp}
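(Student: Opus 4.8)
Throughout write $A=A'\oplus A''$ as in~\eqref{sommadiretta}, so that $A$ is determined by $A'\in\Gr(5,[v_0]\wedge\bigwedge^2 V_{15})$ (indeed $A''=(A')^{\bot}\cap\bigwedge^3 V_{15}$), and the assignment $A'\mapsto A$ identifies ${\mathbb S}^{\sF}_{\cA}$ with $\Gr(5,[v_0]\wedge\bigwedge^2 V_{15})$ as a $G_{\cA}$-variety, where $G_{\cA}=SL(V_{15})$. By~\eqref{mupera} we have $\mu(A,\lambda)=2\mu(A',\lambda)$ for every $1$-PS $\lambda$ of $G_{\cA}$; hence by Hilbert--Mumford $A$ is $G_{\cA}$-(semi)stable, resp.\ has closed orbit, if and only if $A'$ does, and $G_{\cA}$-equivalence of points of ${\mathbb S}^{\sF}_{\cA}$ is read off the $A'$'s. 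Using $v_0\wedge(-)$ to identify $[v_0]\wedge\bigwedge^2 V_{15}$ with $\bigwedge^2 V_{15}$, the decomposable vectors of $A'$ are exactly the $W\in\Theta_{A'}$ (which automatically contain $v_0$), so $\Theta_{A'}=\PP(A')\cap\Gr(2,V_{15})$ is a linear section of $\Gr(2,V_{15})$ (dimension $6$, degree $5$) by a $\PP^4$; its expected dimension is $1$ and generically it is a smooth elliptic normal quintic.

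First I would record the local criterion: $\Theta_{A'}$ fails to be a smooth curve if and only if there is a decomposable $\omega=w_1\wedge w_2\in A'$ with $\dim\big(A'\cap\mathbf{T}_{\omega}\big)\ge 3$, where $\mathbf{T}_{\omega}:=w_1\wedge V_{15}+w_2\wedge V_{15}$ is the $7$-dimensional affine tangent space of $\Gr(2,V_{15})$ at $[\omega]$ (smoothness of dimension $1$ at $[\omega]$ is equivalent to $\dim(A'\cap\mathbf{T}_{\omega})=2$, while excess-dimensional or singular points of $\Theta_{A'}$ force $\ge 3$). Fix such an $\omega$, set $P:=\la w_1,w_2\ra$, complete to a basis $\{w_1,\ldots,w_5\}$ of $V_{15}$, and let $\lambda$ be the $1$-PS of $G_{\cA}$ with $\lambda(t)w_i=t^{3}w_i$ for $i\le 2$ and $\lambda(t)w_j=t^{-2}w_j$ for $j\ge 3$. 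The weights of $\bigwedge^2\lambda$ are $6$ on $\bigwedge^2 P$, $1$ on $P\wedge\la w_3,w_4,w_5\ra$ and $-4$ on $\bigwedge^2\la w_3,w_4,w_5\ra$; writing $(d_0,d_1,d_2)$ for the $\lambda$-type of $A'$, formula~\eqref{caramellamu} gives $\mu(A',\lambda)=10d_0+5d_1-20$. Since $\omega\in A'\cap\bigwedge^2 P$ and $A'\cap(\bigwedge^2 P\oplus P\wedge\la w_3,w_4,w_5\ra)=A'\cap\mathbf{T}_{\omega}$, we have $d_0\ge 1$ and $d_0+d_1\ge 3$, whence $\mu(A',\lambda)\ge 0$ and $A$ is not $G_{\cA}$-stable. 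As $A$ is semistable, $\mu(A',\lambda)\le 0$, so $\mu(A',\lambda)=0$, which forces $d_0=1$, $d_1=d_2=2$.

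By~\Ref{clm}{limite}, $A$ is $G_{\cA}$-equivalent to the $\lambda$-split limit $\ov A$, whose $A'$-part is $\ov{A'}=\la\omega\ra\oplus B_1\oplus B_2$ with $B_2\subset\bigwedge^2\la w_3,w_4,w_5\ra$ of dimension $2$. Every element of $\bigwedge^2\la w_3,w_4,w_5\ra$ is decomposable, so $\PP(B_2)$ is a line contained in $\Theta_{\ov{A'}}$ and $\Theta_{\ov{A'}}$ is again not a smooth curve. I would iterate this degeneration — each step keeps $A$ semistable, preserves the $G_{\cA}$-equivalence class, and strictly enlarges the torus fixing the representative — until the minimal orbit in $\ov{G_{\cA}A}$ is represented by a lagrangian $A_0\in{\mathbb S}^{\sF}_{\cA}$ fixed by a maximal torus $T<SL(V_{15})$. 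Since $T$ together with $\im\lambda_{\cA}$ generates a maximal torus of $SL(V)$, this maximal torus fixes $A_0$; semistability forces its action on $\bigwedge^{10}A_0$ to be trivial, so~\Ref{clm}{unicotre} gives $A_0\in\PGL(V)A_{III}$. As $A^{\sF}_{III}\in{\mathbb S}^{\sF}_{\cA}$ and $SL(V)A_{III}$ is closed in $\lagr^{ss}$ by~\Ref{prp}{atrechiuso}, \Ref{clm}{slagix} identifies the minimal orbit of $A$ with that of $A^{\sF}_{III}$; that is, $A$ is $G_{\cA}$-equivalent to $A^{\sF}_{III}$.

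The main obstacle is the termination step: making precise that the iteration reaches a maximal-torus-fixed representative of the minimal orbit, and that this minimal orbit stays inside the closed set ${\mathbb S}^{\sF}_{\cA}\cong\Gr(5,\bigwedge^2 V_{15})$, so that no foreign closed orbit (such as $A_{+}$) can intervene. The latter is settled by noting that $A_{+}$ is not $\lambda_{\cA}$-split — its stabilizer $SL(U)$ contains no conjugate of $\im\lambda_{\cA}$, since the weights $\{m_i+m_j\}$ of $\bigwedge^2$ of a $1$-PS of $SL(U)$ cannot equal $(5,-1,-1,-1,-1,-1)$ — so $A_{+}\notin{\mathbb S}^{\sF}_{\cA}$ while $\ov{G_{\cA}A}\subset{\mathbb S}^{\sF}_{\cA}$. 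The secondary point requiring care is the uniform treatment of the local criterion of the second paragraph, covering both excess-dimensional and scheme-theoretically singular points of $\Theta_{A'}$; the excess-dimensional case may alternatively be routed through~\Ref{lmm}{sedimdue}.
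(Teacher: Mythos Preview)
Your first step --- choosing the $1$-PS $\lambda$ with weights $(3,3,-2,-2,-2)$ adapted to a bad point $[\omega]=[w_1\wedge w_2]$ of $\Theta_{A'}$ and deducing $d^{\lambda}(A')=(1,2,2)$ from semistability --- is correct and coincides with the paper's opening move. The gap is the continuation. Your recipe ``find a bad point of $\Theta$, build the associated $1$-PS, take the limit'' does not advance after the first step: in the $\lambda$-split limit $\ov{A'}=\la\omega\ra\oplus B_1\oplus B_2$ the point $[\omega]$ is still bad (indeed $\ov{A'}\cap\mathbf T_{\omega}=\la\omega\ra\oplus B_1$ has dimension $3$), but applying your recipe at $[\omega]$ returns $\lambda$ again and the limit is unchanged. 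The alternative bad points you offer, on the line $\PP(B_2)$, are not guaranteed to be bad: for $[\beta]\in\PP(B_2)$ one computes $\ov{A'}\cap\mathbf T_{\beta}=B_2\oplus\bigl(B_1\cap(\ov W\wedge\supp\beta)\bigr)$, and the second summand is generically zero. So the iteration stalls. More generally, nothing forces the minimal $G_{\cA}$-orbit in $\ov{G_{\cA}A}$ to be fixed by a \emph{maximal} torus of $SL(V_{15})$ --- closed orbits have reductive stabilizer, not necessarily full-rank stabilizer --- so the appeal to~\Ref{clm}{unicotre} is not yet justified. Your discussion of $A_{+}$ addresses a side issue (which closed orbits can appear in ${\mathbb S}^{\sF}_{\cA}$) rather than this one.

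The paper closes the gap by exploiting the structure of $B_2$: any $2$-plane in $\bigwedge^2$ of a $3$-space equals $[u_1]\wedge\la u_1,u_2,u_3\ra$ for a distinguished line $[u_1]$, so $B_2=\la u_1\wedge u_2,\,u_1\wedge u_3\ra$. This singles out a second $1$-PS $\lambda_0$ commuting with $\lambda$ (weight $2$ on $u_1$, weight $0$ on $u_2,u_3$, weight $-1$ on $\ov W$) for which $\mu(A'_0,\lambda_0)\ge 0$ holds automatically; semistability forces $d^{\lambda_0}(A'_0)=(2,0,0,2,1)$. In the resulting limit $A'_{00}$ the remaining $2$-dimensional piece sits in $\la u_2,u_3\ra\wedge\ov W$, and intersecting the corresponding line with the Pl\"ucker quadric $\Gr(2,\la u_2,u_3\ra\oplus\ov W)$ gives a short case analysis identifying $A_{00}$ with $A^{\sF'}_{III}$ (or, in the degenerate cases, a limit lying in its closed orbit). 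This tailored second $1$-PS is the missing ingredient.
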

 \begin{proof}
Every irreducible component of $\Theta_{A'}$ has dimension at least $1$: it follows that  $\Theta_{A'}$  contains a point $W$ whose tangent space has dimension greater than $1$. Let $\ov{W}:=W\cap V_{15}$ (thus $\dim\ov{W}=2$) and 
choose a direct-sum decomposition $V_{15}=\ov{W}\oplus U$. Let $\lambda$ be the  $1$-PS of $G_{\cA}$ such that 
\begin{equation}\label{manciano}
\lambda(t)|_{\ov{W}}=t^3\Id_{\ov{W}},\qquad \lambda(t)|_{U}=t^{-2}\Id_U.
\end{equation}
The $\lambda$-type of $A'$ is $(1,d'_1(A'),4-d'_1(A'))$ and  hence $\mu(A',\lambda)=5d'_1(A')-10$. Since  the tangent space to   $\Theta_{A'}$ at $W$ has dimension greater than $1$ we have 
$d'_1(A')=\dim(A'\cap \ov{W}\wedge U)\ge 2$ and thus $\mu(A',\lambda)\ge 0$. By~\eqref{mupera} and semistability of $A$  it follows  that $\mu(A',\lambda)= 0$ i.e.~$d'_1(A')=2$. 
By~\Ref{clm}{limite} we get that $A$ is $G_{\cA}$-equivalent to 
$A_0= A_0'\oplus A_0''$ where $A_0'\in\Gr(5,[v_0]\wedge\bigwedge^2 V_{15})$ and $A''_0 \in\Gr(5,\bigwedge^3 V_{15})$ are 
 $\lambda$-split of types $(1,2,2)$ and $(1,4,0)$ respectively. There exists a basis $\{u_1,u_2,u_3,w_1,w_2\}$ of $V_{15}$ such that $u_i\in U$, $w_j\in \ov{W}$ and $A'_0\cap\bigwedge^2 U=\la u_1\wedge u_2,u_1\wedge u_3\ra$. Let $U_{23}:=\la u_2,u_3\ra$. We let $\lambda_0$ be the $1$-PS of $G_{\cA}$ defined by
 \begin{equation*}
\lambda_0(t)u_1=t^2 u_1,\quad \lambda_0(t)|_{U_{23}}=\Id_{U_{23}},
\quad  \lambda_0(t)|_{\ov{W}}=t^{-1}\Id_{\ov{W}}.
\end{equation*}
The $\lambda_0$-type of $A'_0$ is $(2,d'_1(A'_0),0,d'_3(A'_0),1)$ and $d'_1(A'_0)+d'_3(A'_0)=2$; it follows that 
 $\mu(A'_0,\lambda_0)=d'_1(A'_0)-d'_3(A'_0)+2\ge 0$. By~\eqref{mupera} and semistability of $A$ we get that
  $d'_1(A')=0$ and $d'_3(A')=2$. 
 By~\Ref{clm}{limite} we get that $A_0$ is $G_{\cA}$-equivalent to $A_{00}=A_{00}'\oplus A_{00}''$ where  
$A_{00}'$   is $\lambda_0$-split of type $(2,0,0,2,1)$. In particular we have $\dim(A'_{00}\cap (U_{23}\wedge \ov{W}))=2$. The Grassmannian $\Gr(2,U_{23}\oplus \ov{W})$ is a quadric hypersurface in $\PP(\bigwedge^2(U_{23}\oplus \ov{W}))$: it follows that the intersection $R:=\PP(A'_{00}\cap (U_{23}\wedge \ov{W}))\cap \Gr(2,U_{23}\oplus \ov{W})$ is one of the following: 
\begin{enumerate}
\item[(1)]
a set with exactly two elements, 
\item[(2)]
a set with exactly one element,
\item[(3)]
a line.
\end{enumerate}
Suppose that~(1) holds: then  there exist bases $\{u'_2,u'_3\}$,  $\{w'_1,w'_2\}$ of $U_{23}$ and $\ov{W}$ respectively such that $R=\{u'_2\wedge w'_1,u'_3\wedge w'_2\}$. Let $\sF':=\{u_1,u'_2,u'_3,w'_1,w'_2\}$; as is easily checked $A_{00}=A_{III}^{\sF'}$. Now suppose that~(2) or~(3) holds: such an $A_{00}$ is in the closure of the set of $A_{00}$'s for which Item~(1) holds, since they are in the orbit $\SL(V)A_{III}^{\sF'}$ we get that  $A_{00}$ itself belongs to that orbit  by~\Ref{prp}{atrechiuso}. 
\end{proof}
\begin{prp}\label{prp:mezzoliscio}
Suppose that $A\in {\mathbb S}^{\sF}_{\cA}$ and  that $\Theta_{A'}$   is a smooth curve. Then $A$ is $G_{\cA}$-stable. Moreover the generic $A\in {\mathbb S}^{\sF}_{\cA}$ is $G_{\cA}$-stable. 
\end{prp}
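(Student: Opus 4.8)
The plan is to prove \Ref{prp}{mezzoliscio} in close analogy with \Ref{clm}{vitorchiano}. First I would reduce the assertion to a statement about the single summand $A'$. By \eqref{mupera} we have $\mu(A,\lambda)=2\mu(A',\lambda)$ for every $1$-PS $\lambda$ of $G_{\cA}=SL(V_{15})$, so $A$ is $G_{\cA}$-stable if and only if $A'\in\Gr(5,[v_0]\wedge\bigwedge^2 V_{15})$ is $SL(V_{15})$-stable; combined with \Ref{clm}{slagix} this is exactly what must be shown. Identifying $[v_0]\wedge\bigwedge^2 V_{15}$ with $\bigwedge^2 V_{15}$, the locus $\Theta_{A'}$ is the linear section $\PP(A')\cap\Gr(2,V_{15})$ of the Pl\"ucker-embedded Grassmannian by the $\PP^4=\PP(A')$; its expected dimension is $1$ and, when smooth, it is a degree-$5$ elliptic normal curve.

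The core step is to produce an $SL(V_{15})$-equivariant rational map $f\colon\Gr(5,\bigwedge^2 V_{15})\dashrightarrow\PP(\cM)$ into the projectivization of a suitable $SL(V_{15})$-module $\cM$ (the Chow, respectively Hilbert, point of the $1$-cycle $\Theta_{A'}$ in $\Gr(2,V_{15})$ furnishes such an $f$) with the property that whenever $\Theta_{A'}$ is a smooth curve the map $f$ is regular at $A'$ and $f(A')$ is an $SL(V_{15})$-stable point. The required input is the classical fact that a smooth elliptic normal quintic is stable under its projective automorphism group $SL_5$: its stabilizer (automorphisms of the elliptic curve together with translations by the relevant torsion) is finite and its orbit is closed. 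Granting this, Mumford's Proposition~1.18 (applied exactly as in \Ref{clm}{vitorchiano}, that the pull-back of a stable point along an equivariant map is stable on the locus where the map is defined) yields that $A'$, and hence $A$, is stable.

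For the genericity assertion it suffices to observe that a generic codimension-$5$ linear section of $\Gr(2,V_{15})$ is a smooth elliptic normal quintic, a standard Bertini-type statement; thus the generic $A'$ has smooth $\Theta_{A'}$ and the first part applies. Together with \Ref{prp}{versotre} this also gives one implication of Item~(1) of \Ref{prp}{montalbano}, namely that smoothness of $\Theta_{A'}$ is sufficient for $G_{\cA}$-stability.

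I expect the main obstacle to be pinning down the correct $SL(V_{15})$-linearized target $\PP(\cM)$ and proving cleanly that a smooth linear section corresponds to a stable point there, since the Chow/Hilbert-point formulation rests on the stability of smooth elliptic normal quintics, which is true but not entirely formal. A fallback route, avoiding any external stability statement, is a direct Hilbert--Mumford argument by contraposition: assuming $\mu(A',\lambda)\ge 0$ for some $1$-PS $\lambda$ of $SL(V_{15})$, one translates the resulting jump-dimension inequalities for $A'$ against the weight filtration of $\bigwedge^2 V_{15}$ (via \eqref{caramellamu}) into the statement that $\Theta_{A'}$ acquires a singular point along the flag-coordinate locus or becomes reducible, contradicting smoothness; this parallels the weight computations already carried out in \Ref{prp}{versotre} and in Section~\ref{sec:sottoparam}.
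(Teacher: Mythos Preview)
Your proposal is correct in its reduction to the stability of $A'\in\Gr(5,\bigwedge^2 V_{15})$ via \eqref{mupera} and in the identification of $\Theta_{A'}$ with a linear section of $\Gr(2,V_{15})$, but the actual mechanism the paper uses to establish stability is different from yours. You propose to push $A'$ forward to a Chow/Hilbert point of the elliptic normal quintic and invoke the classical $\SL_5$-stability of that point, then pull back via Mumford's Proposition~1.18. The paper instead argues in two steps entirely internal to the setup: first, the $j$-invariant of the smooth genus-one curve $\Theta_{A'}$ defines a regular $\SL(V_{15})$-invariant function on the open locus $\Gr(5,\bigwedge^2 V_{15})^0$, and the closure of a fibre $j^{-1}(p)$ with $p\neq j(A')$ is an invariant divisor avoiding $A'$, which gives semistability directly; second, if $A'$ were properly semistable, the unique closed orbit in $\ov{\SL(V_{15})A'}\cap\Gr^{ss}$ would have strictly smaller dimension and hence lie outside $\Gr^0$, so by \Ref{prp}{versotre} it would be the orbit of $A'_{III}$, whose $\Theta$ is a cycle of five nodal lines, and a monodromy argument shows this cannot arise as a degeneration of a \emph{single} isomorphism class of smooth elliptic curves. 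Your route is legitimate and arguably more conceptual, but it imports the Chow-stability of smooth elliptic normal quintics as a black box; the paper's route stays self-contained by leveraging \Ref{prp}{versotre} (already proved) and an elementary invariant, at the cost of the short monodromy/degeneration step. Your fallback direct Hilbert--Mumford computation is closer in spirit to what \Ref{prp}{versotre} does, but the paper never carries it out for the smooth case.
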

 \begin{proof}
  Let $\Gr(5,\bigwedge^2 V_{15})^0\subset \Gr(5,\bigwedge^2 V_{15})$ be the open dense subset of $B'$ such that $\Theta_{[v_0]\wedge B'}$   is a smooth curve. The $j$-invariant provides a regular $\SL(V_{15})$-invariant map $j\colon \Gr(5,\bigwedge^2 V_{15})^0\to{\mathbb A}^1$. Let $p\in({\mathbb A}^1\setminus j(A'))$ and $D\subset \Gr(5,\bigwedge^2 V_{15})$ be the closure of $j^{-1}(p)$. Then $D$ is $\SL(V_{15})$-invariant and does not contain $A'$; it follows that $A'$ is $\SL(V_{15})$-semistable. Now suppose that $A'$ is not stable. Then there exists a minimal orbit $\SL(V_{15})A'_0$ contained in $\ov{\SL(V_{15}A'})\cap\Gr(5,\bigwedge^2 V_{15})^{ss}$ and $\SL(V_{15})A'_0\not= \SL(V_{15})A'$. In particular $\dim \SL(V_{15})A'_0<\dim \SL(V_{15})A'$; it follows that $A'_0\notin \Gr(5,\bigwedge^2 V_{15})^0$. By~\Ref{prp}{versotre} we get that $A'_0=A'_{III}$ and hence $\Theta_{A'_0}$ is a curve whose singularities are nodes - in fact a cycle of $5$ lines; by monodromy considerations that contradicts the hypothesis that $A'_0$ is in the closure of $\SL(V_{15})A'$.
\end{proof}
The result below follows at once from~\Ref{prp}{mezzoliscio}.
\begin{crl}\label{crl:mezzoliscio}
The generic $A\in {\mathbb S}^{\sF}_{\cA}$ is $G_{\cA}$-stable. 
\end{crl}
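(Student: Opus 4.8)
The plan is to deduce the statement directly from~\Ref{prp}{mezzoliscio}, whose first assertion guarantees $G_{\cA}$-stability of $A$ as soon as $\Theta_{A'}$ is a smooth curve. Since the $G_{\cA}$-stable locus is open in ${\mathbb S}^{\sF}_{\cA}$, it suffices to show that for generic $A\in{\mathbb S}^{\sF}_{\cA}$ the associated curve $\Theta_{A'}$ is smooth. In effect the corollary merely packages the open-density of the smooth locus with the implication proved in the proposition.

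First I would make the parameter space explicit. By~\eqref{sommadiretta} an element of ${\mathbb S}^{\sF}_{\cA}$ is determined by $A'\in\Gr(5,[v_0]\wedge\bigwedge^2 V_{15})$, with $A''$ forced to equal $(A')^{\bot}\cap\bigwedge^3 V_{15}$; writing $A'=[v_0]\wedge B'$ with $B'\in\Gr(5,\bigwedge^2 V_{15})$ identifies ${\mathbb S}^{\sF}_{\cA}$ with $\Gr(5,\bigwedge^2 V_{15})$. Under this identification $\Theta_{A'}$ is the locus of $2$-planes $\ov{W}\subset V_{15}$ with $\bigwedge^2\ov{W}\in\PP(B')$, i.e.~the linear section $\Gr(2,V_{15})\cap\PP(B')$ of the Pl\"ucker-embedded Grassmannian by the codimension-$5$ linear space $\PP(B')\subset\PP(\bigwedge^2 V_{15})$. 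This matches the already-noted fact that $\Theta_{A'}$ has expected dimension $6-5=1$.

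Next I would invoke Bertini. Since $\Gr(2,V_{15})$ is a smooth irreducible $6$-fold and the linear system of its Pl\"ucker hyperplane sections is base-point-free, a generic codimension-$5$ linear section is smooth of pure dimension $1$; this is the classical elliptic quintic normal curve. Hence the set of $B'\in\Gr(5,\bigwedge^2 V_{15})$ for which $\Theta_{A'}$ is a smooth curve is a nonempty, and therefore dense, open subset of ${\mathbb S}^{\sF}_{\cA}$. Combining this with~\Ref{prp}{mezzoliscio} shows that the generic $A\in{\mathbb S}^{\sF}_{\cA}$ is $G_{\cA}$-stable.

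There is essentially no serious obstacle: the genuine content lies in~\Ref{prp}{mezzoliscio}, and the only points requiring care are the identification of $\Theta_{A'}$ with a linear section of $\Gr(2,V_{15})$ and the verification that the generic such section is a smooth curve rather than empty or of excess dimension. Both are routine, the latter being standard Bertini applied to a base-point-free (indeed very ample) linear system.
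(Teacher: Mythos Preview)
Your proposal is correct and essentially the same as the paper's: the corollary is stated immediately after~\Ref{prp}{mezzoliscio}, whose second sentence already asserts that the generic $A$ is $G_{\cA}$-stable, and the paper's proof of that proposition begins by introducing the open dense subset $\Gr(5,\bigwedge^2 V_{15})^0$ of $B'$ with $\Theta_{[v_0]\wedge B'}$ smooth. Your Bertini argument makes explicit what the paper leaves implicit in calling this subset open and dense.
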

\subsubsection{Analysis of $\Theta_A$ and $C_{W,A}$}
Let $A\in {\mathbb S}^{\sF}_{\cA}$: we have an embedding
\begin{equation}\label{ingradue}
\begin{matrix}
\Theta_{A'} & \overset{\iota}{\hra} & \Gr(2,V_{15}) \\
W & \mapsto & W\cap V_{15}
\end{matrix}
\end{equation}
We will often identify $\Theta_{A'}$ with its image via $\iota$. 
\begin{prp}\label{prp:coppialiscia}
Let  $A\in{\mathbb S}^{\sF}_{\cA}$.
Then $\Theta_{A'}$ is a  smooth curve if and only if $\Theta_{A''}$ is a  smooth curve. If this is the case then $\Theta_{A'}\cong\Theta_{A''}$  and   
$\Theta_A=\Theta_{A'}\coprod \Theta_{A''}$.  
\end{prp}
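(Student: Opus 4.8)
The plan is to describe both $\Theta_{A'}$ and $\Theta_{A''}$ as linear sections of the Grassmannians of $V_{15}$, tied together by a perfect pairing, and then to extract essentially the whole proposition from a single pair of dual tangent-space identities. First I would record the linear-section descriptions. Since $\bigwedge^3\lambda_{\cA}$ has exactly the two weight spaces $[v_0]\wedge\bigwedge^2 V_{15}$ (weight $3$) and $\bigwedge^3 V_{15}$ (weight $-3$), the splitting~\eqref{sommadiretta} has $A'\subset [v_0]\wedge\bigwedge^2 V_{15}$ and $A''\subset\bigwedge^3 V_{15}$, and $A''$ is the annihilator of $A'$ for the perfect pairing $\bigwedge^2 V_{15}\times\bigwedge^3 V_{15}\to\bigwedge^5 V_{15}\cong\CC$ obtained from $(,)_V$ via $[v_0]\wedge\bigwedge^2 V_{15}\cong\bigwedge^2 V_{15}$. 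For $W\in\Theta_A$ the decomposable $\bigwedge^3 W$ splits as $\omega'+\omega''$ with $\omega'\in A'$ and $\omega''\in A''$ (projections onto the two summands). Using the embedding $\iota$ of~\eqref{ingradue} one identifies $\Theta_{A'}$ with $\{P\in\Gr(2,V_{15})\mid\bigwedge^2 P\in A'\}=\Gr(2,V_{15})\cap\PP(A')$, these being the $W$ with $v_0\in W$, while $\Theta_{A''}=\{Q\in\Gr(3,V_{15})\mid\bigwedge^3 Q\in A''\}=\Gr(3,V_{15})\cap\PP(A'')$, these being the $W\subset V_{15}$.

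The key computation is the following. For $P\in\Gr(2,V_{15})$ the affine tangent space of $\Gr(2,V_{15})$ at $\bigwedge^2 P$ is $P\wedge V_{15}$, with orthogonal $(P\wedge V_{15})^{\bot}=\bigwedge^2 P\wedge V_{15}$; for $Q\in\Gr(3,V_{15})$ the affine tangent space is $\bigwedge^2 Q\wedge V_{15}$, with orthogonal $(\bigwedge^2 Q\wedge V_{15})^{\bot}=\bigwedge^2 Q$. Since $A''=(A')^{\bot}$, taking orthogonals and counting dimensions (all inside the $10$-dimensional spaces $\bigwedge^2 V_{15}$, $\bigwedge^3 V_{15}$) yields
\[
\dim\bigl(A'\cap(P\wedge V_{15})\bigr)=2+\dim\bigl(A''\cap(\bigwedge^2 P\wedge V_{15})\bigr),\qquad
\dim\bigl(A''\cap(\bigwedge^2 Q\wedge V_{15})\bigr)=2+\dim\bigl(A'\cap\bigwedge^2 Q\bigr).
\]
The left-hand side of the first identity is the dimension of the (affine) tangent space of the curve $\Theta_{A'}$ at $P$, so $\Theta_{A'}$ is a smooth curve precisely when this equals $2$ for every $P\in\Theta_{A'}$; likewise for $\Theta_{A''}$ via the second identity. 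The right-hand terms are positive exactly when there is an incident pair, i.e. $P\subset Q$ with $P\in\Theta_{A'}$ and $Q\in\Theta_{A''}$ (note $\bigwedge^3 Q\in A''\cap(\bigwedge^2 P\wedge V_{15})$ and $\bigwedge^2 P\in A'\cap\bigwedge^2 Q$). Reading the two identities together gives: $\Theta_{A'}$ is singular at $P$ $\iff$ some incident $Q\supset P$ lies in $\Theta_{A''}$ and is then a singular point of $\Theta_{A''}$, and symmetrically. Hence $\Theta_{A'}$ is smooth everywhere iff $\Theta_{A''}$ is, which is the first assertion, and smoothness is equivalent to the total absence of incident pairs.

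For the decomposition $\Theta_A=\Theta_{A'}\coprod\Theta_{A''}$, the inclusion $\supset$ and the disjointness are immediate, the two families being distinguished by $v_0\in W$ versus $W\subset V_{15}$. For the reverse inclusion I would suppose $W\in\Theta_A$ is ``mixed'', i.e. $v_0\notin W$ and $W\not\subset V_{15}$, so that $\omega'$ and $\omega''$ are both nonzero. Applying $\lambda_{\cA}(t)$ to $[\,t^{3}\omega'+t^{-3}\omega''\,]$ and letting $t\to\infty$, respectively $t\to 0$, produces limits $[\omega']$ and $[\omega'']$; both are decomposable (here $\omega''$ is the image of $W$ in $\bigwedge^3 V_{15}$ and $\omega'=v_0\wedge\beta$ with $\beta$ decomposable and $\supp(\beta)\subset\supp(\omega'')$, since $\bigwedge^2$ of a three-space consists of decomposables), so they define $W_\infty\in\Theta_{A'}$ and $W_0\in\Theta_{A''}$ with $W_\infty\cap V_{15}\subset W_0$. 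This is exactly an incident pair, which cannot occur once $\Theta_{A'}$ is smooth. Thus no mixed $W$ exists and $\Theta_A=\Theta_{A'}\coprod\Theta_{A''}$.

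The one remaining ingredient, and the step I expect to be the main obstacle, is the isomorphism $\Theta_{A'}\cong\Theta_{A''}$ under the smoothness hypothesis. Both are smooth curves of arithmetic genus $1$ and degree $5$ (elliptic normal quintics) lying in the dual four-planes $\PP(A')$ and $\PP(A'')=\PP(A'^{\bot})$, and $\Gr(2,V_{15})\subset\PP(\bigwedge^2 V_{15})$ is projectively self-dual with dual variety $\Gr(3,V_{15})\subset\PP(\bigwedge^3 V_{15})$; I would derive the isomorphism from this self-duality (equivalently, from the Buchsbaum--Eisenbud Pfaffian resolution realizing an elliptic normal quintic as projectively self-dual up to isomorphism), or by invoking the corresponding analysis of quintic elliptic curves in~\cite{ogtasso}. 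Making the abstract identification of the two curves canonical, rather than merely asserting equality of their moduli, is the delicate part, whereas the smoothness equivalence and the splitting follow mechanically from the tangent-space identities above.
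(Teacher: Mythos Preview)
Your tangent-space duality argument is correct and genuinely different from the paper's route. The two identities
\[
\dim\bigl(A'\cap(P\wedge V_{15})\bigr)=2+\dim\bigl(A''\cap(\bigwedge^2 P\wedge V_{15})\bigr),\qquad
\dim\bigl(A''\cap(\bigwedge^2 Q\wedge V_{15})\bigr)=2+\dim\bigl(A'\cap\bigwedge^2 Q\bigr)
\]
do exactly what you claim: they show that a singular point on either curve forces an incident pair $P\subset Q$, and that the other curve is then singular at the partner. This gives the smoothness equivalence in one symmetric stroke, and your decomposition argument (a mixed $W$ produces such an incident pair) is essentially the paper's argument rephrased via $\lambda_{\cA}$. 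The paper instead proves the ``no incident pair'' statement~\eqref{nocatena} directly from smoothness of $\Theta_{A'}$, and only deduces smoothness of $\Theta_{A''}$ \emph{after} constructing the isomorphism---so your order of operations is cleaner for this part.

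The gap is the isomorphism $\Theta_{A'}\cong\Theta_{A''}$, and your proposed fixes do not close it. Projective self-duality of $\Gr(2,V_{15})$ identifies $\Theta_{A''}$ with $\Gr(2,V_{15}^{\vee})\cap\PP(\Ann A')$, but there is no a priori reason the elliptic quintic cut by $A'$ and the one cut by $\Ann A'$ should have the same $j$-invariant; this is precisely what needs proof. The Buchsbaum--Eisenbud skew-symmetry only tells you that the Pfaffian presentation of a single elliptic quintic is self-dual, not that the two curves attached to $A'$ and to $A'^{\bot}$ coincide. The paper supplies the missing construction: let $\cE$ be the restriction to $\Theta_{A'}$ of the tautological rank-$2$ bundle on $\Gr(2,V_{15})$; one checks $\cE$ is stable of degree $-5$, so for each $L\in\Pic^{-3}\Theta_{A'}$ there is a unique (up to scalar) nonzero $\tau\colon L\hookrightarrow\cE$, nowhere vanishing, whose image maps under $\epsilon\colon\PP(\cE)\to\PP(V_{15})$ to a plane cubic spanning some $\PP(W)$ with $W\cap U\not=\{0\}$ for every $U\in\iota(\Theta_{A'})$. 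Since $\Theta_{A'}$ spans $\PP(A')$ and $A''=(A')^{\bot}$, this forces $W\in\Theta_{A''}$, giving a morphism $\varphi\colon\Pic^{-3}\Theta_{A'}\to\Theta_{A''}$. Injectivity comes from injectivity of $\epsilon$, surjectivity from~\eqref{nocatena}, and then $\Theta_{A''}$ is an irreducible genus-$1$ curve and $\varphi$ is an isomorphism. This is the concrete bridge your proposal is missing; once you have it, the rest of your argument stands.
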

 \begin{proof}
Suppose that $\Theta_{A'}$ is a smooth curve. Let's prove the following:
 \begin{equation}\label{nocatena}
\text{if $W_1\in \Theta_{A'}$  and $W_2\in \Theta_{A''}$ then $\dim(W_1\cap W_2)=1$.}
\end{equation}
We know that $\dim (W_1\cap W_2)\ge 1$; the point is to show that we can not have strict inequality. Suppose  that $\dim (W_1\cap W_2)=2$. Let $U:=W_1\cap V_{15}$; thus $U=W_1\cap W_2=\rho^{v_0}_{V_{15}}(W_1)$ where $\rho^{v_0}_{V_{15}}$ is given by~\eqref{ilpacciani} (with $V_0$ replaced by $V_{15}$).
Choose bases  $\{ u_1,u_2\}$, $\{ u_1,u_2,v\}$ of $U$ and $W_2$ respectively.
 Since $A'=(A'')^{\bot}$ we get that $A'\subset (u_1\wedge u_2\wedge v)^{\bot}$.  Since the projective tangent space to $\Gr(2,V_{15})$ at $U$  is contained in $\PP((u_1\wedge u_2\wedge v)^{\bot})$ it follows that the tangent space to $\iota(\Theta_{A'})=\PP(\rho^{v_0}_{V_{15}}(A'))\cap \Gr(2, V_{15})$ at $U$ has dimension at least $2$: that contradicts the hypothesis that 
   $\Theta_{A'}$ is a smooth curve. This proves~\eqref{nocatena}. Let's define a  morphism 
\begin{equation}\label{eccofi}
\varphi\colon\Pic^{-3}\Theta_{A'}\lra \Theta_{A''}. 
\end{equation}
 Let $\cE$ be the restriction to $\Theta_{A'}$ of the tautological rank-$2$ vector-bundle on $\Gr(2,V_{15})$. Let  
\begin{equation}\label{asparagi}
\epsilon\colon \PP(\cE)\to  \PP(V_{15})
\end{equation}
be the natural morphism and $R_{\cE}:=\im \epsilon$. We notice that $\epsilon$ is injective: in fact  if $U_1,U_2\in \rho(\Theta_{A'})$ are distinct then $U_1\cap U_1=\{0\}$ because $\Theta_{A'}$ does not contain lines. 
Clearly $\deg\cE=-5$. We claim that $\cE$ is stable. In fact  $\cE^{\vee}$ is globally generated and hence if it is not stable then $\cE\cong L_1\oplus L_2$ where $\deg L_1=3$ and $\deg L_2=2$; that contradicts injectivity of $\epsilon$. Let $L\in \Pic^{-3}\Theta_{A'}$; since $\cE$ is stable $\dim\Hom(L,\cE)=1$. Let $\tau\in\Hom(L,\cE)$ be non-zero; then $\tau$ does not vanish anywhere and hence  $\epsilon(\im\tau)$ is a cubic curve (recall that $\epsilon$ is injective)  spanning a plane $\PP(W)$ such that $W\cap U\not=\{0\}$ for every $U\in\Theta_{A'}$.  Since $\Theta_{A'}$ spans $\PP(A')$ and $A''=(A')^{\bot}$  it follows that $W\in \Theta_{A''}$. We define the morphism $\varphi$ of~\eqref{eccofi} by setting $\varphi([L]):=W$. The morphism $\varphi$ is injective because $\epsilon$ is injective.  Using~\eqref{nocatena} one proves easily that $\varphi$ is surjective. Thus $\Theta_{A''}$ has the expected dimension $1$ and hence it is an irreducible curve of   arithmetic genus $1$:   it  follows that $\varphi$ is an isomorphism.
We have proved that  if $\Theta_{A'}$ is a  smooth curve then $\Theta_{A''}$ is isomorphic to  $\Theta_{A'}$, in particular it is a  smooth curve.
By duality   it  follows that if $\Theta_{A''}$ is a smooth curve then  $\Theta_{A'}\cong \Theta_{A''}$, in particular it is a smooth curve.  
Now assume that $\Theta_{A''}$ is a smooth curve: we must   prove that $\Theta_A=\Theta_{A'}\coprod \Theta_{A''}$. Suppose that $\alpha\in A$ is non-zero decomposable and that $\supp(\alpha)\notin(\Theta_{A'}\coprod \Theta_{A''})$. Then  there exist linearly independent $u_1,u_2,v\in V_{15}$ such that $\alpha= v_0\wedge u_1\wedge u_2+u_1\wedge u_2\wedge v$. 
 Thus $v_0\wedge u_1\wedge u_2\in A'$ and $u_1\wedge u_2\wedge v\in A''$ and hence $\la v_0, u_1, u_2\ra \in\Theta_{A'}$, $\la  u_1, u_2\wedge v\ra \in\Theta_{A'}$; that contradicts~\eqref{nocatena}.
\end{proof}
\begin{prp}\label{prp:forcone}
Let  $A\in{\mathbb S}^{\sF}_{\cA}$. Suppose that $\Theta_{A'}$ is a  smooth curve. 
 If $W\in\Theta_{A'}$ or   $W\in\Theta_{A''}$   then  $C_{W,A}$ is a sextic  curve  of Type II-2 or II-4 respectively.
\end{prp}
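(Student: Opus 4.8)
```latex
The plan is to determine $C_{W,A}$ for $W\in\Theta_{A'}$ and $W\in\Theta_{A''}$ by combining the stabilizer symmetry coming from $\lambda_{\cA}$ with the local multiplicity estimates of~\Ref{prp}{primisarto}, exactly in the spirit of the arguments already carried out for $\gB_{\cC_1}$ in~\Ref{prp}{lagonemi} and~\Ref{crl}{vuzerodue}. First I would record that since $\Theta_{A'}$ is a smooth curve, $A$ is $G_{\cA}$-stable by~\Ref{prp}{mezzoliscio}, hence $\PGL(V)A$ is closed in $\lagr^{ss}$ by~\Ref{clm}{slagix}, and $\dim\Theta_A=1$ by~\Ref{prp}{coppialiscia}. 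Therefore~\Ref{crl}{senoncurva} applies and gives $C_{W,A}\neq\PP(W)$ for every $W\in\Theta_A$, so in both cases $C_{W,A}=V(P)$ for some $0\neq P\in\Sym^6 W^{\vee}$.

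Next I would exploit the one-parameter subgroup $\lambda_{\cA}$ itself. For $t\in\CC^{\times}$ the element $\diag(t^5,t^{-1},t^{-1},t^{-1},t^{-1},t^{-1})$ (in the basis $\sF$) lies in $SL(V)$, acts trivially on $\bigwedge^{10}A$ by the defining weight condition of $\BB^{\sF}_{\cA}$, and fixes $W$ whenever $W\in\Theta_{A'}$ or $W\in\Theta_{A''}$ because such $W$ is $\lambda_{\cA}$-split (it either contains $[v_0]$ or lies in $V_{15}$). Writing $g_W$ for the induced element of $GL(W)$ and applying~\Ref{clm}{azione}, I get that the associated $\ov{g}_W\in SL(W)$ fixes $P$. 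Choosing coordinates adapted to $W$, this torus acts on $W^{\vee}$ with weights $(-2,1,1)$ when $W\ni[v_0]$ (so $W\in\Theta_{A'}$) and with weights proportional to $(1,1,1)$ twisted appropriately when $W\subset V_{15}$ (so $W\in\Theta_{A''}$); in the first case~\Ref{rmk}{pasquetta}(1) forces $P=X_0^2 F(X_1,X_2)$, i.e.\ a curve of the shape appearing in Shah's Type II, and I would then argue $F$ has no multiple factors to pin down Type II-4, while in the second case the same analysis as in~\Ref{prp}{lagonemi} yields $P=X_0^2 F(X_1,X_2)$ with $F$ squarefree, giving Type II-2. (I would double-check which of the two strata produces which coordinate normalization by tracking the weights of $A'$ versus $A''$, since the labeling II-2 vs.\ II-4 depends on it.)

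To verify squarefreeness of $F$ I would mimic~\Ref{prp}{lagonemi}: set up the cubic $Z\subset\PP(W^{\vee}\cap\cdot)$ parametrizing the residual planes meeting $W$ along a line (using the isomorphism $\Theta_{A'}\cong\Theta_{A''}$ of~\Ref{prp}{coppialiscia}), and show that the branch points of the projection $Z\to\PP(L)$ of the relevant line $L\subset W$ produce points of multiplicity $\geq 3$ on $C_{W,A}$. The multiplicity bound comes from~\Ref{crl}{molteplici} together with the refined computation of the quadratic term $g_2$ via~\Ref{prp}{primisarto} and~\Ref{clm}{roncisvalle}: at a branch point the projected linear space $\PP(\ov K)$ meets ${\mathbb Gr}(2,V_0)_{W_0}$ in a point of multiplicity $2$, forcing $g_2=0$ and hence multiplicity $\geq 3$. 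Since $\deg F=4$ and one gets four such points (the four branch points of a degree-$2$ map from a smooth genus-$1$ curve to $\PP^1$), $F$ can have no repeated factor, which is precisely the defining condition of Shah's Type II-2/II-4.

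The main obstacle I anticipate is the bookkeeping that distinguishes the two cases and certifies that $C_{W,A}$ is genuinely of \emph{semistable} type rather than a more degenerate member of Shah's list: one must confirm that the double line $V(X_0)$ is not itself a component of $V(F)$ and that $F$ is reduced, which is exactly where the genus-$1$ branch-point count becomes essential. The symmetry argument alone only yields the coarse shape $X_0^2 F(X_1,X_2)$; promoting this to squarefree $F$ requires the local analysis via~\Ref{prp}{primisarto}, and the delicate point will be correctly identifying $\ov K$ and the intersection multiplicity of $\PP(\ov K)$ with ${\mathbb Gr}(2,V_0)_{W_0}$ at each branch point. I expect the $\Theta_{A''}$ case to go verbatim as in~\Ref{prp}{lagonemi}, while the $\Theta_{A'}$ case (yielding Type II-4, corresponding to a smooth cubic doubled, cf.\ Shah II-4) will need an extra verification that the quartic $F$ factors as the square-free quartic cut out on the line by the degree-$4$ Veronese-type data, so that $V(P)$ is a conic-plus-residual rather than a triple or quadruple structure.
```
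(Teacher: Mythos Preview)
Your setup (stability via~\Ref{prp}{mezzoliscio}, $\dim\Theta_A=1$ via~\Ref{prp}{coppialiscia}, $C_{W,A}\neq\PP(W)$ via~\Ref{crl}{senoncurva}) is correct, and for $W\in\Theta_{A'}$ your plan is essentially the paper's: apply~\Ref{clm}{azione} with $\lambda_{\cA}$ to force $P=X_0^2F(X_1,X_2)$, then verify that $F$ is squarefree by a genus-$1$ count (the paper phrases this as the $4$ singular quadrics through the elliptic quartic $Z_{U,A}\subset\PP^3$, which is equivalent to your branch-point picture). Note however that this is Type II-2, not II-4: your hedging on the labeling is warranted, and you should resolve it.

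There is a genuine gap in the $W\in\Theta_{A''}$ case. Here $W\subset V_{15}$, and $\lambda_{\cA}(t)$ acts on $V_{15}$ by the scalar $t^{-1}$, hence $\ov{g}_W\in SL(W)$ is the identity and~\Ref{clm}{azione} imposes \emph{no} condition on $P$. Your claim that the weights are ``$(1,1,1)$ twisted appropriately'' cannot be salvaged: there is no nontrivial $SL(W)$-symmetry coming from $\lambda_{\cA}$, so the shape $X_0^2F(X_1,X_2)$ is not forced, and indeed $C_{W,A}$ is \emph{not} of that form. The correct answer is Type II-4, i.e.\ $C_{W,A}=2E_W$ with $E_W$ a smooth cubic, and this must be proved by a different mechanism.

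The paper's argument for $W\in\Theta_{A''}$ is purely geometric: for each $W'\in\Theta_{A'}$ one has $\dim(W'\cap W)=1$ by~\eqref{nocatena}, so as $W'$ varies the intersection points trace out a curve $E_W\subset\PP(W)$. Using the description of $\Theta_{A''}$ via sub-line-bundles of $\cE$ from the proof of~\Ref{prp}{coppialiscia}, one identifies $E_W$ as $\epsilon(\PP(L))$ for a degree-$(-3)$ sub-line-bundle $L\hookrightarrow\cE$, hence $E_W$ is a smooth plane cubic. Every point of $E_W$ lies in $\cB(W,A)$ (Item~(1) of~\Ref{dfn}{malvagio}), so by~\Ref{crl}{cnesinerre} $C_{W,A}$ is singular along $E_W$; since $C_{W,A}$ is a sextic this forces $C_{W,A}=2E_W$. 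You should replace your $\Theta_{A''}$ argument with this one.
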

 \begin{proof}
By~\Ref{prp}{coppialiscia} we have $\dim\Theta_A=1$. By~\Ref{prp}{mezzoliscio} we know that $A$ is $G_{\cA}$-stable and hence $A$ is $\PGL(V)$-semistable with closed orbit by~\Ref{clm}{slagix}. Let $W\in\Theta_A$: since $\dim\Theta_A<2$ it follows from~\Ref{crl}{senoncurva} that $C_{W,A}\not=\PP(W)$.  
Let $W\in\Theta_{A'}$. Let $\{v_0,u_1,u_2\}$ be a basis of $W$ where $u_1,u_2\in V_{15}$, and $\{X_0,X_1,X_2\}$ be the dual basis of $W^{\vee}$. For $t\in\CC^{\times}$ let $g(t):=\diag(t^5,t^{-1},\ldots,t^{-1})\in \SL(V)$. Then $g(t)$ acts trivially on $\bigwedge^{10}A$ and it maps $W$ to itself. Applying~\Ref{clm}{azione} we get that $C_{W,A}=V(P)$ where $P=X_0^2 F(X_1,X_4)$ - and we know that $F\not=0$. It remains to prove that $F$ does not have multiple factors.
Let's examine $C_{W,A}$ in a neighborhood of $[v_0]$.  We identify $U:=W\cap V_{15}$  with an open affine neighborhood of $[v_0]$ in $\PP(W)$ via~\eqref{minnie}. We have $C_{W,A}\cap U= V(g_4)$ where $g_4=F/X_0^4$.  Let  $Z_{U,A}\subset\PP(\bigwedge^2 V_{15}/\bigwedge^2 U)$ be the projection of $\iota(\Theta_{A'})$ from $\bigwedge^2 U$ -   notation as in~\Ref{rmk}{cigeom}. 
By~\eqref{marzamemi}
 the set of zeroes (up to scalars) of $g_4$ is in one-to-one correspondence with the set of singular quadrics in $\PP(\rho^{v_0}_{V_{15}}(A')/\bigwedge^2 U)$ containing $Z_{U,A}$. 
Since $Z_{U,A}$ is a linearly normal quartic elliptic curve in the $3$-dimensional projective space $\PP(\rho^{v_0}_{V_{15}}(A')/\bigwedge^2 U)$ there are exactly $4$ singular quadrics containing it; thus  $F$ does not have multiple factors. Now let $W\in\Theta_{A''}$. If $W'\in\Theta_{A'}$ then $\dim W'\cap W=1$ by~\eqref{nocatena}.
As $W'$ varies in  $\Theta_{A'}$ the intersection $W'\cap W$ describes a  curve  $E_W\subset\PP(W)$ (recall that $\epsilon$ is injective).  One checks easily that $E_W=\epsilon(\PP(L))$  where $L\hra\cE$ is a sub-line-bundle of degree $-3$ (a sub-line-bundle of $\cE$ of degree less than $-3$ will give a non-planar curve in $\PP(V_{15})$); it follows that $E_W$ is a smooth cubic curve in $\PP(W)$. By~\Ref{crl}{cnesinerre} we get that $C_{W,A}=2 E_W$ (recall that $C_{W,A}\not=\PP(W)$) and hence $C_{W,A}$ is of Type  II-4. 
\end{proof}
\subsubsection{Wrapping it up}\label{subsubsec:dimobia}
We will prove~\Ref{prp}{montalbano}. Item~(1) and Item~(2) are gotten by putting together the statements of~\Ref{prp}{versotre}, \Ref{prp}{mezzoliscio} and~\Ref{crl}{mezzoliscio}.  Let's prove Item~(3). 
Since $A$ is  $G_{\cA}$-stable the stabilizer of $A$ in $G_{\cA}$ is a finite group. Thus it suffices to show that if $g\in\Stab(A)$ then $g$ belongs to  the centralizer $C_{\SL(V)}(\lambda_{\cA})$ of $\lambda_{\cA}$ in $\SL(V)$. By Item~(1) and $G_{\cA}$-stability of $A$ we know that
 $\Theta_{A'}$ is a smooth curve. By~\Ref{prp}{coppialiscia}  we get that $\Theta_A=\Theta_{A'}\cup\Theta_{A''}$ and  $\Theta_{A''}$ is a smooth elliptic curve of degree $5$. It follows that $[v_0]$ is the unique $1$-dimensional vector subspace of $V$ contained in every $W\in \Theta_{A'}$ and $V_{15}$ is the unique $5$-dimensional   vector subspace of $V$ containing  every $W\in \Theta_{A''}$ (and there is no $1$-dimensional subspace of $V$ contained in every $W\in \Theta_{A''}$ and no proper subspace of $V$ containing all $W\in \Theta_{A'}$). 
 From these facts we get that if $g\in\Stab(A)$ then $g([v_0])=[v_0]$ and $g(V_{15})=V_{15}$ i.e.~$g\in C_{\SL(V)}(\lambda_{\cA})$. We have proved Item~(3). 
Lastly we prove Item~(4). Let $A\in {\mathbb S}^{\sF}_{\cA}$ be $G_{\cA}$-semistable with minimal orbit. Suppose that $\Theta_{A'}$ is a smooth curve: then  $C_{W,A}$ is of Type II-2 or II-4  by~\Ref{prp}{coppialiscia} and~\Ref{prp}{forcone}.  Suppose that $\Theta_{A'}$ is not a smooth curve: then $A\in \PGL(V)A_{III}$ by~\Ref{prp}{versotre} and hence $C_{W,A}$ is of Type III-2 
by~\eqref{doppiotriangolo}. 
\subsection{$\gB_{\cD}$}\label{subsec:canta}
\setcounter{equation}{0}
 Below is the main result of the present subsection.
\begin{prp}\label{prp:primavera}
The following hold:
\begin{enumerate}
\item[(1)]
The generic $A\in {\mathbb S}^{\sF}_{\cD}$ is $G_{\cD}$-stable.
\item[(2)]
If   $A\in {\mathbb S}^{\sF}_{\cD}$ is $G_{\cD}$-stable the connected component of $\Id$ in $\Stab(A)<\SL(V)$ is equal to $\im\lambda_{\cD}$.
\item[(3)]
Let  $A\in {\mathbb S}^{\sF}_{\cD}$ have closed $\PGL(V)$-orbit (in $\lagr^{ss}$), and suppose that $[A]\notin\gI$. Then $C_{W,A}$ is of Type II-1, II-2, II-3 or $\PGL(V)$-equivalent to Type III-2. 
\item[(4)]
$\gB_{\cD}\cap\gI=\gX_{\cW}$, where $\gX_{\cW}$ is as in~\eqref{stratowu}.
\end{enumerate}
\end{prp}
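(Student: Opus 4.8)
The plan is to follow the same three-part template used for $\gB_{\cC_1}$ and $\gB_{\cA}$, adapting it to the structure of $\mathbb{S}^{\sF}_{\cD}$. Recall that for $A\in\mathbb{S}^{\sF}_{\cD}$ the group is $G_{\cD}=\CC^{\times}\times SL(V_{14})$ acting on $\Gr(3,[v_0]\wedge\bigwedge^2 V_{14})\times\LL\GG([v_0]\wedge V_{14}\wedge[v_5]\oplus\bigwedge^3 V_{14})$, and by Table~\eqref{stratflaguno} such an $A$ contains $[v_0]\wedge\bigwedge^2 W'$ for the relevant $W'$, so that $\Theta_A$ generically contains a conic of planes through $[v_0]$ (Item~(1) of the analysis in~\Ref{subsec}{tramonto}). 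First I would carry out the GIT analysis: identify the $1$-PS's of $G_{\cD}$, compute $\mu(A,\lambda)$ via~\eqref{sommapend} and~\eqref{caramellamu}, and exhibit an explicit semi-invariant (or an $SL(V_{14})$-invariant map to a space of binary/ternary forms whose smoothness detects stability) to show that the generic member is $G_{\cD}$-stable. The cleanest route for Item~(1) is probably to attach to the $SL(V_{14})$-factor $A'\in\Gr(3,[v_0]\wedge\bigwedge^2 V_{14})$ a degeneracy curve in $\PP(V_{14})$, as was done for $E_{A'}$ in~\Ref{subsec}{lagallina}, and invoke~Proposition~1.18 of~\cite{mum} on the complement of the indeterminacy locus of the associated equivariant rational map.

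For Item~(2) I would run exactly the argument used in~\Ref{subsubsec}{dimobia} and~\Ref{subsubsec}{dimociuno}: $G_{\cD}$-stability forces $\Stab(A)\cap G_{\cD}$ finite, so it suffices to show any $g\in\Stab(A)<\SL(V)$ lies in $C_{\SL(V)}(\lambda_{\cD})$. Here the geometry of $\Theta_A$ does the work: for generic stable $A$, the point $[v_0]$ is distinguished as the unique $1$-dimensional subspace common to all planes of the conic-family in $\Theta_A$, and $V_{14}$ (equivalently the hyperplane $V_{04}$ or the complementary data) is reconstructed from the same family; since $\lambda_{\cD}$ has weights $(1,0_4,-1)$, the centralizer $C_{\SL(V)}(\lambda_{\cD})$ is exactly the stabilizer of the flag $[v_0]\subset V_{14}\subset\ldots$, and any $g$ fixing these intrinsic subspaces lies in it. I would need to verify that the conic of planes, together with the second Lagrangian summand, pins down both $[v_0]$ and the decomposition $V=[v_0]\oplus V_{14}\oplus[v_5]$ up to $C_{\SL(V)}(\lambda_{\cD})$.

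The substantive part is Item~(3), $\gB_{\cD}\cap\gI=\gX_{\cW}$. The inclusion $\gX_{\cW}\subset\gB_{\cD}$ should follow by showing $A_{+}\in\BB_{\cD}$ (each $W\in\Theta_{A_{+}}$ contains $[v_0]$-type data giving $\dim(A_{+}\cap[v_0]\wedge\bigwedge^2 V_{14})\ge 3$, analogous to the proof of~\eqref{impagliazzo}), combined with $\gX_{\cW}\subset\gI$ from~\Ref{clm}{stregaest}. For the reverse inclusion I would classify the minimal orbits in $\mathbb{S}^{\sF,ss}_{\cD}$ and, for each $A$ with closed orbit not in $\gX_{\cW}$, produce a $W\in\Theta_A$ such that $C_{W,A}$ is a sextic curve \emph{not} in the indeterminacy locus of~\eqref{persestiche}; by~\Ref{prp}{singiso}(1) the generic $A\in\BB_{\cD}$ already has a point of multiplicity $\ge 4$ on some $C_{W,A}$, which lands it in $\gI$, so the real content is the converse direction—deciding precisely which closed-orbit $A$ give $C_{W,A}$ of type II (hence regular periods) versus type III/IV (hence in $\gI$), using Shah's list~\Ref{thm}{listashah}, \Ref{rmk}{pershah}, and~\Ref{clm}{azione} to read off the relevant torus symmetry of the sextic. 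The main obstacle I anticipate is this last classification: I expect that away from $\gX_{\cW}$ the stable $A$'s produce $C_{W,A}$ with only simple (or at worst type-II) singularities—contradicting~\Ref{prp}{singiso}, which only asserts the bad behaviour for generic or special $A$—so I must check carefully that \emph{every} closed-orbit $A\in\mathbb{S}^{\sF}_{\cD}\setminus\gX_{\cW}$ has all its $C_{W,A}$ outside the indeterminacy locus, handling the boundary strata where $A$ degenerates toward $A_{III}$, $A_{+}$, or $A_k$ via the limit arguments of~\Ref{clm}{limite} and the analysis of~\Ref{prp}{senoncurva} and~\Ref{crl}{senoncurva}.
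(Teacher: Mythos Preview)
Your outline for Items~(1) and~(2) is broadly on target, though for~(2) you are missing half the picture: for $G_{\cD}$-stable $A$ the paper shows (via~\Ref{crl}{skywalker}) that $\Theta_A=\Theta_{A'}\cup\Theta_{A'''}\cup Z_A$ with $Z_A$ finite, where $\Theta_{A'}$ is a conic of planes through $[v_0]$ \emph{and} $\Theta_{A'''}$ is a conic of planes through $[v_5]$. Any $g\in\Stab(A)$ must preserve the set $\{[v_0],[v_5]\}$ and $V_{14}$, which is exactly what forces the connected component into $C_{\SL(V)}(\lambda_{\cD})$. Your sketch only accounts for $[v_0]$.

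There is a genuine gap in your approach to Item~(3). You write that~\Ref{prp}{singiso}(1) gives a point of multiplicity $\ge 4$ on some $C_{W,A}$, ``which lands it in $\gI$''. This is false: by~\Ref{rmk}{pershah}, a multiplicity-$4$ point puts $C_{W,A}$ in the indeterminacy locus only if the tangent cone there is $3l_1+l_2$. For $G_{\cD}$-stable $A$ the paper shows (\Ref{prp}{dipianistab}) that the relevant $C_{W,A}$ is of Type II-2, i.e.\ $V(X_0^2 F(X_1,X_2))$ with $F$ squarefree; the mult-$4$ point at $[v_0]$ has tangent cone $V(F)$, four \emph{distinct} lines, hence the period map is regular there. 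So there is no contradiction with~\Ref{prp}{singiso}, and the generic stable $A$ does \emph{not} lie in $\gI$. Your worry that the stable case ``contradicts'' singiso is a symptom of this confusion.

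The paper's actual strategy for Item~(3) is what you describe as ``the main obstacle'' but then do not carry out: classify all $W\in\Theta_A$ for closed-orbit $A$ (\Ref{prp}{pianididi}), and for each of the three types compute $C_{W,A}$ explicitly using~\Ref{clm}{azione} and the torus symmetry, showing it is Type II-1, II-2, or II-3 in the stable case (\Ref{prp}{dipianistab}) and Type III-2 in the properly semistable case away from $\gX_{\cW}$ (\Ref{prp}{pianidiprop}). The inclusion $\gX_{\cW}\subset\gB_{\cD}$ is proved by taking a line $L$ on the smooth quadric $Z\subset\PP(U)$ and observing that $i_{+}(L)$ is a smooth conic in $\Theta_A$, which is precisely the $\BB_{\cD}$ condition.
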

The proof of~\Ref{prp}{primavera} will be given in~\Ref{subsubsec}{dimodi}.
\subsubsection{Quadrics associated to $A\in {\mathbb S}^{\sF}_{\cD}$}
Let $A\in{\mathbb S}^{\sF}_{\cD}$; by definition $A=  A'\oplus A''\oplus A'''$ where
\begin{equation}\label{aconi}
\scriptstyle
 A'\in\Gr(3,[v_0]\wedge\bigwedge^2 V_{14}), \quad
A''\in\LL\GG([v_0]\wedge V_{14}\wedge[v_5]\oplus \bigwedge^3 V_{14}),
\quad  A'''=(A')^{\bot}\cap(\bigwedge^2 V_{14}\wedge[v_5]).
\end{equation}

In other words $A',A'',A'''$ are the summands named $A_0,A_1,A_2$ in~\Ref{subsec}{prelbound}. 
We define closed subsets $Q_{A'},Q_{A''},Q_{A'''}\subset\PP(V_{14})$ as follows:
\begin{equation*}
\begin{array}{lll}
Q_{A'} & := & \{[\xi]\in \PP(V_{14}) \mid \dim(A'\cap F_{\xi})>0\}, \\
Q_{A''}& := & \{[\xi]\in \PP(V_{14}) \mid \dim(A''\cap F_{\xi})>0\}, \\
Q_{A'''} & := & \{[\xi]\in \PP(V_{14}) \mid \dim(A'''\cap F_{\xi})>0\}. \\
\end{array}
\end{equation*}
Thus $Q_{A'}$  is  swept out by the lines $\PP(W\cap V_{14})$ for $W$ varying in $\Theta_{A'}$ and similarly for $Q_{A'''}$. In particular each of $Q_{A'}$, $Q_{A'''}$  is either a quadric or $\PP(V_{14})$, moreover $Q_{A'''}=Q_{A'}$ because $A'''=(A')^{\bot}$.
Similarly $Q_{A''}$ is either a quadric or $\PP(V_{14})$. Suppose  that $A''\cap\bigwedge^3 V_{14}=\{0\}$; a  simpler description of $Q_{A''}$ goes as follows. We have an isomorphism $\bigwedge^3 V_{14}\cong ([v_0]\wedge V_{14}\wedge[v_5])^{\vee}$  given by wedge-product followed by $\vol$ and  $A''$ is the graph of a map $q_{A''}\colon [v_0]\wedge V_{14}\wedge[v_5]\to \bigwedge^3 V_{14}$ which is  symmetric because $A''$ is lagrangian. As is easily checked $Q_{A''}=V(q_{A''})$. 
The intersection $Y_A\cap\PP(V_{14})$ is supported on $Q_{A'}\cup Q_{A''}$ and it has multiplicity at least $2$ along $Q_{A'}$: it follows that either  $\PP(V_{14})\subset Y_A$ or 
$Y_A\cap\PP(V_{14})=2Q_{A'}+Q_{A''}$. In the following subsubsection we will compare $G_{\cD}$-(semi)stability of $A$ with geometric properties of $Q_{A'}$ and $Q_{A''}$: for example we will show that if $Q_{A'}\cap Q_{A''}$  
 is a  smooth  curve (the generic case) then $A$ is $G_{\cD}$-stable. In the present subsubsection we will go through basic results about $Q_{A''}$ and the computation of $Q_{A'}$ for one explicit $A'$. 
\begin{prp}\label{prp:spatan}
Let  $A''$ be as in~\eqref{aconi} and  $[\xi_0]\in Q_{A''}$. Then $\dim T_{[\xi_0]}Q_{A''}=3$ (i.e.~either $Q_{A''}$ is a  quadric singular at $[\xi_0]$ or it is equal to $\PP(V_{14})$) if and only if one of the following holds:
\begin{enumerate}
\item[(a)]
$A''\cap F_{\xi_0}\cap ([v_0]\wedge V_{14}\wedge [v_5])\not=\{0\}$.
\item[(b)]
$A''\cap F_{\xi_0}\cap \bigwedge^3 V_{14}\not=\{0\}$.
\end{enumerate}
On the other hand suppose that
\begin{equation*}
A''\cap F_{\xi_0}=\la v_0\wedge\xi_0\wedge v_5+\alpha\ra,\qquad 0\not=\alpha\in \bigwedge^3 V_{14}.
\end{equation*}
Then   the embedded {\bf projective tangent space} of $Q_{A''}$ at $[\xi_0]$ is
\begin{equation*}
{\bf T}_{[\xi_0]}Q_{A''}=\PP(\supp\alpha).
\end{equation*}
\end{prp}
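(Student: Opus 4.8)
The statement concerns the quadric (or $\PP(V_{14})$) cut out by the symmetric map $q_{A''}$, so the plan is to work throughout in the concrete description $Q_{A''}=V(q_{A''})$ valid when $A''\cap\bigwedge^3 V_{14}=\{0\}$, and to treat the general case by the isomorphism $\bigwedge^3 V_{14}\cong([v_0]\wedge V_{14}\wedge[v_5])^{\vee}$ induced by the volume form. First I would set up coordinates: identify $[v_0]\wedge V_{14}\wedge[v_5]$ with $V_{14}$ (via $v_0\wedge\xi\wedge v_5\leftrightarrow\xi$) and $\bigwedge^3 V_{14}$ with $V_{14}^{\vee}$, so that $q_{A''}$ becomes a genuine symmetric form on $V_{14}$ and $Q_{A''}=V(q_{A''})\subset\PP(V_{14})$ its projectivized null-cone. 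With this dictionary, membership $[\xi_0]\in Q_{A''}$ means $q_{A''}(\xi_0,\xi_0)=0$, and the tangent-space condition $\dim T_{[\xi_0]}Q_{A''}=3$ is exactly the statement that $\xi_0\in\ker(\widetilde{q}_{A''})$, i.e. that $[\xi_0]$ is a singular point of the quadric (or that the quadric degenerates to $\PP(V_{14})$ when $q_{A''}=0$).

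\textbf{Key steps.} The core computation is to express the element of $A''$ lying over $[\xi_0]$. Because $A''$ is the graph of $q_{A''}$, a nonzero vector in $A''\cap F_{\xi_0}$ has the form $v_0\wedge\xi_0\wedge v_5+\alpha$ with $\alpha=\widetilde{q}_{A''}(\xi_0)\in\bigwedge^3 V_{14}$, provided we are in case where neither summand degenerates. I would then translate the two degeneracy alternatives (a) and (b) into kernel statements. Condition (a), namely $A''\cap F_{\xi_0}\cap([v_0]\wedge V_{14}\wedge[v_5])\neq\{0\}$, says that there is a vector over $[\xi_0]$ with $\alpha=0$, i.e. $\widetilde{q}_{A''}(\xi_0)=0$, which is precisely $\xi_0\in\ker\widetilde{q}_{A''}$. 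Condition (b), $A''\cap F_{\xi_0}\cap\bigwedge^3 V_{14}\neq\{0\}$, is the dual/transverse degeneration (the one I deferred when assuming $A''\cap\bigwedge^3 V_{14}=\{0\}$); here I would argue directly with the lagrangian and $\bot$-pairing structure that this again forces $\xi_0$ into the kernel of the symmetric map, once the identification is made symmetrically on both sides. So both (a) and (b) are equivalent to $[\xi_0]\in\sing Q_{A''}$, establishing the first \emph{iff}.

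\textbf{The tangent-space formula.} For the second assertion I would assume we are in the ``good'' case $A''\cap F_{\xi_0}=\la v_0\wedge\xi_0\wedge v_5+\alpha\ra$ with $0\neq\alpha\in\bigwedge^3 V_{14}$, so $\widetilde{q}_{A''}(\xi_0)=\alpha\neq0$ and $[\xi_0]$ is a smooth point of $Q_{A''}$. The embedded projective tangent space is $\PP(\ker(\text{linearization of }q_{A''}\text{ at }\xi_0))=\PP(\{\eta\mid q_{A''}(\xi_0,\eta)=0\})$. The point is to identify the hyperplane $\{\eta\mid q_{A''}(\xi_0,\eta)=0\}\subset V_{14}$ with $\supp\alpha$. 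Under the identification $\bigwedge^3 V_{14}\cong V_{14}^{\vee}$ given by $\alpha\mapsto(\eta\mapsto\vol_{14}(\eta\wedge\alpha))$ — more precisely the pairing coming from $\vol$ — the functional $q_{A''}(\xi_0,-)=\widetilde{q}_{A''}(\xi_0)(-)=\alpha$ annihilates exactly the $3$-dimensional subspace $\supp\alpha$ of $V_{14}$ (an element $\eta\in V_{14}$ pairs to zero with $\alpha\in\bigwedge^3 V_{14}$ iff $\eta\wedge\alpha=0$ iff $\eta\in\supp\alpha$). Hence ${\bf T}_{[\xi_0]}Q_{A''}=\PP(\supp\alpha)$.

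\textbf{Main obstacle.} The routine part is the linear algebra of graphs of symmetric forms; the delicate point I expect to cost the most care is the symmetric treatment of alternative (b) when $A''\cap\bigwedge^3 V_{14}\neq\{0\}$, since there the convenient graph description $Q_{A''}=V(q_{A''})$ breaks down and one must instead argue intrinsically with the lagrangian $A''\subset([v_0]\wedge V_{14}\wedge[v_5]\oplus\bigwedge^3 V_{14})$ and its self-duality under the symplectic pairing. I would handle this by noting that the roles of the two lagrangian summands $[v_0]\wedge V_{14}\wedge[v_5]$ and $\bigwedge^3 V_{14}$ are interchanged by a linear isomorphism respecting the symplectic form (up to scalar), so (b) is just (a) for the ``dual'' graph, and the tangent-space computation is symmetric under this interchange. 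The only genuine check is that the pairing $\eta\mapsto\vol(\eta\wedge\alpha)$ has kernel exactly $\supp\alpha$, which is immediate from the definition of $\supp$ for a decomposable (or general) $\alpha\in\bigwedge^3 V_{14}$ together with $\dim V_{14}=4$.
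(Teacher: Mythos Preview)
Your argument in the transverse case $A''\cap\bigwedge^3 V_{14}=\{0\}$ is correct and cleaner than the paper's: identifying $A''$ with the graph of a quadratic form $q_{A''}$ on $V_{14}$, condition~(a) becomes $\widetilde{q}_{A''}(\xi_0)=0$ and the tangent hyperplane is $\{\eta:\eta\wedge\alpha=0\}=\supp\alpha$, exactly as you say.

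The gap is in your treatment of~(b). The ``duality swap'' you propose does not work as stated: writing $A''$ as a graph over $\bigwedge^3 V_{14}$ requires $A''\cap([v_0]\wedge V_{14}\wedge[v_5])=\{0\}$, which need not hold, and even when it does the resulting quadratic form lives on $\PP(\bigwedge^3 V_{14})\cong\PP(V_{14}^{\vee})$ rather than on $\PP(V_{14})$, so you never get back to $Q_{A''}$. The correct completion is more direct. If $0\neq\beta\in A''\cap\bigwedge^3 V_{14}$, then because $A''$ is Lagrangian every $(v,f)\in A''$ satisfies $\beta(v)=0$, so the projection $A''\to V_{14}$ has image contained in the hyperplane $\ker\beta$; from this one checks that $Q_{A''}$ equals the double plane $\PP(\ker\beta)$ set-theoretically (or $\PP(V_{14})$ if $\dim(A''\cap\bigwedge^3 V_{14})\geq 2$), hence is singular everywhere, and~(b) holds at every point of $Q_{A''}$. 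This same argument shows that the hypothesis of the second part --- which excludes both~(a) and~(b) --- actually \emph{forces} $A''\cap\bigwedge^3 V_{14}=\{0\}$, so your tangent-space computation applies without further work; you should note this.

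The paper's proof takes a genuinely different route: rather than writing $A''$ as a graph over a coordinate Lagrangian (which fails in the degenerate cases), it fixes an auxiliary Lagrangian $B$ transverse to both $A''$ and $F_{\xi_0}$ and writes the \emph{varying} Lagrangian $F_{\xi_0+\xi}\cap S$ as the graph of a symmetric map $\psi(\xi)\colon A''\to B\cong(A'')^{\vee}$, so that $Q_{A''}=V(\det\psi)$ locally and $\ker\psi(0)=A''\cap F_{\xi_0}$. The tangent-space condition then reduces to the vanishing of the derivative $\frac{d\psi}{d\tau}$ on $\ker\psi(0)$, which is computed via a wedge-product formula and gives the result uniformly without any case analysis on $A''$. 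Your approach is more elementary when it applies; the paper's is more robust.
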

\begin{proof}
In order to simplify notation we let $S:=([v_0]\wedge V_{14}\wedge [v_5]\oplus \bigwedge^3 V_{14})$. Let $B\in\LL\GG(S)$ be transversal both to $A''$ and $F_{\xi_0}$. The symplectic form on $S$ defines an isomorphism $B\cong (A'')^{\vee}$. Choose a subspace $U\subset V_{14}$ complementary to $[\xi_0]$. We have an isomorphism
\begin{equation*}
\begin{matrix}
U & \overset{\sim}{\lra} & \PP(V_{14})\setminus\PP(U) \\
\xi & \mapsto & [\xi_0 +\xi]
\end{matrix}
\end{equation*}
onto a neighborhood of $[\xi_0]$. There is an open $U_0\subset U$ containing $0$ such that $F_{\xi_0+\xi}$ is transverse to $B$ for all $\xi\in U_0$. Let  $\xi\in U_0$: then $F_{\xi_0+\xi}$ is the graph of a linear map $\psi(\xi)\colon A''\to B=(A'')^{\vee}$. Since $F_{\xi_0+\xi}$ is lagrangian the map $\psi(\xi)$ is symmetric. Clearly we have
 \begin{equation}
Q_{A''}\cap U_0= V(\det\psi),\qquad \ker\psi(0)=A''\cap F_{\xi_0}.
\end{equation}
Now suppose that $\dim(A''\cap F_{\xi_0})\ge 2$. Then $\psi(0)$ has corank at least $2$ and hence $\dim T_{[\xi_0]}Q_{A''}=3$. On the other hand one checks at once that Item~(b) holds. Thus from now on we may  suppose that $\dim(A''\cap F_{\xi_0})=1$. Let
\begin{equation*}
A''\cap F_{\xi_0}=\la  \xi_0\wedge ( x v_0 \wedge v_5+ \alpha_0) \ra, 
\qquad \alpha_0\in\bigwedge^2 V_{14}.
\end{equation*}
Given $\tau\in U_0=T_{[\xi_0]}\PP(V_{14})$  we have
\begin{equation*}
\tau\in T_{[\xi_0]} Q_{A''} \iff \frac{d\psi}{d\tau}(\xi_0\wedge ( x v_0 \wedge v_5+ \alpha_0) )=0.
\end{equation*}
(Here we view $\frac{d\psi}{d\tau}$ as a quadratic form on $A''$.) Equation~(2.26) of~\cite{og2} (warning: the $v_0$ of~\cite{og2} is our $\xi_0$!)  gives that
\begin{equation*}
\frac{d\psi}{d\tau}(\xi_0\wedge ( x v_0 \wedge v_5+ \alpha_0) )=\vol(\tau\wedge\xi_0\wedge
( x v_0 \wedge v_5+ \alpha_0)\wedge( x v_0 \wedge v_5+ \alpha_0))=2x\vol(\tau\wedge\xi_0\wedge v_0\wedge v_5\wedge\alpha_0).
\end{equation*}
(Notice that $\alpha_0$ is decomposable and hence $\alpha_0\wedge\alpha_0=0$.) The proposition follows.
\end{proof}
In~\Ref{subsubsec}{cididi} we will need the following explicit computation.
Let $\{\eta_0,\eta_1,\eta_2,\eta_3\}$ be a basis of $V_{14}$ and $\{T_0,T_1,T_2,T_3\}$ be the dual basis of $V^{\vee}_{14}$. Let 
\begin{equation}\label{aprimo}
A'=[v_0]\wedge \la  \eta_0\wedge \eta_1 + \eta_2 \wedge \eta_3, 
\eta_0\wedge \eta_2 - \eta_1 \wedge \eta_3,  \eta_0\wedge \eta_3 + \eta_1 \wedge \eta_2    \ra  
\in \Gr(3,[v_0]\wedge\bigwedge^2 V_{14}).
\end{equation}
A straightforward computation gives that
\begin{equation}\label{stanquad}
Q_{A'}=V(T_0^2 + T_1^2 + T_2^2 + T_3^2).
\end{equation}
Notice that
\begin{equation}\label{aterzo}
A'''=(A')^{\bot}=[v_0]\wedge \la  \eta_0\wedge \eta_1 - \eta_2 \wedge \eta_3, 
\eta_0\wedge \eta_2 + \eta_1 \wedge \eta_3,  \eta_0\wedge \eta_3 - \eta_1 \wedge \eta_2    \ra  
\in \Gr(3,[v_0]\wedge\bigwedge^2 V_{14}).
\end{equation}
\subsubsection{The GIT analysis}
Let  $\lambda$ be a $1$-PS of $G_{\cD}$. We claim  that $I_{-}(\lambda)=\es$, see~\Ref{dfn}{ipiu}. In fact  
$G_{\cD}=\CC^{\times}\times \SL(V_{14})$ and hence it suffices to check that~\eqref{grupspec} holds for $\lambda$ with image in the $\CC^{\times}$-factor: now look at~\eqref{torodi}. The $1$-PS $\lambda$ defines  actions of $\CC^{\times}$ on $[v_0]\wedge \bigwedge^2 V_{14}$ and $([v_0]\wedge  V_{14}\wedge  [v_5]\oplus \bigwedge^3 V_{14})$:  
  we let $e'_0>\ldots > e'_{j(0)}$ and $e''_0>\ldots > e''_{j(1)}$  be the corresponding weights. 
  Now let   $A\in{\mathbb S}^{\sF}_{\cD}$. 
By~\eqref{sommapend}  and~\eqref{caramellamu} we have
\begin{equation}\label{saviano}
\mu(A,\lambda)=2\mu(A',\lambda)+\mu(A'',\lambda)=
2\sum_{i=0}^{j(0)} e'_i d^{\lambda}_i(A')+\sum_{i=0}^{j(1)} e''_i d^{\lambda}_i(A'').
\end{equation}
\begin{prp}\label{prp:gitsidi}
Let $A\in{\mathbb S}^{\sF}_{\cD}$. Then $A$  is not $G_{\cD}$-stable  if and only if one of the following holds:
\begin{enumerate}
\item[(1)]
$\dim(A''\cap [v_0]\wedge V_{14}\wedge [v_5])\ge 2$.
\item[(2)]
$\dim(A''\cap \bigwedge^3 V_{14})\ge 2$.
\item[(3)]
There exists a basis $\{\xi_0,\xi_1,\xi_2,\xi_3\}$ of $V_{14}$ such that one of the following holds:
\begin{enumerate}
\item[(3a)]
$A'\ni v_0\wedge\xi_0\wedge\xi_1$ and  
$A''\supset \la v_0\wedge\xi_0\wedge v_5, \xi_0\wedge\xi_1 \wedge\xi_2 \ra$.
\item[(3b)]
$A'\supset\la v_0\wedge\xi_0\wedge\xi_1,v_0\wedge\xi_0\wedge\xi_2 \ra$.
\item[(3c)]
$A'\supset \la v_0\wedge\xi_0\wedge\xi_1,  v_0\wedge(\xi_0\wedge\xi_3 + \xi_1 \wedge\xi_2)  \ra$ 
and there exists $0\not=(a v_0\wedge\xi_0\wedge v_5 + b \xi_0\wedge\xi_1 \wedge\xi_2)\in A''$. 
\end{enumerate}
\end{enumerate}
\end{prp}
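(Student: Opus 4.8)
The plan is to apply the Hilbert--Mumford criterion (\Ref{thm}{hilmum}) to the $G_{\cD}$-action on ${\mathbb S}^{\sF}_{\cD}$: the point $A$ is not $G_{\cD}$-stable if and only if there is a $1$-PS $\lambda$ of $G_{\cD}$ with $\mu(A,\lambda)\ge 0$, and throughout I would use the expansion $\mu(A,\lambda)=2\mu(A',\lambda)+\mu(A'',\lambda)$ of~\eqref{saviano}. The key preliminary observation is that, although $A'$ is not Lagrangian, the subspace $B:=A'\oplus A'''$ is a Lagrangian subspace of the $12$-dimensional symplectic space $[v_0]\wedge\bigwedge^2 V_{14}\oplus\bigwedge^2 V_{14}\wedge[v_5]$: both summands are isotropic, the pairing between them is perfect, and $A'''=\Ann(A')$ inside the second summand. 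Moreover the symplectic pairing matches the weight-$w$ part of this space with its weight-$(-w)$ part (the central weight $c$ cancels on $A'$, so $v_0\wedge\xi_i\wedge\xi_j$ and $\xi_k\wedge\xi_l\wedge v_5$ with $\{k,l\}=\{i,j\}^c$ carry opposite weights), so \Ref{clm}{nickcarter} and \Ref{clm}{dipsey} apply to $B$, and \Ref{clm}{mumeta} gives $\mu(B,\lambda)=2\mu(A',\lambda)$. Hence $\mu(A,\lambda)=\mu(B,\lambda)+\mu(A'',\lambda)$ is a sum of the numerical functions of two Lagrangians, to each of which the monotonicity of \Ref{clm}{domina} applies; this uniform reformulation is what makes the bookkeeping tractable.

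For the ``if'' direction I would produce, for each of the conditions (1)--(3c), an explicit destabilizing $\lambda$ and verify $\mu(A,\lambda)\ge 0$. Conditions (1) and (2) are handled by the central $1$-PS $\lambda_0(t)=(t,\Id_{V_{14}})$: a direct computation with the weights of $\rho_{\cD}$ (see~\eqref{torodi}) gives $\mu(A,\lambda_0)=6\bigl(\dim(A''\cap[v_0]\wedge V_{14}\wedge[v_5])-2\bigr)$, so that (1) forces $\mu(A,\lambda_0)\ge 0$, while (2) is exchanged with (1) under $\lambda_0\mapsto\lambda_0^{-1}$. For (3a), (3b), (3c) I would take $\lambda$ with nontrivial $SL(V_{14})$-component diagonal in the displayed basis $\{\xi_0,\xi_1,\xi_2,\xi_3\}$, with $r_0\ge r_1\ge r_2\ge r_3$ and central weight $c$ tuned so that the two privileged members of $A'$ (respectively of $A''$) named in the condition become $\lambda$-homogeneous of non-negative weight (e.g.\ in (3c) one needs $r_0+r_3=r_1+r_2$ and $6c=r_1+r_2$). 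The stated flag memberships then bound below the reduced $\lambda$-types of $B$ and of $A''$, and \Ref{clm}{domina} yields $\mu(B,\lambda)+\mu(A'',\lambda)\ge 0$.

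For the ``only if'' direction I would invoke the Cone Decomposition Algorithm (\Ref{prp}{algcon}) for $G_{\cD}=\CC^\times\times SL(V_{14})$ after checking its two hypotheses (here $T_0=\CC^\times$ has dimension $1$, so Item~(2) holds), reducing a destabilizing $\lambda$ to an ordering $1$-PS diagonal in a fixed maximal torus. Since $SL(V_{14})=SL_4$, the list of ordering $1$-PS's is short, and for each of them I would enumerate the types ${\bf d}$ with $\mu({\bf d},\lambda)\ge 0$ exactly as in Tables~\eqref{bandsemi1}--\eqref{bandsemi4}, reading off which of (1)--(3) is forced; the symmetry $d_i+d_{s-i}=\dim U_{e_i}$ of \Ref{clm}{nickcarter}, applied to both $B$ and $A''$, roughly halves the casework. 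Translating the top-weight flag inequalities back into intrinsic statements about $A'$ and $A''$ then produces precisely the adapted basis $\{\xi_0,\xi_1,\xi_2,\xi_3\}$ appearing in (3a)--(3c).

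The hardest part will be this last step for the mixed strata (3a)--(3c), where destabilization arises from a delicate cancellation between the (possibly negative) contribution of $B$ and that of $A''$ rather than from a single dominant term. Here I expect to need the explicit interplay between $A'$ and $A''$ recorded in \Ref{prp}{spatan}, together with the constraint $A'''=\Ann(A')$, both to recognize the correct normal forms and to confirm that the purely numerical analysis leaves open no destabilizing $1$-PS outside the list (1)--(3).
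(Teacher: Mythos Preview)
Your plan coincides with the paper's approach: both directions are handled via explicit destabilizing $1$-PS's for the ``if'' direction and the Cone Decomposition Algorithm (\Ref{prp}{algcon}) for the ``only if'' direction. The paper in fact uses only two $1$-PS's for the ``if'' part: the central $\lambda_0^{\pm 1}$ for (1)--(2), and a single $\lambda_1$ with $SL(V_{14})$-weights $(1,0,0,-1)$ and trivial central component for all of (3a), (3b), (3c); no separate tuning of the central weight is needed for the three sub-cases.

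Your repackaging $B=A'\oplus A'''$ as a Lagrangian in the $12$-dimensional symplectic space $[v_0]\wedge\bigwedge^2 V_{14}\oplus\bigwedge^2 V_{14}\wedge[v_5]$ is correct, and $\mu(B,\lambda)=2\mu(A',\lambda)$ does follow from \Ref{clm}{mumeta}. The paper does not use this device: the case analysis is small enough that it works directly with $2\mu(A',\lambda)+\mu(A'',\lambda)$, invoking the ordering $\succeq$ only to bound types from below. The ordering rays turn out to be just $\lambda_0^{\pm 1}$, $\lambda_1$, and three further $1$-PS's with trivial central component and $SL(V_{14})$-weights $(3,-1,-1,-1)$, $(1,1,1,-3)$, $(1,1,-1,-1)$; the verification that $\mu(A,\lambda)\ge 0$ for any of these three forces one of (1)--(3c) is a short explicit check.

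Your expectation that \Ref{prp}{spatan} will be needed is mistaken: the proof is purely numerical and uses nothing beyond the weight decompositions and the Cone Decomposition Algorithm. The geometric content of \Ref{prp}{spatan} enters later, in \Ref{crl}{gendistab} and the analysis of $C_{W,A}$, not in the stability criterion itself.
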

\begin{proof}
 Let $\lambda_0\colon \CC^{\times}\to G_{\cD}$ be the $1$-PS of $G_{\cD}$ mapping identically to the $\CC^{\times}$-factor and trivially to the $\SL(V_{14})$-factor. We let $\lambda^{-1}_0(t):=\lambda_0(t^{-1})$ be the inverse. We notice that $\lambda_0$ acts trivially on $[v_0]\wedge\bigwedge^2 V_{14}$ and the weight-decomposition of the $\lambda_0$-action on $([v_0]\wedge V_{14}\wedge [v_5])\oplus \bigwedge^3 V_{14}$ is the following:
  \begin{equation}
\underbrace{[v_0]\wedge V_{14}\wedge [v_5]}_{t^3}\oplus 
\underbrace{\bigwedge^3 V_{14}}_{t^{-3}}.
\end{equation}
Let
\begin{equation}\label{basebi}
\sB=\{\xi_0,\xi_1,\xi_2,\xi_3\}
\end{equation}
 be a basis of $V_{14}$.   Let $\lambda_1\colon \CC^{\times}\to \SL(V_{14})$  be defined by 
 \begin{equation}\label{piroso}
\lambda_1(t)\xi_0= t\xi_0,\quad \lambda_1(t)\xi_1= \xi_1,\quad \lambda_1(t)\xi_2= \xi_2,
\quad  \lambda_1(t)\xi_3= t^{-1}\xi_3.
\end{equation}
We view $\lambda_1$ as a  $1$-PS of $G_{\cD}$.   
The weight-decomposition of the $\lambda_1$-action on $[v_0]\wedge\bigwedge^2 V_{14}$ is the following:
  \begin{equation}
\underbrace{[v_0\wedge\xi_0]\wedge \la \xi_1,\xi_2\ra}_{t}\oplus 
\underbrace{\la v_0\wedge\xi_0\wedge \xi_3, v_0\wedge\xi_1\wedge \xi_2 \ra }_{1}\oplus 
\underbrace{[v_0\wedge\xi_3]\wedge \la \xi_1,\xi_2\ra}_{t^{-1}}.
\end{equation}
The weight-decomposition of the $\lambda_1$-action on $([v_0]\wedge V_{14}\wedge [v_5])\oplus \bigwedge^3 V_{14}$ is the following:
  \begin{equation}
\scriptsize
\underbrace{\la v_0\wedge\xi_0\wedge v_5, \xi_0\wedge \xi_1\wedge \xi_2\ra}_{t}\oplus 
\underbrace{\la v_0\wedge\xi_1\wedge v_5, v_0\wedge\xi_2\wedge v_5, 
\xi_0\wedge\xi_1\wedge \xi_3, \xi_0\wedge\xi_2\wedge \xi_3 \ra }_{1}\oplus 
\underbrace{\la v_0\wedge\xi_3\wedge v_5, \xi_1\wedge \xi_2\wedge \xi_3\ra}_{t^{-1}}.
\end{equation}
A straightforward computation gives the following:
\begin{enumerate}
\item[($1'$)]
If $A$ satisfies Item~(1) then $\mu(A,\lambda_0)\ge 0$.
\item[($2'$)]
If $A$ satisfies Item~(2) then $\mu(A,\lambda^{-1}_0)\ge 0$.
\item[($3a'$)]
If $A$ satisfies Item~(3a) then $d^{\lambda_1}(A')\succeq (1,0,2)$ and  $d^{\lambda_1}(A'')\succeq (2,2,0)$ thus $\mu(A,\lambda_1)\ge 0$.
\item[($3b'$)]
If $A$ satisfies Item~(3b) then $d^{\lambda_1}(A')\succeq (2,0,1)$ and  $d^{\lambda_1}(A'')\succeq (0,2,2)$ thus $\mu(A,\lambda_1)\ge 0$.
\item[($3c'$)]
If $A$ satisfies Item~(3c) then $d^{\lambda_1}(A')\succeq (1,1,1)$ and  $d^{\lambda_1}(A'')\succeq (1,2,1)$ thus $\mu(A,\lambda_1)\ge 0$.
\end{enumerate}
(The relation $\succeq$ is defined as in~\Ref{dfn}{ordpar}.) This proves that if one of Items~(1)-(3c) holds then $A$ is not $G_{\cD}$-stable. We will prove the converse by applying the Cone Decomposition Algorithm of~\Ref{subsec}{algocono}. 
We choose the maximal torus $T< G_{\cD}$ to be $T=\CC^{\times}\times\{\diag(t_0,t_1,t_2,t_3) \mid t_0\cdot\ldots\cdot t_4=1\}$ where the matrices are diagonal with respect to the basis $\sB$. We let $C\subset {\check X}(T)_{\RR}$ be the standard cone. Thus 
\begin{equation*}
{\check X}(T)_{\RR}:=\{(n,r_0,\ldots,r_3)\in\RR^5 \mid r_0+\ldots+ r_3=0\},\qquad
C:=\{(n,r_0,\ldots,r_3)\in \RR^5 \mid   r_0\ge r_1\ge \ldots\ge r_3\}.
\end{equation*}
Let
\begin{equation*}
x_i:=r_{i-1}-r_{i},\qquad i=1,2,3.
\end{equation*}
In  the new coordinates     $(n,x_1,x_2,x_3)$ we have
\begin{equation*}
C:=\{(n,x_1,\ldots,x_3) \mid  x_1\ge 0,\ x_2\ge 0,\ x_3\ge 0 \}.
\end{equation*}
The linear functions $r_0,\ldots,r
_3$ (restricted to ${\check X}(T)_{\RR}$) are expressed as follows in terms of the coordinates $x_1,\ldots,x_3$:
 \begin{equation}\label{quattropertre}\scriptsize
\begin{pmatrix}
r_0 \\
r_1 \\
r_2 \\
r_3 
\end{pmatrix} =
\left(\begin{array}{rrr}
3/4 & 1/2 &  1/4   \\
-1/4 & 1/2 &  1/4  \\
-1/4 & -1/2 &  1/4  \\
-1/4 & -1/2 &  -3/4 
\end{array}\right)
\cdot
\begin{pmatrix}
x_1 \\
x_2 \\
x_3 
\end{pmatrix}
\end{equation}
A hyperplane $H\subset {\check X}(T)_{\RR}$ is an ordering hyperplane if and ony if it is the kernel of one of the following linear functions:
\begin{equation}\label{altrimuri}
x_1,\ x_2,\ x_3,\ x_1-x_3,\ x_1+x_3\pm 12n,\  \ x_1-x_3\pm 12n,\  x_1+2x_2+x_3\pm 12n. 
\end{equation}
Thus the hypotheses of~\Ref{prp}{algcon} are satisfied. 
An easy computation gives that the ordering rays  in the $(n,x_1,x_2,x_3)$-coordinates are generated by the vectors 
\begin{equation*}
(\pm 1,0,0,0),\quad (0,1,0,1), \quad (0,1,0,0), \quad (0,0,0,1),\quad  (0,0,1,0).
\end{equation*}
Switching to $(n,r_0,r_1,r_2,r_3)$-coordinates via~\eqref{quattropertre} we get the $1$-PS's
\begin{equation*}
\lambda_0^{\pm 1},\quad\lambda_1,\quad (1,\diag(t^3,t^{-1},t^{-1},t^{-1})),
\quad  (1,\diag(t,t,t,t^{-3})),\quad  (1,\diag(t,t,t^{-1},t^{-1})).
\end{equation*}
A straightforward case-by-case analysis gives that if $\mu(A,\lambda)\ge 0$ for one of the last three $1$-PS's  then one of Items~(1)-(3c) holds. 
\end{proof}
\begin{crl}\label{crl:gendistab}
Let $A\in{\mathbb S}^{\sF}_{\cD}$ and let $A'$, $A''$ be as in~\eqref{aconi}. Suppose that $A''\cap\bigwedge^3 V_{14}=\{0\}$. Then $A$ is $G_{\cD}$-stable if and only if  $Q_{A'}\cap Q_{A''}$ is a  smooth curve.  In particular the generic $A\in{\mathbb S}^{\sF}_{\cD}$  is $G_{\cD}$-stable. 
\end{crl}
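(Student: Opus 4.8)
The strategy is to read the result off Proposition \ref{prp:gitsidi}, which characterizes non-$G_{\cD}$-stability of $A\in{\mathbb S}^{\sF}_{\cD}$ as the occurrence of one of the listed conditions (1), (2), (3a), (3b), (3c); what remains is to translate these into the geometry of the two quadrics $Q_{A'},Q_{A''}\subset\PP(V_{14})$. First I would discard the conditions incompatible with the hypothesis $A''\cap\bigwedge^3 V_{14}=\{0\}$. Condition (2) reads $\dim(A''\cap\bigwedge^3 V_{14})\ge 2$ and is excluded outright; condition (3a) demands $\xi_0\wedge\xi_1\wedge\xi_2\in A''$, i.e. a nonzero element of $A''\cap\bigwedge^3 V_{14}$, and is likewise excluded. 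Thus under the hypothesis $A$ is $G_{\cD}$-stable if and only if none of (1), (3b), (3c) holds, and it suffices to show that the disjunction of these three is precisely the failure of $Q_{A'}\cap Q_{A''}$ to be a smooth curve.

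Next I would set up the dictionary. Since $A''\cap\bigwedge^3 V_{14}=\{0\}$, the subspace $A''$ is the graph of the symmetric map $q_{A''}$ and $Q_{A''}=V(q_{A''})$, with $\cork q_{A''}=\dim(A''\cap[v_0]\wedge V_{14}\wedge[v_5])$; hence condition (1) says exactly $\rk q_{A''}\le 2$, i.e. $Q_{A''}$ is a pair of planes, a double plane, or all of $\PP(V_{14})$, in every case singular along a line, so that $Q_{A'}\cap Q_{A''}$ is reducible or non-reduced, in particular not a smooth curve. For condition (3b) I would use the identification $[v_0]\wedge\bigwedge^2 V_{14}\cong\bigwedge^2 V_{14}$ under which $\Theta_{A'}=\PP(A')\cap\Gr(2,V_{14})$ is a conic: the hypothesis $A'\supset\la\xi_0\wedge\xi_1,\xi_0\wedge\xi_2\ra$ forces this conic to contain the line $[\xi_0\wedge\la\xi_1,\xi_2\ra]$ of decomposables, whence the ruled surface $Q_{A'}$ contains the plane $\PP\la\xi_0,\xi_1,\xi_2\ra$ and so is a pair of planes (or $\PP(V_{14})$), again singular along a line. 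Conversely $Q_{A''}$ (resp. $Q_{A'}$) is singular along a line exactly when (1) (resp. (3b)) holds, using Proposition \ref{prp:spatan} for $Q_{A''}$ and the conic description of $\Theta_{A'}$ for $Q_{A'}$ (whose singular locus is a line precisely when that conic degenerates).

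The substantive remaining case is when both $Q_{A'}$ and $Q_{A''}$ have rank $\ge 3$. Here I would show that condition (3c) is equivalent to the pencil $\la Q_{A'},Q_{A''}\ra$ being non-regular, i.e. to $Q_{A'}$ and $Q_{A''}$ either sharing a singular point on their base locus or being tangent along it, which is exactly the condition for the complete intersection $Q_{A'}\cap Q_{A''}$ to acquire a node while staying one-dimensional. Concretely I would locate the candidate singular point $[\xi_0]$ of the intersection, apply Proposition \ref{prp:spatan} to read off $T_{[\xi_0]}Q_{A''}=\PP(\supp\alpha)$ from the $\bigwedge^3 V_{14}$-component $\alpha$ of the generator of $A''\cap F_{\xi_0}$, and match the coincidence $T_{[\xi_0]}Q_{A'}=T_{[\xi_0]}Q_{A''}$ (or a common vertex) with the precise shape of $A'$ and $A''$ recorded in (3c). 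Granting this, $A$ is $G_{\cD}$-stable iff $Q_{A'}\cap Q_{A''}$ is smooth. The genericity assertion then follows because smoothness of $Q_{A'}\cap Q_{A''}$ is an open, nonempty condition on ${\mathbb S}^{\sF}_{\cD}$: the explicit $A'$ of \eqref{aprimo} has the smooth quadric $Q_{A'}=V(T_0^2+T_1^2+T_2^2+T_3^2)$ by \eqref{stanquad}, and a generic $A''$ yields a smooth $Q_{A''}$ meeting it transversally, so the generic $A$ is $G_{\cD}$-stable.

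The main obstacle will be the honest verification of the rank-$\ge 3$ case: establishing that condition (3c) coincides, on the nose, with non-transversality (or a shared singular point) of the base locus, and that (1), (3b), (3c) together exhaust \emph{all} ways a complete intersection of two quadrics in $\PP^3$ can fail to be smooth. This is the point at which the classical theory of pencils of quadrics — the discriminant quartic and its repeated roots — must be reconciled with the weight bookkeeping underlying Proposition \ref{prp:gitsidi}.
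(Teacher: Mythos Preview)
Your proposal is correct and follows essentially the same route as the paper: discard Items~(2) and~(3a) of \Ref{prp}{gitsidi} via the hypothesis $A''\cap\bigwedge^3 V_{14}=\{0\}$, interpret Item~(1) as $\rk q_{A''}\le 2$, Item~(3b) as degeneracy of $\Theta_{A'}$ (equivalently $\rk Q_{A'}\le 2$), and Item~(3c) as non-transversality of $Q_{A'}\cap Q_{A''}$ at the distinguished point $[\xi_0]$ using \Ref{prp}{spatan}. The paper's verification of the (3c) equivalence is carried out at the same level of detail you anticipate in your final paragraph, so there is no hidden obstacle.
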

\begin{proof}
Let $[\xi_0]\in Q_{A''}$: then $\dim (A''\cap F_{\xi_0})=1$ because $A''\cap\bigwedge^3 V_{14}=\{0\}$. Let 
\begin{equation*}
A''\cap F_{\xi_0}=[ v_0\wedge\xi_0\wedge v_5+\alpha],\qquad \alpha\in \bigwedge^3 V_{14}.
\end{equation*}
By~\Ref{prp}{spatan} the projective tangent space to $Q_{A''}$ at $[\xi_0]$ is equal to
$\PP(\supp \alpha)$. Now assume that $\dim(A''\cap [v_0]\wedge V_{14}\wedge [v_5])\ge 2$. Then on one hand $A$ is not $G_{\cD}$-stable by~\Ref{prp}{gitsidi}, on the other hand $Q_{A''}$ is either $\PP(V_{14})$ or a quadric whose singular locus has dimension at least $1$ and hence $Q_{A'}\cap Q_{A''}$ is not a smooth curve. Thus from now on we may assume that $\dim(A''\cap [v_0]\wedge V_{14}\wedge [v_5])\le 1$. Notice that since $A''\cap\bigwedge^3 V_{14}=\{0\}$ we get that neither~(1), (2) or~(3a) of~\Ref{prp}{gitsidi} holds.  Next notice that~(3b) of 
~\Ref{prp}{gitsidi} holds if and only if $\Theta_{A'}$ is not a smooth conic i.e.~$Q_{A'}$ is  either all of $\PP(V_{14})$ or a quadric of rank at most $2$: it follows that we may suppose that $Q_{A'}$ is a smooth quadric.     
With  these hypotheses $Q_{A'}\cap Q_{A''}$ is not transverse at $[\xi_0]$ if and only if there exists a basis $\{\xi_0,\xi_1,\xi_2,\xi_3\}$ of $V_{14}$ such that~(3c) of~\Ref{prp}{gitsidi} holds.
\end{proof}
\begin{prp}\label{prp:malchiodi}
Let $A\in{\mathbb S}^{\sF}_{\cD}$ and suppose that $A$ is $G_{\cD}$-semistable. Suppose in addition that one of Items~($1$), ($2$), ($3a$), ($3b$) of~\Ref{prp}{gitsidi} holds. 
Then $A$ is $\PGL(V)$-equivalent to $A_{III}$.
\end{prp}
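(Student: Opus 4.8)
The plan is to show that each of the degenerations listed in Items~(1), (2), (3a), (3b) of~\Ref{prp}{gitsidi} forces $A$ into the orbit of $A_{III}$, by exhibiting in each case a maximal torus of $SL(V)$ fixing $A$ (up to further degeneration) and invoking~\Ref{clm}{unicotre} together with~\Ref{prp}{atrechiuso}. The guiding principle is exactly as in the proofs of~\Ref{prp}{versotre} and~\Ref{prp}{ciunononstab}: use the relevant hypothesis to produce a $1$-PS $\lambda$ of $G_{\cD}$ with $\mu(A,\lambda)=0$ (forced by semistability), pass via~\Ref{clm}{limite} to the $\lambda$-split limit $A_0$ in the same $G_{\cD}$-orbit-closure, and then iterate with further $1$-PS's until $A_0$ acquires a monomial basis diagonalized by a maximal torus acting trivially on $\bigwedge^{10}A_0$.

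First I would handle Items~(1) and~(2) simultaneously, since they are dual under the substitution $\lambda_0\leftrightarrow\lambda_0^{-1}$ (cf.~the proof of~\Ref{prp}{gitsidi}, where Item~(1) gives $\mu(A,\lambda_0)\ge 0$ and Item~(2) gives $\mu(A,\lambda_0^{-1})\ge 0$). Under Item~(1) we have $\dim(A''\cap[v_0]\wedge V_{14}\wedge[v_5])\ge 2$; semistability forces $\mu(A,\lambda_0)=0$, so the limit $A_0$ is $\lambda_0$-split and its $A_0''$-part contains a $2$-dimensional piece of $[v_0]\wedge V_{14}\wedge[v_5]$ paired under the lagrangian condition with a $2$-dimensional piece of $\bigwedge^3 V_{14}$. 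This reduces the remaining freedom to a product of smaller Grassmannians acted on by $SL(V_{14})$, exactly as in the quadric-hypersurface trichotomy (two points / one point / a line) appearing in~\Ref{prp}{versotre} and~\Ref{prp}{ciunononstab}. I would then use a second, generic $1$-PS $\lambda_1$ of $SL(V_{14})$ as in~\eqref{piroso} to split further, landing on an $A_0$ whose intersections with $\bigwedge^2 U\wedge[v_5]$-type spaces are spanned by decomposable monomials; the generic (transverse) case gives a monomial basis directly, and the degenerate cases lie in the closure of the generic one, hence in $SL(V)A_{III}$ by~\Ref{prp}{atrechiuso}.

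For Items~(3a) and~(3b) the hypotheses already supply an explicit partial basis $\{\xi_0,\xi_1,\xi_2,\xi_3\}$ of $V_{14}$ together with the $1$-PS $\lambda_1$ of~\eqref{piroso}, for which the proof of~\Ref{prp}{gitsidi} records $d^{\lambda_1}(A')\succeq(1,0,2)$, $d^{\lambda_1}(A'')\succeq(2,2,0)$ (case 3a) or $d^{\lambda_1}(A')\succeq(2,0,1)$, $d^{\lambda_1}(A'')\succeq(0,2,2)$ (case 3b), giving $\mu(A,\lambda_1)\ge 0$. Semistability again forces equality, so by~\Ref{clm}{limite} I replace $A$ with its $\lambda_1$-split limit and read off that the graded pieces are spanned by monomials in the $v_0,\xi_i,v_5$. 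A final application of a generic torus in $SL(V_{14})$ diagonalizes these pieces; the resulting $A_0$ is fixed by the maximal torus diagonal in a suitable basis $\sF'$ of $V$ and acts trivially on $\bigwedge^{10}A_0$, whence $A_0=A_{III}^{\sF'}$ by~\Ref{clm}{unicotre}, and $A\in SL(V)A_{III}$.

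The main obstacle I expect is bookkeeping rather than conceptual: in each of the four cases one must verify that after the successive $\lambda$-limits the split lagrangian genuinely admits a monomial basis satisfying the combinatorial incidence pattern encoded by the matrix $N$ of~\eqref{matricenne}, i.e.~that the ``Sudoku-like'' hypotheses of~\Ref{clm}{unicotre} (conditions~(1) and~(2) on the tripleton set $\cS$) are met. The delicate points are the non-generic branches of the quadric trichotomy, where $\PP(H)$ meets $\Gr(2,\cdot)$ in a single point or a line; there I would argue, exactly as in~\Ref{prp}{versotre} and~\Ref{prp}{ciunononstab}, that such $A_0$ lie in the closure of the two-point (generic) locus and therefore in the single closed orbit $SL(V)A_{III}$ guaranteed by~\Ref{prp}{atrechiuso}, rather than attempting to diagonalize them directly.
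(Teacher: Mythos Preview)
Your outline is correct and matches the paper's approach: repeated $1$-PS limits (forced to be split by semistability) until one reaches a lagrangian with monomial basis, then~\Ref{clm}{unicotre} and~\Ref{prp}{atrechiuso}. Two small points where the paper is sharper than your sketch. First, for Items~(1) and~(2) the paper does not run a separate quadric-trichotomy argument: after the $\lambda_0^{\pm 1}$-limit one simply observes that the resulting $A_0=A'\oplus B\oplus C\oplus A'''$ (with $B\in\Gr(2,[v_0]\wedge V_{14}\wedge[v_5])$ and $C=B^{\bot}\cap\bigwedge^3 V_{14}$) already satisfies Item~(3a), so everything is reduced to that case. Second, after the $\lambda_1$-limit in case~(3a) the graded pieces are \emph{not} yet monomial (the $A_0''$-piece still has $2$-dimensional weight spaces), and ``a generic torus'' is not what finishes the job, since a generic $1$-PS will have $\mu\ne 0$; the paper writes down an explicit second $1$-PS $\lambda_2$ of $SL(V_{14})$ (with weights $(1,-1,1,-1)$ on $\xi_1,\xi_0,\xi_3,\xi_2$) for which $\mu(A_0,\lambda_2)=0$ and whose limit $A_{00}$ is monomial. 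Case~(3b) is then handled by duality. Modulo these streamlinings your plan is the paper's proof.
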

\begin{proof}
 Suppose that Item~($1$) or~($2$) holds. Taking  $\lim_{t\to 0}\lambda_0(t)A$ (respectively $\lim_{t\to 0}\lambda^{-1}_0(t)A$) and applying~\Ref{clm}{limite} we get that $A$  is $G_{\cD}$-equivalent to 
\begin{equation*}
A_0=A'\oplus B\oplus C\oplus A''',\qquad B\in\Gr(2,[v_0]\wedge V_{14}\wedge[v_5]),\quad
C=B^{\bot}\cap \bigwedge V_{14}.
\end{equation*}
It follows easily that $A_0$ satisfies Item~($3a$) in the statement of~\Ref{prp}{gitsidi}. Thus we may assume from the start that one of Items~($3a$), ($3b$) holds. Suppose that Item~($3a$) holds. As shown in the proof of~\Ref{prp}{gitsidi} it follows that $d^{\lambda_1}(A')\succeq (1,0,2)$ and $d^{\lambda_1}(A'')\succeq (2,2,0)$. 
Taking  $\lim_{t\to 0}\lambda_1(t)A$ we get that $A$  is $G_{\cD}$-equivalent to a $\lambda_1$-split $A_0\in{\mathbb S}^{\sF}_{\cD}$ with
\begin{equation*}
A'_0=\la  v_0\wedge \xi_0\wedge \xi_1, v_0\wedge \xi_1\wedge \xi_3, v_0\wedge \xi_2\wedge \xi_3 \ra,\quad 
A''_0\supset \la v_0\wedge \xi_0\wedge v_5, \xi_0\wedge \xi_1\wedge \xi_2 \ra. 
\end{equation*}
Let $\lambda_2$ be the $1$-PS of $\SL(V_{14})$ defined by 
\begin{equation*}
\lambda_2(t)\xi_1=t \xi_1,\quad \lambda_2(t)\xi_3=t \xi_3,\quad 
\lambda_2(t) \xi_0=t^{-1} \xi_0,\quad \lambda_2(t)  \xi_2=t^{-1} \xi_2. 
\end{equation*}
One checks easily that $\mu(A_0,\lambda_2)= 0$ and that  $A_{00}=\lim_{t\to 0}\lambda_2(t)A_0$ has a monomial basis. By~\Ref{clm}{unicotre} we get that  $A_{00}\in \PGL(V)A_{III}$ and hence $A_{00}$ is $G_{\cD}$-equivalent to $A_{III}$.  It follows  by duality that if    Item~($3b$) holds  then $A$ is $G_{\cD}$-equivalent to  $A_{III}$.
\end{proof}
\begin{crl}\label{crl:quadliscia}
Let $A\in{\mathbb S}^{\sF}_{\cD}$ be semistable and suppose that $A$  is not $\PGL(V)$-equivalent to $A_{III}$. Then $Q_{A'}$ is a smooth quadric.
\end{crl}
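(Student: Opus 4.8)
The plan is to argue by contrapositive: assuming $Q_{A'}$ is not a smooth quadric I will extract exactly the flag configuration of Item~(3b) of~\Ref{prp}{gitsidi}, and then invoke~\Ref{prp}{malchiodi} to force $A\in\PGL(V)A_{III}$, contradicting the hypothesis. First I would set up the standard identification $A'=[v_0]\wedge\ov{A}'$ with $\ov{A}'\in\Gr(3,\bigwedge^2 V_{14})$. Since every element of $A'$ has $v_0$ in its support, each $W\in\Theta_{A'}$ has the form $W=\la v_0\ra\oplus U$ with $U\in\Gr(2,V_{14})$, and $\bigwedge^3 W\subset A'$ exactly when $\bigwedge^2 U\subset\ov{A}'$. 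Thus $\Theta_{A'}$ is identified with the conic $\PP(\ov{A}')\cap\Gr(2,V_{14})$ cut out on the plane $\PP(\ov{A}')$ by the Pl\"ucker quadric, and by construction $Q_{A'}=\bigcup_{U\in\Theta_{A'}}\PP(U)$.

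The crux is the dictionary (already used tacitly in the proof of~\Ref{crl}{gendistab}) that $Q_{A'}$ is a smooth quadric if and only if $\Theta_{A'}$ is a smooth conic, and I would prove the two directions I actually need. For the implication I will use directly, suppose $\Theta_{A'}$ is \emph{not} a smooth conic. Being a plane conic, it is then reducible, or non-reduced, or all of $\PP(\ov{A}')$; in each case it contains a line $L\subset\Gr(2,V_{14})$. Such a line is a pencil of $2$-planes through a fixed point inside a fixed hyperplane of $V_{14}$, so there is a basis $\{\xi_0,\xi_1,\xi_2,\xi_3\}$ of $V_{14}$ with $\la\la L\ra\ra=\la\xi_0\wedge\xi_1,\xi_0\wedge\xi_2\ra\subset\ov{A}'$, i.e.~$A'\supset\la v_0\wedge\xi_0\wedge\xi_1,v_0\wedge\xi_0\wedge\xi_2\ra$; this is precisely Item~(3b). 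The hard part will be the forward direction, which is what makes the dictionary close: a smooth conic $\Theta_{A'}$ spans its plane and meets $\Gr(2,V_{14})$ properly, so it is contained in neither of the two families of maximal $\PP^2$'s of the Pl\"ucker quadric; hence its ruling lines $\PP(U)$ neither all pass through one point nor all lie in one plane. Since $Q_{A'}$ is cut out by a single quadratic form on $\PP(V_{14})$ (see~\eqref{stanquad}), and since every line contained in a quadric cone passes through the vertex while a reducible or double quadric confines its lines to a plane, $Q_{A'}$ can be none of these degenerate types; it is therefore a smooth quadric. This sweeping argument is the one genuinely geometric step, and is where I would be most careful.

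Finally I would wrap up. Suppose $Q_{A'}$ is not a smooth quadric. By the dictionary $\Theta_{A'}$ is not a smooth conic, so Item~(3b) of~\Ref{prp}{gitsidi} holds. Because $A$ is (SL$(V)$-)semistable it is $G_{\cD}$-semistable by~\Ref{clm}{slagix}, and together with~(3b) this places $A$ under the hypotheses of~\Ref{prp}{malchiodi}, giving $A\in\PGL(V)A_{III}$, against the assumption. Hence $Q_{A'}$ is a smooth quadric. Note that no condition on $A''$ (such as $A''\cap\bigwedge^3 V_{14}=\{0\}$) is needed here, since both the dictionary and Item~(3b) involve only the summand $A'$; this is why the statement holds for all semistable $A$ outside the orbit of $A_{III}$, not just those handled by~\Ref{crl}{gendistab}.
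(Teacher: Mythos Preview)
Your proof is correct and follows the same route as the paper: show that ``$Q_{A'}$ not smooth'' forces Item~(3b) of~\Ref{prp}{gitsidi}, then invoke~\Ref{prp}{malchiodi}. The paper's proof is a one-liner doing exactly this, relying on the equivalence (stated without proof in the argument for~\Ref{crl}{gendistab}) between (3b) and ``$\Theta_{A'}$ is not a smooth conic'', together with the dictionary between smoothness of $\Theta_{A'}$ and of $Q_{A'}$; you have supplied the geometry behind that dictionary.

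Two small wording issues, neither fatal. First, when you say $\Theta_{A'}$ ``meets $\Gr(2,V_{14})$ properly'' you mean that the plane $\PP(\ov{A}')$ is not contained in $\Gr(2,V_{14})$; it is this, combined with the fact that a smooth conic spans its plane, that prevents $\PP(\ov{A}')$ from being an $\alpha$- or $\beta$-plane. Second, your phrase ``a reducible or double quadric confines its lines to a plane'' is imprecise for a rank-$2$ quadric $\Pi_1\cup\Pi_2$: each line lies in some $\Pi_i$, so $\Theta_{A'}\subset\beta_{\Pi_1}\cup\beta_{\Pi_2}$, and irreducibility of the smooth conic then forces it into a single $\beta_{\Pi_i}$, giving the contradiction as before. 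With these clarifications your argument also shows that $Q_{A'}$ can never have rank $3$ (so ``not smooth'' really means ``rank $\le 2$ or $\PP(V_{14})$''), which explains the phrasing in the proof of~\Ref{crl}{gendistab}.
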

\begin{proof}
Suppose that $Q_{A'}$ is not a smooth quadric: then Item~($3b$) of~\Ref{prp}{gitsidi} holds and hence we get a contradiction by~\Ref{prp}{malchiodi}.
\end{proof}
\begin{rmk}
Let
\begin{equation}\label{arieccolo}
\scriptsize
A:=\la v_0\wedge \xi_0\wedge \xi_1, v_0\wedge \xi_0\wedge \xi_3, v_0\wedge \xi_2\wedge \xi_3,  
v_0\wedge \xi_1\wedge v_5, v_0\wedge \xi_2\wedge v_5,  
\xi_0\wedge \xi_1\wedge \xi_2, \xi_1\wedge \xi_2\wedge \xi_3, 
\xi_0\wedge \xi_2\wedge v_5, \xi_0\wedge \xi_3\wedge v_5, \xi_1\wedge \xi_3\wedge v_5 \ra.
\end{equation}
Then $A\in {\mathbb S}^{\sF}_{\cD}$. Applying~\Ref{clm}{unicotre} we get that the left-hand side belongs to $\PGL(V)A_{III}$. Thus  $\PGL(V)A_{III}\cap {\mathbb S}^{\sF}_{\cD}$ is not empty.
\end{rmk}
Let $\sB$ be the basis of $V_{14}$ appearing in the proof of~\Ref{prp}{gitsidi} - see~\eqref{basebi}.
Let $\lambda_1$ be the $1$-PS of $G_{\cD}$ defined by~\eqref{piroso}.
Let $\wh{\mathbb S}^{\sF}_{\cD}$ be the affine cone over ${\mathbb S}^{\sF}_{\cD}$; then $G_{\cD}$ acts on $\wh{\mathbb S}^{\sF}_{\cD}$. The fixed locus $(\wh{\mathbb S}^{\sF}_{\cD})^{\lambda_1}$ is the set of $A$ which are mapped to themselves by $\wedge^3\lambda_1(t)$ and such that 
$\wedge^3\lambda_1(t)$ acts trivially on $\bigwedge^{10} A$.
\begin{dfn}
Let $\MM^{\sB}_{\cD}\subset\PP((\wh{\mathbb S}^{\sF}_{\cD})^{\lambda_1})$ be the set of $A$  such that 
$\wedge^3\lambda_1(t)$ acts trivially on $\bigwedge^3 A'$, $\bigwedge^4 A''$, and $\bigwedge^3 A'''$ (as usual $A',A'',A'''$ are as in~\eqref{aconi}).
\end{dfn}
\begin{rmk}\label{rmk:tispiezzo}
Let's adopt the notation introduced in the proof of~\Ref{prp}{gitsidi}. Suppose that $A\in{\mathbb S}^{\sF}_{\cD}$;  then $A\in \MM^{\sB}_{\cD}$ if and only if $A'$, $A''$ are $\lambda_1$-split of types $d^{\lambda_1}(A')=(1,1,1)$ and $d^{\lambda_1}(A'')=(1,2,1)$. Moreover $\MM^{\sB}_{\cD}$ is an irreducible component of $\PP((\wh{\mathbb S}^{\sF}_{\cD})^{\lambda_1})$.
\end{rmk}
\begin{prp}
Suppose that $A$ is properly $G_{\cD}$-semistable i.e.~$G_{\cD}$-semistable but not $G_{\cD}$-stable. Then there exists $A_0\in\MM^{\sB}_{\cD}$ which is $G_{\cD}$-equivalent to $A$. 
\end{prp}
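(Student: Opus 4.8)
The plan is to show that any properly $G_{\cD}$-semistable $A \in {\mathbb S}^{\sF}_{\cD}$ is $G_{\cD}$-equivalent to a point of $\MM^{\sB}_{\cD}$ (for a suitable basis $\sB$), by reducing to the case where Item~(3c) of~\Ref{prp}{gitsidi} holds and then identifying the limit under a one-parameter subgroup. First I would invoke~\Ref{prp}{gitsidi}: since $A$ is not $G_{\cD}$-stable, one of Items~(1)--(3c) holds. If one of Items~(1), (2), (3a), (3b) holds, then~\Ref{prp}{malchiodi} gives that $A$ is $\PGL(V)$-equivalent to $A_{III}$; I would then use the remark containing~\eqref{arieccolo}, which exhibits an element of $\PGL(V)A_{III} \cap {\mathbb S}^{\sF}_{\cD}$, together with the analysis in the proof of~\Ref{prp}{malchiodi} showing that such an $A$ is $G_{\cD}$-equivalent to an element of $\MM^{\sF}_{\cD}$ (and hence, after relabeling the basis, of $\MM^{\sB}_{\cD}$). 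This disposes of all the "degenerate" cases.

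The main case is therefore Item~(3c): there is a basis $\sB = \{\xi_0,\xi_1,\xi_2,\xi_3\}$ of $V_{14}$ such that $A' \supset \la v_0\wedge\xi_0\wedge\xi_1, v_0\wedge(\xi_0\wedge\xi_3 + \xi_1\wedge\xi_2)\ra$ and $A''$ contains a nonzero element $a v_0\wedge\xi_0\wedge v_5 + b\,\xi_0\wedge\xi_1\wedge\xi_2$. As computed in the proof of~\Ref{prp}{gitsidi}, for the one-parameter subgroup $\lambda_1$ defined by~\eqref{piroso} one has $d^{\lambda_1}(A') \succeq (1,1,1)$ and $d^{\lambda_1}(A'') \succeq (1,2,1)$, so $\mu(A,\lambda_1) \ge 0$. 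Because $A$ is $G_{\cD}$-semistable, \Ref{clm}{dipsey}-type positivity (via~\eqref{saviano}) forces $\mu(A,\lambda_1) = 0$, which in turn forces equality in both reduced-type inequalities: $d^{\lambda_1}(A') = (1,1,1)$ and $d^{\lambda_1}(A'') = (1,2,1)$. I would then form $A_0 := \lim_{t\to 0}\lambda_1(t)A$; by~\Ref{clm}{limite} the summands $A_0'$ and $A_0''$ are $\lambda_1$-split of exactly these types, and $A_0$ is $G_{\cD}$-equivalent to $A$ (the orbit closure meets the fixed locus in a semistable point, and semistability with $\mu=0$ guarantees the limit lies in the same $G_{\cD}$-equivalence class). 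Finally, by~\Ref{rmk}{tispiezzo} the condition that $A_0'$, $A_0''$ be $\lambda_1$-split of types $(1,1,1)$ and $(1,2,1)$ is precisely the defining condition for membership in $\MM^{\sB}_{\cD}$, so $A_0 \in \MM^{\sB}_{\cD}$, as required.

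The step I expect to be the main obstacle is verifying that the limit $A_0$ is genuinely $G_{\cD}$-\emph{equivalent} to $A$ rather than merely sitting in its orbit closure. Since $A$ is only properly semistable (not stable), $\PGL(V)A$ need not be closed, so one must argue that the minimal orbit in $\overline{G_{\cD}A}$ is reached and that $A_0$ represents it. The standard fact is that when $\mu(A,\lambda_1)=0$, the limit $A_0 = \lim_{t\to 0}\lambda_1(t)A$ is semistable and $G_{\cD}$-equivalent to $A$ in the sense of the quotient map $Z^{ss}\to Z//G$ (their closed orbits in $\lagr^{ss}$ coincide); this is exactly the $G$-equivalence relation recalled in~\Ref{sec}{sottoparam}. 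I would make this explicit by noting that $\lambda_1$ being a $1$-PS with $\mu(A,\lambda_1)=0$ places $A$ and $A_0$ in the same fiber of the quotient. A secondary subtlety is confirming that in Item~(3c), the various possible values of $(a,b)$ (in particular the possibility $b=0$, which would drop us back into a degenerate case) either already fall under Items~(1)--(3b) — handled by~\Ref{prp}{malchiodi} — or yield the generic $\lambda_1$-type equalities above; a short case check on $(a,b)$ closes this gap.
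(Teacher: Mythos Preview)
Your proposal is correct and follows essentially the same approach as the paper. In case~(3c) you take the limit under $\lambda_1$ and invoke~\Ref{rmk}{tispiezzo}, exactly as the paper does; for cases~(1), (2), (3a), (3b) you cite the proof of~\Ref{prp}{malchiodi}, whose argument is verbatim the one the paper repeats here (reduce to a monomial lagrangian in $\PGL(V)A_{III}$ and use~\eqref{arieccolo}). Your worry about $G_{\cD}$-equivalence of $A$ and $A_0=\lim_{t\to 0}\lambda_1(t)A$ is resolved exactly as you indicate: $\mu(A,\lambda_1)=0$ places $A_0$ in $\overline{G_{\cD}A}\cap{\mathbb S}^{\sF,ss}_{\cD}$, so they are $G_{\cD}$-equivalent in the sense defined in~\Ref{sec}{sottoparam}.
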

\begin{proof}
By~\Ref{prp}{gitsidi} one of Items~($1$), ($2$), ($3a$), ($3b$) or ($3c$) of that proposition holds. We will adopt the notation introduced in the proof of~\Ref{prp}{gitsidi}.
If Item~($3c$) holds then   by~\Ref{rmk}{tispiezzo} there exists $A_0\in\MM^{\sB}_{\cD}$ which is $G_{\cD}$-equivalent to $A$. 
Now suppose that Item~($1$) or~($2$) holds. Taking  $\lim_{t\to 0}\lambda_0(t)A$ (respectively $\lim_{t\to 0}\lambda^{-1}_0(t)A$) and applying~\Ref{clm}{limite} we get that $A$  is $G_{\cD}$-equivalent to 
\begin{equation*}
A_0=A'\oplus B\oplus C\oplus A''',\qquad B\in\Gr(2,[v_0]\wedge V_{14}\wedge[v_5]),\quad
C=B^{\bot}\cap \bigwedge V_{14}.
\end{equation*}
It follows easily that $A_0$ satisfies Item~($3a$) in the statement of~\Ref{prp}{gitsidi}. Thus we may assume from the start that one of Items~($3a$), ($3b$) holds. Suppose that Item~($3a$) holds. As shown in the proof of~\Ref{prp}{gitsidi} it follows that $d^{\lambda_1}(A')\succeq (1,0,2)$ and $d^{\lambda_1}(A'')\succeq (2,2,0)$. 
Taking   $\lim_{t\to 0}\lambda_1(t)A$  and applying~\Ref{clm}{limite}  we get that $A$  is $G_{\cD}$-equivalent to a $\lambda_1$-split  $A_0\in{\mathbb S}^{\sF}_{\cD}$ with
\begin{equation*}
A'_0=\la  v_0\wedge \xi_0\wedge \xi_1, v_0\wedge \xi_1\wedge \xi_3, v_0\wedge \xi_2\wedge \xi_3 \ra,\quad 
A''\supset \la v_0\wedge \xi_0\wedge v_5, \xi_0\wedge \xi_1\wedge \xi_2 \ra. 
\end{equation*}
Let $\lambda_2$ be the $1$-PS of $\SL(V_{14})$ defined by 
\begin{equation*}
\lambda_2(t)\xi_1=t \xi_1,\quad \lambda_2(t)\xi_3=t \xi_3,\quad 
\lambda_2(t) \xi_0=t^{-1} \xi_0,\quad \lambda_2(t)  \xi_2=t^{-1} \xi_2. 
\end{equation*}
One checks easily that $\mu(A_0,\lambda_2)= 0$ and that  $A_{00}=\lim_{t\to 0}\lambda_2(t)A_0$ has a monomial basis. By~\Ref{clm}{unicotre} we get that  $A_{00}\in \PGL(V)A_{III}$ and hence $A_{00}$ is $G_{\cD}$-equivalent to an element of  $\MM^{\sF}_{\cD}$ by~\eqref{arieccolo}. This proves that if   Item~($3a$) holds then $A$ is $G_{\cD}$-equivalent to an element of $\MM^{\sF}_{\cD}$. It follows  by duality that if    Item~($3b$) holds  then $A$ is $G_{\cD}$-equivalent to an element of $\MM^{\sF}_{\cD}$.
\end{proof}
\subsubsection{Analysis of $\Theta_A$ and $C_{W,A}$}\label{subsubsec:cididi}
\begin{prp}\label{prp:pianididi}
Let $A\in{\mathbb S}^{\sF}_{\cD}$  be $G_{\cD}$-semistable 
and suppose that it is not $\PGL(V)$-equivalent to $A_{III}$. 
Let $W\in\Theta_A$. Then one of the following holds:
\begin{enumerate}
\item[(1)]
$\dim (W\cap V_{14})=1$ and  $W= \la \eta_0 , v_0+\eta_2, \eta_1 + v_5\ra$ where $\eta_0,\eta_1,\eta_2\in V_{14}$. Moreover we may assume that one of the following holds:
\begin{enumerate}
\item[(1a)]
$v_0\wedge\eta_0\wedge v_5\in A''$   and $\eta_1=0$ or $\eta_2=0$. 
\item[(1b)]
${\bf T}_{[\eta_0]}Q_{A'}\subset {\bf T}_{[\eta_0]}Q_{A''}$ and  $A$ is not $G_{\cD}$-stable.
\end{enumerate}
\item[(2)]
$\dim (W\cap V_{14})=2$ and 
\begin{enumerate}
\item[(2a)]
$W\in(\Theta_{A'}\cup\Theta_{A'''})$ or
\item[(2b)]
$W=\la  v_0+\eta_2,  \eta_0, \eta_1 \ra$ where  $\eta_0,\eta_1,\eta_2\in V_{14}$  are linearly independent.   
\item[(2c)]
$W=\la  v_5+\eta_2,  \eta_0, \eta_1 \ra$   where  $\eta_0,\eta_1,\eta_2\in V_{14}$  are linearly independent.    
\end{enumerate}
If either one of~(2b), (2c) holds then $A$ is  not $G_{\cD}$-stable.
\item[(3)]
$W\subset V_{14}$. 
\end{enumerate}
\end{prp}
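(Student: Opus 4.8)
The plan is to test the condition $W\in\Theta_A$ on a generator of $\bigwedge^3 W$ and read off the consequences weight by weight. Throughout I may assume by~\Ref{crl}{quadliscia} that $Q_{A'}$ is a smooth quadric in $\PP(V_{14})$, since $A$ is not $\PGL(V)$-equivalent to $A_{III}$. Fix $W\in\Theta_A$ and let $\omega$ generate $\bigwedge^3 W\subset A$. Splitting $\omega=\omega_{+}+\omega_{0}+\omega_{-}$ according to the three isotypical summands $[v_0]\wedge\bigwedge^2 V_{14}$, $([v_0]\wedge V_{14}\wedge[v_5]\oplus\bigwedge^3 V_{14})$ and $\bigwedge^2 V_{14}\wedge[v_5]$ of $\bigwedge^3\lambda_{\cD}$, and using that $A$ is $\lambda_{\cD}$-split, gives $\omega_{+}\in A'$, $\omega_{0}\in A''$, $\omega_{-}\in A'''$. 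Since $\dim W+\dim V_{14}-\dim V=1$, the integer $d:=\dim(W\cap V_{14})$ lies in $\{1,2,3\}$, and these values correspond to Cases~(3), (2) and~(1) respectively.

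The value $d=3$ is Case~(3) by definition. For $d=2$ pick a basis $u,v$ of $W\cap V_{14}$ and $w=a v_0+b v_5+\eta\in W$ with $\eta\in V_{14}$, $(a,b)\neq(0,0)$; the weight splitting of $u\wedge v\wedge w$ gives $a\,v_0\wedge u\wedge v\in A'$, $u\wedge v\wedge\eta\in A''$ and $b\,u\wedge v\wedge v_5\in A'''$. First, $a\neq0\neq b$ is impossible: it would place the line $\PP\la u,v\ra$ simultaneously in the $A'$-ruling and the $A'''$-ruling of the smooth quadric $Q_{A'}=Q_{A'''}$, whereas (for $A\not\cong A_{III}$) these are its two distinct rulings. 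So exactly one of $a,b$ is nonzero, say $a\neq0=b$, the other being symmetric under $v_0\leftrightarrow v_5$. If $\eta\in\la u,v\ra$ then $v_0\in W$ and $v_0\wedge u\wedge v\in A'$, i.e.\ $W\in\Theta_{A'}$: Case~(2a). If $\eta\notin\la u,v\ra$ then, after rescaling, $W=\la v_0+\eta_2,\eta_0,\eta_1\ra$ (Case~2b), while $u\wedge v\wedge\eta$ is a nonzero element of $A''\cap\bigwedge^3 V_{14}$; one then exhibits a one-parameter subgroup of $G_{\cD}$ with non-negative numerical function (as in the proof of~\Ref{prp}{gitsidi}), whence $A$ is not $G_{\cD}$-stable.

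For $d=1$ let $\eta_0$ span $W\cap V_{14}$ and normalise $W=\la\eta_0,\,v_0+\eta_2,\,v_5+\eta_1\ra$ with $\eta_1,\eta_2\in V_{14}$. Expanding $\eta_0\wedge(v_0+\eta_2)\wedge(v_5+\eta_1)$ and separating weights yields $v_0\wedge\eta_0\wedge\eta_1\in A'$, $\eta_0\wedge\eta_2\wedge v_5\in A'''$, and a weight-$0$ element that combines $v_0\wedge\eta_0\wedge v_5$ with the $\bigwedge^3 V_{14}$-term $\alpha:=\eta_0\wedge\eta_2\wedge\eta_1$ lying in $A''$. If $\eta_1\in\la\eta_0\ra$, $\eta_2\in\la\eta_0\ra$, or $\eta_0\wedge\eta_1\wedge\eta_2=0$, then $\alpha=0$, so $v_0\wedge\eta_0\wedge v_5\in A''$ and, after absorbing a multiple of $\eta_0$, $\eta_1=0$ or $\eta_2=0$: this is Case~(1a). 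Otherwise $\eta_0,\eta_1,\eta_2$ are independent; then $Q_{A'}$ contains the two lines $\PP\la\eta_0,\eta_1\ra$ and $\PP\la\eta_0,\eta_2\ra$ (the first from $v_0\wedge\eta_0\wedge\eta_1\in A'$, the second from $\eta_0\wedge\eta_2\wedge v_5\in A'''$ together with $A'''=(A')^{\bot}$ and $Q_{A'''}=Q_{A'}$), one from each ruling, so, $Q_{A'}$ being smooth, ${\bf T}_{[\eta_0]}Q_{A'}=\PP\la\eta_0,\eta_1,\eta_2\ra$. By~\Ref{prp}{spatan} applied to the $A''$-element, $\supp\alpha=\la\eta_0,\eta_1,\eta_2\ra$ computes ${\bf T}_{[\eta_0]}Q_{A''}$, so ${\bf T}_{[\eta_0]}Q_{A'}\subset{\bf T}_{[\eta_0]}Q_{A''}$; hence $Q_{A'}\cap Q_{A''}$ is singular at $[\eta_0]$ and $A$ is not $G_{\cD}$-stable by~\Ref{crl}{gendistab} (the residual possibility $A''\cap\bigwedge^3 V_{14}\neq\{0\}$ being disposed of via~\Ref{prp}{gitsidi}). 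This is Case~(1b).

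The main obstacle is Case~(1b): pinning down the two embedded tangent planes at $[\eta_0]$ and matching them. The $Q_{A''}$ side is clean via~\Ref{prp}{spatan}, but the $Q_{A'}$ side rests on recognising that the lines through $[\eta_0]$ produced by $\omega_{+}$ and $\omega_{-}$ are distinct, belong to opposite rulings, and hence span the tangent plane of the smooth quadric; this forces a careful separation of the genuine configuration from the degenerate ones ($\alpha=0$, or $A''\cap\bigwedge^3 V_{14}\neq\{0\}$) that must instead be routed to Case~(1a) or to~\Ref{prp}{gitsidi}. The same ruling dichotomy is what excludes the mixed $d=2$ configuration $a\neq0\neq b$. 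A secondary bookkeeping point, which halves the labour, is the symmetry exchanging $v_0\leftrightarrow v_5$ and $A'\leftrightarrow A'''$, under which Cases~(1a)/(2b) map to their counterparts; the numerical criterion behind~\Ref{prp}{gitsidi} then furnishes a uniform non-stability certificate in the remaining cases.
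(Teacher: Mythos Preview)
Your overall strategy---weight decomposition of a generator of $\bigwedge^3 W$ and case analysis on $d=\dim(W\cap V_{14})$---matches the paper's. The identification of the three cases, the use of~\Ref{crl}{quadliscia}, the expansion in the $d=1$ case, and the tangent-plane computation via~\Ref{prp}{spatan} are all correct. However, there is a genuine gap in the non-stability arguments for Cases~(2b)/(2c) and~(1b).

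In Case~(2b) you obtain $v_0\wedge\eta_0\wedge\eta_1\in A'$ and $\eta_0\wedge\eta_1\wedge\eta_2\in A''\cap\bigwedge^3 V_{14}$, and then assert that ``one exhibits a one-parameter subgroup \ldots\ as in the proof of~\Ref{prp}{gitsidi}''. But a single nonzero element of $A''\cap\bigwedge^3 V_{14}$ does not match any item of~\Ref{prp}{gitsidi}: Item~(2) there requires $\dim(A''\cap\bigwedge^3 V_{14})\ge 2$, and Items~(3a),~(3c) require a \emph{second} specific element of $A'$ that you have not produced. The same issue recurs in Case~(1b): you invoke~\Ref{crl}{gendistab}, which presupposes $A''\cap\bigwedge^3 V_{14}=\{0\}$, and then wave the residual case $A''\cap\bigwedge^3 V_{14}\ne\{0\}$ toward~\Ref{prp}{gitsidi}; but when $\dim(A''\cap\bigwedge^3 V_{14})=1$ neither~\Ref{crl}{gendistab} nor any item of~\Ref{prp}{gitsidi} applies directly.

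The paper closes both gaps by verifying Item~(3c) of~\Ref{prp}{gitsidi} directly, and this requires a small geometric argument you have skipped. In Case~(2b), for instance, the line $L=\PP\la\eta_0,\eta_1\ra$ lies on the smooth quadric $Q_{A'}$; the plane $P=\PP\la\eta_0,\eta_1,\eta_2\ra$ cuts $Q_{A'}$ in $L+L'$ with $L'\ne L$. One then \emph{re-chooses} $\eta_0$ so that $[\eta_0]=L\cap L'$; now $P={\bf T}_{[\eta_0]}Q_{A'}$, and choosing $\xi_3$ so that ${\bf T}_{[\eta_1]}Q_{A'}=\PP\la\eta_0,\eta_1,\xi_3\ra$ forces $v_0\wedge(\eta_0\wedge\xi_3+\eta_1\wedge\eta_2)\in A'$---the missing second element. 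The same trick works in Case~(1b). Without this step your proof is incomplete.

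Two minor points: your sentence ``these values correspond to Cases~(3),~(2) and~(1) respectively'' has the order reversed; and in your (1a) discussion, the clause ``or $\eta_0\wedge\eta_1\wedge\eta_2=0$'' admits the configuration where $\eta_0\wedge\eta_1\ne 0\ne\eta_0\wedge\eta_2$ but $\la\eta_0,\eta_1\ra=\la\eta_0,\eta_2\ra$, in which case absorbing a multiple of $\eta_0$ does \emph{not} kill $\eta_1$ or $\eta_2$---though this configuration is in fact impossible (the line would lie in both rulings of the smooth quadric $Q_{A'}$), you should say so.
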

\begin{proof}
First notice that  $Q_{A'}$ is a smooth  quadric by~\Ref{crl}{quadliscia}. 
Clearly $\dim(W\cap V_{14})\ge 1$. We proceed to a case-by-case analysis according to the dimension of $W\cap V_{14}$. 
\vskip 2mm
\n
$\boxed{\text{$\dim(W\cap V_{14})= 1$}}$ 
Then $W$ is necessarily as in Item~(1). It remains to show that we may assume that~(1a) or~(1b) holds. We have
\begin{equation*}
A\ni\eta_0\wedge (v_0+\eta_2) \wedge (\eta_1 + v_5)=-v_0\wedge \eta_0 \wedge \eta_1  
-(v_0\wedge \eta_0 \wedge v_5+ \eta_0\wedge \eta_1\wedge\eta_2 )+\eta_0\wedge \eta_2\wedge v_5.
\end{equation*}
It follows that 
\begin{equation}\label{riccardo}
v_0\wedge \eta_0 \wedge \eta_1\in A', \quad 
(v_0\wedge \eta_0 \wedge v_5+ \eta_0\wedge \eta_1\wedge\eta_2 )\in A'', \quad 
\eta_0\wedge \eta_2\wedge v_5\in A'''. 
\end{equation}
If one (at least) among $\eta_0 \wedge \eta_1$, $\eta_0 \wedge \eta_2$ vanishes then we may rename $\eta_1,\eta_2$ so that~(1a) holds. Thus we may assume that $\eta_0 \wedge \eta_1\not=0\not=\eta_0 \wedge \eta_2$. By~\eqref{riccardo} we get that  the lines $\PP\la \eta_0,\eta_1\ra$ and $\PP\la \eta_0,\eta_2\ra$ are lines on 
the  smooth quadric $Q_{A'}$ belonging to different rulings: it follows that 
${\bf T}_{[\eta_0]}Q_{A'}=\PP(\la  \eta_0,\eta_1,\eta_2 \ra)$. On the other hand $\PP(\la  \eta_0,\eta_1,\eta_2 \ra)\subset {\bf T}_{[\eta_0]}Q_{A''}$ by~\eqref{riccardo} and~\Ref{prp}{spatan}. This proves that ${\bf T}_{[\eta_0]}Q_{A'}\subset {\bf T}_{[\eta_0]}Q_{A''}$, moreover we get that Item~(3c) of~\Ref{prp}{gitsidi} holds with $\xi_i=\eta_i$ for $i=0,1,2$ and $\xi_3$ such that ${\bf T}_{[\eta_1]}Q_{A'}=\PP(\la \eta_0,\eta_1,\xi_3  \ra)$: it follows that $A$ is not $G_{\cD}$-stable. Thus  Item~(1b) holds. 
\vskip 2mm
\n
$\boxed{\text{$\dim(W\cap V_{14})= 2$}}$ 
Let $\{\eta_0,\eta_1\}$ be a basis of $W\cap V_{14}$.  Let $0\not=\alpha\in\bigwedge^3 W$: then $\alpha=\alpha'+\alpha''+\alpha'''$ where $\alpha'\in A'$ etc. Multiplying $\alpha$ by $\eta_0$ or $\eta_1$ we get that 
\begin{equation*}
\alpha=x v_0\wedge \eta_0\wedge \eta_1+\eta_0\wedge \eta_1\wedge\eta_2+ 
y \eta_0\wedge\eta_1\wedge v_5, 
\qquad  x,y\in\CC,\quad \eta_2\in V_{14}.
\end{equation*}
Since  $Q_{A'}$ is a  smooth quadric it follows that one at least among $x,y$ vanishes. On the other hand  $x,y$ do not both vanish because $W\not\subset V_{14}$. 
If $\eta_0\wedge \eta_1\wedge\eta_2=0$ then $W\in(\Theta_{A'}\cup\Theta_{A'''})$ i.e.~Item(2a) holds. Assume that $\eta_0\wedge \eta_1\wedge\eta_2\not=0$: rescaling the $\eta_i$'s we get that $W$ is as in Item~(2b) if $x\not=0$,  as in Item~(2c) if $y\not=0$. It remains to prove that if Item~(2b) or~(2c) holds then $A$ is not $G_{\cD}$-stable. By symmetry it suffices to assume  that~(2b) holds. Thus $v_0\wedge\eta_0\wedge\eta_1\in A'$ and 
$\eta_0\wedge\eta_1\wedge\eta_2\in A''$. In particular the smooth quadric  $Q_{A'}$ contains the line  $L:=\PP\la \eta_0,\eta_1\ra$.  Let $P:=\PP\la \eta_0,\eta_1,\eta_2\ra$. Since $Q_{A'}$ is a smooth quadric $P\cap Q_{A'}=L+L'$ where $L'$ is line distinct from $L$. 
We may choose a basis of $\la \eta_0,\eta_1\ra$ and rename its elements $\eta_0,\eta_1$ so that $L\cap L'=[\eta_0]$. Then ${\bf T}_{[\eta_0]}=P=\PP(\la \eta_0,\eta_1,\eta_2\ra)$; it follows that Item~(3c) of~\Ref{prp}{gitsidi} holds with $\xi_i$ replaced by $\eta_i$ for $i=0,1,2$ and a suitable $\xi_3$ (up to a scalar $\xi_3$ is determined by requiring that ${\bf T}_{[\eta_1]}=P=\PP(\la \eta_0,\eta_1,\xi_3\ra)$). Thus $A$ is not $G_{\cD}$-stable.
\vskip 2mm
\n
$\boxed{\text{$\dim(W\cap V_{14})= 3$}}$ 
Then Item~(3) holds. 
\end{proof}
\begin{crl}\label{crl:skywalker}
Let $A\in{\mathbb S}^{\sF}_{\cD}$  be $G_{\cD}$-stable. Then $\Theta_A=\Theta_{A'}\cup\Theta_{A'''}\cup Z_A$ where $Z_A$ is a finite set. Moreover each of   $\Theta_{A'}$, $\Theta_{A'''}$ is a smooth conic.
\end{crl}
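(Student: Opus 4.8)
The plan is to read the statement off~\Ref{prp}{pianididi}, after first removing that proposition's standing hypothesis and identifying the two conics; the real work is in disposing of case~(1a).

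First I would check that a $G_{\cD}$-stable $A$ is not $\PGL(V)$-equivalent to $A_{III}$, so that~\Ref{prp}{pianididi} applies. Since $A\in{\mathbb S}^{\sF}_{\cD}$ is $\lambda_{\cD}$-split, $\im\lambda_{\cD}\subset\Stab_{\SL(V)}(A)$ by~\Ref{rmk}{banana}. If we had $A=gA_{III}$, then (using that $A_{III}$ is fixed by the maximal torus $T$, see~\eqref{torotre}) the identity component $\Stab_{\SL(V)}(A)^{\circ}=gTg^{-1}$ would be a maximal torus; containing $\im\lambda_{\cD}$, it lies inside $C_{\SL(V)}(\lambda_{\cD})$. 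As $\rho_{\cD}(G_{\cD})$ has codimension $1$ in $C_{\SL(V)}(\lambda_{\cD})$ (a dimension count from~\eqref{torodi}), the $5$-dimensional torus $gTg^{-1}$ meets it in dimension $\ge 4$, forcing $\Stab_{G_{\cD}}(A)$ to be positive-dimensional and contradicting $G_{\cD}$-stability. Granting this, I would extract the two conics: $G_{\cD}$-stability means no condition of~\Ref{prp}{gitsidi} holds, so in particular~(3b) fails, which (as observed in the proof of~\Ref{crl}{gendistab}, cf.~\Ref{crl}{quadliscia}) is equivalent to $\Theta_{A'}=\PP(A')\cap\Gr(2,V_{14})$ being a smooth conic, i.e.\ $Q_{A'}$ a smooth quadric. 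Since ${\mathbb S}^{\sF}_{\cD}$ is invariant under the order-reversal $\sF\mapsto\sF'$ that interchanges $A'$ and $A'''$, the same argument applied to $\sF'$ shows $\Theta_{A'''}$ is a smooth conic; these are the two (disjoint, as $A'\cap A'''=\{0\}$) rulings of $Q_{A'''}=Q_{A'}$.

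Next I would apply~\Ref{prp}{pianididi}: $G_{\cD}$-stability rules out its cases~(1b),~(2b),~(2c), so every $W\in\Theta_A$ satisfies~(1a),~(2a) or~(3). Case~(2a) is exactly $W\in\Theta_{A'}\cup\Theta_{A'''}$. In case~(3) one has $W\subset V_{14}$, hence $\bigwedge^3 W\subset A\cap\bigwedge^3 V_{14}=A''\cap\bigwedge^3 V_{14}$; failure of condition~(2) of~\Ref{prp}{gitsidi} gives $\dim(A''\cap\bigwedge^3 V_{14})\le 1$, and since every element of the $4$-dimensional $\bigwedge^3 V_{14}$ is decomposable there is at most one such $W$. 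Thus case~(3) contributes only finitely many planes to $Z_A:=\Theta_A\setminus(\Theta_{A'}\cup\Theta_{A'''})$.

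The main obstacle is then to show that case~(1a) contributes nothing at all. The relations defining~(1a) force the distinguished vector to satisfy $v_0\wedge\eta_0\wedge v_5\in A''$ together with $[\eta_0]\in Q_{A'}$ (the $A'$- or $A'''$-line through $[\eta_0]$ being precisely what makes the remaining relation solvable); and, because $\dim(A''\cap[v_0]\wedge V_{14}\wedge[v_5])\le 1$ for stable $A$, such $[\eta_0]$ is unique, so~(1a) would in fact produce a positive-dimensional family in $Z_A$. I would rule this out as follows: assuming $A''\cap\bigwedge^3 V_{14}=\{0\}$, \Ref{prp}{spatan} shows $Q_{A''}$ is singular at $[\eta_0]$, and since $[\eta_0]\in Q_{A'}$ with $Q_{A'}$ smooth this makes $Q_{A'}\cap Q_{A''}$ singular at $[\eta_0]$, contradicting $G_{\cD}$-stability via~\Ref{crl}{gendistab}; the degenerate subcase $\dim(A''\cap\bigwedge^3 V_{14})=1$ (where $\bigwedge^3 V_{14}\cap A''=\CC\alpha_0$ with $\supp\alpha_0$ a hyperplane) I would handle directly, checking that the configuration triggers condition~(3a) or~(3c) of~\Ref{prp}{gitsidi}. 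I expect this destabilization step — verifying that a "special point lying on $Q_{A'}$'' is genuinely incompatible with stability — to be the hard part, with the $Q_{A'}\cap Q_{A''}$ singularity route the cleanest and the degenerate case requiring a short separate computation. With~(1a) empty, $Z_A$ reduces to the finite set from case~(3), giving $\Theta_A=\Theta_{A'}\cup\Theta_{A'''}\cup Z_A$ with $Z_A$ finite and each of $\Theta_{A'},\Theta_{A'''}$ a smooth conic.
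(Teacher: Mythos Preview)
Your destabilization idea is correct and is exactly what is needed, but you have misread what case~(1a) forces, and as a result your conclusion that ``(1a) contributes nothing at all'' is wrong. Item~(1a) says $v_0\wedge\eta_0\wedge v_5\in A''$ together with $\eta_1=0$ or $\eta_2=0$. If \emph{both} $\eta_1,\eta_2$ lie in $[\eta_0]$ (equivalently $\eta_1=\eta_2=0$ after absorbing them into $\eta_0$), then $W=\la v_0,\eta_0,v_5\ra$ and the relations~\eqref{riccardo} impose no further constraint; in particular nothing forces $[\eta_0]\in Q_{A'}$. This single plane is a genuine element of $Z_A$ (it lies in neither $\Theta_{A'}$ nor $\Theta_{A'''}$) and is \emph{not} excluded by stability---it is precisely the $W$ analyzed in~\Ref{lmm}{casuno}. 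So case~(1a) contributes at most one plane, not zero.

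What your argument \emph{does} rule out is the remaining subcase of~(1a), where $\eta_0\wedge\eta_1\ne 0$ or $\eta_0\wedge\eta_2\ne 0$. That subcase indeed forces $[\eta_0]\in Q_{A'}$ (the line $\la\eta_0,\eta_1\ra$ or $\la\eta_0,\eta_2\ra$ must lie on $Q_{A'}$), and then, as you note, one would get a $1$-parameter family of such $W$. Your route via singularity of $Q_{A'}\cap Q_{A''}$ handles this when $A''\cap\bigwedge^3 V_{14}=\{0\}$; uniformly one checks that $[\eta_0]\in Q_{A'}$ together with $v_0\wedge\eta_0\wedge v_5\in A''$ triggers Item~(3c) of~\Ref{prp}{gitsidi} directly (take $\xi_0=\eta_0$, let $\xi_1$ span with $\eta_0$ the $A'$-ruling line through $[\eta_0]$, and choose $\xi_2,\xi_3$ so that $v_0\wedge(\xi_0\wedge\xi_3+\xi_1\wedge\xi_2)$ spans the tangent to the smooth conic $\Theta_{A'}$; then $a=1$, $b=0$ works). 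It is worth noting that the paper's own proof of~\Ref{crl}{skywalker} is a bit loose at this very point: it records that the line $\la\eta_0,\eta_1\ra$ is unique but does not address that $\eta_1$ can still be rescaled within that line (which would give a $1$-parameter family); the needed fact $[\eta_0]\notin Q_{A'}$ appears explicitly only later, in the proof of~\Ref{lmm}{casuno}. So your instinct to invoke a destabilization here is the right fix---you just need to apply it to the correct subcase and let~(1a) keep its one isolated plane.
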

\begin{proof}
Each of   $\Theta_{A'}$, $\Theta_{A'''}$ is a smooth conic by~\Ref{crl}{quadliscia}. 
 Let $W\in\Theta_A$ and suppose that 
 $W\notin(\Theta_{A'}\cup\Theta_{A'''})$. Then either   Item~(1a) or Item~(3) of~\Ref{prp}{pianididi} holds. Suppose that   Item~(1a) holds. 
By Item~(1) of~\Ref{prp}{gitsidi} we get that $[\eta_0]\in\PP(V_{14})$ is unique. If $0=\eta_1=\eta_2$ there are no other choices involved and hence $W$ is uniquely determined. Next suppose that one of $\eta_0\wedge \eta_1$ or $\eta_0\wedge \eta_2$ is non-zero (if they both vanish we may rename $\eta_1,\eta_2$ so that $0=\eta_1=\eta_2$). Since $Q_{A'}=Q_{A'''}$ is a smooth quadric (by~\Ref{crl}{quadliscia}) we get that either $\eta_2=0$ and $\la \eta_0, \eta_1\ra$ is the unique line of $Q_{A'}$ through $[\eta_0]$ or else $\eta_1=0$ and $\la \eta_0, \eta_2\ra$ is the unique line of $Q_{A'''}$ through $[\eta_0]$. This shows that there is at most a finite set of choices for $W$ such that Item~(1a) of~\Ref{prp}{pianididi} holds. By Item~(2) of~\Ref{prp}{gitsidi} there is at most one  choice for $W$ such that Item~(3) of~\Ref{prp}{pianididi} holds. 
\end{proof}
\begin{dfn}\label{dfn:teletubbies}
Suppose that Item~(3) of~\Ref{prp}{pianididi} holds. Let 
\begin{equation*}
C'_W:=\{[\eta]\in\PP(W) \mid \dim(A'\cap F_{\eta})>0   \},\quad
C''_W:=\{[\eta]\in\PP(W) \mid \dim(A''\cap F_{\eta})>1   \}.
\end{equation*}
\end{dfn}
\begin{rmk}\label{rmk:cibees}
Suppose that Item~(3) of~\Ref{prp}{pianididi} holds. Then
\begin{equation}
C'_W=\PP(W)\cap Q_{A'}=\PP(W)\cap Q_{A'''}=\{[\eta]\in\PP(W) \mid \dim(A'''\cap F_{\eta})>0   \}.
\end{equation}
(Recall that $Q_{A'}$ is a smooth quadric - see the proof of~\Ref{prp}{pianididi}.) It follows that
either $C_{W,A}=2C'_W+C''_W$ or $C_{W,A}=\PP(W)$.
\end{rmk}
We continue to assume that  Item~(3) of~\Ref{prp}{pianididi} holds. Let $W=\la \eta_0,\eta_1,\eta_2 \ra$. By hypothesis $A$ is not $\PGL(V)$-equivalent to $A_{III}$: thus~\Ref{prp}{malchiodi}  gives that
\begin{equation}\label{asecondo}
A''=\la \eta_0\wedge\eta_1\wedge\eta_2, v_0\wedge \eta_0\wedge v_5+\alpha_0, 
v_0\wedge \eta_1\wedge v_5+\alpha_1, v_0\wedge \eta_2\wedge v_5+\alpha_2,   \ra,\quad \alpha_i\in\bigwedge^3 V_{14}.
\end{equation}
The condition that $A''$ be lagrangian translates into
\begin{equation}\label{commutano}
\eta_i\wedge\alpha_j=\eta_j\wedge\alpha_i,\qquad 0\le i,j\le 2.
\end{equation}
It follows that
\begin{equation}\label{eqesplicita}
C''_W=\left\{ \left[\sum_{i=0}^2 X_i \eta_i \right] \mid \sum_{0\le i,j\le 2} \eta_i\wedge \alpha_j X_i X_j=0 \right\}.
\end{equation}
\begin{lmm}\label{lmm:casotre}
Let $A\in{\mathbb S}^{\sF}_{\cD}$  be $G_{\cD}$-stable. Suppose that $W\in\Theta_A$ and that $W\subset V_{14}$. Then $C_{W,A}=2C'_W+C''_W$ and  $C'_W$ is a smooth conic  intersecting transversely  $C''_W$.  
\end{lmm}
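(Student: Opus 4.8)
The plan is to analyze the structure of $C_{W,A}$ using the three-summand decomposition $A = A' \oplus A'' \oplus A'''$ together with the explicit normal form established earlier. Since $W \subset V_{14}$ (Item~(3) of~\Ref{prp}{pianididi}), I would first invoke~\Ref{rmk}{cibees} to reduce to the alternative $C_{W,A} = 2C'_W + C''_W$ versus $C_{W,A} = \PP(W)$, and exclude the second possibility: because $A$ is $G_{\cD}$-stable, its $\PGL(V)$-orbit is closed in $\lagr^{ss}$ by~\Ref{clm}{slagix}, and by~\Ref{crl}{skywalker} we have $\dim\Theta_A \le 1$ (in fact $\Theta_{A'}, \Theta_{A'''}$ are conics and the rest of $\Theta_A$ is finite); hence~\Ref{crl}{senoncurva} gives $C_{W,A} \ne \PP(W)$. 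So $C_{W,A} = 2C'_W + C''_W$ as claimed.

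Next I would establish that $C'_W$ is a smooth conic. By~\Ref{rmk}{cibees} we have $C'_W = \PP(W) \cap Q_{A'}$, and $Q_{A'}$ is a smooth quadric surface by~\Ref{crl}{quadliscia} (using that $A$ is not $\PGL(V)$-equivalent to $A_{III}$, which holds since $A$ is $G_{\cD}$-stable). The intersection of the plane $\PP(W)$ with the smooth quadric $Q_{A'}$ is a conic; I would need to rule out that this conic is singular, i.e.~that $\PP(W)$ is tangent to $Q_{A'}$ or that $W \subset V_{14}$ with $\PP(W)$ meeting $Q_{A'}$ in a double line or a line-pair. The cleanest route is to argue that tangency of $\PP(W)$ to $Q_{A'}$ would force one of the destabilizing flag conditions of~\Ref{prp}{gitsidi} (specifically~(3b) or a configuration as in the $\dim(W\cap V_{14})=2$ analysis), contradicting $G_{\cD}$-stability; since $W \subset V_{14}$ is a full plane in $\PP(V_{14})$, its intersection with the smooth quadric surface $Q_{A'} \subset \PP(V_{14})$ is a plane conic, and non-smoothness corresponds precisely to the tangential position that violates stability.

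The main work, and the expected obstacle, is the transversality of $C'_W$ and $C''_W$. Here I would use the explicit normal form~\eqref{asecondo} for $A''$ together with the lagrangian constraint~\eqref{commutano}, which yields the explicit equation~\eqref{eqesplicita} for $C''_W$ as a conic $\sum_{i,j} (\eta_i \wedge \alpha_j) X_i X_j = 0$ in coordinates $(X_0, X_1, X_2)$ dual to $\{\eta_0, \eta_1, \eta_2\}$, while $C'_W = V(T_0^2 + \cdots)$ has the shape recorded in~\eqref{stanquad} after a suitable change of basis (using the computation~\eqref{aprimo}--\eqref{stanquad}). A point of tangency $p \in C'_W \cap C''_W$ where the two conics share a tangent line would correspond, via~\Ref{prp}{spatan} and the interpretation of the $\alpha_i$ as the second fundamental data of $Q_{A''}$, to an alignment ${\bf T}_{[\eta]}Q_{A'} \subset {\bf T}_{[\eta]}Q_{A''}$ or to a corank-$2$ degeneration; either of these again triggers a destabilizing $1$-PS from the list in~\Ref{prp}{gitsidi} (Items~(3a) or~(3c)). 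Thus the strategy is uniform: every failure of smoothness of $C'_W$ or of transversality of $C'_W \cap C''_W$ is matched against a flag condition that contradicts $G_{\cD}$-stability. The delicate part will be translating the infinitesimal tangency condition between the two conics into the precise flag inequality, which requires carefully unwinding the identification $T_W \cong \Hom(W, V/W)$ and the quadratic forms $q_{A''}$, $\psi^{v_0}_w$; I would carry this out by fixing the basis $\{\eta_0, \eta_1, \eta_2\}$ of $W$, writing both conic equations explicitly, and checking that a common tangent forces $\dim(A \cap F_\eta \cap S_W) \ge 2$ or the vanishing pattern of~(3c), completing the proof.
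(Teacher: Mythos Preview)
Your overall architecture is right and matches the paper: exclude $C_{W,A}=\PP(W)$ via~\Ref{clm}{slagix} and~\Ref{crl}{senoncurva}, invoke~\Ref{rmk}{cibees} for the decomposition, then argue that any failure of smoothness or transversality produces a destabilizing flag from~\Ref{prp}{gitsidi}. That is exactly the paper's route.

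However, you misidentify the destabilizing item. For the smoothness of $C'_W$, Item~(3b) is the wrong target: (3b) asserts $A'\supset\la v_0\wedge\xi_0\wedge\xi_1, v_0\wedge\xi_0\wedge\xi_2\ra$, which gives two lines of the \emph{same} ruling through $[\xi_0]$ and therefore forces $Q_{A'}$ to be singular---but you have already used~\Ref{crl}{quadliscia} to make $Q_{A'}$ smooth, so (3b) cannot arise here. What actually happens when $C'_W=\PP(W)\cap Q_{A'}$ is a singular conic (with $Q_{A'}$ smooth) is that $\PP(W)$ is tangent to $Q_{A'}$ at some $[\xi_0]$, the two lines of $C'_W$ lie in \emph{opposite} rulings, and one verifies Item~(3c) with $a=0$: the element $b\,\xi_0\wedge\xi_1\wedge\xi_2\in A''$ is supplied by $\bigwedge^3 W\in A''$, and the two required elements of $A'$ come from the ruling structure. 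The paper's proof states this in one line; working it out requires the standard normal form of $A'$ for a smooth $Q_{A'}$.

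For transversality the same issue recurs: Item~(3a) is not what is used. The paper's argument is to take $p=[\eta_0]\in C'_W\cap C''_W$ with common tangent $\PP\la\eta_0,\eta_1\ra$, deduce from~\eqref{eqesplicita} and~\eqref{commutano} that $\alpha_0=\eta_0\wedge\eta_1\wedge\eta$, and then---this is the step you are missing---extend $\{\eta_0,\eta_1\}$ to a basis $\{\eta_0,\eta_1,\eta_3,\eta_4\}$ of $V_{14}$ (deliberately \emph{not} the basis of $W$) adapted to the $A'$-ruling, so that $v_0\wedge\eta_0\wedge\eta_3\in A'$ and $v_0\wedge(\eta_0\wedge\eta_4+\eta_3\wedge\eta_1)\in A'$. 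With this basis one checks, after a short case split on whether $\alpha_0$ is proportional to $\eta_0\wedge\eta_1\wedge\eta_2$, that Item~(3c) holds. Your plan to work only in the basis $\{\eta_0,\eta_1,\eta_2\}$ of $W$ and to look for $\dim(A\cap F_\eta\cap S_W)\ge 2$ will not by itself produce the $A'$-conditions in (3c); the passage to a basis of $V_{14}$ adapted to the ruling is the missing ingredient.
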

\begin{proof}
First we claim that $C_{W,A}\not=\PP(W)$.  In fact $A$ has minimal $\PGL(V)$-orbit by~\Ref{clm}{slagix}, moreover it follows from~\Ref{prp}{pianididi} that $\dim\Theta_A=1$. Thus $C_{W,A}\not=\PP(W)$ by~\Ref{crl}{senoncurva}. 
By~\Ref{rmk}{cibees} we get that $C_{W,A}=2C'_W+C''_W$. Suppose that $C'_W$ is a singular conic. Then Item~(3c) of~\Ref{prp}{gitsidi} is satisfied with $a=0$ and $W=\la \xi_0,\xi_1,\xi_2 \ra$: by~\Ref{prp}{gitsidi}  that  contradicts the hypothesis that $A$ is $G_{\cD}$-stable. This proves that $C'_W$ is a smooth conic. Now suppose that there is a point $p\in C'_W\cap C''_W$ such that ${\bf T}_p C'_W\subset {\bf T}_p C''_W$. We may choose a basis $\{\eta_0,\eta_1,\eta_2\}$ of $W$ such that $p=[\eta_0]$ and ${\bf T}_p C'_W=\PP\la \eta_0,\eta_1 \ra$. We let $\alpha_0,\alpha_1,\alpha_2$ be as in~\eqref{asecondo}. The explicit equation~\eqref{eqesplicita} gives that $\eta_0\wedge\alpha_0=0$ and allows us to compute ${\bf T}_p C''_W$: it follows that $\eta_0\wedge \alpha_1=0$. By~\eqref{commutano} we get that $\eta_1\wedge \alpha_0=0$; thus $\alpha_0=\eta_0\wedge\eta_1\wedge \eta$.  Since 
${\bf T}_p C'_W\subset {\bf T}_p Q_{A'}$ and $\PP(W)$ is not tangent to $Q_{A'}$ 
we may extend $\{\eta_0,\eta_1\}$ to a basis $\{\eta_0,\eta_1,\eta_3,\eta_4\}$ (notice that  $\eta_2$ does not belong to the chosen basis)) so that $v_0\wedge \eta_0\wedge\eta_3\in A'$ (i.e.~$\PP\la \eta_0,\eta_3 \ra$ is a line of the ruling of $Q_{A'}$ corresponding to $A'$) and  $v_0\wedge (\eta_0\wedge\eta_4+ \eta_3\wedge \eta_1)\in A'$. Suppose first that $\eta_0\wedge\eta_1\wedge \eta$ and $\eta_0\wedge\eta_1\wedge \eta_2$ are linearly dependent. Then $v_0\wedge\eta_0\wedge v_5\in A''$ and hence $A$ is not  $G_{\cD}$-stable by Item~(3c) of~\Ref{prp}{gitsidi}, that is a contradiction.
Next suppose  that $\eta_0\wedge\eta_1\wedge \eta$ and $\eta_0\wedge\eta_1\wedge \eta_2$ are linearly independent: then there exist $x,y\in\CC$ such that $x\eta_0\wedge\eta_1\wedge \eta+y\eta_0\wedge\eta_1\wedge \eta_2=-\eta_0\wedge\eta_1\wedge \eta_3$. It follows that $(x v_0\wedge \eta_0\wedge v_5+ \eta_0\wedge\eta_1\wedge \eta_3)\in A''$. Set $\xi_0=\eta_0$, $\xi_1=\eta_3$, $\xi_2=\eta_1$ and $\xi_3=\eta_4$; then $A$ satisfies Item~(3c) of~\Ref{prp}{gitsidi} and hence $A$ is not $G_{\cD}$-stable, that is a contradiction. 
\end{proof}
\begin{lmm}\label{lmm:casuno}
Let $A\in{\mathbb S}^{\sF}_{\cD}$  be $G_{\cD}$-stable. Suppose that $W\in\Theta_A$ and that Item~(1) of~\Ref{prp}{pianididi} holds. Then $C_{W,A}$  
 is a semistable sextic of Type II-1.
\end{lmm}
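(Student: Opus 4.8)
The plan is to analyze the local structure of $C_{W,A}$ at the two distinguished points of $\PP(W)$ that arise in Item~(1) of~\Ref{prp}{pianididi}, and show that after understanding these points the curve has only simple singularities, hence is a semistable sextic of Type~II-1 by~\Ref{thm}{listashah} and~\Ref{rmk}{semade}. Recall from Item~(1) that $\dim(W\cap V_{14})=1$ and $W=\la\eta_0,v_0+\eta_2,\eta_1+v_5\ra$. First I would dispose of the case $C_{W,A}=\PP(W)$: since $A$ is $G_{\cD}$-stable, by~\Ref{clm}{slagix} its $\PGL(V)$-orbit is minimal, and by~\Ref{prp}{pianididi} we have $\dim\Theta_A=1$ (using~\Ref{crl}{skywalker}); then~\Ref{crl}{senoncurva} gives $C_{W,A}\not=\PP(W)$, so $C_{W,A}$ is genuinely a sextic curve.

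Next I would locate the singular points of $C_{W,A}$ using the criterion of~\Ref{crl}{cnesinerre}, namely $\cB(W,A)\subset\sing C_{W,A}$, together with~\Ref{prp}{nonmalvagio}. The key computation is to determine $\dim(A\cap F_w)$ for $[w]\in\PP(W)$ and to identify $\cB(W,A)$. In the subcase~(1a) where $v_0\wedge\eta_0\wedge v_5\in A''$ and (say) $\eta_1=0$, I expect $\cB(W,A)$ to consist of the single point $[\eta_0]$, possibly together with one more point coming from the incidence with $\Theta_{A'}$ or $\Theta_{A'''}$. The plan is to apply~\Ref{prp}{primisarto} at each candidate bad point $[v_0']\in\PP(W)$: compute $\ov{k}=\dim(A\cap F_{v_0'})-1$ and the leading term $g_{\ov k}=\mu\det(\psi^{v_0'}_w|_{\ov K})$ of the Taylor expansion, thereby reading off the multiplicity and the tangent cone. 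For a Type~II-1 curve I must verify that $C_{W,A}$ has only simple (A-D-E) singularities, so at each point of $\cB(W,A)$ the multiplicity should be at most $3$, and where it equals $3$ the tangent cone should not be a triple line (by~\Ref{rmk}{pershah}, a triple line with the bad infinitely-near configuration, or a quadruple point, would push the curve into Type~III or the indeterminacy locus). I would use the transversality of $\PP(W)$ to $Q_{A'}$ (a smooth quadric by~\Ref{crl}{quadliscia}) and the description of ${\bf T}Q_{A''}$ from~\Ref{prp}{spatan} to control these tangent cones.

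The main obstacle I anticipate is twofold. First, in subcase~(1b) the hypothesis forces $A$ to be non-stable by~\Ref{prp}{pianididi}, so strictly only subcase~(1a) is relevant for the $G_{\cD}$-stable $A$ asserted here; I would make this reduction explicit at the outset, noting that $G_{\cD}$-stability rules out~(1b). Second, and more seriously, proving that the singularity at the distinguished point is \emph{simple} rather than merely planar requires controlling enough terms of the Taylor expansion to exclude the degenerate configurations of~\Ref{rmk}{pershah}; \Ref{prp}{primisarto} gives only the leading term, so where the multiplicity is $2$ this suffices (any double point with distinct tangents, or a cusp/tacnode governed by the next term, is A-D-E), but if a triple point appears I would need the finer machinery of quadratic-form degenerations (\Ref{prp}{conodegenere} and the $e(\cdot;\cdot)$ constructions used in the proof of~\Ref{prp}{singiso}) to show the blow-up resolves to A-D-E curves. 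Concretely, I expect the generic behavior in case~(1) to produce at worst nodes or simple cusps at $[\eta_0]$, so the argument should reduce to showing $\mult_{[\eta_0]}C_{W,A}\le 3$ with admissible tangent cone; the harder bookkeeping is ensuring no second bad point is hidden, which I would settle by combining~\Ref{prp}{pianididi} (classifying all $W\in\Theta_A$) with~\Ref{crl}{skywalker} (finiteness of $Z_A$) to see that the only singularities of $C_{W,A}$ beyond the quadric intersection are accounted for.
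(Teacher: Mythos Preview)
Your proposal rests on a fundamental misidentification of what a Type~II-1 sextic is. By~\Ref{thm}{listashah}, a Type~II-1 curve is
\[
V\bigl((X_0X_2+a_1X_1^2)(X_0X_2+a_2X_1^2)(X_0X_2+a_3X_1^2)\bigr)
\]
with $a_1,a_2,a_3$ pairwise distinct: three conics mutually tangent at the two points $[1,0,0]$ and $[0,0,1]$. At each of these points the multiplicity is $3$ and the tangent cone \emph{is} the triple line $X_1^3$. These are \emph{not} simple singularities; a sextic with only simple singularities is Type~I, not Type~II-1. So your plan ``verify that $C_{W,A}$ has only simple (A-D-E) singularities, hence is a semistable sextic of Type~II-1'' would, if it succeeded, prove the wrong thing; and your proposed check that ``where the multiplicity equals $3$ the tangent cone should not be a triple line'' would in fact \emph{exclude} Type~II-1.

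The paper's argument is entirely different in shape. It first uses stability to reduce (via the dichotomy (1a)/(1b) in~\Ref{prp}{pianididi}) to the case $W=\la v_0,\eta_0,v_5\ra$ with $\eta_0\in V_{14}$. The key structural step you are missing is the torus symmetry: $\lambda_{\cD}(t)$ fixes $\bigwedge^{10}A$ and maps $W$ to itself, so~\Ref{clm}{azione} together with Item~(2) of~\Ref{rmk}{pasquetta} forces
\[
C_{W,A}=V\bigl((b_1X_0X_2+a_1X_1^2)(b_2X_0X_2+a_2X_1^2)(b_3X_0X_2+a_3X_1^2)\bigr)
\]
automatically. The entire content of the lemma is then to show the three conics are \emph{distinct}. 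For this the paper chooses coordinates so that $A'$ is in the standard form~\eqref{aprimo}, writes $A',A'',A'''$ in terms of linear maps $f,g,h\colon\bigwedge^2\la\eta_1,\eta_2,\eta_3\ra\to\la\eta_1,\eta_2,\eta_3\ra$ (with $g=-f$ and $f$ the identity in the chosen bases), and shows that $[xv_0+\eta_0+yv_5]\in C_{W,A}$ iff $(h+2xyf)$ is singular. Distinctness of the conics is then an eigenvalue count for $h$, which the paper controls via the geometry of $Q_{A'}\cap Q_{A''}$ (a smooth genus-$1$ curve when $A''\cap\bigwedge^3V_{14}=\{0\}$, giving $3$ distinct eigenvalues) or via~\Ref{lmm}{casotre} (when $A''\cap\bigwedge^3V_{14}\neq\{0\}$, giving $2$ non-zero distinct eigenvalues plus the zero eigenvalue).

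Your local-analysis machinery (\Ref{prp}{primisarto}, $\cB(W,A)$, etc.) is not the right tool here: it tells you the singularities are at least as bad as a certain threshold, but to pin down Type~II-1 you need the global shape from the torus action and then an explicit distinctness computation.
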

\begin{proof}
By~\Ref{prp}{pianididi}  there exists $0\not=\eta_0\in V_{14}$ such that $W=\la v_0, \eta_0, v_5 \ra$.  Arguing as in the proof of~\Ref{lmm}{casotre}  we get that $C_{W,A}\not=\PP(W)$: thus $C_{W,A}=V(P)$ where $0\not=P\in \Sym^6 W^{\vee}$. Let $\lambda_{\cD}$ be the $1$ PS of $\SL(V)$ defined in~\Ref{subsec}{prelbound} i.e.~$\lambda_{\cD}(t)=\diag(t,1,1,1,1,t^{-1})$ in the basis $\sF$. Then $\lambda_{\cD}(t)W=W$ for all $t\in\CC^{\times}$. Now apply~\Ref{clm}{azione} to $\lambda_{\cD}(t)$ and $P$: by~\Ref{rmk}{pasquetta} we get that $P$ is given by~\eqref{treconiche} i.e.~$C_{W,A}$ is the \lq\lq union\rq\rq of $3$ conics tangent at $[v_0]$ and $[v_5]$ (because $P\not=0$). It remains to prove that the $3$ conics are distinct. The proof is achieved by a brutal computation.  By~\Ref{crl}{quadliscia} we know that $Q_{A'}$ is a smooth quadric, moreover $[\eta_0]\notin Q_{A'}$ because if $[\eta_0]\in Q_{A'}$ then  Item~(3c) of~\Ref{prp}{gitsidi} holds and hence $A$ is not $G_{\cD}$-stable. Since $[\eta_0]$ is outside the smooth quadric $Q_{A'}$  
we may complete $\eta_0$ to a basis $\{\eta_0,\eta_1,\eta_2,\eta_3\}$  of $V_{14}$ such that $A'$ is given by~\eqref{aprimo}. Then $A'$ and $A'''$ are transverse to 
$ \la  \eta_1,\eta_2,\eta_3\ra$:  thus there are linear maps $f,g\colon \bigwedge^2\la  \eta_1,\eta_2,\eta_3\ra \to  \la  \eta_1,\eta_2,\eta_3\ra$ such that
\begin{equation*}
 A'=\{v_0\wedge (\eta_0\wedge f(\beta')+\beta') \mid \beta'\in \bigwedge^2\la  \eta_1,\eta_2,\eta_3\ra \},\quad
 A'''=\{[v_5]\wedge (\eta_0\wedge g(\beta''')+\beta''') \mid \beta'''\in \bigwedge^2\la  \eta_1,\eta_2,\eta_3\ra \}.
\end{equation*}
Choose the basis $\cB=\{\eta_1,\eta_2,\eta_3 \}$ of $ \la  \eta_1,\eta_2,\eta_3\ra$ and let $\cB^{\vee}=\{\eta_2\wedge\eta_3,\eta_3\wedge\eta_1,\eta_1\wedge\eta_2   \}$  be the dual  basis of  $\bigwedge^2 \la  \eta_1,\eta_2,\eta_3\ra$: the matrices 
 associated to $f$ and $g$ are the unit matrix $1_3$ and $-1_3$ respectively: in particular we have $g=-f$. 
By~\Ref{prp}{gitsidi} we have $A\cap [v_0]\wedge V_{14}\wedge [v_5]=[v_0\wedge\eta_0\wedge v_5]$: it follows that there exists a linear map $h\colon \bigwedge^2\la  \eta_1,\eta_2,\eta_3\ra \to  \la  \eta_1,\eta_2,\eta_3\ra$ such that
\begin{equation*}
 A''=[v_0\wedge\eta_0\wedge v_5]\oplus
 \{(v_0\wedge h(\beta'')\wedge v_5+\eta_0\wedge \beta'') \mid 
 \beta''\in \bigwedge^2\la  \eta_1,\eta_2,\eta_3\ra \}.
\end{equation*}
By definition $[xv_0+\eta_0+y v_5]\in C_{W,A}$ if and only if $\dim(A\cap F_{xv_0+\eta_0+y v_5})\ge 2$ i.e.~there exists 
\begin{equation*}
(0,0,0)\not= (\beta',\beta'',\beta''')\in \bigwedge^2\la  \eta_1,\eta_2,\eta_3\ra \times
 \bigwedge^2\la  \eta_1,\eta_2,\eta_3\ra \times \bigwedge^2\la  \eta_1,\eta_2,\eta_3\ra
\end{equation*}
such that 
\begin{equation}\label{sciopero}
0=(xv_0+\eta_0+y v_5)\wedge (v_0\wedge (\eta_0\wedge f(\beta')+\beta')+
(v_0\wedge h(\beta'')\wedge v_5+\eta_0\wedge \beta'') +
v_5\wedge (\eta_0\wedge g(\beta''')+\beta''').
\end{equation}
We may write out the right-hand side as the sum of $3$ elements respectively in $[v_0]\wedge\bigwedge^3 V_{14}$, $[v_5]\wedge\bigwedge^3 V_{14}$ and  $[v_0]\wedge\bigwedge^2 V_{14}\wedge [v_5]$: we get that
\begin{equation}
0=\beta'-x\beta''=\beta'''-y\beta''=
x g(\beta''')-y f(\beta')-h(\beta'')= x\beta''-y\beta'.
\end{equation}
Thus  (recall that $g=-f$) 
\begin{equation}\label{tripesp}
\text{$[xv_0+\eta_0+y v_5]\in C_{W,A}$ if and only if $(h+2xy f)$ is singular.}
\end{equation}
 To finish the proof we distinguish between the two cases:
 \begin{enumerate}
\item[(a)]
$A''\cap\bigwedge^3 V_{14}=\{0\}$ or
\item[(b)]
$A''\cap\bigwedge^3 V_{14}\not=\{0\}$.
\end{enumerate}
\vskip 2mm
\n
$\boxed{\text{Item~(a) holds}}$ 
Then $Q_{A''}$ is a quadric with $\sing Q_{A''}=\{[\eta_0]\}$ and $Q_{A'}\cap Q_{A''}$ is a smooth curve of genus $1$ (by~\Ref{prp}{gitsidi} it cannot have singular points). Let $Q_{A'}=V(q_{A'})$ and $Q_{A''}=V(q_{A''})$. Since $Q_{A'}\cap Q_{A''}$ is smooth there are exactly $4$ singular quadrics in the pencil spanned by $Q_{A'}$ and $Q_{A''}$: since $Q_{A'}$ is smooth and $Q_{A''}$ is singular it follows that 
\begin{equation}\label{sonotre}
|\{ r\not=0 \mid \det(q_{A'}+r q_{A''})=0 \}|=3.
\end{equation}
Now let $M(q_{A'})$ and $M(q_{A''})$ be the symmetric matrices associated to $q_{A'}$ and $q_{A''}$ by the choice of the basis $ \{\eta_0, \eta_1,\eta_2,\eta_3\}$ of $V_{14}$ and the dual basis $\{\eta_1\wedge\eta_2\wedge\eta_3,\eta_0\wedge\eta_2\wedge\eta_3, \eta_0\wedge\eta_3\wedge\eta_1, \eta_0\wedge\eta_1\wedge\eta_2 \}$  of $\bigwedge^3 V_{14}$.  Then  $M(q_{A''})$ has  first row and first column equal to zero. Let $N$ be the $3\times 3$-matrix obtained by deleting first row and  first column of   $M(q_{A''})$: thus  $N$ is the matrix $M^{\cB}_{\cB^{\vee}}(h^{-1})$ associated to $h^{-1}$ by the choice of bases $\cB$, $\cB^{\vee}$ given above. By~\eqref{stanquad}
we know that $M(q_{A'})$ is the unit matrix: thus~\eqref{sonotre}  gives that $N$ has exactly $3$ distinct (non-zero) eigenvalues and hence so does $M^{\cB^{\vee}}_{\cB}(h)$. Since $M^{\cB^{\vee}}_{\cB}(f)=1_3$ we get that $(h+2sf)$ is singular for exactly $3$ distinct non-zero values of $s$, say $s_1,s_2,s_3$.  Now look at~\eqref{treconiche}: we get that $a_i/b_i=-s_i$ and hence the $3$ conics are indeed distinct.
\vskip 2mm
\n
$\boxed{\text{Item~(b) holds}}$ 
Then $\dim(A''\cap\bigwedge^3 V_{14})=1$ by~\Ref{prp}{gitsidi}. 
By an orthogonal change of basis in $\la \eta_1,\eta_2,\eta_3\ra$ we may assume that $A''\cap\bigwedge^3 V_{14}=[\eta_0\wedge\eta_1\wedge\eta_2]$ and moreover~\eqref{stanquad} continues to hold (recall that $C'_{W_0}$ is smooth by~\Ref{lmm}{casotre}). Thus $A''$ is given by~\eqref{asecondo} with $\alpha_0=0$. Let $W_0:=\la \eta_0,\eta_1,\eta_2\ra$: then $W\in\Theta_A$ and we have the conics $C'_{W_0},C''_{W_0}\subset\PP(W_0)$, see~\Ref{dfn}{teletubbies}. By~\eqref{eqesplicita} we know that $C''_{W_0}$ is singular at $[\eta_0]$ (recall~\eqref{commutano});  in order to be coherent with our current use of coordinates (see~\eqref{stanquad}) we replace the $X_i$'s in~\eqref{eqesplicita} by $T_i$'s. Let $C'_{W_0}=V(c'_{W_0})$ and $C''_{W_0}=V(c''_{W_0})$: by~\Ref{lmm}{casotre} we have
\begin{equation}\label{sonodue}
|\{ r\not=0 \mid \det(c'_{W_0}+r c''_{W_0})=0 \}|=2.
\end{equation}
The matrix $M^{\cB^{\vee}}_{\cB}(h)$ has third row and third column equal to zero: let $P$ be the $2\times 2$-matrix obtained by deleting third row and third column,  it is invertible because  $\dim(A''\cap\bigwedge^3 V_{14})=1$. Let $R$ be the $3\times 3$-matrix  with vanishing first row and first column and with $P^{-1}$ in the remaining space. Then $R$ is the symmetric matrix giving  $c''_{W_0}$: since~\eqref{stanquad} continues to hold~\eqref{sonodue}  gives that $P^{-1}$ has exactly $2$ distinct eigenvalues. Thus $P$  has exactly $2$ distinct eigenvalues as well: it follows that $(h+2sf)$ is singular for exactly $2$ distinct non-zero values of $s$, say $s_1,s_2$.  Moreover $h$ is singular because Item~(b) holds. Now look at~\eqref{treconiche}: we get that $a_i/b_i=-s_i$ for $i=1,2$ and $a_3=0$, thus the $3$ conics are indeed distinct.
\end{proof}
\begin{prp}\label{prp:dipianistab}
 Let $A\in{\mathbb S}^{\sF}_{\cD}$  be $G_{\cD}$-stable. Let $W\in\Theta_A$. Then
 \begin{enumerate}
\item[(i)]
If Item~(1) of~\Ref{prp}{pianididi} holds then $C_{W,A}$ is a semistable sextic of Type II-1.
\item[(ii)]
If Item~(2) of~\Ref{prp}{pianididi} holds then $C_{W,A}$ is a semistable sextic of Type II-2.
\item[(iii)]
If Item~(3) of~\Ref{prp}{pianididi} holds then $C_{W,A}$ is a semistable sextic of Type II-3.
\end{enumerate}
In particular $[A]\notin\gI$. 
\end{prp}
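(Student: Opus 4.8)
The plan is to run through the three cases of~\Ref{prp}{pianididi} in turn, after first checking that for $G_{\cD}$-stable $A$ they exhaust $\Theta_A$ and that case~(ii) collapses to the harmless subcase~(2a). Indeed, since $A$ is $G_{\cD}$-stable, the subcases~(2b) and~(2c) of~\Ref{prp}{pianididi} cannot occur (each forces $A$ to be non-stable), so Item~(2) reduces to $W\in(\Theta_{A'}\cup\Theta_{A'''})$. By~\Ref{crl}{skywalker} we have $\Theta_A=\Theta_{A'}\cup\Theta_{A'''}\cup Z_A$ with $Z_A$ finite, and the proof of that corollary shows that each $W\in Z_A$ satisfies Item~(1a) or Item~(3); hence every $W\in\Theta_A$ falls under Item~(1), (2a) or~(3), i.e.\ under~(i), (ii) or~(iii). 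Statement~(i) is nothing but~\Ref{lmm}{casuno}, so no further work is needed there.

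For~(iii), I would simply reinterpret~\Ref{lmm}{casotre}. That lemma already gives $C_{W,A}=2C'_W+C''_W$ with $C'_W$ a smooth conic meeting $C''_W$ transversely; since $C''_W$ is the zero-locus of the quadratic form~\eqref{eqesplicita} it is a conic, so $C_{W,A}$ has the shape $(X_0X_2+X_1^2)^2F$ with $\deg F=2$ and $V(X_0X_2+X_1^2)$, $V(F)$ meeting transversely. By~\Ref{thm}{listashah} this is exactly a semistable sextic of Type~II-3, proving~(iii).

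The real content is~(ii). By the $A'\leftrightarrow A'''$ symmetry of ${\mathbb S}^{\sF}_{\cD}$ (the basis reversal $\sF\mapsto\sF'$ interchanging $v_0$ and $v_5$) it suffices to treat $W\in\Theta_{A'}$; then $v_0\in W$ and $W=[v_0]\oplus U$ with $U:=W\cap V_{14}$ and $\PP(U)$ a line of the smooth quadric $Q_{A'}$ (smooth by~\Ref{crl}{quadliscia}). First I would record that $C_{W,A}\neq\PP(W)$, which follows from $\dim\Theta_A=1$ together with~\Ref{crl}{senoncurva}. Writing $\{X_0,X_1,X_2\}$ for the basis of $W^{\vee}$ dual to $\{v_0,u_1,u_2\}$ and applying~\Ref{clm}{azione} to the torus $\lambda_{\cD}(t)=\diag(t,1,1,1,1,t^{-1})$, which fixes $W$ and acts trivially on $\bigwedge^{10}A$, one gets that $C_{W,A}=V(P)$ with $P$ invariant under $\diag(s^{-2},s,s)\in SL(W)$; by~\Ref{rmk}{pasquetta}(1) this forces $P=X_0^2F(X_1,X_2)$. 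Thus $C_{W,A}$ is the double line $2\PP(U)$ plus the four lines $V(F)$ through $[v_0]$, and the remaining task is to show that the binary quartic $F$ has no repeated root, i.e.\ that $[v_0]$ is an ordinary quadruple point. This is the step I expect to be the main obstacle. The argument would parallel~\Ref{prp}{forcone}: examine $C_{W,A}$ near $[v_0]$ via~\Ref{prp}{primisarto} (or, after checking~\eqref{ipogenerica} with $V_0=V_{15}$, via the discriminant description of~\Ref{clm}{athos}), so that by~\Ref{rmk}{cigeom} the roots of $F$ correspond to the singular members of a net of quadrics through the curve $Z_{U,A}$; a repeated root would produce a coincidence realizing Item~(3c) of~\Ref{prp}{gitsidi}, contradicting $G_{\cD}$-stability. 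The difficulty relative to the $\gB_{\cA}$ case is that here $\Theta_{A'}$ is only a conic rather than an elliptic normal quartic, so the four roots are not governed by the classical ``four singular quadrics'' count and stability must be fed in explicitly through the excluded configuration of~\Ref{prp}{gitsidi}; once $F$ is shown separable, \Ref{thm}{listashah} identifies $C_{W,A}$ with a Type~II-2 sextic, proving~(ii).

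Finally, for the concluding clause: since $A$ is $G_{\cD}$-stable its $\PGL(V)$-orbit is closed in $\lagr^{ss}$ by~\Ref{clm}{slagix}, and by~(i)--(iii) every $W\in\Theta_A$ yields a $C_{W,A}$ that is a semistable sextic of Type~II-1, II-2 or~II-3 (in particular never $\PP(W)$). None of these is of Type~IV, so by~\Ref{rmk}{pershah} each lies in the regular locus of the period map~\eqref{persestiche}. As membership in $\gI$ (see~\Ref{dfn}{gotico}) requires the existence of some $W\in\Theta_A$ with $C_{W,A}=\PP(W)$ or in the indeterminacy locus, we conclude $[A]\notin\gI$.
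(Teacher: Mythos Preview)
Your handling of Items~(i) and~(iii), the reduction of~(ii) to $W\in\Theta_{A'}$, the use of~\Ref{clm}{azione} to get $P=X_0^2F(X_1,X_2)$, and the identification of the remaining task as ``$F$ has no repeated root'' all match the paper. The divergence is in that last step. You propose to analyze the quartic tangent cone at $[v_0]$ via~\Ref{prp}{primisarto}/\Ref{rmk}{cigeom} and argue that a repeated root would force Item~(3c) of~\Ref{prp}{gitsidi}; but you leave this as a sketch, and it is not clear that a repeated factor of $F$ translates directly into the specific flag configuration $(\xi_0,\xi_1,\xi_2,\xi_3)$ required by~(3c). The local picture at $[v_0]$ is more intricate here than in~\Ref{prp}{forcone}: $\dim(A\cap F_{v_0})$ can be $3$ or $4$, the kernel line $\PP(\ker\ov q_A)$ lies inside $\Gr(2,V_0)_{U}$ (which is why the multiplicity jumps to $4$, cf.\ the $\BB_\cD$ case in the proof of~\Ref{prp}{singiso}), and extracting the quartic term $g_4$ and then reading off~(3c) from its discriminant is not a one-line matter.

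The paper takes a different, constructive route: it does not work at $[v_0]$ but on the line $L=\PP(U)\subset Q_{A'}$, and exhibits four \emph{distinct} points of $L$ through each of which a line of $V(F)$ passes. It splits into two cases according to whether some $W_0\in\Theta_A$ lies in $V_{14}$. If not (case~(a)), then by~\Ref{prp}{gitsidi} the curve $E:=Q_{A'}\cap Q_{A''}$ is smooth elliptic; the projection $\pi\colon Q_{A'}\to L$ restricts to a degree-$2$ map $E\to L$, and its four branch points are the roots of $F$ (one checks directly, for each ramification point $[\eta_2]$, that the line $\PP\la v_0,\eta_0\ra$ with $\eta_0=\pi(\eta_2)$ lies in $C_{W,A}$ by writing down an explicit second element of $A\cap F_{v_0+x\eta_0}$). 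If there is such a $W_0$ (case~(b)), one uses the four transverse intersection points $C'_{W_0}\cap C''_{W_0}$ from~\Ref{lmm}{casotre}; no two of them lie on the same line of the ruling (since $C'_{W_0}$ is a smooth conic), so their projections to $L$ are distinct, and again an explicit computation shows these are the roots of $F$. This avoids any discriminant analysis of $g_4$ and feeds stability in only through~\Ref{crl}{gendistab}/\Ref{lmm}{casotre}, not through a hypothetical contradiction with~(3c).
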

\begin{proof}
Item~(i) is the content of~\Ref{lmm}{casuno} and Item~(iii) is the content of~\Ref{lmm}{casotre}. Thus it remains to prove Item~(ii). First we claim that $C_{W,A}\not=\PP(W)$.  In fact $A$ has minimal $\PGL(V)$-orbit by~\Ref{clm}{slagix}, moreover it follows from~\Ref{prp}{pianididi} that $\dim\Theta_A=1$. Thus $C_{W,A}\not=\PP(W)$ by~\Ref{crl}{senoncurva}. Since $A$ is $G_{\cD}$-stable we have $W\in(\Theta_{A'}\cup \Theta_{A'''})$. We will give the proof for  $W\in\Theta_{A'}$ (if   $W\in \Theta_{A'''}$ the proof is analogous). 
There exist $\eta_1,\eta_2\in V_{14}$ such that $W=\la v_0,\eta_1,\eta_2 \ra$.  Let $\{X_0,X_1,X_2\}$ be the dual basis of $W^{\vee}$ and $0\not=P\in\CC[X_0,X_1,X_2]_6$ be the homogeneous . The  $1$ PS  $\lambda_{\cD}$ defined in~\Ref{subsec}{prelbound} maps $W$ to itself: by applying~\Ref{clm}{azione} to $\lambda_{\cD}(t)$ and $P$ we get that $P=X_0^2 F$ where $0\not=F\in\CC[X_1,X_2]_4$. It remains to prove that $F$ has no multiple roots. 
Let $L:=\PP(W\cap V_{14})$. The line $L$ is contained in $Q_{A'}$ (by definition). By~\Ref{crl}{quadliscia} we know that $Q_{A'}$ is a smooth quadric and hence there is a projection $\pi\colon Q_{A'}\to L$. The line $L$ has equation $X_0=0$ in $\PP(W)$ and  the roots of $F$ give $4$ points $p_1,p_2,p_3,p_4\in L$: we must show that the $p_i$'s are distinct. In order to describe   the $p_i$'s we distinguish between the two cases:
\begin{enumerate}
\item[(a)]
There is no $W_0\in\Theta_A$ contained in $V_{14}$.
\item[(b)]
There exist $W_0\in\Theta_A$ contained in $V_{14}$.
\end{enumerate}
\vskip 2mm
\n
$\boxed{\text{Item~(a) holds}}$ 
 Then $E:=Q_{A'}\cap Q_{A''}$ is a smooth elliptic curve by~\Ref{prp}{gitsidi}. Restricting the projection $\pi$ to $E$ we get a degree-$2$ map $f\colon E\to L$. Since $E$ is smooth of genus $1$ there are $4$ (distinct) ramification points $q_1,\ldots,q_4$  of $f$: we will show that  $\{p_1,\ldots,p_4\}=
\{\pi(q_1),\ldots,\pi(q_4)\}$. Let $[\eta_2]\in E$ be a ramification point of $f$ and let $\pi([\eta_2])=[\eta_0]$. We must prove that
\begin{equation}\label{dammeretta}
\PP(\la v_0,\eta_0 \ra)\subset C_{W,A}.
\end{equation}
By hypothesis the line $\PP(\la \eta_0,\eta_2\ra)$ is contained in $Q_{A'}$ and it belongs to the ruling parametrized by $A'''$ i.e.~$\eta_0\wedge\eta_2\wedge v_5\in A'''$. We may extend $\{\eta_0,\eta_2\}$ to a basis $\{\eta_0,\eta_1,\eta_2,\eta_3\}$ (we may need to rescale $\eta_0$) of $V_{14}$ so that
\begin{equation}\label{sharktale}
A' = \la v_0 \wedge \eta_0\wedge\eta_1, v_0 \wedge (\eta_0\wedge\eta_3+\eta_1\wedge\eta_2),
 v_0 \wedge \eta_2\wedge\eta_3 \ra.
\end{equation}
Since $[\eta_2]$ is a ramification point of $f$ the line $\PP(\la \eta_0,\eta_2\ra)$ is tangent to $Q_{A''}$ at $[\eta_2]$: by~\Ref{prp}{spatan} it follows that there exists $\gamma\in \la \eta_1, \eta_3\ra$ such that 
\begin{equation}\label{deniro}
(v_0\wedge \eta_2\wedge v_5+\eta_0\wedge\eta_2\wedge \gamma)\in A''.
\end{equation}
Thus $\gamma  = s \eta_1 +t \eta_3$. A straightforward computation gives that
\begin{equation}\label{parigi}
\scriptstyle
(-s v_0  \wedge (\eta_0\wedge\eta_3+\eta_1\wedge\eta_2)+ t  v_0 \wedge \eta_2\wedge\eta_3 + 
x^2 \eta_0\wedge\eta_2\wedge v_5 + 
x(v_0\wedge \eta_2\wedge v_5+ s \eta_0\wedge\eta_2\wedge \eta_1 + t \eta_0\wedge\eta_2\wedge \eta_3))\in 
A\cap F_{v_0+x \eta_0}. 
\end{equation}
Since $(v_0\wedge\eta_0\wedge\eta_1)\in A\cap F_{v_0+x \eta_0}$ it follows that $\dim(A\cap F_{v_0+x \eta_0})\ge 2$. This shows that~\eqref{dammeretta} holds and hence that $C_{W,A}$ is a semistable sextic of Type II-2. 
\vskip 2mm
\n
$\boxed{\text{Item~(b) holds}}$ 
Let $C'_{W_0},C''_{W_0}\subset\PP(W_0)$ be as in~\Ref{dfn}{teletubbies}: by~\Ref{lmm}{casotre} we know that $C'_{W_0}\cap C''_{W_0}$ consists of $4$ distinct points, say $q_1,\ldots,q_4$: moreover no two of the points  $q_1,\ldots,q_4$ belong to the same line on $Q_{A'}$ because $C'_{W_0}$ is a smooth conic (see~\Ref{lmm}{casotre}). Let's show that  $\{p_1,\ldots,p_4\}=\{\pi(q_1),\ldots,\pi(q_4)\}$. Let $q_i=[\eta_2]$. By hypothesis $[\eta_2]\in Q_{A'}$: it follows that we may complete $\eta_2$ to a basis $\{\eta_0,\ldots,\eta_3\}$ of $V_{14}$ so that 
 $\eta_0\wedge\eta_2\wedge v_5\in A'''$ and~\eqref{sharktale} holds. By definition of the $q_i$'s there exists $0\not=\eta_2\wedge\beta\in A''\cap\bigwedge^3 V_{14}$ and moreover $\dim(A''\cap F_{\eta_2})\ge 2$: since $A$ is $G_{\cD}$-stable $\dim(A''\cap\bigwedge^3 V_{14})=1$ and hence there exists $(v_0\wedge \eta_2\wedge v_5+\eta_2\wedge\delta)\in A''$.  Moreover $\eta_2\wedge\delta\not=0$ because otherwise $[\eta_2]$ is a singular point of $C''_{W_0}$ (see~\eqref{eqesplicita}) and  that would contradict~\Ref{lmm}{casotre}. Now notice that $\eta_0\wedge \eta_2\wedge\beta\not=0$ because by~\Ref{lmm}{casotre} we know that $C'_{W_0}$ is smooth. Thus there exists  $x\in\CC$ such that $\eta_0\wedge(\eta_2\wedge\delta+x\eta_2\wedge\beta)=0$ and hence~\eqref{deniro} holds for a suitable  $\gamma\in \la \eta_1, \eta_3\ra$. It follows that~\eqref{parigi} holds in this case as well and we are done again.
\end{proof}
\begin{prp}\label{prp:pianidiprop}
Let $A\in{\mathbb S}^{\sF}_{\cD}$. Suppose that $A$  is properly $G_{\cD}$-semistable with minimal $\PGL(V)$-orbit (equivalently minimal $G_{\cD}$-orbit by~\Ref{clm}{slagix}) and  that $[A]\notin\gX_{\cW}$. If $W\in\Theta_A$ then $C_{W,A}$ is  a $\PGL(W)$-semistable  sextic curve $\PGL(W)$-equivalent to a sextic of Type III-2, in particular $[A]\notin\gI$.
\end{prp}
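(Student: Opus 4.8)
The plan is to use minimality of the orbit to replace $A$ by a lagrangian in the explicit fixed locus $\MM^{\sB}_{\cD}$, and then to read off $C_{W,A}$ from torus invariance exactly as was done for $\gB_{\cC_1}$ in \Ref{prp}{genzano}. Since $A$ is properly $G_{\cD}$-semistable, the structure result for properly $G_{\cD}$-semistable lagrangians stated just before \Ref{subsubsec}{cididi} produces $A_0\in\MM^{\sB}_{\cD}$ which is $G_{\cD}$-equivalent to $A$. As $A$ has minimal $\PGL(V)$-orbit, equivalently minimal $G_{\cD}$-orbit by \Ref{clm}{slagix}, the orbits $G_{\cD}A$ and $G_{\cD}A_0$ coincide, so I may assume $A=A_0\in\MM^{\sB}_{\cD}$. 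By \Ref{rmk}{tispiezzo} this forces $A'$ and $A''$ to be $\lambda_1$-split of types $(1,1,1)$ and $(1,2,1)$; in particular $A$ is fixed, with trivial action on $\bigwedge^{10}A$, both by $\lambda_{\cD}$ and by the $1$-PS $\lambda_1$ of~\eqref{piroso}, so its stabilizer contains a torus $T_{\cD}<C_{\SL(V)}(\lambda_{\cD})$ spanned by $\lambda_{\cD}$ and $\lambda_1$.

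First I would dispose of the special point $A\in\PGL(V)A_{III}$, which does occur inside $\MM^{\sB}_{\cD}$ by~\eqref{arieccolo} and \Ref{clm}{unicotre}: for it \Ref{prp}{rettetre} already gives that $C_{W,A}$ is a double triangle, i.e. of Type III-2, for every $W\in\Theta_A$. Hence I may assume $A\notin\PGL(V)A_{III}$, so that $Q_{A'}$ is a smooth quadric by \Ref{crl}{quadliscia}. Next I must check $C_{W,A}\neq\PP(W)$. By \Ref{crl}{senoncurva} it is enough to prove $\dim\Theta_A\le 1$: were $\dim\Theta_A\ge 2$, \Ref{lmm}{sedimdue} together with minimality of the orbit would place $A$ itself in $\XX^{*}_{\cW}\cup\PGL(V)A_k\cup\PGL(V)A_h$; the first alternative is excluded by the hypothesis $[A]\notin\gX_{\cW}$, while $A_k$ and $A_h$ are excluded because $Y_{A_k}=Y_{A_h}=2\cV_2$ is a double cubic, incompatible with the decomposition $Y_A\cap\PP(V_{14})=2Q_{A'}+Q_{A''}$ whose doubled component $Q_{A'}$ is a smooth quadric surface. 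Thus $C_{W,A}=V(P)$ with $0\neq P\in\Sym^6 W^{\vee}$.

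The concluding step, and the main obstacle, is to show that for every $W\in\Theta_A$ a two-dimensional torus in $\Stab(A)<\SL(V)$ stabilizes $W$ and acts on $\PP(W)$ with three distinct fixed points; granting this, \Ref{clm}{azione} (applied after the normalization $\ov{g}_W=(\det g_W)^{-1/3}g_W$) produces a maximal torus of $\SL(W)$ fixing $P$, whence $P=cX_0^2X_1^2X_2^2$ by part~(3) of \Ref{rmk}{pasquetta}, so that $C_{W,A}$ is $\PGL(W)$-equivalent to a sextic of Type III-2. Carrying this out requires an explicit description of $\Theta_{A_0}$ for $A_0\in\MM^{\sB}_{\cD}$ — a union of rational curves and isolated planes playing the role of the conics of~\eqref{zigozago} and \Ref{crl}{treconiche} in the $\gB_{\cC_1}$ analysis — together with the identification, for each such $W$, of the precise subtorus of $\Stab(A)$ stabilizing it and the verification that the three coordinate lines of $W$ carry distinct weights. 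Finally, since a sextic of Type III-2 is $\PGL(W)$-semistable with closed orbit by \Ref{thm}{listashah} and is not of Type IV, \Ref{rmk}{pershah} shows that $C_{W,A}$ lies outside the indeterminacy locus of~\eqref{persestiche}; as moreover $C_{W,A}\neq\PP(W)$, \Ref{dfn}{gotico} gives $[A]\notin\gI$.
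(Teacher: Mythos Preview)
Your overall strategy matches the paper's: reduce to $A\in\MM^{\sB}_{\cD}$ so that the $2$-torus $\rho(s,t)=\lambda_{\cD}(s)\cdot\lambda_1(t)$ lies in $\Stab(A)$, dispose of $A_{III}$, show $C_{W,A}\neq\PP(W)$, then use $\rho$ together with \Ref{clm}{azione} and \Ref{rmk}{pasquetta}(3). The final step, however, has a genuine gap.

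You want, for every $W\in\Theta_A$, a $2$-dimensional subtorus of $\Stab(A)$ stabilizing $W$ and acting on $\PP(W)$ with three distinct fixed points. This fails. For $A\in\MM^{\sB}_{\cD}$ away from $\PGL(V)A_{III}$ and $\XX^*_{\cW}$ the torus available is the $2$-dimensional $\langle\lambda_{\cD},\lambda_1\rangle$. Take $W\in\Theta_{A'}$, so $W=\langle v_0,\eta,\eta'\rangle$ with $\PP\langle\eta,\eta'\rangle$ a line on the smooth quadric $Q_{A'}\subset\PP(V_{14})$: every such $W$ is stabilized by $\lambda_{\cD}$ (which acts trivially on $V_{14}$), but $\lambda_1$ stabilizes $W$ only when $\PP\langle\eta,\eta'\rangle$ is $\lambda_1$-invariant, and there are only finitely many such lines on $Q_{A'}$. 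Hence for the generic $W\in\Theta_{A'}$ only $\lambda_{\cD}$ stabilizes $W$; since $\lambda_{\cD}$ acts on $W$ with weights $(1,0,0)$, \Ref{clm}{azione} only forces $P=X_0^2 F(X_1,X_2)$ as in \Ref{rmk}{pasquetta}(1), not the monomial $cX_0^2X_1^2X_2^2$ needed for Type III-2. The paper closes this gap by a limit argument: given arbitrary $W$, the closure of $\{\rho(s,t)W\mid (s,t)\in(\CC^{\times})^2\}$ contains some $W_0\in\Theta_A$ fixed by all of $\rho$; one lists the possible $W_0$ via \Ref{prp}{pianididi} (using smoothness of $Q_{A'}$ to discard two candidates), checks in each case that $\rho$ maps to a $2$-dimensional torus in $\GL(W_0)$, and concludes $C_{W_0,A}$ is of Type III-2. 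Since each $\rho(s,t)$ lies in $\Stab(A)$ one has $C_{\rho(s,t)W,A}\cong C_{W,A}$ for all $(s,t)$, so passing to the limit shows $C_{W,A}$ is $\PGL(W)$-\emph{equivalent} to a Type III-2 sextic --- exactly the conclusion as stated.

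A smaller point: your exclusion of $A_k,A_h$ by comparing $2\cV_2\cap\PP(V_{14})$ with $2Q_{A'}+Q_{A''}$ is not airtight (nothing yet rules out $\cV_2\cap\PP(V_{14})=Q_{A'}+H$ with $Q_{A''}=2H$ for a hyperplane $H$). The paper takes a shorter route, invoking \Ref{prp}{senoncurva} directly --- which yields only $\XX^{*}_{\cW}\cup\PGL(V)A_k$, never $A_h$ --- and then observing that $\Theta_{A_k}=k(\PP(L))$ is a degree-$9$ Veronese surface containing no conic, hence $A_k\notin\BB_{\cD}$.
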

\begin{proof}
First we notice that $C_{W,A}\not=\PP(W)$. In fact suppose the contrary. By~\Ref{crl}{senoncurva} we get that $A$ is $\PGL(V)$-equivalent to a lagrangian in $(\XX^{*}_{\cW}\cup \PGL(V)A_k)$. Since $A$ has minimal $\PGL(V)$-orbit it follows that $A\in(\XX^{*}_{\cW}\cup \PGL(V)A_k)$. Since $[A]\notin\gX_{\cW}$ we must have $A\in \PGL(V)A_k$. As is easily checked $\Theta_{A_k}=k(\PP(L))$ and hence $\Theta_{A_k}$ is a Veronese surface of degree $9$: thus  $\Theta_{A_k}$ does not contain any conic and therefore $A_k\notin\BB_{\cD}$, that is a contradiction. This proves   that $C_{W,A}\not=\PP(W)$. Next we may suppose that $A\notin \PGL(V)A_{III}$ because in that case $C_{W,A}$ is  a sextic of Type III-2 by~\Ref{prp}{rettetre}: thus $Q_{A'}$ is a smooth quadric by~\Ref{crl}{quadliscia}. Let $\lambda_{\cD},\lambda_1$ be the $1$-PS's of $\SL(V)$ defined  in~\Ref{subsec}{prelbound} and~\eqref{piroso} respectively: notice that they commute and hence they define a homomorphism
\begin{equation*}
\begin{matrix}
(\CC^{\times})^2 & \overset{\rho}{\lra} & \SL(V) \\
(s,t) & \mapsto & \lambda_{\cD}(s)\cdot \lambda_1(t)
\end{matrix}
\end{equation*}
Both $\lambda_{\cD}$ and $\lambda_1$ act trivially on $\bigwedge^{10}A$: thus $\rho(s,t)$ acts on  $\Theta_A$ and hence we get an action of $(\CC^{\times})^2 $ on $\Theta_A$. Suppose first that $W$ is  fixed  by $\rho(s,t)$ for every $(s,t)\in (\CC^{\times})^2$: we will prove that  $C_{W,A}$ is a sextic of Type III-2.  Let $\{\xi_0,\ldots,\xi_3\}$ be the basis of $V_{14}$ appearing in the definition of $\lambda_1$, see~~\eqref{piroso}.  We claim  that $W$ is one of the following:
\begin{equation*}
\la v_0,\xi_0,a_1\xi_1+a_2\xi_2\ra, \la v_0,a_1\xi_1+a_2\xi_2,\xi_3\ra,
\la \xi_0,a_1\xi_1+a_2\xi_2, v_5\ra, \la a_1\xi_1+a_2\xi_2,\xi_3, v_5\ra,
\la \xi_0,\xi_1,\xi_2 \ra, \la \xi_1,\xi_2, \xi_3 \ra.
\end{equation*}
In fact this is a simple consequence of~\Ref{prp}{pianididi}: one invokes the hypothesis that $Q_{A'}$ is smooth (recall that a polynomial defining $Q_{A'}$ is left invariant by $\lambda_{\cD}$)  in order to exclude the cases $W=\la v_0,\xi_1,\xi_2 \ra$ or $W=\la \xi_1,\xi_2, v_5 \ra$. In each of the cases above the image of $(\CC^{\times})^2\to GL(W)$ is a $2$-dimensional torus. Let $C_{W,A}=V(P)$, thus $P\not=0$: applying~\Ref{clm}{azione} we get that $P$ is left invariant by a maximal torus of $\SL(W)$ and hence $C_{W,A}$ is a sextic of Type III-2 by~\Ref{rmk}{pasquetta}. Now let $W\in\Theta_A$ be arbitrary. Then the closure of $\{\rho(s,t)W\}$ contains a $W_0\in\Theta_A$ which is fixed by $\rho(s,t)$ for every $(s,t)\in (\CC^{\times})^2$. It follows  that $C_{W,A}$ is $\PGL(W)$-equivalent to $C_{W_0,A}$: we have proved that $C_{W_0,A}$  is a sextic of Type III-2 and hence we are done.
\end{proof}
\subsubsection{Wrapping it up}\label{subsubsec:dimodi}
We will prove~\Ref{prp}{primavera}. Item~(1) is the content of~\Ref{crl}{gendistab}. Let's prove Item~(2).
By~\Ref{crl}{skywalker}  we have $\Theta_A=\Theta_{A'}\cup\Theta_{A'''}\cup Z_A$  where $\Theta_{A'}$, $\Theta_{A'''}$ are smooth conics, $Z_A$ is a finite set, every $W\in\Theta_{A'}$ contains $[v_0]$ and every $W\in\Theta_{A'''}$ contains $[v_5]$. 
It follows that $[v_0]$ is the unique $1$-dimensional vector subspace of $V$ contained in every $W\in \Theta_{A'}$ and $[v_{5}]$ is the unique $1$-dimensional   vector subspace of $V$ contained in  every $W\in \Theta_{A'''}$. 
 From these facts we get that if $g\in\Stab(A)$ then  $g$ preserves the set $\{[v_0],[v_5]\}$ and maps $V_{14}$ to itself. 
Thus the the connected component of $\Id$ in $\Stab(A)$ belongs to  the centralizer $C_{\SL(V)}(\lambda_{\cD})$. 
Since $A$ is  $G_{\cD}$-stable the stabilizer of $A$ in $G_{\cD}$ is a finite group and hence Item~(2) follows. 
Lastly let's prove Items~(3) and~(4). Let $A\in {\mathbb S}^{\sF}_{\cD}$ be $G_{\cD}$-stable with minimal orbit: then $C_{W,A}$ is of Type II-1, II-2 or II-3  by~\Ref{prp}{dipianistab}.  Next suppose that $A\in {\mathbb S}^{\sF}_{\cD}$ is properly $G_{\cD}$-semistable with minimal orbit and $[A]\notin\gX_{\cW}$: then $C_{W,A}$  is $\PGL(W)$-semistable and $\PGL(W)$-equivalent to a sextic of Type III-2 by~\Ref{prp}{pianidiprop}. It remains to prove that 
\begin{equation}\label{marchionne}
\gX_{\cW}\subset\gB_{\cD}. 
\end{equation}
In fact let $U$ be a $4$-dimensional vector-space and $\varphi\colon V\cong\bigwedge^2 U$ be an isomorphism as in~\eqref{identifico}. It suffices to prove that $\XX^{*}_{\cW}(U)\subset\BB^{*}_{\cD}$. Let $A\in\XX^{*}_{\cW}(U)$. By~\Ref{dfn}{ixdoppiovu} there exists a smooth quadric $Z\subset\PP(U)$ such that $A\supset i_{+}(Z)$. Let $L\subset Z$ be a line. Then $i_{+}(L)$ is a smooth conic  contained in $\Theta_A$; we claim  that 
 the intersection of $\Gr(3,V)$ and the linear span  $\la i_{+}(L)\ra\subset\PP(\bigwedge^3 V)$   is equal
  to $i_{+}(L)$. In fact if it is not then  the plane $\la i_{+}(L)\ra$ is contained in $\Theta_A$ (because $\Gr(3,V)$ is cut out by  quadrics) and hence $A\in\XX^{*}_{\cF_1,+}$; thus $A$ is unstable and that contradicts~\Ref{prp}{duequad}. Since 
 the intersection of $\Gr(3,V)$ and the linear span  $\la i_{+}(L)\ra$   is equal
  to the  smooth conic $i_{+}(L)$   it follows 
  by~\cite{ogtasso} that $A\in\BB^{*}_{\cD}$. This proves~\eqref{marchionne}.
\subsection{$\gB_{\cE_1}$}\label{subsec:bordeuno}
\setcounter{equation}{0}
The isotypical decomposition of $\bigwedge^3 \lambda_{\cE_1}$ with decreasing weights is 
\begin{equation}\label{quattradd}
\bigwedge^3 V=\bigwedge^3 V_{02}\oplus [v_0]\wedge V_{12}\wedge V_{35}\oplus 
\left([v_0]\wedge\bigwedge^2 V_{35}\oplus \bigwedge^2 V_{12}\wedge V_{35}\right) \oplus 
V_{12}\wedge\bigwedge^2 V_{35}\oplus \bigwedge^3 V_{35}.
\end{equation}
Let $A\in {\mathbb S}^{\sF}_{\cE_1}$. By definition $A=A_0\oplus  A_1\oplus A_2 \oplus A_3$ where
\begin{equation*}
\scriptstyle
A_0=\bigwedge^3 V_{02},\quad
A_1\in\Gr(2,[v_0]\wedge V_{12}\wedge V_{35}),\quad 
A_2\in\LL\GG([v_0]\wedge\bigwedge^2 V_{35}\oplus \bigwedge^2 V_{12}\wedge V_{35}),
\quad A_3=A_1^{\bot}\cap (V_{12}\wedge \bigwedge^2 V_{35}).
\end{equation*}
We will associate to $A$ two closed subsets of $\PP(\bigwedge^2 V_{35})$ that will be conics for $A$ generic. First we  notice that $\PP(V_{12}\wedge \bigwedge^2 V_{35})\cap\GG(3,V)$ is isomorphic to $\PP(V_{12})\times \PP(V_{35})$ embedded by the Segre map. Since $\PP(A_3)$ has codimension $2$ in $\PP(V_{12}\wedge \bigwedge^2 V_{35})$ it follows that $\Theta_{A_3}$ has dimension at least $1$ and that generically it is a twisted rational cubic curve.
The projection $\PP(V_{12})\times \PP(\bigwedge^2 V_{35})\to \PP(\bigwedge^2 V_{35})$ defines a regular map  $\pi\colon \Theta_{A_3}  \to  \PP(\bigwedge^2 V_{35})$.
 Let $D_{A_3}:=\im\pi$. If $\Theta_{A_3}$ is a twisted rational cubic curve then $D_{A_3}$ is a smooth conic. On the other hand let
 \begin{equation}
D_{A_2}:=\{[\gamma]\in \PP(\bigwedge^2 V_{35}) \mid 
A_2\cap ([v_0\wedge\gamma]\oplus \bigwedge^2 V_{12}\wedge\la \supp\gamma\ra)\not=\{0\}  \}.
\end{equation}
Then $D_{A_2}$ is a lagrangian degeneracy locus and either it is a conic or all of $\PP(\bigwedge^2 V_{35})$. 
\begin{rmk}\label{rmk:vulgata}
If $A_2\cap \bigwedge^2 V_{12}\wedge V_{35}=\{0\}$ we may describe $D_{A_2}$ as follows. By our assumption $A_2$ is the graph of a linear map  $[v_0]\wedge\bigwedge^2 V_{35}\lra \bigwedge^2 V_{12}\wedge V_{35}$ which is symmetric because $A_2$ is lagrangian: let $q_{A_2}$ be the associated quadratic form. Then $D_{A_2}=V(q_{A_2})$. 
\end{rmk}
If $A\in{\mathbb S}^{\sF}_{\cE_1}$ is generic then $D_{A_2}$, $D_{A_3}$ are conics intersecting transversely.  
 Below is the main result of the present subsection. 
 \begin{prp}\label{prp:versolinf}
The following hold:
\begin{enumerate}
\item[(1)]
Let $A\in {\mathbb S}^{\sF}_{\cE_1}$.   Then  $A$ is $G_{\cE_1}$-stable if and only if $D_{A_3}$ is a a smooth conic and $D_{A_2}$ is a conic intersecting $D_{A_3}$ transversely.
\item[(2)]
The generic $A\in {\mathbb S}^{\sF}_{\cE_1}$ is $G_{\cE_1}$-stable. 
\item[(3)]
If   $A\in {\mathbb S}^{\sF}_{\cE_1}$ is $G_{\cE_1}$-stable the connected component of $\Id$ in $\Stab(A)<\SL(V)$ is equal to $\im\lambda_{\cE_1}$.
\item[(4)]
Let  $A\in {\mathbb S}^{\sF}_{\cE_1}$ have closed $\PGL(V)$-orbit (in $\lagr^{ss}$), and suppose that $[A]\notin\gI$. Then $C_{W,A}$ is of Type II-2 or $\PGL(V)$-equivalent to Type III-2. 
\item[(5)]
$\gB_{\cE_1}\cap\gI=\{\gx\}$ where  $\gx\in\gM$ is as in~\eqref{pisapia} .
\end{enumerate}
\end{prp}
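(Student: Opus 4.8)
The plan is to follow the pattern already established for $\gB_{\cC_1}$, $\gB_{\cA}$ and $\gB_{\cD}$: first carry out the Geometric Invariant Theory of the $G_{\cE_1}$-action on ${\mathbb S}^{\sF}_{\cE_1}$ by means of the Cone Decomposition Algorithm of \Ref{prp}{algcon}, and then translate the resulting numerical conditions into the geometry of the two conics $D_{A_2}$, $D_{A_3}$ and of the curve $\Theta_A$. Throughout I would use \Ref{clm}{slagix}, which lets us pass freely between $G_{\cE_1}$-(semi)stability with closed orbit and $\PGL(V)$-(semi)stability with closed orbit, together with \eqref{sommapend} and \eqref{caramellamu} for $\mu(A,\lambda)$; note that for the torus factor $\CC^{\times}\subset G_{\cE_1}$ the set $I_{-}(\lambda)$ of \Ref{dfn}{ipiu} is nonempty, so the $\CC^{\times}$-weight of $\bigwedge^{10}A$ must be tracked separately.

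For Items~(1) and~(2) I would first prove a GIT lemma analogous to \Ref{prp}{gitsidi}: choosing the maximal torus of $G_{\cE_1}$ diagonal in $\sF$ and the standard cone, the Cone Decomposition Algorithm produces a short finite list of ordering $1$-PS's, and a case-by-case computation of $\mu$ gives explicit flag conditions equivalent to $A$ failing to be $G_{\cE_1}$-stable. The second step, analogous to \Ref{crl}{gendistab}, is to translate these flag conditions into the three geometric degenerations: $D_{A_3}$ is not a smooth conic (equivalently $\Theta_{A_3}$ fails to be a twisted cubic), $D_{A_2}$ degenerates to all of $\PP(\bigwedge^2 V_{35})$ (equivalently $A_2\supset[v_0]\wedge\bigwedge^2 V_{35}$, identified via \Ref{rmk}{vulgata} under the hypothesis $A_2\cap\bigwedge^2 V_{12}\wedge V_{35}=\{0\}$), or $D_{A_2}$ and $D_{A_3}$ meet non-transversely. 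Item~(1) is then the contrapositive of the combined statement, and Item~(2) follows at once since smooth transverse conics form a nonempty Zariski-open condition. The properly semistable minimal orbits would be identified separately by taking limits $\lim_{t\to 0}\lambda(t)A$ through \Ref{clm}{limite} and recognizing the resulting split lagrangians via \Ref{clm}{unicotre}, as in \Ref{prp}{malchiodi}.

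For Item~(3), $G_{\cE_1}$-stability forces $\Stab_{G_{\cE_1}}(A)$ to be finite, so it suffices to show $\Stab(A)^{0}\subset C_{\SL(V)}(\lambda_{\cE_1})$, i.e. that every $g\in\Stab(A)^{0}$ preserves the three eigenspaces $[v_0]$, $V_{12}$, $V_{35}$ of $\lambda_{\cE_1}$. I would recover these subspaces canonically from the geometry of $\Theta_A$: $V_{35}$ is the common support of the planes of $\Theta_{A_3}$, the transverse pair $D_{A_2},D_{A_3}\subset\PP(\bigwedge^2 V_{35})$ rigidifies the flag inside $\bigwedge^2 V_{35}$, and $[v_0]$, $V_{12}$ are then pinned down by the remaining incidences of $\Theta_A$, exactly as $[v_0]$ and $V_{15}$ were recovered in the proof of \Ref{prp}{montalbano}. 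Uniqueness of these subspaces forces $g\in C_{\SL(V)}(\lambda_{\cE_1})$.

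For Item~(4) I would first show $\gx=[A_k]\in\gB_{\cE_1}$ by exhibiting $A_k=\Sym^3 L\otimes\det L$ (for suitable $L$ and basis) as a point of $\BB_{\cE_1}$, verifying the flag conditions $A\supset[v_0]\wedge\bigwedge^2 V_{12}$ and $\dim A\cap([v_0]\wedge V_{12}\wedge V_{35})\ge 2$ directly from \eqref{kappacca}--\eqref{decoelle}; that $\gx\in\gI$ is already known from \eqref{ibla} (where $C_{W,A_k}=\PP(W)$) and \eqref{stregaovest}. For the reverse inclusion, let $A\in{\mathbb S}^{\sF}_{\cE_1}$ be semistable with minimal orbit and not $\PGL(V)$-equivalent to $A_k$; since then $\dim\Theta_A\le 1$ (the larger cases being excluded by \Ref{lmm}{sedimdue}), \Ref{crl}{senoncurva} gives $C_{W,A}\ne\PP(W)$, and using \Ref{prp}{primisarto}, \Ref{crl}{cnesinerre}, \Ref{clm}{azione} and \Ref{rmk}{pasquetta} I expect each $C_{W,A}$ to be a sextic of Type~II (a double smooth cubic, or of the form $X_0^2F$ with $F$ squarefree) when $A$ is $G_{\cE_1}$-stable, and of Type~III-2 in the remaining properly semistable cases (those equivalent to $A_{III}$ being excluded from $\gI$ by \eqref{zetano}); by \Ref{rmk}{pershah} none of these lies in the indeterminacy locus of \eqref{persestiche}, whence $[A]\notin\gI$. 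The main obstacle will be the GIT bookkeeping of Item~(1): setting up the Cone Decomposition Algorithm for the reducible group $G_{\cE_1}=\CC^{\times}\times SL(V_{12})\times SL(V_{35})$ and checking that its finite list of ordering $1$-PS's detects precisely the three conic degenerations with no spurious instabilities; a secondary difficulty is the clean translation of the flag conditions into conic transversality when $A_2$ meets $\bigwedge^2 V_{12}\wedge V_{35}$ nontrivially, and pinning down which degenerate limit is $\PGL(V)$-equivalent to $A_k$ so as to isolate $\{\gx\}$.
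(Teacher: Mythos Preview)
Your approach is essentially the same as the paper's; the paper proves Item~(1) via a GIT lemma (\Ref{prp}{raggieuno}) obtained from the Cone Decomposition Algorithm, translates it into the conic condition (\Ref{crl}{raggieuno}), and handles Items~(3) and~(4) via the structure of $\Theta_A$ and explicit computation of $C_{W,A}$.

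Two refinements are worth noting. First, in the GIT step for Item~(1) the paper's three destabilizing conditions are: (i)~$A_1$ contains a nonzero decomposable element (equivalent to $D_{A_3}$ not smooth); (ii)~$A_2\cap(\bigwedge^2 V_{12}\wedge V_{35})\ne\{0\}$; (iii)~a specific mixed flag condition on $A_2$ and $A_3$. Your description of (ii) as ``$A_2\supset[v_0]\wedge\bigwedge^2 V_{35}$'' is not the right condition, and the translation of (ii)+(iii) into non-transversality of $D_{A_2}$ and $D_{A_3}$ requires separate handling of the case $A_2\cap(\bigwedge^2 V_{12}\wedge V_{35})\ne\{0\}$, which the paper does carefully in the proof of \Ref{crl}{raggieuno}.

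Second, for Item~(3), the incidences of $\Theta_A$ alone recover $V_{35}$ (as the unique $3$-plane meeting every $W\in\Theta_{A_3}$ in dimension~$2$) and $V_{02}$ (as the isolated component, via \Ref{crl}{tuttipiani}), but do not obviously pin down $[v_0]$ and $V_{12}$ separately inside $V_{02}$. The paper instead uses that $C_{V_{02},A}$ is a sextic of Type~II-2, i.e.\ $V(X_0^2 F(X_1,X_2))$ with $F$ squarefree (\Ref{prp}{eunostabtipo}): its singular locus is the double line $\PP(V_{12})$ together with the isolated point $[v_0]$, and since $g\in\Stab(A)^0$ preserves $C_{V_{02},A}$ it must fix both. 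Your sketch ``recover $[v_0]$, $V_{12}$ from remaining incidences'' needs to be replaced or supplemented by this curve computation. For Item~(4) the properly semistable dichotomy is between $[A]=\gx$ and Type~III-2 (\Ref{prp}{eunosemtipo}); $A_{III}$ does not play a distinguished role here, unlike in the $\cA$ and $\cC_1$ cases.
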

The proof of~\Ref{prp}{versolinf}  is given in~\Ref{subsubsec}{dimeuno}. 
\subsubsection{The GIT analysis}
Let $\lambda$ be a $1$-PS of $G_{\cE_1}$. Since $G_{\cE_1}=\CC^{\times}\times \SL(V_{12})\times \SL(V_{35})$ there exist bases $\{\xi_1,\xi_2\}$, $\{\beta_1,\beta_2,\beta_3\}$ of $V_{12}$ and $V_{35}$ respectively such that
 \begin{equation}\label{nostrops}
\lambda(t)=(t^m,\diag(t^r,t^{-r}),\diag(t^{s_1},t^{s_2},t^{s_3})).
\end{equation}
and
\begin{equation}
 m,s_1,s_2,s_3\in\ZZ,\quad r\in\NN, 
\quad s_1\ge s_2\ge s_3,\quad (m,r,s_1,s_2,s_3)\not=(0,0,0,0,0),\quad\sum s_i=0.
\end{equation}
We recall that the action of the $\CC^{\times}$-factor on $V$ is given by~\eqref{toroeuno}. We write below the  action of $\bigwedge^3 \lambda$ on the  second and third summands of~\eqref{quattradd}:
\begin{equation}\label{primadec}
\scriptsize
[v_0]\wedge V_{12}\wedge V_{35}=\underbrace{[v_0\wedge\xi_1\wedge\beta_1]}_{t^{r+s_1}}+
\underbrace{[v_0\wedge\xi_1\wedge\beta_2]}_{t^{r+s_2}}+
\underbrace{[v_0\wedge\xi_1\wedge\beta_3]}_{t^{r+s_3}}+
\underbrace{[v_0\wedge\xi_2\wedge\beta_1]}_{t^{-r+s_1}}+
\underbrace{[v_0\wedge\xi_2\wedge\beta_2]}_{t^{-r+s_2}}+
\underbrace{[v_0\wedge\xi_2\wedge\beta_3]}_{t^{-r+s_3}}.
\end{equation}
\begin{equation}\label{secondadec}
\scriptsize
[v_0]\wedge \bigwedge^2 V_{35}\oplus \bigwedge^2 V_{12}\wedge V_{35}=
\underbrace{[v_0\wedge\beta_1\wedge\beta_2]}_{t^{3m-s_3}}+
\underbrace{[v_0\wedge\beta_1\wedge\beta_3]}_{t^{3m-s_2}}+
\underbrace{[v_0\wedge\beta_2\wedge\beta_3]}_{t^{3m-s_1}}+
\underbrace{[\xi_1\wedge\xi_2\wedge\beta_1]}_{t^{s_1-3m}}+
\underbrace{[\xi_1\wedge\xi_2\wedge\beta_2]}_{t^{s_2-3m}}+
\underbrace{[\xi_1\wedge\xi_2\wedge\beta_3]}_{t^{s_3-3m}}.
\end{equation}
In particular $I_{-}(\lambda)\subset\{0,4\}$, see~\Ref{dfn}{ipiu}. We let $e^1_0>\ldots> e^1_{j(1)}$ and $e^2_0>\ldots> e^2_{j(2)}$ be the weights (in decreasing order) of the action of $\bigwedge^3 \lambda$ on the  second and third summands of~\eqref{quattradd}. 
 By~\eqref{sommapend}  and~\eqref{caramellamu} we have
\begin{equation}\label{gratteri}
\mu(A,\lambda)=-3m+2\mu(A_1,\lambda)+\mu(A_2,\lambda)=-3m+2\sum_{i=0}^{j(1)} d^{\lambda}_i(A_1) e^1_i +\sum_{i=0}^{j(2)} d^{\lambda}_i(A_2) e^2_i.
\end{equation}
\begin{prp}\label{prp:raggieuno}
 $A\in{\mathbb S}^{\sF}_{\cE_1}$  is not $G_{\cE_1}$-stable  if and only if one of the following holds:
\begin{enumerate}
\item[(1)]
There exists a non-zero decomposable element of $A_1$.
\item[(2)]
$\dim(A_2\cap \bigwedge^2 V_{12}\wedge V_{35})\ge 1$.
\item[(3)]
There exist bases  $\{\xi_1,\xi_2\}$ of $V_{12}$,  $\{\beta_1,\beta_2,\beta_3\}$ of $V_{35}$ and $x,y\in\CC$ not both zero such that 
\begin{equation}\label{atrecond}
\la \xi_1\wedge\beta_1\wedge\beta_2, (x \xi_1\wedge\beta_1\wedge\beta_3 + 
y \xi_2\wedge  \beta_1\wedge\beta_2)\ra \subset A_3
\end{equation}
and
\begin{equation}\label{aduecond}
\dim(A_2\cap \la v_0\wedge\beta_1\wedge\beta_2, \xi_1\wedge\xi_2\wedge\beta_1  \ra)\ge 1.
\end{equation}
\end{enumerate}
\end{prp}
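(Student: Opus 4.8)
The plan is to combine the Hilbert--Mumford criterion (\Ref{thm}{hilmum}, applied to the $G_{\cE_1}$-action linearized by the embedding ${\mathbb S}^{\sF}_{\cE_1}\hookrightarrow\lagr$) with the Cone Decomposition Algorithm of \Ref{prp}{algcon}. As always the argument splits into two halves: first, each of conditions (1)--(3) produces a $1$-PS $\lambda$ of $G_{\cE_1}$ with $\mu(A,\lambda)\ge 0$, so that $A$ is not $G_{\cE_1}$-stable; second, conversely, if $A$ is not $G_{\cE_1}$-stable then one of (1)--(3) holds. Throughout I will use the expansion~\eqref{gratteri}, namely $\mu(A,\lambda)=-3m+2\mu(A_1,\lambda)+\mu(A_2,\lambda)$, the weight tables~\eqref{primadec}--\eqref{secondadec}, and the fact (recorded after~\eqref{secondadec}) that the index $1$ always lies in $I_{+}(\lambda)$, so that \Ref{clm}{mumeta} gives $\mu(A_1,\lambda)=\mu(A_3,\lambda)$; this identity is what lets me read condition~(3), which is phrased through $A_3=A_1^{\bot}$, directly off the numerical function.

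For the easy half I would exhibit the destabilizing $1$-PS's explicitly. If (1) holds, choose bases so that the decomposable vector is $v_0\wedge\xi_1\wedge\beta_1$ and take $\lambda=(t^0,\diag(t,t^{-1}),\Id_{V_{35}})$; then $m=0$ and $\mu(A_2,\lambda)=0$, while by~\eqref{primadec} the weight-$(+1)$ summand of $[v_0]\wedge V_{12}\wedge V_{35}$ is $[v_0]\wedge\xi_1\wedge V_{35}$, whence $d_0(A_1)\ge 1$ and $\mu(A,\lambda)=2\mu(A_1,\lambda)=2(2d_0(A_1)-2)\ge 0$ by~\eqref{caramellamu}. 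If (2) holds, take the inverse of the torus factor, $\lambda(t)=(t^{-1},\Id,\Id)$; using~\eqref{toroeuno} and~\eqref{secondadec} one finds $\mu(A_1,\lambda)=0$, $-3m=3$, and (by \Ref{clm}{nickcarter}) $\mu(A_2,\lambda)=6\dim(A_2\cap\bigwedge^2 V_{12}\wedge V_{35})-9$, so $\mu(A,\lambda)=6(\dim(A_2\cap\bigwedge^2 V_{12}\wedge V_{35})-1)\ge 0$. For (3) the destabilizing $\lambda$ has nontrivial components in all three factors; its weights are dictated by requiring that $\xi_1\wedge\beta_1\wedge\beta_2$ and the vector of~\eqref{atrecond} head the $\lambda$-filtration of $A_3=A_1^{\bot}$ and that the plane in~\eqref{aduecond} sit in the top $\lambda$-graded piece of $A_2$. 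Conditions~\eqref{atrecond}--\eqref{aduecond} then translate into $d^{\lambda}(A_3)\succeq{\bf d}_3$ and $d^{\lambda}(A_2)\succeq{\bf d}_2$ for types with $-3m+2\mu({\bf d}_3,\lambda)+\mu({\bf d}_2,\lambda)=0$, so $\mu(A,\lambda)\ge 0$ follows from the monotonicity of \Ref{clm}{domina} (applied to the Grassmannian and lagrangian factors separately).

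For the converse I would verify the hypotheses of \Ref{prp}{algcon} for $G_{\cE_1}=\CC^{\times}\times \SL(V_{12})\times \SL(V_{35})$ acting on ${\mathbb S}^{\sF}_{\cE_1}$, with $T$ the product of the obvious maximal tori and $C$ the standard Weyl chamber given by $r\ge 0$ and $s_1\ge s_2\ge s_3$. The faces of $C$ are cut out by equalities among the weights of the monomials in~\eqref{primadec}, hence span ordering hyperplanes, so hypothesis (1) holds; hypothesis (2) holds because the torus factor has rank $1$, so every ordering hyperplane coming from its characters equals $\{m=0\}$ and $\dim Z=0$. Since conditions (1)--(3) are manifestly $G_{\cE_1}$-invariant (decomposability of elements of $A_1$, the $G_{\cE_1}$-stable subspace $\bigwedge^2 V_{12}\wedge V_{35}$, and base-change in (3)), \Ref{prp}{algcon} reduces the claim to showing that for every ordering $1$-PS $\lambda$ the locus $\{A\mid\mu(A,\lambda)\ge 0\}$ lies in (1)$\cup$(2)$\cup$(3). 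I would enumerate the finitely many ordering rays by solving the linear systems in the coordinates $(m,r,s_1,s_2,s_3)$, and for each resulting $\lambda$ describe $\cP_{\lambda}^{\ge 0}$ via~\eqref{gratteri} and match every type in it to one of the three conditions, replacing $A$ by a $\lambda$-split representative via \Ref{clm}{limite} when analyzing boundary types.

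The main obstacle is precisely this last case-by-case bookkeeping: producing the complete list of ordering $1$-PS's of $G_{\cE_1}$ and checking, type by type, that $\mu(A,\lambda)\ge 0$ forces a non-zero decomposable element in $A_1$, or a non-zero vector of $A_2$ in $\bigwedge^2 V_{12}\wedge V_{35}$, or the combined degeneracy~(3). Coupled with this is the dual, equally delicate task in the forward direction of pinning down the exact $1$-PS and the threshold types ${\bf d}_2,{\bf d}_3$ that realize condition (3); everything else is routine once the weight data~\eqref{quattradd}, \eqref{primadec}--\eqref{secondadec} and the identity $\mu(A_1,\lambda)=\mu(A_3,\lambda)$ are in place.
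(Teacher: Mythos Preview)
Your proposal is correct and follows essentially the same route as the paper: verify the hypotheses of \Ref{prp}{algcon}, exhibit the destabilizing $1$-PS's $(0,1,0,0,0)$, $(-1,0,0,0,0)$, and (for condition~(3)) $(0,3,6,0,-6)$ in the coordinates of~\eqref{nostrops}, and then run the Cone Decomposition Algorithm, checking that every ordering ray either forces one of (1)--(3) directly or reduces to them. The only point worth flagging is that the converse direction is not purely mechanical: the ordering $1$-PS $(1,0,0,0,0)$ yields the condition $\dim(A_2\cap[v_0]\wedge\bigwedge^2 V_{35})\ge 2$ (see~\eqref{jolly}), which is neither (1) nor (2) on its face, and one must produce bases so that~\eqref{aunocond}--\eqref{aduecond} hold to land in (3); this is a small argument beyond the type-by-type matching you describe, so be prepared for it when you carry out the enumeration.
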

\begin{proof}
We will use the data displayed in Tables~\eqref{penduno}, \eqref{pendue} and~\eqref{mudieuno}. 
The first two tables give for each of a series of $1$-PS's of $G_{\cE_1}$ the weights of the action on the second and third summands of~\eqref{quattradd}. Each such  $1$-PS is diagonalized as in~\eqref{nostrops} and we denote it by the corresponding string of weights $(m,r,s_1,s_2,s_3)$. 
 One  computes the numerical function  $\mu(A,\lambda)$ of such a $1$-PS by plugging the data in
Formula~\eqref{gratteri}: the results are listed in Table~\eqref{mudieuno}. The $1$-PS's will be obtained by applying the Cone Decomposition Algorithm of~\Ref{subsec}{algocono}: below we will give the details.  First let's prove that if one of Items~(1), (2), (3) above holds then $A$ is not 
$G_{\cE_1}$-stable. Suppose that Item~(1) holds. There exist bases $\{\xi_1,\xi_2\}$ of $V_{12}$ and $\{\beta_1,\beta_2,\beta_3\}$ of $V_{35}$ such that $v_0\wedge\xi_1\wedge\beta_1\in A_1$. Let $\lambda$ be the $1$-PS  which is diagonal in the  basis $\{v_0,\xi_1,\xi_2,\beta_1,\beta_2,\beta_3\}$ and which is denoted by $(0,1,0,0,0)$: then $\mu(A,\lambda)\ge 0$ (see Tables~\eqref{penduno} and~\eqref{mudieuno}) and hence $A$ is not $G_{\cE_1}$-stable. Next suppose that Item~(2) holds. There exist bases $\{\xi_1,\xi_2\}$ of $V_{12}$ and $\{\beta_1,\beta_2,\beta_3\}$ of $V_{35}$ such that $\xi_1\wedge\xi_2\wedge\beta_1\in A_2$. Let $\lambda$ be the $1$-PS  which is diagonal in the  basis $\{v_0,\xi_1,\xi_2,\beta_1,\beta_2,\beta_3\}$ and which is denoted by $(-1,0,0,0,0)$: then $\mu(A,\lambda)\ge 0$ (see Tables~\eqref{penduno} and~\eqref{mudieuno})  and hence $A$ is not $G_{\cE_1}$-stable. Before dealing with Item~(3) we notice  that the equality $A_1=A_3^{\bot}\cap (\bigwedge^2 V_{12}\wedge V_{35})$ gives the following
 \begin{rmk}\label{rmk:treprimo}
\eqref{atrecond} holds (for some $x,y\in\CC$ not both zero)  if and only if 
there exist  $w_1,w_2,z\in\CC$ not all zero such that 
\begin{equation}\label{aunocond}
[v_0]\wedge [w_1   \xi_1\wedge \beta_1 + w_2  \xi_1\wedge \beta_2 + z  \xi_2\wedge \beta_1 ]\subset A_1\subset [v_0]\wedge ([\xi_1]\wedge V_{35}\oplus  \la \xi_2\wedge\beta_1, \xi_2\wedge\beta_2\ra ).
\end{equation}
\end{rmk}
Now suppose that Item~(3) holds. Thus we have the bases $\{\xi_1,\xi_2\}$ of $V_{12}$ and $\{\beta_1,\beta_2,\beta_3\}$ of $V_{35}$ which appear in the statement of Item~(3).
Let $\lambda$ be the $1$-PS of $G_{\cE_1}$ which corresponds to 
$(0,3,6,0,-6)$ (with respect to the given basis of $V$). By~\Ref{rmk}{treprimo} we know that~\eqref{aunocond} holds, and of course~\eqref{aduecond} holds: it follows that $\mu(A,\lambda)\ge 0$  (see Tables~\eqref{penduno} and~\eqref{mudieuno})  and hence $A$ is not $G_{\cE_1}$-stable. It remains to prove the converse i.e.~that if $A$ is not $G_{\cE_1}$-stable then one of Items~(1), (2), (3) holds.
We will  apply the Cone Decomposition Algorithm of~\Ref{subsec}{algocono}. 
We choose the maximal torus $T< G_{\cE_1}$ to be 
 \begin{equation}
T=\{(u,\diag(t,t^{-1}),\diag(t_1,t_2,t_3)) \mid u,t,t_i\in\CC^{\times},\ t_1\cdot t_2\cdot t_3=1\}.
\end{equation}
(The maps are diagonal with respect to the bases $\{\xi_1,\xi_2\}$ and $\{\beta_1,\beta_2,\beta_3\}$.)  Thus
\begin{equation*}
{\check X}(T)_{\RR}:=\{(m,r,s_1,s_2,s_3)\in\RR^5 \mid s_1+s_2+ s_3=0\}
\end{equation*}
 We let $C\subset {\check X}(T)_{\RR}$ be the standard cone: 
\begin{equation*}
C:=\{(m,r,s_1,s_2,s_3)\in \RR^5 \mid  r\ge s_1\ge s_2\ge s_3\}.
\end{equation*}
$H\subset {\check X}(T)_{\RR}$ is an ordering hyperplane if and only if is equal to the kernel  of  one the following linear functions:
\begin{equation*}
s_i-s_j,\ r,\ 2r-s_i+s_j,\ s_i+6m,\ s_i-3m. 
\end{equation*}
In particular the hypotheses of~\Ref{prp}{algcon} are satisfied.
One computes the ordering rays  by passing to
coordinates $(m,r,x_1,x_2)$ where
\begin{equation}\label{coordix}
x_i:=s_{i}-s_{i+1},\qquad i=1,2.
\end{equation}
In  the above coordinates    
\begin{equation*}
C=\{(n,r,x_1,x_2) \mid  r\ge 0,\quad x_1\ge 0,\ x_2\ge 0 \}.
\end{equation*}
The linear functions $s_1,s_2,s_3$ on $W$ are expressed as follows in terms of the coordinates $x_1,x_2$:
 \begin{equation}\label{treperdue}\scriptsize
\begin{pmatrix}
s_1 \\
s_2 \\
s_3 
\end{pmatrix} =
\left(\begin{array}{rr}
2/3 & 1/3   \\
-1/3 & 1/3   \\
-1/3 & -2/3  
\end{array}\right)
\cdot
\begin{pmatrix}
x_1 \\
x_2  
\end{pmatrix}
\end{equation}
It follows that  $H\subset {\check X}(T)_{\RR}$ is an ordering hyperplane if and only if,  in the $(m,r,x_1,x_2)$-coordinates, it is equal to the kernel of one of the following  linear functions:
\begin{equation*}
\scriptstyle
x_1,\ x_2,\ r,\ 2r-x_1,\ 2r-x_2,\ 2r-x_1-x_2,\ 2x_1+x_2+18m,\ x_1-x_2-18 m,\ x_1+2 x_2-18m,\ 2x_1+x_2-9m,\ x_1-x_2+9m,\ x_1+2x_2+9m. 
\end{equation*}
An easy computation gives  the ordering rays  in the $(m,r,x_1,x_2)$-coordinates. Switching back to $(m,r,s_1,s_2,s_3)$-coordinates we get the following generators for ordering rays.
First we get the vectors 
\begin{equation}\label{primalista}
(\pm 1,0,0,0,0),\quad (0,1,0,0,0), \qquad (m,r,6,0,-6) \ \ (m,r)\in\{(0,0),(0,3),(1,3),(2,3)\}.
\end{equation}
Secondly we get the vectors
\begin{equation}\label{secondalista}
(m,r,12,-6,-6),\quad \text{$m=r=0$ or $m\in\{1,4,-2\}$ and $r\in\{0,9\}$}.
\end{equation}
and lastly the vectors
\begin{equation}\label{terzalista}
(m,r,6,6,-12),\quad \text{$m=r=0$ or $m\in\{-1,-4,2\}$ and $r\in\{0,9\}$}.
\end{equation}
Thus we get exactly the $1$-PS's that appear in Tables~\eqref{penduno}, \eqref{pendue} and~\eqref{mudieuno}. As is easily checked 
 the following hold:
\begin{enumerate}
\item[(a)]
Let $\lambda$ be the $1$-PS indicized by $(0,1,0,0,0)$ and suppose that  $\mu(A,\lambda)\ge 0$. Then Item~(1)  of~\Ref{prp}{raggieuno} holds.
\item[(b)]
Let  $\lambda$ be the $1$-PS indicized by $(-1,0,0,0,0)$ and suppose that  $\mu(A,\lambda)\ge 0$. Then Item~(2)  of~\Ref{prp}{raggieuno} holds.
\item[(c)]
Let  $\lambda$ be the $1$-PS indicized by $(0,3,6,0,-6)$ and suppose that  $\mu(A,\lambda)\ge 0$. Suppose in addition that neither Item~(1) nor Item~(2) of~\Ref{prp}{raggieuno} holds: then   Item~(3)  of~\Ref{prp}{raggieuno} holds (use~\Ref{rmk}{treprimo}).  
\end{enumerate}
In order to finish the proof it suffices to show that  if $\mu(A,\lambda)\ge 0$ for one of the remaining ordering $1$-PS's (i.e.~different from those appearing in Items~(a), (b) and~(c) above) then one of Items~(1), (2) or~(3) holds. This consists of a series of routine checks. We summarize the main points. First consider the $1$-PS $\lambda$ indicized by $(1,0,0,0,0)$ and suppose that $\mu(A,\lambda)\ge 0$. By Table~\eqref{penduno} we get that 
\begin{equation}\label{jolly}
\dim (A_2\cap [v_0]\wedge\bigwedge^2 V_{35})\ge 2.
\end{equation}
Let's show that if~\eqref{jolly} holds
there exist bases  $\{\xi_1,\xi_2\}$ of $V_{12}$,  $\{\beta_1,\beta_2,\beta_3\}$ of $V_{35}$ and $w_1,w_2,z\in\CC$ not all zero such that~\eqref{aduecond} and~\eqref{aunocond}  hold. 
 It will be convenient to identify $[v_0]\wedge V_{12}\wedge V_{35}$ with $\Hom(V_{12}, V_{35})$ via the perfect pairing $V_{12}\times V_{12}\to \bigwedge^2 V_{12}$ given by wedge product. First one shows that there exist 
\begin{equation*}
0\not= \alpha_1\in A_1,\quad
 0\not=v_0\wedge \theta\in A_2,\ \theta\in \bigwedge^2 V_{35}
\end{equation*}
 such that the following holds. Let  $f_1\colon V_{12}\to V_{35}$ be the map associated to  $\alpha_1$: then $\im f_1\subset \supp\theta$. Now complete $\alpha_1$ to basis $\{\alpha_1,\alpha_2\}$ of $A_1$ and let  $f_2\colon V_{12}\to V_{35}$ be the map associated to  $\alpha_2$. Since $\dim f_2^{-1}(\supp\theta) \ge 1$ there exists a basis $\{\xi_1,\xi_2\}$ of $V_{12}$ such that $f_2(\xi_1)\in\supp\theta$. 
 Let $0\not=\beta_1$ such that $f_1(\xi_1)\in[\beta_1]$: thus $\beta_1\in\supp\theta$. Now complete $\beta_1$ to basis $\{\beta_1,\beta_2,\beta_3\}$ of $V_{35}$ such that $\supp\theta=\la \beta_1,\beta_2\ra$. Then~\eqref{aduecond} and~\eqref{aunocond}  hold: by~\Ref{rmk}{treprimo} we get that Item~(3) of~\Ref{prp}{raggieuno} holds. Next one examines the other ordering $1$-PS's, i.e.~those indicized by 
 $(m,r,6,0-6)$,  $(m,r,6,0-6)$, $(m,r,12,-6,-6)$ and $(m,r,6,6,-12)$. Suppose that $\lambda$ is one such $1$-PS and that $\mu(A,\lambda)\ge 0$. We may assume that neither  Item~(1) nor Item~(2) nor Item~(3) of~\Ref{prp}{raggieuno} holds (with respect to arbitrary bases  $\{\xi_1,\xi_2\}$ of $V_{12}$,  $\{\beta_1,\beta_2,\beta_3\}$ of $V_{35}$): then one must check that $\mu(A,\lambda)<0$. This is time-consuming but straightforward. 
\end{proof}
\begin{crl}\label{crl:raggieuno}
 $A\in{\mathbb S}^{\sF}_{\cE_1}$  is $G_{\cE_1}$-stable  if and only if  $D_{A_3}$ is a smooth conic (equivalently $\Theta_{A_3}$ is a smooth curve) and $D_{A_2}$ is a conic intersecting $D_{A_3}$ transversely.
\end{crl}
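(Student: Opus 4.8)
The plan is to read off the corollary from the numerical characterization of non-stability in~\Ref{prp}{raggieuno} by translating each of its three conditions into a property of the conics $D_{A_2}$ and $D_{A_3}$. By~\Ref{prp}{raggieuno} the lagrangian $A$ is $G_{\cE_1}$-stable precisely when none of~(1), (2), (3) holds, so it is enough to prove three equivalences: condition~(1) fails if and only if $\Theta_{A_3}$ is a smooth curve and $D_{A_3}$ is a smooth conic; assuming~(1) fails, condition~(2) fails if and only if $D_{A_2}$ is a genuine conic (rather than all of $\PP(\bigwedge^2 V_{35})$); and, assuming~(1) and~(2) fail, condition~(3) holds if and only if the conics $D_{A_2}$ and $D_{A_3}$ meet non-transversely. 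Negating and combining these then yields exactly the asserted criterion.

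First I would settle the first equivalence, which also contains the parenthetical claim that $D_{A_3}$ is a smooth conic exactly when $\Theta_{A_3}$ is a smooth curve. Since $\dim V_{35}=3$, every element of $\bigwedge^2 V_{35}$ is decomposable, so $\Theta_{A_3}=\PP(A_3)\cap(\PP(V_{12})\times\PP(\bigwedge^2 V_{35}))$ is the intersection of a $\PP^3$ with the Segre variety, and $\pi$ is the restriction of the projection to the second factor. When $\Theta_{A_3}$ is the expected twisted cubic it is the graph of the degree-two map $\PP(V_{12})\to\PP(\bigwedge^2 V_{35})$ that parametrizes $D_{A_3}$, so $\pi$ restricts to an isomorphism onto $D_{A_3}$ and one curve is smooth iff the other is. To tie this to condition~(1), I would use $A_3=A_1^{\bot}\cap(V_{12}\wedge\bigwedge^2 V_{35})$: a short orthogonality computation shows that a nonzero decomposable $v_0\wedge\xi\wedge\beta\in A_1$ forces $\Theta_{A_3}$ to acquire a singularity (equivalently $D_{A_3}$ to degenerate), while the absence of decomposable vectors in $A_1$ makes $\Theta_{A_3}$ a smooth twisted cubic.

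Second, condition~(2) is handled by~\Ref{rmk}{vulgata}: when $A_2\cap(\bigwedge^2 V_{12}\wedge V_{35})=\{0\}$, i.e.\ when~(2) fails, $A_2$ is the graph of the symmetric map attached to $q_{A_2}$ and $D_{A_2}=V(q_{A_2})$ is a conic. Conversely, if~(2) holds then a nonzero $\xi_1\wedge\xi_2\wedge\beta\in A_2$ makes every $\gamma$ with $\beta\in\langle\supp\gamma\rangle$ lie in $D_{A_2}$, so $D_{A_2}$ contains a line; since the lagrangian degeneracy locus $D_{A_2}$ is either a conic or the whole of $\PP(\bigwedge^2 V_{35})$, I would check that containing such a line forces it to be the entire plane. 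Hence~(2) fails exactly when $D_{A_2}$ is a genuine conic.

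The main work, and the step I expect to be the real obstacle, is the geometric reading of condition~(3). Assuming~(1) and~(2) fail, both $D_{A_3}$ (smooth) and $D_{A_2}=V(q_{A_2})$ are conics, and I would show that~(3) holds precisely when they are tangent at a common point. Using~\Ref{rmk}{treprimo} I would rewrite the membership~\eqref{atrecond} as~\eqref{aunocond}: the first generator places the point $[\beta_1\wedge\beta_2]$ on $D_{A_3}$ (since $\xi_1\wedge\beta_1\wedge\beta_2\in A_3$), and the second generator records the tangent direction to $\Theta_{A_3}$ there, whose image under $\pi$ is the tangent line $\ell$ to $D_{A_3}$ at $[\beta_1\wedge\beta_2]$. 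Unravelling the definition of $D_{A_2}$, the inequality~\eqref{aduecond} then says both that $[\beta_1\wedge\beta_2]\in D_{A_2}$ and that $\ell$ is tangent to $D_{A_2}$ at that point, the privileged line $\langle v_0\wedge\beta_1\wedge\beta_2,\ \xi_1\wedge\xi_2\wedge\beta_1\rangle$ selecting the direction $\beta_1$ inside $\supp(\beta_1\wedge\beta_2)$. The delicate point is to verify that this algebraic data genuinely computes the tangent line and the tangency of $D_{A_2}$, not merely a common point, and that the freedom of $(x,y)\neq(0,0)$ matches the existence of this tangent direction; here the Pl\"ucker and first-order computations in the spirit of~\Ref{prp}{primisarto} are needed. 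Once the three translations are in place, negating them and combining gives the statement of the corollary.
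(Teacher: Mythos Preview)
Your overall plan—reduce to Proposition~\ref{prp:raggieuno} and translate each of its three conditions into geometry of $D_{A_2}$ and $D_{A_3}$—matches the paper's approach, and your first equivalence and the tangent-line analysis sketched for the third step are in the right spirit (the paper carries out that tangent computation explicitly rather than by appeal to~\Ref{prp}{primisarto}, but the idea is the same).

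The genuine gap is in your second equivalence. You assert that if Item~(2) holds then $D_{A_2}$ is all of $\PP(\bigwedge^2 V_{35})$, proposing to ``check that containing such a line forces it to be the entire plane''. That check fails: a conic can contain a line by being reducible. Concretely, take $A_2=\la \xi_1\wedge\xi_2\wedge\beta_1,\ v_0\wedge\beta_1\wedge\beta_2,\ v_0\wedge\beta_1\wedge\beta_3 \ra$; this is lagrangian (each symplectic pairing vanishes by a repeated factor), Item~(2) holds, yet one computes that $D_{A_2}$ is set-theoretically the single line $\{[\gamma]:\beta_1\in\supp\gamma\}$, hence the double line as a divisor—not the whole plane. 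With this error your chain of equivalences does not close: in the $(\Leftarrow)$ direction you have no way to exclude the scenario ``$D_{A_3}$ smooth, $D_{A_2}$ a conic meeting it transversely, yet Item~(2) holds'', because your step~(ii) was supposed to rule out Item~(2) from $D_{A_2}$ being a conic, and it does not.

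The paper fixes this with a finer dichotomy: $D_{A_2}$ is a double line or all of $\PP(\bigwedge^2 V_{35})$ if and only if Item~(2) holds \emph{or} $\dim(A_2\cap[v_0]\wedge\bigwedge^2 V_{35})\ge 2$. In particular, whenever Item~(2) holds, $D_{A_2}$ is the whole plane or a double line, and neither can meet a smooth conic transversely; this is what makes the argument go through. What you need in place of your false step is exactly this: Item~(2) forces $D_{A_2}$ to be \emph{non-reduced} (not merely to contain a line), hence non-transverse to $D_{A_3}$.
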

\begin{proof}
First notice the following:
\begin{enumerate}
\item[(A)]
The equality $A_3=A_1^{\bot}\cap (V_{12}\wedge\bigwedge^2 V_{35})$ gives: Item~(1) of~\Ref{prp}{raggieuno} holds if and only if the intersection 
\begin{equation}\label{manbar}
\PP(A_3)\cap \left(\PP(V_{12})\times\PP(\bigwedge^2 V_{35})\right)
\end{equation}
 in $\PP(V_{12}\wedge\bigwedge^2 V_{35})$  is {\bf not} transverse.
\item[(B)]
$D_{A_2}$ is  a double line or all of $\PP(\bigwedge^2 V_{35})$ if and only if either Item~(2) of~\Ref{prp}{raggieuno} holds or~\eqref{jolly} holds.
\end{enumerate}
Let's prove that if $D_{A_3}$ is not a smooth conic or if  $D_{A_3}$   is a smooth conic but $D_{A_2}$ is not a conic intersecting $D_{A_3}$ transversely then $A$ is not $G_{\cE_1}$-stable. If $D_{A_3}$ is not a smooth conic then $\Theta_{A_3}$ is not a smooth curve i.e.~the intersection~\eqref{manbar} is not transverse. By Item~(A) above it follows that Item~(1) of~\Ref{prp}{raggieuno} holds and thus  $A$ is not $G_{\cE_1}$-stable by~\Ref{prp}{raggieuno}. Now let's assume that $D_{A_3}$ is a smooth conic  but $D_{A_2}$ is not a conic intersecting $D_{A_3}$ transversely. In order to prove that $A$ is not $G_{\cE_1}$-stable we need first to write out the tangent space to $D_{A_3}$ at a point $[\theta]$ (here $0\not=\theta\in\bigwedge^2 V_{35}$).  Since $[\theta]\in D_{A_3}$  there exists $0\not=\xi_1\in V_{12}$ such that  $[\xi_1\wedge\theta]$ belongs to~\eqref{manbar}. By the assumption that $D_{A_3}$ is a smooth conic we get that  the intersection~\eqref{manbar} is transverse at  $[\xi_1\wedge\theta]$ (as intersection in $\PP(V_{12}\wedge\bigwedge^2 V_{35})$).  Let $M\colon A_3\to \bigwedge^2 V_{12}\wedge \bigwedge^2 V_{35}$ be multiplication by $\xi_1$. Then $\ker M=[\xi_1\wedge\theta]$  because the intersection~\eqref{manbar} is transverse at  $[\xi_1\wedge\theta]$. Thus $M$ is surjective and hence 
\begin{equation}\label{controemme}
M^{-1}(\bigwedge^2 V_{12}\wedge[\theta])=\la \xi_1\wedge\theta, \xi_1\wedge \gamma+\xi_2\wedge\theta \ra,
\quad \gamma\in\bigwedge^2 V_{35}.
\end{equation}
Moreover $\gamma,\theta$ are linearly independent because the intersection~\eqref{manbar} is transverse at  $[\xi_1\wedge\theta]$; thus there exists $0\not=\beta_1\in V_{35}$ such that $\supp\gamma\cap\supp\theta=[\beta_1]$. The projective tangent space to $D_{A_3}$ at $[\theta]$ is given by
\begin{equation}\label{tanditre}
{\bf T}_{[\theta]}D_{A_3}=\PP\la \Ann\beta_1 \ra.
\end{equation}
Here we make the identification $\PP(\bigwedge^2 V_{35}^{\vee})=\PP(V_{35})$. We may complete $\beta_1$ to a basis $\{\beta_1,\beta_2,\beta_3\}$ of $V_{35}$ such that $\theta=\beta_1\wedge\beta_2$ and $\gamma=\beta_1\wedge\beta_3$.  Thus~\eqref{controemme}  gives that
\begin{equation}\label{miamia}
A_3\supset \la \xi_1\wedge \beta_1\wedge\beta_2, \xi_1\wedge \beta_1\wedge\beta_3 +
\xi_2\wedge\beta_1\wedge\beta_2 \ra.
\end{equation}
Now suppose that $[\theta]=[\beta_1\wedge\beta_2]\in D_{A_2}$ i.e.
\begin{equation}\label{nascinasci}
(v_0\wedge  \beta_1\wedge\beta_2+\xi_1\wedge\xi_2\wedge\beta)\in A_2,\qquad \beta\in \la\beta_1,\beta_2\ra
\end{equation}
and that either $D_{A_2}$ is  all of $\PP(\bigwedge^2 V_{35})$ or a conic which does not intersect $D_{A_3}$ transversely at $[\theta]$. 
If $D_{A_2}$ is all of $\PP(\bigwedge^2 V_{35})$ or a a double line then by Item~(B) above we get that  Item~(2) of~\Ref{prp}{raggieuno} holds, thus $A$ is not $G_{\cE_1}$-stable by~\Ref{prp}{raggieuno}. Next we assume that $D_{A_2}$ is a conic of rank at least $2$. By Item~(B) above it follows that Item~(2) of~\Ref{prp}{raggieuno} does not hold. Thus $D_{A_2}$ is described as in~\Ref{rmk}{vulgata}: it follows that  ${\bf T}_{[\beta_1\wedge\beta_2]}D_{A_2}=\PP\la \Ann\beta \ra$. Since $D_{A_2}$ and the smooth conic $D_{A_3}$ do not intersect transversely at $[\beta_1\wedge\beta_2]$ we get that $\beta\in[\beta_1]$.  By~\eqref{miamia} and~\eqref{nascinasci} we get that Item~(3) of~\Ref{prp}{raggieuno} holds and hence $A$ is not $G_{\cE_1}$-stable. We have proved that if $D_{A_3}$ is not a smooth conic or if  $D_{A_3}$   is a smooth conic but $D_{A_2}$ is not a conic intersecting $D_{A_3}$ transversely then $A$ is not $G_{\cE_1}$-stable.
Now suppose that $A$ is not $G_{\cE_1}$-stable and hence one of Items~(1), (2), (3) of~\Ref{prp}{raggieuno} holds. 
If Item~(1) holds then by Item~(A) above we  get that $D_{A_3}$ is not a smooth conic. If Item~(2) holds
 then  by Item~(B) above $D_{A_2}$ is all of $\PP(\bigwedge^2 V_{35})$ or else a double line (and hence it cannot intersect transversely a conic). Lastly suppose that  Item~(3) holds. We may assume that neither Item~(1) nor Item~(2) hold: thus~\eqref{atrecond} and~\eqref{aduecond} give (after a rescaling of $\beta_3$) that
 \begin{equation}\label{armadiocloe}
\xi_1\wedge\beta_1\wedge\beta_2, (\xi_1\wedge\beta_1\wedge\beta_3 + 
\xi_2\wedge  \beta_1\wedge\beta_2)\in A_3,\quad
 (v_0\wedge\beta_1\wedge\beta_2+ z \xi_1\wedge\xi_2\wedge\beta_1)\in A_2.
\end{equation}
Since Item~(1) of~\Ref{prp}{raggieuno}  does not hold the conic $D_{A_3}$ is smooth. By~\eqref{armadiocloe} we have that $[\beta_1\wedge\beta_2]\in D_{A_3}\cap D_{A_2}$ and the analysis carried out above shows that the intersection is not transverse at $[\beta_1\wedge\beta_2]$. 
\end{proof}
Let $\sB=\{v_0,\xi_1,\xi_2,\beta_1,\beta_2,\beta_3\}$ be a basis of $V$ with $\{\xi_1,\xi_2\}$ a basis of $V_{12}$ and $\{\beta_1,\beta_2,\beta_3\}$ a basis of $V_{35}$. 
Let $\lambda_1$ be the $1$-PS of $G_{\cE_1}$ indicized by $(0,3,6,0,-6)$ (given the choice of the basis $\sB$)  i.e.~the  $1$-PS that intervenes in the proof that if Item~(3) of~\Ref{prp}{raggieuno} holds for $A$ then $A$ is not $G_{\cE_1}$-stable.
Let $\wh{\mathbb S}^{\sF}_{\cE_1}$ be the affine cone over ${\mathbb S}^{\sF}_{\cE_1}$; then $G_{\cE_1}$ acts on $\wh{\mathbb S}^{\sF}_{\cE_1}$. The fixed locus $(\wh{\mathbb S}^{\sF}_{\cE_1})^{\lambda_1}$ is the set of $A$ which are mapped to themselves by $\wedge^3\lambda_1(t)$ and such that 
$\wedge^3\lambda_1(t)$ acts trivially on $\bigwedge^{10} A$.
\begin{dfn}
Let $\MM^{\sB}_{\cE_1}\subset\PP((\wh{\mathbb S}^{\sF}_{\cE_1})^{\lambda_1})$ be the set of $A$  such that 
$\wedge^3\lambda_1(t)$ acts trivially on $\bigwedge^2 A_1$,  $\bigwedge^3 A_2$ and  $\bigwedge^4 A_3$.
\end{dfn}
\begin{rmk}\label{rmk:fissespl}
Suppose that $A\in{\mathbb S}^{\sF}_{\cE_1}$;  then $A\in \MM^{\sB}_{\cE_1}$ if and only if it is $\lambda_1$-split of types $d^{\lambda_1}(A_1)=(0,1,1,0)$ and $d^{\lambda_1}(A_2)=(1,1,1)$. Moreover $\MM^{\sB}_{\cE_1}$ is an irreducible component of $\PP((\wh{\mathbb S}^{\sF}_{\cE_1})^{\lambda_1})$.
\end{rmk}
\begin{prp}\label{prp:giostra}
Suppose that $A$ is properly $G_{\cE_1}$-semistable. Then there exists a semistable $A_0\in\MM^{\sB}_{\cE_1}$  which is $G_{\cE_1}$-equivalent to $A$. 
\end{prp}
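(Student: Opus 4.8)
The plan is to exploit the destabilizing one-parameter subgroups produced in the proof of \Ref{prp}{raggieuno} and to degenerate $A$ inside its orbit closure until it becomes $\lambda_1$-split of the prescribed type. First, since $A$ is properly $G_{\cE_1}$-semistable, \Ref{prp}{raggieuno} gives that one of its Items (1), (2), (3) holds, and in each case the proof of that proposition exhibits a one-parameter subgroup $\lambda$ of $G_{\cE_1}$ with $\mu(A,\lambda)\ge 0$. Because $A$ is semistable, the Hilbert-Mumford criterion (\Ref{thm}{hilmum}, part~(2), in our sign convention) forces $\mu(A,\lambda)=0$; hence $A_0:=\lim_{t\to 0}\lambda(t)A$ exists and is $\lambda$-split of type $d^{\lambda}(A)$ by \Ref{clm}{limite}. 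Since $A_0$ lies in $\overline{G_{\cE_1}A}$ and $\mu(A,\lambda)=0$, a generator of $\bigwedge^{10}A_0$ lies in the closure of the $G_{\cE_1}$-orbit of a generator of $\bigwedge^{10}A$ but not at the origin; thus $A_0$ is $G_{\cE_1}$-semistable and $G_{\cE_1}$-equivalent to $A$ (their orbit closures share the same unique closed orbit). It therefore suffices to degenerate $A$, along such $\mu=0$ subgroups, into $\MM^{\sB}_{\cE_1}$.

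The principal case is Item (3), which is governed precisely by the subgroup $\lambda_1$ (indicized $(0,3,6,0,-6)$) defining $\MM^{\sB}_{\cE_1}$. Here I would take $A_0:=\lim_{t\to 0}\lambda_1(t)A$. Remark \Ref{rmk}{treprimo} translates Item (3) into the lower bounds $d^{\lambda_1}(A_1)\succeq(0,1,1,0)$ and $d^{\lambda_1}(A_2)\succeq(1,1,1)$, while the corresponding row of Table~\eqref{mudieuno} reads $\mu(A,\lambda_1)=12(3d_0(A_1)+2d_1(A_1)+d_2(A_1)+d_0(A_2)-4)$. A short arithmetic check shows that the only type compatible with these bounds and with $\mu(A,\lambda_1)=0$ is $d^{\lambda_1}(A_1)=(0,1,1,0)$ and $d_0(A_2)=1$; and since $A_2$ is Lagrangian, \Ref{clm}{nickcarter} then forces $d^{\lambda_1}(A_2)=(1,1,1)$. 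By \Ref{rmk}{fissespl} this is exactly the condition $A_0\in\MM^{\sB}_{\cE_1}$, so this case is complete.

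For Items (1) and (2) the plan is to first degenerate $A$ along the simpler subgroups $(0,1,0,0,0)$, respectively $(-1,0,0,0,0)$ (again with $\mu=0$, so that semistability forces $d_0(A_1)=1$, respectively $\dim(A_2\cap\bigwedge^2 V_{12}\wedge V_{35})=1$), producing a $G_{\cE_1}$-equivalent lagrangian that is now split in the relevant factor. I would then choose bases $\{\xi_1,\xi_2\}$ of $V_{12}$ and $\{\beta_1,\beta_2,\beta_3\}$ of $V_{35}$ adapted simultaneously to this new splitting of $A_1$ (equivalently of $A_3$) and to $A_2$, and verify that Item (3) of \Ref{prp}{raggieuno} now holds for the degenerated lagrangian; the previous paragraph then concludes.

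The main obstacle will be precisely this last verification. Item (3) couples the $A_3$-condition \eqref{atrecond} and the $A_2$-condition \eqref{aduecond} through the same vectors $\xi_i,\beta_j$, so one must check that a single common flag can always be arranged after splitting one factor. I expect this to be a finite but delicate case analysis, carried out using the explicit weight data of Tables~\eqref{penduno} and~\eqref{pendue} together with the degeneracy-locus descriptions of $D_{A_2}$ and $D_{A_3}$ (and, where convenient, \Ref{rmk}{vulgata}), entirely parallel to the reduction steps used for $\gB_{\cD}$ in the proof preceding \Ref{subsubsec}{cididi}. As there, the individual computations are routine; the only real content is the bookkeeping that the coupled Item (3) relations can be met for one choice of basis.
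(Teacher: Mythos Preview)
Your treatment of Item~(3) is exactly the paper's argument: take $A_0:=\lim_{t\to 0}\lambda_1(t)A$ and use the numerical function to pin down the type. Your plan for Item~(2) is also essentially what the paper does: after degenerating along $(-1,0,0,0,0)$ the resulting $A'$ satisfies $\dim(A'_2\cap[v_0]\wedge\bigwedge^2 V_{35})=2$, and the argument following~\eqref{jolly} in the proof of \Ref{prp}{raggieuno} then exhibits bases for which Item~(3) holds.

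The gap is in your handling of Item~(1). After degenerating along $(0,1,0,0,0)$ you obtain $A'$ with $A'_1$ split of type $(1,1)$, but $A'_2=A_2$ is unchanged (the weights on the middle summand are all zero for this $1$-PS). Discarding the unstable possibility, you have $A'_1=\la v_0\wedge\xi_1\wedge\beta_1, v_0\wedge\xi_2\wedge\beta_2\ra$, and the decomposables in $A'_1$ fix $[\beta_1],[\beta_2]$ up to swapping. Condition~\eqref{aduecond} then demands that $A_2$ meet one of two \emph{specific} $2$-planes in the $6$-dimensional symplectic space; for generic $A_2\in\LL\GG$ this fails. So Item~(3) need \emph{not} hold for $A'$, and no amount of bookkeeping with the original $A_2$ will produce it.

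The paper fixes this by inserting a second degeneration: from $A'$ one applies the $1$-PS $\lambda''$ of $SL(V_{35})$ given by $\diag(t,t,t^{-2})$ in the basis $\{\beta_1,\beta_2,\beta_3\}$, checks $\mu(A',\lambda'')\ge 0$ (hence $=0$), and takes $A'':=\lim_{t\to 0}\lambda''(t)A'$. One then verifies $\xi_1\wedge\xi_2\wedge\beta_3\in A''_2$, so $A''$ satisfies Item~(2), which you have already reduced to Item~(3). Thus the correct chain is Item~(1) $\to$ Item~(2) $\to$ Item~(3), not Item~(1) $\to$ Item~(3) directly.
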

\begin{proof}
One of Items~(1), (2), (3) of~\Ref{prp}{raggieuno} holds. Suppose that Item~(3) holds. We showed in the proof of~\Ref{prp}{raggieuno} that  there exists a semistable $A_0\in\MM^{\sB}_{\cE_1}$   which is $G_{\cE_1}$-equivalent to $A$, namely the limit $\lim_{t\to 0}\lambda_1(t)A$. 
We will finish the proof by showing that if one of Items~(1), (2) of~\Ref{prp}{raggieuno} holds then there exists $A_0\in{\mathbb S}^{\sF}_{\cE_1}$ which is $G_{\cE_1}$-equivalent to $A$ and for which Item~(3) of~\Ref{prp}{raggieuno} holds. Suppose that Item~(2) holds. We will refer to the notation introduced in the proof that if  Item~(2) of~\Ref{prp}{raggieuno} holds then $A$ is not $G_{\cE_1}$-stable. Let $\lambda_2$ be the $1$-PS of $G_{\cE_1}$ indicized by $(-1,0,0,0,0)$. We showed in the proof   of~\Ref{prp}{raggieuno} that $\mu(A,\lambda_2)=0$. Thus $\lim_{t\to 0}\lambda_2(t)A$ is a semistable lagrangian $A'$ which is $G_{\cE_1}$-equivalent to $A$ and which is $\lambda_2$-split with $d_0(A'_2)=1$ (and hence $d_1(A'_2)=2$). It follows that $\dim(A'_2\cap [v_0]\wedge\bigwedge^2 V_{35})=2$: as shown in the proof of~\Ref{prp}{raggieuno} (see the text right below~\eqref{jolly}) it follows that Item~(3) holds for $A'$. This proves the result if  Item~(2) holds. Lastly
suppose that  Item~(1) of~\Ref{prp}{raggieuno} holds. Let $\lambda=(0,1,0,0,0)$. As shown in the proof of~\Ref{prp}{raggieuno} we have $\mu(A,\lambda)\ge 0$. Since $A$ is $G_{\cE_1}$-semistable $\mu(A,\lambda)=0$ and hence $A$ is $G_{\cE_1}$-equivalent to a $\lambda$-split $A'$ with type $d^{\lambda}(A_1)=(1,1)$. It follows that there exist  bases  $\{\xi_1,\xi_2\}$ of $V_{12}$ and $\{\beta_1,\beta_2,\beta_3\}$ of $V_{35}$ such that either $A'_1=\la  v_0\wedge \xi_1\wedge\beta_1, v_0\wedge \xi_2\wedge\beta_2 \ra$ or $A'_1=\la  v_0\wedge \xi_1\wedge\beta_1, v_0\wedge \xi_2\wedge\beta_1\ra$. Suppose that the latter holds. Let $\lambda'$ be the $1$-PS of $\SL(V_{35})$ defined by $\lambda'(t)=\diag(t,1,t^{-1})$ (the basis is $\{\beta_1,\beta_2,\beta_3\}$): then 
$\mu(A',\lambda')>0$, that is a contradiction. Thus  $A'_1=\la  v_0\wedge \xi_1\wedge\beta_1, v_0\wedge \xi_2\wedge\beta_2 \ra$. Let $\lambda''$ be the $1$-PS of $\SL(V_{35})$ defined by $\lambda''(t)=\diag(t,t,t^{-2})$: then  $\mu(A',\lambda'')\ge 0$ and hence it is zero by semistability of $A'$. Let $A'':=\lim_{t\to 0}\lambda''(t)A'$. As is easily checked $A''_2\ni \xi_1\wedge\xi_2\wedge\beta_3$ and hence $A''$ satisfies Item~(2) of~\Ref{prp}{raggieuno}. 
\end{proof}
\subsubsection{Analysis of $\Theta_A$ and $C_{W,A}$}  
\begin{prp}\label{prp:tuttipiani}
Let $A\in{\mathbb S}^{\sF}_{\cE_1}$ be $G_{\cE_1}$-stable and $W\in\Theta_A$. Then one of the following holds:
\begin{enumerate}
\item[(a)]
$W=V_{02}$.
\item[(b)]
 $W\in\Theta_{A_3}$.
\item[(c)]
$W=\la v_0,\beta_1,\beta_2 \ra$ where $\beta_1,\beta_2\in V_{35}$.
\end{enumerate}
\end{prp}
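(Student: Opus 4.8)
The plan is to show first that every $W\in\Theta_A$ is $\lambda_{\cE_1}$-invariant, and then to read off the three families by a direct enumeration of invariant planes. Granting invariance, $W$ is $\lambda_{\cE_1}$-split by \Ref{rmk}{banana}, so $W=(W\cap[v_0])\oplus(W\cap V_{12})\oplus(W\cap V_{35})$ and $\bigwedge^3 W$ is homogeneous for $\bigwedge^3\lambda_{\cE_1}$. Running through the possible split types $(a_P,a_Q,a_R)$ (dimensions of $W$ in $[v_0]$, $V_{12}$, $V_{35}$, summing to $3$) and the corresponding weights in~\eqref{quattradd}: the weight-$6$ plane is $V_{02}$ (case (a)), the weight-$(-3)$ planes are exactly the members of $\Theta_{A_3}$ (case (b)), and the weight-$0$ planes of type $(1,0,2)$ are the $\langle v_0,\beta_1,\beta_2\rangle$ with $\beta_i\in V_{35}$ (case (c)). The remaining homogeneous types are excluded by stability: the weight-$3$ plane $(1,1,1)$ forces a nonzero decomposable element of $A_1$, i.e.\ Item~(1) of~\Ref{prp}{raggieuno}; the weight-$0$ plane $(0,2,1)$, namely $V_{12}\oplus[\beta]$, forces $A_2\cap(\bigwedge^2 V_{12}\wedge V_{35})\neq\{0\}$, i.e.\ Item~(2); and the weight-$(-6)$ plane $V_{35}$ is impossible because $A\cap\bigwedge^3 V_{35}=\{0\}$ (by \Ref{clm}{nickcarter} the weight-$(-6)$ multiplicity equals $1-d_0=1-1=0$, since $A$ is $\lambda_{\cE_1}$-split with $d_0=1$). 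Each of the surviving types (a), (b), (c) is indeed $\lambda_{\cE_1}$-invariant, as one checks directly, so this enumeration is consistent.

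To prove invariance I would argue by a case analysis on $a:=\dim(W\cap V_{02})$. Since $A_0=\bigwedge^3 V_{02}\subset A$ we have $V_{02}\in\Theta_A$, and since $A$ is lagrangian $W\cap V_{02}\neq\{0\}$ (equivalently, writing $\omega:=\bigwedge^3 W$ and decomposing $\omega=\omega_6+\omega_3+\omega_0+\omega_{-3}+\omega_{-6}$ along~\eqref{quattradd}, membership $\omega\in A$ forces $\omega_{-6}=0$, which says exactly $W\cap V_{02}\neq\{0\}$). The conditions $\omega_3\in A_1$, $\omega_0\in A_2$, $\omega_{-3}\in A_3$ are the remaining constraints. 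If $a=3$ then $W=V_{02}$. If $a=2$, write $U:=W\cap V_{02}$, $W=U\oplus[v]$ with $v=q+r$, $q\in V_{12}$, $0\neq r\in V_{35}$, and $\bigwedge^2 U=\eta'+\eta''$ with $\eta'\in[v_0]\wedge V_{12}$, $\eta''\in\bigwedge^2 V_{12}$; then $\omega_3=\eta'\wedge r$ and $\omega_0=\eta''\wedge r$. When $v_0\notin U$ we have $\eta''\neq0$, so $0\neq\omega_0\in A_2\cap(\bigwedge^2 V_{12}\wedge V_{35})$ (Item~(2)); when $v_0\in U$, say $U=\langle v_0,\xi\rangle$, then $0\neq\omega_3=v_0\wedge\xi\wedge r$ is a decomposable element of $A_1$ (Item~(1)). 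Either way $A$ is not $G_{\cE_1}$-stable, so $a=2$ cannot occur.

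The substance is $a=1$. Let $w_0=cv_0+q_0$ span $W\cap V_{02}$ ($q_0\in V_{12}$); after subtracting multiples of $w_0$ I pick $w_1=\tilde q_1+\beta_1$, $w_2=\tilde q_2+\beta_2$ with $\beta_1,\beta_2\in V_{35}$ independent. Expanding $\omega$ yields $\omega_{-3}=q_0\wedge\beta_1\wedge\beta_2$, $\omega_3=v_0\wedge(\tilde q_1\wedge\beta_2-\tilde q_2\wedge\beta_1)+(\text{terms in }q_0)$, and an explicit $\omega_0$. The two pure subcases settle at once. If $c=0$ (so $w_0\in V_{12}$) then $\omega_3$ is decomposable and $\omega_0\in\bigwedge^2 V_{12}\wedge V_{35}$, so Items~(1),(2) of~\Ref{prp}{raggieuno} force $\omega_3=\omega_0=0$; this collapses $W$ to $\langle q_0,\beta_1,\beta_2\rangle$ with $\bigwedge^3 W=\omega_{-3}\in A_3$, i.e.\ $W\in\Theta_{A_3}$ (case (b)). If $q_0=0$ (so $w_0\in[v_0]$) and moreover $\omega_3=0$, then the displayed form of $\omega_3$ forces $\tilde q_1=\tilde q_2=0$ and $W=\langle v_0,\beta_1,\beta_2\rangle$ (case (c)).

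The one genuinely delicate point — the main obstacle — is to exclude the remaining \emph{mixed} planes, namely those for which $\omega_3$ has rank two (this covers $q_0=0$ with $\tilde q_1,\tilde q_2$ spanning $V_{12}$, and the case $c\neq0\neq q_0$). In each such configuration I expect to show that $[\beta_1\wedge\beta_2]$ lies in $D_{A_3}$ — either because $q_0\wedge\beta_1\wedge\beta_2\in A_3$ directly, or because the rank-two datum in $A_1=A_3^{\bot}\cap([v_0]\wedge V_{12}\wedge V_{35})$ produces, by orthogonality, some $\xi\wedge\beta_1\wedge\beta_2\in A_3$ — while the nonzero $v_0$-component of $\omega_0\in A_2$ shows $[\beta_1\wedge\beta_2]\in D_{A_2}$. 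Thus $[\beta_1\wedge\beta_2]$ is a point of $D_{A_2}\cap D_{A_3}$, and the existence of the extra mixed plane over it must be converted into a tangency of $D_{A_2}$ and $D_{A_3}$ at that point, contradicting the transversality guaranteed for $G_{\cE_1}$-stable $A$ by~\Ref{crl}{raggieuno}. I would carry this out by recycling the explicit tangent-space computation~\eqref{miamia}--\eqref{tanditre} from the proof of that corollary. Once no non-invariant $W$ survives, the invariance claim of the first paragraph holds and the classification follows; this tangency bookkeeping is the only delicate step, the rest being routine multilinear algebra.
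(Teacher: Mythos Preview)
Your approach is close to the paper's in substance but organised around a different case variable and a different stability criterion. The paper splits on $\dim(W\cap V_{12})$ rather than $\dim(W\cap V_{02})$, and in the hard case invokes Items~(1) and~(3) of \Ref{prp}{raggieuno} directly rather than the transversality of $D_{A_2}$ and $D_{A_3}$ from \Ref{crl}{raggieuno}. These choices are equivalent in principle (the corollary is proved from the proposition), but the paper's split is cleaner: when $\dim(W\cap V_{12})=1$ the factor $\xi\in W$ forces $\omega_3$ to be decomposable, so that case falls to Items~(1) and~(2) alone, and all the genuine difficulty is isolated in the case $W\cap V_{12}=\{0\}$, where the paper writes $W=\langle v_0+\xi,\xi_1-\beta_1,\xi_2-\beta_2\rangle$ and asserts that unless $\xi=\xi_1=\xi_2=0$ one of Items~(1) or~(3) holds.

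Your ``mixed'' case is exactly this hard case, and you have not proved it --- you only sketch an intention. Your tangency idea is sound, and in fact in the sub-case $q_0=0$ with $\tilde q_1,\tilde q_2$ spanning $V_{12}$ there is a clean shortcut you nearly have: since $\omega_0=v_0\wedge\beta_1\wedge\beta_2$ lies purely in $[v_0]\wedge\bigwedge^2 V_{35}$, the description in \Ref{rmk}{vulgata} shows $[\beta_1\wedge\beta_2]\in\ker q_{A_2}$, i.e.\ it is a \emph{singular} point of $D_{A_2}$; since it also lies on the smooth conic $D_{A_3}$ (the orthogonality computation you outline gives this), non-transversality is immediate and \Ref{crl}{raggieuno} finishes. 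But in the sub-case $q_0\neq 0$ (the paper's $\xi\neq 0$) the $\bigwedge^2 V_{12}$-part of $\omega_0$ is nonzero, so $[\beta_1\wedge\beta_2]$ is a smooth point of $D_{A_2}$, and you must genuinely compare tangent lines. That comparison \emph{does} go through --- it is exactly the computation \eqref{controemme}--\eqref{armadiocloe} run in reverse --- but you have not done it, and ``recycling'' is not a proof. The paper's route via Item~(3) is shorter: one checks that with $\xi_1':=\xi$ (when $\xi\neq0$) and $\beta_1':=a\beta_1+b\beta_2$, the elements $\omega_{-3},\omega_0,\omega_3$ already furnish the data of Item~(3).

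A minor point: your ``invariance'' framing is circular as stated --- you do not prove invariance first and then enumerate, you prove directly (case by case) that $W$ is one of (a),(b),(c), which happen to be invariant. That is fine, but the opening paragraph oversells the structure.
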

\begin{proof}
Let $W\in\Theta_A$. We distinguish between the three cases:
\begin{enumerate}
\item[(I)]
$W\supset V_{12}$.
\item[(II)]
$\dim(W\cap V_{12})=1$.
\item[(III)]
$W\cap V_{12}=\{0\}$.
\end{enumerate}
One checks easily that if (I) holds then $W=V_{02}$ and that if (II) holds then $W\in\Theta_{A_3}$. Suppose that (III) holds. Since $V_{02}\in\Theta_A$ we have $W\cap V_{02}\not=\{0\}$: it follows that $W$ is not contained in $V_{15}$ and hence $\dim(W\cap V_{15})=2$. Thus there exist linearly independent $\beta_1,\beta_2\in V_{35}$ and $\xi_1,\xi_2,\xi\in V_{12}$ such that 
\begin{equation*}
W=\la v_0+\xi, \xi_1-\beta_1,\xi_2-\beta_2 \ra.
\end{equation*}
Thus
\begin{equation}\label{matilde}
\scriptstyle
A\ni(v_0+c\xi_1)\wedge( \xi_1-\beta_1)\wedge(\xi_2-\beta_2)=
v_0\wedge\xi_1\wedge\xi_2+v_0\wedge(-\xi_1\wedge\beta_2+\xi_2\wedge\beta_1)+(v_0\wedge\beta_1\wedge\beta_2-\xi\wedge\xi_1\wedge\beta_2+
\xi\wedge\xi_2\wedge\beta_1)+\xi\wedge\beta_1\wedge\beta_2.
\end{equation}
The addends  of~\eqref{matilde} belong to different summands of the isotypycal decomposition of $\bigwedge^3 \lambda_{\cE_1}$ - see~\eqref{quattradd} - hence each addend belongs to $A$. One checks easily that unless $0=\xi_1=\xi_2=\xi$ one of Items~(1) or~(3)
  of~\Ref{prp}{raggieuno} holds and hence $A$ is not $G_{\cE_1}$-stable, that is a contradiction. Thus $0=\xi_1=\xi_2=\xi$. 
\end{proof}
\begin{crl}\label{crl:tuttipiani}
Let $A\in{\mathbb S}^{\sF}_{\cE_1}$ be $G_{\cE_1}$-stable. Then $\Theta_A=\{V_{02}\}\cup\Theta_{A_3}\cup Z_A$ where $Z_A$ is a finite set.
\end{crl}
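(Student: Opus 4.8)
The plan is to derive the decomposition directly from \Ref{prp}{tuttipiani}, the only real content being the finiteness of the type-(c) contribution. First I would invoke \Ref{prp}{tuttipiani}: every $W\in\Theta_A$ is of type (a), (b), or (c). Type (a) gives exactly $W=V_{02}$, and since $\bigwedge^3 V_{02}=A_0\subset A$ we have $V_{02}\in\Theta_A$, so this type accounts precisely for $\{V_{02}\}$. Type (b) gives a subset of $\Theta_{A_3}$, and conversely $A_3\subset A$ yields $\Theta_{A_3}\subset\Theta_A$; so type (b) accounts precisely for $\Theta_{A_3}$. Hence I would set $Z_A:=\{W\in\Theta_A\mid W \text{ is of type (c)}\}$, and everything reduces to showing $Z_A$ is finite.

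For the finiteness, observe that a plane $W=\la v_0,\beta_1,\beta_2\ra$ with $\beta_1,\beta_2\in V_{35}$ lies in $\Theta_A$ if and only if $v_0\wedge\beta_1\wedge\beta_2\in A$. Now $v_0\wedge\beta_1\wedge\beta_2\in[v_0]\wedge\bigwedge^2 V_{35}$, which sits inside the weight-$0$ isotypical summand of $\bigwedge^3\lambda_{\cE_1}$ in the decomposition~\eqref{quattradd}; since $A$ is $\lambda_{\cE_1}$-split, membership in $A$ of an element lying in a single weight space is equivalent to membership in the corresponding summand $A_i$. Thus $W\in\Theta_A$ is of type (c) precisely when $v_0\wedge\beta_1\wedge\beta_2\in A_2\cap([v_0]\wedge\bigwedge^2 V_{35})$. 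Because $\dim V_{35}=3$, every $2$-vector on $V_{35}$ is decomposable, so $[\gamma]\mapsto\la v_0\ra\oplus\supp\gamma$ defines a bijection from $\PP\!\left(A_2\cap([v_0]\wedge\bigwedge^2 V_{35})\right)$ onto $Z_A$ (injective because $\gamma$ is recovered up to scalar from $\supp\gamma$). Finiteness of $Z_A$ is therefore equivalent to $d_0(A_2):=\dim\!\left(A_2\cap([v_0]\wedge\bigwedge^2 V_{35})\right)\le 1$.

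This last bound is exactly what $G_{\cE_1}$-stability supplies. The ordering $1$-PS indicized by $(1,0,0,0,0)$ acts on the weight-$0$ summand with top weight piece $[v_0]\wedge\bigwedge^2 V_{35}$ (see~\eqref{secondadec}), so its numerical function is $\mu(A,\lambda)=6(d_0(A_2)-2)$ by Table~\eqref{mudieuno}. Since $A$ is $G_{\cE_1}$-stable, the Hilbert--Mumford criterion forces $\mu(A,\lambda)<0$, whence $d_0(A_2)\le 1$; equivalently, condition~\eqref{jolly} from the proof of \Ref{prp}{raggieuno} fails. Therefore $\PP\!\left(A_2\cap([v_0]\wedge\bigwedge^2 V_{35})\right)$ is empty or a single point, so $Z_A$ has at most one element. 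For bookkeeping I would finally note that the three pieces are genuinely distinct: $V_{02}\supset V_{12}$ and every $W\in\Theta_{A_3}$ meets $V_{12}$, while every type-(c) plane satisfies $W\cap V_{12}=\{0\}$ (as $v_0\notin V_{12}$ and $V_{12}\cap V_{35}=\{0\}$). The only nonroutine step is the identification of the weight piece $[v_0]\wedge\bigwedge^2 V_{35}$ with the coordinate $d_0(A_2)$ appearing in the $1$-PS computation, and since that matching is already recorded in Table~\eqref{mudieuno} I expect no genuine obstacle.
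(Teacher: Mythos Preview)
Your proof is correct and the overall structure matches the paper's: both invoke \Ref{prp}{tuttipiani} and then argue that the set of type-(c) planes has at most one element. The difference lies only in how that bound is obtained. The paper proceeds geometrically: since $A$ is $G_{\cE_1}$-stable, Item~(2) of \Ref{prp}{raggieuno} fails and \Ref{rmk}{vulgata} applies, so a type-(c) plane $W=\la v_0,\beta_1,\beta_2\ra$ forces $[\beta_1\wedge\beta_2]$ to be a singular point of the conic $D_{A_2}$; by \Ref{crl}{raggieuno} this conic has rank at least $2$, hence at most one singular point. You instead apply Hilbert--Mumford directly to the $1$-PS $(1,0,0,0,0)$ to get $\dim\!\bigl(A_2\cap([v_0]\wedge\bigwedge^2 V_{35})\bigr)\le 1$. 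These are the same fact in different language: since $A_2\cap\bigl(\bigwedge^2 V_{12}\wedge V_{35}\bigr)=\{0\}$, the subspace $A_2\cap\bigl([v_0]\wedge\bigwedge^2 V_{35}\bigr)$ is exactly $\ker\wt q_{A_2}$, so your bound $d_0(A_2)\le 1$ is precisely $\rk q_{A_2}\ge 2$. Your route is a touch more self-contained (it does not pass through the geometric stability criterion of \Ref{crl}{raggieuno}); the paper's route keeps the link with $D_{A_2}$ visible, which is exploited again later in the section. Your closing bookkeeping on disjointness is correct but not required by the statement.
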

\begin{proof}
It suffices to prove that there is at most one $W\in\Theta_A$ such that Item~(c) of~\Ref{prp}{tuttipiani} holds. Let $W=\la v_0,\beta_1,\beta_2 \ra$. By Item~(2) of~\Ref{prp}{raggieuno} we may describe $D_{A_2}$ as in~\Ref{rmk}{vulgata}; it follows that $[\beta_1\wedge\beta_2]$ is a singular point of the conic $D_{A_2}$. 
On the other hand $D_{A_2}$ is a conic with at most one singular point by~\Ref{crl}{raggieuno}:  
thus there is at most one choice for $\la \beta_1,\beta_2\ra$ and hence for $W$ as well.
\end{proof}
\begin{prp}\label{prp:eunostabtipo}
Let $A\in{\mathbb S}^{\sF}_{\cE_1}$ be $G_{\cE_1}$-stable and $W\in\Theta_A$. Then $C_{W,A}$ is a sextic curve of Type II-2.
\end{prp}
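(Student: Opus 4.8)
The plan is to run the same three-step strategy used for the other boundary strata (compare \Ref{prp}{forcone}, \Ref{prp}{lagonemi} and \Ref{prp}{dipianistab}): first reduce to an honest sextic, then extract its shape from the torus $\lambda_{\cE_1}$ fixing $\bigwedge^{10}A$, and finally rule out multiple components by a local computation.

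First I would record that $C_{W,A}$ is a genuine sextic curve. Since $A$ is $G_{\cE_1}$-stable, \Ref{clm}{slagix} shows that $\PGL(V)A$ is closed in $\lagr^{ss}$, i.e. $A$ has minimal orbit, while \Ref{crl}{tuttipiani} gives $\dim\Theta_A=1$; hence \Ref{crl}{senoncurva} applies and $C_{W,A}\not=\PP(W)$, so $C_{W,A}=V(P)$ with $0\not=P\in\Sym^6 W^\vee$. By \Ref{prp}{tuttipiani} the plane $W$ is one of: (a) $W=V_{02}$; (b) $W\in\Theta_{A_3}$; (c) $W=\la v_0,\beta_1,\beta_2\ra$ with $\beta_1,\beta_2\in V_{35}$.

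Second, I would determine the shape of $P$. The standard ordering $1$-PS $\lambda_{\cE_1}$ (weights $(4,1,1,-2,-2,-2)$ in $\sF$) acts trivially on $\bigwedge^{10}A$, because $A$ is $\lambda_{\cE_1}$-split of type ${\bf d}_{\cE_1}$ with $\mu({\bf d}_{\cE_1},\lambda_{\cE_1})=0$; moreover in each of the three cases $W$ is spanned by $\lambda_{\cE_1}$-eigenvectors, so $\lambda_{\cE_1}$ preserves $W$ together with a complementary $\lambda_{\cE_1}$-invariant $U$. Applying \Ref{clm}{azione} with $g=\lambda_{\cE_1}(t)$, the $SL(W)$-normalised restriction $\ov g_W=(\det g_W)^{-1/3}g_W$ fixes $P$, and a weight count in each case shows that $\{\ov g_W(t)\mid t\in\CC^\times\}$ is the one-parameter subgroup $\{\diag(t^2,t^{-1},t^{-1})\}$ of $SL(W)$, the distinguished eigenvector being $v_0$ in cases (a),(c) and a generator of $W\cap V_{12}$ in case (b). By \Ref{rmk}{pasquetta}(1) we conclude $P=X_0^2F(X_1,X_2)$ with $\deg F=4$, where $X_0$ is dual to the distinguished eigenvector. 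Thus $C_{W,A}$ is a double line together with a quartic consisting of four lines through a point off the double line: exactly the shape of a Type II-2 sextic, provided $F$ has no repeated factor.

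Third — the heart of the matter — I would prove that $F$ is squarefree, i.e. that the four concurrent lines are distinct. Here I would use the local description of \Ref{prp}{primisarto} (and \Ref{clm}{athos}, whose transversality hypothesis \eqref{ipogenerica} holds by \Ref{crl}{enneinst}) to express the leading form of $P$ at the quadruple point as the discriminant of a pencil of quadrics, and \Ref{clm}{roncisvalle} to identify its base locus inside the projected Grassmannian of that claim. The roots of $F$ then correspond to the degenerate members of the pencil, equivalently to the intersection points of $D_{A_2}$ and $D_{A_3}$ (resp. to the branch points of the degree-$2$ projection of the smooth genus-$1$ curve $\Theta_{A_3}$, as in \Ref{prp}{lagonemi} and case (a) of \Ref{prp}{dipianistab}). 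Since $A$ is $G_{\cE_1}$-stable, \Ref{crl}{raggieuno} guarantees that $D_{A_3}$ is a smooth conic and that $D_{A_2}$ is a conic meeting it transversely; hence there are exactly four distinct such points, $F$ has no repeated factor, and $C_{W,A}$ is a sextic of Type II-2. The main obstacle is precisely this last step: the bijection between the roots of $F$ and the transverse intersection $D_{A_2}\cap D_{A_3}$ must be set up case by case through \Ref{prp}{primisarto}, and in the cases where the vertex is $[v_0]$ one must first check that the lower-order terms $g_2,g_3$ of the local expansion vanish (so that $F$ is genuinely the tangent cone at the quadruple point), just as in the computations behind \Ref{prp}{forcone} and \Ref{prp}{dipianistab}.
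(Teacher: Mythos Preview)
Your first two steps --- reducing to an honest sextic via \Ref{crl}{senoncurva} and extracting the shape $P=X_0^2F(X_1,X_2)$ via \Ref{clm}{azione} --- are correct and match the paper. The third step, however, does not go through as written, and the analogy with \Ref{prp}{forcone} is misleading.

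First a factual slip: $\Theta_{A_3}$ is a twisted \emph{rational} cubic, not a genus-$1$ curve; the ``branch points of a degree-$2$ cover of an elliptic curve'' picture of \Ref{prp}{lagonemi} is simply not available here.

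More seriously, your plan to read $F=g_4$ from \Ref{prp}{primisarto} (equivalently \Ref{clm}{athos}) at the quadruple point fails because $\ov{k}<4$. In case~(a) the only contributions to $A\cap F_{v_0}$ are $\bigwedge^3 V_{02}$, $A_1$, and $\ker q_{A_2}$, giving $\dim(A\cap F_{v_0})\le 4$; in case~(b) one finds $\dim(A\cap F_{\xi})=2$ generically, so $\ov{k}=1$. (The reason the analogous step in \Ref{prp}{forcone} succeeds is that there $A'\subset F_{v_0}$ already has dimension $5$, forcing $\ov{k}=4$.) Thus \Ref{prp}{primisarto} only returns $g_{\ov{k}}=0$ and is silent about $g_4$. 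To reach $g_4$ you would need the higher-order expansion of \Ref{subsec}{migliorini} and then actually establish the asserted bijection between the roots of $F$ and $D_{A_2}\cap D_{A_3}$ --- which you acknowledge is the ``main obstacle'' but do not carry out.

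The paper proceeds differently and more directly. In cases~(a) and~(b) it avoids the vertex entirely: for a point $[v_0-\xi]$ (resp.\ $[\xi+\beta]$) on $C_{W,A}$ but away from both the vertex and the double line, an explicit projection of $A\cap F_{v_0-\xi}$ onto $V_{12}\wedge\bigwedge^2 V_{35}$ (resp.\ onto $\bigwedge^3 V_{02}$) shows $\dim(A\cap F_{v_0-\xi})=2$ and that the point lies outside $\cB(W,A)$; \Ref{prp}{nonmalvagio} then gives smoothness there, so no line of $V(F)$ is repeated. In case~(c) the paper instead exhibits four distinct points of multiplicity $\ge 3$ on the double line $\PP(U)$: two come from $\PP(U)\cap D_{A_3}^{\vee}$ (via \Ref{prp}{primisarto}) and two from the decomposition $D_{A_2}=L_1\cup L_2$ (via \Ref{crl}{molteplici}), kept distinct by the transversality of $D_{A_2}$ and $D_{A_3}$ from \Ref{crl}{raggieuno}.
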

\begin{proof}
The orbit $\PGL(V)A$ is minimal  because $A$ is $G_{\cE_1}$-stable (see~\Ref{clm}{slagix}) and $\dim\Theta_A=1$ by~\Ref{prp}{tuttipiani}:  thus  $C_{W,A}\not=\PP(W)$ by~\Ref{crl}{senoncurva}. 
One of Items~(a), (b), (c) of~\Ref{prp}{tuttipiani} holds. Let $\{X_0,X_1,X_2\}$ be a basis of $W^{\vee}$ such that
\begin{enumerate}
\item[(a')]
$[X_0]=\Ann\la v_1,v_2\ra$ and $[v_0]=\Ann\la X_1,X_2\ra$ if~(a) holds.
\item[(b')]
$[X_0]=\Ann(W\cap V_{35})$ and $W\cap V_{12}=\Ann\la X_1,X_2\ra$ if~(b) holds.
\item[(c')]
$[X_0]=\Ann(W\cap V_{35})$ and $[v_0]=\Ann\la X_1,X_2\ra$ if~(c) holds.
\end{enumerate}
The $1$-PS $\lambda_{\cE_1}$ maps $W$ to itself. Now we look at the action of $\lambda_{\cE_1}$ on $W$: by~\Ref{clm}{azione} and~\Ref{rmk}{pasquetta} we get that
\begin{equation}\label{recitabibi}
C_{W,A}=X_0^2 F(X_1,X_2),\qquad 0\not=F\in\CC[X_1,X_2]_4.
\end{equation}
 It remains to prove that $F$ does not have multiple roots. We will carry out a case-by-case analysis.
\vskip 2mm
\n
$\boxed{W=V_{02}}$ 
Let $0\not=\xi\in V_{12}$. Let 
 \begin{equation}
\rho\colon A\cap F_{(v_0-\xi)}\lra V_{12}\wedge\bigwedge^2 V_{35}
\end{equation}
 be the projection determined by Decomposition~\eqref{quattradd}. Let's prove that
 \begin{equation}\label{eliseo}
\ker\rho=\bigwedge^3 V_{02},\quad \dim(\im\rho)\le 1.
\end{equation}
Let $\alpha\in(A\cap F_{(v_0-\xi)})$ and write $\alpha=\sum_{i=0}^3\alpha_i$ where $\alpha_i$ belongs to the $(i+1)$-th summand of Decomposition~\eqref{quattradd} (we start from the left of course). Then $v_0\wedge\alpha=\xi\wedge\alpha$. Decomposing $v_0\wedge\alpha$ and $\xi\wedge\alpha$ according to Decomposition~\eqref{quattradd} we get that $\xi\wedge\alpha_3=0$, in particular $\alpha_3$ is decomposable i.e.~$[\alpha_3]\in \Theta_{A_3}$. By~\Ref{crl}{raggieuno} we know that $\Theta_{A_3}$ is a smooth curve: it follows that the projection $ \Theta_{A_3}\to \PP(V_{12})$ is an isomorphism. This proves that $\dim\im\rho\le 1$.   Next suppose that $\alpha_3=0$.
From $0=v_0\wedge\alpha_3=\xi\wedge \alpha_2$ we get that $\alpha_2=0$ (recall that  $A\cap(\bigwedge^2 V_{12}\wedge V_{35})=\{0\}$ by $G_{\cE_1}$-stability of $A$). We also have $\xi\wedge\alpha_1=0$: since $A$ is $G_{\cE_1}$-stable $A_1$ contains no non-zero decomposables and thus $\alpha_1=0$. This finishes the proof of~\eqref{eliseo}. Now suppose that $[v_0-\xi]\in C_{W,A}$. By~\eqref{eliseo} we have $\dim((A\cap F_{(v_0-\xi)})=2$. We claim that $[v_0-\xi]\notin\cB(W,A)$. First there is no $W'\in(\Theta_A\setminus\{W\})$ containing $[v_0-\xi]$ by~\Ref{prp}{tuttipiani}. Secondly suppose that $\alpha\in(A\cap F_{(v_0-\xi)})$ and $\alpha_3=\rho(\alpha)\not=0$: if $\xi'\in V_{12}$ is not a multiple of $\xi$ then $0\not=v_0\wedge\xi'\wedge\alpha_3=v_0\wedge\xi'\wedge\alpha$, this proves that $A\cap F_{(v_0-\xi)}\cap S_W=\bigwedge^3 W$ and hence  we get that  $[v_0-\xi]\notin\cB(W,A)$. By~\Ref{prp}{nonmalvagio} it follows that $F$ has no multiple roots.  
\vskip 2mm
\n
$\boxed{W\in\Theta_{A_3}}$ 
 Let $W\cap V_{12}=[\xi]$ and $\not=\beta\in W\cap V_{35}$. Let
 \begin{equation}
\pi\colon A\cap F_{(\xi+\beta)}\lra\bigwedge^3 V_{02}
\end{equation}
be  the projection
  determined by Decomposition~\eqref{quattradd}. Arguing as in the previous case one checks that $\ker(\pi)=[\xi\wedge\beta_1\wedge\beta_2]$ where $\xi\in V_{12}$, $\beta_1,\beta_2\in V_{35}$ and $\la \xi,\beta_1,\beta_2\ra$ is
   the unique  element of $\Theta_{A_3}$ mapped to $[\xi]$ by the projection $\Theta_{A_3}\to\PP(V_{12})$.  Suppose that $[\xi+\beta]\in C_{W,A}$: it follows that $\dim(A\cap F_{(\xi+\beta)})= 2$. A straightforward computation shows that $[\xi+\beta]\notin\cB(W,A)$ and hence $C_{W,A}$ is smooth at $[\xi+\beta]$. This proves that $F$ has no multiple factors. 
\vskip 2mm
\n
$\boxed{\text{$W=\la v_0,U \ra$ where $U\in\Gr(2, V_{35})$}}$ 
Let
\begin{equation*}
T:=\{[\beta]\in\PP(U) \mid \mult_{[\beta]}C_{W,A}\ge 3\}.
\end{equation*}
By~\eqref{recitabibi} it suffices to prove that $T$ has cardinality at least $4$. Let $[\beta]\in\PP(V_{35})$: as is easily checked $\dim(F_{\beta}\cap A_3)=2$ and moreover
\begin{equation*}
|\PP(F_{\beta}\cap A_3)\cap\Gr(3,V)|=
\begin{cases}
2 & \text{if $[\beta]\notin D^{\vee}_{A_3}$,} \\
1 & \text{if $[\beta]\in D^{\vee}_{A_3}$.}
\end{cases}
\end{equation*}
(We have the identification $\PP(\bigwedge^2 V_{35}^{\vee})=\PP(V_{35})$.) Since $A$ is   $G_{\cE_1}$-stable we have   $\bigwedge^2 U\notin D_{A_3}$ and hence $|\PP(U)\cap D^{\vee}_{A_3}|=2$. Applying~\Ref{prp}{primisarto} we get that
\begin{equation}\label{marte}
\PP(U)\cap D^{\vee}_{A_3}\subset T.
\end{equation}
Next we examine $A_2$. By hypothesis $D_{A_2}=L_1\cup L_2$ where $L_1,L_2\subset\PP(\bigwedge^2 V_{35})$ are distinct lines intersecting in $\bigwedge^2 U$. It follows that there exist  bases $\{\xi_1,\xi_2\}$ of $V_{12}$ and 
$\{\beta_1,\beta_2,\beta_3\}$ of $V_{35}$ such that
\begin{equation*}
A_2\supset \la v_0\wedge\beta_1\wedge\beta_3+\xi_1\wedge\xi_2\wedge \beta_1,
v_0\wedge\beta_2\wedge\beta_3+\xi_1\wedge\xi_2\wedge \beta_2 \ra.
\end{equation*}
Thus $\dim(F_{\beta_i}\cap A)\ge 4$ for $i=1,2$: by~\Ref{crl}{molteplici} we get that 
\begin{equation}\label{venere}
 [\beta_1],[\beta_2]\in  T.
\end{equation}
We have $[\beta_1],[\beta_2]\notin D^{\vee}_{A_3}$ because $D_{A_2}$ is transverse to $D_{A_3}$ (see~\Ref{crl}{raggieuno}). Thus~\eqref{marte} and~\eqref{venere} give that $T$ has cardinality at least $4$. 
\end{proof}
\begin{prp}\label{prp:eunosemtipo}
Let $A\in{\mathbb S}^{\sF}_{\cE_1}$ be properly $G_{\cE_1}$-semistable with minimal orbit. Then either $[A]=\gx$ (here  $\gx\in\gM$ is as in~\eqref{pisapia}) or else the following holds: if $W\in\Theta_A$ then $C_{W,A}$ is a semistable sextic curve $\PGL(W)$-equivalent to a sextic of Type III-2.
\end{prp}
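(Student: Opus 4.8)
The plan is to follow the template already used for the analogous strata in~\Ref{prp}{genzano} and~\Ref{prp}{pianidiprop}: reduce a properly semistable point to the torus-fixed locus $\MM^{\sB}_{\cE_1}$, dispose of the degenerate case $C_{W,A}=\PP(W)$ by identifying it with the orbit of $A_k$, and then run a two-torus argument to force each $C_{W,A}$ to be fixed by a maximal torus of $\SL(W)$. First I would invoke~\Ref{prp}{giostra} to replace $A$ by a $G_{\cE_1}$-equivalent semistable $A_0\in\MM^{\sB}_{\cE_1}$; since on minimal orbits $G_{\cE_1}$-equivalence agrees with $\PGL(V)$-equivalence by~\Ref{clm}{slagix}, this changes neither $[A]$ nor, up to $\PGL(W)$-equivalence, the curves $C_{W,A}$, so I may assume $A\in\MM^{\sB}_{\cE_1}$. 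By~\Ref{rmk}{fissespl} such an $A$ is $\lambda_1$-split, $\lambda_1$ being the $1$-PS of $G_{\cE_1}$ indicized by $(0,3,6,0,-6)$. If $A\in\PGL(V)A_{III}$ then every $C_{W,A}$ is of Type III-2 by~\Ref{prp}{rettetre} (see~\eqref{doppiotriangolo}, a double triangle), so from now on I assume $A\notin\PGL(V)A_{III}$.

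The second step is to dispose of the possibility $C_{W,A}=\PP(W)$ for some $W\in\Theta_A$. By~\Ref{prp}{senoncurva} this forces $A$ to be $\PGL(V)$-equivalent to an element of $\XX^{*}_{\cW}\cup\PGL(V)A_k$, and minimality of the orbit together with~\Ref{prp}{duequad} and~\Ref{prp}{minorb} shows $A$ itself lies in $\XX^{*}_{\cW}\cup\PGL(V)A_k$. I would then rule out $\XX^{*}_{\cW}$: an element of $\XX^{*}_{\cW}$ has $\Theta_A\supset i_{+}(Z)$ with $Z$ a smooth quadric surface, so its connected stabilizer contains a copy of $\SO(4)$ by~\Ref{prp}{duequad}, whereas $A\in\MM^{\sB}_{\cE_1}$ is fixed only by a torus (of the kind described below); comparing stabilizers, or directly comparing $\Theta_A$ with the rational normal cubic produced by the generic fibre of $\BB_{\cE_1}=\BB_{\cE^{\vee}_2}$, excludes $\XX^{*}_{\cW}$. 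This leaves $A\in\PGL(V)A_k$, whence $[A]=\gx$ by~\eqref{pisapia} and~\Ref{prp}{minorb}, which is precisely the exceptional case. Having excluded $\gx$, I may therefore assume $C_{W,A}\neq\PP(W)$ for all $W\in\Theta_A$, so each $C_{W,A}=V(P)$ with $0\neq P\in\Sym^6 W^{\vee}$.

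The third and decisive step is the torus argument. In the basis $\sB$ both $\lambda_{\cE_1}$, with weights $(4,1,1,-2,-2,-2)$, and $\lambda_1$, with weights $(0,3,-3,6,0,-6)$, fix $A$ and act trivially on $\bigwedge^{10}A$ (the latter holds by the very definition of $\MM^{\sB}_{\cE_1}$); they commute, lying in the image of the maximal torus of $G_{\cE_1}$, and their weight vectors are linearly independent, so they generate a two-dimensional torus $\Lambda<\SL(V)$ fixing $A$. Given a $\Lambda$-invariant plane $W_0\in\Theta_A$, I would choose a $\Lambda$-invariant complement $U$ with $V=W_0\oplus U$ and apply~\Ref{clm}{azione} to each element of $\Lambda$, concluding that $P$ is fixed by the image of $\Lambda$ in $\SL(W_0)$; provided $\Lambda$ acts on $W_0$ with three distinct weights this image is a maximal torus, and~\Ref{rmk}{pasquetta}(3) gives $P=cX_0^2X_1^2X_2^2$, i.e.~$C_{W_0,A}$ of Type III-2. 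For an arbitrary $W\in\Theta_A$ the closure of $\{\Lambda W\}$ meets $\Theta_A$ in a $\Lambda$-fixed $W_0$, and $C_{W,A}$ is $\PGL(W)$-equivalent to $C_{W_0,A}$, hence likewise $\PGL(W)$-equivalent to a sextic of Type III-2, which is the assertion.

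The main obstacle I anticipate lies in the second and third steps. On the one hand, excluding $\XX^{*}_{\cW}$ and pinning the entire $C_{W,A}=\PP(W)$ locus down to the single point $\gx$ requires a careful stabilizer or $\Theta_A$-comparison, since both $\XX^{*}_{\cW}$ and $\PGL(V)A_k$ have two-dimensional $\Theta$; on the other hand, I must locate the $\Lambda$-fixed planes in $\Theta_A$ explicitly (as is done for the stable case in~\Ref{prp}{tuttipiani}, where they are $V_{02}$, the members of $\Theta_{A_3}$, and the planes $\la v_0,\beta_i,\beta_j\ra$) and verify case by case that $\Lambda$ acts with three distinct weights on each, so that its image in $\SL(W_0)$ is a genuine maximal torus rather than a single $1$-PS. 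This weight bookkeeping, rather than any conceptual difficulty, is where the real work will be.
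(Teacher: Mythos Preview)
Your proposal follows essentially the same strategy as the paper's proof: reduce to $\MM^{\sB}_{\cE_1}$ via~\Ref{prp}{giostra}, exclude the case $C_{W,A}=\PP(W)$ via~\Ref{prp}{senoncurva}, and then run the two-torus argument with $\lambda_{\cE_1}$ and $\lambda_1$ together with~\Ref{clm}{azione} and~\Ref{rmk}{pasquetta}(3). The paper does exactly this, with the torus written out explicitly as $T=\{\diag(s^4,st,st^{-1},s^{-2}t^2,s^{-2},s^{-2}t^{-2})\}$.

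The one point where the paper does something sharper is in ruling out $\XX^{*}_{\cW}$, which you correctly identified as your main obstacle. Rather than compare stabilizers, the paper argues by \emph{degree parity}: any $A\in{\mathbb S}^{\sF}_{\cE_1}$ has $\Theta_A\supset\Theta_{A_3}$, and $\Theta_{A_3}$ --- a codimension-$2$ linear section of $\PP(V_{12})\times\PP(\bigwedge^2 V_{35})$ embedded by Segre --- has Pl\"ucker degree $3$; so $\Theta_A$ contains a curve of odd degree. But for $A\in\XX^{*}_{\cW}$ every curve in $\Theta_A$ lies in $i_{+}(\PP(U))$, and since $i_{+}^{*}\cO_{\Gr}(1)\cong\cO_{\PP(U)}(2)$ all such curves have even degree. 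This one-line obstruction replaces the stabilizer comparison entirely, and the paper in fact establishes $A\notin\XX^{*}_{\cW}$ \emph{before} reducing to $\MM^{\sB}_{\cE_1}$, so that once $[A]\neq\gx$ is assumed, $C_{W,A}\neq\PP(W)$ follows immediately.

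As for the weight bookkeeping you worry about at the end: the paper simply asserts that a $T$-fixed $W\in\Theta_A$ is spanned by three vectors of $\sB$ (the six $T$-characters on $V$ being pairwise distinct) and that the image of $T$ in $\SL(W)$ is a maximal torus. This does require the three chosen weight vectors to be affinely independent; the only collinear triples among the six weights are $\{v_0,\xi_1,\beta_1\}$, $\{v_0,\xi_2,\beta_3\}$, and $\{\beta_1,\beta_2,\beta_3\}$, and the $\lambda_1$-type $d^{\lambda_1}(A_1)=(0,1,1,0)$ forbids the first two from lying in $\Theta_A$ while $V_{35}\notin\Theta_A$ since $\bigwedge^3 V_{35}\cap A=\{0\}$. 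So the verification you anticipated is genuine but short.
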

\begin{proof}
By~\Ref{clm}{slagix} $A$ is $\PGL(V)$-semistable with minimal orbit. We claim that
$A\notin\XX^{*}_{\cW}$. In fact suppose that $A\in \XX^{*}_{\cW}$. Since $A$ is the limit of $A'$ generic in ${\mathbb S}^{\sF}_{\cE_1}$ we get that $\Theta_A$ contains a curve of degree $3$ (with respect to the Pl\"ucker embedding) namely the limit of $\Theta_{A'_3}$. On the other hand if $A\in  \XX^{*}_{\cW}$ then any curve in $\Theta_A$ has even degree, that is a contradiction.  Now suppose that $[A]\not=\gx$: by~\Ref{prp}{senoncurva} we get that $C_{W,A}\not=\PP(W)$. Since $A$ is not $G_{\cE_1}$-stable we may assume that $A\in M^{\sB}_{\cE_1}$ by~\Ref{prp}{giostra}. It follows that $\bigwedge^{10}A$ is fixed by the $1$-PS of $\SL(V)$ given by $(m,r,s_1,s_2,s_3)=(0,1,2,0,-2)$ - see~\eqref{nostrops}. On the other hand $\bigwedge^{10}A$ is fixed by $\lambda_{\cE_1}$ because $A\in{\mathbb S}^{\sF}_{\cE_1}$. Thus $\bigwedge^{10}A$ is fixed by the torus
\begin{equation*}
T:=\{\diag(s^4,st,st^{-1},s^{-2}t^2,s^{-2},s^{-2}t^{-2}) \mid (s,t)\in\CC^{\times}\times\CC^{\times}\}.
\end{equation*}
(The basis of $V$ is $\sB=\{v_0,\xi_1,\xi_2,\beta_1,\beta_2,\beta_3\}$.) Now suppose that $W\in\Theta_A$ is fixed by $T$: then $W$ is spanned by vectors of $\sB$. Let $C_{W,A}=V(P)$ where $0\not=P\in\Sym^6 W^{\vee}$. Applying~\Ref{clm}{azione} we get that $P$ is fixed by a maximal torus of $\SL(W)$: it follows that $C_{W,A}$ is of Type III-2. Next assume that $W$ is not fixed by $T$: then we may find a $1$-PS $\lambda\colon\CC^{\times}\to T$ such that $\lim_{t\to 0}\lambda(t)W$ exists and is equal to $W_0\in\Theta_A$ fixed by $T$: it follows that $C_{W,A}$ is a semistable sextic $\PGL(W)$-equivalent to a sextic of Type III-2.  
\end{proof}
\subsubsection{Wrapping it up}\label{subsubsec:dimeuno}
We will prove~\Ref{prp}{versolinf}. Item~(1)  is the content of~\Ref{crl}{raggieuno}. We have noticed that if $A\in{\mathbb S}^{\sF}_{\cE_1}$ is generic then $D_{A_2}$, $D_{A_3}$ are conics intersecting transversely: together with Item~(1) that gives   Item~(2).  
Let's prove Item~(3). By~\Ref{crl}{tuttipiani} we have $\Theta_A=\{V_{02}\}\cup\Theta_{A_3}\cup Z_A$ where $Z_A$ is finite. By~\Ref{crl}{raggieuno} we know that $\Theta_{A_3}$ is a rational normal twisted curve parametrizing subspaces  $W\subset V_{15}$. By the classification of~\cite{ogtasso} (see Table~2) 
 the following holds:  $V_{35}$ is the  unique $3$-dimensional vector-subspace of $V$ intersecting every $W\in \Theta_{A_3}$ in a subspace of dimension $2$. In addition~\Ref{prp}{eunostabtipo} gives that $C_{V_{02},A}$ is a sextic of Type II-2 with isolated singular point in $[v_0]$ (for the last statement go to the proof of~\Ref{prp}{eunostabtipo}). 
Now let $g\in\Stab(A)$ belong to the connected component of $\Id$. The facts quoted above about $\Theta_A$ and $C_{V_{02},A}$ give that $g([v_0])=[v_0]$, $g(V_{12})=V_{12}$ and $g(V_{35})=V_{35}$. Thus $g$ belongs to the 
 centralizer $C_{\SL(V)}(\lambda_{\cE_1})$. Since $A$ is $G_{\cE_1}$-stable the stabilizer of $A$ in $G_{\cE_1}$ is a finite group and Item~(3) follows. 
Lastly let us prove Items~(4)  and~(5). First we will show that
 \begin{equation}\label{gianburrasca}
\gx\in\gB_{\cE_1}.
\end{equation}
By definition it suffices to show that $A_k(L)\in\BB^{*}_{\cE_1}$, where $A_k(L)$ is given by~\Ref{dfn}{kappacca}. Let $W\in \Theta_{A_k(L)}$. There exists a basis $\{X,Y,Z\}$ of $L$ such that $W=\la X^2,XY,XZ\ra$. Let $\sF=\{v_0,\ldots,v_5\}$  be the basis of $\Sym^2 L$ defined by $\sF:=\{X^2,XY,XZ,Y^2,YZ,Z^2\}$. A straightforward computation shows that $v_0\wedge(v_1\wedge v_4-v_2\wedge v_3), v_0\wedge(v_1\wedge v_5-v_2\wedge v_4)\in A$. Since $v_0\wedge v_1\wedge v_2\in A$ it follows that 
$A_k(L)\in\BB^{*}_{\cE_1}$. 
This proves~\eqref{gianburrasca}. 
Items~(4) and~(5)  follow at once from~\eqref{gianburrasca}, \Ref{prp}{eunostabtipo} and~\Ref{prp}{eunosemtipo}.
\subsection{$\gB_{\cE^{\vee}_1}$}\label{subsec:eunoduale}
\setcounter{equation}{0}
The isotypical decomposition of $\bigwedge^3 \lambda_{\cE^{\vee}_1}$ with decreasing weights is 
\begin{equation}\label{decoduale}
\bigwedge^3 V=\bigwedge^3 V_{02}\oplus \bigwedge^2 V_{02}\wedge V_{34}\oplus
\left(V_{02}\wedge \bigwedge^2 V_{34}\oplus  \bigwedge^2 V_{02}\wedge [v_5]\right) \oplus
V_{02}\wedge V_{34}\wedge [v_5] \oplus  \bigwedge^3 V_{35}.
\end{equation}
Let $A\in {\mathbb S}^{\sF}_{\cE^{\vee}_1}$; by definition $A=A_0\oplus  A_1\oplus A_2 \oplus A_3$ where
\begin{equation*}
\scriptstyle
A_0=\bigwedge^3 V_{02},\quad
A_1\in\Gr(2, \bigwedge^2 V_{02}\wedge V_{34}),\quad 
A_2\in\LL\GG(V_{02}\wedge \bigwedge^2 V_{34}\oplus  \bigwedge^2 V_{02}\wedge [v_5]),
\quad A_3=A_1^{\bot}\cap (V_{02}\wedge V_{34}\wedge [v_5]).
\end{equation*}
We associate to the generic $A\in {\mathbb S}^{\sF}_{\cE^{\vee}_1}$ two closed subsets of $\PP(V_{02})$ (generically conics) as follows. First we  notice that $\PP(V_{02}\wedge V_{34}\wedge [v_5])\cap\GG(3,V)$ is isomorphic to $\PP(V_{02})\times \PP(V_{34})$ embedded by the Segre map. Since $\PP(A_3)$ has codimension $2$ in $\PP(V_{02}\wedge V_{34}\wedge [v_5])$ it follows that $\Theta_{A_3}$ has dimension at least $1$ and that generically it is a twisted rational cubic curve.
The projection $\PP(V_{02})\times \PP(V_{34})\to \PP(V_{02})$ defines a regular map  $\pi\colon \Theta_{A_3}  \to  \PP(V_{02})$.
 Let $C_{A_3}:=\im\pi$. If $\Theta_{A_3}$ is a twisted rational cubic curve then $C_{A_3}$ is a smooth conic. On the other hand let
 \begin{equation}
C_{A_2}:=\{[\beta]\in \PP( V_{02}) \mid 
A_2\cap ([\beta]\wedge\bigwedge^2 V_{34}\oplus [\beta]\wedge V_{02}\wedge[v_5])\not=\{0\}  \}.
\end{equation}
Then $C_{A_2}$ is a lagrangian degeneracy locus and either it is a conic or all of $\PP( V_{02})$. If $A_2\cap \bigwedge^2 V_{02}\wedge [v_5]=\{0\}$ we may describe $C_{A_2}$ as follows. By our assumption $A_2$ is the graph of a linear map  $V_{02}\wedge \bigwedge^2 V_{34}\lra \bigwedge^2 V_{02}\wedge [v_5]$ which is symmetric because $A_2$ is lagrangian: let $q_{A_2}$ be the associated quadratic form. Then $C_{A_2}=V(q_{A_2})$. If $A$ is generic in ${\mathbb S}^{\sF}_{\cE^{\vee}_1}$ then $C_{A_2}$, $C_{A_3}$ are conics intersecting transversely.  
 Below is the main result of the present subsection. 
 \begin{prp}\label{prp:eoltre}
The following hold:
\begin{enumerate}
\item[(1)]
Let $A\in {\mathbb S}^{\sF}_{\cE^{\vee}_1}$.   Then  $A$ is $G_{\cE^{\vee}_1}$-stable if and only if $C_{A_3}$ is a  smooth conic and $C_{A_2}$ is a conic intersecting $D_{A_3}$ transversely.
\item[(2)]
The generic $A\in {\mathbb S}^{\sF}_{\cE^{\vee}_1}$ is $G_{\cE^{\vee}_1}$-stable. 
\item[(3)]
If   $A\in {\mathbb S}^{\sF}_{\cE^{\vee}_1}$ is $G_{\cE^{\vee}_1}$-stable the connected component of $\Id$ in $\Stab(A)<\SL(V)$ is equal to $\im\lambda_{\cE^{\vee}_1}$.
\item[(4)]
Let  $A\in {\mathbb S}^{\sF}_{\cE^{\vee}_1}$ have closed $\PGL(V)$-orbit (in $\lagr^{ss}$), and suppose that $[A]\notin\gI$. Then $C_{W,A}$ is of Type II-1, II-2, II-3 or $\PGL(V)$-equivalent to Type III-2. 
\item[(5)]
$\gB_{\cE^{\vee}_1}\cap\gI=\{\gx^{\vee}\}$.
\end{enumerate}
\end{prp}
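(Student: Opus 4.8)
The plan is to deduce the three GIT statements (1)--(3) from their already-proved counterparts for $\cE_1$ by duality, and to treat the sharper statement (4) by a direct analysis of the plane sextics $C_{W,A}$ that parallels~\Ref{prp}{tuttipiani}--\Ref{prp}{eunosemtipo}. The starting point is that the weights $(2_3,-1_2,-4)$ of $\lambda_{\cE^{\vee}_1}$ are the negation-and-reversal of the weights $(4,1_2,-2_3)$ of $\lambda_{\cE_1}$; equivalently, acting contragrediently on $V^{\vee}$ the $1$-PS $\lambda_{\cE^{\vee}_1}$ becomes $\lambda_{\cE_1}$ after reordering the dual basis. Since the duality isomorphism $\delta_V\colon\lagr\overset{\sim}{\to}\lagrdual$ of~\eqref{specchio} is $SL$-equivariant and weight-preserving, it carries ${\mathbb S}^{\sF}_{\cE^{\vee}_1}(V)$ isomorphically onto ${\mathbb S}^{\sF^{\vee}}_{\cE_1}(V^{\vee})$ (in accordance with $\delta_V(\EE^{*}_{{\bf r},{\bf d}}(V))=\EE^{*}_{-{\bf r},{\bf d}}(V^{\vee})$), intertwines the $G_{\cE^{\vee}_1}$- and $G_{\cE_1}$-actions, preserves (semi)stability and closedness of orbits, and matches the four-term decomposition of $A$ summand-by-summand with that of $B:=\delta_V(A)$. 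In particular the degeneracy loci $C_{A_2},C_{A_3}\subset\PP(V_{02})$ are identified with $D_{B_2},D_{B_3}$.

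First I would read off (1)--(3). An $A\in{\mathbb S}^{\sF}_{\cE^{\vee}_1}$ is $G_{\cE^{\vee}_1}$-stable iff $B$ is $G_{\cE_1}$-stable, and the condition ``$C_{A_3}$ a smooth conic meeting the conic $C_{A_2}$ transversely'' is exactly the condition of~\Ref{prp}{versolinf}(1) for $B$; this gives (1), and (2) follows because $\delta_V$ takes generic members to generic members. For (3) the stabilizer of $A$ is carried onto that of $B$ and $\im\lambda_{\cE^{\vee}_1}$ onto $\im\lambda_{\cE_1}$, so~\Ref{prp}{versolinf}(3) applies.

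The substance of the proof is (4), where duality is only a partial tool: the map $W\mapsto\Ann(W)$ is a bijection $\Theta_A\overset{\sim}{\to}\Theta_{\delta_V(A)}$ (because $\delta_V(\bigwedge^3 W)=\bigwedge^3\Ann(W)$), but the sextic $C_{W,A}\subset\PP(W)$ is an intrinsic invariant that is not transported by $\delta_V$. Accordingly I would, for $G_{\cE^{\vee}_1}$-stable $A$: (a) transport the $\Theta$-analysis of~\Ref{prp}{tuttipiani}--\Ref{crl}{tuttipiani} through this bijection to conclude $\Theta_A=\{V_{02}\}\cup\Theta_{A_3}\cup Z_A$ with $Z_A$ finite and $\Theta_{A_3}$ a rational normal cubic, so $\dim\Theta_A=1$ and hence $C_{W,A}\neq\PP(W)$ by~\Ref{crl}{senoncurva}; (b) apply~\Ref{clm}{azione} and~\Ref{rmk}{pasquetta} to the action of $\lambda_{\cE^{\vee}_1}$ on $W$ to get $C_{W,A}=V(X_0^2F(X_1,X_2))$, and prove $F$ has no multiple factor using~\Ref{prp}{primisarto} together with the transversality $C_{A_2}\pitchfork C_{A_3}$; this makes every $C_{W,A}$ a sextic of Type II-2, whence $[A]\notin\gI$ by~\Ref{rmk}{pershah}. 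For properly $G_{\cE^{\vee}_1}$-semistable $A$ with minimal orbit I would establish the analogue of~\Ref{prp}{giostra} (reduction to a $\lambda$-fixed model) and show, as in~\Ref{prp}{eunosemtipo}, that either $A\in\PGL(V)A_h$ or $C_{W,A}$ is $\PGL(W)$-equivalent to a Type III-2 sextic, so again $[A]\notin\gI$.

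It remains to exhibit the single point of $\gB_{\cE^{\vee}_1}\cap\gI$. Dualizing~\eqref{gianburrasca}, from $\delta_V(A_k)=A_h$ and $\delta_V(\BB^{*}_{\cE_1}(V))=\BB^{*}_{\cE^{\vee}_1}(V^{\vee})$ I get $A_h\in\BB^{*}_{\cE^{\vee}_1}$, i.e.~$\gx^{\vee}\in\gB_{\cE^{\vee}_1}$; and $\gx^{\vee}\in\gI$ is already recorded in~\eqref{stregaovest}--\eqref{nopanino}, where $C_{W,A_h}=3D_W$ with $D_W$ a smooth conic (a Type IV sextic, in the indeterminacy locus). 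Putting the two displayed cases together yields $\gB_{\cE^{\vee}_1}\cap\gI=\{\gx^{\vee}\}$. I expect the main obstacle to be steps (b)--(c): precisely as for $\cE_1$, identifying the stable curves as Type II-2 and the properly-semistable ones as Type III-2 requires the Pl\"ucker-quadric computation of~\Ref{prp}{primisarto} and a fixed-point reduction, and these genuinely cannot be shortcut by duality; a secondary nuisance is ruling out that $\gx=[A_k]$ lies in $\gB_{\cE^{\vee}_1}$, which the same curve analysis settles since $C_{W,A_k}=\PP(W)$ is incompatible with $\dim\Theta_A=1$.
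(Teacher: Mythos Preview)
Your duality argument for (1)--(3) is essentially the paper's own proof, and your overall architecture for (4)---classify $\Theta_A$, then analyze $C_{W,A}$ case by case for stable and properly semistable $A$---is also correct. However, step (b) contains a genuine error that would make the argument fail.

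You assert that applying~\Ref{clm}{azione} to the action of $\lambda_{\cE^{\vee}_1}$ on $W$ forces $C_{W,A}=V(X_0^2F(X_1,X_2))$ for every $W\in\Theta_A$. This is false: the weight structure of $\lambda_{\cE^{\vee}_1}$ on the three types of $W$ differs, and so does the resulting constraint. For $W=V_{02}$ the weights are $(2,2,2)$, so $\lambda_{\cE^{\vee}_1}$ acts trivially on $W$ and~\Ref{clm}{azione} yields no restriction at all; the paper instead shows directly that $C_{V_{02},A}=C_{A_2}+2C_{A_3}$, a sextic of Type II-3. For $W\in\Theta_{A_3}$, i.e.\ $W=\la\beta,\xi,v_5\ra$ with $\beta\in V_{02}$, $\xi\in V_{34}$, the weights are $(2,-1,-4)$, which after normalizing gives $\diag(t,1,t^{-1})$-invariance; by~\Ref{rmk}{pasquetta}(2) this forces $C_{W,A}$ to be a product of three conics tangent at two points, i.e.\ (after checking distinctness) Type II-1, not $X_0^2F$. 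Only for the isolated $W=\la\beta,\xi_1,\xi_2\ra$ with $\xi_i\in V_{34}$ do the weights $(2,-1,-1)$ give the $X_0^2F$ shape and Type II-2. You have imported the uniform Type II-2 conclusion from~\Ref{prp}{eunostabtipo} without noticing that on the $\cE_1$ side the three cases happen to have the \emph{same} normalized weight pattern on $W$, whereas on the $\cE^{\vee}_1$ side they do not. The paper carries out the three cases separately in~\Ref{prp}{eunodualestabtipo}, and the II-1 case in particular requires a nontrivial computation (bounding $\dim(A\cap F_v)$ and checking $v\notin\cB(W,A)$) that cannot be read off from duality.

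A smaller point: your remark about excluding $\gx$ is muddled. The correct reason $\gx\notin\gB_{\cE^{\vee}_1}$ is not that $C_{W,A_k}=\PP(W)$ ``is incompatible with $\dim\Theta_A=1$''---rather, once you have shown that every minimal-orbit $A$ in ${\mathbb S}^{\sF}_{\cE^{\vee}_1}$ other than those in $\PGL(V)A_h$ has $C_{W,A}$ in the regular locus of the period map, you are done, since $\gx\neq\gx^{\vee}$ by~\eqref{stregaovest}.
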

The proof of~\Ref{prp}{eoltre} is in~\Ref{subsubsec}{dimoeunoduale}.
\subsubsection{The GIT analysis} 
\begin{prp}\label{prp:raggieunoduale}
 $A\in{\mathbb S}^{\sF}_{\cE^{\vee}_1}$  is not $G_{\cE^{\vee}_1}$-stable  if and only if one of the following holds:
\begin{enumerate}
\item[(1)]
There exists a non-zero decomposable element of $A_1$.
\item[(2)]
$\dim(A_2\cap \bigwedge^2 V_{02}\wedge [v_5])\ge 1$.
\item[(3)]
There exist bases $\{\beta_1,\beta_2,\beta_3\}$ of $V_{02}$, $\{\xi_1,\xi_2\}$ of $V_{34}$   and $x,y\in\CC$ not both zero such that 
\begin{equation*}
\la \beta_1\wedge\xi_1\wedge v_5,  (x \beta_1\wedge  \xi_2 + y \beta_2 \wedge  \xi_1 )\wedge  v_5 \ra \subset A_3
\end{equation*}
and
\begin{equation*}
\dim(A_2\cap \la \xi_1\wedge\xi_2 \wedge \beta_1 , \beta_1\wedge\beta_2 \wedge v_5  \ra)\ge 1.
\end{equation*}
\end{enumerate}
\end{prp}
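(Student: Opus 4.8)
The cleanest route is to deduce~\Ref{prp}{raggieunoduale} from its mirror,~\Ref{prp}{raggieuno}, via the duality isomorphism $\delta_V\colon\lagr\overset{\sim}{\lra}\lagrdual$ of~\eqref{specchio}. The point of departure is the observation recorded in the Duality paragraph of~\Ref{subsec}{nonstabili}, that $\cE_1^{\vee}$ is the mirror of $\cE_1$: the weight vector ${\bf r}_{\cE_1^{\vee}}=(2,2,2,-1,-1,-4)$ satisfies $-{\bf r}_{\cE_1^{\vee}}=(4,1,1,-2,-2,-2)={\bf r}_{\cE_1}$ after reordering, so that $\delta_V(\EE^{*}_{{\bf r}_{\cE_1^{\vee}},{\bf d}}(V))=\EE^{*}_{{\bf r}_{\cE_1},{\bf d}}(V^{\vee})$. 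I would fix the reversed dual basis $\sF'=\{v_5^{\vee},v_4^{\vee},v_3^{\vee},v_2^{\vee},v_1^{\vee},v_0^{\vee}\}$ of $V^{\vee}$ and check that $\delta_V$ carries the split locus ${\mathbb S}^{\sF}_{\cE_1^{\vee}}(V)$ isomorphically onto ${\mathbb S}^{\sF'}_{\cE_1}(V^{\vee})$, by comparing the isotypical decompositions~\eqref{quattradd} and~\eqref{decoduale}.

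Second, I would verify that this isomorphism intertwines the two group actions. Since $\delta_V$ is equivariant for the contragredient isomorphism $SL(V)\cong SL(V^{\vee})$, $g\mapsto(g^t)^{-1}$, and since this isomorphism carries $\lambda_{\cE_1^{\vee}}$ (in the basis $\sF$) to $\lambda_{\cE_1}$ (in the basis $\sF'$) and hence $C_{SL(V)}(\lambda_{\cE_1^{\vee}})$ onto $C_{SL(V^{\vee})}(\lambda_{\cE_1})$, the homomorphism $\rho_{\cE_1^{\vee}}$ is identified with $\rho_{\cE_1}$, the two $SL$-factors exchanging roles ($SL(V_{34})\leftrightarrow SL(V_{12}^{\sF'})$, $SL(V_{02})\leftrightarrow SL(V_{35}^{\sF'})$). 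As $\delta_V$ maps semistable points to semistable points and closed orbits to closed orbits and respects the Pl\"ucker linearizations, it follows that $A$ is $G_{\cE_1^{\vee}}$-stable if and only if $\delta_V(A)$ is $G_{\cE_1}$-stable (and likewise for semistability and minimal orbits, which will be needed in the companion results).

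Third, I would apply~\Ref{prp}{raggieuno} to $\delta_V(A)\in{\mathbb S}^{\sF'}_{\cE_1}(V^{\vee})$ and translate its three conditions back through $\delta_V$. The essential input is that $\delta_V$ sends decomposable vectors to decomposable vectors, sending a monomial $v_i\wedge v_j\wedge v_k$ to a nonzero multiple of the complementary dual monomial $v_l^{\vee}\wedge v_m^{\vee}\wedge v_n^{\vee}$; consequently it carries each wedge subspace of~\Ref{prp}{raggieuno} to the corresponding one of~\Ref{prp}{raggieunoduale}. Concretely, a nonzero decomposable of $\delta_V(A)_1\subset[v_5^{\vee}]\wedge V_{12}^{\sF'}\wedge V_{35}^{\sF'}$ pulls back to a nonzero decomposable of $A_1\subset\bigwedge^2 V_{02}\wedge V_{34}$ (condition~(1)$\leftrightarrow$(1)); the subspace $\bigwedge^2 V_{12}^{\sF'}\wedge V_{35}^{\sF'}$ pulls back to $\bigwedge^2 V_{02}\wedge[v_5]$ (condition~(2)$\leftrightarrow$(2)); and the basis-indexed incidences of condition~(3) match up monomial by monomial, giving condition~(3) of~\Ref{prp}{raggieunoduale}.

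The main obstacle is purely bookkeeping: making the dictionary between~\eqref{quattradd} and~\eqref{decoduale} precise enough that the pullbacks of the three conditions of~\Ref{prp}{raggieuno} are \emph{exactly} conditions~(1)--(3) of~\Ref{prp}{raggieunoduale}, with no sign or indexing slip in condition~(3). As an independent check (and a fallback should the dictionary prove unwieldy) I would verify the easy direction directly, exhibiting for each of conditions~(1)--(3) an explicit $1$-PS of $G_{\cE_1^{\vee}}$ with $\mu(A,\lambda)\ge 0$, namely the mirrors of the $1$-PS's $(0,1,0,0,0)$, $(-1,0,0,0,0)$ and $(0,3,6,0,-6)$ used in~\Ref{prp}{raggieuno}; then only the converse implication would rest on the full transfer of the Cone Decomposition computation through duality.
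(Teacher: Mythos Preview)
Your proposal is correct and takes essentially the same approach as the paper: transfer the question via $\delta_V$ to ${\mathbb S}^{\sG}_{\cE_1}(V^{\vee})$ with $\sG=\{v_5^{\vee},v_4^{\vee},\ldots,v_0^{\vee}\}$ and invoke~\Ref{prp}{raggieuno}. The paper's own proof is just the terse version of what you wrote, compressing your three paragraphs into the single sentence that the proposition \lq\lq follows at once\rq\rq; the bookkeeping you flag (matching the isotypical summands and tracking condition~(3) through the dictionary) is left implicit there.
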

\begin{proof}
$A$ is not $G_{\cE^{\vee}_1}$-stable  if and only if $\delta_V(A)$ is not $G_{\cE_1}$-stable - see~\eqref{specchio} for the definition of $\delta_V$. Now $\delta_V(A)\in{\mathbb S}^{\sG}_{\cE_1}$ where $\sG=\{v_5^{\vee},v_4^{\vee},\ldots,v_0^{\vee}\}$. The proposition follows at once from~\Ref{prp}{raggieuno}.  
\end{proof}
  By copying the proof of~\Ref{crl}{raggieuno} one gets the following result.
\begin{crl}\label{crl:raggieunoduale}
 $A\in{\mathbb S}^{\sF}_{\cE^{\vee}_1}$  is $G_{\cE^{\vee}_1}$-stable  if and only if  $C_{A_3}$ is a a smooth conic (equivalently $\Theta_{A_3}$ is a smooth curve) and $C_{A_2}$ is a conic intersecting $C_{A_3}$ transversely.
\end{crl}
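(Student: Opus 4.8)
The plan is to prove Corollary \Ref{crl}{raggieunoduale} exactly as the paper announces, by transporting the proof of its mirror statement \Ref{crl}{raggieuno}. Two essentially equivalent routes are available, and I would carry out the first while keeping the second as a consistency check. The first route is a direct transcription: the data attached to $A\in{\mathbb S}^{\sF}_{\cE^{\vee}_1}$ have the same formal shape as the data attached to ${\mathbb S}^{\sF}_{\cE_1}$, under the dictionary $V_{35}\leftrightarrow V_{02}$ (the plane carrying the two degeneracy conics, with $\PP(\bigwedge^2 V_{35})\cong\PP(V_{35}^{\vee})$ playing the role of $\PP(V_{02})$), $V_{12}\leftrightarrow V_{34}$ (the two-dimensional factor), $[v_0]\leftrightarrow[v_5]$, and $D_{A_i}\leftrightarrow C_{A_i}$. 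Under this dictionary the three conditions of \Ref{prp}{raggieunoduale} are termwise the three conditions of \Ref{prp}{raggieuno}, so it suffices to rerun the geometric translation carried out in the proof of \Ref{crl}{raggieuno}.

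First I would record, in dual form, the two structural observations that drive that proof. From $A_3=A_1^{\bot}\cap(V_{02}\wedge V_{34}\wedge[v_5])$ one gets that Item~(1) of \Ref{prp}{raggieunoduale} (existence of a nonzero decomposable in $A_1$) holds if and only if the intersection $\PP(A_3)\cap(\PP(V_{02})\times\PP(V_{34}))$ inside $\PP(V_{02}\wedge V_{34}\wedge[v_5])$ fails to be transverse, i.e.\ if and only if $\Theta_{A_3}$, equivalently its image conic $C_{A_3}$, is singular. Second, using the description of $C_{A_2}$ as $V(q_{A_2})$ valid when $A_2\cap\bigwedge^2 V_{02}\wedge[v_5]=\{0\}$ (stated just before \Ref{prp}{eoltre}, the dual of \Ref{rmk}{vulgata}), one gets that $C_{A_2}$ degenerates to a double line or to all of $\PP(V_{02})$ precisely when Item~(2) of \Ref{prp}{raggieunoduale} holds or when $\dim(A_2\cap V_{02}\wedge\bigwedge^2 V_{34})\ge 2$. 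These are the verbatim duals of the observations labelled (A) and (B) in the proof of \Ref{crl}{raggieuno}.

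The core of the argument is then the tangent-space computation. For $[\beta]\in C_{A_3}$ with $C_{A_3}$ smooth, I would compute the projective tangent space ${\bf T}_{[\beta]}C_{A_3}$ by introducing the multiplication map $A_3\to\bigwedge^2 V_{02}\wedge V_{34}\wedge[v_5]$ against the corresponding vector of $V_{34}$, showing it is surjective with one-dimensional kernel (this is where smoothness of $C_{A_3}$ enters), and reading off from its preimage of the relevant Segre tangent space an explicit two-element partial basis of $A_3$, the dual of the one exhibited in the proof of \Ref{crl}{raggieuno}. Matching this against the tangent space of $C_{A_2}$ computed from $q_{A_2}$, non-transversality of $C_{A_2}$ and $C_{A_3}$ at $[\beta]$ forces exactly the two containments of Item~(3) of \Ref{prp}{raggieunoduale}; the converse reading of the same equations shows that each of Items~(1)--(3) makes $C_{A_3}$ singular, $C_{A_2}$ non-reduced, or the two conics tangent. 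Combined with \Ref{prp}{raggieunoduale} this yields the stated equivalence.

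The step I expect to be the genuine obstacle is the bookkeeping in that tangent-space computation: getting the indices, the roles of the two factors $V_{02}$ and $V_{34}$, and the signs right so that the explicit local basis of $A_3$ lands in the subspaces named in Item~(3). Everything else is formal. As a guard against a transcription error I would in parallel verify the duality route: since $A$ is $G_{\cE^{\vee}_1}$-stable if and only if $\delta_V(A)\in{\mathbb S}^{\sG}_{\cE_1}$ is $G_{\cE_1}$-stable (shown in the proof of \Ref{prp}{raggieunoduale}, with $\sG=\{v_5^{\vee},\ldots,v_0^{\vee}\}$ and $\delta_V$ as in \eqref{specchio}), it suffices to check that $\delta_V$ identifies the loci $C_{A_2},C_{A_3}$ with $D_{\delta_V(A)_2},D_{\delta_V(A)_3}$ up to projective isomorphism, whence smoothness and transversality are preserved and \Ref{crl}{raggieuno} applies directly.
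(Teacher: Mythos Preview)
Your proposal is correct and matches the paper's approach exactly: the paper's entire proof of \Ref{crl}{raggieunoduale} is the single line ``By copying the proof of~\Ref{crl}{raggieuno} one gets the following result,'' and you have spelled out precisely what that copying entails, including the dual versions of observations (A) and (B) and the tangent-space computation. Your parallel duality-route check via $\delta_V$ is a sound consistency verification but not needed for the argument itself.
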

Let $\{\beta_1,\beta_2,\beta_3\}$ be a basis of $V_{02}$ and $\{\xi_1,\xi_2\}$  be a basis of $V_{34}$. Let $\lambda_1^{\vee}$ be the $1$-PS of $\SL(V)$ defined by  $\lambda_1^{\vee}(t):=\diag(t^2,1,t^{-2},t,t^{-1},1)$ where we mean diagonal with respect to the basis $\{\beta_1,\beta_2,\beta_3,\xi_1,\xi_2,v_5\}$. 
The group $G_{\cE^{\vee}_1}$ acts on  the affine cone $\wh{\mathbb S}^{\sF}_{\cE^{\vee}_1}$ over ${\mathbb S}^{\sF}_{\cE_1}$. The fixed locus $(\wh{\mathbb S}^{\sF}_{\cE^{\vee}_1})^{\lambda_1}$ is the set of $A$ which are mapped to themselves by $\wedge^3\lambda_1(t)$ and such that 
$\wedge^3\lambda_1(t)$ acts trivially on $\bigwedge^{10} A$.
\begin{dfn}
Let $\MM^{\sB}_{\cE^{\vee}_1}\subset\PP((\wh{\mathbb S}^{\sF}_{\cE^{\vee}_1})^{\lambda_1})$ be the set of $A$  such that 
$\wedge^3\lambda_1(t)$ acts trivially on $\bigwedge^2 A_1$,  $\bigwedge^3 A_2$ and  $\bigwedge^4 A_3$.
\end{dfn}
Suppose that $A\in{\mathbb S}^{\sF}_{\cE^{\vee}_1}$;  then $A\in \MM^{\sB}_{\cE^{\vee}_1}$ if and only if it is $\lambda^{\vee}_1$-split of types $d^{\lambda^{\vee}_1}(A_1)=(0,1,1,0)$ and $d^{\lambda^{\vee}_1}(A_2)=(1,1,1)$. Moreover $\MM^{\sB}_{\cE^{\vee}_1}$ is an irreducible component of $\PP((\wh{\mathbb S}^{\sF}_{\cE^{\vee}_1})^{\lambda_1})$.
By copying the proof of~\Ref{prp}{giostra} one gets the following result.
\begin{prp}\label{prp:arcade}
Suppose that $A$ is properly $G_{\cE^{\vee}_1}$-semistable. Then there exists a semistable $A_0\in\MM^{\sB}_{\cE^{\vee}_1}$ with minimal orbit  which is $G_{\cE^{\vee}_1}$-equivalent to $A$. 
\end{prp}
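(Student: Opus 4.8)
The plan is to mirror the proof of~\Ref{prp}{giostra} verbatim, replacing every object by its $\delta_V$-dual; the cleanest way to make the word ``copying'' rigorous is to reduce to~\Ref{prp}{giostra} itself through the duality isomorphism $\delta_V\colon\lagr\overset{\sim}{\lra}\lagrdual$ of~\eqref{specchio}. Recall from the proof of~\Ref{prp}{raggieunoduale} that $A$ is properly $G_{\cE^{\vee}_1}$-semistable if and only if $\delta_V(A)$ is properly $G_{\cE_1}$-semistable, with $\delta_V(A)\in{\mathbb S}^{\sG}_{\cE_1}$ for the reversed basis $\sG=\{v_5^{\vee},\ldots,v_0^{\vee}\}$. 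Since $\delta_V$ is an isomorphism intertwining the $\PGL(V)$-action (up to the outer automorphism $g\mapsto(g^{-1})^t$) and carrying the natural linearization on $\lagr$ to the one on $\lagrdual$, it preserves semistability, $G$-equivalence and closedness of orbits, and it matches the torus-fixed-point descriptions, so that $\delta_V(\MM^{\sB}_{\cE^{\vee}_1})=\MM^{\sG'}_{\cE_1}$ for the appropriate basis $\sG'$ of $V^{\vee}$.

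First I would apply~\Ref{prp}{giostra} to $\delta_V(A)$: this yields a semistable $B_0\in\MM^{\sG'}_{\cE_1}$ which is $G_{\cE_1}$-equivalent to $\delta_V(A)$. Setting $A_0:=\delta_V^{-1}(B_0)$ then produces a semistable element of $\MM^{\sB}_{\cE^{\vee}_1}$ that is $G_{\cE^{\vee}_1}$-equivalent to $A$. To obtain the minimal-orbit refinement — the one point where~\Ref{prp}{arcade} asserts slightly more than~\Ref{prp}{giostra} — I would pass to the unique closed orbit in $\ov{G_{\cE^{\vee}_1}A_0}\cap({\mathbb S}^{\sF}_{\cE^{\vee}_1})^{ss}$: by the definition of $G$-equivalence this closed orbit is still $G_{\cE^{\vee}_1}$-equivalent to $A$, and it is reached as a limit $\lim_{t\to 0}\mu(t)A_0$ under a $1$-PS $\mu$ of the centralizer of $\lambda_1^{\vee}$ that preserves $\MM^{\sB}_{\cE^{\vee}_1}$, so the limit remains in $\MM^{\sB}_{\cE^{\vee}_1}$.

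Alternatively, arguing directly inside ${\mathbb S}^{\sF}_{\cE^{\vee}_1}$, by~\Ref{prp}{raggieunoduale} one of its three Items holds for $A$. If Item~(3) holds, the limit $\lim_{t\to 0}\lambda_1^{\vee}(t)A$ is semistable, lies in $\MM^{\sB}_{\cE^{\vee}_1}$, and is $G_{\cE^{\vee}_1}$-equivalent to $A$, the relevant numerical function $\mu(A,\lambda_1^{\vee})$ vanishing by semistability. If Item~(2) or Item~(1) holds, I would first degenerate $A$ under the dual analogs of the $1$-PS's $\lambda_2$, $\lambda'$, $\lambda''$ used in the proof of~\Ref{prp}{giostra}, each time invoking vanishing of the numerical function forced by semistability together with~\Ref{clm}{limite}, so as to produce a $G_{\cE^{\vee}_1}$-equivalent lagrangian satisfying Item~(3); one then applies the Item~(3) case.

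The main obstacle is bookkeeping rather than a new idea: in the direct approach one must check that the dual analogs of the weight computations (Tables~\ref{penduno}, \ref{pendue} and~\ref{mudieuno}) behave as claimed, so that the chosen $1$-PS's have the asserted numerical functions and the iterated limits indeed land in the stratum where Item~(3) holds. Using the duality reduction sidesteps this bookkeeping entirely, reducing everything to the already-established~\Ref{prp}{giostra}, so I would present the $\delta_V$ argument as the proof and dispatch the minimal-orbit statement by the closed-orbit-in-the-closure argument above, which is the only genuinely new ingredient beyond~\Ref{prp}{giostra}.
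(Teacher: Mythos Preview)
Your approach is correct and matches the paper's own: the paper simply writes ``By copying the proof of~\Ref{prp}{giostra} one gets the following result,'' and both your duality-reduction and your direct argument are faithful implementations of that instruction. You were right to flag the ``with minimal orbit'' clause as an extra assertion not present in~\Ref{prp}{giostra}; your closed-orbit-in-the-closure argument handles it, the key point being that $\MM^{\sB}_{\cE^{\vee}_1}$ is an irreducible component of the $\lambda_1^{\vee}$-fixed locus and hence is preserved by the centralizer, so Luna's~\Ref{crl}{piupiccolo} lets you replace $A_0$ by a minimal-orbit representative without leaving $\MM^{\sB}_{\cE^{\vee}_1}$.
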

\subsubsection{Analysis of $\Theta_A$ and $C_{W,A}$}  
\begin{prp}\label{prp:pianiduali}
Let $A\in{\mathbb S}^{\sF}_{\cE^{\vee}_1}$ be $G_{\cE^{\vee}_1}$-stable and $W\in\Theta_A$. Then one of the following holds:
\begin{enumerate}
\item[(a)]
$W=V_{02}$.
\item[(b)]
 $W\in\Theta_{A_3}$.
\item[(c)]
$W=\la \beta,\xi_1,\xi_2 \ra$ where $\beta\in V_{02}$ and $\xi_1,\xi_2\in V_{34}$.
\end{enumerate}
\end{prp}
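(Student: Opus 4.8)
The plan is to deduce~\Ref{prp}{pianiduali} from its counterpart~\Ref{prp}{tuttipiani} by means of the duality isomorphism $\delta_V\colon\bigwedge^3 V\overset{\sim}{\lra}\bigwedge^3 V^{\vee}$ of~\eqref{specchio}. Recall from the proof of~\Ref{prp}{raggieunoduale} that $\delta_V$ carries ${\mathbb S}^{\sF}_{\cE^{\vee}_1}(V)$ isomorphically onto ${\mathbb S}^{\sG}_{\cE_1}(V^{\vee})$, where $\sG=\{v_5^{\vee},v_4^{\vee},\ldots,v_0^{\vee}\}$, and that $A$ is $G_{\cE^{\vee}_1}$-stable if and only if $\delta_V(A)$ is $G_{\cE_1}$-stable. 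First I would record the compatibility with the isotypical decompositions: since $\delta_V$ intertwines $g\in\SL(V)$ with its contragredient ${}^t g^{-1}\in\SL(V^{\vee})$, it intertwines $\lambda_{\cE^{\vee}_1}(t)$ with $\lambda_{\cE_1}(t)$ read in the basis $\sG$, hence sends the weight-$e$ isotypical summand of $\bigwedge^3\lambda_{\cE^{\vee}_1}$ onto the weight-$e$ isotypical summand of $\bigwedge^3\lambda_{\cE_1}$. As $A$ is $\lambda_{\cE^{\vee}_1}$-split it follows that $\delta_V(A_i)=\delta_V(A)_i$ for each $i$; in particular $\delta_V(A_3)=\delta_V(A)_3$.

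The second step is to set up a bijection between $\Theta_A$ and $\Theta_{\delta_V(A)}$. The key point is the elementary identity $\delta_V(\bigwedge^3 W)=\bigwedge^3\Ann(W)$ (up to a scalar) for $W\in\Gr(3,V)$: the functional $\beta\mapsto\vol(\omega_W\wedge\beta)$, where $\omega_W$ spans $\bigwedge^3 W$, is decomposable with support $\Ann(W)$, as one checks at once in a basis adapted to $W$. Consequently $\bigwedge^3 W\subset A$ if and only if $\bigwedge^3\Ann(W)\subset\delta_V(A)$, so that $W\mapsto\Ann(W)$ defines a bijection $\Theta_A\overset{\sim}{\lra}\Theta_{\delta_V(A)}$; combining this with $\delta_V(A_3)=\delta_V(A)_3$, the same bijection restricts to a bijection $\Theta_{A_3}\overset{\sim}{\lra}\Theta_{\delta_V(A)_3}$.

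With these preliminaries in place I would apply~\Ref{prp}{tuttipiani} to $\delta_V(A)$ and translate each conclusion back through $\Ann$. Writing $W'=\Ann(W)\in\Theta_{\delta_V(A)}$, the three alternatives of~\Ref{prp}{tuttipiani}, with $v_0$, $V_{02}$, $V_{35}$ replaced by their counterparts $v_5^{\vee}$, $\Ann(V_{02})$, $\Ann(V_{35})$ for the basis $\sG$, read: (a) $W'=\Ann(V_{02})$, forcing $W=V_{02}$; (b) $W'\in\Theta_{\delta_V(A)_3}$, forcing $W\in\Theta_{A_3}$; and (c) $W'=\la v_5^{\vee},\gamma_1,\gamma_2\ra$ with $\gamma_1,\gamma_2\in\Ann(V_{35})$. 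In case (c) I would compute $W=\Ann_V(W')$ directly: the presence of $v_5^{\vee}$ forces $W\subset V_{04}$, while $v_5^{\vee}$ and the $\gamma_i$ (which lie in $\Ann(V_{35})\subset\Ann(V_{34})$) all annihilate $V_{34}$, so $V_{34}\subset W$ and hence $W=\la\beta,\xi_1,\xi_2\ra$ with $\{\xi_1,\xi_2\}$ a basis of $V_{34}$ and $\beta$ determined modulo $V_{34}$; since the $\gamma_i\in\Ann(V_{35})$ cut the $V_{02}$-component of $W$ down to a line, one may take $\beta\in V_{02}$. This is precisely alternative~(c) of~\Ref{prp}{pianiduali}.

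The only genuinely delicate point is this last translation of case~(c): one must pin down the explicit identifications $\Ann(V_{02})$, $\Ann(V_{35})$ coming from the reversed basis $\sG$, and then carry out the annihilator computation showing that the representative $\beta$ may be chosen inside $V_{02}$ and that $V_{34}\subset W$. Everything else is formal once the compatibility $\delta_V(A_i)=\delta_V(A)_i$ and the identity $\delta_V(\bigwedge^3 W)=\bigwedge^3\Ann(W)$ are in hand. Alternatively one could repeat the case analysis of the proof of~\Ref{prp}{tuttipiani} verbatim, now splitting according to $\dim(W\cap V_{34})$, but the duality argument is shorter and reuses the stability input~\Ref{prp}{raggieunoduale} rather than reproving it.
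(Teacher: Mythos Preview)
Your approach is correct and is exactly the paper's own argument: the paper's proof is the single line ``Follows from the equality $\delta_V(\Theta_A)=\Theta_{\delta_V(A)}$ and~\Ref{prp}{tuttipiani},'' and you have simply unpacked this, supplying the identity $\delta_V(\bigwedge^3 W)=\bigwedge^3\Ann(W)$, the compatibility $\delta_V(A_i)=\delta_V(A)_i$, and the explicit dictionary for the basis $\sG$. The translation of case~(c) is carried out correctly.
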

\begin{proof}
Follows from the equality $\delta_V(\Theta_A)=\Theta_{\delta_V(A)}$ and~\Ref{prp}{tuttipiani}.
\end{proof}
\begin{prp}\label{prp:eunodualestabtipo}
Let $A\in{\mathbb S}^{\sF}_{\cE^{\vee}_1}$ be $G_{\cE^{\vee}_1}$-stable. Let $W\in\Theta_A$ and hence one of Items~(a), (b), (c) of~\Ref{prp}{pianiduali} holds. Then $C_{W,A}$ is a sextic curve of 
\begin{enumerate}
\item[(1)]
Type II-3 if Item~(a) holds.
\item[(2)]
Type II-1 if Item~(b) holds.
\item[(3)]
Type II-2 if Item~(c) holds.
\end{enumerate}
\end{prp}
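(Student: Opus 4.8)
The plan is to compute $C_{W,A}$ directly in each of the three cases of \Ref{prp}{pianiduali}, rather than through the duality $\delta_V$. Indeed $\delta_V$ transports $G_{\cE^{\vee}_1}$-stability and the structure of $\Theta_A$ to $\cE_1$, but it does \emph{not} preserve the Shah type of the plane sextic $C_{W,A}$; this is exactly why the three cases (a), (b), (c) produce three different types here, whereas in $\cE_1$ every $C_{W,A}$ is of Type II-2 (\Ref{prp}{eunostabtipo}). So the model to imitate is the local analysis of the $\gB_{\cD}$-section: \Ref{lmm}{casotre} for the double-conic (Type II-3) behaviour, \Ref{lmm}{casuno} for the three-conics (Type II-1) behaviour, together with the final box of \Ref{prp}{eunostabtipo} for the double-line (Type II-2) behaviour.

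First the common preliminaries. Since $A$ is $G_{\cE^{\vee}_1}$-stable its $\PGL(V)$-orbit is minimal by \Ref{clm}{slagix}, and $\dim\Theta_A=\dim\Theta_{\delta_V(A)}=1$ by \Ref{crl}{tuttipiani} applied to $\delta_V(A)\in{\mathbb S}^{\sG}_{\cE_1}$; hence $C_{W,A}\not=\PP(W)$ by \Ref{crl}{senoncurva}, so we may write $C_{W,A}=V(P)$ with $0\not=P\in\Sym^6 W^{\vee}$. By \Ref{crl}{raggieunoduale}, $C_{A_3}$ is a smooth conic (equivalently $\Theta_{A_3}$ is a smooth twisted cubic) and $C_{A_2}$ is a conic meeting $C_{A_3}$ transversely, both in $\PP(V_{02})$. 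Next I pin down the shape of $P$ from the torus $\lambda_{\cE^{\vee}_1}$, which fixes $\bigwedge^{10}A$ and acts on $V_{02},V_{34},[v_5]$ with weights $2,-1,-4$. Restricting to $W$ and normalizing into $\SL(W)$, case (b) with $W=\la\beta,\xi,v_5\ra$ gives weights $(2,-1,-4)\sim\diag(t^3,1,t^{-3})$, and case (c) with $W=\la\beta,\xi_1,\xi_2\ra$ gives weights $(2,-1,-1)=\diag(t^2,t^{-1},t^{-1})$. Applying \Ref{clm}{azione} and \Ref{rmk}{pasquetta} yields $P=\prod_{i=1}^3(b_iX_0X_2+a_iX_1^2)$ in case (b) and $P=X_0^2F(X_1,X_2)$ with $F\in\CC[X_1,X_2]_4$ in case (c). In case (a), $W=V_{02}$, the $1$-PS acts as a scalar and gives no information, so that case is treated separately below.

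It then remains to refine each shape to the claimed type. For case (c) I would show $F$ is square-free, as in the box $W=\la v_0,U\ra$ of \Ref{prp}{eunostabtipo}: on the double line $\PP(V_{34})\subset\PP(W)$ there are exactly four points of multiplicity $\ge 3$, produced by the two rank drops of $A_3$ and the two rank drops of $A_2$ over $\PP(V_{34})$ (via \Ref{prp}{primisarto} and \Ref{crl}{molteplici}), and the transversality of $C_{A_2}$ and $C_{A_3}$ from \Ref{crl}{raggieunoduale} keeps these four points distinct, giving Type II-2. For case (a) I would prove $C_{V_{02},A}=2C_{A_3}+C_{A_2}$. One first checks $C_{A_3}\cup C_{A_2}\subset\supp C_{V_{02},A}$ (any element of $A_3$, resp.\ $A_2$, supported over $[\beta]$ lies in $A\cap F_\beta$ and raises its dimension), and then that $C_{A_3}$ occurs with multiplicity $2$: writing $C_{A_1}:=\{[\beta]\in\PP(V_{02})\mid A_1\cap F_\beta\not=\{0\}\}$, one has $C_{A_1}=C_{A_3}$ because $A_3=A_1^{\bot}$, so both $A_1$ and $A_3$ degenerate along $C_{A_3}$ and hence $\dim(A\cap F_\beta)\ge 3$ there (mult $\ge 2$ by \Ref{crl}{molteplici}), exactly the doubling of \Ref{rmk}{cibees} and \Ref{lmm}{casotre}. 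A degree count then forces $C_{V_{02},A}=2C_{A_3}+C_{A_2}$, and smoothness of $C_{A_3}$ with its transversality to $C_{A_2}$ gives Type II-3.

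The main obstacle is case (b): one must show the three conics $b_iX_0X_2+a_iX_1^2$ are genuinely distinct, i.e.\ all $b_i\not=0$ and the ratios $a_i/b_i$ pairwise distinct, which is the analogue of the explicit eigenvalue computation of \Ref{lmm}{casuno} and is the one truly computational step. I expect to carry it out by choosing the basis of $V$ adapted to $W=\la\beta,\xi,v_5\ra\in\Theta_{A_3}$, writing $A_2$ and $A_3$ explicitly, and reading off the three degeneracy parameters of the resulting pencil of forms as the eigenvalues of a single $3\times3$ symmetric matrix; their distinctness follows once more from the transversality of $C_{A_2}$ and $C_{A_3}$. Finally, since Types II-1, II-2 and II-3 are all semistable and not of Type IV, \Ref{rmk}{pershah} shows $C_{W,A}$ lies in the regular locus of the period map for every $W\in\Theta_A$, which is the statement needed in the wrap-up.
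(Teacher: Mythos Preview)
Your overall strategy is sound and case~(a) is correct (the observation $C_{A_1}=C_{A_3}$ gives a clean reason for the doubling along $C_{A_3}$: for $[\beta]\in C_{A_3}$ both $A_1\cap F_\beta$ and $A_3\cap F_\beta$ are nonzero, so $\dim(A\cap F_\beta)\ge 3$). The gap is in case~(c). The ``two rank drops of $A_3$ and two rank drops of $A_2$ over $\PP(V_{34})$'' do not occur: for every $[\xi]\in\PP(V_{34})$ one has $A_2\cap F_\xi=[\beta\wedge v_3\wedge v_4]=\bigwedge^3 W$ (an element of $A_2\cap F_\xi$ must have vanishing $\bigwedge^2 V_{02}\wedge[v_5]$-component, so lies in the $\xi$-independent line $A_2\cap(V_{02}\wedge\bigwedge^2 V_{34})$), while $A_3\cap F_\xi=A_3\cap(V_{02}\wedge[\xi]\wedge[v_5])$ is exactly $1$-dimensional because the projection $\Theta_{A_3}\to\PP(V_{34})$ is an isomorphism (the twisted cubic $\Theta_{A_3}\subset\PP(V_{02})\times\PP(V_{34})$ has bidegree $(2,1)$); also $A_1\cap F_\xi=0$ since every nonzero element of $A_1$ is surjective as a map $V_{02}\to V_{34}$. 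Hence $\dim(A\cap F_\xi)=2$ for all $\xi$, and neither \Ref{crl}{molteplici} nor \Ref{prp}{primisarto} (here $\ov{k}=1$, and $g_1\equiv 0$ since $\ov K$ sits on the projected Grassmannian) locates the four roots of $F$. Your analogy with the box $W=\la v_0,U\ra$ of \Ref{prp}{eunostabtipo} is mis-matched: there $\dim(A_3\cap F_\beta)=2$, giving a line that can become tangent to $\Gr(2,V_0)_{W_0}$; here no such line exists.

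The paper instead handles case~(c) via the $W=V_{02}$ box of \Ref{prp}{eunostabtipo}: one proves $C_{W,A}$ is smooth at every point $[\beta-\xi]$ (for $0\ne\xi\in V_{34}$) lying on the curve, by showing $\dim(A\cap F_{\beta-\xi})\le 2$ (the projection $A\cap F_{\beta-\xi}\to V_{02}\wedge V_{34}\wedge[v_5]$ has kernel $\bigwedge^3 W$ and image of dimension $\le 1$ landing in $\Theta_{A_3}$) together with $[\beta-\xi]\notin\cB(W,A)$. For case~(b) the paper also takes a different route from your eigenvalue sketch: rather than a $3\times 3$ matrix \`a la \Ref{lmm}{casuno}, it proves directly that $\mult_{[x\beta+yv_5+\xi]}C_{W,A}\le 1$ whenever $y\ne 0$, via the bound $\dim(A\cap F_{x\beta+yv_5+\xi})\le 2$ (a substantial argument ending in a contradiction with stability through the Segre $3$-fold $\PP(V_{02})\times\PP(V_{34})$) and $[x\beta+yv_5+\xi]\notin\cB(W,A)$.
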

\begin{proof}
The orbit $\PGL(V)A$ is minimal  because $A$ is $G_{\cE^{\vee}_1}$-stable (see~\Ref{clm}{slagix}) and $\dim\Theta_A=1$ by~\Ref{prp}{pianiduali}:  thus  $C_{W,A}\not=\PP(W)$ by~\Ref{crl}{senoncurva}. Let us carry out a case-by-case analysis.
\vskip 2mm
\n
$\boxed{W=V_{02}}$ 
 We have $C_{A_2},C_{A_3}\subset C_{V_{02},A}$. Moreover $\dim(A_3\cap F_{\beta})\ge 2$ for all $[\beta]\in\PP(V_{02})$: thus $\mult_{[\beta]} C_{V_{02},A}\ge 2$ for all $[\beta]\in C_{A_3}$. Since $C_{A_2}$ and $C_{A_3}$ are conics and $C_{V_{02},A}$ is a sextic it follows that $C_{V_{02},A}=C_{A_2}+2C_{A_3}$: by~\Ref{crl}{raggieunoduale} the conics $C_{A_2},C_{A_3}$ are transverse and hence 
$C_{V_{02},A}$ is of Type II-3. 
\vskip 2mm
\n
$\boxed{W\in\Theta_{A_3}}$ 
Thus $W=\la \beta,v_5,\xi\ra$ where $\beta\in V_{02}$ and $\xi\in V_{34}$. Notice that $\lambda_{\cE^{\vee}_1}(t)$ maps $W$ to itself for every $t\in\CC^{\times}$. Let $\{x,y,z\}$ be the basis of $W^{\vee}$ dual to $\{\beta,v_5,\xi\}$: applying~\Ref{clm}{azione} we get that 
\begin{equation}\label{trecerchi}
C_{W,A}=V((xy+a_1z^2)(xy+a_2z^2)(xy+a_3z^2)).
\end{equation}
 It remains to prove that  $a_1,a_2,a_3$ are pairwise distinct. It suffices to show that 
 \begin{equation}\label{bastaquesto}
 \text{$\mult_{[x\beta+yv_5+\xi]} C_{W,A}\le 1$ if $y\not=0$.}
\end{equation}
 The key step is the proof  that 
 \begin{equation}\label{hasekura}
\dim(A\cap F_{(x\beta+yv_5+\xi)})\le 2,\quad y\not=0.
\end{equation}
Let $\alpha\in A\cap F_{(x\beta+yv_5+\xi)}$. Write $\alpha=\sum_{i=0}^3\alpha_i$ where $\alpha_i$ belongs to the $(i+1)$-th (starting from the left) summand of~\eqref{decoduale}. We set $\alpha_2=\alpha'_2+\alpha''_2$ where $\alpha'_2\in V_{02}\wedge\bigwedge^2 V_{34}$, $\alpha''_2\in\bigwedge^2 V_{02}\wedge[v_5]$. We have  $(x\beta+yv_5+\xi)\wedge\alpha=0$. Now decompose  $(x\beta+yv_5+\xi)\wedge\alpha$  according to the direct-sum decomposition of $\bigwedge^4 V$ determined by $V=V_{02}\oplus V_{34}\oplus[v_5]$: we get that
\begin{equation}\label{disintegro}
0=yv_5\wedge\alpha'_2+\xi\wedge\alpha_3=yv_5\wedge\alpha_1+\xi\wedge\alpha''_2+x\beta\wedge\alpha_3=
x\beta\wedge\alpha'_2+\xi\wedge\alpha_1=x\beta\wedge\alpha''_2+y v_5\wedge\alpha_0=
x\beta\wedge\alpha_1+\xi\wedge\alpha_0.
\end{equation}
  Now suppose that $y\not=0$: then
\begin{equation}\label{proietto}
\begin{matrix}
 A\cap F_{(x\beta+yv_5+\xi)} &  \overset{\rho}{\lra} & V_{02}\wedge V_{34}\wedge[v_5] \\
 \alpha & \mapsto & \alpha_3
\end{matrix}
\end{equation}
is injective.  This follows at once from~\eqref{disintegro}. Now we prove~\eqref{hasekura} arguing by contradiction. Suppose that~\eqref{hasekura} does not hold. Since the map $\rho$ of~\eqref{proietto} is injective it follows that $\dim(\im\rho)\ge 3$. Now consider the intersection of  $\PP(\im\rho)$ and $\PP(V_{02})\times\PP(V_{34})\times\{[v_5]\}$: it contains $[\beta\wedge\xi\wedge v_5]$ and  the expected dimension is zero. Since the Segre $3$-fold   $\PP(V_{02})\times\PP(V_{34})$ has degree $3$ it follows that one of the following holds:
\begin{enumerate}
\item[(I)]
$\PP(\im\rho)$ contains $[\beta'\wedge\xi'\wedge v_5]\not= [\beta\wedge\xi\wedge v_5]$.
\item[(II)]
$\PP(\im\rho)$ contains a tangent vector to $\PP(V_{02})\times\PP(V_{34})\times\{[v_5]\}$ at $[\beta\wedge\xi\wedge v_5]$ i.e.~there exists  $\alpha\in A\cap F_{(x\beta+yv_5+\xi)}$ such that $\alpha_3=(\beta\wedge\xi'+\beta'\wedge\xi)\wedge v_5$.
\end{enumerate}
Suppose that~(I) holds. We let $\beta_3:=\beta$, $\xi_2:=\xi$, $\beta_1:=\beta'$ and $\xi_1:=\xi'$. By hypothesis there exists $\alpha\in A\cap F_{(x\beta+yv_5+\xi)}$ such that $\alpha_3=\beta_1\wedge\xi_1\wedge v_5$. The first equality of~\eqref{disintegro} gives that $\alpha'_2=y^{-1}\beta_1\wedge\xi_1\wedge\xi_2$. The third equality of~\eqref{disintegro} gives that $\alpha_1=-xy^{-1}\beta_1\wedge\beta_3\wedge\xi_1+\gamma\wedge\xi_2$ for some $\gamma\in \bigwedge^2 V_{02}$. Since $\beta_1\wedge\xi_1\wedge v_5\in A_3$ and $A_1\bot A_3$ we get that $\gamma\wedge\beta_1=0$. Thus $\gamma=\beta_1\wedge\theta$ for some $\theta\in V_{02}$ and $\alpha_1=-xy^{-1}\beta_1\wedge\beta_3\wedge\xi_1+\beta_1\wedge\theta\wedge\xi_2$. Since $A_1$ contains no non-zero decomposable element we get that $\{\beta_1,\beta_3,\theta\}$ is a basis of $V_{02}$: we let $\beta_2:=\theta$. 
 The second equality of~\eqref{disintegro} gives that $\alpha''_2=y\beta_1\wedge\beta_2\wedge v_5$. Summarizing:
 \begin{equation}\label{mammaefiglia}
\alpha_1=-xy^{-1}\beta_1\wedge\beta_3\wedge\xi_1+\beta_1\wedge\beta_2\wedge\xi_2,\quad
 \alpha_2=y^{-1}\beta_1\wedge\xi_1\wedge\xi_2+y\beta_1\wedge\beta_2\wedge v_5.
\end{equation}
The equality $A_3=A_1^{\bot}\cap (V_{02}\wedge V_{34}\wedge [v_5])$ together with the first equality of~\eqref{mammaefiglia} gives that there exist $s,t\in\CC$ not both zero such that $(s\beta_1\wedge\xi_2+t\beta_2\wedge\xi_1)\wedge v_5\in A_3$. By hypothesis $\beta_1\wedge\xi_1\wedge v_5=\alpha_3\in A_3$. Thus Item~(3) of~\Ref{prp}{raggieunoduale} holds and hence $A$ is not $G_{\cE^{\vee}_1}$-stable; that is a contradiction. Next suppose that~(II) holds. Let $\beta_1:=\beta$, $\xi_1:=\xi$, $\beta_2:=\beta'$ and $\xi_2:=\xi'$. Thus
\begin{equation*}
\beta_1\wedge\xi_1\wedge v_5, (\beta_1\wedge \xi_2+\beta_2\wedge \xi_1)\wedge v_5 \in A_3
\end{equation*}
and there exists $\alpha\in  A\cap F_{(x\beta+yv_5+\xi)}$ such that $\alpha_3=(\beta_1\wedge \xi_2+\beta_2\wedge \xi_1)\wedge v_5$.
Since $\Theta_{A_3}$ is a smooth curve $\beta_1,\beta_2$ are linearly independent and $\{\xi_1,\xi_2\}$ is a basis of $V_{34}$. 
On the other hand an argument similar to that of the previous case gives  that $\alpha_2=-y^{-1}\beta_1\wedge\xi_1\wedge\xi_2-x\beta_1\wedge\beta_2\wedge v_5$. Thus  $A$ is not $G_{\cE^{\vee}_1}$-stable by~\Ref{prp}{raggieunoduale}; that is a contradiction. We have proved~\eqref{hasekura}. Next assume that $[x\beta+yv_5+\xi]\in C_{W,A}$ and $y\not=0$. Thus $\dim(A\cap F_{(x\beta+yv_5+\xi)})= 2$. One shows that $[x\beta+yv_5+\xi]\notin \cB(W,A)$.   The computations are similar to those which prove~\eqref{hasekura}: we leave details to the reader. 
This finishes the proof that if $W\in\Theta_{A_3}$ then $C_{W,A}$ is a semistable sextic of Type II-1. 
\vskip 2mm
\n
$\boxed{\text{$W=\la \beta,\xi_1,\xi_2\ra$ where $\beta\in V_{02}$, $\xi_1,\xi_2\in V_{34}$}}$ 
{\it Mutatis mutandis} the proof is  that (given in~\Ref{prp}{eunostabtipo}) that if Item~(a) of~\Ref{prp}{tuttipiani}  holds then $C_{W,A}$ is of Type II-2. Let $\{X_0,X_1,X_2\}$ be the basis of $W^{\vee}$ dual to $\{\beta,\xi_1,\xi_2\}$: applying~\Ref{clm}{azione} one gets that 
\begin{equation*}
C_{W,A}=V(X_0^2 F(X_1,X_2)),\qquad 0\not= F\in\CC[X_1,X_2]_4.
\end{equation*}
It remains to prove that $F$ does not have multiple roots. Let $0\not=\xi\in V_{34}$ and $\pi\colon A\cap F_{(\beta-\xi)}\to V_{02}\wedge V_{34}\wedge[v_5]$ be the projection. Arguing as in the proof of~\Ref{prp}{eunostabtipo} one shows that  the image is either $\{0\}$ or it belongs to $\Theta_{A_3}$, and it has dimension at most $1$.   Moreover the kernel is spanned by $\beta\wedge \xi_1\wedge \xi_2$. Now suppose that $[\beta-\xi]\in C_{W,A}$: then it follows that $\dim(A\cap F_{(\beta-\xi)})=2$. Moreover one checks easily that $[\beta-\xi]\notin \cB(W,A)$. By~\Ref{prp}{nonmalvagio} it follows that $ C_{W,A}$ is smooth at $[\beta-\xi]$: thus $F$ does not have multiple roots. 
\end{proof}
Arguing as in the proof of~\Ref{prp}{eunosemtipo} one gets the following result.
\begin{prp}\label{prp:eunodualesemtipo}
Let $A\in{\mathbb S}^{\sF}_{\cE^{\vee}_1}$ be properly $G_{\cE^{\vee}_1}$-semistable with minimal orbit. Then either $[A]=\gx^{\vee}$ or else the following holds: if $W\in\Theta_A$ then $C_{W,A}$ is a semistable sextic curve $\PGL(W)$-equivalent to a sextic of Type III-2.
\end{prp}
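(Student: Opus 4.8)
The plan is to mimic the proof of \Ref{prp}{eunosemtipo}, substituting the dual objects of this subsection and keeping careful track of the two pathological loci $\gx=[A_k]$ and $\gx^{\vee}=[A_h]$, which enter asymmetrically. (One might instead hope to deduce the statement from \Ref{prp}{eunosemtipo} via the involution $\delta_V$, which by \Ref{prp}{raggieunoduale} carries ${\mathbb S}^{\sF}_{\cE^{\vee}_1}$ into ${\mathbb S}^{\sG}_{\cE_1}$ and preserves semistability and minimality of orbits; but since the excerpt provides no transport law relating $C_{W,A}$ to $C_{W',\delta_V(A)}$, duality alone does not handle the sextic, so I would argue directly.)

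First I would invoke \Ref{clm}{slagix} to pass from $G_{\cE^{\vee}_1}$-semistability with minimal orbit to $\PGL(V)$-semistability with minimal orbit. Next I would exclude $A\in\XX^{*}_{\cW}$ exactly as in \Ref{prp}{eunosemtipo}: since ${\mathbb S}^{\sF}_{\cE^{\vee}_1}$ is irreducible, $A$ is a specialization of a generic element, for which $\Theta_{A_3}$ is a twisted rational cubic; hence the flat limit $\Theta_A=\PP(A)\cap\Gr(3,V)$ contains a curve of odd Pl\"ucker degree, whereas every curve in $\Theta_B$ for $B\in\XX^{*}_{\cW}$ has even degree (because $i_{+}$ pulls the Pl\"ucker bundle back to $\cO(2)$). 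By \Ref{clm}{stregaest} this already removes the $\XX^{*}_{\cW}$ part of $\gI$.

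Now assume $[A]\neq\gx^{\vee}$. By \Ref{prp}{arcade} I may replace $A$ by a $G_{\cE^{\vee}_1}$-equivalent $A_0\in\MM^{\sB}_{\cE^{\vee}_1}$ of minimal orbit. Then $\bigwedge^{10}A_0$ is fixed both by $\lambda_{\cE^{\vee}_1}$ (as $A_0\in{\mathbb S}^{\sF}_{\cE^{\vee}_1}$) and by the $1$-PS $\lambda_1^{\vee}$ cutting out $\MM^{\sB}_{\cE^{\vee}_1}$, hence by the $2$-dimensional torus $T'$ they generate. For $W\in\Theta_{A_0}$ fixed by $T'$, necessarily spanned by three basis vectors, I write $C_{W,A_0}=V(P)$ and apply \Ref{clm}{azione} to the elements of $T'$: the polynomial $P$ is invariant under the image of $T'$ in $SL(W)$. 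When that image is the full maximal torus and $P\neq 0$, \Ref{rmk}{pasquetta}(3) forces $P=cX_0^2X_1^2X_2^2$, i.e. $C_{W,A_0}$ is of Type III-2. For an arbitrary $W\in\Theta_{A_0}$ I would pick a $1$-PS of $T'$ degenerating $W$ to such a fixed $W_0$; since $T'$ fixes $\bigwedge^{10}A_0$ the sextics $C_{W,A_0}$ vary in a family with central fibre $C_{W_0,A_0}$, so $C_{W,A_0}$ is $\PGL(W)$-equivalent, in the GIT sense, to the Type III-2 curve $C_{W_0,A_0}$.

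The remaining work, and the main obstacle, is to dispose of the two ways the previous paragraph can fail: the vanishing $P=0$ (so $C_{W_0,A_0}=\PP(W_0)$) and the possibility that the image of $T'$ in $SL(W_0)$ is only one-dimensional, which by \Ref{rmk}{pasquetta}(1)--(2) would permit a genuine smooth triple conic of Type IV. For the first, \Ref{prp}{senoncurva} shows $C_{W,A_0}=\PP(W)$ forces $A_0$ into $\XX^{*}_{\cW}\cup\PGL(V)A_k$, and with $\XX^{*}_{\cW}$ already gone this would give $[A]=\gx$; so I must verify $\gx\notin\gB_{\cE^{\vee}_1}$, that is $A_k\notin\BB_{\cE^{\vee}_1}$, by a flag computation dual to the one behind \eqref{gianburrasca} (which places $A_h$, not $A_k$, in $\BB^{*}_{\cE^{\vee}_1}$), using that by \eqref{nopanino} every $C_{W,A_h}$ is a triple conic while by \eqref{ibla} every $C_{W,A_k}$ equals $\PP(W)$. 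The smooth-triple-conic case is precisely what produces $\gx^{\vee}$ and is excluded by hypothesis; concretely I expect to show that for every $A_0\in\MM^{\sB}_{\cE^{\vee}_1}$ outside $\PGL(V)A_h$ the $T'$-weights on each fixed plane $W_0$ are affinely independent, so the induced image is a maximal torus and \Ref{rmk}{pasquetta}(3) applies. The hard part is exactly this weight bookkeeping: isolating the orbit of $A_h$ as the unique minimal orbit on which some fixed plane carries a degenerate $T'$-action and a true Type IV curve, and confirming along the way that the $\PP(W)$-pathology never meets $\BB_{\cE^{\vee}_1}$.
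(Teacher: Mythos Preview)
Your approach is correct and matches the paper's, which proves \Ref{prp}{eunodualesemtipo} by the single sentence ``Arguing as in the proof of \Ref{prp}{eunosemtipo} one gets the following result.'' You have carried out that dual argument in detail and, in doing so, flagged two points the paper leaves implicit. Both are minor.

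First, the asymmetry with $\gx$ versus $\gx^{\vee}$: you are right that \Ref{prp}{senoncurva} only yields $[A]\in\gX_{\cW}\cup\{\gx\}$, and the hypothesis excludes $\gx^{\vee}$, not $\gx$. But no new flag computation is needed: since $\delta(\gB_{\cE_1})=\gB_{\cE^{\vee}_1}$ and $\delta$ swaps $\gx\leftrightarrow\gx^{\vee}$, the already-proved Item~(4) of \Ref{prp}{versolinf} (namely $\gB_{\cE_1}\cap\gI=\{\gx\}$, hence $\gx^{\vee}\notin\gB_{\cE_1}$) gives $\gx\notin\gB_{\cE^{\vee}_1}$ immediately. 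So for any $A\in{\mathbb S}^{\sF}_{\cE^{\vee}_1}$ one has $[A]\neq\gx$, and $C_{W,A}\neq\PP(W)$ follows.

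Second, the torus-dimension worry: once $A\in\MM^{\sB}_{\cE^{\vee}_1}$, a $T'$-fixed $W\in\Theta_A$ is spanned by three vectors of the basis $\sB$, but the split-type constraints on $A_0,\ldots,A_3$ (the dual of \Ref{rmk}{fissespl}) pin down which triples can actually occur, and for each the two $1$-PS's $\lambda_{\cE^{\vee}_1}$ and $\lambda_1^{\vee}$ act on $W$ with affinely independent weight pairs. So the image in $\SL(W)$ is always a maximal torus and \Ref{rmk}{pasquetta}(3) applies; the Type~IV possibility does not arise for $A\notin\PGL(V)A_h$. This is the routine ``weight bookkeeping'' you anticipated, and the paper treats it as evident, exactly as in the $\cE_1$ case.
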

\subsubsection{Wrapping it up}\label{subsubsec:dimoeunoduale}
We will prove~\Ref{prp}{eoltre}.  Item~(1) is the content of~\Ref{crl}{raggieunoduale}.  We have noticed that if $A\in{\mathbb S}^{\sF}_{\cE^{\vee}_1}$ is generic then $C_{A_2}$, $C_{A_3}$ are conics intersecting transversely: together with Item~(1) that gives   Item~(2).  Item~(3) follows from Item~(3) of~\Ref{prp}{versolinf} because if $A\in {\mathbb S}^{\sF}_{\cE^{\vee}_1}$ is $G_{\cE^{\vee}_1}$-stable then $\delta_V(A)$ belongs to $ {\mathbb S}^{\sF'}_{\cE_1}$ for a suitable basis $\sF'$ of $V^{\vee}$ and  is $G_{\cE_1}$-stable.
In order to prove Items~(4) and~(5) we notice that $\delta(\gB_{\cE_1})=\gB_{\cE^{\vee}_1}$ and hence $\gx^{\vee}\in\gB_{\cE^{\vee}_1}$      by~\eqref{gianburrasca}. Since $\gx^{\vee}\in\gB_{\cE^{\vee}_1}$  Items~(4) and~(5) 
follow from~\Ref{prp}{eunodualestabtipo} and~\Ref{prp}{eunodualesemtipo}.
\subsection{$\gB_{\cF_1}$}\label{subsec:effeuno}
\setcounter{equation}{0}
Let $A\in{\mathbb S}^{\sF}_{\cF_1}$. Then 
\begin{equation}\label{decoeffeuno}
A=\bigwedge^2 V_{01}\wedge V_{23}\oplus A_2\oplus V_{01}\wedge\bigwedge^2 V_{45}\oplus \bigwedge^2 V_{23}\wedge V_{45},\qquad A_2\in\LL\GG(V_{01}\wedge V_{23}\wedge V_{45}).
\end{equation}
 Below is the main result of the present subsection.  
\begin{prp}\label{prp:trilli}
The following hold:
\begin{enumerate}
\item[(1)]
Let $A\in {\mathbb S}^{\sF}_{\cF_1}$.   Then  $A$ is $G_{\cF_1}$-stable if and only if $A_2$ contains no non-zero decomposable element.
\item[(2)]
The generic $A\in {\mathbb S}^{\sF}_{\cF_1}$ is $G_{\cF_1}$-stable. 
\item[(3)]
If   $A\in {\mathbb S}^{\sF}_{\cF_1}$ is $G_{\cF_1}$-stable the connected component of $\Id$ in $\Stab(A)<\SL(V)$ is equal to $H_{\cF_1}$ (see~\eqref{calvin}).
\item[(4)]
Let  $A\in {\mathbb S}^{\sF}_{\cF_1}$ have closed $\PGL(V)$-orbit (in $\lagr^{ss}$). Then $C_{W,A}$ is of Type II-2 or III-2. In particular 
$\gB_{\cF_1}\cap\gI=\es$.
\end{enumerate}
\end{prp}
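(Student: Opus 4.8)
The plan is to reduce everything to the action of the semisimple factor on the central summand and then feed it through the machinery of Sections~2 and~3. By Table~\eqref{spezzgruppi} the parameter space ${\mathbb S}^{\sF}_{\cF_1}$ is literally the single symplectic Grassmannian $\LL\GG(V_{01}\wedge V_{23}\wedge V_{45})$, the outer summands in~\eqref{decoeffeuno} being fixed $G_{\cF_1}$-submodules. Thus $A$ is determined by its central piece $A_2$, a Lagrangian $4$-plane in the $8$-dimensional space $V_{01}\wedge V_{23}\wedge V_{45}\cong V_{01}\otimes V_{23}\otimes V_{45}$, acted on by $G_{\cF_1}=\SL(V_{01})\times \SL(V_{23})\times \SL(V_{45})$, a product of three copies of $\SL_2$. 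Since every factor is special linear, the fixed summands contribute weight $0$ to $\bigwedge^{10}A$, so $\mu(A,\lambda)=\mu(A_2,\lambda)$, computable by~\Ref{clm}{dipsey}. Finally a non-zero decomposable element of $A_2$ is a vector $a\wedge b\wedge c$ with $a\in V_{01}$, $b\in V_{23}$, $c\in V_{45}$, i.e.\ $\PP(A_2)\cong\PP^3$ meets the Segre threefold $\Sigma_{\mathrm{Seg}}:=\PP(V_{01})\times\PP(V_{23})\times\PP(V_{45})\subset\PP(V_{01}\wedge V_{23}\wedge V_{45})$. So Item~(1) asserts that $A$ is $G_{\cF_1}$-stable iff $\PP(A_2)\cap\Sigma_{\mathrm{Seg}}=\es$.

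For the easy half of Item~(1), suppose $a\wedge b\wedge c\in A_2$. Completing to bases of the three planes, let $\lambda$ be the diagonal $1$-PS acting with weight $+1$ on $a,b,c$; then $V_{01}\wedge V_{23}\wedge V_{45}$ carries the weights $\pm1\pm1\pm1$, and since $A_2$ is Lagrangian containing the top-weight vector $a\wedge b\wedge c$, a direct application of~\Ref{clm}{dipsey} gives $\mu(A_2,\lambda)=2\,d_1^{\lambda}(A_2)\ge 0$, so $A$ is non-stable. The hard half is the main obstacle: I would apply the Cone Decomposition Algorithm~\Ref{prp}{algcon} to $G_{\cF_1}$ acting on $\LL\GG(V_{01}\wedge V_{23}\wedge V_{45})$, exactly as in the proof of~\Ref{thm}{tuttisemi} but for this much smaller group. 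A diagonal $1$-PS has weights $\pm a\pm b\pm c$, the ordering hyperplanes record coincidences among these eight weights, and the resulting list of ordering $1$-PS's is short. For each ordering ray one lists the reduced types $\mathbf{d}$ with $\mu(\mathbf{d},\lambda)\ge 0$ and checks, and this is the computational heart, that each such intersection profile forces $\PP(A_2)$ to contain a monomial $e_{\pm}\wedge f_{\pm}\wedge g_{\pm}$, i.e.\ a point of $\Sigma_{\mathrm{Seg}}$; upper-semicontinuity of the flag-intersection dimensions (as in~\Ref{clm}{cucs}) keeps the conclusion on the closed stratum. Item~(2) is then immediate: the incidence variety of pairs $([v],A_2)$ with $[v]\in\Sigma_{\mathrm{Seg}}\cap\PP(A_2)$ has dimension $3+6=9<10=\dim\LL\GG(V_{01}\wedge V_{23}\wedge V_{45})$, so the generic Lagrangian avoids $\Sigma_{\mathrm{Seg}}$ and is stable by Item~(1).

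For Item~(3) I would first compute $\Theta_A$ for stable $A$. The three fixed outer summands produce three pencils of planes, $\{W\supset V_{01}\}$, $\{W\supset V_{23}\}$ and $\{W\supset V_{45}\}$, parametrized respectively by $\PP(V_{23}),\PP(V_{45}),\PP(V_{01})$; stability, i.e.\ the absence of a decomposable in $A_2$, rules out any further positive-dimensional family, so $\dim\Theta_A=1$. The common vertices $V_{01},V_{23},V_{45}$ of the three pencils are intrinsically attached to $A$, hence the connected component of $\Stab(A)$ preserves each $V_{ij}$ and therefore lies in $C_{\SL(V)}(\lambda_{\cF_1})$; since the stabilizer in $G_{\cF_1}$ is finite, what survives is precisely the torus $H_{\cF_1}$ of~\eqref{calvin} acting by scalars on the three planes.

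For Item~(4), let $A$ have closed orbit, so that by~\Ref{clm}{slagix} its $\PGL(V)$-orbit is closed and, since $\dim\Theta_A\le 1$, one has $C_{W,A}\ne\PP(W)$ for every $W\in\Theta_A$ by~\Ref{crl}{senoncurva}. For $W$ in any of the three pencils the $2$-torus $H_{\cF_1}$ fixes $W$ and acts on it, up to scalars, through a one-parameter subgroup $\diag(s,s,s^{-2})$ of $\SL(W)$; by~\Ref{clm}{azione} and~\Ref{rmk}{pasquetta} the sextic is then $C_{W,A}=V(X_0^2 F(X_1,X_2))$, a double line plus a residual quartic. Arguing as in~\Ref{prp}{forcone} and~\Ref{prp}{eunostabtipo}, by counting singular conics in the relevant pencil or the ramification of the degree-two map induced on the arithmetic-genus-one curve $\Theta$, I would show $F$ is squarefree for $G_{\cF_1}$-stable $A$, so that $C_{W,A}$ is a semistable sextic of Shah Type~II, while for the properly semistable closed-orbit lagrangians it degenerates to Type~III-2. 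In no case is $C_{W,A}$ equal to $\PP(W)$ or $\PGL(W)$-equivalent to a triple conic (Type~IV), so by~\Ref{rmk}{pershah} it is never in the indeterminacy locus of~\eqref{persestiche}; hence $\gB_{\cF_1}\cap\gI=\es$. The two places where genuine work remains are the ordering-ray bookkeeping in the hard half of Item~(1) and the squarefreeness statement in Item~(4); both are made tractable by the reduction to three copies of $\SL_2$ and the triple symmetry among $V_{01},V_{23},V_{45}$.
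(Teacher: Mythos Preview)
Your proposal is correct and follows essentially the same route as the paper: Item~(1) via the Cone Decomposition Algorithm on $G_{\cF_1}=\SL_2^{\,3}$ acting on $\LL\GG(V_{01}\wedge V_{23}\wedge V_{45})$ (this is exactly~\Ref{prp}{raggieffeuno}), Item~(2) by the Segre dimension count, Item~(3) by computing $\Theta_A$ as three pencils and reading off that $\Stab(A)^{0}$ preserves each $V_{ij}$ (this is~\Ref{prp}{giulatesta}), and Item~(4) by using $\lambda_{\cF_1}$-invariance plus~\Ref{clm}{azione} to force $C_{W,A}=V(X_0^2F(X_1,X_2))$, then ruling out Type~IV. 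For the properly semistable closed orbits the paper, like you, degenerates to $A_{III}$ and invokes~\Ref{prp}{rettetre}.

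The one place where your sketch should be adjusted is the squarefreeness of $F$. Your proposed analogy with~\Ref{prp}{forcone} and~\Ref{prp}{eunostabtipo} via a genus-one curve does not transfer: here $\Theta_A$ consists of three \emph{disjoint} lines in $\Gr(3,V)$, not a connected arithmetic-genus-one curve, and there is no natural degree-two cover whose ramification would count the roots. The paper instead argues directly (inside~\Ref{prp}{giulatesta}): for $0\ne u\in V_{01}$ one checks $\dim(A\cap F_{\xi-u})\le 2$ by projecting to $A_2$ and using that a $2$-dimensional subspace of $[u]\wedge V_{23}\wedge V_{45}$ must contain a decomposable, and then that $[\xi-u]\notin\cB(W,A)$; by~\Ref{prp}{nonmalvagio} this makes $C_{W,A}$ smooth off the double line $V(X_0)$, hence $F$ squarefree.
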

The proof of~\Ref{prp}{trilli} is in~\Ref{subsubsec}{dimoeffeuno}.
\subsubsection{The GIT analysis} 
Let $\lambda$ be a $1$-PS of $G_{\cF_1}$. Since $G_{\cF_1}= \SL(V_{01})\times \SL(V_{23})\times \SL(V_{45})$ we have $I_{-}(\lambda)=\es$, see~\Ref{dfn}{ipiu}. Let $A\in{\mathbb S}^{\sF}_{\cF_1}$: by~\eqref{sommapend}  we have
\begin{equation}\label{centro}
\mu(A,\lambda)=\mu(A_2,\lambda).
\end{equation}
Let  $\{\xi_0,\xi_1\}$, $\{\xi_2,\xi_3\}$, $\{\xi_4,\xi_5\}$ be bases of $V_{01}$,  $V_{23}$ and $V_{45}$ respectively such that 
\begin{equation}\label{diverio}
\lambda(t):=\diag(t^{r_1},t^{-r_1},t^{r_2},t^{-r_2},t^{r_3},t^{-r_3}),\qquad 
r_1\ge 0,\ r_2\ge 0,\ r_3\ge 0. 
\end{equation}
We denote $\lambda$ by $(r_1,r_2,r_3)$: thus $(r_1,r_2,r_3)$ belongs to the first quadrant of $\RR^3$. Below are the weights of the action of $\bigwedge^3 \lambda(t)$ on $V_{01}\wedge V_{23}\wedge V_{45}$:
\begin{equation}\label{decouno}
\begin{matrix}
\scriptstyle [\xi_0\wedge \xi_2\wedge\xi_4] & \scriptstyle [\xi_0\wedge \xi_2\wedge\xi_5] & 
\scriptstyle [\xi_0\wedge \xi_3\wedge\xi_4] & \scriptstyle [\xi_1\wedge \xi_2\wedge\xi_4] & 
\scriptstyle [\xi_0\wedge \xi_3\wedge\xi_5] & \scriptstyle [\xi_1\wedge \xi_2\wedge\xi_5] &
\scriptstyle [\xi_1\wedge \xi_3\wedge\xi_4] & \scriptstyle [\xi_1\wedge \xi_3\wedge\xi_5] \\
\scriptstyle r_1+r_2+r_3 & \scriptstyle r_1+r_2-r_3 & \scriptstyle r_1-r_2+r_3 & 
\scriptstyle -r_1+r_2+r_3 & \scriptstyle  r_1-r_2-r_3 & \scriptstyle -r_1+r_2-r_3 & 
\scriptstyle -r_1-r_2+r_3 & \scriptstyle -r_1-r_2-r_3  
\end{matrix}
\end{equation}
\begin{prp}\label{prp:raggieffeuno}
$A\in{\mathbb S}^{\sF}_{\cF_1}$ is $G_{\cF_1}$-stable if and only if $A_2$ contains no non-zero decomposable element.
\end{prp}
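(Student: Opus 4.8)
The plan is to apply the Hilbert--Mumford criterion directly to the $G_{\cF_1}$-action on ${\mathbb S}^{\sF}_{\cF_1}$, which by~\eqref{centro} reduces entirely to the action of $SL(V_{01})\times SL(V_{23})\times SL(V_{45})$ on the single lagrangian factor $A_2\in\LL\GG(V_{01}\wedge V_{23}\wedge V_{45})$. By~\Ref{thm}{hilmum} and~\eqref{centro}, the point $A$ is $G_{\cF_1}$-stable if and only if $\mu(A_2,\lambda)<0$ for every $1$-PS $\lambda$ of $G_{\cF_1}$. Since every $1$-PS of $G_{\cF_1}$ is conjugate (by an element of $G_{\cF_1}$) to one in diagonal form~\eqref{diverio}, and conjugation moves $A_2$ within its orbit without affecting stability, it suffices to test the $\mu$-function against the $1$-PS's $(r_1,r_2,r_3)$ in the first quadrant, using the weight table~\eqref{decouno}.

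**The easy implication.**
First I would prove that a non-zero decomposable element forces instability. If $0\not=\xi_0\wedge\xi_2\wedge\xi_4\in A_2$ for some $\xi_i$, complete to bases of $V_{01},V_{23},V_{45}$ and take $\lambda=(1,1,1)$ in those coordinates. By~\eqref{decouno} the vector $\xi_0\wedge\xi_2\wedge\xi_4$ has the maximal weight $r_1+r_2+r_3=3$, so $\mu(A_2,\lambda)\ge 0$ by~\eqref{caramellamu} applied to the lagrangian $A_2$ (via~\Ref{clm}{dipsey}), whence $A$ is not $G_{\cF_1}$-stable. This uses only that a decomposable element occupies a one-dimensional weight space of maximal weight.

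**The hard implication.**
The substantive direction is the converse: if $A_2$ contains no non-zero decomposable element then $A$ is $G_{\cF_1}$-stable. I would argue by contrapositive. Suppose $A$ is not stable; then there is a $1$-PS, which we may normalize to the form $(r_1,r_2,r_3)$ with $r_1\ge r_2\ge r_3\ge 0$ and $(r_1,r_2,r_3)\not=(0,0,0)$, such that $\mu(A_2,\lambda)\ge 0$. Using~\Ref{clm}{dipsey} together with the fact that $A_2$ is lagrangian in the eight-dimensional symplectic space $V_{01}\wedge V_{23}\wedge V_{45}$ (so $\dim A_2=4$), the expression for $\mu(A_2,\lambda)$ becomes a sum over the reduced $\lambda$-type. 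The key obstacle, and the heart of the proof, is the combinatorial analysis of which half-dimensional isotropic subspaces $A_2$ can achieve $\mu(A_2,\lambda)\ge 0$ for some such $\lambda$: one must show that any such $A_2$ is forced to meet one of the maximal-weight coordinate lines, i.e.\ to contain a non-zero monomial $\xi_i\wedge\xi_j\wedge\xi_k$, which is exactly a decomposable element. I expect this to require a short case analysis according to the location of $(r_1,r_2,r_3)$ relative to the ordering hyperplanes for this action (the walls where two of the weights in~\eqref{decouno} coincide), applying the Cone Decomposition Algorithm of~\Ref{subsec}{algocono} to reduce to finitely many ordering $1$-PS's. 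For each ordering ray one reads off from~\eqref{decouno} the filtration $L_0\subset L_1\subset\cdots$ and checks, using the constraint that $A_2$ is lagrangian (so the reduced type determines the full type via~\Ref{clm}{nickcarter}), that $\mu(A_2,\lambda)\ge 0$ already forces $\dim(A_2\cap L_0)\ge 1$ where $L_0$ is the top weight space spanned by a single monomial $\xi_0\wedge\xi_2\wedge\xi_4$. This yields a non-zero decomposable element in $A_2$, completing the contrapositive and hence the proposition.
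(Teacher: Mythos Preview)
Your approach is essentially the paper's own proof: the easy implication via the $1$-PS $(1,1,1)$ is identical, and for the converse you correctly invoke the Cone Decomposition Algorithm to reduce to finitely many ordering rays followed by a case analysis. The paper computes the ordering rays to be those generated by $(1,0,0)$, $(1,1,0)$, $(2,1,1)$, $(1,1,1)$ and their permutations, and concludes just as you outline.

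One imprecision in your final paragraph: it is not true that for every ordering $1$-PS the top weight space $L_0$ is spanned by a single monomial. For $\lambda=(1,0,0)$ the top weight space is $[\xi_0]\wedge V_{23}\wedge V_{45}$ (dimension $4$) and $\mu(A_2,\lambda)\ge 0$ forces $\dim(A_2\cap L_0)\ge 2$; for $\lambda=(1,1,0)$ the top weight space is $[\xi_0\wedge\xi_2]\wedge V_{45}$ (dimension $2$) and $\mu\ge 0$ forces $\dim(A_2\cap L_0)\ge 1$. In each such case the relevant weight space either consists entirely of decomposables (every element of $[\xi_0\wedge\xi_2]\wedge V_{45}$ is decomposable) or the dimension bound forces $\PP(A_2\cap L_0)$ to meet the Segre locus inside it (a line in $\PP([\xi_0]\wedge V_{23}\wedge V_{45})\cong\PP^3$ meets the Segre quadric $\PP(V_{23})\times\PP(V_{45})$). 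So the conclusion stands, but the check for each ordering ray is slightly more than reading off containment of a single coordinate line.
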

\begin{proof}
Suppose that $A_2$ contains a non-zero decomposable element $\alpha$. 
Since we have an isomorphism
\begin{equation}\label{segredeco}
\begin{matrix}
\PP(V_{01})\times \PP(V_{23})\times \PP(V_{45}) & \hra & 
\PP(V_{01}\wedge V_{23}\wedge V_{45})\cap \Gr(3,V) \\
([u],[v],[w]) & \mapsto & [u\wedge v\wedge w]
\end{matrix}
\end{equation}
 there exists bases $\{\xi_0,\xi_1\},\{\xi_2,\xi_3\},\{\xi_4,\xi_5\}$ as above such that $\alpha=\xi_0\wedge\xi_2\wedge \xi_4$. Let $\lambda_1$ be the $1$-PS of $G_{\cF_1}$ denoted $(1,1,1)$ i.e.~$\lambda_1(t):=\diag(t,t^{-1},t,t^{-1},t,t^{-1})$. Then $\mu(A_2,\lambda_1)\ge 0$: by~\eqref{centro} we get that $A$ is not $G_{\cF_1}$-stable. We prove   the converse by running the Cone Decomposition algorithm. 
We choose the maximal torus $T< G_{\cF_1}$ to be 
 \begin{equation}
T=\{\diag(s_1,s_1^{-1},s_2,s_2^{-1},s_3,s_3^{-1})) \mid s_i\in\CC^{\times}\}.
\end{equation}
(The maps are diagonal with respect to the basis $\{\xi_0,\xi_1,\xi_2,\xi_3,\xi_4,\xi_5\}$.)  Thus
\begin{equation}
{\check X}(T)_{\RR}=\{(r_1,r_2,r_3)\in\RR^3 \}
\end{equation}
 where the $r_i$'s are those appearing in~\eqref{diverio} and 
$C=\{(r_1,r_2,r_3)\in\RR^3 \mid r_i\ge 0\}$ .
Let $H\subset {\check X}(T)_{\RR}$ be a hyperplane: by~\eqref{decouno} $H$ is an ordering hyperplane if and only if it is the kernel of one of the following 
following linear functions on ${\check X}(T)_{\RR}$:
\begin{equation*}
r_i,\quad r_i-r_j,\quad r_i-r_j-r_k\ (j\not=k). 
\end{equation*}
A quick computation gives that the ordering rays  are those spanned by 
\begin{equation*}
(1,0,0),\quad (1,1,0),\quad (2,1,1),\quad (1,1,1)
\end{equation*}
and their permutations. Computing $\mu(A_2,\lambda)$ and imposing $\mu(A_2,\lambda)\ge 0$ we get that in each case $A_2$ contains a non-zero decomposable element.
\end{proof}
\subsubsection{Analysis of $\Theta_A$ and $C_{W,A}$}  
\begin{prp}\label{prp:giulatesta}
Let $A\in{\mathbb S}^{\sF}_{\cF_1}$ be $G_{\cF_1}$-stable. Then
\begin{equation}\label{trerette}
\scriptstyle
\Theta_A=\{W\in\Gr(3,V) \mid V_{01}\subset W\subset V_{03}   \}\cup 
\{W\in\Gr(3,V) \mid V_{23}\subset W\subset V_{25}   \}\cup 
\{W\in\Gr(3,V) \mid V_{45}\subset W\subset (V_{45}\oplus V_{01})   \}.
\end{equation}
Let $W\in\Theta_A$: then $C_{W,A}$ is a semistable sextic curve of Type II-2.
\end{prp}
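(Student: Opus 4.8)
The plan is to prove the two assertions of~\Ref{prp}{giulatesta} separately, first pinning down $\Theta_A$ and then analyzing each $C_{W,A}$.

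For the description of $\Theta_A$, I would first record that the three rulings on the right-hand side of~\eqref{trerette} are contained in $\Theta_A$: this is immediate from~\eqref{decoeffeuno}, since $\bigwedge^3(V_{01}\oplus\langle u\rangle)=v_0\wedge v_1\wedge u\in\bigwedge^2 V_{01}\wedge V_{23}$ for $u\in V_{23}$, and symmetrically $\bigwedge^3(V_{23}\oplus\langle w\rangle)\in\bigwedge^2 V_{23}\wedge V_{45}$ and $\bigwedge^3(V_{45}\oplus\langle u\rangle)\in V_{01}\wedge\bigwedge^2 V_{45}$. The heart of the matter is the reverse inclusion, and the key observation is that $A$ is graded for the $\ZZ$-grading on $\bigwedge^3 V$ induced by giving $V_{01},V_{23},V_{45}$ the weights $+1,0,-1$ (equivalently $A$ is $\lambda_{\cF_1}$-split, see~\Ref{rmk}{banana}): by~\eqref{decoeffeuno} the only weights occurring in $A$ are $2,0,-1$, with $A\cap(\bigwedge^3 V)_2=\bigwedge^2 V_{01}\wedge V_{23}$, $A\cap(\bigwedge^3 V)_0=A_2$ and $A\cap(\bigwedge^3 V)_{-1}=V_{01}\wedge\bigwedge^2 V_{45}\oplus\bigwedge^2 V_{23}\wedge V_{45}$, while the weight-$1$ and weight-$(-2)$ pieces of $\bigwedge^3 V$ meet $A$ only in $\{0\}$. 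Taking $W\in\Theta_A$ and $0\neq\omega\in\bigwedge^3 W\subset A$, each weight component $\omega_2,\omega_0,\omega_{-1}$ then lies in $A$, with $\omega_0\in A_2$.

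I would then argue by cases on the weight components of the decomposable vector $\omega$. If $\omega_2\neq 0$ its highest weight equals $2$; writing this highest weight as $\dim(W\cap V_{01})+\dim(W\cap V_{03})-3$ (the standard leading-term computation for $\bigwedge^3$ of a filtered space) and using $\dim(W\cap V_{01})\le 2$, $\dim(W\cap V_{03})\le 3$ forces $V_{01}\subset W$, after which vanishing of the weight-$1$ component of $\omega$ puts $W$ in the first ruling. Symmetrically, if $\omega_2=0$ but $\omega_{-1}\neq 0$, the lowest weight of $\omega$ is $-1$ and the dual computation gives $\dim(W\cap V_{45})+\dim(W\cap V_{25})=4$, whence either $V_{45}\subset W$ (and vanishing of the weight-$(-2)$ component puts $W$ in the third ruling) or $W\subset V_{25}$ (and then $\omega$ is homogeneous of weight $-1$, so $W$ lies in the second ruling). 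Finally, if $\omega_2=\omega_{-1}=0$ then $\omega=\omega_0\in A_2$ is a nonzero decomposable element of $A_2$, contradicting $G_{\cF_1}$-stability of $A$ via~\Ref{prp}{raggieffeuno}. These cases are exhaustive and yield~\eqref{trerette}; in particular $\dim\Theta_A=1$.

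For the second assertion it suffices to treat $W$ in the first ruling, say $W=V_{01}\oplus\langle u\rangle$ with $u\in V_{23}$; the other two rulings are handled by the identical computation after permuting the roles of $V_{01},V_{23},V_{45}$. Since $A$ is $G_{\cF_1}$-stable it has minimal $\PGL(V)$-orbit by~\Ref{clm}{slagix}, and $\dim\Theta_A=1$, so $C_{W,A}\neq\PP(W)$ by~\Ref{crl}{senoncurva}; write $C_{W,A}=V(P)$ with $0\neq P\in\Sym^6 W^\vee$. The torus $H_{\cF_1}$ of~\eqref{calvin} fixes $A$, acts trivially on $\bigwedge^{10}A$, and preserves both $W$ and the complement $U:=\langle u'\rangle\oplus V_{45}$ (where $V_{23}=\langle u,u'\rangle$); restricting to $W=\langle v_0,v_1,u\rangle$ its image in $SL(W)$ is the one-parameter torus $\diag(t,t,t^{-2})$ with distinguished eigenline $\langle u\rangle$. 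Hence \Ref{clm}{azione} together with Item~(1) of~\Ref{rmk}{pasquetta} gives $P=X_0^2 F(X_1,X_2)$, where $X_0$ is dual to $u$ and $X_1,X_2$ are dual to $v_0,v_1$; thus $C_{W,A}$ is supported on the double line $\PP(V_{01})$ together with the quartic $V(F)$, and to get Type II-2 I must show the binary quartic $F$ has no repeated root.

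The main obstacle is exactly this squarefreeness. I would identify the roots of $F$ with the points of $\PP(V_{01})$ where $C_{W,A}$ is more singular than an ordinary double line: a direct computation gives $\dim(A\cap F_v)=3+\dim\big(A_2\cap(\langle v\rangle\wedge V_{23}\wedge V_{45})\big)$ for $[v]\in\PP(V_{01})$, so by~\Ref{crl}{molteplici} the multiplicity jumps precisely where $A_2$ meets $\langle v\rangle\wedge V_{23}\wedge V_{45}$; viewing $A_2$ as a four-dimensional subspace of $V_{01}\otimes(V_{23}\otimes V_{45})$, this locus is the zero scheme of a binary quartic in $v$, which via the local Pl\"ucker-quadric description of~\Ref{prp}{primisarto} I expect to coincide up to scalar with $F$. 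The real work is then to show this quartic is squarefree: I would argue that a repeated root would force $A_2$ to contain a rank-one tensor $v\wedge v'\wedge w$ in $V_{01}\otimes V_{23}\otimes V_{45}$, i.e.\ a nonzero decomposable element, contradicting $G_{\cF_1}$-stability via~\Ref{prp}{raggieffeuno}. Making this last implication precise through~\Ref{prp}{primisarto} is where the computation must be carried out carefully.
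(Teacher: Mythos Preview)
Your proof is correct and differs from the paper in both parts. For $\Theta_A$ you argue via the weight decomposition of a decomposable $\omega$ under $\lambda_{\cF_1}$, exploiting that $A$ meets the weight-$1$ and weight-$(-2)$ pieces of $\bigwedge^3 V$ trivially to read off the filtration data of $W$ from the highest and lowest surviving weights; the paper instead observes that any $W_0\in\Theta_A$ must intersect every plane in the three known rulings (because $A$ is lagrangian), then runs a short case split on whether $W_0$ contains one of $V_{01},V_{23},V_{45}$, with the residual case forcing a decomposable in $A_2$. For squarefreeness of $F$, you analyze $C_{W,A}$ \emph{along} the double line $\PP(V_{01})$: your dimension formula $\dim(A\cap F_v)=3+\dim(A_2\cap L_v)$ is correct, and the tangent-cone computation you anticipate via~\Ref{prp}{primisarto} does go through cleanly --- the matrix of $\psi^v_w|_{\ov K}$ is block-diagonal, with the $1\times 1$ block proportional to $\theta_0\wedge\theta_0$ (where $v\wedge\theta_0$ generates $A_2\cap L_v$), so $g_3$ vanishes precisely when $\theta_0$ is decomposable. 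The paper instead works \emph{off} the double line: it shows $\dim(A\cap F_{\xi-u})\le 2$ for $0\neq u\in V_{01}$ (using the same observation that a $2$-dimensional subspace of $[u]\wedge V_{23}\wedge V_{45}$ must contain a decomposable) and that such points avoid $\cB(W,A)$, then obtains smoothness of $C_{W,A}$ there directly from~\Ref{prp}{nonmalvagio}. The paper's route is a bit shorter; yours has the payoff of identifying the roots of $F$ with the degeneracy locus of $A_2$ over $\PP(V_{01})$.
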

\begin{proof}
The right-hand side of~\eqref{trerette} is contained in $\Theta_A$ by~\eqref{decoeffeuno}. Now suppose that $W_0\in\Theta_A$. Since $A$ is lagrangian 
\begin{equation}\label{interseca}
\text{$W_0$ has non-trivial intersection with every $W$ belonging to the right-hand side of~\eqref{trerette}.}
\end{equation}
 Suppose that $W_0$ contains one of $V_{01}$, $V_{23}$ or $V_{45}$: it follows from~\eqref{interseca} that  $W_0$ must belong to the right-hand side of~\eqref{trerette}. Now suppose that $W_0$ does not contain $V_{01}$ nor $V_{23}$ nor $V_{45}$.
 It follows from~\eqref{interseca}  that $W_0$ has non-trivial intersection with two at least among $V_{01}$, $V_{23}$ and $V_{45}$.  That easily leads to a contradiction because by~\Ref{prp}{raggieffeuno} we know that
 $A_2$ contains no non-zero decomposable elements. 
 We have proved~\eqref{trerette}. Now suppose that $W\in\Theta_A$ i.e.~$W$ belongs to the  right-hand side of~\eqref{trerette}: we will prove that $C_{W,A}$ is a semistable sextic curve of Type II-2. By~\eqref{trerette} we have $\dim\Theta_A=1$: by~\Ref{crl}{senoncurva} it follows that   $C_{W,A}\not=\PP(W)$.  
From now on we will assume that  $V_{01}\subset W\subset V_{03} $, if $W$ belongs to one of the other two subsets on the right-hand side of~\eqref{trerette} the proof is analogous. Let $\xi$ be a generator of $W\cap V_{23}$: thus $W=\la \xi,v_0,v_1\ra $. Let $\{X_0,X_1,X_2\}$ be the basis of $W^{\vee}$ dual to $\{\xi,v_0,v_1\}$. Then $\lambda_{\cF_1}(t)$ maps $W$ to itself for every $t\in\CC^{\times}$: applying~\Ref{clm}{azione} we get that  
\begin{equation}\label{formastand}
C_{W,A}=V(X_0^2 P ),\qquad 0\not=P\in\CC[X_1,X_2]_4.
\end{equation}
It remains to prove that $P$ has no multiple factors. Let $0\not=u\in V_{01}$. We claim that
\begin{equation}\label{alpiudue}
\dim(A\cap F_{(\xi-u)})\le 2.
\end{equation}
In fact assume that $\alpha\in A\cap F_{(\xi-u)}$. Thus $(\xi-u)\wedge \alpha=0$. 
Write 
$\alpha=\alpha_0+\alpha_2+\alpha'_3 + \alpha''_3$ where $\alpha_0\in \bigwedge^2 V_{01}\wedge V_{23}$, $\alpha_2\in V_{01}\wedge V_{23}\wedge V_{45}$, $\alpha'_3\in V_{01}\wedge\bigwedge^2 V_{45}$ and $\alpha''_3\in \bigwedge^2 V_{23}\wedge V_{45}$. The equality $(\xi-u)\wedge\alpha=0$ is equivalent to the following equalities:
\begin{equation}\label{tanteq}
0=\xi\wedge\alpha_0=u\wedge \alpha_2=\xi\wedge\alpha'_3=u\wedge \alpha'_3,\qquad \xi\wedge\alpha_2=u\wedge \alpha''_3. 
\end{equation}
In particular  $\alpha_0\in \bigwedge^2 V_{01} \wedge [\xi]$.  One also gets easily that the projection 
\begin{equation*}
\pi\colon A\cap F_{(\xi-u)}\lra  V_{01}\wedge V_{23}\wedge V_{45}
\end{equation*}
 has $1$-dimensional kernel namely $\bigwedge^2 V_{01}\wedge[\xi]$. On the other hand 
 \begin{equation}\label{contenuto}
\im\pi\subset \{u\wedge \theta \mid \theta\in V_{23}\wedge V_{45}\}.
\end{equation}
A subspace of the right-hand side of~\eqref{contenuto} of dimension at least $2$ contains non-zero decomposable elements: since $A_2$ does not contain non-zero decomposables it follows that $\dim(\im\pi)\le 1$. This proves~\eqref{alpiudue}. Next assume that $[\xi-u]\in C_{W,A}$: by~\eqref{alpiudue} we get that 
 $\dim(A\cap F_{(\xi-u)})= 2$. As is easily checked $\cB(W,A)=\es$. This proves that $C_{W,A}$ is smooth at $[\xi-u]$: it follows that the polynomial $P$ of~\eqref{formastand}  does not have multiple roots.
\end{proof}
Before stating the next result we notice that $\PGL(V) A_{III}\cap{\mathbb S}^{\sF}_{\cF_1}\not=\es$.
\begin{prp}\label{prp:bronson}
Let $A\in{\mathbb S}^{\sF}_{\cF_1}$ be properly $G_{\cF_1}$-semistable: then $A\in \PGL(V) A_{III}$. In particular  $C_{W,A}$ is a semistable sextic curve of Type III-2.
\end{prp}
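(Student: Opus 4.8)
The plan is to show that a properly $G_{\cF_1}$-semistable $A\in{\mathbb S}^{\sF}_{\cF_1}$ must lie in the $\PGL(V)$-orbit of $A_{III}$, and then deduce the statement about $C_{W,A}$ from~\Ref{prp}{rettetre}. The structure mirrors the GIT analysis already carried out for the other strata. First I would invoke~\Ref{prp}{raggieffeuno}: since $A$ is not $G_{\cF_1}$-stable, $A_2\in\LL\GG(V_{01}\wedge V_{23}\wedge V_{45})$ contains a non-zero decomposable element. Using the Segre isomorphism~\eqref{segredeco} I may choose bases $\{\xi_0,\xi_1\}$, $\{\xi_2,\xi_3\}$, $\{\xi_4,\xi_5\}$ of $V_{01},V_{23},V_{45}$ so that $\xi_0\wedge\xi_2\wedge\xi_4\in A_2$. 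The idea is then to degenerate $A$ by a well-chosen sequence of $1$-PS's, each normalizing the previous configuration and acting with $\mu=0$ (so that by~\Ref{clm}{limite} the limit stays $G_{\cF_1}$-equivalent to $A$), until I reach a lagrangian having a monomial basis, at which point~\Ref{clm}{unicotre} identifies it with $A_{III}^{\sF'}$.

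Concretely, I would first take $\omega_0:=\lim_{t\to 0}\lambda_1(t)\omega$ where $\lambda_1=(1,1,1)$ is the ordering $1$-PS from the proof of~\Ref{prp}{raggieffeuno}; since $A$ is semistable and $\mu(A,\lambda_1)\ge 0$ we have $\mu(A,\lambda_1)=0$, so by~\Ref{clm}{limite} the limit $A^{(1)}:=\supp(\omega_0)$ is $\lambda_1$-split and $G_{\cF_1}$-equivalent to $A$. The weight table~\eqref{decouno} shows that $\lambda_1$-splitness forces $A_2^{(1)}$ to be spanned by monomials $\xi_i\wedge\xi_j\wedge\xi_k$ of the chosen bases. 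Because $A_2^{(1)}$ is a five-dimensional lagrangian subspace of $V_{01}\wedge V_{23}\wedge V_{45}$ spanned by five of the eight monomials in~\eqref{decouno}, and the symplectic form pairs $\xi_i\wedge\xi_j\wedge\xi_k$ with its complementary monomial, $A_2^{(1)}$ selects exactly one monomial out of each complementary pair except one self-paired case — a combinatorial condition of the same flavor as the Sudoku argument of~\Ref{clm}{unicotre}. Then the full $A^{(1)}=\bigwedge^2 V_{01}\wedge V_{23}\oplus A_2^{(1)}\oplus V_{01}\wedge\bigwedge^2 V_{45}\oplus\bigwedge^2 V_{23}\wedge V_{45}$ has an entirely monomial basis in the basis $\sF'=\{\xi_0,\ldots,\xi_5\}$.

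With a monomial basis in hand I would apply~\Ref{clm}{unicotre}: the torus $T<SL(V)$ diagonal in $\sF'$ fixes $A^{(1)}$ and acts trivially on $\bigwedge^{10}A^{(1)}$ (one checks the row-sum condition as in~\eqref{torotre}, which holds because $A^{(1)}$ is lagrangian and $\lambda_{\cF_1}$-split), so $\PGL(V)A^{(1)}$ contains $A_{III}$. Since $A^{(1)}$ is $G_{\cF_1}$-equivalent — hence a fortiori $\PGL(V)$-equivalent — to $A$, we conclude $A\in\PGL(V)A_{III}$. Finally, for $W\in\Theta_A$ the sextic $C_{W,A}$ is $\PGL(W)$-equivalent to $C_{W,A_{III}}$, which is of Type III-2 by~\eqref{doppiotriangolo} of~\Ref{prp}{rettetre} (the double triangle), giving the claim.

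The main obstacle I anticipate is verifying that a $\lambda_1$-split $A_2^{(1)}$ really must be a lagrangian spanned by monomials satisfying precisely the incidence pattern encoded by the matrix $N$ of~\eqref{matricenne}, rather than some other monomial lagrangian that is merely semistable but not in $\PGL(V)A_{III}$. In principle the limit could land on a properly semistable monomial configuration in the boundary of the $A_{III}$-orbit (the analogue of the degenerate cases~(2),(3) in the proofs of~\Ref{prp}{versotre} and~\Ref{prp}{ciunononstab}); there one argues, using~\Ref{prp}{atrechiuso} that $SL(V)A_{III}^{\sF'}$ is closed in $\lagr^{ss}$, that such a limit still lies in the orbit. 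I would need to enumerate the possible $\lambda_1$-split lagrangians $A_2^{(1)}$ and check each reduces, possibly after one further $\mu=0$ degeneration, to the monomial pattern giving $A_{III}$ — a finite but slightly delicate case-check paralleling the combinatorial lemma inside~\Ref{clm}{unicotre}.
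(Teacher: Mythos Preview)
Your approach is essentially the paper's: use~\Ref{prp}{raggieffeuno} to get a decomposable $\xi_0\wedge\xi_2\wedge\xi_4\in A_2$, flow by $\lambda_1=(1,1,1)$, observe the limit has a monomial basis, and invoke~\Ref{clm}{unicotre} and~\Ref{prp}{rettetre}.

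Two corrections. First, $A_2\in\LL\GG(V_{01}\wedge V_{23}\wedge V_{45})$ is a lagrangian in an $8$-dimensional symplectic space, so $\dim A_2=4$, not $5$; your sentence about ``five of the eight monomials'' should read four. Second, the case-check you anticipate is not needed. From~\eqref{decouno} the $\lambda_1$-weights on $V_{01}\wedge V_{23}\wedge V_{45}$ are $3,1,1,1,-1,-1,-1,-3$. Since $\xi_0\wedge\xi_2\wedge\xi_4\in A_2$ gives $d_0^{\lambda_1}(A_2)=1$, the formula of~\Ref{clm}{dipsey} yields $\mu(A_2,\lambda_1)=2(3d_0+d_1-3)=2d_1\ge 0$; semistability forces $d_1=0$, hence by~\Ref{clm}{nickcarter} the full type is $(1,0,3,0)$. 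But a $\lambda_1$-split lagrangian of type $(1,0,3,0)$ is \emph{unique}: it must equal the direct sum of the entire weight-$3$ and weight-$(-1)$ isotypical summands, namely
\[
A_2^{(1)}=\la \xi_0\wedge\xi_2\wedge\xi_4,\ \xi_0\wedge\xi_3\wedge\xi_5,\ \xi_1\wedge\xi_2\wedge\xi_5,\ \xi_1\wedge\xi_3\wedge\xi_4\ra.
\]
So there is nothing to enumerate; the resulting $A^{(1)}$ has a completely explicit monomial basis (the four monomials above together with the six monomials spanning $\bigwedge^2 V_{01}\wedge V_{23}$, $V_{01}\wedge\bigwedge^2 V_{45}$, $\bigwedge^2 V_{23}\wedge V_{45}$), and one checks directly that the hypotheses of~\Ref{clm}{unicotre} hold. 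No further degeneration is required.
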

\begin{proof}
By~\Ref{prp}{raggieffeuno} $A_2$ contains a non-zero decomposable element, say $\xi_0\wedge\xi_2\wedge\xi_4$. Proceeding as in the proof of~\Ref{prp}{raggieffeuno} we define a $1$-PS $\lambda_1$ such that $\mu(A,\lambda_1)= 0$. Considering the action of $\lambda_1$ on $V_{01}\wedge V_{23}\wedge V_{45}$ we get that $A':=\lim_{t\to 0}\lambda_1(t)A$ has a monomial basis. Thus either $A'$ is not $G_{\cF_1}$-semistable or else it  belongs to $\PGL(V) A_{III}$ by~\Ref{clm}{unicotre} - one checks that in fact the latter holds.
\end{proof}
\subsubsection{Wrapping it up}\label{subsubsec:dimoeffeuno}
We will prove~\Ref{prp}{trilli}. 
Item~(1) is the content of~\Ref{prp}{raggieffeuno}. 
The generic $A_2\in\LL\GG(V_{01}\wedge V_{23}\wedge V_{45})$ contains no non-zero decomposable element because the dimension of the right-hand side of~\eqref{segredeco} is equal to $3$, thus Item~(2) follows from Item~(1). Let's prove Item~(3). Let $g\in\Stab(A)$ belong to the connected component of $\Id$. \Ref{prp}{giulatesta} gives that $g(V_{01})=V_{01}$, $g(V_{23})=V_{23}$ and $g(V_{45})=V_{45}$ i.e.~$g\in C_{\SL(V)}(\lambda_{\cF_1})$. Since $A$ is $G_{\cF_1}$-stable the stabilizer of $A$ in $G_{\cF_1}$ is finite: it follows that $g\in H_{\cF_1}$.  
Lastly Item~(4) follows from~\Ref{prp}{giulatesta} and~\Ref{prp}{bronson}. 
\clearpage
\section{The remaining boundary components}\label{sec:bounddue}
\setcounter{equation}{0}
\subsection{$\gB_{\cF_2}$}\label{subsec:anticipeffe}
\setcounter{equation}{0}
The isotypical decomposition of $\bigwedge^3 \lambda_{\cF_2}$ is the following: 
\begin{equation}\label{rocco}
\scriptstyle
\bigwedge^2 V_{01}\wedge V_{23}\oplus 
\left(\bigwedge^2 V_{01}\wedge V_{45}\oplus V_{01}\wedge \bigwedge^2 V_{23}\right)\oplus
   V_{01}\wedge  V_{23} \wedge V_{45}\oplus
 \left(V_{01}\wedge \bigwedge^2  V_{45}\oplus \bigwedge^2 V_{23} \wedge V_{45}\right)\oplus   
 V_{23} \wedge \bigwedge^2 V_{45}.
\end{equation}
Let $A\in{\mathbb S}^{\sF}_{\cF_2}$: then $A=A_0+\ldots + A_4$ where 
\begin{equation}
\tiny 
A_0\in\PP(\bigwedge^2 V_{01}\wedge V_{23}),\ 
A_1\in\Gr( 2,(\bigwedge^2 V_{01}\wedge V_{45}\oplus V_{01}\wedge \bigwedge^2 V_{23}))\ 
A_2\in\LL\GG(   V_{01}\wedge  V_{23} \wedge V_{45})\ 
A_3\in\Gr(2, (V_{01}\wedge \bigwedge^2  V_{45}\oplus \bigwedge^2 V_{23} \wedge V_{45}))\    
A_4\in\PP(V_{23} \wedge \bigwedge^2 V_{45}).
\end{equation}
and $A_{4-i}\bot A_i$. Let $\lambda$ be a $1$-PS of $G_{\cF_2}$. There exist bases $\{\xi_0,\xi_1\}$, $\{\xi_2,\xi_3\}$, $\{\xi_4,\xi_5\}$   of $V_{01}$, $V_{23}$, $V_{45}$ respectively such that 
\begin{equation}\label{adattato}
\lambda(t)=(t^m,(\diag(t^{r_1},t^{-r_1}),\diag(t^{r_2},t^{-r_2}),\diag(t^{r_3},t^{-r_3}))),\qquad r_1\ge 0,\ r_2\ge 0,\ r_3\ge 0.
\end{equation}
We denote such a $1$-PS by $(m,r_1,r_2,r_3)$. Below are the weights of the action of $\bigwedge^3 \lambda(t)$ on the first two summands of~\eqref{rocco}:
\begin{equation}\label{decodue}
\begin{matrix}
\bigwedge^2 V_{01}\wedge V_{23} & = & [\xi_0\wedge\xi_1\wedge \xi_2] & 
\oplus & [\xi_0\wedge\xi_1\wedge \xi_3] \\
 & & r_2 & & -r_2
\end{matrix}
\end{equation}
\begin{equation}\label{decotre}
\begin{matrix}
\scriptstyle \bigwedge^2 V_{01}\wedge V_{45}\oplus V_{01}\wedge \bigwedge^2 V_{23} & = & 
\scriptstyle  [\xi_0\wedge\xi_2\wedge \xi_3] & \scriptstyle  \oplus & 
\scriptstyle  [\xi_1\wedge\xi_2\wedge \xi_3] & \scriptstyle \oplus & 
\scriptstyle  [\xi_0\wedge\xi_1\wedge \xi_4] & \scriptstyle  \oplus & 
\scriptstyle  [\xi_0\wedge\xi_1\wedge \xi_5]  \\
 & & \scriptstyle  r_1-3m & & \scriptstyle  -r_1-3m & & \scriptstyle  r_3+3m & 
 & \scriptstyle  -r_3+3m
\end{matrix}
\end{equation}
The  weights of the action of $\bigwedge^3 \lambda(t)$ on
 $V_{01}\wedge V_{23}\wedge V_{45}$ are given by~\eqref{decouno}.
In particular we get that $I_{-}(\lambda)=\es$: by~\eqref{sommapend}  and~\eqref{caramellamu} we have
\begin{equation*}
\mu(A,\lambda)=2\mu(A_0,\lambda)+2\mu(A_1,\lambda)+\mu(A_2,\lambda). 
\end{equation*}
\begin{prp}\label{prp:raggieffedue}
$A\in{\mathbb S}^{\sF}_{\cF_2}$ is not $G_{\cF_2}$-stable if and only if one of the following holds:
\begin{enumerate}
\item[(1)]
$\dim A_1\cap (V_{01}\wedge \bigwedge^2 V_{23})\ge 1$ or $\dim A_1\cap (\bigwedge^2 V_{01}\wedge V_{45})\ge 1$.
\item[(2)]
There exist  $0\not=\beta\in V_{23}$ and $0\not=\theta\in V_{01}\wedge V_{45}$ such that $v_0\wedge v_1\wedge \beta\in A_0$ and $ \beta\wedge \theta\in A_2$.
\item[(3)]
There exist $0\not=\alpha \in V_{01}$,  $0\not=\beta \in V_{23}$,  $0\not=\gamma \in V_{45}$ such that $(\alpha\wedge v_2\wedge v_3+v_0\wedge v_1\wedge \gamma)\in A_1$ and $\alpha\wedge \beta \wedge \gamma\in A_2$. 
\item[(4)]
There exists $0\not=\alpha\in V_{01}$ such that 
$\dim A_2\cap ([\alpha]\wedge V_{23}\wedge V_{45})\ge 2$, or 
there exists $0\not=\gamma\in V_{45}$ such that 
$\dim A_2\cap (V_{01}\wedge V_{23}\wedge [\gamma])\ge 2$.
\end{enumerate}
\end{prp}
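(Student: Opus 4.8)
The proof will follow the two-step pattern already used for the other boundary strata, in particular~\Ref{prp}{raggieuno} and~\Ref{prp}{raggieffeuno}. The plan is to prove the two implications separately: first that each of Items~(1)--(4) forces $A$ to be non-$G_{\cF_2}$-stable by exhibiting an explicit destabilizing $1$-PS, and then the converse by running the Cone Decomposition Algorithm of~\Ref{subsec}{algocono}. Throughout I will evaluate $\mu(A,\lambda)$ from the $\lambda$-types of $A_0$, $A_1$ and $A_2$ via Formula~\eqref{sommapend} together with~\eqref{caramellamu}, reading the relevant weights off~\eqref{decodue}, \eqref{decotre} and~\eqref{decouno}; recall $I_{-}(\lambda)=\es$, so that $\mu(A,\lambda)=2\mu(A_0,\lambda)+2\mu(A_1,\lambda)+\mu(A_2,\lambda)$.

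For the \emph{if} direction I would treat the four conditions one at a time. If Item~(1) holds, a nonzero decomposable element of $A_1$ lies in one of the two extreme eigenspaces $V_{01}\wedge\bigwedge^2 V_{23}$ or $\bigwedge^2 V_{01}\wedge V_{45}$ of the second summand of~\eqref{rocco}; taking the $1$-PS $(m,r_1,r_2,r_3)=(-1,0,0,0)$ (resp.~$(1,0,0,0)$) makes the corresponding weight $r_1-3m$ (resp.~$r_3+3m$) in~\eqref{decotre} positive while the contributions of $A_0$ and $A_2$ vanish, so that $\mu(A,\lambda)\ge 0$. For Items~(2), (3) and~(4) I would likewise produce a $1$-PS adapted to the stated flag data: Item~(2) couples an eigenvector of $A_0$ of weight $r_2$ with the decomposable eigenvector $\alpha\wedge\beta\wedge\gamma$ of $A_2$ and is destabilized by a $1$-PS of the form $(0,r_1,r_2,r_3)$; Item~(3) forces the two summands of the displayed element of $A_1$ to share an isotypical eigenvalue, i.e.~$r_1-3m=r_3+3m$, and combined with the decomposable in $A_2$ yields $\mu(A,\lambda)\ge 0$; Item~(4) is handled by the $1$-PS concentrating weight on $[\alpha]\wedge V_{23}\wedge V_{45}$ or $V_{01}\wedge V_{23}\wedge[\gamma]$. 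Each verification is the routine arithmetic of~\eqref{caramellamu}.

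For the \emph{only if} direction I would apply~\Ref{prp}{algcon}. Take $T<G_{\cF_2}$ the maximal torus of elements diagonal in bases $\{\xi_0,\xi_1\}$, $\{\xi_2,\xi_3\}$, $\{\xi_4,\xi_5\}$ of $V_{01},V_{23},V_{45}$ together with the central $\CC^{\times}$-factor, so that ${\check X}(T)_{\RR}=\{(m,r_1,r_2,r_3)\}$ and the standard cone is $C=\{r_1\ge 0,\ r_2\ge 0,\ r_3\ge 0\}$. Reading differences of the weights listed in~\eqref{decodue}, \eqref{decotre} and~\eqref{decouno}, the ordering hyperplanes are the kernels of $r_i$, $r_i\pm r_j$, $r_1\pm r_2\pm r_3$ together with the functions involving $m$ obtained from the $\pm 3m$ terms of the outer summands. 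The faces $r_i=0$ of $C$ are ordering hyperplanes, and the intersection of the ordering hyperplanes coming from the $\CC^{\times}$-factor is $\{m=0\}$, which has the form $Z\times N(T_1)$ with $\dim Z=0$; hence both hypotheses of~\Ref{prp}{algcon} hold. A finite computation then lists the ordering rays, and by the algorithm $A$ fails to be $G_{\cF_2}$-stable only if $\mu(A,\lambda)\ge 0$ for some ordering $1$-PS $\lambda$.

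The main obstacle will be this last step: for each ordering $1$-PS one must describe $\{A\mid\mu(A,\lambda)\ge 0\}$ and show it lies in the locus defined by one of Items~(1)--(4). Because $G_{\cF_2}$ has a four-dimensional cocharacter space (one more than $G_{\cF_1}$, owing to the central $\CC^{\times}$), there are considerably more ordering rays than in~\Ref{prp}{raggieffeuno}, and translating each inequality $\mu(A,\lambda)\ge 0$ into a clean flag condition is the time-consuming heart of the argument. I would organize it exactly as in the proof of~\Ref{prp}{raggieuno}: tabulate, for each ordering $1$-PS, the weights of the action on the first three summands of~\eqref{rocco} and the resulting expression for $\mu(A,\lambda)$, then check case by case that nonnegativity forces one of~(1)--(4), reducing whenever possible to the extremal rays $(\pm 1,0,0,0)$, $(0,1,1,1)$ and their relatives already used in the \emph{if} direction.
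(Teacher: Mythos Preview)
Your plan is correct and matches the paper's approach: exhibit a destabilizing $1$-PS for each of Items~(1)--(4), then run the Cone Decomposition Algorithm for the converse, tabulating $\mu(A,\lambda)$ for each ordering ray and reducing to~(1)--(4). Two refinements from the paper are worth folding in. First, the paper observes a duality: setting $B:=\bigwedge^3\phi(\delta_V(A))$ for a suitable $\phi$ gives $B\in{\mathbb S}^{\sF}_{\cF_2}$ with $\mu(A,(m,r_1,r_2,r_3))=\mu(B,(-m,r_3,r_2,r_1))$, and Items~(1)--(4) are preserved under $A\mapsto B$; this cuts the ordering-ray analysis in half (it suffices to treat $m\in\{0,1\}$). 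Second, two of your \emph{if}-direction sketches need sharpening. In Item~(2) the element $\beta\wedge\theta\in A_2$ is \emph{not} decomposable in general (since $\theta\in V_{01}\wedge V_{45}$ has rank up to~$2$); the correct move is simply to set $\xi_2:=\beta$ and take $\lambda=(0,0,1,0)$, which places $v_0\wedge v_1\wedge\beta$ and all of $\beta\wedge V_{01}\wedge V_{45}$ in weight~$+1$. In Item~(4) the paper first assumes Item~(1) fails, so that $A_1$ is the graph of an isomorphism $\varphi\colon V_{01}\to V_{45}$, and then chooses $\xi_0:=\alpha$ \emph{and} $\xi_4:=\varphi(\alpha)$ before applying $\lambda=(1,6,0,0)$; this coordinated choice is what makes $d_0(A_1)\ge 1$ and $d_0(A_2)\ge 2$ hold simultaneously, and without it the computation of $\mu(A_1,\lambda)$ does not cooperate.
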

\begin{proof}
We begin by considering the duality operator. If $A$ is not $G_{\cF_2}$-stable then so is $\delta_V(A)$ where $\delta_V$ is defined by~\eqref{specchio}. More precisely let $\{\xi_0,\xi_1,\ldots,\xi_5\}$ be a basis of $V$ as above  and 
$\{\xi^{\vee}_0,\xi^{\vee}_1,\ldots,\xi^{\vee}_5\}$ be the dual basis of $V^{\vee}$.
Let $\phi\colon V^{\vee}\overset{\sim}{\lra} V$ be the isomorphism such that $\phi(\xi^{\vee}_i)=\xi_{5-i}$. Let $A\in{\mathbb S}^{\sF}_{\cF_2}$: then
\begin{equation}\label{eccoimmagine}
B:=\bigwedge^3 \phi(\delta_V(A))\in{\mathbb S}^{\sF}_{\cF_2}.
\end{equation}
 Now suppose that $\lambda_1$ is the $1$-PS of $G_{\cF_2}$ denoted by $(m,r_1,r_2,r_3)$ and let $\lambda_2$ be the $1$-PS of $G_{\cF_2}$ denoted by $(-m,r_3,r_2,r_1)$.  An easy computation shows that $\mu(A,\lambda_1)=\mu(B, \lambda_2)$; in particular 
if   $\mu(A,\lambda_1)\ge 0$ then $\mu(B, \lambda_2)\ge 0$. Thus non-stable elements of ${\mathbb S}^{\sF}_{\cF_2}$  come in dual pairs. One can easily check that if $A$ satisfies
one of Items~(1) - (4) above then $B$ satisfies the same Item.
Now  let's prove that if one of Items~(1) - (4)  holds then $A$ is not $G_{\cF_2}$-stable. We will freely use the data listed in Tables~\eqref{primaeffedue} and~\eqref{secondaeffedue}. Suppose that Item~(1) holds. Let $\{\xi_0,\xi_1,\ldots,\xi_5\}$ be a basis of $V$ as above and $\lambda_1^{\pm}$ be the $1$-PS of $G_{\cF_2}$ which is diagonal in the chosen basis and is indicized by $(\pm 1,0,0,0)$ - see~\eqref{adattato}. 
Explicitly
\begin{equation}\label{lamuno}
\lambda_1^{+}(s)=\diag(s,s,s^{-2},s^{-2},s,s),\quad
\lambda_1^{-}(s)=\diag(s^{-1},s^{-1},s^2,s^2,s^{-1},s^{-1}).
\end{equation}
If $\dim(A_1\cap V_{01}\wedge \bigwedge^2 V_{23})\ge 1$ then $\mu(A,\lambda_1^{+})\ge 0$ (see~\eqref{primaeffedue}), if $\dim(A_1\cap \bigwedge^2 V_{01}\wedge V_{45})\ge 1$ then $\mu(A,\lambda_1^{-})\ge 0$: in both cases it follows that $A$ is not $G_{\cF_2}$-stable.   Next suppose that Item~(2) holds. Let  $\xi_2:=\beta$ and extend $\xi_2$ to a basis $\{\xi_0,\ldots,\xi_5\}$ of $V$ as above. Let $\lambda_2$ be the $1$-PS's of $G_{\cF_2}$ which is diagonal in the chosen basis and is indicized by $(0,0,1,0)$. 
 Explicitly
\begin{equation}\label{lamdue}
\lambda_2(s)=\diag(1,1,s,s^{-1},1,1).
\end{equation}
 Then $\mu(A,\lambda_2)\ge 0$ - see Tables~\eqref{primaeffedue} and~\eqref{secondaeffedue}. Now 
suppose that Item~(3) holds.  Let $\xi_0:=\alpha$, $\xi_2:=\beta$ and $\xi_4:=\gamma$. Extend $\{\xi_0,\xi_2,\xi_4\}$ to a basis $\{\xi_0,\ldots,\xi_5\}$ as above: we require  that $\xi_0\wedge\xi_1= v_0\wedge v_1$ and $\xi_2\wedge\xi_3= v_2\wedge v_3$. Let $\lambda_3$ be the $1$-PS's of $G_{\cF_2}$ which is diagonal in the chosen basis and is indicized by $(0,3,0,3)$. 
Explicitly
\begin{equation}\label{lamtre}
\lambda_3(s)=\diag(s^3,s^{-3},1,1,s^3,s^{-3}).
\end{equation}
Then $\mu(A,\lambda_3)\ge 0$ - see Tables~\eqref{primaeffedue} and~\eqref{secondaeffedue}. 
Now suppose that Item~(4) holds. 
We may assume that Item~(1) does not hold. Thus there exists an isomorphism $\varphi\colon V_{01}\overset{\sim}{\lra} V_{45}$ such that
\begin{equation}\label{aunografo}
A_1=\{v_0\wedge v_1\wedge \varphi(\alpha) + \alpha\wedge v_2\wedge v_3 \mid \alpha\in V_{01} \}.
\end{equation}
Assume first that there exists $0\not=\alpha\in V_{01}$ such that 
$\dim(A_2\cap [\alpha]\wedge V_{23}\wedge V_{45})\ge 2$. Let $\xi_0:=\alpha$ and $\xi_4:=\varphi(\alpha)$. We extend $\{\xi_0,\xi_4\}$ to a basis $\{\xi_0,\ldots,\xi_5\}$ as above: we require  that $\xi_0\wedge\xi_1= v_0\wedge v_1$ and $\xi_2\wedge\xi_3= v_2\wedge v_3$. Let $\lambda_4^{+}$ be the $1$-PS's of $G_{\cF_2}$ which is diagonal in the chosen basis and is indicized by $(1,6,0,0)$. Then $\mu(A,\lambda_4^{+})\ge 0$ - see Tables~\eqref{primaeffedue} and~\eqref{secondaeffedue}. Now assume  that there exists $0\not=\gamma\in V_{45}$ such that  $\dim(A_2\cap  V_{01}\wedge V_{23}\wedge [\gamma])\ge 2$. Let $B$ be given by~\eqref{eccoimmagine}: then $\dim(B_2\cap [\alpha]\wedge V_{23}\wedge V_{45})\ge 2$ for a certain $0\not=\alpha\in V_{01}$ and hence $A$ is not $G_{\cF_2}$-stable. More precisely 
let $\lambda_4^{-}$  be the $1$-PS's of $G_{\cF_2}$ indicized by  $(-1,0,0,6)$: then $\mu(A,\lambda_4^{-})\ge 0$. The $1$-PS's $\lambda_4^{\pm}$ are given explicitly by
\begin{equation}\label{lamquattro}
\lambda_4^{+}(s)=\diag(s^7,s^{-5},s^{-2},s^{-2},s,s),\quad
\lambda_4^{-}(s)=\diag(s^{-1},s^{-1},s^2,s^2,s^5,s^{-7}).
\end{equation}
It remains to prove that if $A\in{\mathbb S}^{\sF}_{\cF_2}$ is not $G_{\cF_2}$-stable then one of Items~(1) - (4) holds. We will run the Cone Decomposition algorithm. 
We choose the maximal torus $T< G_{\cF_2}$ to be 
 \begin{equation}
T=\{(u,\diag(s_1,s_1^{-1}),\diag(s_2,s_2^{-1}),\diag(s_3,s_3^{-1}))) \mid u,s_i\in\CC^{\times}\}.
\end{equation}
(The maps are diagonal with respect to the bases $\{\xi_0,\xi_1\}$, $\{\xi_2,\xi_3\}$, $\{\xi_4,\xi_5\}$.)
Thus 
\begin{equation*}
{\check X}(T)_{\RR}=\{(m,r_1,r_2,r_3) \mid m,r_i\in\RR \},\quad C=\{(m,r_1,r_2,r_3) \mid r_i\ge 0  \}
\end{equation*}
with notation as in~\eqref{adattato}. 
Looking at~\eqref{decouno}, \eqref{decodue} and~\eqref{decotre} we get that $H\subset {\check X}(T)_{\RR}$ is an ordering hyperplane if and only if it is the kernel of one of the following linear functions:
\begin{equation*}
r_i,\quad r_i-r_j,\quad r_i-r_j-r_k\ (j\not=k),\quad r_1-r_3+ 6m,\quad r_1-r_3- 6m,
\quad r_1+r_3+ 6m,\quad r_1+r_3- 6m.
\end{equation*}
In particular the hypotheses of~\Ref{prp}{algcon} are satisfied. 
It follows that the ordering rays  are generated by vectors $(m,r_1,r_2,r_3)$ such that 
$m\in\{0,\pm 1\}$ and
\begin{equation*}
\scriptstyle
(r_1,r_2,r_3)\in\{(0,0,0),\ (0,1,0),\ (6,0,0),\  (0,0,6),\  (6,6,0),\ (0,6,6),\ (3,0,3),\  (3,3,3),\  (3,6,3),\ 
(12,6,6),\ (6,6,12),\  (4,2,2),\ (2,2,4)\}.
\end{equation*}
Actually the ordering $1$-PS with $m=0$ are $(0,0,1,0)$, $(0,3,0,3)$, $(0,3,3,3)$ and $(0,3,6,3)$ while all combinations of $m=\pm 1$ and the $(r_1,r_2,r_3)$ listed above occur.   
By the self-duality  of ${\mathbb S}^{\sF}_{\cF_2}$ that we discussed above it suffices to prove that if $\mu(A,\lambda)\ge 0$ for an ordering $1$-PS $\lambda$ with $m\in\{ 0,1\}$ then $A$ satisfies one of Items~(1)-(4). In other words it suffices to check that if 
none of Items~(1)-(4) is satisfied then $\mu(A,\lambda)<0$ for all ordering $1$-PS $\lambda$ with $m\in\{ 0,1\}$.  One gets the above statement for all   ordering $1$-PS, with the exception  of the one  indicized by $(0,0,1,0)$,  
by consulting the last column of Tables~\eqref{primaeffedue} and of Table~\eqref{secondaeffedue}. It remains to exclude the existence of  $A$ such that  $d^{\lambda}(A_0)=0$ and $d^{\lambda}(A_2)\ge 3$ for  $\lambda$   indicized by $(0,0,1,0)$. By hypothesis Item~(1) is not satisfied: it follows that the subset of $\PP(V_{01})\times\PP(V_{45})$ defined by
\begin{equation}\label{conica}
\{([\alpha],[\gamma]) \mid (\alpha\wedge v_2\wedge v_3+v_0\wedge v_1\wedge \gamma)\in A_1\}
\end{equation}
is a curve (a conic if we embed  $\PP(V_{01})\times\PP(V_{45})$ via the Segre map). 
On the other hand 
 the subset of $\PP(V_{01})\times\PP(V_{45})$ defined by
\begin{equation}\label{conicadue}
\{([\alpha],[\gamma] \mid (\alpha\wedge \xi_2\wedge \gamma)\in A_2\}
\end{equation}
is a curve or all of $\PP(V_{01})\times\PP(V_{45})$ (look at the second row of Table~\eqref{primaeffedue} and recall that $d^{\lambda}(A_2)\ge 3$).  Thus there is point $([\alpha],[\beta])$ of intersection between~\eqref{conica} and~\eqref{conicadue}, i.e.~$A$ satisfies Item~(3) (with $\beta=\xi_2$), and that is a contradiction.
\end{proof}
\begin{crl}\label{crl:raggieffedue}
The generic $A\in{\mathbb S}^{\sF}_{\cF_2}$ is $G_{\cF_2}$-stable.
\end{crl}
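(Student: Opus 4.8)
The plan is to combine the explicit GIT characterization of \Ref{prp}{raggieffedue} with the standard facts that the stable locus is open and that ${\mathbb S}^{\sF}_{\cF_2}$ is irreducible (this irreducibility is recorded in \Ref{subsec}{prelbound}). By \Ref{prp}{raggieffedue} an $A\in{\mathbb S}^{\sF}_{\cF_2}$ fails to be $G_{\cF_2}$-stable exactly when one of its Conditions~(1)--(4) holds. Hence it suffices to show that the locus where at least one of these conditions is satisfied is a proper closed subset of ${\mathbb S}^{\sF}_{\cF_2}$; by irreducibility this is equivalent to producing a single $A^{*}\in{\mathbb S}^{\sF}_{\cF_2}$ that satisfies none of them. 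I would carry this out by treating Condition~(1), which involves only $A_1$, separately from Conditions~(2)--(4), which constrain the central Lagrangian $A_2\in\LL\GG(V_{01}\wedge V_{23}\wedge V_{45})$.

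First I would dispose of Condition~(1). Since $A_1$ is a $2$-plane in the $4$-dimensional space $\bigwedge^2 V_{01}\wedge V_{45}\oplus V_{01}\wedge\bigwedge^2 V_{23}$, the requirement that it meet one of the two $2$-dimensional summands is a Schubert condition of codimension $1$, so it fails generically. Concretely I would take $A_1$ to be the graph $\{v_0\wedge v_1\wedge\varphi(\alpha)+\alpha\wedge v_2\wedge v_3\mid\alpha\in V_{01}\}$ of the isomorphism $\varphi\colon V_{01}\to V_{45}$ sending $v_0\mapsto v_4$, $v_1\mapsto v_5$, and check directly that it meets neither summand. With $A_1$ so chosen and $A_0=[v_0\wedge v_1\wedge v_2]$ fixed, Conditions~(2)--(4) become conditions on $A_2$ alone.

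The key observation is that each of these becomes an incidence requirement between the Lagrangian $A_2$ and a prescribed \emph{isotropic} subspace, or a one-parameter family of them. Condition~(2), after noting that $A_0=[v_0\wedge v_1\wedge v_2]$ forces $\beta=v_2$, reads $A_2\cap(v_2\wedge V_{01}\wedge V_{45})\neq\{0\}$, where $v_2\wedge V_{01}\wedge V_{45}$ is $4$-dimensional and isotropic, hence Lagrangian; the locus of Lagrangians meeting a fixed Lagrangian nontrivially is a divisor in $\LL\GG(V_{01}\wedge V_{23}\wedge V_{45})$. Condition~(3) says that for some $[\alpha]\in\PP(V_{01})$ the $2$-plane $[\alpha]\wedge V_{23}\wedge[\varphi(\alpha)]$ meets $A_2$, which for each fixed $\alpha$ has codimension $\ge 2$, so the union over the one-parameter family remains proper; Condition~(4) says that $A_2$ meets one of the Lagrangians $[\alpha]\wedge V_{23}\wedge V_{45}$ (or $V_{01}\wedge V_{23}\wedge[\gamma]$) in dimension $\ge 2$, again of high codimension for fixed parameter and hence proper after taking the one-parameter union. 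I expect the main obstacle to be the bookkeeping in these last verifications: confirming that $v_2\wedge V_{01}\wedge V_{45}$, $[\alpha]\wedge V_{23}\wedge[\varphi(\alpha)]$ and $[\alpha]\wedge V_{23}\wedge V_{45}$ are genuinely isotropic for $(,)_V$ (so the expected codimensions in $\LL\GG$ are correct) and that the four proper loci do not together cover ${\mathbb S}^{\sF}_{\cF_2}$. This last point is settled at once by the explicit $A^{*}$: choosing $A_0,A_1$ as above and $A_2$ a generic Lagrangian avoids the single condition of~(2) and the one-parameter families of~(3) and~(4) simultaneously, giving a $G_{\cF_2}$-stable point and hence, by openness and irreducibility, proving that the generic $A\in{\mathbb S}^{\sF}_{\cF_2}$ is $G_{\cF_2}$-stable.
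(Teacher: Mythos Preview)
Your proposal is correct and follows essentially the same route as the paper: both invoke \Ref{prp}{raggieffedue} and show by dimension counting that each of Conditions~(1)--(4) cuts out a proper closed locus in ${\mathbb S}^{\sF}_{\cF_2}$. The paper's proof is a two-line dimension count (Items~(1),(2) codimension $\ge 1$; Items~(3),(4) codimension $\ge 2$), while you spell out more explicitly why the relevant subspaces are isotropic and how the parameters enter --- but the underlying argument is the same.
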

\begin{proof}
It suffices to show that the generic $A\in{\mathbb S}^{\sF}_{\cF_2}$  satisfies none of Items~(1)-(4) of~\Ref{prp}{raggieffedue}. A dimension count shows that the set of $A$'s satisfying Item~(1) or~(2) has codimension (at least) $1$, and the set of $A$'s satisfying Item~(3) or~(4) has codimension (at least) $2$.
\end{proof}
\begin{prp}\label{prp:effeduepropsemi}
Let $\lambda_1^{\pm}$, $\lambda_2^{\pm}$, $\lambda_3$ and $\lambda_4$ be the $1$-PS's of $G_{\cF_2}$  defined by~\eqref{lamuno}, \eqref{lamdue}, \eqref{lamtre} and~\eqref{lamquattro} respectively. Suppose that $A\in{\mathbb S}^{\sF}_{\cF_2}$ is properly $G_{\cF_2}$-semistable. Then $A$ is $G_{\cF_2}$-equivalent to $A' \in{\mathbb S}^{\sF}_{\cF_2}$ satisfying  one of the following conditions:
\begin{enumerate}
\item[(1')]
$A'$ is $\lambda_1^{\pm}$-split  and $d^{\lambda_1^{\pm}}(A'_1)=(1,1)$.
\item[(2')]
$A'$ is $\lambda_2$-split, $d^{\lambda_2}(A'_0)=(1,0)$ and $d^{\lambda_1^{\pm}}(A_2')=(1,3)$ (non-reduced type).
\item[(3')]
$A'$ is $\lambda_3$-split, $d^{\lambda_3}(A_0')=(1,0)$, $d^{\lambda_3}(A_1')=(1,1)$ and $d^{\lambda_3}(A'_2)=(1,2,1)$ (non-reduced type).
\item[(4')]
$A'$ is $\lambda_4^{\pm}$-split, $d^{\lambda_4^{\pm}}(A_1')=(1,1)$ and $d^{\lambda_4^{\pm}}(A_2')=(2,2)$ (non-reduced type).
\end{enumerate}
\end{prp}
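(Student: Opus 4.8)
The plan is to combine the destabilization analysis already carried out in the proof of \Ref{prp}{raggieffedue} with one limiting argument per case. Since $A$ is properly $G_{\cF_2}$-semistable it is in particular not $G_{\cF_2}$-stable, so by \Ref{prp}{raggieffedue} at least one of Items~(1)--(4) of that proposition holds. To each of these items the proof of \Ref{prp}{raggieffedue} attaches a distinguished $1$-PS of $G_{\cF_2}$, diagonal in a basis adapted to the relevant flag: $\lambda_1^{\pm}$ for Item~(1), $\lambda_2$ for Item~(2), $\lambda_3$ for Item~(3) and $\lambda_4^{\pm}$ for Item~(4), given explicitly by~\eqref{lamuno}, \eqref{lamdue}, \eqref{lamtre} and~\eqref{lamquattro}. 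The computations in that proof show that, under the hypothesis of the corresponding item, the chosen $1$-PS $\lambda$ satisfies $\mu(A,\lambda)\ge 0$; this is the only input I would borrow.

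First I would upgrade each of these inequalities to an equality. Since $A$ is $G_{\cF_2}$-semistable, the Hilbert--Mumford criterion (\Ref{thm}{hilmum}) gives $\mu(A,\lambda)\le 0$ for every $1$-PS $\lambda$ of $G_{\cF_2}$; combined with $\mu(A,\lambda)\ge 0$ this forces $\mu(A,\lambda)=0$ for the distinguished $1$-PS. In this situation the limit $A':=\lim_{t\to 0}\lambda(t)A$ exists in ${\mathbb S}^{\sF}_{\cF_2}$ and, being a specialization of $A$ inside the semistable locus to a $\lambda$-fixed point, lies in the same fibre of the quotient map as $A$; hence $A'$ is $G_{\cF_2}$-equivalent to $A$. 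By \Ref{clm}{limite} the limit $A'$ is $\lambda$-split, and its $\lambda$-type on each isotypical summand equals $d^{\lambda}(A)$.

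It then remains to identify $d^{\lambda}(A)$ with the tuples listed in (1$'$)--(4$'$). Here the relevant item of \Ref{prp}{raggieffedue} provides a lower bound on the dimension of the intersection of one summand of $A$ with a top $\lambda$-weight space (for instance $\dim(A_2\cap[\alpha]\wedge V_{23}\wedge V_{45})\ge 2$ in Item~(4)), while the equality $\mu(A,\lambda)=0$, expanded through~\eqref{sommapend} and~\eqref{caramellamu} exactly as in the weight tables~\eqref{primaeffedue} and~\eqref{secondaeffedue}, forces the opposite inequality and pins down the leading entries $d_0(A_0)$, $d_0(A_1)$, $d_0(A_2)$. The remaining entries are determined by the Lagrangian symmetry $d^{\lambda}_i(A)+d^{\lambda}_{s-i}(A)=\dim U_{e_i}$ of \Ref{clm}{nickcarter}; this is what produces the non-reduced types $(1,3)$, $(1,2,1)$ and $(2,2)$ in (2$'$), (3$'$), (4$'$). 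In the mildest case, Item~(1), no degeneration of $A_0$ or $A_2$ is needed and one reads off $d^{\lambda_1^{\pm}}(A_1')=(1,1)$ directly.

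The main obstacle I anticipate is the bookkeeping in this last step, and in particular the cases where the first limit does not immediately land in the canonical split form. Exactly as in the proof of \Ref{prp}{giostra} for $\gB_{\cE_1}$, when a ``lower'' item holds simultaneously with a ``higher'' one, the single limit under the distinguished $1$-PS may leave residual freedom; one then has to follow it by a further limit under a commuting $1$-PS of one of the $SL$-factors, chosen so that $\mu$ remains $\le 0$ by semistability and hence $=0$, in order to reach a representative that is split of precisely one of the types (1$'$)--(4$'$). Keeping track of which item one lands in after such secondary flows, and verifying that the four listed types are exhaustive for properly semistable $A$, is the delicate part; the underlying GIT mechanism, namely that $\mu=0$ yields a $G_{\cF_2}$-equivalent $\lambda$-split limit via \Ref{clm}{limite} and \Ref{clm}{nickcarter}, is entirely routine.
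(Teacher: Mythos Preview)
Your approach is essentially the paper's own: the paper's proof is the single sentence ``Follows from the proof of~\Ref{prp}{raggieffedue} together with the observation that the types indicated above are those for which the numerical function $\mu(A,\cdot)$ is equal to $0$,'' and your write-up is a correct unpacking of exactly that mechanism (semistability forces $\mu=0$, take the limit, invoke \Ref{clm}{limite} and \Ref{clm}{nickcarter} to read off the split types).

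One remark: your worry about ``secondary flows'' as in \Ref{prp}{giostra} is misplaced here. In the $\cE_1$ case all properly semistable $A$ had to be pushed into a \emph{single} target stratum $\MM^{\sB}_{\cE_1}$, so when Item~(1) or~(2) held one first flowed to something satisfying Item~(3) and then flowed again. In the present proposition there are four separate target conditions, one per item of \Ref{prp}{raggieffedue}, and each item's distinguished $1$-PS already lands you in the corresponding case $(1')$--$(4')$ after one limit; no cascading is needed.
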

\begin{proof}
Follows from the proof of~\Ref{prp}{raggieffedue} together with the observation that the types indicated above are those for which the numerical function $\mu(A,\cdot)$ is equal to $0$ (i.e.~not $>0$). 
\end{proof}
The proof of the above proposition gives also the following observation.
\begin{rmk}\label{rmk:nonsballare}
Let $A\in{\mathbb S}^{\sF}_{\cF_2}$ be  $G_{\cF_2}$-semistable. If Item~(1) of~\Ref{prp}{raggieffedue} holds then either $\dim A_1\cap (V_{01}\wedge \bigwedge^2 V_{23})= 1$ or $\dim A_1\cap (\bigwedge^2 V_{01}\wedge V_{45})= 1$.
If  Item~(2) of~\Ref{prp}{raggieffedue} holds then $\theta$  is unique up to rescaling.
\end{rmk}
Below we will prove a result on  $C_{W,A}$ for certain semistable $A\in{\mathbb S}^{\sF}_{\cF_2}$ (in~\Ref{subsec}{ixvu} we will examine $C_{W,A}$ for arbitrary semistable $A\in{\mathbb S}^{\sF}_{\cF_2}$ with minimal orbit). Let  $A\in{\mathbb S}^{\sF}_{\cF_2}$; there exists $\beta_0\in V_{23}$  well-defined up to rescaling such that
\begin{equation}\label{eccobeta}
A_0=[v_0\wedge v_1\wedge\beta_0],\quad A_4=[\beta_0\wedge v_4\wedge v_5]. 
\end{equation}
We set
\begin{equation}\label{volkswagen}
W_{\infty}:=\la v_0,v_1,\beta_0\ra,\qquad W_{0}:=\la  v_4,v_5, \beta_0 \ra.
\end{equation}
\begin{prp}\label{prp:casobuono}
Let $A\in{\mathbb S}^{\sF}_{\cF_2}$ be $G_{\cF_2}$-semistable with closed orbit and suppose that Item~(1) of~\Ref{prp}{raggieffedue} holds. Let $W\in\Theta_A$.  Then $C_{W,A}$ is a semistable sextic curve of Type II-2 or of Type III-2.
\end{prp}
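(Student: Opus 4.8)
The plan is to put $A$ into a normal form and then exploit a hidden two--dimensional torus of symmetries. Since $A$ is semistable with closed orbit and Item~(1) of~\Ref{prp}{raggieffedue} holds, $A$ is not $G_{\cF_2}$-stable, hence properly $G_{\cF_2}$-semistable; by~\Ref{prp}{effeduepropsemi} it is $G_{\cF_2}$-equivalent to an $A'$ of type~$(1')$, i.e.~$\lambda_1^{\pm}$-split with $d^{\lambda_1^{\pm}}(A'_1)=(1,1)$, where $\lambda_1^{\pm}$ is given by~\eqref{lamuno}. By~\Ref{clm}{slagix} the $G_{\cF_2}$-orbit of $A$ is closed in ${\mathbb S}^{\sF,ss}_{\cF_2}$, while $A'=\lim_{t\to0}\lambda_1^{\pm}(t)A$ lies in $\overline{G_{\cF_2}A}\cap {\mathbb S}^{\sF,ss}_{\cF_2}=G_{\cF_2}A$; thus I may assume $A=A'$ is $\lambda_1^{+}$-split (the $\lambda_1^{-}$ case follows from the self-duality of ${\mathbb S}^{\sF}_{\cF_2}$ recorded in the proof of~\Ref{prp}{raggieffedue}).

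Next I would introduce the torus. Both $\lambda_{\cF_2}$ and $\lambda_1^{+}$ act as scalars on each of $V_{01},V_{23},V_{45}$, they commute, they leave $A$ invariant (it is split for each), and each acts trivially on $\bigwedge^{10}A$: for $\lambda_{\cF_2}$ because $A\in{\mathbb S}^{\sF}_{\cF_2}$, and for $\lambda_1^{+}$ because $d^{\lambda_1^{+}}(A_1)=(1,1)$ forces $\mu(A,\lambda_1^{+})=0$. Their block-weight vectors $(1,0,-1)$ and $(1,-2,1)$ are independent, so they generate a $2$-dimensional subtorus $T<SL(V)$ which normalizes $V=V_{01}\oplus V_{23}\oplus V_{45}$, fixes $A$, and acts trivially on $\bigwedge^{10}A$. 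On the identity component the three block-characters $\chi_{01},\chi_{23},\chi_{45}$ are pairwise distinct and satisfy $\chi_{01}\chi_{23}\chi_{45}=1$, so they are the weights of a maximal torus of an $SL_3$.

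Now let $W\in\Theta_A$. Since $T$ fixes $A$ it acts on the projective variety $\Theta_A$, so $\overline{TW}\subset\Theta_A$ contains a $T$-fixed point $W_0=\lim_{t\to0}\lambda(t)W$ for a suitable $1$-PS $\lambda$ of $T$; consequently $C_{W,A}$ is $\PGL(W)$-equivalent to $C_{W_0,A}$, and it suffices to treat the $T$-fixed $W_0$, for which $W_0=(W_0\cap V_{01})\oplus(W_0\cap V_{23})\oplus(W_0\cap V_{45})$. First one checks $C_{W_0,A}\neq\PP(W_0)$: by~\Ref{prp}{senoncurva} equality would force $A$ into $\XX^{*}_{\cW}\cup\PGL(V)A_k$, which I rule out by a parity/degree comparison on the curve components of $\Theta_A$ as in the proof of~\Ref{prp}{eunosemtipo}, together with the constraints of Item~(1). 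Writing $C_{W_0,A}=V(P)$ and applying~\Ref{clm}{azione} to the elements of $T\cap(GL(W_0)\times GL(U))$, the form $P$ is fixed by the image of $T$ in $SL(W_0)$. If the three block-intersections are all $1$-dimensional (type $(1,1,1)$) the three induced weights are distinct, the image is a maximal torus, and~\Ref{rmk}{pasquetta}(3) gives $P=cX_0^2X_1^2X_2^2$, a sextic of Type~III-2. If instead two spanning directions lie in one block---e.g.~$\langle v_0,v_1,\beta_0\rangle$ or $\langle\beta_0,v_4,v_5\rangle$ with $\beta_0$ as in~\eqref{eccobeta}, or the plane $\langle u\rangle\oplus V_{23}$ produced by Item~(1)---two weights coincide, the image is $1$-dimensional, and~\Ref{rmk}{pasquetta}(1) gives $P=X_0^2F(X_1,X_2)$.

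The main obstacle is the last case: to conclude that $C_{W_0,A}$ is genuinely of Type~II-2 I must show $F$ has no multiple factors. I expect to do this by a local computation at the common point of the residual four lines, using~\Ref{prp}{primisarto} to read off the leading form of $C_{W_0,A}$ as $\det(\psi^{v_0}_w|_{\ov K})$ and showing it factors into four distinct lines, exactly in the spirit of~\Ref{prp}{eunostabtipo}; here semistability enters through~\Ref{rmk}{nonsballare}, which pins the degeneration recorded by Item~(1) to be as mild as possible and so prevents $F$ from acquiring a repeated root. Since Types~II-2 and~III-2 both have minimal orbit and are not of Type~IV, this also records, via~\Ref{rmk}{pershah}, that the associated periods are well defined; for a general (non $T$-fixed) $W$ the conclusion holds up to $\PGL(W)$-equivalence, matching the phrasing in~\Ref{prp}{eunosemtipo}.
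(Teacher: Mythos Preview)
Your opening move---using~\Ref{prp}{effeduepropsemi} and the closed-orbit hypothesis to replace $A$ by a $\lambda_1^{\pm}$-split lagrangian with $d^{\lambda_1^{\pm}}(A_1)=(1,1)$---matches the paper. But from that point the paper takes a much shorter route that you have missed entirely.

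Once $A$ is $\lambda_1^{+}$-split of type $(1,1)$, the summand $A_1$ decomposes as a line in $\bigwedge^2 V_{01}\wedge V_{45}$ plus a line in $V_{01}\wedge\bigwedge^2 V_{23}$; in particular there is $0\neq\gamma\in V_{45}$ with $v_0\wedge v_1\wedge\gamma\in A$. Combined with $v_0\wedge v_1\wedge\beta_0\in A_0$ from~\eqref{eccobeta} this gives $\bigwedge^2 V_{01}\wedge\langle\beta_0,\gamma\rangle\subset A$, hence $A\in\BB^{*}_{\cF_1}$ (the $\lambda_1^{-}$ case is symmetric, using $\beta_0\wedge v_4\wedge v_5\in A_4$ and some $\alpha\wedge v_4\wedge v_5\in A_3$). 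Since $A$ has closed $\PGL(V)$-orbit it is $\PGL(V)$-equivalent to an element of ${\mathbb S}^{\sF'}_{\cF_1}$ with closed $G_{\cF_1}$-orbit, and~\Ref{prp}{giulatesta} and~\Ref{prp}{bronson} finish the proof at once: Type~II-2 in the $G_{\cF_1}$-stable case, Type~III-2 otherwise (where $A\in\PGL(V)A_{III}$). No torus bookkeeping or local computation is needed.

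Your approach, by contrast, has a genuine gap that is worse than ``unfinished.'' You flag ``showing $F$ has no multiple factors'' as the main obstacle and leave it as an expectation; but your implied dichotomy (block-type $(1,1,1)$ gives III-2, the remaining $T$-fixed $W_0$ give II-2) is not correct. The conclusion of the proposition allows Type~III-2, and when $A\in\PGL(V)A_{III}$---a case the paper's proof explicitly covers via~\Ref{prp}{bronson}---every $C_{W,A}$ is of Type~III-2 by~\Ref{prp}{rettetre}. For a $T$-fixed $W_0$ of block-type $(2,1,0)$ such as $W_{\infty}=\langle v_0,v_1,\beta_0\rangle$ this forces $F(X_0,X_1)=X_0^2X_1^2$, which does have multiple factors. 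So the local argument you hope to run via~\Ref{prp}{primisarto} cannot succeed uniformly; you would first have to isolate the $A_{III}$ locus, and at that point you are essentially reproducing the $\cF_1$ analysis that~\Ref{prp}{giulatesta} and~\Ref{prp}{bronson} already provide. The one-line reduction to $\BB^{*}_{\cF_1}$ packages all of this for free.
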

\begin{proof}
By~\Ref{prp}{effeduepropsemi} we know that $A$ is $G_{\cF_2}$-equivalent to $A'$ which is $\lambda_1^{\pm}$-split with $d^{\lambda_1^{\pm}}(A')=(1,1)$. Since $A$ has closed orbit we may assume that $A'=A$. Let $\{\xi_0,\ldots,\xi_5\}$ be the basis of $V$ introduced in the proof of~\Ref{prp}{raggieffedue}. If $A$ is $\lambda_1^{+}$-split we get that there exists $0\not=\gamma\in V_{45}$ such $A$ contains $\xi_0\wedge\xi_1\wedge\gamma$, if  $A$ is $\lambda_1^{-}$-split there exists $0\not=\alpha\in V_{01}$ such $A$ contains $\alpha\wedge\xi_4\wedge\xi_5$. Let $\beta_0$ be as in~\eqref{eccobeta}: then $A$ contains $\xi_0\wedge\xi_1\wedge\beta_0$ and $\beta_0\wedge\xi_4\wedge\xi_5$. It follows that $A\in{\mathbb B}^{*}_{\cF_1}$: thus the proposition follows from~\Ref{prp}{giulatesta} and~\Ref{prp}{bronson}.
\end{proof}
\begin{crl}\label{crl:casobuono}
Let $A\in{\mathbb S}^{\sF}_{\cF_2}$ be $G_{\cF_2}$-semistable  and suppose that Item~(1) of~\Ref{prp}{raggieffedue} holds. Let $W\in\Theta_A$.  Then $C_{W,A}$ is a semistable sextic curve and the period map~\eqref{persestiche} is regular at $C_{W,A}$.
\end{crl}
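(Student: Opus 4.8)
The plan is to reduce to \Ref{prp}{casobuono} by a degeneration argument, exploiting that regularity of the sextic period map~\eqref{persestiche} is a property of the GIT-equivalence class: by \Ref{rmk}{pershah} the map~\eqref{persestiche} is regular at a sextic $C$ exactly when $C$ is semistable and the unique semistable sextic with closed $\PGL(W)$-orbit lying in $\overline{\PGL(W)C}$ is not of Type IV. Thus it suffices to show that $C_{W,A}$ is semistable and that its closed-orbit representative is of Shah Type II-2 or III-2.

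First I would show that $A\in\BB^{*}_{\cF_1}$. Since $A$ is $G_{\cF_2}$-semistable and satisfies Item~(1) of \Ref{prp}{raggieffedue}, \Ref{rmk}{nonsballare} gives that either $\dim A_1\cap(\bigwedge^2 V_{01}\wedge V_{45})=1$ or $\dim A_1\cap(V_{01}\wedge\bigwedge^2 V_{23})=1$. In the first case $A$ contains $v_0\wedge v_1\wedge\gamma$ for some $0\not=\gamma\in V_{45}$, and by~\eqref{eccobeta} it also contains $v_0\wedge v_1\wedge\beta_0$ with $0\not=\beta_0\in V_{23}$; hence $A\supset v_0\wedge v_1\wedge\la\beta_0,\gamma\ra$, which is precisely the flag condition $A\supset\bigwedge^2 V'_{01}\wedge V'_{23}$ defining $\BB^{\sF'}_{\cF_1}$ for a suitable basis $\sF'$ (with $V'_{01}=V_{01}$ and $V'_{23}=\la\beta_0,\gamma\ra$). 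The second case is the image of the first under the duality isomorphism $\delta_V$ of~\eqref{specchio}, and since $\BB^{*}_{\cF_1}$ is $\delta_V$-invariant we again obtain $A\in\BB^{*}_{\cF_1}$. Replacing the pair $(W,A)$ by a $\PGL(V)$-translate $(gW,gA)$, which replaces $C_{W,A}$ by a projectively equivalent sextic, I may assume $A\in\BB^{\sF}_{\cF_1}$; semistability then forces the reduced $\lambda_{\cF_1}$-type of $A$ to be exactly $(2,0)$, since any larger type has $\mu>0$ by \Ref{clm}{cucs}.

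Next I would degenerate along $\lambda_{\cF_1}$. Set $A_0:=\lim_{t\to 0}\lambda_{\cF_1}(t)A$; by \Ref{clm}{limite} this is $\lambda_{\cF_1}$-split of type $(2,0)$, i.e.\ $A_0\in{\mathbb S}^{\sF}_{\cF_1}$, and it is semistable (being $G_{\cF_2}$-equivalent to $A$), hence $G_{\cF_1}$-semistable by \Ref{clm}{slagix}. For the given $W\in\Theta_A$ the limit $W_0:=\lim_{t\to 0}\lambda_{\cF_1}(t)W$ exists because $\Gr(3,V)$ is complete, and passing to the limit in the closed incidence $\bigwedge^3\lambda_{\cF_1}(t)W\subset\lambda_{\cF_1}(t)A$ gives $W_0\in\Theta_{A_0}$. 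The construction $C_{-,-}$ is $\GL(V)$-equivariant, so $C_{\lambda_{\cF_1}(t)W,\lambda_{\cF_1}(t)A}=\lambda_{\cF_1}(t)_{*}C_{W,A}$ for $t\not=0$; letting $t\to 0$ and identifying the planes $\PP(\lambda_{\cF_1}(t)W)$ with $\PP(W_0)$ exhibits $C_{W_0,A_0}$ as a limit $\lim_{t\to 0}g(t)_{*}C_{W,A}$ for a path $g(t)\in\GL(W_0)$, i.e.\ $C_{W_0,A_0}\in\overline{\PGL(W)\cdot C_{W,A}}$. Now \Ref{prp}{giulatesta} and \Ref{prp}{bronson} apply to $A_0\in{\mathbb S}^{\sF}_{\cF_1}$: they give that $C_{W_0,A_0}$ is a semistable sextic of Type II-2 (if $A_0$ is $G_{\cF_1}$-stable) or III-2 (if properly semistable), in particular a genuine sextic, not of Type IV, and with closed $\PGL(W_0)$-orbit by Shah's list (\Ref{thm}{listashah}: Types II and III are the minimal properly-semistable orbits).

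Finally I would transfer these properties back to $C_{W,A}$. Since $C_{W_0,A_0}$ is semistable and lies in $\overline{\PGL(W)\cdot C_{W,A}}$, and the unstable locus is closed and $\PGL(W)$-invariant, $C_{W,A}$ must itself be semistable. Moreover $C_{W_0,A_0}$ has closed orbit and belongs to the orbit closure of $C_{W,A}$, so it is the unique closed orbit in that closure, i.e.\ the GIT-representative of $C_{W,A}$; being of Type II-2 or III-2 it is not of Type IV, and \Ref{rmk}{pershah} yields regularity of~\eqref{persestiche} at $C_{W,A}$. The only genuinely delicate points are the flag bookkeeping in the first step, together with the appeal to $\delta_V$-self-duality of $\BB^{*}_{\cF_1}$ for the symmetric case, and the equivariant-limit identification in the third step; both are routine given the machinery already in place, so I expect the main conceptual content to be the reduction $A\in\BB^{*}_{\cF_1}$ combined with the observation that regularity of~\eqref{persestiche} is insensitive to such $1$-PS degenerations.
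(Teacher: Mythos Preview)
Your proof is correct, and the underlying idea --- degenerate to a situation where the $\cF_1$-analysis applies, then transfer back --- is the same as the paper's, but the organization differs. The paper's argument is a three-line proof by contradiction: replace $A$ by its $G_{\cF_2}$-equivalent closed-orbit representative $A'\in{\mathbb S}^{\sF}_{\cF_2}$ (still satisfying Item~(1), a closed $G_{\cF_2}$-invariant condition), observe that a bad $C_{W,A}$ would specialize to a bad $C_{W',A'}$, and contradict \Ref{prp}{casobuono}. You instead establish $A\in\BB^{*}_{\cF_1}$ directly from Item~(1) --- in case~(a) one gets $A\supset\bigwedge^2 V_{01}\wedge\la\beta_0,\gamma\ra$ immediately, and case~(b) follows by your duality argument or, equivalently, by the symmetric direct argument that $\alpha\wedge v_2\wedge v_3\in A_1$ forces $A_3\cap(\bigwedge^2 V_{23}\wedge V_{45})\not=\{0\}$, giving $A\supset\bigwedge^2 V_{23}\wedge\la\alpha,\delta\ra$ --- and then degenerate along $\lambda_{\cF_1}$ to land in ${\mathbb S}^{\sF}_{\cF_1}$, citing \Ref{prp}{giulatesta} and \Ref{prp}{bronson}. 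The content is the same: your route essentially inlines the proof of \Ref{prp}{casobuono} (which itself shows $A'\in\BB^{*}_{\cF_1}$ and reduces to the $\cF_1$-case) and fuses it with the specialization step. The mild gain of your organization is that it makes explicit that the $\BB^{*}_{\cF_1}$ membership requires no closed-orbit hypothesis.
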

\begin{proof}
By contradiction. Suppose that $C_{W,A}$ is either $\PP(W)$ or a sextic curve in the indeterminacy locus of  the period map~\eqref{persestiche}. Let $A'\in{\mathbb S}^{\sF}_{\cF_2}$ be $G_{\cF_2}$-semistable with closed orbit and $G_{\cF_2}$-equivalent to $A$: thus $A'$ belongs to the closure of $G_{\cF_2}A$. It follows that there exists $W'\in\Theta_{A'}$ such that $C_{W',A'}$ is either $\PP(W')$ or a sextic curve in the indeterminacy locus of  the period map~\eqref{persestiche} (for $W=W'$): that contradicts~\Ref{prp}{casobuono}.
\end{proof}
\subsection{$\gB_{\cF_2}\cap \gI$}\label{subsec:ixvu}
\setcounter{equation}{0}
\subsubsection{Set-up and statement of the main results}\label{subsubsec:carnevale}
Let $U$  be a complex vector-space of dimension $4$ and choose  an isomorphism 
\begin{equation}\label{anakin}
\psi\colon\bigwedge^2 U\overset{\sim}{\lra} V.
\end{equation}
Let $\{u_0,u_1,u_2,u_3\}$ be a basis of $U$ and  $\sF$ the basis of $V$   
  given by
\begin{equation}\label{natavota}
v_0=u_0\wedge u_1,\ v_1=u_0\wedge u_2,\ v_2=u_0\wedge u_3,\ v_3=u_1\wedge u_2,
\ v_4=u_1\wedge u_3,\ v_5=u_2\wedge u_3.
\end{equation}
Consider the action of $\CC^{\times}$ on $\PP(U)$ defined by $g(t):=\diag(t,1,1,t^{-1})$ in the basis 
$\{u_0,u_1,u_2,u_3\}$: then
\begin{equation}\label{pulsazioni}
\bigwedge^2 g(t)=\lambda_{\cF_2}(t).  
\end{equation}
 Let $D\subset\PP(U)$ be the  smooth conic
\begin{equation}\label{buongusto}
D:=\{[\lambda^2 u_0+\lambda\mu u_1+\mu^2 u_3] \mid [\lambda,\mu]\in\PP^1 \}.
\end{equation}
(no misprint, the vectors are $u_0$, $u_1$ and $u_3$) and  $i_{+}\colon\PP(U)\hra \Gr(3,V)$  be the map of~\eqref{piumenomap}:   
 then $i_{+}(D)$ is an irreducible curve (of Type ${\bf Q}$ according to the classification of~\cite{ogtasso}) parametrizing pairwise incident projective planes. 
  Next let  
\begin{equation}\label{obiwan}
\WW^{\psi}:=\{ A\in\lagr \mid \Theta_A\supset i_{+}(D) \}.
\end{equation}
Let $t\in \CC^{\times}$: then $D$ is sent to itself by $g(t)$ and hence  $\lambda_{\cF_2}(t)$ defines a projectivity of $\PP(V)$ mapping $i_{+}(D)$ to itself. It follows that $\bigwedge^{10}\lambda_{\cF_2}$ defines an action of $\CC^{\times}$    on   the affine cone over $\WW^{\psi}$. Let
\begin{equation}
\WW^{\psi}_{\rm fix}:=\{A\in\WW^{\psi} \mid \text{$\bigwedge^{10}A$ is in the fixed locus of $\bigwedge^{10}\lambda_{\cF_2}(t)$ for all $t\in\CC^{\times}$}  \}.
\end{equation}
We claim that
\begin{equation}\label{serenissima}
\WW^{\psi}_{\rm fix}\subset {\mathbb S}^{\sF}_{\cF_2}.
\end{equation}
In fact let $A\in \WW^{\psi}_{\rm fix}$. Then $\bigwedge^{10}A$ is  fixed by $\bigwedge^{10}\lambda_{\cF_2}(t)$ for every $t\in\CC^{\times}$: it follows that $A$ is $\lambda_{\cF_2}(t)$-split, say $A=(A_0\bigoplus\ldots\bigoplus A_4)$,  of reduced type $(2,0)$, $(1,2)$ or $(0,4)$. Now notice that
 \begin{equation*}
 i_{+}([1,0,0,0])=\la v_0,v_1,v_2\ra,\quad i_{+}([0,0,0,1])=\la v_4,v_5,v_2\ra
\end{equation*}
and hence $v_0\wedge v_1\wedge v_2\in A_0$ and $v_2\wedge v_4\wedge v_5\in A_4$. Thus $\dim A_0\ge 1$ and $\dim A_4\ge 1$. It follows that  $A$ is of reduced type $(1,2)$ i.e.~it belongs to  ${\mathbb S}^{\sF}_{\cF_2}$. We have proved~\eqref{serenissima}.
Let  $A\in \WW^{\psi}_{\rm fix}$: then 
 \begin{equation}\label{primaressa}
 W_{\infty}=i_{+}([1,0,0,0])=\la v_0,v_1,v_2\ra,\quad W_{0}=i_{+}([0,0,0,1])=\la v_4,v_5,v_2\ra.
\end{equation}
In particular, letting $\beta_0$ be is as in~\eqref{volkswagen},  we may set
\begin{equation}\label{pingu}
\beta_0=v_2.
\end{equation}
Let $\{X_0,X_1,X_2\}$ be the basis of $W_{\infty}^{\vee}$ dual to the basis $\{v_0,v_1,v_2\}$. Write $C_{W_{\infty},A}=V(P_{\infty})$ where $P_{\infty}\in\CC[X_0,X_1,X_2]_6$. 
Since $\lambda_{\cF_2}$ acts trivially on $\bigwedge^{10}A$ and it maps $W_{\infty}$  to itself we may apply~\Ref{clm}{azione}: it follows that  $P_{\infty}$ is  fixed by every element of $\{\diag(t,t,t^{-2})\}$. Thus 
\begin{equation}\label{equavolk}
C_{W_{\infty},A}=V( F_{\infty} X_2^2),
\qquad F_{\infty}\in\CC[X_0,X_1]_4.
\end{equation}
Next we notice the following. Let 
\begin{equation*}
\Lambda:=\PP(\psi(\bigwedge^2 \la u_0,u_1,u_3\ra))=\PP(\la v_0,v_2,v_4 \ra)\subset\PP(V).
\end{equation*}
Given  $p\in D$ let $W(p)=i_{+}(p)$. The  projective plane $\Lambda$  intersects $\PP(W(p))$ in the line $L_{W(p)}\subset\PP(W(p))$ parametrizing lines  contained in $\PP\la u_0,u_1,u_3\ra$ and containing $p$: each such line, with the exception of the line tangent to $D$, is parametrized by  the intersection (in $\PP(\bigwedge^2 U)=\PP(V)$) $\PP(W(p))\cap \PP(W(q))$ for a suitable $q\in (D\setminus\{p\})$. 
By~\Ref{crl}{cnesinerre} it follows that $C_{W(p),A}$ is singular along $L_{W(p)}$ (or $C_{W(p),A}=\PP(W)$). 
Now we consider $W_{\infty}=W([1,0,0,0])$: then $L_{W_{\infty}}=V(X_1)$ and recalling~\eqref{equavolk} we get that
\begin{equation}\label{duedoppie}
C_{W_{\infty},A}=
V((a_{2} X_0^2 +a_{3} X_0 X_1+a_{4} X_1^2) X_1^2 X_2^2).
\end{equation}
We let
\begin{equation}\label{eccowupsi}
\XX^{\psi}:=\{ A\in \WW^{\psi}_{\rm fix} \mid C_{W_{\infty},A}=
V((a_{3} X_0 X_1+a_{4} X_1^2) X_1^2 X_2^2) \}.
\end{equation}
Thus $A\in \XX^{\psi}$ if and only if $C_{W_{\infty},A}$ is not a semistable sextic in the regular locus of the period 
map~\eqref{persestiche}.
\begin{dfn}
$\gX_{\cV}\subset\gM$ is the set of points represented by  semistable lagrangians $A\in\XX^{\psi}$ (of course $\gX_{\cV}$ is independent of $\psi$).
\end{dfn}
By definition we have $\gX_{\cV}\subset\gB_{\cF_2}\cap\gI$.
\begin{rmk}\label{rmk:diehard}
 The smooth quadric $Z\subset\PP(U)$ given by
\begin{equation*}
Z:=\{[\eta_0 u_0+\eta_1 u_1+\eta_2 u_2+\eta_3 u_3] \mid \eta_0\eta_3-\eta_1^2=0\}
\end{equation*}
is left invariant by $g(t)$ for every $t\in\CC^{\times}$ and  contains $D$. It follows that if $A\in\XX^{*}_{\cW}(U)$ then there exists $g\in\PGL(V)$ such that  $gA\in\WW^{\psi}_{\rm fix}$ and hence $gA\in\XX^{\psi}$. This proves that $\gX_{\cW}\subset\gX_{\cV}$.
\end{rmk}
Below is the main result of the present subsection - it is obtained by putting together~\Ref{prp}{guzzanti} 
and~\Ref{subsubsec}{ultimopasso}.
\begin{prp}
$\gX_{\cV}$ is  irreducible of dimension $3$,  and it is equal to $\gB_{\cF_2}\cap \gI$.
\end{prp} 
\subsubsection{The $3$-fold swept out by the projective planes parametrized by $i_{+}(D)$}
We will examine the curve $\Theta:=i_{+}(D)$ and the variety
\begin{equation}\label{regolo}
R_{\Theta}:=\bigcup_{W\in\Theta} \PP(W).
\end{equation}
Let $\{W_1,-Z_2,W_3,Z_3,W_2,Z_1\}$ be the basis of $V^{\vee}$ dual to the basis $\sF$ of~\eqref{natavota}: thus
\begin{equation}
v=W_1 v_0-Z_2 v_1 + W_3 v_2 + Z_3 v_3 + W_2 v_4+Z_1 v_5.
\end{equation}
Let $W,Z$ be the column vectors with entries $W_1,W_2,W_3$ and $Z_1,Z_2,Z_3$ respectively. 
Let
\begin{equation*}
B:=
\left(
\begin{array}{rrr}
0  & 1   & 0  \\
 1 & 0  &  0  \\
 0 & 0   & -2  
\end{array}
\right).
\end{equation*}
The Pl\"ucker quadratic relation is $W^t\cdot Z=0$ and we have 
\begin{equation*}
R_{\Theta}=V(W^t\cdot Z)\cap V(Z^t\cdot B \cdot Z).
\end{equation*}
Thus
\begin{equation}\label{fascioquad}
|\cI_{R_{\Theta}}(2) | =\PP(\la Q_0, Q_{\infty}  \ra),\quad 
Q_0:=V(W^t\cdot Z),\  Q_{\infty}:=V(Z^t\cdot B \cdot Z).
\end{equation}
\begin{rmk}\label{rmk:unaquad}
It follows from~\eqref{fascioquad} that there is a unique  singular quadric containing $R_{\Theta}$, namely $Q_{\infty}$. 
\end{rmk}
 We will describe $ \Aut(R_{\Theta})<\PGL(V)$. Let $g\in \Aut(R_{\Theta})$.  Then  $g(Q_{\infty})=Q_{\infty}$ 
 because of~\Ref{rmk}{unaquad}. It follows that $g(V(Z_1, Z_2,Z_3))=V(Z_1, Z_2,Z_3)$ and hence 
\begin{equation}
f^{*}
\left(
\begin{array}{c}
W \\
Z
\end{array}
\right)
=
\left(
\begin{array}{cc}
L & M \\
0_3 & N
\end{array}
\right)\cdot
\left(
\begin{array}{c}
W \\
Z
\end{array}
\right)
\end{equation}
where $L$, $M$, $N$ are $3\times 3$ matrices, $0_3$ is the $3\times 3$ zero matrix. Equation~\eqref{fascioquad} gives that 
\begin{equation}\label{eccogruppo}
N^t\cdot B\cdot N=\mu B,\quad L^t\cdot N=\nu 1_3,\quad M^t\cdot N=\tau B+P,\qquad 
\mu,\nu,\tau\in\CC,\quad P^t=-P.
\end{equation}
The intersection $\Aut(R_{\Theta})\cap G_{\cF_2}$ acts on $\WW^{\psi}_{\rm fix}$. It follows from~\eqref{eccogruppo} that the elements of  $\Aut(R_{\Theta})\cap G_{\cF_2}$  are represented by matrices
\begin{equation}\label{autocentro}
\left(
\begin{array}{cccccc}
a^{-2}  &  0 & 0 & 0 & m_1 & 0  \\
0 &  b^{-2}  &  0  &  m_2 & 0 & 0 \\
0 &  0 & a^{-1}b^{-1}  &  0  & 0 &  m_3  \\
0 & 0 &  0 &  a^2  &  0 & 0   \\
0 & 0 & 0 &  0 &  b^2  &  0    \\
0 & 0 & 0 & 0 &  0 &  ab     \\
\end{array}
\right),\qquad a^2 m_{1} +b^2 m_{2}+ ab m_{3}=0.
\end{equation}
In particular 
\begin{equation}\label{tredim}
\dim\Aut(R_{\Theta})\cap G_{\cF_2}=3.
\end{equation}
\begin{clm}\label{clm:agitrans}
Let  $Q,Q'\in |\cI_{R_{\Theta}}(2) |$ be smooth quadrics  and $h\in\Aut(\Theta)$. 
There exists $g\in\Aut(R_{\Theta})$ such that  $g(Q)=Q'$   and the automorphism $\ov{g}\in\Aut(\Theta)$ induced by $g$ is equal to $h$.
\end{clm}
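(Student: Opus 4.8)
The plan is to factor the problem through the restriction homomorphism $r\colon\Aut(R_\Theta)\to\Aut(\Theta)$ recording how $g\in\Aut(R_\Theta)$ permutes the planes of the ruling of $R_\Theta$. Every $g\in\Aut(R_\Theta)$ preserves the pencil $|\cI_{R_\Theta}(2)|=\PP(\la Q_0,Q_\infty\ra)$ and must fix its unique singular member $[Q_\infty]$ (see~\eqref{fascioquad}), so it carries smooth quadrics of the pencil to smooth quadrics; moreover a determinant computation shows $Q_0+\lambda Q_\infty$ is smooth for every $\lambda\in\CC$, so the smooth quadrics form an affine line on which $\Aut(R_\Theta)$ acts. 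Granting the two facts (I) $r$ is surjective and (II) $\ker r$ acts transitively on the smooth quadrics of the pencil, the claim follows at once: given $Q,Q'$ smooth and $h\in\Aut(\Theta)$, I would choose $g_1$ with $r(g_1)=h$ by~(I), observe that $g_1(Q)$ is again a smooth member of the pencil, choose $g_2\in\ker r$ with $g_2(g_1(Q))=Q'$ by~(II), and set $g:=g_2g_1$; then $r(g)=r(g_2)r(g_1)=h$ and $g(Q)=g_2(g_1(Q))=Q'$.

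For~(I) I would exhibit an explicit $\SL_2$ inside $\Aut(R_\Theta)$ surjecting onto $\Aut(\Theta)\cong\PGL_2$. The conic $D$ of~\eqref{buongusto} is the image of the Veronese map $\PP^1\hra\PP(\la u_0,u_1,u_3\ra)$, so the homomorphism $\SL_2\to\SL(U)$ acting on $\la u_0,u_1,u_3\ra$ by $\Sym^2$ and fixing $u_2$ preserves $D$ and induces the full $\Aut(D)=\PGL_2$. Via $\bigwedge^2(\cdot)$ this produces a subgroup of $\SL(V)$ preserving $i_{+}(D)=\Theta$, hence preserving $R_\Theta$, and inducing the full $\PGL_2=\Aut(\Theta)$. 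These extra elements are genuinely needed, since the torus part of $\Aut(R_\Theta)\cap G_{\cF_2}$ displayed in~\eqref{autocentro} realizes only a torus of $\Aut(\Theta)$.

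For~(II) I would use the block description~\eqref{eccogruppo} to write down the one-parameter subgroup $g_\tau$ with $L=N=1_3$, $P=0$ and $M=\tau B$. The relations~\eqref{eccogruppo} hold with $\mu=\nu=1$, so $g_\tau\in\Aut(R_\Theta)$; since $g_\tau^{*}Q_0=Q_0+\tau Q_\infty$ and $g_\tau^{*}Q_\infty=Q_\infty$, the element $g_\tau$ translates the affine coordinate on the line of smooth quadrics by $\tau$, so $\{g_\tau\}$ alone already acts transitively on the smooth quadrics. It then remains to check $g_\tau\in\ker r$, i.e.\ that $g_\tau$ fixes every plane $W\in\Theta$. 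This is the one point requiring a genuine computation: writing $W=W_{[\lambda,\mu]}=(\lambda^2u_0+\lambda\mu u_1+\mu^2u_3)\wedge U$ in the basis~\eqref{natavota}, one finds $W$ is spanned by two vectors lying in the vertex $\Pi_\infty=\la v_0,v_2,v_4\ra$ of $Q_\infty$ (which $g_\tau$ fixes pointwise) together with one transverse vector $a_3$, and that $g_\tau(a_3)-a_3$ lands in $W\cap\Pi_\infty$; hence $g_\tau(W)=W$.

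The main obstacle is precisely this last verification in~(II): the shears $g_\tau$ do move the quadric in the pencil, so it is not a priori clear that they leave the ruling $\Theta$ pointwise fixed, and decoupling the action on the pencil from the action on $\Theta$ is exactly what makes the factorization strategy work. Everything else is either formal (the reduction) or a direct surjectivity check (the $\SL_2$ in~(I)). I note that for (II) one would in fact only need $\ker r$ to have a single nontrivial affine orbit on the line of smooth quadrics, but the explicit translations $g_\tau$ give transitivity outright once their triviality on $\Theta$ is confirmed.
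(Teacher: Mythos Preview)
Your proposal is correct and follows essentially the same two-step structure as the paper: exhibit a one-parameter family of shears in $\Aut(R_\Theta)$ acting transitively on the affine line of smooth quadrics in the pencil, and lift $\Aut(D)\cong\PGL_2$ to $\Aut(R_\Theta)$ by extending automorphisms of $D$ to $\PP(U)$ and taking $\bigwedge^2$. The only cosmetic difference is the choice of shear: the paper takes $g_s$ from the explicit unipotent part of $\Aut(R_\Theta)\cap G_{\cF_2}$ in~\eqref{autocentro} (namely $a=b=1$, $m_1=2s$, $m_2=0$, $m_3=-2s$), whereas you take $g_\tau$ with $L=N=1_3$, $M=\tau B$, $P=0$; both are valid and both fix each plane of $\Theta$. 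Your sketch of the verification that $g_\tau\in\ker r$ is accurate: with $a_1=-\lambda v_0-v_2$, $a_2=\lambda^2 v_0-v_4\in\Pi_\infty$ and $a_3=\lambda^2 v_1+\lambda v_3-v_5$, one computes $g_\tau(a_3)-a_3=\tau(2\lambda a_1+a_2)\in W$.
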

\begin{proof}
Let $Q_s:=V(W^t\cdot Z+sZ^t\cdot B \cdot Z)$ - the notation is consistent with~\eqref{fascioquad}. Thus $Q_s\in  |\cI_{R_{\Theta}}(2) |$ is a smooth quadric   and conversely every smooth quadric in $ |\cI_{R_{\Theta}}(2) |$ is equal to $Q_s$ for some $s\in\CC$. Let $g_s\in\PGL(V)$ be such that 
\begin{equation*}
g_s^{*}W_1=W_1+2s Z_2,\quad g_s^{*}W_2=W_2,\quad g_s^{*}W_3=W_3-2s Z_3,\quad 
\quad  g_s^{*}Z_i=Z_i.
\end{equation*}
Then $g_s \in\Aut(R_{\Theta})\cap G_{\cF_2}$ (it corresponds to $a=b=1$, $m_{1}=2s$, $m_{2}=0$ and $m_{3}=-2s$ in~\eqref{autocentro}) and
$g_s^{*} (Q_0)=(Q_s)$. To finish the proof it suffices to notice 
that every $\varphi\in\Aut(D)$ extends  to an automorphism of $\PP(U)$ and hence it induces a projectivity of $\PP(\bigwedge^2 U)=\PP(V)$ sending $R_{\Theta}$ to itself.
\end{proof}
 \subsubsection{Explicit description of $\WW^{\psi}_{\rm fix}$.}\label{subsubsec:renziboy}
First we explain Table~\eqref{odescalchi}. Let $\la\la i_{+}(D)\ra\ra \subset A_{+}(U)$ be the span of the affine cone over $i_{+}(D)$. Going through Table~\eqref{basipluck} one gets that a basis of $\la\la i_{+}(D)\ra\ra$ is given by the first five entries of Table~\eqref{odescalchi}. 
\begin{table}[tbp]\scriptsize
\caption{ Bases of $\la\la i_{+}(D)\ra\ra$ and of $\la\la i_{+}(D)\ra\ra^{\bot}$.}\label{odescalchi}
\vskip 1mm
\centering
\renewcommand{\arraystretch}{1.60}
\begin{tabular}{lll}
\toprule
  $\alpha$-$\beta$ notation &  explicit expression &  action of $\lambda_{\cF_2}(t)$ \\
 \midrule
    $\alpha_{(2,0,0,0)}$ &    $ v_0\wedge v_1 \wedge v_2$ &    $t^2$  \\
  $\alpha_{(1,1,0,0)}$ &     $v_0\wedge(v_1\wedge v_4 - v_2\wedge v_3)$ &      $t$ \\
 $\alpha_{(0,2,0,0)}+\alpha_{(1,0,0,1)}$ &     $v_0\wedge v_2\wedge v_5 + v_0\wedge v_3\wedge v_4 - v_1 \wedge v_2\wedge v_4$  &  $1$ \\
$\alpha_{(0,1,0,1)}$ & $v_0\wedge v_4\wedge v_5 + v_2\wedge v_3\wedge v_4$  &  $t^{-1}$ \\
$\alpha_{(0,0,0,2)}$ &       $v_2\wedge v_4\wedge v_5$ &     $t^{-2}$ \\
 \midrule
$\alpha_{(1,0,1,0)}$ &     
    $v_0\wedge v_1\wedge v_5 - v_1 \wedge v_2\wedge v_3$  &  $t$ \\
$\alpha_{(0,2,0,0)}-\alpha_{(1,0,0,1)}$ &     $-v_0\wedge v_2\wedge v_5 + v_0\wedge v_3\wedge v_4 + v_1 \wedge v_2\wedge v_4$  &  $1$ \\
$\alpha_{0,1,1,0)}$ &     $ v_0\wedge v_3\wedge v_5  + v_1\wedge v_3\wedge  v_4$ &     $1$ \\
$\alpha_{(0,0,2,0)}$ &   $ v_1\wedge v_3\wedge v_5$   &  $1$ \\
$\alpha_{(0,0,1,1)}$ &  $v_1\wedge v_4 \wedge v_5 +v_2\wedge v_3\wedge v_5 $ &  $t^{-1}$ \\
 $\beta_{(0,0,1,1)}$ &     $ -v_0\wedge v_1\wedge v_4 - v_0\wedge  v_2\wedge  v_3$ &     $t$ \\
$2\beta_{(0,2,0,0)}-\beta_{(1,0,0,1)}$ &     $v_0\wedge v_3\wedge v_5 + 2 v_1\wedge v_2\wedge v_5 
- v_1 \wedge v_3\wedge v_4$  &  $1$ \\
$\beta_{(0,1,1,0)}$ &     $ -v_0\wedge v_2\wedge v_5 -  v_1\wedge v_2\wedge  v_4$ &     $1$ \\
$\beta_{(0,0,2,0)}$ &   $ 4 v_0\wedge v_2\wedge v_4$   &  $1$ \\
$\beta_{(1,0,1,0)}$ &     $v_0\wedge v_4\wedge v_5 - v_2 \wedge v_3\wedge v_4$  &  $t^{-1}$  \\
\bottomrule 
\end{tabular}
\end{table} 
It follows by a straightforward computation that  the elements of Table~\eqref{odescalchi}
form a basis of $i_{+}(D)^{\bot}$.  Notice that each such element spans a subspace invariant under the action of  $\lambda_{\cF_2}(t)$ for $t\in\CC^{\times}$: the corresponding character of $\CC^{\times}$ is contained in the third column of Table~\eqref{odescalchi}.  
Let $P_D\subset A_{+}(U)$   be the subspace spanned by  the elements of Table~\eqref{cibasi} which belong to lines $6$ through $10$ and 
 $Q_D\subset A_{-}(U)$ be the subspace spanned by  the elements of Table~\eqref{cibasi} which belong to lines $11$ through $15$. Both $P_D$ and $Q_D$ are isotropic for $(,)_V$ and the symplectic form identifies one with the dual of the other; thus the restriction of $(,)_V$ to $P_D\oplus Q_D$ is a symplectic form. It follows that a lagrangian $A\in\lagr$ contains $ i_{+}(D)$ if and only if it is equal to $\la\la i_{+}(D)\ra\ra \oplus R$ where $R\in\LL\GG(P_D\oplus Q_D)$.  Let $P_D^0\subset P_D$ and $Q_D^0\subset Q_D$ be the subspaces of elements which are invariant for $\lambda_{\cF_2}$ i.e.~the spaces spanned by  the elements on rows $7$ through $9$ and $12$ through $14$ of Table~\eqref{odescalchi} respectively. The symplectic form $(,)_V$ identifies $P_D^0$  with the dual of $Q_D^0$ and the restriction of $(,)_V$ to $P_D^0\oplus Q_D^0$ is a symplectic form: we let $\LL\GG(P_D^0\oplus Q_D^0)$ be the corresponding symplectic grassmannian. 
 Let ${\bf c}=[c_0,c_1]\in\PP^1$ and ${\bf L}\in\LL\GG(P_D^0\oplus Q_D^0)$; we let
\begin{equation}\label{orsini}
R_{\bf c}:=\la c_0 \alpha_{(1,0,1,0)}+c_1 \beta_{(0,0,1,1)}, 
c_0 \alpha_{(0,0,1,1)} + c_1 \beta_{(1,0,1,0)} \ra.
\end{equation}
\begin{equation}
A_{{\bf c},{\bf L}}:= \la\la i_{+}(D) \ra\ra \oplus R_{\bf c} \oplus {\bf L}.
\end{equation}
Looking at  the action of $\lambda_{\cF_2}(t)$ on  the given bases of $P_D$ and $Q_D$ one gets that 
\begin{equation}\label{rettaperlagr}
 \WW^{\psi}_{\rm fix}=\{ A_{{\bf c},{\bf L}} \mid ({\bf c},{\bf L})\in\PP^1\times \LL\GG(P_D^0\oplus Q_D^0) \}.
\end{equation}
\subsubsection{$\gX_{\cV}$ is irreducible of dimension $3$}\label{subsubsec:ruocco}
The main result of the present subsubsection is the following.
\begin{prp}\label{prp:guzzanti}
$\gX_{\cV}$ is  irreducible of dimension $3$. 
\end{prp}
The proof will be given at the end of the subsubsection. Let  $\cU\subset  \LL\GG(P_D^0\oplus Q_D^0)$ 
be the dense open subset of ${\bf L}$ such that ${\bf L}\cap Q_D^0=\{0\}$. Let
\begin{multline}\label{mammecompet}
{\bf L}_M:= 
\la \alpha_{(0,2,0,0)}-\alpha_{(1,0,0,1)} + m_{11}(2\beta_{(0,2,0,0)} - \beta_{(1,0,0,1)}) +
2m_{12} \beta_{(0,1,1,0)}+ 4m_{13} \beta_{(0,0,2,0)},\\
\alpha_{(0,1,1,0)} + m_{12}(2\beta_{(0,2,0,0)} - \beta_{(1,0,0,1)}) +
m_{22} \beta_{(0,1,1,0)}+ 2m_{23} \beta_{(0,0,2,0)},\\
\alpha_{(0,0,2,0)} + m_{13}(2\beta_{(0,2,0,0)} - \beta_{(1,0,0,1)}) +
m_{23} \beta_{(0,1,1,0)}+ 2m_{33} \beta_{(0,0,2,0)} \ra 
\end{multline}
where $m_{ij}$ are arbitrary complex numbers - here $M$ is the symmetric $3\times 3$-matrix with entries the given $m_{ij}$'s. 
A straightforward computation (use the last column of Table~\eqref{basipluck}) gives that ${\bf L}_M\in \cU$ and that conversely every ${\bf L}\in\cU$ is equal to ${\bf L}_M$ for a unique $M$. Next recall that  $A_{{\bf c},{\bf L}}\in \WW^{\psi}_{\rm fix}$ is sent to itself by the $1$-PS $\lambda_{\cF_2}$  and hence $A_{{\bf c},{\bf L}}$ decomposes as the direct sum of its weight subspaces: we let $A_{{\bf c},{\bf L}}(i)\subset A_{{\bf c},{\bf L}}$ be the weight-$i$ subspace (thus $A_{{\bf c},{\bf L}}(i)$ is $A_{{\bf c},{\bf L},2-i}$ in the old notation). Tables~\eqref{basezero}, \eqref{baseuno} and~\eqref{basemenouno} give bases of $A_{{\bf c},{\bf L}_M}(i)$ for $i=0,\pm 1$. A few explanations regarding notation:  we denote $v_i\wedge v_j\wedge v_k$ by $(ijk)$, we let $\ell_j$ be the $j$-th element of the basis of ${\bf L}_M$ given by~\eqref{mammecompet}. 
\begin{table}[tbp]\scriptsize
\caption{Basis of $A_{{\bf c},{\bf L}_M}(0)$.}\label{basezero}
\vskip 1mm
\centering
\renewcommand{\arraystretch}{1.60}
\begin{tabular}{rrrrrrrrr}
\toprule
$(024)$  &  $(025)$ &  $(034)$   &  $(035)$ &  $(124)$  &  $(125)$ &  $(134)$   &  $(135)$ &  {\rm element of basis}   \\
\midrule
 $0$ & $1$ &  $1$ & $0$ & $-1$ & $0$ & $0$ & $0$ & $\alpha_{(0,2,0,0)}+\alpha_{(1,0,0,1)}$  \\
 \midrule
 $16 m_{13}$ & $-2m_{12}-1$ &  $1$ & $m_{11}$ & $-2m_{12}+1$ & $2m_{11}$ & $-m_{11}$ & $0$ & $\ell_1$  \\
  \midrule
 $8 m_{23}$ & $-m_{22}$ &  $0$ & $m_{12}+1$ & $-m_{22}$ & $2m_{12}$ & $-m_{12}+1$ & $0$ & $\ell_2$  \\
 \midrule
 $8 m_{33}$ & $-m_{23}$ &  $0$ & $m_{13}$ & $-m_{23}$ & $2m_{13}$ & $-m_{13}$ & $1$ & $\ell_3$  \\
\bottomrule 
\end{tabular}
\end{table} 
\begin{table}[tbp]\scriptsize
\caption{Basis of $A_{{\bf c},{\bf L}}(1)$.}\label{baseuno}
\vskip 1mm
\centering
\renewcommand{\arraystretch}{1.60}
\begin{tabular}{rrrrr}
\toprule
$(014)$  &  $(015)$ &  $(023)$   &  $(123)$ &    {\rm element of basis}   \\
\midrule
 $1$ & $0$ &  $-1$ & $0$ &  $\alpha_{(1,1,0,0)}$  \\
 \midrule
$-c_1$ & $c_0$ &  $-c_1$ & $-c_0$ &  $c_0\alpha_{(1,0,1,0)}+c_1\beta_{(0,0,1,1)}$    \\
\bottomrule 
\end{tabular}
\end{table} 
\begin{table}[tbp]\scriptsize
\caption{Basis of $A_{{\bf c},{\bf L}}(-1)$.}\label{basemenouno}
\vskip 1mm
\centering
\renewcommand{\arraystretch}{1.60}
\begin{tabular}{rrrrr}
\toprule
$(045)$  &  $(145)$ &  $(234)$   &  $(235)$ &    {\rm element of basis}   \\
\midrule
 $1$ & $0$ &  $1$ & $0$ &  $\alpha_{(0,1,0,1)}$  \\
 \midrule
$c_1$ & $c_0$ &  $-c_1$ & $c_0$ &  $c_0\alpha_{(0,0,1,1)}+c_1\beta_{(1,0,1,0)}$    \\
\bottomrule 
\end{tabular}
\end{table} 
In order to determine whether  $A_{{\bf c},{\bf L}}\in \WW^{\psi}_{\rm fix}$ belongs to $\XX^{\psi}$ we will  analyze $C_{W_{\infty},A_{{\bf c},{\bf L}}}$ in a neighborhood of $[v_0+v_2]$. The first   step is the computation of $F_{v_0+v_2}\cap A_{{\bf c},{\bf L}}$. Notice that
\begin{equation}\label{contienedue}
(F_{v_0+v_2}\cap A_{{\bf c},{\bf L}_M}) \supset \la \alpha_{(2,0,0,0)},
\alpha_{(1,1,0,0)}+\alpha_{(0,2,0,0)}+\alpha_{(1,0,0,1)}+\alpha_{(0,1,0,1)}+\alpha_{(0,0,0,2)} \ra.
\end{equation}
(Of course~\eqref{contienedue} holds also if ${\bf L}_M$ is replaced by an arbitrary element of $\LL\GG(P_D^0\oplus Q_D^0)$.)
\begin{lmm}\label{lmm:fornelli}
Keep notation as above. If $c_0 m_{11}\not=0$ then right-hand side and left-hand side of~\eqref{contienedue} are equal. On the other hand 
\begin{multline}\label{contienequattro}
F_{v_0+v_2}\cap A_{[0,1],{\bf L}_M} \supset \la \alpha_{(2,0,0,0)},
\alpha_{(1,1,0,0)}+\alpha_{(0,2,0,0)}+\alpha_{(1,0,0,1)}+\alpha_{(0,1,0,1)}+\alpha_{(0,0,0,2)}, \\
\alpha_{(1,1,0,0)}+\beta_{(1,1,0,0)}, \alpha_{(0,1,0,1)}+2\alpha_{(0,0,0,2)}+\beta_{(1,0,1,0)}\ra.
\end{multline}
\end{lmm}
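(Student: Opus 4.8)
The plan is to compute $F_{v_0+v_2}\cap A_{{\bf c},{\bf L}_M}$ by direct linear algebra, organized by the $\lambda_{\cF_2}$-grading, which is what makes the computation tractable. Recall that $F_{v_0+v_2}$ is the kernel of $\alpha\mapsto(v_0+v_2)\wedge\alpha$ on $\bigwedge^3 V$. With respect to $\lambda_{\cF_2}$ the vector $v_0$ has weight $1$ and $v_2$ has weight $0$ (see the weights recorded in Table~\eqref{odescalchi}), so wedging with $v_0+v_2$ carries the weight-$j$ summand into weights $j+1$ and $j$. Since every $A_{{\bf c},{\bf L}_M}$ is $\lambda_{\cF_2}$-split, I would write $\alpha=\sum_{j=-2}^{2}\alpha_j$ with $\alpha_j\in A_{{\bf c},{\bf L}_M}(j)$, expanding the weight pieces $\alpha_{\pm1},\alpha_0$ in the monomial bases of Tables~\eqref{baseuno}, \eqref{basemenouno}, \eqref{basezero} and the one-dimensional extreme pieces $\alpha_2\in\CC\,\alpha_{(2,0,0,0)}$, $\alpha_{-2}\in\CC\,\alpha_{(0,0,0,2)}$. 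Collecting $(v_0+v_2)\wedge\alpha$ by weight, the equation $(v_0+v_2)\wedge\alpha=0$ breaks into the decoupled chain
\begin{gather*}
v_0\wedge\alpha_2=0,\quad v_0\wedge\alpha_1=0,\quad v_2\wedge\alpha_{-2}=0,\\
v_0\wedge\alpha_0+v_2\wedge\alpha_1=0,\quad v_0\wedge\alpha_{-1}+v_2\wedge\alpha_0=0,\quad v_0\wedge\alpha_{-2}+v_2\wedge\alpha_{-1}=0,
\end{gather*}
the first and third being automatic since $\alpha_{\pm2}$ are multiples of $v_0\wedge v_1\wedge v_2$ and $v_2\wedge v_4\wedge v_5$.

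First I would dispose of the generic case $c_0m_{11}\neq0$. Writing $\alpha_1=p\,\alpha_{(1,1,0,0)}+q(c_0\alpha_{(1,0,1,0)}+c_1\beta_{(0,0,1,1)})$, a one-line wedge computation gives $v_0\wedge\alpha_1=-qc_0\,v_0\wedge v_1\wedge v_2\wedge v_3$, so $c_0\neq0$ forces $q=0$ and $\alpha_1=p\,\alpha_{(1,1,0,0)}$. Substituting into $v_0\wedge\alpha_0+v_2\wedge\alpha_1=0$ and expanding $\alpha_0=r_0e_0+r_1\ell_1+r_2\ell_2+r_3\ell_3$ with the coefficients of Table~\eqref{basezero}, the coefficients of $v_0\wedge v_1\wedge v_3\wedge v_5$, $v_0\wedge v_1\wedge v_2\wedge v_5$ and $v_0\wedge v_1\wedge v_3\wedge v_4$ in $v_0\wedge\alpha_0$ force $r_3=0$, then $r_2=0$, then $r_1m_{11}=0$; thus $m_{11}\neq0$ gives $r_1=0$ and $r_0=p$, i.e.\ $\alpha_0=p\,e_0$. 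The last two equations then propagate $p$ down the chain to $\alpha_{-1}=p\,\alpha_{(0,1,0,1)}$ and $\alpha_{-2}=p\,\alpha_{(0,0,0,2)}$, while $\alpha_2=a\,\alpha_{(2,0,0,0)}$ is free. Hence the solution space is $\la\alpha_{(2,0,0,0)},\Xi\ra$ with $\Xi:=\alpha_{(1,1,0,0)}+\alpha_{(0,2,0,0)}+\alpha_{(1,0,0,1)}+\alpha_{(0,1,0,1)}+\alpha_{(0,0,0,2)}$, which is exactly the right-hand side of~\eqref{contienedue}; since both vectors manifestly lie in the intersection, this yields the claimed equality.

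For the second assertion I would rerun the chain with ${\bf c}=[0,1]$, i.e.\ $c_0=0$, $c_1=1$. Now $v_0\wedge\alpha_1=0$ is automatic and the coefficient $q$ is free; the constraints from $m_{11}\neq0$ still kill $r_1,r_2,r_3$ but only tie $r_0$ to $p-q$, and the foot of the chain acquires one extra free parameter in $\alpha_{-1}$. The outcome is a four-dimensional solution space spanned by $\alpha_{(2,0,0,0)}$, $\Xi$, the weight-$1$ class $\alpha_{(1,1,0,0)}+\beta_{(0,0,1,1)}\,(=-2\,v_0\wedge v_2\wedge v_3)$, and $\alpha_{(0,1,0,1)}+2\alpha_{(0,0,0,2)}+\beta_{(1,0,1,0)}$, which is the content of~\eqref{contienequattro}. (Here I read the third generator of~\eqref{contienequattro} as the weight-$1$ class $\alpha_{(1,1,0,0)}+\beta_{(0,0,1,1)}$: note that $\beta_{(1,1,0,0)}=v_5\wedge(v_2\wedge v_3-v_1\wedge v_4)$ has weight $-1$ and pairs nontrivially with $\alpha_{(1,1,0,0)}\in\la\la i_{+}(D)\ra\ra$ by Table~\eqref{basipluck}, so it does not lie in $\la\la i_{+}(D)\ra\ra^{\bot}\supset A_{[0,1],{\bf L}_M}$; the genuine extra weight-$1$ solution is the one displayed.)

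The computation is elementary and the only labor is the careful expansion of the wedge products against the monomial bases of Tables~\eqref{basezero}--\eqref{basemenouno}. The main point, and the step I would emphasize, is that the $\lambda_{\cF_2}$-grading decouples the single equation $(v_0+v_2)\wedge\alpha=0$ into the short chain above, isolating the coefficients $q$ and $r_1$ whose forced vanishing (through $c_0$ and $m_{11}$) is precisely what controls the jump in the dimension of the intersection from $2$ to $4$.
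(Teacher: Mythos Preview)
Your approach is exactly the one the paper takes: decompose $\alpha$ by $\lambda_{\cF_2}$-weight, use that $v_0$ has weight $1$ and $v_2$ has weight $0$ to break $(v_0+v_2)\wedge\alpha=0$ into the same chain of equations, and then peel off the constraints from the top using $c_0\neq 0$ (to kill the $\alpha_{(1,0,1,0)}$-coefficient in weight $1$) and $m_{11}\neq 0$ (to kill the $\ell_1,\ell_2,\ell_3$-coefficients in weight $0$). The paper's argument in the generic case is literally this computation, phrased as ``subtract a multiple of $\alpha_{(2,0,0,0)}$, then of the second generator, then show the rest vanishes.''

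Your side remark about the third generator in~\eqref{contienequattro} is well taken. The element $\beta_{(1,1,0,0)}$ has $\lambda_{\cF_2}$-weight $-1$ and pairs nontrivially with $\alpha_{(1,1,0,0)}\in\la\la i_{+}(D)\ra\ra$ (Table~\eqref{basipluck}), so it cannot lie in the lagrangian $A_{[0,1],{\bf L}_M}\subset\la\la i_{+}(D)\ra\ra^{\bot}$; the genuine extra weight-$1$ element of $A_{[0,1],{\bf L}_M}$ coming from $R_{[0,1]}$ is $\beta_{(0,0,1,1)}$, and indeed $\alpha_{(1,1,0,0)}+\beta_{(0,0,1,1)}=-2\,v_0\wedge v_2\wedge v_3\in F_{v_0+v_2}$. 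So the third generator should read $\alpha_{(1,1,0,0)}+\beta_{(0,0,1,1)}$, as you say. One small point: for the second assertion the lemma only claims a containment, so you do not need $m_{11}\neq 0$ there; it suffices to check directly that each of the four listed vectors lies in both $F_{v_0+v_2}$ and $A_{[0,1],{\bf L}_M}$, which is what the paper does.
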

\begin{proof}
By~\eqref{contienedue} the first two elements spanning the right-hand side of~\eqref{contienequattro} are  contained in $A_{[0,1],{\bf L}_M}$. On the other hand the third and fourth element  are  contained in $A_{[0,1],{\bf L}_M}$ 
 because $\beta_{(1,1,0,0)},\beta_{(1,0,1,0)}\in A_{[0,1],{\bf L}_M}$. Thus  the right-hand side of~\eqref{contienequattro} is  contained in $A_{[0,1],{\bf L}_M}$. Looking at Table~\eqref{odescalchi} we get that    the right-hand side of~\eqref{contienequattro} is  contained in $F_{v_0+v_2}$ as well: this proves that~\eqref{contienequattro} holds. Now suppose that  $c_0 m_{11}\not=0$. Let $\gamma\in  A_{{\bf c},{\bf L}_M}$. Write $\gamma=\sum_i \gamma(i)$ where $\gamma(i)\in A_{{\bf c},{\bf L}_M}(i)$, i.e.~$\lambda_{\cF_2}(t)\gamma(i) =t^i \gamma(i)$. Then $\gamma\in F_{v_0+v_2}$ if and only if $(v_0+v_2)\wedge \gamma=0$. Now $v_0\in A_{{\bf c},{\bf L}_M}(1)$ and $v_2\in A_{{\bf c},{\bf L}_M}(0)$: it follows that $\gamma\in F_{v_0+v_2}\cap A_{{\bf c},{\bf L}_M}$ if and only if
 \begin{equation}\label{tantizeri}
\scriptstyle
0=v_2\wedge \gamma(-2)=v_0\wedge \gamma(-2)+v_2\wedge \gamma(-1)=
v_0\wedge \gamma(-1)+v_2\wedge \gamma(0)=v_0\wedge \gamma(0)+v_2\wedge \gamma(1)=
v_0\wedge \gamma(1)+v_0\wedge \gamma(2)=v_2\wedge \gamma(2).
\end{equation}
Now let $\gamma\in F_{v_0+v_2}\cap A_{{\bf c},{\bf L}_M}$: we will show that $\gamma$  belongs to the right-hand side of~\eqref{contienedue}. Subtracting from $\gamma$ a suitable multiple of $\alpha_{(2,0,0,0)}$ we might assume that $\gamma(2)=0$. By~\eqref{tantizeri} we get that $v_0\wedge\gamma(1)=0$; since $c_0\not=0$ it follows that $\gamma(1)\in \la \alpha_{(1,1,0,0)} \ra $ - see Table~\eqref{baseuno}. Subtracting a suitable multiple of the second element appearing in the right-hand side of~\eqref{contienedue} we may assume that $\gamma(1)=0$: we must prove that $\gamma=0$. 
By~\eqref{tantizeri} we get that $v_0\wedge\gamma(0)=0$; a straightforward computation - see Table~\eqref{basezero} - gives that $\gamma(0)=0$ (recall that by hypothesis $m_{11}\not=0$).  By~\eqref{tantizeri} we get that $v_0\wedge\gamma(-1)=0$, this implies that $\gamma(-1)=0$  - see Table~\eqref{basemenouno}. By~\eqref{tantizeri} we get that $v_0\wedge\gamma(-2)=0$ and hence $\gamma(-2)=0$ because $\gamma(-2)\in \la v_2\wedge v_4\wedge v_5 \ra$. This proves that $\gamma=0$. 
\end{proof}
\begin{prp}\label{prp:duecomp}
Let $c_1\in\CC$.  Then  $A_{[1,c_1],{\bf L}}\notin \XX^{\psi}$ for  generic ${\bf L}\in \LL\GG(P_D^0\oplus Q_D^0)$.
\end{prp}
\begin{proof}
We will analyze $C_{W_{\infty},A_{[1,c_1],{\bf L}_M}}$  in a neighborhood of $[v_0+v_2]$. Let
\begin{equation*}
V_0:=\la v_0,v_1, v_3,v_4,v_5 \ra.
\end{equation*}
(No typo: we omit $v_2$ !) Going through Tables~\eqref{basezero}, \eqref{baseuno} and~\eqref{basemenouno} one gets that 
\begin{equation}\label{twitter}
\text{$\bigwedge^3 V_0\cap A_{[1,c_1],{\bf L}_M}=\{0\}$ if $\det
\left(\begin{array}{rrr}
2m_{13} & 2m_{12} & m_{11} \\
m_{23} & 2m_{22} & m_{12} \\
m_{33} & 2m_{23} & m_{13} \\
\end{array}\right)\not=0$.}
\end{equation}
The determinant appearing in~\eqref{twitter} is not identically zero: we  
assume that $M$  is such that the determinant does not vanish. We will also assume that $m_{11}\not=0$ and hence the right-hand side and left-hand side of~\eqref{contienedue} are equal.    The lagrangians $\bigwedge^3 V_0$ and $A_{[1,c_1],{\bf L}_M}$ are transverse because  On the other hand  we have a direct-sum decomposition $V=[v_0+v_2]\oplus V_0$. Thus~\Ref{clm}{athos} applies. We adopt the notation of that claim: of course  in the present context $v_0$ is $(v_0+v_2)$ and $W_0=W_{\infty}\cap V_0=\la v_0,v_1\ra$. \Ref{clm}{athos}  states that
\begin{equation}
C_{W_{\infty},A_{[1,c_1],{\bf L}_M}}\cap(\PP(W_{\infty}\setminus\PP(W_0))=
V(\det(\ov{q}_{A_{[1,c_1],{\bf L}_M}}+z_0 \ov{q}_{v_0}+
z_1 \ov{q}_{v_1}).
\end{equation}
(Beware that the point with affine coordinates $(z_0,z_1)$ is $(1+z_0)v_0+z_1 v_1+v_2$.) Here 
$q_{A_{[1,c_1],{\bf L}_M}} $ is as in~\eqref{giannibrera} and
$\ov{q}_{A_{[1,c_1],{\bf L}_M}}$, $\ov{q}_{v_0}$, 
$\ov{q}_{v_1}$ are the quadratic forms on $\bigwedge^2 V_0/\bigwedge^2 W_0$ given by~\eqref{pioggia}. The kernel of $\ov{q}_{A_{[1,c_1],{\bf L}_M}}$ is as follows. First notice that
\begin{equation*}
-(\alpha_{(2,0,0,0)}+
\alpha_{(1,1,0,0)}+\alpha_{(0,2,0,0)}+\alpha_{(1,0,0,1)}+\alpha_{(0,1,0,1)}+\alpha_{(0,0,0,2)})=(v_0+v_2)\wedge (v_1+v_3-v_5)\wedge (v_0-v_4).
\end{equation*}
By~\Ref{lmm}{fornelli} it follows that  
\begin{equation}\label{generanucleo}
\ker\ov{q}_{A_{[1,c_1],{\bf L}_M}} = \la  e_1 \ra,\qquad e_1:=(v_1+v_3-v_5)\wedge (v_0-v_4).
\end{equation}
(The notation is somewhat sloppy: we mean that the kernel is generated by the image of $e_1$ in 
$\bigwedge^2 V_0/\bigwedge^2 W_0$.) 
 Since $e_1$ is a decomposable tensor we have $\ov{q}_{v_1}(e_1)=0$ and hence by~\Ref{prp}{conodegenere} we have
\begin{equation*}
\det(\ov{q}_{A_{[1,c_1],{\bf L}_M}}+z_1 \ov{q}_{v_1})=b_2 z^2_1+b_3 z^3_1+\ldots + b_6 z_1^6.
\end{equation*}
(Of course this agrees with~\eqref{duedoppie}.) 
We will show that $b_2\not=0$ for $M$ generic and that will prove the proposition. We will apply~\Ref{prp}{zeronucleo} as reformulated in~\Ref{rmk}{zeronucleo}. In the case at hand $q_{*}=\ov{q}_{A_{[1,c_1],{\bf L}_M}}$ and $q=\ov{q}_{v_1}$. It follows that $e_2$ is such that
\begin{equation*}
(v_0+v_2)\wedge e_2 - v_1\wedge (v_1+v_3-v_5)\wedge (v_0-v_4)\in A_{[1,c_1],{\bf L}_M}.
\end{equation*}
(Once again notation is potentially confusing: 
$e_2\in\bigwedge^2 V_0/\bigwedge^2 W_0$ and is determined modulo $\la e_1\ra$, we think of $e_2$ as an element of $\bigwedge^2 V_0$ determined modulo $\la v_0\wedge v_1,e_1\ra$.)
By~\Ref{rmk}{zeronucleo} we get that $b_2=0$ if and only if  
\begin{equation}\label{verifica}
 (v_0+v_2)\wedge e_2\wedge v_1\wedge  (v_1+v_3-v_5)\wedge (v_0-v_4)=0.
\end{equation}
One computes $e_2$ by using Table~\eqref{equivalenze}. We explain Table~\eqref{equivalenze}. Let $\pi\colon\bigwedge^3 V\to \bigwedge^3 V_0$ be the projection determined by the direct-sum decomposition $\bigwedge^3 V= F_{v_0+v_2}\oplus \bigwedge^3 V_0$. Then $\pi(A_{[1,c_1],{\bf L}_M})=\la v_0\wedge v_1,e_1 \ra^{\bot}$, in particular  $\pi(A_{[1,c_1],{\bf L}_M})$ is contained in the subspace generated by $v_i\wedge v_j\wedge v_k$ where $i<j<k$, $i,j,k\in\{0,1,3,4,5\}$ and $(i,j,k)\not=(3,4,5)$. 
Table~\eqref{equivalenze} gives $\pi(\gamma)$ as linear combination of the $v_i\wedge v_j\wedge v_k$'s listed above for a collection of $\gamma\in A_{[1,c_1],{\bf L}_M}$  giving a basis of a subspace complementary to $F_{v_0+v_2}\cap A_{[1,c_1],{\bf L}_M}$. (The elements $\ell_1$, $\ell_2$, $\ell_3$ are as in Table~\eqref{basezero}.)
\begin{table}[tbp]\tiny
\caption{$\pi(A_{[1,c_1],{\bf L}_M}(0))$.}\label{equivalenze}
\vskip 1mm
\centering
\renewcommand{\arraystretch}{1.60}
\begin{tabular}{rrrrrrrrrr}
\toprule
$(013)$  &  $(014)$ &  $(015)$   &  $(034)$ &  $(035)$  &  $(045)$ &  $(134)$   &  $(135)$ & $(145)$ &  
$\gamma$   \\
\midrule
 $0$ & $1$ &  $0$ & $0$ & $0$ & $0$ & $0$ & $0$ & $0$ & $\alpha_{(1,1,0,0)}$  \\
 \midrule
 $0$ & $0$ &  $0$ & $-1$ & $0$ & $1$ & $0$ & $0$ & $0$  &  $\alpha_{(0,1,0,1)}$\\
  \midrule
 $0$ & $0$ &  $0$ & $0$ & $0$ & $-1$ & $0$ & $0$ & $0$ &  $\alpha_{(0,0,0,2)}$   \\
 \midrule
 $-1$ & $-c_1$ &  $1$ &  $0$ & $0$ & $0$ & $0$ & $0$ & $0$ & $\alpha_{(1,0,1,0)}+c_1\beta_{(0,0,1,1)}$  \\
\midrule
 $0$ & $0$ &  $0$ &  $c_1$ & $-1$ & $c_1$ & $0$ & $1$ & $0$ & 
 $\alpha_{(0,0,1,1)}+c_1\beta_{(1,0,1,0)}$  \\
\midrule
 $0$ & $1-2m_{12}$ &  $2m_{11}$ &  $1$ & $m_{11}$ & $0$ & $-m_{11}$ & $0$ & $0$ & 
 $\ell_1$  \\
\midrule
 $0$ & $-m_{22}$ &  $2m_{12}$ &  $0$ & $m_{12}+1$ & $0$ & $1-m_{12}$ & $0$ & $0$ & 
 $\ell_2$  \\
\midrule
 $0$ & $-m_{23}$ &  $2m_{13}$ &  $0$ & $m_{13}$ & $0$ & $-m_{13}$ & $1$ & $0$ & 
 $\ell_3$  \\
\bottomrule 
\end{tabular}
\end{table} 
It follows from Table~\eqref{equivalenze} that 
\begin{multline}
\scriptstyle
e_2=(c_1+m_{22} - m_{11}^{-1} m_{12}(2m_{12}-1))\alpha_{(1,1,0,0)}+
(c_1-m_{11}^{-1} m_{12})\alpha_{(0,1,0,1)}+
(2c_1-m_{11}^{-1} m_{12})\alpha_{(0,0,0,2)} +\\
\scriptstyle
+(\alpha_{(1,0,1,0)}+c_1\beta_{(0,0,1,1)})
+(\alpha_{(0,0,1,1)}+c_1\beta_{(1,0,1,0)})-
m_{11}^{-1} m_{12}\ell_1 + \ell_2.
\end{multline}
Computing we get that~\eqref{verifica} holds (assuming that $m_{11}\not=0$ and the determinant appearing in~\eqref{twitter} does not vanish) if and only if 
\begin{equation}\label{anitona}
2m^2_{12}-m_{11} m_{22}-2 m_{11} c_1=0.
\end{equation}
 This proves that for generic $M$ we have $A_{[1,c_1],{\bf L}_M}\notin \XX^{\psi}$.
\end{proof}
\begin{crl}\label{crl:duecomp}
Keep notation as above. Then 
\begin{equation}\label{duecomp}
\XX^{\psi}=\{A_{[0,1],{\bf L}} \mid {\bf L}\in \LL\GG(P_D^0\oplus Q_D^0)\} \cup \XX^{\psi}_{\cV}
\end{equation}
where $\XX^{\psi}_{\cV}$ is an irreducible divisor in $|\cO_{\PP^1}(1)\boxtimes  \cL |$ where $\cL$ is  the ample  generator of the Picard group of $\LL\GG(P_D^0\oplus Q_D^0)$ (i.e.~the 
Pl\"ucker line-bundle).
\end{crl}
\begin{proof}
One gets right away that $\XX^{\psi}$ is the zero-locus of a section $\sigma$ of $\cO_{\PP^1}(2)\boxtimes \cL$ - see~\eqref{modem} and~\eqref{vatussi}.  Moreover $\sigma$ is not identically zero by~\Ref{prp}{duecomp} and hence $\XX^{\psi}$ is a divisor in $|\cO_{\PP^1}(2)\boxtimes \cL |$. By~\Ref{lmm}{fornelli} and~\Ref{crl}{molteplici} the \lq\lq vertical\rq\rq 
 divisor  $\VV\subset\PP^1\times \LL\GG(P_D^0\oplus Q_D^0)$ given by $c_0=0$ is an irreducible component of $\XX^{\psi}$. Thus $\XX^{\psi}=\VV\cup \XX^{\psi}_{\cV}$ where $\XX^{\psi}_{\cV}\in |\cO_{\PP^1}(d)\boxtimes \cL |$ with $d\le 1$. Looking at~\eqref{anitona} 
 we get that in fact $d=1$ and $\XX^{\psi}_{\cV}$ is irreducible. 
\end{proof}
\begin{rmk}\label{rmk:bigrado}
Let $p_{ijk}$ for $1\le i<j<k\le 6$ be homogeneous coordinates on $\PP(\bigwedge^3(P_D^0\oplus Q_D^0))$ associated to the basis of $(P_D^0\oplus Q_D^0)$ given by
\begin{equation*}
\alpha_{(0,2,0,0)}-\alpha_{(1,0,0,1)},\ 
\alpha_{(0,1,1,0)},\ 
\alpha_{(0,0,2,0)},\ 
2\beta_{(0,2,0,0)} - \beta_{(1,0,0,1)},\ 
\beta_{(0,1,1,0)},\ 
\beta_{(0,0,2,0)}.
\end{equation*}
Then \Ref{crl}{duecomp} and~\eqref{anitona} give that  $\XX^{\psi}_{\cV}\subset\PP^1\times\LL\GG(P_D^0\oplus Q_D^0))$
 has equation
 \begin{equation}\label{bigrado}
c_0 p_{345}-2c_1 p_{234}=0.
\end{equation}
\end{rmk}
 The following result shows that  only the second component of~\eqref{duecomp} will contribute to $\gB_{\cF_2}\cap\gI$.
 \begin{prp}\label{prp:asterix}
If ${\bf L}\in\LL\GG(P_D^0\oplus Q_D^0)$ then $A_{[0,1],{\bf L}}$ is unstable. On the other hand the generic 
 $A_{[c_0,c_1],{\bf L}}\in \XX^{\psi}_{\cV}$ is $G_{\cF_2}$-stable.
\end{prp}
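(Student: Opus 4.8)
The plan is to prove two separate statements about the family $\WW^{\psi}_{\rm fix}\cong\PP^1\times\LL\GG(P_D^0\oplus Q_D^0)$, namely the instability of the fiber over $[0,1]$ and the generic $G_{\cF_2}$-stability of the irreducible divisor $\XX^{\psi}_{\cV}$.

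\textbf{Instability of $A_{[0,1],{\bf L}}$.} First I would exhibit a destabilizing $1$-PS. By~\Ref{lmm}{fornelli}, the intersection $F_{v_0+v_2}\cap A_{[0,1],{\bf L}_M}$ has dimension at least $4$ (the right-hand side of~\eqref{contienequattro}), in contrast to the generic dimension $2$ that holds when $c_0m_{11}\not=0$. Geometrically this says that $[v_0+v_2]$ is a point of high multiplicity on $Y_{A_{[0,1],{\bf L}}}$, which is exactly the mechanism that produces instability via the standard unstable strata of Table~\eqref{stratflagdue}. Concretely I would show that this large intersection forces $A_{[0,1],{\bf L}}$ into one of the unstable strata $\XX_{\cX,+}$: the point is that $\dim(A\cap F_{v}\cap S_W)$ or $\dim(A\cap F_v)$ jumps past the semistability thresholds recorded in Tables~\eqref{stratflaguno}--\eqref{stratflagdue}. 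The cleanest route is probably to produce explicitly a $1$-PS $\lambda$ (diagonal in a suitable basis adapted to the decomposition~\eqref{rocco}) with $\mu(A_{[0,1],{\bf L}},\lambda)>0$, using the weight data of Tables~\eqref{primaeffedue} and~\eqref{secondaeffedue}; the four-dimensional intersection guarantees the weight inequality is strict. Since instability is a $\PGL(V)$-invariant open condition to verify, and $A_{[0,1],{\bf L}}$ is $\lambda_{\cF_2}$-split, the computation reduces to checking a numerical inequality on reduced types.

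\textbf{Generic $G_{\cF_2}$-stability of $\XX^{\psi}_{\cV}$.} For the second assertion, by~\Ref{prp}{raggieffedue} it suffices to show that a generic point of $\XX^{\psi}_{\cV}$ satisfies \emph{none} of the four conditions (1)--(4) of that proposition. I would argue by a dimension/genericity count directly on the explicit equation~\eqref{bigrado}, $c_0 p_{345}-2c_1 p_{234}=0$, for $\XX^{\psi}_{\cV}$ inside $\PP^1\times\LL\GG(P_D^0\oplus Q_D^0)$. Each of conditions (1)--(4) cuts out a proper closed subset of $\WW^{\psi}_{\rm fix}$ (for instance condition (1) is the locus where $A_1$ meets a Segre-type subvariety, conditions (3),(4) impose codimension-$2$ incidences as in the proof of~\Ref{crl}{raggieffedue}); since $\XX^{\psi}_{\cV}$ is irreducible of codimension $1$ and is \emph{not} contained in the vertical divisor $c_0=0$ (which is where the bad~\Ref{lmm}{fornelli} phenomenon concentrates), it suffices to exhibit a single point of $\XX^{\psi}_{\cV}$ with $c_0\not=0$ avoiding all four conditions. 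Such a point is found by choosing $M$ generic subject to~\eqref{anitona} and $c_0\not=0$, then checking directly that $A_1$ contains no decomposable element, that $A_2$ meets none of the forbidden subspaces, and that the $\theta$-incidence of condition (2) fails; these are the same transversality verifications carried out in~\Ref{prp}{duecomp}.

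\textbf{Main obstacle.} The harder half is the stability statement. Instability only requires producing one destabilizing $1$-PS, which is an existence claim amenable to the explicit bases of Tables~\eqref{basezero}--\eqref{basemenouno}. By contrast, stability requires ruling out \emph{all} destabilizations, i.e.\ verifying the negation of each of (1)--(4) of~\Ref{prp}{raggieffedue} simultaneously at a generic point of the specific divisor $\XX^{\psi}_{\cV}$; the subtlety is that $\XX^{\psi}_{\cV}$ was carved out precisely by a degeneracy of $C_{W_\infty,A}$, so one must confirm that this degeneracy (consecutive triple points, forcing $[A]\in\gI$) is compatible with $G_{\cF_2}$-stability rather than with instability. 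I expect the key check to be that the rank-$2$ conditions on $A_1$ and the transversality of the pencil of quadrics~\eqref{fascioquad} persist generically along~\eqref{bigrado}; this is where I would spend most of the computational effort, exploiting the $3$-dimensional group~\eqref{autocentro} of symmetries $\Aut(R_{\Theta})\cap G_{\cF_2}$ (see~\eqref{tredim}) to reduce the number of free parameters $m_{ij}$ before running the verification.
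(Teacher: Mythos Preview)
Your stability half is essentially the paper's argument: invoke~\Ref{prp}{raggieffedue} and verify that none of Items~(1)--(4) holds at a generic point of $\XX^{\psi}_{\cV}$. The paper executes this with three sharp observations you do not quite isolate: (i) Item~(1) \emph{never} holds once $c_0\not=0$ (read Table~\eqref{baseuno}); (ii) Item~(2) is equivalent to $m_{11}=0$ by a $4\times 4$ determinant on Table~\eqref{basezero}, hence fails generically on~\eqref{anitona}; (iii) for Items~(3)--(4) the key trick is that whether $A_{{\bf c},{\bf L}_M}(0)$ contains a nonzero decomposable is \emph{independent of ${\bf c}$} (again Table~\eqref{basezero}), so one may borrow the genericity from all of $\WW^{\psi}_{\rm fix}$, not just from the divisor $\XX^{\psi}_{\cV}$.

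Your instability half has a genuine gap. The inequality $\dim(F_{v_0+v_2}\cap A_{[0,1],{\bf L}})\ge 4$ from~\Ref{lmm}{fornelli} does \emph{not} match any of the flag conditions defining the standard unstable strata in Table~\eqref{stratflagdue}; those conditions constrain $\dim(A\cap(\text{wedge subspace}))$, not $\dim(A\cap F_v)$ at a single point, and one does not imply the other. So the sentence ``this large intersection forces $A_{[0,1],{\bf L}}$ into one of the unstable strata $\XX_{\cX,+}$'' is unsupported. The paper's route is different and shorter: from Table~\eqref{baseuno} one reads directly that when $c_0=0$ the weight-$1$ summand $A_1$ contains both $v_0\wedge v_1\wedge v_4$ and $v_0\wedge v_2\wedge v_3$, hence Item~(1) of~\Ref{prp}{raggieffedue} holds; since $A_{[0,1],{\bf L}}\in\XX^{\psi}$ we also know $C_{W_{\infty},A_{[0,1],{\bf L}}}$ is not a sextic in the regular locus of~\eqref{persestiche}, and~\Ref{crl}{casobuono} then gives $G_{\cF_2}$-instability (hence $\PGL(V)$-instability) immediately. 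You should replace the $F_{v_0+v_2}$ argument with this observation on $A_1$.
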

 \begin{proof}
We have $v_0\wedge v_1\wedge v_4,v_0\wedge v_2\wedge v_3 \in A_{[0,1],{\bf L}}(1)$ - see Table~\eqref{baseuno}. Thus Item~(1) of~\Ref{prp}{raggieffedue} holds with $A= A_{[0,1],{\bf L}}$. On the other hand  $C_{W_{\infty}, A_{[0,1],{\bf L}}}$ is not a sextic  curve in the regular locus of the period map~\eqref{persestiche}: by~\Ref{crl}{casobuono} we get that $A_{[0,1],{\bf L}}$ is $G_{\cF_2}$-unstable and hence unstable. Next we will prove that  the generic 
 $A_{[1,c_1],{\bf L}_M}\in \XX^{\psi}_{\cV}$ is $G_{\cF_2}$-stable. By~\Ref{prp}{raggieffedue} it suffices to check that if $A_{[1,c_1],{\bf L}_M}\in \XX^{\psi}_{\cV}$ is generic then none of Items~(1) - (4)  of~\Ref{prp}{raggieffedue} holds. First Item~(1) never holds (because $c_0=1$!). Item~(2) holds if and only if $F_{v_2}\cap A_{[1,c_1],{\bf L}_M}(0)\not=\{0\}$; looking at Table~\eqref{basezero} we get that  Item~(2) holds if and only if 
 \begin{equation*}
0=\det
\left(\begin{array}{rrrr}
1 & 0 & 0 & 0 \\
1 & m_{11} & -m_{11} & 0 \\
0 & m_{12}+1 & -m_{12}+1 & 0 \\
0 & m_{13} & -m_{13} & 1 \\
\end{array}\right)=2m_{11}.
\end{equation*}
On the other hand if $M$ is generic and~\eqref{anitona} holds then $A_{[1,c_1],{\bf L}_M}\in \XX^{\psi}_{\cV}$: it follows that if $A_{[1,c_1],{\bf L}_M}\in \XX^{\psi}_{\cV}$ is generic then Item~(2) does not hold. 
Next we will show that  if $A_{[1,c_1],{\bf L}_M}\in \XX^{\psi}_{\cV}$ is generic then $A_{[1,c_1],{\bf L}_M}(0)$ contains no non-zero decomposable tensor: that will prove that neither Item~(3) nor Item~(4) holds. First notice that if $A\in\WW^{\psi}_{\rm fix}$ is generic then $\Theta_A=i_{+}(D)$: it follows that $A(0)$ contains no non-zero decomposable tensor. On the other hand Table~\eqref{basezero} gives that the condition \lq\lq $A_{{\bf c},{\bf L}_M}(0)$ contains a   non-zero decomposable tensor \rq\rq is independent of ${\bf c}$. It follows that if $M$ is generic then for every choice of ${\bf c}\in\PP^1$ we have that  $A_{{\bf c},{\bf L}_M}(0)$ contains no   non-zero decomposable tensors: choosing $c_0=1$ and $c_1$ such that~\eqref{anitona} holds we get $A_{[1,c_1],{\bf L}_M}\in \XX^{\psi}_{\cV}$ such that $A_{[1,c_1],{\bf L}_M}(0)$ 
contains no   non-zero decomposable tensors.
 \end{proof}

\n
{\it Proof of~\Ref{prp}{guzzanti}.\/}
 By~\Ref{prp}{asterix} every point of $\gX_{\cV}$ is represented by a (semistable) point of $\XX^{\psi}_{\cV}$. Thus we have a surjection
\begin{equation}\label{cardinale}
\XX^{\psi,ss}_{\cV}\twoheadrightarrow \gX_{\cV}
\end{equation}
and hence $\gX_{\cV}$ is   irreducible because $\XX^{\psi}_{\cV}$ is irreducible.  It remains to prove that $\dim\gX_{\cV}=3$. 
Let $\wt{\gX}_{\cV}\subset ({\mathbb S}^{\sF}_{\cF_2}// G_{\cF_2})$ be the image of $\XX^{\psi,ss}_{\cV}$ under the quotient map 
$({\mathbb S}^{\sF}_{\cF_2})^{ss}\twoheadrightarrow ({\mathbb S}^{\sF}_{\cF_2}// G_{\cF_2})$. We have a natural factorization of Map~\eqref{cardinale}:
$$\XX^{\psi,ss}_{\cV}\overset{\pi}{\twoheadrightarrow}\wt{\gX}_{\cV}\overset{\varphi}{\twoheadrightarrow} \gX_{\cV}.$$
The map $\varphi$ is finite by~\Ref{clm}{slagix}, and $\dim\XX^{\psi}_{\cV}=6$ by~\Ref{crl}{duecomp}; it follows that it suffices to show that the generic fiber of $\pi$ has dimension $3$.  The open set  
$\XX^{\psi,s}_{\cV}$ parametrizing   $G_{\cF_2}$-stable $A$'s is dense  by~\Ref{prp}{asterix}. Let 
 $A\in\XX^{\psi,s}_{\cV}$.  By  $G_{\cF_2}$-stability we have
\begin{equation}\label{fibrimm}
\pi^{-1}(\pi(A))=\{A'\in \XX^{\psi,s}_{\cV} \mid A'=gA,\quad g \in G_{\cF_2}\}.
\end{equation}
We will show that the right-hand side has dimension $3$. Let $\Theta=i_{+}(D)$ and let $R_{\Theta}$ be as in~\eqref{regolo}.
The group $\Aut(R_{\Theta})\cap G_{\cF_2}$ acts on $\XX^{\psi,s}_{\cV}$ with finite stabilizers: 
 by~\eqref{tredim} we get that the right-hand side of~\eqref{fibrimm} has dimension at least $3$. On the other hand    $\dim\Theta_A=1$ for  $A\in\XX^{\psi,s}_{\cV}$. In fact suppose the contrary: by~\Ref{lmm}{sedimdue} either  $A\in\XX^{*}_{\cW}$ or it is in the $\PGL(V)$-orbit of $A_k$ or $A_h$. By~\Ref{lmm}{corona} we get that $A\in\XX^{*}_{\cW}$ and hence $A$ is properly $G_{\cF_2}$-semistable, that is a contradiction.
 Let  $A\in\XX^{\psi,s}_{\cV}$: since $\dim\Theta_A=1$   the right-hand side of~\eqref{fibrimm} is a union of sets isomorphic to the $\Aut(R_{\Theta})\cap G_{\cF_2}$-orbit of $A$ and hence it has dimension  $3$.
\qed
\subsubsection{Points of $\gB_{\cF_2}\cap\gI$ are represented by lagrangians in $\WW^{\psi}_{\rm fix}$}
Below is the main result of the present subsubsection.
\begin{prp}\label{prp:buonasera}
Let $\sF$ be a basis of $V$ and $\psi$ be as in~\eqref{anakin}. 
Suppose that $A\in{\mathbb S}^{\sF}_{\cF_2}$ is semistable with minimal orbit and that $[A]\in\gI$. Then there exist $g\in\PGL(V)$ such that $gA\in \WW^{\psi}_{\rm fix}$. 
\end{prp}
The proof of~\Ref{prp}{buonasera} is given at the end of the subsubsection.
\begin{lmm}\label{lmm:schiavoni}
 Suppose that $A\in{\mathbb S}^{\sF}_{\cF_2}$ is semistable with minimal orbit and that $[A]\in\gI$. There exists    
 $\ov{W}\in\Theta_A$ such that   
$C_{\ov{W},A}$ is either $\PP(\ov{W})$ or a sextic curve in the indeterminacy locus of Map~\eqref{persestiche} and 
\begin{equation}\label{catalogo}
\ov{W}\in\{W_0, W_{\infty},  \la \alpha,\beta,\gamma \ra\},\quad 
\text{where $\alpha\in V_{01},\beta\in V_{23},\gamma\in V_{45}$.}
\end{equation}
\end{lmm}
\begin{proof}
By hypothesis there  exists  
 $W\in\Theta_A$ such that 
$C_{W,A}$ is  either $\PP(W)$ or  a sextic curve in the indeterminacy locus of Map~\eqref{persestiche}.
 Taking $\lim_{t\to 0}\lambda_{\cF_2}(t)W$ we get that there exists $\ov{W}\in\Theta_A$  such that $C_{\ov{W},A}$ is   either $\PP(\ov{W})$ or  a sextic curve in the indeterminacy locus of Map~\eqref{persestiche} and $\ov{W}$ is fixed by $\lambda_{\cF_2}(t)$ for all $t\in\CC^{\times}$. Thus $\ov{W}$ is the direct sum of $3$ irreducible summands for the representation $\lambda_{\cF_2}\colon\CC^{\times}\to \SL(V)$ i.e.~one of 
\begin{equation*}
W_{\infty},\ W_0,\ V_{01}\oplus[\gamma],\ V_{23}\oplus[\alpha],\ V_{23}\oplus[\gamma],
\ V_{45}\oplus[\alpha],\ \la \alpha,\beta,\gamma \ra,
\qquad \alpha\in V_{01},\ \beta\in V_{23},\ \gamma\in V_{45}. 
\end{equation*} 
Suppose that $\ov{W}$ does not belong to the set appearing on the right-hand side of~\eqref{catalogo}. 
Then Item~(1) of~\Ref{prp}{raggieffedue} holds (if $\ov{W}=V_{23}\oplus[\gamma]$ then $v_2\wedge v_3\wedge\gamma\in A_3$, since $A_1\bot A_3$ it follows that $\dim A_1\cap(V_{01}\wedge\bigwedge^2 V_{23})\ge 1$) and hence $[A]\notin\gI$ by~\Ref{prp}{casobuono}, that is a contradiction.
\end{proof}
\begin{prp}\label{prp:schiavoni}
Suppose that $A\in{\mathbb S}^{\sF}_{\cF_2}$ is semistable with minimal orbit and that $[A]\in\gI$.
 Then $\dim\Theta_A\ge 1$. 
\end{prp}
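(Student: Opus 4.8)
The plan is to feed the plane $\ov W$ produced by~\Ref{lmm}{schiavoni} into the local theory of~\Ref{subsec}{sestinw} and to read off a positive-dimensional family of planes in $\Theta_A$ from the degeneracy of $C_{\ov W,A}$. By~\Ref{lmm}{schiavoni} there is $\ov W\in\Theta_A$ belonging to the list~\eqref{catalogo} such that $C_{\ov W,A}$ is either $\PP(\ov W)$ or a sextic curve in the indeterminacy locus of Map~\eqref{persestiche}; in particular $\ov W$ is fixed by $\lambda_{\cF_2}$. The case $C_{\ov W,A}=\PP(\ov W)$ is immediate: the contrapositive of~\Ref{crl}{senoncurva}, applied to the minimal orbit $\PGL(V)A$, forces $\dim\Theta_A\ge 2$, and we are done. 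So from now on I would assume that $C_{\ov W,A}$ is a genuine sextic in the indeterminacy locus and argue for $\dim\Theta_A\ge 1$, suppose for contradiction that $\Theta_A$ is finite.

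Since $\ov W$ is $\lambda_{\cF_2}$-invariant, \Ref{clm}{azione} applies: writing $C_{\ov W,A}=V(P)$, the polynomial $P$ is invariant under the induced one-parameter subgroup of $\SL(\ov W)$, which according to whether $\ov W=W_\infty$, $\ov W=W_0$ or $\ov W=\la\alpha,\beta,\gamma\ra$ is of type $\diag(t^{-2},t,t)$ or $\diag(t,1,t^{-1})$ on $\ov W$. By~\Ref{rmk}{pasquetta} this pins $P$ to a normal form: $X_0^2F(X_1,X_2)$ in the first two cases (with $X_0$ dual to $\beta_0$), and a product $\prod_{i=1}^3(b_iX_0X_2+a_iX_1^2)$ of three conics in the third. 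I would then invoke~\Ref{rmk}{pershah} to translate ``$C_{\ov W,A}$ lies in the indeterminacy locus'' into the precise degeneration of this normal form: a point of multiplicity $\ge 4$ with tangent cone $3\ell_1+\ell_2$, a point with consecutive triple points of the stated type, or a triple conic. In each of these $\sing C_{\ov W,A}$ contains a curve $Z$ of non-nodal singular points, and by~\Ref{crl}{cnesinerre} together with~\Ref{prp}{nonmalvagio} we get $Z\subset\cB(\ov W,A)$ (using $C_{\ov W,A}\ne\PP(\ov W)$).

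The heart of the argument is then to show that, once $\Theta_A$ is finite, such a curve $Z\subset\cB(\ov W,A)$ cannot be accounted for. For $[w]\in Z$, \Ref{dfn}{malvagio} offers two alternatives: either a second plane $W'\in\Theta_A\setminus\{\ov W\}$ passes through $[w]$, or $\dim(A\cap F_w\cap S_{\ov W})\ge 2$, i.e.\ $[w]$ records a tangential direction of $\Theta_A$ at $\ov W$. The plan is to exclude the tangential alternative holding along all of $Z$: using the local description of $C_{\ov W,A}$ as the discriminant of the net of Plücker quadrics $\{\psi^{v_0}_w\}$ restricted to $\ov K=A\cap F_{v_0}$ (\Ref{prp}{primisarto}) together with the identification $\bigcap_w V(\psi^{v_0}_w)={\mathbb Gr}(2,V_0)_{W_0}$ (\Ref{clm}{roncisvalle}), the triple-line / triple-conic degeneration forces $\PP(\ov K)$ to meet ${\mathbb Gr}(2,V_0)_{W_0}$ along a positive-dimensional locus, hence produces decomposable elements of $A$ varying in a one-parameter family, i.e.\ a curve of planes in $\Theta_A$ distinct from $\ov W$. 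Should the tangential alternative nonetheless persist on a subcurve, I would appeal to the rank stratification of $\ov A\cap T_{\ov W}$ in~\Ref{rmk}{calfin} to place $A$ in $\BB^*_{\cE_1}\cup\BB^*_{\cE_1^\vee}\cup\BB^*_{\cF_2}$ with a two-dimensional $\cV$, and then to the classification of~\cite{ogtasso} to again exhibit a positive-dimensional $\Theta_A$. Either way $\dim\Theta_A\ge 1$, contradicting finiteness.

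The main obstacle I anticipate is exactly this last step: cleanly separating the ``second plane'' alternative from the ``tangential'' alternative of~\Ref{dfn}{malvagio} along the whole of $Z$, and carrying the three shapes of $\ov W$ in~\eqref{catalogo} through one and the same quadric-pencil computation. It amounts to a Segre-symbol analysis of how a triple root of the discriminant of the pencil $\{\psi^{v_0}_w|_{\ov K}\}$ forces the intersection of $\PP(\ov K)$ with the projected Grassmannian ${\mathbb Gr}(2,V_0)_{W_0}$ to acquire a one-parameter component; this is the computational core, and the place where minimality of the orbit must be used to rule out the degenerate yet rigid configurations.
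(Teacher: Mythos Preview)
Your outline matches the paper's proof: invoke \Ref{lmm}{schiavoni}, normalise $P$ via \Ref{clm}{azione}, extract a singular curve $Z\subset\cB(\ov W,A)$, and reach a contradiction assuming $\Theta_A$ is finite. The confusion is in the core step, and one ingredient is missing. You propose to ``exclude the tangential alternative'' and produce decomposable elements of $A$ from a positive-dimensional intersection $\PP(\ov K)\cap{\mathbb Gr}(2,V_0)_{W_0}$. The paper's logic runs the other way: if $\Theta_A$ is finite then alternative~(1) of \Ref{dfn}{malvagio} can occur at only finitely many $[v]\in Z$, so the tangential alternative $\dim(A\cap F_v\cap S_{\ov W})\ge2$ must hold for generic $[v]\in Z$. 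Letting $[v]$ vary along $Z$ forces $\dim(A\cap S_{\ov W})\ge3$. The missing ingredient is the complementary upper bound: if $\dim(A\cap S_{\ov W})\ge4$ then $A\in\BB_{\cC_1}$, and minimality together with $[A]\in\gI$ and \Ref{prp}{taliare} give $[A]=\gy$, contradicting $\dim\Theta_{A_+}=3$. So $\dim(A\cap S_{\ov W})=3$ exactly. Your proposed mechanism---deducing a positive-dimensional intersection with the projected Grassmannian from a triple root of the local discriminant---does not follow from \Ref{prp}{primisarto} and \Ref{clm}{roncisvalle}: a cube factor of $\det(\psi^{v_0}_w|_{\ov K})$ does not by itself force the line $\PP(\ov K)$ onto ${\mathbb Gr}(2,V_0)_{W_0}$.

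The closing tool is the pencil classification \Ref{prp}{fascidege}, which your reference to \Ref{rmk}{calfin} gestures at but does not deploy. Set $\cV:=(A\cap S_{\ov W})/\bigwedge^3\ov W\subset\Hom(\ov W,V/\ov W)$; then $\dim\cV=2$, every nonzero element has rank exactly $2$, and the kernel locus $K(\cV)$ contains $Z$. By \Ref{prp}{fascidege} $K(\cV)$ is a line, a smooth conic, or a point. If $Z$ is a smooth conic then $\cV\sim\cV_c$, so $A\in\BB_{\cE_1^\vee}$ and \Ref{prp}{eoltre} with minimality gives $[A]=\gx^\vee$, contradicting $\dim\Theta_A=0$. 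If $Z$ contains two distinct lines the classification already gives a contradiction. If $Z$ is a single line then $\cV\sim\cV_l$, and the paper exploits the $\lambda_{\cF_2}$-invariance of $\cV$ and of the basis $\{w_0,w_1,w_2\}$ to pin down the weights of the vectors $u_j$ in the normal form~\eqref{missmarple}; a short case analysis on the three shapes of $\ov W$ in~\eqref{catalogo} then either produces a fourth element of $A\cap S_{\ov W}$ (contradicting $\dim=3$) or shows that a further $1$-PS $\lambda_3$ fixes $\bigwedge^{10}A$, whence \Ref{clm}{azione} forces $P=aX_0^2X_1^2X_2^2$, which is not in the indeterminacy locus. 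Your final Segre-symbol paragraph is aimed in the right direction, but the actual mechanism is this weight computation, not a discriminant-pencil analysis.
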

\begin{proof}
By contradiction. Suppose that $\dim\Theta_A=0$. In particular 
\begin{equation}\label{formenton}
\text{if $W_1\not= W_2\in\Theta_A$ then $\dim(W_1\cap W_2)=1$.}
\end{equation}
Moreover $C_{W,A}$ is a sextic curve for every $W\in\Theta_A$ by~\Ref{crl}{senoncurva}. By~\Ref{lmm}{schiavoni} there exists $\ov{W}\in\Theta_A$ such that~\eqref{catalogo} holds and $C_{\ov{W},A}$ is a sextic curve in the indeterminacy locus of Map~\eqref{persestiche}. Notice that 
\begin{equation}\label{menodi}
\dim S_{\ov{W}}\le 3.
\end{equation}
In fact suppose that~\eqref{menodi} does not hold. Then $A\in\BB_{\cC_1}$: by~\Ref{prp}{taliare} we get that $A\in \PGL(V)A_{+}$, that is a contradiction because $\dim\Theta_{A_{+}}=3$. 
Let $\{w_0,w_1,w_2\}$ be the basis of $\ov{W}$  appearing 
in~\eqref{volkswagen}  or in~\eqref{catalogo}: thus $w_0=v_0$ if $\ov{W}=W_{\infty}$,  $w_0=\alpha$ if $\ov{W}=\la \alpha,\beta,\gamma\ra$ and $w_0=v_4$ if $\ov{W}=W_0$ etc. 
Let $\{X_0,X_1,X_2\}$ be the basis of $\ov{W}^{\vee}$ dual to $\{w_0,w_1,w_2\}$. The $1$-PS $\lambda_{\cF_2}$ acts trivially on $\bigwedge^{10}A$;   applying~\Ref{clm}{azione}   we get that $C_{\ov{W},A}=V(P)$ where 
\begin{equation*}
P=
\begin{cases}
F_4 X_2^2,\quad F_4\in\CC[X_0,X_1]_4 & \text{if  $\ov{W}=W_{\infty}$ or  $\ov{W}=W_0$,}\\
(b_1X_0X_2+a_1 X_1^2)(b_2X_0X_2+a_2 X_1^2)(b_3X_0X_2+a_3 X_1^2) & \text{if  $\ov{W}=\la \alpha,\beta,\gamma\ra$.}
\end{cases}
\end{equation*}
 Since  $C_{\ov{W},A}$ is a sextic curve in the indeterminacy locus of Map~\eqref{persestiche} one gets that one of the following holds: 
\begin{enumerate}
\item[(1)]
$C_{\ov{W},A}=V((b X_0 X_2+a X_1^2)^3)$.
\item[(2a)]
$C_{\ov{W},A}=V(X^2_0 X^2_2(b X_0 X_2+ X_1^2))$.
\item[(2b)]
$C_{\ov{W},A}=V(L\cdot M^3\cdot X_2^2)$ where $L,M\in\CC[X_0,X_1]_1$.
\item[(3)]
$C_{\ov{W},A}=V(X_1^4(b X_0 X_2+a X_1^2))$.
\end{enumerate}
Let $Z\subset\PP(\ov{W})$ be the union of $1$-dimensional components of $\sing C_{\ov{W},A}$: in all of the above cases $Z$ is non-empty. 
By~\Ref{prp}{nonmalvagio} we get that $Z\subset\cB(\ov{W},A)$. Let $[v]\in Z$ be generic: 
 there does not exist $W\in\Theta_A$ containing $[v]$ and different from $\ov{W}$ because $\dim\Theta_A=0$. It follows that  
$\dim(A\cap F_v\cap S_{\ov{W}})\ge 2$. Since $[v]$ moves on a curve it follows that $\dim S_{\ov{W}}\ge 3$ (recall that~\eqref{formenton} holds): by~\eqref{menodi} we get that 
\begin{equation}\label{rodari}
\dim S_{\ov{W}}= 3. 
\end{equation}
Let $V=\ov{W}\oplus U$ where $U$ is $\lambda_{\cF_2}$-invariant and let $\cV:=S_{\ov{W}}\cap(\bigwedge^2\ov{W}\wedge U)$. By~\eqref{rodari} we have $\dim\cV=2$. View $\cV$ as a subspace of $\Hom(\ov{W},U)$ by choosing a volume form on $\ov{W}$: every  $\phi\in\cV$ has rank $2$ (by~\eqref{formenton}, \eqref{rodari} and the fact that $Z$ is not empty). 
Now suppose that~(1) above holds. Since $Z$ is a smooth conic  we get that $A\in\BB_{\cE_1^{\vee}}$ by~\Ref{rmk}{calfin}. By~\Ref{prp}{eoltre} we get that $A\in \PGL(V)A_h$: that is a contradiction because $\dim\Theta_{A_h}=2$. Now suppose that~(2a) or (2b)  above holds: then $Z$ is the union of two lines and that contradicts~\Ref{prp}{fascidege}.  Lastly suppose that~(3) above holds. 
Then $K(\cV)$ (notation as in~\eqref{tuttinuc}) is the line $V(X_1)$. By~\Ref{prp}{fascidege} we get that $\cV$ is $\GL(\ov{W})\times\GL(U)$-equivalent to $\cV_l$. Thus there exists a basis $\{u_0,u_1,u_2\}$ of $U$ such that
\begin{equation}\label{missmarple}
\cV=\la w_0\wedge w_1\wedge u_0+  w_0\wedge w_2\wedge u_1,\ 
w_0\wedge w_2\wedge u_2+  w_1\wedge w_2\wedge u_0 \ra.
\end{equation}
Up to scalars there is a unique non-zero element of $\cV$ mapping $w_0$ to $0$ and similarly  there is a unique (up to scalars) non-zero element of $\cV$ mapping $w_2$ to $0$: 
since $\cV$, $[w_0]$ and $[w_2]$ are $\lambda_{\cF_2}$-invariant it follows that  the two elements of $\cV$ appearing in~\eqref{missmarple} generate $\lambda_{\cF_2}$-invariant subspaces. Since each $w_i$ generates a $\lambda_{\cF_2}$-invariant subspace it follows that each $u_j$ generates a $\lambda_{\cF_2}$-invariant subspace.  Now  suppose that $\ov{W}=\la \alpha,\beta,\gamma \ra$. Considering the possible weights of the $u_j$'s we get that $u_0\in V_{23}$, $u_1\in V_{01}$ and $u_2\in V_{45}$. Thus we have
\begin{equation*}
\cV=\la \alpha\wedge \beta\wedge u_0+  \alpha\wedge \gamma\wedge u_1,\ 
\alpha\wedge \gamma\wedge u_2+  \beta\wedge \gamma\wedge u_0 \ra,\quad 
u_0\in V_{23},\ u_1\in V_{01},\ u_2\in V_{45}.
\end{equation*}
It follows that Item~(3) of~\Ref{prp}{raggieffedue} holds and hence $\mu(A,\lambda_3)\ge 0$ where $\lambda_3$ is given by~\eqref{lamtre}. Since the $G_{\cF_2}$-orbit of $A$ is closed in 
${\mathbb S}^{\sF,ss}_{\cF_2}$ we may assume that $\lambda_3$ acts trivially on $\bigwedge^{10}A$. By~\Ref{clm}{azione} we get that $P$ is left invariant by $\diag(s^3t,s^{-3}t,t^{-2})$ for $s,t\in\CC^{\times}$: it follows that $P=a X_0^2X_1^2X_2^2$, that is a contradiction. 
Now  suppose that $\ov{W}=W_{\infty}$. We may (and will) choose $v_2:=w_2=\beta_0$. Considering the possible weights of the $u_j$'s we get that $u_0\in V_{45}$ , $u_1\in V_{23}$ and $u_2\in V_{45}$. Thus we may assume that $v_3=u_1$, $v_4=u_0$ and $v_5=u_2$. It follows that
\begin{equation*}
\cV=\la v_0\wedge v_1\wedge v_4+  v_0\wedge v_2\wedge v_3,\ 
v_0\wedge v_2\wedge v_5+  v_1\wedge v_2\wedge v_4 \ra.
\end{equation*}
Thus $(v_0\wedge v_2\wedge v_5+  v_1\wedge v_2\wedge v_4 )\in A\cap S_{\ov{W}}$. Now $A\cap S_{\ov{W}}$ contains a $3$-dimensional subspace $R$ dictated by the condition $A\in\BB_{\cF_2}$ - see Table~\eqref{stratflaguno} - and $(v_0\wedge v_2\wedge v_5+  v_1\wedge v_2\wedge v_4 )\notin R$. Thus $\dim (A\cap S_{\ov{W}})\ge 4$ and that contradicts~\eqref{rodari}.   It remains to deal with the case $\ov{W}=W_0$: it is similar to the case  $\ov{W}=W_{\infty}$.
\end{proof}
\begin{lmm}\label{lmm:corona}
$\gB_{\cF_2}$ does not contain $\gx$ nor $\gx^{\vee}$. 
\end{lmm}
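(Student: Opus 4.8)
The plan is to show that no $\PGL(V)$-translate of $A_k$ (resp.\ $A_h$) can be $\lambda_{\cF_2}$-split, which rules out $\gx,\gx^{\vee}\in\gB_{\cF_2}$. Suppose for contradiction that $\gx\in\gB_{\cF_2}$. First I would produce a translate of $A_k$ inside ${\mathbb S}^{\sF}_{\cF_2}$: by \Ref{clm}{limite} the point $\gx$ is represented by some $A_0\in{\mathbb S}^{\sF}_{\cF_2}$, and by \Ref{clm}{slagix} $A_0$ is $G_{\cF_2}$-semistable; replacing $A_0$ by a point of the unique closed $G_{\cF_2}$-orbit in $\overline{G_{\cF_2}A_0}\cap{\mathbb S}^{\sF,ss}_{\cF_2}$ (which still lies in the closed, $G_{\cF_2}$-invariant set ${\mathbb S}^{\sF}_{\cF_2}$) and using \Ref{clm}{slagix} once more, I may assume $\PGL(V)A_0$ is closed in $\lagr^{ss}$. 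Since $A_k$ also has closed orbit by \Ref{prp}{minorb} and $[A_0]=\gx=[A_k]$, the two closed orbits coincide, so there is $g$ with $gA_k=A_0\in{\mathbb S}^{\sF}_{\cF_2}$. By \Ref{rmk}{banana} the subspace $gA_k$ is fixed by $\im\lambda_{\cF_2}$, hence $g^{-1}\lambda_{\cF_2}g$ is a $1$-PS of $\Stab_{\SL(V)}(A_k)$.

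Next I would identify $\Stab_{\SL(V)}(A_k)$. Fix an isomorphism $V\cong\Sym^2 L$ as in \eqref{decoelle}; then $\SL(L)$ embeds in $\SL(V)=\SL(\Sym^2 L)$ via $\Sym^2$ and fixes the summand $A_k=\Sym^3 L\otimes\det L$. Any element stabilizing $A_k$ preserves $Y_{A_k}=2\cV_2$ (see \eqref{cubicadisc}), so it lies in $\Aut(\cV_2)$; by the proof of \Ref{prp}{minorb} we have $\Aut(\cV_2)=N_{\SL(V)}(\im\SL(L))$ with $N_{\SL(V)}(\im\SL(L))/\im\SL(L)$ finite. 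Hence the identity component of $\Stab_{\SL(V)}(A_k)$ equals $\im\SL(L)$, and the $1$-PS $g^{-1}\lambda_{\cF_2}g$ is of the form $\Sym^2\widetilde\lambda$ for a $1$-PS $\widetilde\lambda$ of $\SL(L)$.

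Now comes the numerical heart of the argument. Let $a,b,c\in\ZZ$ with $a+b+c=0$ be the weights of $\widetilde\lambda$ on $L$; then the weights of $\Sym^2\widetilde\lambda$ on $V=\Sym^2 L$ form the multiset $\{2a,\,a+b,\,2b,\,a+c,\,b+c,\,2c\}$. On the other hand $g^{-1}\lambda_{\cF_2}g$ is conjugate to $\lambda_{\cF_2}$, whose weights are $(1,1,0,0,-1,-1)$ by Table~\eqref{stratflaguno}, so the two multisets must agree. Comparing sums of squares gives $(2a)^2+(a+b)^2+(2b)^2+(a+c)^2+(b+c)^2+(2c)^2=5(a^2+b^2+c^2)$ on one side and $1+1+0+0+1+1=4$ on the other, i.e.\ $a^2+b^2+c^2=4/5$, which is impossible for integers (and $a=b=c=0$ would make $\lambda_{\cF_2}$ trivial, contrary to it being a $1$-PS). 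This contradiction shows $\gx\notin\gB_{\cF_2}$.

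Finally, the same computation applies verbatim to $A_h$, since $A_h=\Sym^3 L^{\vee}\otimes(\det L)^3$ is stabilized by the very same subgroup $\im\SL(L)\subset\SL(\Sym^2 L)$ and $Y_{A_h}=2\cV_2$; alternatively one invokes the duality involution $\delta$, which fixes $\gB_{\cF_2}$ (the weight vector $(1,1,0,0,-1,-1)$ of $\lambda_{\cF_2}$ is palindromic, so $\delta_V$ carries $\BB_{\cF_2}$ to itself) and sends $\gx$ to $\gx^{\vee}$. Either way $\gx^{\vee}\notin\gB_{\cF_2}$. I expect the only delicate point to be the reduction in the first paragraph—namely checking that the closed-orbit representative genuinely lands in the fixed locus ${\mathbb S}^{\sF}_{\cF_2}$ rather than merely in $\gB_{\cF_2}$—since the purely numerical incompatibility of the $\Sym^2$-weights with $(1,1,0,0,-1,-1)$ is then immediate.
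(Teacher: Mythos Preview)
Your argument is correct and follows essentially the same line as the paper's proof: both reduce to showing that a $\PGL(V)$-translate of $A_k$ (or $A_h$) lying in ${\mathbb S}^{\sF}_{\cF_2}$ would force $\lambda_{\cF_2}$ (up to conjugation) to lie in $\im(\SL(L)\to\SL(\Sym^2 L))$, and then derive a contradiction. The paper simply asserts that $\lambda_{\cF_2}$ is not in this image, whereas you make the obstruction explicit via the sum-of-squares identity $5(a^2+b^2+c^2)=4$ for the $\Sym^2$-weights; your reduction in the first paragraph is also more carefully justified than the paper's one-line claim.
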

\begin{proof}
Suppose the contrary. Then $A_k(L)\in{\mathbb S}^{\sF}_{\cF_2}$ or $A_h(L)\in{\mathbb S}^{\sF}_{\cF_2}$, in particular $\lambda_{\cF_2}(t)$ acts trivially on $\bigwedge^{10}A_k(L)$ (respectively $\bigwedge^{10}A_h(L)$). The stabilizer of $\bigwedge^{10}A_k(L)$ (respectively $\bigwedge^{10}A_h(L)$) is the image of the homomorphism $\rho\colon \SL(L)\to\SL(\Sym^2 L)$ (we have chosen an isomorphism $V=\Sym^2 L$): since $\{\lambda_{\cF_2}(t) \mid t\in\CC^{\times}\}$ is not in the image of $\rho$ we get a contradiction.
\end{proof}
\begin{prp}\label{prp:corona}
Suppose that $A\in{\mathbb S}^{\sF}_{\cF_2}$ is semistable with minimal orbit and that $[A]\in\gI$.
 Then  $\Theta_A$ contains $i_{+}(D)$ for some choice of Isomorphism~\eqref{anakin}. 
\end{prp}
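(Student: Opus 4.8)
The plan is to combine \Ref{prp}{schiavoni}, which already yields $\dim\Theta_A\ge 1$, with Morin's classification of positive-dimensional complete irreducible families of pairwise incident planes, and then to eliminate every Morin family except the one producing a curve of Type ${\bf Q}$, namely a conic sitting inside a single ruling of a smooth quadric hypersurface.

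First I would settle the case $\dim\Theta_A\ge 2$. By \Ref{lmm}{sedimdue} the lagrangian $A$ is then $\PGL(V)$-equivalent to an element of $\XX^{*}_{\cW}\cup\PGL(V)A_k\cup\PGL(V)A_h$. Since $[A]\in\gB_{\cF_2}$ and, by \Ref{lmm}{corona}, $\gB_{\cF_2}$ contains neither $\gx=[A_k]$ nor $\gx^{\vee}=[A_h]$, we are forced into $A\in\XX^{*}_{\cW}$ (this set being $\PGL(V)$-invariant). Now \Ref{rmk}{diehard} provides $g\in\PGL(V)$ with $\Theta_{gA}\supset i_{+}(D)$; since the embedding $i_{+}$ depends only on the isomorphism $\bigwedge^2 U\overset{\sim}{\lra}V$ of~\eqref{anakin} and $\PGL(V)$ acts on the set of such isomorphisms, applying $g^{-1}$ turns $i_{+}(D)$ into $i_{+}(D)$ for a translated choice of~\eqref{anakin}, giving $\Theta_A\supset i_{+}(D)$ for that new isomorphism. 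This disposes of the $\dim\Theta_A\ge 2$ case.

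There remains the case $\dim\Theta_A=1$. Let $\Theta\subseteq\Theta_A$ be a one-dimensional irreducible component; because $A$ is lagrangian the planes $\PP(W)$ with $W\in\Theta$ are pairwise incident, so Morin's theorem places $\Theta$ inside one of the six families recalled in the introduction: three elementary (Schubert) families, one ruling of a smooth quadric $\cQ$, the family of planes tangent to a Veronese surface $\cV^2$, and the family of planes meeting $\cV^2$ in a conic. The goal is to show that only the ruling of a smooth quadric survives, so that the complete irreducible curve $\Theta$ inside it is of Type ${\bf Q}$, hence projectively equivalent to $i_{+}(D)$. To this end I would exploit that $[A]\in\gI$ through \Ref{lmm}{schiavoni}, which produces a distinguished $\ov{W}\in\{W_\infty,\la\alpha,\beta,\gamma\ra,W_0\}$ in $\Theta_A$ with $C_{\ov{W},A}$ either $\PP(\ov{W})$ or a sextic in the indeterminacy locus of~\eqref{persestiche}; feeding $\ov{W}$ into \Ref{clm}{azione} and \Ref{rmk}{pasquetta}, the $\lambda_{\cF_2}$-invariance pins $C_{\ov{W},A}$ down to a short list of degenerate sextics. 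Reading off $\sing C_{\ov{W},A}$ and the subspace $S_{\ov{W}}$ and matching them against the incidence geometry of each Morin family, while using \Ref{prp}{casobuono} and \Ref{prp}{raggieffedue} to discard the cases where $C_{\ov{W},A}$ would be a harmless sextic of Type II--2 or III--2 incompatible with nothing, should leave the quadric-ruling family as the only option, giving $\Theta=i_{+}(D)$ after the appropriate choice of~\eqref{anakin}.

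The main obstacle is precisely this last elimination in the $\dim\Theta_A=1$ case: one must exclude every line, twisted-cubic or elliptic family of pairwise incident planes for a semistable $A\in{\mathbb S}^{\sF}_{\cF_2}$ with minimal orbit and $[A]\in\gI$. The delicate point is that a single distinguished $\ov{W}$ only controls the local geometry of $C_{\ov{W},A}$ at one point, so I expect to need the full interplay of \Ref{lmm}{schiavoni}, \Ref{prp}{raggieffedue} and \Ref{prp}{casobuono}, organized exactly as in the proof of \Ref{prp}{schiavoni}, in order to force the quadric-ruling alternative for the positive-dimensional component $\Theta$.
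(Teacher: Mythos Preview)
Your treatment of the case $\dim\Theta_A\ge 2$ is correct and coincides with the paper's argument. The gap is in the one-dimensional case.

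Morin's theorem tells you only that a one-dimensional irreducible component $\Theta\subset\Theta_A$ lies inside one of the six maximal families; it does not by itself tell you which curve $\Theta$ is within that family, and your proposed elimination via the geometry of $C_{\ov{W},A}$ and $S_{\ov{W}}$ is essentially the machinery of \Ref{prp}{schiavoni}, which was already spent in proving $\dim\Theta_A\ge 1$. The single distinguished $\ov{W}$ provided by \Ref{lmm}{schiavoni} constrains $C_{\ov{W},A}$, but it does not by itself pick out the Morin family containing $\Theta$; for instance there is no obvious way this data distinguishes a conic in a ruling of a quadric (Type~${\bf Q}$) from a twisted cubic $i_{+}(C)$ (Type~${\bf R}$), or from the curves of Types~${\bf S},{\bf T},{\bf T}^{\vee}$.

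The paper proceeds by two tools you do not invoke. First, it quotes Theorem~3.9 and Claim~3.22 of~\cite{ogtasso}: any one-dimensional irreducible $\Theta\subset\Theta_A$ is of one of the Types $\cF_1,\cD,\cE_2,\cE_2^{\vee},{\bf Q},\cA,\cA^{\vee},\cC_2,{\bf R},{\bf S},{\bf T},{\bf T}^{\vee}$, and if $\Theta$ is of calligraphic Type~$\cX$ then $A\in\BB_{\cX}$. Combined with~\eqref{coincidenze} and the already-proved identifications of $\gB_{\cX}\cap\gI$ (\Ref{prp}{montalbano}, \Ref{prp}{taliare}, \Ref{prp}{primavera}, \Ref{prp}{versolinf}, \Ref{prp}{eoltre}), a calligraphic type forces $[A]\in\gX_{\cW}\cup\{\gx,\gx^{\vee}\}$, hence $\dim\Theta_A\ge 2$, a contradiction. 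Second, for the boldface Types the paper uses the action of $\lambda_{\cF_2}$ on $\Theta$: if the action is trivial then $\Theta$ is a line, so $A\in\BB_{\cF_1}$ and \Ref{prp}{trilli} gives $[A]\notin\gI$; if the action is non-trivial then $\Theta$ is rational, and one checks directly that no curve of Type~${\bf R},{\bf S},{\bf T},{\bf T}^{\vee}$ admits such a $\CC^{\times}$-action compatible with $\lambda_{\cF_2}$, leaving Type~${\bf Q}$ as the only possibility. This $\lambda_{\cF_2}$-action argument is the decisive step you are missing.
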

\begin{proof}
Suppose first that $\dim\Theta_A\ge 2$. By~\Ref{lmm}{sedimdue}  we have $A\in \XX^{*}_{\cW}\cup\PGL(V)A_k\cup \PGL(V) A_h$. By~\Ref{lmm}{corona} we get that $[A]\in \XX^{*}_{\cW}$ and hence $\Theta_A$ contains $i_{+}(D)$ for some choice of isomorphism~\eqref{anakin} - see~\Ref{rmk}{diehard}. Now suppose that $\dim\Theta_A\le 1$. By~\Ref{prp}{schiavoni} we have $\dim\Theta_A= 1$. 
Let $\Theta$ be a $1$-dimensional   irreducible  component of $\Theta_A$.  By Theorem~3.9 of~\cite{ogtasso} the curve $\Theta$ belongs to one of the Types 
\begin{equation*}
\cF_1,\cD,\cE_2,\cE_2^{\vee},{\bf Q},\cA,\cA^{\vee},\cC_2,{\bf R},{\bf S},{\bf T},{\bf T}^{\vee}
\end{equation*}
defined in~\cite{ogtasso}.  Moreover if $\Theta$ is of Type $\cX$ then $A\in\BB_{\cX}$ - see Claim~3.22 of~\cite{ogtasso}. Thus if $\Theta$ has calligraphic Type then $A\in \BB_{\cF_1}\cup\BB_{\cD}\cup\BB_{\cE_2}\cup\BB_{\cE_2^{\vee}}\cup\BB_{\cA}\cup\BB_{\cA^{\vee}}\cup\BB_{\cC_2}$; by~\eqref{coincidenze} we get that $[A]\in\gB_{\cA}\cup\gB_{\cC_1}\cup\gB_{\cD}\cup\gB_{\cE_1}\cup\BB_{\cE_1^{\vee}}$ and hence $[A]\in\gB_{\cW}\cup\{\gx,\gx^{\vee}\}$ by~\Ref{prp}{taliare}, \Ref{prp}{montalbano}, \Ref{prp}{primavera}, \Ref{prp}{versolinf} and~\Ref{prp}{eoltre}.  It follows that $\dim\Theta_A\ge 2$, that is a contradiction. Thus we may assume that $\Theta$ is of Type ${\bf Q}$, ${\bf R}$, ${\bf S}$, ${\bf T}$ or   ${\bf T}^{\vee}$. Now notice that if
 $t\in\CC^{\times}$ then $\lambda_{\cF_2}(t)$ acts on $\Theta$ i.e.~$\lambda_{\cF_2}(t)|_{\Theta}$ is an automorphism of $\Theta$. Suppose  that $\lambda_{\cF_2}(t)|_{\Theta}$ is the identity for each $t\in\CC^{\times}$: looking at the action of $\lambda_{\cF_2}(t)$ on $V$ we get that $\Theta$ is a line and hence $A\in\BB_{\cF_1}$. By~\Ref{prp}{trilli} we have $\gB_{\cF_1}\cap\gI=\es$ and hence we get a contradiction.  It follows that if  $t\in\CC^{\times}$ is generic then $\lambda_{\cF_2}(t)|_{\Theta}$ is not the identity - in particular  there exist points in $\Theta$ with dense orbit and hence $\Theta$ has geometric genus $0$. 
 We claim that there does not exist a $\Theta$ of Type ${\bf R}$, ${\bf S}$, ${\bf T}$ or  ${\bf T}^{\vee}$  such that $\lambda_{\cF_2}(t)(\Theta)=\Theta$ for $t\in\CC^{\times}$. In fact suppose that $\Theta$ has type $\bf R$. Then we may assume that $\Theta=i_{+}(C)$ where $C\subset\PP(U)$ is a rational normal cubic curve and each $\lambda_{\cF_2}(t)$ is induced by a projectivity of $\PP(U)$: as is easily checked that is impossible. On the other hand $\Theta$ cannot be of Type ${\bf S}$, ${\bf T}$ or  ${\bf T}^{\vee}$ because there is no $1$-PS of $\PGL(V)$  mapping such a curve to itself. (There is no copy of $\CC^{\times}$ in the automorphism group of such a curve acting trivially on the Picard group of the curve.) Thus $\Theta$ is of type ${\bf Q}$ and that finishes the proof of the corollary.
\end{proof}
\n
{\it Proof of~\Ref{prp}{buonasera}.\/}
Suppose first that $\dim\Theta_A\ge 2$. By~\Ref{lmm}{sedimdue}  we have $A\in \XX^{*}_{\cW}\cup\PGL(V)A_k\cup \PGL(V) A_h$. By~\Ref{lmm}{corona} we get that $[A]\in \XX^{*}_{\cW}$ and hence there exist $g\in\PGL(V)$ such that $gA\in \WW^{\psi}_{\rm fix}$ by~\Ref{rmk}{diehard}.
Now suppose that $\dim\Theta_A\le 1$. By~\Ref{prp}{corona} we get that there is an irreducible component $\ov{\Theta}$ of 
 $\Theta_A$ which is projectively equivalent to $i_{+}(D)$ (i.e.~of Type {\bf Q}). 
 The $1$-PS $\lambda^{\sF}_{\cF_2}$ fixes $A$ hence it acts on $\ov{\Theta}$: notice that the action is effective because the set of fixed points for the action of $\lambda_{\cF_2}$ on $\Gr(3,V)$ is a collection of  points and lines. The image 
\begin{equation}\label{amarcord}
H:=\{ \rho\in \Aut(\ov{\Theta}) \mid \text{$\rho=\lambda_{\cF_2}(t)|_{\ov{\Theta}}$ for some  $t\in\CC^{\times}$} \}
\end{equation}
 consists of the group of automorphisms fixing two points $p,q\in\Theta$. Of course $\lambda_{\cF_2}$ acts on $R_{\ov{\Theta}}$ as well and hence also on $| \cI_{\ov{\Theta}}(2)|$. By~\Ref{rmk}{unaquad}  there is a single singular quadric in $| \cI_{\ov{\Theta}}(2)|$: it follows that there exists a smooth quadric $\ov{Q}\in | \cI_{\ov{\Theta}}(2)|$ which is mapped to itself by $\lambda_{\cF_2}$. 
On the other hand   there exists $g\in\PGL(V)$ such that $g(\ov{\Theta})=i_{+}(D)=:\Theta$ because up to projectivities there is a single curve of Type ${\bf Q}$. By~\Ref{clm}{agitrans} 
 we may choose $g$ so that $g(p)=i_{+}([1,0,0,0])$, $g(q)=i_{+}([0,0,0,1])$ and $g(\ov{Q})=\Gr(2,U)$ (recall that $\bigwedge^2 U$ is identified with $V$ via~\eqref{anakin} and hence $\Gr(2,U)$ is a smooth quadric containing $R_{\Theta}$). With this choice of $g$ 
the group $H$ of~\eqref{amarcord} gets identified with the group of automorphisms of $D$ fixing $[1,0,0,0]$ and $[0,0,0,1]$. Thus  $gA\in \WW^{\psi}_{\rm fix}$.
\qed
\subsubsection{$C_{W,A}$ for $A\in\XX^{\psi}_{\cV}$ and $W$ spanned by $\alpha\in V_{01}$, $\beta\in V_{23}$ and $\gamma\in V_{45}$}
Below is the main result of the present subsubsection.
\begin{prp}\label{prp:sghembi}
Let $A\in\WW^{\psi}_{\rm fix}$ be a $G_{\cF_2}$-semistable lagrangian with minimal $G_{\cF_2}$-orbit.  Suppose that  there exist non-zero $\alpha\in V_{01}$, $\beta\in V_{23}$ and $\gamma\in V_{45}$  such that $\alpha\wedge\beta\wedge\gamma\in A$ and, letting $\ov{W}:=\la\alpha,\beta,\gamma\ra$, the degeneracy locus   $C_{\ov{W},A}$ is either $\PP(\ov{W})$ or a sextic curve in the indeterminacy locus of Map~\eqref{persestiche}.
 Then $[A]\in\gX_{\cV}$.
\end{prp}
The proof of~\Ref{prp}{sghembi} will be given at the end of the subsubsection.
\begin{dfn}
Let $\cE\subset \Gr(3,V)$ be the subset of $W$ such that $W=\la\alpha,\beta,\gamma\ra$ where  $\alpha\in V_{01}$, $\beta\in V_{23}$, $\gamma\in V_{45}$. Let $\cE_D\subset\cE$ be the subset of $W$ such that
\begin{equation*}
\bigwedge^3 W\bot \la\la i_{+}(D) \ra\ra.
\end{equation*}
\end{dfn}
\begin{rmk}\label{rmk:perpdi}
Let $A\in\WW^{\psi}_{\rm fix}$ and suppose that there exists $W\in\Theta_A$ which belongs to $\cE$: then $W\in\cE_D$.
\end{rmk}
Below we will make the identification
\begin{equation}
\begin{matrix}
\PP^1\times\PP^1\times\PP^1 & \overset{\sim}{\lra} & \cE \\
([e_0,e_1],[e_2,e_3],[e_4,e_5]) & \mapsto & \la e_0 v_0+e_1 v_1, e_2 v_2+ e_3 v_3, e_4 v_4+e_5 v_5 \ra
\end{matrix}
\end{equation}
A straightforward computation gives the following result.
\begin{lmm}\label{lmm:decoperp}
Keep notation as above. Then $([e_0,e_1],[e_2,e_3],[e_4,e_5])\in\cE_D$ if and only if
\begin{equation}\label{trepertre}
e_0 e_3 e_5- e_1 e_2 e_5 - e_1 e_3 e_4 =0.
\end{equation}
\end{lmm}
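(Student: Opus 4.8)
The plan is to compute directly the orthogonality condition defining $\cE_D$. By definition $W\in\cE_D$ if and only if $\bigwedge^3 W\bot\la\la i_{+}(D)\ra\ra$ with respect to $(,)_V$, so I would first expand $\bigwedge^3 W$ in the monomial basis of $\bigwedge^3 V$. Writing $W=\la e_0 v_0+e_1 v_1, e_2 v_2+e_3 v_3, e_4 v_4+e_5 v_5\ra$, the generator $(e_0 v_0+e_1 v_1)\wedge(e_2 v_2+e_3 v_3)\wedge(e_4 v_4+e_5 v_5)$ of $\bigwedge^3 W$ is the sum of the eight monomials $v_a\wedge v_b\wedge v_c$ with $a\in\{0,1\}$, $b\in\{2,3\}$, $c\in\{4,5\}$, the monomial $v_a\wedge v_b\wedge v_c$ carrying coefficient $e_a e_b e_c$.

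Next I would pair $\bigwedge^3 W$ against each of the five basis vectors of $\la\la i_{+}(D)\ra\ra$ listed in the first five rows of Table~\eqref{odescalchi}. The key observation is combinatorial: since $(\alpha,\beta)_V=\vol(\alpha\wedge\beta)$, the pairing of two monomials $v_I$, $v_J$ is nonzero exactly when the index sets $I,J$ partition $\{0,1,2,3,4,5\}$; and every monomial occurring in $\bigwedge^3 W$, as well as its complement, contains precisely one index from each of the pairs $\{0,1\}$, $\{2,3\}$, $\{4,5\}$. Hence a basis vector of $\la\la i_{+}(D)\ra\ra$ contributes only through those of its monomials that are themselves of this \lq\lq one-index-from-each-pair\rq\rq\ type. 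Inspecting Table~\eqref{odescalchi} one sees that the monomials of $\alpha_{(2,0,0,0)}$, $\alpha_{(1,1,0,0)}$, $\alpha_{(0,1,0,1)}$ and $\alpha_{(0,0,0,2)}$ all fail this condition (each index set contains both members of some pair), so these four basis vectors pair to zero with $\bigwedge^3 W$ identically.

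The only surviving contribution comes from the third basis vector $\alpha_{(0,2,0,0)}+\alpha_{(1,0,0,1)}=v_0\wedge v_2\wedge v_5+v_0\wedge v_3\wedge v_4-v_1\wedge v_2\wedge v_4$, whose three monomials have index sets $\{0,2,5\}$, $\{0,3,4\}$, $\{1,2,4\}$, complementary respectively to the monomials $v_1\wedge v_3\wedge v_4$, $v_1\wedge v_2\wedge v_5$, $v_0\wedge v_3\wedge v_5$ of $\bigwedge^3 W$, with coefficients $e_1 e_3 e_4$, $e_1 e_2 e_5$, $e_0 e_3 e_5$. I would then compute the three volumes $\vol(v_1\wedge v_3\wedge v_4\wedge v_0\wedge v_2\wedge v_5)$ and its analogues by an inversion count (each reordering turns out to have sign $-1$), and combine these with the coefficient $-1$ attached to the monomial $v_1\wedge v_2\wedge v_4$ of $\alpha_{(0,2,0,0)}+\alpha_{(1,0,0,1)}$. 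Collecting signs yields $\bigl(\bigwedge^3 W,\ \alpha_{(0,2,0,0)}+\alpha_{(1,0,0,1)}\bigr)_V=e_0 e_3 e_5-e_1 e_2 e_5-e_1 e_3 e_4$, and setting this equal to zero gives exactly~\eqref{trepertre}.

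The computation is entirely mechanical, in accord with the assertion that it is straightforward; the only point requiring care is the sign bookkeeping, since it is precisely the relative signs of the three cubic monomials that distinguish~\eqref{trepertre} from its sign variants. Thus the one genuinely delicate step is to verify that the three reorderings carry a common sign and that the lone $-1$ coefficient in the third basis vector flips the corresponding term back to $+e_0 e_3 e_5$; once this is checked, the equation follows.
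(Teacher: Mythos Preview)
Your proposal is correct and carries out precisely the straightforward computation the paper alludes to; the paper gives no further details beyond stating that the result follows from a direct computation. Your sign bookkeeping is accurate: each of the three permutations has five inversions and hence sign $-1$, and the coefficient $-1$ on $v_1\wedge v_2\wedge v_4$ indeed produces the sign pattern of~\eqref{trepertre}.
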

The group $\Aut(R_{\Theta})\cap G_{\cF_2}$ - see~\eqref{autocentro} - acts on $\cE_D$.
\begin{prp}\label{prp:decoperp}
There are $5$ orbits for the action of $\Aut(R_{\Theta})\cap G_{\cF_2}$  on $\cE_D$ namely
\begin{enumerate}
\item[(1)]
An open dense orbit consisting of those $([e_0,e_1],[e_2,e_3],[e_4,e_5])$ such that $e_1 e_3 e_5\not=0$.  
\item[(2)]
The orbit of $([1,0],[1,0],[0,1])$.
\item[(3)]
The orbit of $([1,0],[0,1],[1,0])$.
\item[(4)]
The orbit of $([0,1],[1,0],[1,0])$.
\item[(5)]
The orbit of $([1,0],[1,0],[1,0])$.
\end{enumerate}
\end{prp}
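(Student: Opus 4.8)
The plan is to reduce the statement to an explicit computation on $\PP^1\times\PP^1\times\PP^1$. First I would translate the matrices~\eqref{autocentro}, which are written in the $(W,Z)$-coordinates, into their action on the basis $\{v_0,\ldots,v_5\}$. A short bookkeeping computation (using $x_0=W_1$, $x_1=-Z_2$, $x_2=W_3$, $x_3=Z_3$, $x_4=W_2$, $x_5=Z_1$ and passing to the induced action on vectors) shows that every element of $H:=\Aut(R_\Theta)\cap G_{\cF_2}$ preserves each of $V_{01},V_{23},V_{45}$ and acts on them by \emph{upper-triangular} $2\times2$ matrices. Concretely, in the coordinates $([e_0,e_1],[e_2,e_3],[e_4,e_5])$ of $\cE$ the action is
\[
[e_0,e_1]\mapsto[a^{-2}e_0-m_1 e_1,\,b^2 e_1],\quad
[e_2,e_3]\mapsto[a^{-1}b^{-1}e_2+m_3 e_3,\,ab\,e_3],\quad
[e_4,e_5]\mapsto[b^{-2}e_4+m_2 e_5,\,a^2 e_5].
\]
The crucial consequence is that the three ``second coordinates'' $e_1,e_3,e_5$ are each merely rescaled by a non-zero factor; hence the vanishing pattern of $(e_1,e_3,e_5)$ is an $H$-invariant, and this is what will separate the orbits.

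Next I would dispose of the open stratum. On $\{e_1e_3e_5\neq0\}$ I normalize $e_1=e_3=e_5=1$, so that~\eqref{trepertre} reads $e_0=e_2+e_4$ and the stratum becomes the affine plane parametrized by $(e_2,e_4)$. The induced action sends
\[
(e_2,e_4)\longmapsto(a^{-2}b^{-2}e_2+a^{-1}b^{-1}m_3,\;a^{-2}b^{-2}e_4+a^{-2}m_2).
\]
Taking $a=b=1$ (so that the relation $a^2m_1+b^2m_2+ab\,m_3=0$ in~\eqref{autocentro} leaves $m_2,m_3$ free and determines $m_1$) one reaches every $(e_2,e_4)$ from $(0,0)$. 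Thus $\{e_1e_3e_5\neq0\}\cap\cE_D$ is the single orbit~(1), of dimension $2$, in accordance with $\dim H=3$ from~\eqref{tredim}.

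I would then analyze the invariant complement $\{e_1e_3e_5=0\}\cap\cE_D$ according to the vanishing pattern of $(e_1,e_3,e_5)$. Equation~\eqref{trepertre} rules out every pattern with \emph{exactly one} vanishing coordinate: if $e_1=0$ and $e_3,e_5\neq0$ then~\eqref{trepertre} forces $e_0=0$, impossible since $[e_0,e_1]\in\PP^1$, and the other two cases are analogous (they force $e_2=0$ with $e_3=0$, resp. $e_4=0$ with $e_5=0$). For each of the three patterns with \emph{exactly two} vanishing coordinates, the defining equation is automatically satisfied and $H$ acts on the surviving $\PP^1$ by an upper-triangular transformation, which has exactly two orbits: the fixed point $[1,0]$ and its complement. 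Normalizing shows these complements are precisely the orbits of $([1,0],[1,0],[0,1])$, $([1,0],[0,1],[1,0])$ and $([0,1],[1,0],[1,0])$, i.e. (2), (3), (4), while the all-zero pattern is the single point $([1,0],[1,0],[1,0])$, orbit~(5). Since the vanishing pattern is $H$-invariant, these five orbits are pairwise distinct, completing the count.

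The routine-but-delicate step is the first one: carrying the matrices~\eqref{autocentro} correctly from the $(W,Z)$-coordinates over to the $v$-basis so as to obtain the displayed upper-triangular action; once this is established, everything else is a finite check on $\PP^1$'s governed by~\eqref{trepertre}. I expect the main obstacle to be exactly this basis bookkeeping, together with confirming that the single linear relation among $m_1,m_2,m_3$ does not obstruct transitivity on the open stratum — both are resolved by specializing to $a=b=1$.
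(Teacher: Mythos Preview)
Your proof is correct and follows essentially the same approach as the paper's: both identify the vanishing pattern of $(e_1,e_3,e_5)$ as the governing invariant, use~\eqref{trepertre} to rule out the patterns with exactly one vanishing coordinate, and then check transitivity on each remaining stratum. The paper's proof is much terser (it leaves the translation of~\eqref{autocentro} into the $v$-basis and the transitivity checks implicit), whereas you spell out the upper-triangular action on each $\PP^1$ factor and verify explicitly, via the specialization $a=b=1$, that the linear constraint $a^2 m_1+b^2 m_2+ab\,m_3=0$ does not obstruct transitivity on any stratum.
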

\begin{proof}
One checks easily that the orbit of $([0,1],[0,1],[0,1])$ is the set of $([e_0,e_1],[e_2,e_3],[e_4,e_5])\in\cE_D$ such that $e_1 e_3 e_5\not=0$. Now assume that $([e_0,e_1],[e_2,e_3],[e_4,e_5])\in\cE_D$ and that $e_1 e_3 e_5=0$. Suppose that $e_1=0$: then~\eqref{trepertre} gives that one among $e_3$, $e_5$ vanishes. Similarly  if $e_3=0$ then one among $e_1$, $e_5$ vanishes, if  $e_5=0$ then one among $e_1$, $e_3$ vanishes. The result follows from this and simple computations.
\end{proof}
\begin{prp}\label{prp:abbecedario}
Let $A\in\WW^{\psi}_{\rm fix}$ be a $G_{\cF_2}$-semistable lagrangian with minimal $G_{\cF_2}$-orbit.  Suppose that  there exists   $\ov{W}\in\Theta_A$ such that
\begin{enumerate}
\item[(1)]
$\ov{W} \in\cE$ and hence $\ov{W} \in\cE_D$ by~\Ref{rmk}{perpdi}.
\item[(2)]
The $\Aut(R_{\Theta})\cap G_{\cF_2}$-orbit of $\ov{W}$ is not the single  open orbit.
\item[(3)]
$C_{\ov{W},A}$ is either $\PP(\ov{W})$ or a sextic curve in the indeterminacy locus of Map~\eqref{persestiche}, i.e.~$[A]\in\gI$.
\end{enumerate}
 Then $[A]\in\gX_{\cW}$.
\end{prp}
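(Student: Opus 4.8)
## Proof proposal for Proposition~\ref{prp:abbecedario}

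The plan is to exploit the explicit classification of the non-open orbits given in~\Ref{prp}{decoperp}, combined with the semistability hypothesis, to force $A$ into a very restricted position. The key structural fact is that hypothesis~(2) places the distinguished plane $\ov{W}$ in one of the four lower-dimensional orbits listed in Items~(2)--(5) of~\Ref{prp}{decoperp}, each of which is represented by a plane $\la\alpha,\beta,\gamma\ra$ whose generators $\alpha\in V_{01}$, $\beta\in V_{23}$, $\gamma\in V_{45}$ are among the ``degenerate'' configurations where one or more of the coordinates $e_1,e_3,e_5$ vanishes. First I would reduce, using the action of $\Aut(R_{\Theta})\cap G_{\cF_2}$ (which acts on $\WW^{\psi}_{\rm fix}$ and preserves all the relevant conditions), to the case where $\ov{W}$ is one of the four explicit representatives. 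Since $A$ has minimal $G_{\cF_2}$-orbit and is $\lambda_{\cF_2}$-split of reduced type $(1,2)$ by~\eqref{incluso}, I can read off its weight decomposition from Tables~\eqref{basezero}, \eqref{baseuno} and~\eqref{basemenouno}, expressing $A=A_{{\bf c},{\bf L}_M}$ in the standard parametrization.

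The second step is to translate hypothesis~(3) into algebraic constraints on $({\bf c},M)$. By~\Ref{clm}{azione} and~\Ref{rmk}{pasquetta}, the presence of the torus $\Aut(R_{\Theta})\cap G_{\cF_2}$ fixing $\ov{W}$ forces the sextic $C_{\ov{W},A}=V(P)$ to be fixed by a subtorus of $SL(\ov{W})$; when $\ov{W}=\la\alpha,\beta,\gamma\ra$ is one of the special representatives the fixing torus is large enough (by the computation in~\eqref{autocentro}) that $P$ must take one of the degenerate forms of type~(2) in the classification of~\Ref{rmk}{pasquetta}, namely a product of three conics of the shape $b_iX_0X_2+a_iX_1^2$. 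For such $\ov{W}$, being in the indeterminacy locus of Map~\eqref{persestiche} means---by~\Ref{rmk}{pershah}---that two of these three conics coincide, i.e.\ the sextic is of Type III-1 or worse, or equivalently $C_{\ov{W},A}=\PP(\ov{W})$. This should pin down a proper closed subvariety of the $(c_0,c_1,m_{ij})$-parameters.

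The third step is to identify that subvariety with $\XX^{\psi}_{\cW}$. Recall from~\Ref{rmk}{ixwce} that elements of $\XX^{*}_{\cW}$ arise as $A_{{\bf c},{\bf L}}$ with $c_1=0$ and ${\bf L}\supset\la\alpha_{(0,1,1,0)},\alpha_{(0,0,2,0)}\ra$, and that by~\Ref{clm}{stregaest} and~\eqref{ibla} every such $A$ has $C_{W,A}=\PP(W)$ for all $W\in\Theta_A$. I would show conversely that the constraints extracted in the second step force $c_1=0$ together with the membership condition on $M$, so that $A\in\WW^{\psi}_{\rm fix}$ actually lies in the $\XX^{\psi}_{\cW}$-locus; membership of $[A]$ in $\gX_{\cW}$ then follows from~\eqref{stratowu} and the fact that $\gX_{\cW}\subset\gI$ by~\Ref{clm}{stregaest}. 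A subtlety is that $C_{\ov{W},A}$ could a priori equal $\PP(\ov{W})$, in which case I would invoke~\Ref{prp}{senoncurva} to conclude that $A\in\XX^{*}_{\cW}\cup\PGL(V)A_k$; since $A_k$ has $\dim\Theta_{A_k}=2$ and is excluded by~\Ref{lmm}{corona} from $\gB_{\cF_2}$, this again lands $A$ in $\gX_{\cW}$.

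The main obstacle I expect is the case analysis across the four orbits of~\Ref{prp}{decoperp}: each representative $\ov{W}$ yields a different subtorus of $SL(\ov{W})$ fixing $P$, hence a slightly different normal form for $C_{\ov{W},A}$, and I must verify in each case that the indeterminacy condition degenerates $A$ into the $\XX^{\psi}_{\cW}$-locus rather than into some other semistable stratum. The cleanest route may be to handle the ``most generic'' special orbit (say that of $([1,0],[1,0],[0,1])$) in detail using~\Ref{clm}{athos} to compute the first terms $g_0,g_1,\ldots$ of the local equation of $C_{\ov{W},A}$ near the singular vertex, and then argue that the remaining three orbits are limits of this one inside $\cE_D$, so that their $C_{\ov{W},A}$ are at least as degenerate; the minimality of the orbit of $A$ and~\Ref{prp}{asterix} should prevent any of these limits from escaping $\gX_{\cW}$.
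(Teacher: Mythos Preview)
Your opening reduction to the four representatives of \Ref{prp}{decoperp} is correct and matches the paper. After that, however, the proposal diverges from the actual proof and contains two genuine gaps.

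First, your claim that ``being in the indeterminacy locus means two of these three conics coincide, i.e.\ the sextic is of Type III-1 or worse'' is wrong. Type III-1 (two conics coincide, the third distinct) is precisely one of Shah's semistable closed-orbit sextics (\Ref{thm}{listashah}) and is \emph{regular} for the period map by \Ref{rmk}{pershah}. The indeterminacy condition on a sextic of the form $\prod_i(b_iX_0X_2+a_iX_1^2)$ is strictly stronger. More seriously, you invoke \Ref{clm}{azione} with ``the torus from $\Aut(R_{\Theta})\cap G_{\cF_2}$,'' but that group does not fix $\bigwedge^{10}A$; it only acts on $\WW^{\psi}_{\rm fix}$. \Ref{clm}{azione} applies only with $1$-PS's that actually stabilize $\bigwedge^{10}A$, and a priori only $\lambda_{\cF_2}$ does so. This alone gives the tritangent-conics form, but nothing sharper.

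Second, and more structurally, the paper does not proceed by computing $C_{\ov{W},A}$ uniformly and solving for $(c_0,c_1,m_{ij})$. Three of the four cases are eliminated by direct contradictions that bypass $C_{\ov{W},A}$ entirely: for $\ov{W}=\la v_0,v_2,v_5\ra$ or $\la v_1,v_2,v_4\ra$ one has $\dim(\ov{W}\cap W_{\infty})=2$, so $A\in\BB^{*}_{\cF_1}$, contradicting $\gB_{\cF_1}\cap\gI=\es$ (\Ref{prp}{trilli}); for $\ov{W}=\la v_0,v_2,v_4\ra=\bigwedge^2\la u_0,u_1,u_3\ra$ every $i_{+}(p)$ with $p\in D$ meets $\ov{W}$ in a line, so $\la\la i_{+}(D)\ra\ra\subset S_{\ov{W}}$ and $A$ lands in the unstable stratum $\XX^{\sF}_{\cC_{1,+}}$, a contradiction. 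Only for $\ov{W}=\la v_0,v_3,v_4\ra$ is $C_{\ov{W},A}$ analyzed, and the missing key step is this: Item~(3) of \Ref{prp}{raggieffedue} holds (using $\alpha_{(1,1,0,0)}\in A$ and $v_0\wedge v_3\wedge v_4\in A$), so by minimality of the $G_{\cF_2}$-orbit the $1$-PS $\lambda_3$ of~\eqref{lamtre} also fixes $\bigwedge^{10}A$. Together with $\lambda_{\cF_2}$ this gives a \emph{maximal} torus of $\SL(\ov{W})$ fixing $P$, hence $P=aX_0^2X_3^2X_4^2$. Since Type III-2 is regular, hypothesis~(3) forces $a=0$, i.e.\ $C_{\ov{W},A}=\PP(\ov{W})$. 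The conclusion $[A]\in\gX_{\cW}$ then comes from \Ref{prp}{senoncurva} and \Ref{lmm}{corona}, exactly the route you relegated to a ``subtlety''; it is in fact the only mechanism producing the conclusion, and your proposed direct identification of a parameter-locus with $\XX^{\psi}_{\cW}$ is not needed.
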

\begin{proof}
One of Items~(2) through~(5) of~\Ref{prp}{decoperp} holds. Thus we may assume that $\ov{W}$ is one of the following:
\begin{enumerate}
\item[(2')]
 $\la v_0,v_2,v_5 \ra$.
\item[(3')]
 $\la v_0, v_3, v_4 \ra$.
\item[(4')]
$\la v_1, v_2, v_4 \ra$.
\item[(5')]
$\la v_0, v_2, v_4 \ra$.
\end{enumerate}
Suppose that~(2') or~(4') holds: we will reach a contradiction. In fact in both cases $\dim(\ov{W}\cap W_{\infty})=2$ - see~\eqref{primaressa}. Thus $[A]\in\gB_{\cF_1}$ and hence $[A]\notin\gI$ by~\Ref{prp}{trilli}, that is a contradiction.  
Suppose that~(3') holds. Then Item~(3) of~\Ref{prp}{raggieffedue} holds for $A$ with $\alpha=-v_0$, $\beta=v_3$ and $\gamma=v_4$ because by Table~\eqref{odescalchi} we have $(v_0\wedge v_1\wedge v_4-v_0\wedge v_2\wedge v_3)=\alpha_{(1,1,0,0)}\in A$. Now look at the proof of~\Ref{prp}{raggieffedue}: since the $G_{\cF_2}$-orbit of $A$ is minimal we get that $\bigwedge^{10}A$ is left invariant by the $1$-PS $\lambda_3\colon\CC^{\times}\to G_{\cF_2}$ defined by~\eqref{lamtre}. Let $C_{\ov{W},A}=V(P)$ where $P\in\Sym^6 \ov{W}^{\vee}$. 
Applying~\Ref{clm}{azione} to  $C_{\ov{W},A}$ we get that $P$ is left-invariant by the maximal torus of $\SL(\ov{W})$  diagonalized in the basis $\{v_0,v_3,v_4\}$ (recall that $\bigwedge^{10}A$ is left invariant by $\lambda_{\cF_2}$): thus $P=a X_0^2 X_3^2 X_4^2$ where $\{X_0,X_3,X_4\}$ is the basis of $\ov{W}^{\vee}$ dual to $\{v_0,v_3,v_4\}$. By hypothesis $C_{\ov{W},A}$ is either $\PP(\ov{W})$ or a sextic curve in the indeterminacy locus of Map~\eqref{persestiche}: it follows that $a=0$ i.e.~$C_{\ov{W},A}=\PP(\ov{W})$. By~\Ref{prp}{senoncurva} and~\Ref{lmm}{corona} we get that $[A]\in\gX_{\cW}$.
Lastly suppose that~(5') holds: we will reach a contradiction. We have $\la v_0, v_2, v_4 \ra=\bigwedge^2\la u_0,u_1,u_3 \ra$ and hence $\dim(i_{+}(p)\cap \ov{W})=2$ for every $p\in D$. Viewing $i_{+}(D)$ as a subset of $\PP(\bigwedge^3 V)$ via the Pl\"ucker embedding we get that $\la\la i_{+}(D)\ra\ra \subset S_{\ov{W}}$. Since $\ov{W}\in\Theta_A$ and $\dim\la\la i_{+}(D)\ra\ra=5$ it follows that $A$ is $\PGL(V)$-unstable (see Table~\eqref{stratflagdue}, stratum $\XX^{\sF}_{\cC_{1,+}}$), that is a contradiction.
\end{proof}
 Let
\begin{equation}\label{mediow}
W_m:=\{ Y_0 v_1+Y_1 v_3+Y_2 v_5 \mid Y_i\in\CC\}.
\end{equation}
Notice that $W_m\in\cE_D$ and it belongs to the open orbit for the action of $\Aut(R_{\Theta})\cap G_{\cF_2}$. We will  examine those $A\in\WW^{\psi}$ such that $\Theta_A$ contains $W_m$ 
and $C_{W_m,A}$ is not a sextic in the regular locus of~\eqref{persestiche}. 
Let
\begin{equation*}
\MM^{\psi}:=\{A_{{\bf c},{\bf L}}\in\WW^{\psi}_{\rm fix} \mid v_1\wedge v_3\wedge v_5\in A_{{\bf c},{\bf L}}\}.
\end{equation*}
In order to give an explicit description of $\MM^{\psi}$ we introduce the following notation. Let
\begin{equation*}
P^{00}_D:=\la \alpha_{(0,2,0,0)} - \alpha_{(1,0,0,1)} , \alpha_{(0,1,1,0)} \ra,\qquad
Q^{00}_D:=\la 2\beta_{(0,2,0,0)} - \beta_{(1,0,0,1)} , \beta_{(0,1,1,0)} \ra.
\end{equation*}
Thus $P^{00}_D\subset P^{0}_D $ and $Q^{00}_D\subset Q^{0}_D $. Given ${\bf J}\in  \LL\GG(P^{00}_D\oplus Q^{00}_D)$ we let
\begin{equation}
{\bf L}_{\bf J}:=(\la \alpha_{(0,2,0,0)} \ra \oplus {\bf J})\in \LL\GG(P^{0}_D\oplus Q^{0}_D). 
\end{equation}
We have an isomorphism
\begin{equation}\label{prodotto}
\begin{matrix}
\PP^1\times \LL\GG(P^{00}_D\oplus Q^{00}_D) & \overset{\sim}{\lra} & \MM^{\psi} \\
({\bf c},{\bf J}) & \mapsto & A_{{\bf c}, {\bf L}_{\bf J}}.
\end{matrix}
\end{equation}
In particular $\MM^{\psi}$ is irreducible of dimension $4$. Let ${\bf L}_M$ be as in~\eqref{mammecompet}: then 
\begin{equation}\label{cornice}
\text{${\bf L}_M={\bf L}_{\bf J}$ for some ${\bf J}\in \LL\GG(P^{00}_D\oplus Q^{00}_D)$ if and only if $0=m_{13}=m_{23}=m_{33}$.}
\end{equation}
We have $[v_1\wedge v_3\wedge v_5]=i_{+}([u_2])$; thus we have an isomorphism
\begin{equation}
\begin{matrix}
\la u_0,u_1,u_3\ra & \overset{f}{\lra} & W_m \\
u & \mapsto & u\wedge u_2
\end{matrix}
\end{equation}
If $p\in D\subset\PP(\la u_0,u_1,u_3\ra)$ then $[f(p)]$ belongs to the distinct planes $i_{+}(p)$ and to $\PP(W_m)$. Now suppose that $A_{{\bf c},{\bf L}}\in \MM^{\psi}$: then  $i_{+}(p)\in\Theta_{A_{{\bf c},{\bf L}}}$ and hence by~\Ref{crl}{cnesinerre} (and~\Ref{clm}{azione}) we get that
\begin{equation}\label{duepiuno}
\text{$C_{W_m,A}=V((Y_0Y_2+Y_1^2)^2(bY_0Y_2+aY_1^2))$ if $A\in \MM^{\psi}$.}
\end{equation}
(Here $Y_0,Y_1,Y_2$ are as in~\eqref{mediow}.)
\begin{lmm}\label{lmm:verticeuno}
Identify $\MM^{\psi}$ with $\PP^1\times \LL\GG(P^{00}_D\oplus Q^{00}_D)$ via~\eqref{prodotto}.
The set of $A\in\MM^{\psi}$ such that  $[v_3]\in C_{W_m, A}$ is equal to
\begin{equation}\label{hopersolaereo}
\{({\bf c}, {\bf J})\in \PP^1\times \LL\GG(P^{00}_D\oplus Q^{00}_D) \mid c_0=0  \}\cup 
\{({\bf c}, {\bf J})\in \PP^1\times \LL\GG(P^{00}_D\oplus Q^{00}_D) \mid {\bf J}\cap P_D^{00} \not=\{0\}  \}.
\end{equation}
\end{lmm}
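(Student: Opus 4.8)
The plan is to work directly with the degeneracy-locus description~\eqref{supses} rather than with the explicit sextic~\eqref{duepiuno}: since $\bigwedge^3 W_m=[v_1\wedge v_3\wedge v_5]$, one has $[v_3]\in C_{W_m,A}$ if and only if $\dim(A\cap F_{v_3})\ge 2$. (Equivalently, evaluating the sextic of~\eqref{duepiuno} at $[v_3]$, i.e.\ at $[Y_0,Y_1,Y_2]=[0,1,0]$ in the coordinates~\eqref{mediow}, returns $a$, so this locus is exactly $\wh{\rho}^{-1}\{(0,b)\}$.) First I would observe that $v_3$ has $\lambda_{\cF_2}$-weight $0$, so $F_{v_3}$ is $\lambda_{\cF_2}$-invariant; since every $A\in\MM^{\psi}\subset \WW^{\psi}_{\rm fix}$ is $\lambda_{\cF_2}$-split, the intersection $A\cap F_{v_3}$ decomposes as the direct sum of the weight pieces $A(i)\cap F_{v_3}$ for $i=-2,\ldots,2$. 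Moreover $v_1\wedge v_3\wedge v_5\in A(0)\cap F_{v_3}$ always, so the entire question is whether some weight piece contributes beyond this single line.

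The weight pieces are then computed against the explicit bases of Tables~\eqref{baseuno}, \eqref{basemenouno} and~\eqref{basezero}, using that (with the convention $(ijk):=v_i\wedge v_j\wedge v_k$) the weight-$0$ part $F_{v_3}(0)$ is spanned by $(034),(035),(134),(135)$, with analogous lists in the other weights. In weights $\pm 2$ the pieces $A(\pm 2)$ are spanned by monomials not divisible by $v_3$ and contribute nothing. In weights $\pm 1$ membership in $F_{v_3}$ forces the coefficients of $(014),(015)$ (resp.\ $(045),(145)$) to vanish; reading off Tables~\eqref{baseuno} and~\eqref{basemenouno}, a short linear-algebra check shows each of these pieces is $0$ when $c_0\not=0$ and one-dimensional when $c_0=0$. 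In weight $0$ I would invoke~\eqref{cornice} to set $m_{13}=m_{23}=m_{33}=0$ (precisely the condition $A\in\MM^{\psi}$) and impose that a general combination $\sum_i t_i$ of the four rows of Table~\eqref{basezero} has vanishing coefficients along $(024),(025),(124),(125)$. After elimination these reduce to $t_1=t_2$ together with the $2\times 2$ homogeneous system $2m_{12}t_2+m_{22}t_3=0$, $m_{11}t_2+m_{12}t_3=0$; hence $A(0)\cap F_{v_3}$ is one-dimensional (just $\bigwedge^3 W_m$) unless $2m_{12}^2-m_{11}m_{22}=0$, in which case it is two-dimensional.

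Assembling the three contributions gives $\dim(A\cap F_{v_3})\ge 2$ exactly when $c_0=0$ or $2m_{12}^2-m_{11}m_{22}=0$, and it remains to match these with the two sets in~\eqref{hopersolaereo}. The condition $c_0=0$ is the first set verbatim. For the second I would recall that, under~\eqref{prodotto}, ${\bf J}$ is the graph of the symmetric map $P_D^{00}\to Q_D^{00}$ sending $p_1:=\alpha_{(0,2,0,0)}-\alpha_{(1,0,0,1)}\mapsto m_{11}q_1+2m_{12}q_2$ and $p_2:=\alpha_{(0,1,1,0)}\mapsto m_{12}q_1+m_{22}q_2$, with $q_1:=2\beta_{(0,2,0,0)}-\beta_{(1,0,0,1)}$ and $q_2:=\beta_{(0,1,1,0)}$ (the rows of~\eqref{mammecompet} with $m_{13}=m_{23}=m_{33}=0$). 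Such a graph meets $P_D^{00}$ nontrivially precisely when this map is singular, i.e.\ when $m_{11}m_{22}-2m_{12}^2=0$, which is exactly the weight-$0$ condition. This yields the claimed equality~\eqref{hopersolaereo}.

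The main obstacle is the weight-$0$ bookkeeping: determining which of the eight weight-$0$ monomials lie in $F_{v_3}$, writing down the four vanishing constraints coming from Table~\eqref{basezero}, and reducing them (after using~\eqref{cornice}) to the single determinant $2m_{12}^2-m_{11}m_{22}$, whose vanishing must then be recognized as the degeneracy of the symmetric map defining ${\bf J}$. Everything else is routine once the weight decomposition of $A\cap F_{v_3}$ is set up.
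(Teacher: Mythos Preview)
Your approach is essentially the paper's: decompose $A\cap F_{v_3}$ by $\lambda_{\cF_2}$-weight, handle weights $\pm 1,\pm 2$ globally, and do the weight-$0$ computation via Table~\eqref{basezero} after imposing~\eqref{cornice}. The identification $m_{11}m_{22}-2m_{12}^2=0 \iff {\bf J}\cap P_D^{00}\neq\{0\}$ is also exactly what the paper records.

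There is one genuine gap. Your weight-$0$ computation, and the sentence ``${\bf J}$ is the graph of the symmetric map $P_D^{00}\to Q_D^{00}$'', are only valid on the affine chart where ${\bf J}\cap Q_D^{00}=\{0\}$ (equivalently ${\bf L}_{\bf J}={\bf L}_M$ for some $M$, cf.~\eqref{cornice}). Off this chart ${\bf J}$ is not a graph, Table~\eqref{basezero} does not apply, and your determinant $m_{11}m_{22}-2m_{12}^2$ is undefined; you have therefore only established the equality of the two sides of~\eqref{hopersolaereo} on a dense open of $\PP^1\times\LL\GG(P_D^{00}\oplus Q_D^{00})$. Since both sides are closed, this gives neither containment. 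The paper closes this gap not by extending the chart computation but by a divisor-class argument: the locus $\PP(\wh{\rho}^{-1}\{(0,b)\})$ is by construction the zero scheme of a section of $\cO_{\PP^1}(2)\boxtimes\cL$ (see~\eqref{peetscoffee},~\eqref{vatussi}); one checks directly that it contains~\eqref{hopersolaereo} (the paper exhibits explicit elements of $A\cap F_{v_3}$ when $c_0=0$ and when ${\bf J}\cap P_D^{00}\neq\{0\}$), and then the chart computation shows it is not all of $\PP^1\times\LL\GG$, so the section is nonzero and equality follows from matching divisor classes. You should either add this degree argument or treat the boundary stratum ${\bf J}\cap Q_D^{00}\neq\{0\}$ by a separate direct computation.
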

\begin{proof}
Let $\Xi\subset \MM^{\psi}$ be the set of $A$ such that  $[v_3]\in C_{W_m, A}$. First we will prove that if $({\bf c}, {\bf J})$ belongs to~\eqref{hopersolaereo} then  $A_{{\bf c}, {\bf L}_{\bf J}}\in\Xi$. 
If $c_0=0$ then
\begin{equation*}
-2c_1 v_0\wedge v_2\wedge v_3=(c_1\alpha_{(1,1,0,0)}+c_1\beta_{(0,0,1,1)})\in F_{v_3}\cap A_{{\bf c},{\bf L}_{\bf J}}
\ni (c_1\alpha_{(0,1,0,1)}- c_1\beta_{(1,0,1,0)})=2 c_1 v_2\wedge v_3\wedge v_4.
\end{equation*}
Since $c_1\not=0$ we get that $\dim(F_{v_3}\cap A_{{\bf c},{\bf L}_{\bf J}})\ge 3$ and hence $[v_3]\in C_{W_m, A_{{\bf c},{\bf L}_{\bf J}}}$, i.e.~$A_{{\bf c}, {\bf L}_{\bf J}}\in\Xi$. Now suppose that ${\bf J}\cap P_D^{00} \not=\{0\}$ and let $0\not=(s(\alpha_{(0,2,0,0)} - \alpha_{(1,0,0,1)})+t \alpha_{(0,1,1,0)})\in{\bf J}\cap P_D^{00}$. Then
\begin{equation*}
\scriptstyle
2s v_0\wedge v_3\wedge v_4+t v_0\wedge v_3\wedge v_5+t v_1\wedge v_3\wedge v_4=
(s(\alpha_{(0,2,0,0)} + \alpha_{(1,0,0,1)})+s(\alpha_{(0,2,0,0)} - \alpha_{(1,0,0,1)})+t \alpha_{(0,1,1,0)})\in F_{v_3}\cap A_{{\bf c},{\bf L}_{\bf J}}.
\end{equation*}
Thus $\dim(F_{v_3}\cap A_{{\bf c},{\bf L}_{\bf J}})\ge 2$ and hence $[v_3]\in C_{W_m, A_{{\bf c},{\bf L}_{\bf J}}}$, i.e.~$A_{{\bf c}, {\bf L}_{\bf J}}\in\Xi$. We have proved   that if $({\bf c}, {\bf J})$ belongs to~\eqref{hopersolaereo} then  $A_{{\bf c}, {\bf L}_{\bf J}}\in\Xi$. 
It remains to prove that if $A_{{\bf c}, {\bf L}_{\bf J}}\in\Xi$ then $({\bf c}, {\bf J})$ belongs to~\eqref{hopersolaereo}. 
 Since $v_3$ generates a $\lambda_{\cF_2}$-invariant subspace of $V$ the intersection $F_{v_3}\cap  A_{{\bf c},{\bf L}_{\bf J}}$ decomposes as the direct-sum of the interesections  $F_{v_3}\cap  A_{{\bf c},{\bf L}_{\bf J}}(i)$. By~\eqref{pingu} we get that $F_{v_3}\cap  A_{{\bf c},{\bf L}_{\bf J}}(i)$ can be non-zero only for  $i=0,\pm 1$. Looking at Tables~\eqref{baseuno} and~\eqref{basemenouno} we get that $\dim(F_{v_3}\cap  A_{{\bf c},{\bf L}_{\bf J}}(\pm 1)$ is non-zero only if $c_0=0$. Next we compute $\dim(F_{v_3}\cap  A_{{\bf c},{\bf L}_{\bf J}}(0))$
for those ${\bf J}$ such that ${\bf L}_{\bf J}=
 {\bf L}_M$ - see~\eqref{cornice}. Of course $v_1\wedge v_3\wedge v_5\in F_{v_3}\cap  A_{{\bf c},{\bf L}_{\bf J}}(0)$.  A straightforward computation gives that $\dim(F_{v_3}\cap  A_{{\bf c},{\bf L}_M}(0))\ge 2$ if and only if $(m_{11}m_{22}-2m_{12}^2)=0$ (notice: this is equivalent to requiring that ${\bf L}_M\cap P^{00}_D\not=\{0\}$). This shows  that 
 \begin{equation}\label{crackers}
\text{$\Xi$ contains $\{A_{{\bf c}, {\bf L}_{\bf J}} \mid \text{$[c_0,c_1]$ fixed, $J\in\LL\GG(P^{00}_D\oplus Q^{00}_D)$ arbitrary}\}$ if and only if $c_0=0$.}
\end{equation}
In particular $\Xi$  is not all of $\PP^1\times  \LL\GG(P^{00}_D\oplus Q^{00}_D)$: it follows  that it is the zero locus of a {\bf non-zero} section of $\cO_{\PP^1}(2)\boxtimes \cL$ where $\cL$ is the (ample) Pl\"ucker line-bundle on $\LL\GG(P^{00}_D\oplus Q^{00}_D)$ - see~\eqref{peetscoffee} and~\eqref{vatussi}. Since $\Xi$ contains the set of~\eqref{hopersolaereo} we get  by~\eqref{crackers} that it is equal to that set. 
\end{proof}
By~\Ref{lmm}{verticeuno} we have a rational map
\begin{equation}\label{draghi}
\begin{matrix}
\MM^{\psi} & \overset{\rho}{\dashrightarrow} & \PP^1 \\
A & \mapsto & [a,b]
\end{matrix}
\end{equation}
where $a,b$ are as in~\eqref{duepiuno}.  Let $\wh{\MM}^{\psi}\subset\bigwedge^{10}(\bigwedge^3 V)$ be the affine cone over $\MM^{\psi}$: then $\rho$ is the projectivization of a regular map
\begin{equation}
\wh\MM^{\psi}  \overset{\wh{\rho}}{\lra}  \CC^2. 
\end{equation}
\begin{lmm}\label{lmm:verticedue}
Identify $\MM^{\psi}$ with $\PP^1\times \LL\GG(P^{00}_D\oplus Q^{00}_D)$ via~\eqref{prodotto}.
Then the set of $A\in\MM^{\psi}$ such that  $[v_1-v_5]\in C_{W_m, A}$ (i.e.~$\PP(\wh{\rho}^{-1}\{(a,0)\})$) is equal to
\begin{equation}\label{morbillo}
\scriptstyle
\{({\bf c}, {\bf J})\in \PP^1\times \LL\GG(P^{00}_D\oplus Q^{00}_D) \mid c_0 c_1=0  \}\cup
\{({\bf c}, {\bf J})\in \PP^1\times \LL\GG(P^{00}_D\oplus Q^{00}_D) \mid {\bf J}\cap 
\la \alpha_{(0,2,0,0)}- \alpha_{(1,0,0,1)}, \beta_{(0,1,1,0)}     \ra\not=\{0\}  \}.
\end{equation}
\end{lmm}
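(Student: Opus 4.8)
The plan is to follow closely the proof of~\Ref{lmm}{verticeuno}, with the point $[v_3]$ replaced by $[v_1-v_5]$ and the coefficient $a$ of~\eqref{duepiuno} by $b$. First I would record the reduction: substituting the coordinates $(Y_0,Y_1,Y_2)=(1,0,-1)$ of $[v_1-v_5]$ into~\eqref{duepiuno} yields the value $-b$, so $[v_1-v_5]\in C_{W_m,A}$ if and only if $b=0$, i.e.~$A\in\PP(\wh\rho^{-1}\{(a,0)\})$; this justifies the parenthetical in the statement. Since $W_m\in\Theta_A$ for every $A\in\MM^{\psi}$, by~\eqref{supses} the condition $[v_1-v_5]\in C_{W_m,A}$ is equivalent to $\dim(F_{v_1-v_5}\cap A)\ge 2$, and we always have the element $v_1\wedge v_3\wedge v_5\in\bigwedge^3 W_m\cap F_{v_1-v_5}$.

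For the inclusion of the set~\eqref{morbillo} into $\PP(\wh\rho^{-1}\{(a,0)\})$ I would exhibit, for $A$ in each of the three loci, a second element of $F_{v_1-v_5}\cap A$ independent of $v_1\wedge v_3\wedge v_5$, using the explicit bases of Tables~\eqref{basezero}, \eqref{baseuno} and~\eqref{basemenouno}. On the slice $c_1=0$ one checks that $\alpha_{(1,0,1,0)}+\alpha_{(0,0,1,1)}$ lies in $A_{[1,0],{\bf L}}$ and in $F_{v_1-v_5}$; on the slice $c_0=0$ the element $\alpha_{(1,1,0,0)}+\alpha_{(0,1,0,1)}-\beta_{(0,0,1,1)}+\beta_{(1,0,1,0)}$ lies in $A_{[0,1],{\bf L}}$ (the first two summands come from $\la\la i_{+}(D)\ra\ra$, the last two from $R_{[0,1]}$, cf.~\eqref{orsini}) and in $F_{v_1-v_5}$; both are independent of the factor $\LL\GG(P^{00}_D\oplus Q^{00}_D)$, so each slice lies entirely in $\PP(\wh\rho^{-1}\{(a,0)\})$. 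Finally, writing $L_0:=\la \alpha_{(0,2,0,0)}-\alpha_{(1,0,0,1)},\beta_{(0,1,1,0)}\ra$, a Lagrangian plane of $P^{00}_D\oplus Q^{00}_D$, whenever ${\bf J}\cap L_0\ne\{0\}$ one has a weight-$0$ vector $\gamma\in{\bf L}_{\bf J}\subset A$; correcting $\gamma$ by suitable elements of $A$ of $\lambda_{\cF_2}$-weights $0$ and $\pm 2$ produces the required second element of $F_{v_1-v_5}\cap A$, the solvability of the correction being precisely the condition ${\bf J}\cap L_0\ne\{0\}$.

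For the reverse inclusion I would argue as in~\Ref{lmm}{verticeuno}. The locus $\PP(\wh\rho^{-1}\{(a,0)\})$ is the zero locus of the section $b$ of $\cO_{\PP^1}(2)\boxtimes\cL$ induced by~\eqref{peetscoffee} and~\eqref{vatussi}, and $b\not\equiv 0$ since for generic $A\in\MM^{\psi}$ the residual conic $bY_0Y_2+aY_1^2$ is nondegenerate. The analogue of~\eqref{crackers} is that $\{[c_0,c_1]\}\times\LL\GG(P^{00}_D\oplus Q^{00}_D)$ is contained in $\PP(\wh\rho^{-1}\{(a,0)\})$ if and only if $c_0c_1=0$: containment for $c_0c_1=0$ was shown above, while for $c_0c_1\ne 0$ a generic ${\bf J}$ yields $b\ne 0$. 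Hence on a slice with $c_0c_1\ne 0$ the restriction $b|_{\rm slice}$ is a nonzero section of $\cL$, so $V(b)\cap{\rm slice}$ is an effective divisor of class $\cL$ on the quadric threefold $\LL\GG(P^{00}_D\oplus Q^{00}_D)$; it contains the irreducible Schubert divisor $\{{\bf J}\mid {\bf J}\cap L_0\ne\{0\}\}$, itself of class $\cL$, and containment of irreducible divisors of equal class forces equality. Combining the slices gives $V(b)_{\rm red}=\{c_0c_1=0\}\cup\{{\bf J}\cap L_0\ne\{0\}\}$, which is~\eqref{morbillo}; as a consistency check the divisor classes add up to $\cO_{\PP^1}(2)\boxtimes\cL$.

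The main obstacle is that $v_1-v_5$ is not a $\lambda_{\cF_2}$-weight vector: it is the sum of the weight-$(+1)$ vector $v_1$ and the weight-$(-1)$ vector $-v_5$. Consequently $F_{v_1-v_5}\cap A$ does not split into $\lambda_{\cF_2}$-weight spaces, and the clean weight-by-weight reduction available for $[v_3]$ in~\Ref{lmm}{verticeuno} cannot be used; the computations identifying the second element of $F_{v_1-v_5}\cap A$, especially on the locus ${\bf J}\cap L_0\ne\{0\}$, genuinely mix the weight $+1$ and weight $-1$ parts and must be carried out by direct linear algebra with the bases of Tables~\eqref{basezero}, \eqref{baseuno} and~\eqref{basemenouno}.
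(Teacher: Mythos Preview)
Your overall strategy matches the paper's: exhibit explicit second elements of $F_{v_1-v_5}\cap A$ on each component of~\eqref{morbillo}, then close up by the divisor-class argument in $|\cO_{\PP^1}(2)\boxtimes\cL|$. Your explicit elements for $c_0=0$ and $c_1=0$ are (up to sign) exactly those of the paper.

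Where you diverge is in what you call ``the main obstacle''. You note that $v_1-v_5$ is not a $\lambda_{\cF_2}$-weight vector and conclude that $F_{v_1-v_5}\cap A$ does not decompose, forcing brute linear algebra mixing weights. The paper observes something you overlooked: both $v_1$ and $v_5$ have \emph{odd} $\lambda_{\cF_2}$-weight, so wedging with $v_1-v_5$ exchanges the even and odd parity parts of $\bigwedge^4 V$. Consequently $F_{v_1-v_5}\cap A$ \emph{does} split as the direct sum of $F_{v_1-v_5}\cap A(\mathrm{even})$ and $F_{v_1-v_5}\cap A(\mathrm{odd})$. With this parity decomposition the paper computes cleanly: the odd-weight piece (governed by Tables~\eqref{baseuno}, \eqref{basemenouno}) contributes iff $c_0c_1=0$, and the even-weight piece (governed by Table~\eqref{basezero}) contributes beyond $v_1\wedge v_3\wedge v_5$ iff $m_{11}=0$, which is exactly ${\bf J}\cap\la\alpha_{(0,2,0,0)}-\alpha_{(1,0,0,1)},\beta_{(0,1,1,0)}\ra\ne\{0\}$ on the chart ${\bf L}_M$. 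This replaces your somewhat vague ``correcting $\gamma$ by suitable elements of weights $0$ and $\pm 2$'' with a direct computation, and simultaneously provides the nontriviality of the section $b$ on each vertical slice with $c_0c_1\ne 0$ that you need for the analogue of~\eqref{crackers}. Your route would work but is harder to execute; the parity trick is the missing simplification.
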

\begin{proof}
First we prove that the set of~\eqref{morbillo} is contained in $\PP(\wh{\rho}^{-1}\{(a,0)\})$. 
Suppose that $c_0=0$. Then
\begin{equation*}
-2(v_1-v_5)\wedge v_0\wedge v_4=\alpha_{(1,1,0,0)}-\beta_{(0,0,1,1)}+\alpha_{(0,1,0,1)}+\beta_{(1,0,1,0)}\in
F_{(v_1-v_5)}\cap A_{{\bf c},{\bf L}}
\end{equation*}
 and hence $\dim(F_{(v_1-v_5)}\cap A_{{\bf c},{\bf L}})\ge 2$: it follows that $[v_1-v_5]\in C_{W_m, A_{{\bf c},{\bf L}}}$. 
Now suppose that $c_1=0$. Then
\begin{equation*}
(v_1-v_5)\wedge (v_0\wedge v_5+v_2\wedge v_3-v_4\wedge v_5)=-(\alpha_{(0,0,1,1)}+\alpha_{(1,0,1,0)})\in
F_{(v_1-v_5)}\cap A_{{\bf c},{\bf L}}
\end{equation*}
 and hence $\dim(F_{(v_1-v_5)}\cap A_{{\bf c},{\bf L}})\ge 2$: it follows that $[v_1-v_5]\in C_{W_m, A_{{\bf c},{\bf L}}}$. 
Lastly suppose that ${\bf J}\cap 
\la \alpha_{(0,2,0,0)}- \alpha_{(1,0,0,1)}, \beta_{(0,1,1,0)}     \ra\not=\{0\}$ and let  
\begin{equation*}
0\not=(t(\alpha_{(0,2,0,0)} - \alpha_{(1,0,0,1)})+u \beta_{(0,1,1,0)})\in{\bf J}\cap \la \alpha_{(0,2,0,0)}- \alpha_{(1,0,0,1)}, \beta_{(0,1,1,0)}     \ra.
\end{equation*}
Then
\begin{multline*}
\scriptstyle
(v_1-v_5)\wedge ((2t-u)v_2\wedge v_4+(2t+u)v_0\wedge v_2)=(-(u+2t)\alpha_{(2,0,0,0)}
-t(\alpha_{(0,2,0,0)} + \alpha_{(1,0,0,1)})+ \\
\scriptstyle
+t(\alpha_{(0,2,0,0)} - \alpha_{(1,0,0,1)})+u \beta_{(0,1,1,0)})
+(u-2t)\alpha_{(0,0,0,2)})\in F_{(v_1-v_5)}\cap A_{{\bf c},{\bf L}}.
\end{multline*}
Thus $\dim(F_{(v_1-v_5)}\cap A_{{\bf c},{\bf L}})\ge 2$: it follows that $[v_1-v_5]\in C_{W_m, A_{{\bf c},{\bf L}}}$.
It remains to prove that $\PP(\wh{\rho}^{-1}\{(a,0)\})$ is contained in the set given by~\eqref{hopersolaereo}. Let 
$A_{{\bf c},{\bf L}}({\rm even})$ and $A_{{\bf c},{\bf L}}({\rm odd})$ be the direct sum of the $\bigwedge^3\lambda_{\cF_2}$-isotypical summands of $A_{{\bf c},{\bf L}}$ with even and odd weights respectively. Let $\delta\in A_{{\bf c},{\bf L}}$: then $\delta\in F_{(v_1-v_5)}$ if and only if $v_1\wedge \delta=v_5\wedge\delta$. Since both $v_1$ and $v_5$ belong to  $\lambda_{\cF_2}$-isotypical summands of odd weight it follows that $F_{(v_1-v_5)}\cap A_{{\bf c},{\bf L}}$ is the direct-sum of its intersections with $A_{{\bf c},{\bf L}}({\rm even})$ and $A_{{\bf c},{\bf L}}({\rm odd})$. 
Going through Tables~\eqref{baseuno} and~\eqref{basemenouno}  we get that  $F_{(v_1-v_5)}\cap A_{{\bf c},{\bf L}}({\rm odd})$ is not empty if and only if $c_0c_1=0$. Next we compute $\dim(F_{(v_1-v_5)}\cap  A_{{\bf c},{\bf L}_{\bf J}}({\rm even}))$
for those ${\bf J}$ such that ${\bf L}_{\bf J}=
 {\bf L}_M$ - see~\eqref{cornice}. Of course $v_1\wedge v_3\wedge v_5\in F_{(v_1-v_5)}\cap  A_{{\bf c},{\bf L}_{\bf J}}({\rm even})$.  A straightforward computation gives that $\dim(F_{(v_1-v_5)}\cap  A_{{\bf c},{\bf L}_M}({\rm even}))\ge 2$ if and only if $m_{11}=0$ (notice: this holds if and only if $({\bf c},{\bf L}_M)$ belongs to the second set of~\eqref{morbillo}). In particular $\PP(\wh{\rho}^{-1}\{(a,0)\})$  is not all of $\PP^1\times  \LL\GG(P^{00}_D\oplus Q^{00}_D)$. It follows  that $\PP(\wh{\rho}^{-1}\{(a,0)\})$ is the zero locus of a {\bf non-zero} section of $\cO_{\PP^1}(2)\boxtimes \cL$ where $\cL$ is the (ample) Pl\"ucker line-bundle on $\LL\GG(P^{00}_D\oplus Q^{00}_D)$ - see~\eqref{peetscoffee} and~\eqref{vatussi}. Since $\PP(\wh{\rho}^{-1}\{(a,0)\})$ contains the set of~\eqref{morbillo} we get that it is equal to that set.
\end{proof}
Let
\begin{equation}
\NN^{\psi}:=\{A\in\MM^{\psi} \mid a-b=0\}.
\end{equation}
In other words $\NN^{\psi}$ is the set of $A\in\MM^{\psi}$ such that $C_{W_m,A}$ is not a sextic in the regular locus of the period map~\eqref{persestiche}. 
\begin{prp}\label{prp:tiroide}
Identify $\MM^{\psi}$ with $\PP^1\times \LL\GG(P^{00}_D\oplus Q^{00}_D)$ via~\eqref{prodotto}. Then 
\begin{equation}\label{faiella}
\NN^{\psi}=\{({\bf c},{\bf J}) \mid c_0=0\} \cup \XX^{\psi}_{\cU}
\end{equation}
where $\XX^{\psi}_{\cU}$ is an irreducible divisor in $|\cO_{\PP^1}(1)\boxtimes  \cL |$ and $\cL$ is  the ample  generator of the Picard group of $\LL\GG(P_D^{00}\oplus Q_D^{00})$ (i.e.~the 
Pl\"ucker line-bundle).
\end{prp}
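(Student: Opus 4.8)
The plan is to compute $\NN^{\psi}$ explicitly as the zero locus of the regular map $\wh{\rho}$ determined by the condition $a-b=0$, and to identify its irreducible components. The structure follows the pattern already established for $\XX^{\psi}$ in~\Ref{crl}{duecomp}. First I would observe that, via the Pl\"ucker expansion~\eqref{peetscoffee}--\eqref{vatussi} of $\Phi$, the two components $a$ and $b$ of $\wh{\rho}$ are each (up to a common factor) sections of $\cO_{\PP^1}(2)\boxtimes\cL$ on $\PP^1\times\LL\GG(P^{00}_D\oplus Q^{00}_D)$; hence $a-b$ is such a section too, and $\NN^{\psi}=V(a-b)$ is {\it a priori} a divisor in $|\cO_{\PP^1}(2)\boxtimes\cL|$, provided it is not all of $\MM^{\psi}$. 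The key inputs are~\Ref{lmm}{verticeuno} and~\Ref{lmm}{verticedue}, which pin down the loci $\PP(\wh{\rho}^{-1}\{(0,b)\})=V(a)$ and $\PP(\wh{\rho}^{-1}\{(a,0)\})=V(b)$ respectively.

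The main calculation is to extract the $c_0=0$ component. I would argue that the ``vertical'' divisor $\VV:=\{({\bf c},{\bf J})\mid c_0=0\}$ is an irreducible component of $\NN^{\psi}$. By~\Ref{lmm}{verticeuno} and~\Ref{lmm}{verticedue}, whenever $c_0=0$ both $[v_3]\in C_{W_m,A}$ and $[v_1-v_5]\in C_{W_m,A}$; consulting~\eqref{duepiuno} this forces the sextic $V((Y_0Y_2+Y_1^2)^2(bY_0Y_2+aY_1^2))$ to have $[a,b]$ equal to a value for which $C_{W_m,A}$ is in the indeterminacy locus, so $\VV\subset\NN^{\psi}$. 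That $\VV$ is an actual component (not contained in the rest) follows because $c_0$ divides the defining section after removing it from the remaining factor. Writing $\NN^{\psi}=\VV\cup\XX^{\psi}_{\cU}$, I would then bound the bidegree of $\XX^{\psi}_{\cU}$: since $a-b$ has $\PP^1$-degree $2$ and $\VV$ absorbs degree $1$ in $c_0$, the residual divisor $\XX^{\psi}_{\cU}$ lies in $|\cO_{\PP^1}(d)\boxtimes\cL|$ with $d\le 1$.

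To finish I would show $d=1$ and that $\XX^{\psi}_{\cU}$ is irreducible. Here the explicit coordinate computation is indispensable: using the bases of $A_{{\bf c},{\bf L}_M}(i)$ from Tables~\eqref{basezero}, \eqref{baseuno}, \eqref{basemenouno} (restricted via~\eqref{cornice} to those ${\bf L}_M$ with $0=m_{13}=m_{23}=m_{33}$, which parametrize $\MM^{\psi}$), I would compute the leading Taylor coefficients of $C_{W_m,A}$ near the appropriate point and read off the equation of $\XX^{\psi}_{\cU}$ as a bihomogeneous polynomial, exactly as~\eqref{anitona} and~\eqref{bigrado} did for $\XX^{\psi}_{\cV}$. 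The equation will be linear in $(c_0,c_1)$ (giving $d=1$) and manifestly irreducible in the Pl\"ucker coordinates on $\LL\GG(P^{00}_D\oplus Q^{00}_D)$, analogous to~\eqref{bigrado}. The main obstacle I anticipate is the bookkeeping in this final coefficient computation: one must correctly evaluate $\wh{\rho}$ on the four-parameter family $\MM^{\psi}$, track the symmetric-matrix entries $m_{ij}$ entering ${\bf J}$, and verify that the $a-b$ section does not accidentally acquire an extra factor of $c_0$ (which would wrongly inflate the multiplicity of $\VV$) nor vanish identically. Distinguishing $V(a-b)$ from the separate loci $V(a)$ and $V(b)$ computed in the two preceding lemmas is the delicate point, since all three are sections of the same line bundle and only the explicit formulas separate them.
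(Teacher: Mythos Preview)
Your overall architecture is correct and closely parallels the paper's: identify $\NN^{\psi}$ as the zero locus of a section of $\cO_{\PP^1}(2)\boxtimes\cL$, split off the vertical component $\{c_0=0\}$, and analyze the residual divisor $\XX^{\psi}_{\cU}$. Where you diverge from the paper is in the endgame.

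You propose to pin down $d=1$ and irreducibility by an explicit Taylor-coefficient computation of $C_{W_m,A}$, analogous to~\eqref{anitona}. The paper avoids this computation entirely. For the bidegree, the paper simply reads off from~\Ref{lmm}{verticeuno} and~\Ref{lmm}{verticedue} the full divisors $\divisore(\sigma_1)=2\pi^{*}(\infty)+\tau^{*}\Sigma_1$ and $\divisore(\sigma_2)=\pi^{*}(\infty)+\pi^{*}(0)+\tau^{*}\Sigma_2$; since these intersect properly away from $\pi^{-1}(\infty)$, the rational map $\rho$ is dominant and (after cancelling the common factor $c_0$) one gets $\rho^{*}\cO_{\PP^1}(1)\cong\cO_{\PP^1}(1)\boxtimes\cL$, which immediately gives $d=1$. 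In particular, your worry that $c_0^2$ might divide $a-b$ is resolved without computation: $\sigma_2$ vanishes to order exactly $1$ along $\{c_0=0\}$ while $\sigma_1$ vanishes to order $2$, so their difference vanishes to order exactly $1$. For irreducibility, rather than computing the equation of $\XX^{\psi}_{\cU}$, the paper argues by contradiction: if $\XX^{\psi}_{\cU}$ were reducible it would equal $\pi^{-1}(s)\cup\tau^{-1}\Sigma$ for some $s\in\PP^1$ and $\Sigma\in|\cL|$; but $\XX^{\psi}_{\cU}$ must contain the base locus of $\rho$, and this forces $(s,\Sigma)$ to be one of two explicit possibilities, each of which is excluded because the generic point there has $C_{W_m,A}$ equal to a sextic \emph{not} in the indeterminacy locus.

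So your route would work, but the promised explicit computation is both unnecessary and the place where errors are most likely; the paper's structural argument extracts everything from the two lemmas you already cited.
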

\begin{proof}
Let $A=A_{{\bf c},{\bf L}_{\bf J}}$. If $c_0=0$ then $C_{W_m,A}=\PP(W_m)$ by~\Ref{lmm}{verticeuno} and~\Ref{lmm}{verticedue}. This shows that the left-hand side  of~\eqref{faiella} contains the first set in the right-hand side of the same equation.  We need to compare the two sides away from the set of $({\bf c},{\bf J})$ such that $c_0=0$. 
The restriction to $\MM^{\psi}$ of the Pl\"ucker (ample) line-bundle is isomorphic (via Identification~\eqref{prodotto}) to $\cO_{\PP^1}(2)\boxtimes  \cL $. Let $\pi$ and $\tau$ be the projections of $\PP^1\times \LL\GG(P^{00}_D\oplus Q^{00}_D)$ to the first and second factor respectively. 
Both $\PP(\wh{\rho}^{-1}\{(0,b)\})$ and $\PP(\wh{\rho}^{-1}\{(a,0)\})$ are the supports of divisors in the linear system $|\cO_{\PP^1}(2)\boxtimes  \cL|$: thus~\Ref{lmm}{verticeuno} and~\Ref{lmm}{verticedue} give  sections 
\begin{equation}\label{argotone}
\sigma_1,\sigma_2\in H^0(\PP^1\times \LL\GG(P^{00}_D\oplus Q^{00}_D); \cO_{\PP^1}(2)\boxtimes  \cL)
\end{equation}
 such that
\begin{equation}\label{rottermeier}
\divisore(\sigma_1)=2\pi^{*}(\infty)+\tau^{*}\Sigma_1,
\qquad \divisore(\sigma_2)=\pi^{*}(0)+\pi^{*}(\infty)+\tau^{*}\Sigma_2
\end{equation}
(we choose $c_1/c_0$ as affine coordinate on $(\PP^1\setminus\{[0,1])\}$) where
\begin{equation}
\Sigma_1:=\{ {\bf J}\in \LL\GG(P^{00}_D\oplus Q^{00}_D) \mid {\bf J}\cap P_D^{00} \not=\{0\}  \}
\end{equation}
and
\begin{equation}
\Sigma_2:=\{ {\bf J}\in \LL\GG(P^{00}_D\oplus Q^{00}_D) \mid {\bf J}\cap 
\la \alpha_{(0,2,0,0)}- \alpha_{(1,0,0,1)}, \beta_{(0,1,1,0)}     \ra \not=\{0\}  \}.
\end{equation}
Now notice that away from $\pi^{-1}(\infty)$ the divisors  $\divisore(\sigma_1)$ and $\divisore(\sigma_2)$ intersect properly: it follows that the rational map $\rho$ of~\eqref{draghi} is dominant and
 $\rho^{*}\cO_{\PP^1}(1)\cong \cO_{\PP^1}(1)\boxtimes  \cL $. This shows that~\eqref{faiella} holds with $\XX^{\psi}_{\cU}$  a divisor in $|\cO_{\PP^1}(1)\boxtimes  \cL |$. It remains to show that $\XX^{\psi}_{\cU}$ is irreducible. Now $\XX^{\psi}_{\cU}$ contains the base locus of the rational map $\rho$ i.e.
 \begin{equation}\label{brunom}
(\pi^{-1}(\infty)\cap\tau^{-1}\Sigma_2) \cup 
(\pi^{-1}(0)\cap\tau^{-1}\Sigma_1) \cup
(\tau^{-1}\Sigma_2\cap\tau^{-1}\Sigma_1).
\end{equation}
Suppose that $\XX^{\psi}_{\cU}$ is reducible, then it is equal to $(\pi^{-1}(s)\cup\tau^{-1}\Sigma)$ for some $s\in\PP^1$ and $\Sigma\in|\cL|$. Since  $\XX^{\psi}_{\cU}$  contains the base locus i.e.~\eqref{brunom} it follows that  either $s=\infty$ and $\Sigma=\Sigma_1$ or $s=0$ and $\Sigma=\Sigma_2$: that is absurd because for the generic $({\bf c},{\bf J})$ in the first set $C_{W_m, A_{{\bf c},{\bf L}_{\bf J}}}=V((Y_0Y_2+Y_1^2)^2(Y_0Y_2))$ while  for the generic $({\bf c},{\bf J})$ in the second set $C_{W_m, A_{{\bf c},{\bf L}_{\bf J}}}=V((Y_0Y_2+Y_1^2)^2(Y_1^2))$.
\end{proof}
\begin{prp}\label{prp:sparisce}
$ \XX^{\psi}_{\cU}\subset\XX^{\psi}_{\cV}$. 
\end{prp}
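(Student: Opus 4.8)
The plan is to prove the inclusion by identifying $\XX^{\psi}_{\cU}$ with the trace of $\XX^{\psi}_{\cV}$ on $\MM^{\psi}$, working entirely with the explicit equations already produced. Recall that $\XX^{\psi}_{\cU}$ is an irreducible divisor of class $\cO_{\PP^1}(1)\boxtimes\cL$ in $\MM^{\psi}=\PP^1\times\LL\GG(P^{00}_D\oplus Q^{00}_D)$ by~\Ref{prp}{tiroide}, while $\XX^{\psi}_{\cV}$ is an irreducible divisor in $\PP^1\times\LL\GG(P^0_D\oplus Q^0_D)$ by~\Ref{crl}{duecomp}; the two ambient spaces are linked by the inclusion $\MM^{\psi}\hra\WW^{\psi}_{\rm fix}$, $({\bf c},{\bf J})\mapsto A_{{\bf c},{\bf L}_{\bf J}}$ with ${\bf L}_{\bf J}=\la\alpha_{(0,0,2,0)}\ra\oplus{\bf J}$, see~\eqref{prodotto}. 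Since $\XX^{\psi}_{\cU}$ is irreducible of dimension $3$ and $\XX^{\psi}_{\cV}\cap\MM^{\psi}$ is an effective divisor of the \emph{same} class $\cO_{\PP^1}(1)\boxtimes\cL$, it will be enough to show that the two defining $(1,1)$-forms on $\MM^{\psi}$ are proportional: then $\XX^{\psi}_{\cU}$ and $\XX^{\psi}_{\cV}\cap\MM^{\psi}$ share a $3$-dimensional component and, $\XX^{\psi}_{\cU}$ being irreducible, $\XX^{\psi}_{\cU}\subset\XX^{\psi}_{\cV}$ follows.

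First I would restrict the equation $c_0 p_{345}-2c_1 p_{234}=0$ of $\XX^{\psi}_{\cV}$ (Remark~\ref{rmk:bigrado}, see~\eqref{bigrado}) to $\MM^{\psi}$. Since $\bigwedge^3{\bf L}_{\bf J}=\alpha_{(0,0,2,0)}\wedge\bigwedge^2{\bf J}$, every Pl\"ucker coordinate $p_{ijk}$ of ${\bf L}_{\bf J}$ not involving the index of $\alpha_{(0,0,2,0)}$ vanishes, while those that do involve it equal, up to sign, the Pl\"ucker coordinate of ${\bf J}$ obtained by deleting that index. A short computation then identifies the restriction of $p_{345}$ with the Pl\"ucker coordinate of ${\bf J}$ cutting out the Schubert divisor $\Sigma_1=\{{\bf J}\cap P^{00}_D\not=\{0\}\}$, and the restriction of $p_{234}$ with the coordinate cutting out $\Sigma_2=\{{\bf J}\cap\la\alpha_{(0,2,0,0)}-\alpha_{(1,0,0,1)},\beta_{(0,1,1,0)}\ra\not=\{0\}\}$. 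These are exactly the two divisors $\Sigma_1,\Sigma_2$ that appear in the proof of~\Ref{prp}{tiroide}.

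Second, I would read off the equation of $\XX^{\psi}_{\cU}$ from that same proof. Dividing the sections $\sigma_1,\sigma_2$ by their common factor $\pi^{*}(\infty)$, one gets $\bar a,\bar b\in H^0(\cO_{\PP^1}(1)\boxtimes\cL)$ with $\divisore(\bar a)=\{c_0=0\}+\tau^{*}\Sigma_1$ and $\divisore(\bar b)=\{c_1=0\}+\tau^{*}\Sigma_2$ (compare~\eqref{rottermeier}), so that $\XX^{\psi}_{\cU}=V(\bar a-\mu\bar b)$ for some $\mu\in\CC^{\times}$. By the previous paragraph the restricted equation of $\XX^{\psi}_{\cV}$ is of the very same shape $V(\bar a-\kappa\bar b)$ with $\kappa=\pm2$. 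Thus both divisors are $V(\bar a-(\mathrm{const})\bar b)$, and the whole proposition reduces to checking the single equality $\mu=\kappa$ of scalars. This I would settle either by a direct evaluation at one convenient $A_{{\bf c},{\bf L}_{\bf J}}$ (for which $\bar a\bar b\neq0$), computing $C_{W_m,A}$ via~\eqref{duepiuno} and $C_{W_\infty,A}$ via~\eqref{duedoppie} and comparing the normalizations against the coefficient $2$ in~\eqref{anitona}, or by tracking the Pl\"ucker signs through~\Ref{lmm}{verticeuno} and~\Ref{lmm}{verticedue}.

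The hard part will be precisely this matching of the proportionality constant, together with keeping the Pl\"ucker sign conventions coherent between the two Lagrangian Grassmannians: the divisor-level argument only shows the two $(1,1)$-forms involve the identical pair of coordinates, and a mismatched constant would drop the intersection $\XX^{\psi}_{\cU}\cap(\XX^{\psi}_{\cV}\cap\MM^{\psi})$ to dimension $2$ and break the inclusion. As a conceptual safety net I would keep in mind the geometric shadow of the statement: for generic $A\in\XX^{\psi}_{\cU}$ one has $W_m\in\Theta_A$ with $C_{W_m,A}$ a triple conic, hence $[A]\in\gI$ (\Ref{dfn}{gotico}), and since $A$ is then semistable with minimal orbit (as in~\Ref{prp}{asterix}) and $W_\infty,W_m\in\Theta_A$ meet along $[v_1]$, one expects the extra vanishing $a_2=0$ defining $\XX^{\psi}_{\cV}$ to be forced; verifying that this degeneration indeed raises $\mult_{[v_0]}C_{W_\infty,A}$ (via~\Ref{prp}{primisarto} and~\Ref{crl}{molteplici}) rather than merely spoiling some other member of $\Theta_A$ is the genuinely delicate point, and it is the same subtlety that the constant-matching computation resolves.
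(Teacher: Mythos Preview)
Your reduction is sound and essentially matches the paper's: one shows that $\TT^{\psi}:=\XX^{\psi}_{\cV}\cap\MM^{\psi}$ and $\XX^{\psi}_{\cU}$ are both irreducible divisors in $|\cO_{\PP^1}(1)\boxtimes\cL|$ on $\MM^{\psi}$, so that proving $\XX^{\psi}_{\cU}\subset\XX^{\psi}_{\cV}$ comes down to identifying $\XX^{\psi}_{\cU}$ with $\TT^{\psi}$. You correctly observe that the restricted equation of $\XX^{\psi}_{\cV}$ has the form $\lambda_1\bar a+\lambda_2\bar b$ in the same two coordinates $\bar a,\bar b$ that cut out $\XX^{\psi}_{\cU}$, so everything reduces to one scalar match.

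Where the paper differs is in how it pins down that scalar. You propose a direct evaluation---computing both $C_{W_m,A}$ and $C_{W_\infty,A}$ at a generic point and comparing normalizations---but you do not carry it out and rightly flag the sign/normalization bookkeeping as the hard part. The paper bypasses this entirely. First it observes (from the very equation~\eqref{bigrado} you restrict) that on $\TT^{\psi}$ one has $(\pi^{*}(\infty)+\tau^{*}\Sigma_1)|_{\TT^{\psi}}=(\pi^{*}(0)+\tau^{*}\Sigma_2)|_{\TT^{\psi}}$ as divisors, hence $\divisore(\sigma_1|_{\TT^{\psi}})=\divisore(\sigma_2|_{\TT^{\psi}})$ and $\rho|_{\TT^{\psi}}$ is \emph{constant}. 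Then, instead of computing that constant directly, it exhibits one explicit point of $\TT^{\psi}$ where the constant is visibly $[1,1]$: the lagrangian $A_{\cR}$ of~\eqref{ragno}, translated by an element of $\Aut(R_{\Theta})\cap G_{\cF_2}$ so that $\ov W$ becomes $W_m$. That $A_0\in\TT^{\psi}$ follows because $C_{W_\infty,A_{\cR}}=\PP(W_\infty)$ and $A_{\cR}$ is semistable (so by~\Ref{crl}{duecomp} and~\Ref{prp}{asterix} one lands in $\XX^{\psi}_{\cV}$, not on the $c_0=0$ component), and $C_{W_m,A_0}=V((Y_0Y_2+Y_1^2)^3)$ follows from the already-established~\Ref{clm}{canederli}. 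Thus $\TT^{\psi}\subset\XX^{\psi}_{\cU}$, and since both are irreducible of the same class they coincide.

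So your plan is correct in outline, but the decisive step is only sketched; the paper's route trades your coordinate computation for a single geometric example prepared earlier in~\Ref{subsec}{grupport}, which is cleaner and sidesteps the delicate sign-matching you anticipate. Your ``geometric safety net'' paragraph gestures at the right heuristic but does not supply an argument either.
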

\begin{proof}
Let $\TT^{\psi}:=(\XX^{\psi}_{\cV}\cap\MM^{\psi})$: thus $\TT^{\psi}$ is a divisor in $|\cO_{\PP^1}(1)\boxtimes  \cL |$   by~\Ref{crl}{duecomp} (notation as in the statement of~\Ref{prp}{tiroide}).  Since $\XX^{\psi}_{\cU}$ is an irreducible divisor in $|\cO_{\PP^1}(1)\boxtimes  \cL |$ it will suffice to prove that
\begin{equation}\label{zucca}
\TT^{\psi}\subset \XX^{\psi}_{\cU}.
\end{equation}
First we notice that the restriction of the rational function $\rho$ (see~\eqref{draghi}) to $\TT^{\psi}$ is constant. 
To see why notice that $\rho=\sigma_1/\sigma_2$ where $\sigma_i\in H^0(\PP^1\times\LL\GG(P_D^{00}\oplus Q_D^{00}); \cO_{\PP^1}(2)\boxtimes  \cL )$ are the sections appearing in the proof of~\Ref{prp}{tiroide} - see~\eqref{argotone}. 
The equation of $\TT^{\psi}$ is given by the restriction of~\eqref{bigrado} to $\PP^1\times\LL\GG(P_D^{00}\oplus Q_D^{00})$  - see also~\eqref{anitona}: it follows that $\TT^{\psi}$ is irreducible, smooth and 
\begin{equation*}
(\pi^{*}(\infty)+\tau^{*}\Sigma_1)|_{\TT^{\psi}}=(\pi^{*}(0)+\Sigma_2)|_{\TT^{\psi}}. 
\end{equation*}
Looking at~\eqref{rottermeier} we get that $\divisore(\sigma_1|_{\TT^{\psi}})=\divisore(\sigma_2|_{\TT^{\psi}})$ and hence the restriction of  $\rho$ to $\TT^{\psi}$ is constant. Thus it will suffice to show that 
\begin{equation}\label{abbasta}
\text{there exists $A_0\in {\TT^{\psi}}$ such that $C_{W_m,A_0}=V((Y_0 Y_2+Y_1^2)^3)$.}
\end{equation}
 Let's show that such an example is provided by the lagrangian $A_{\cR}$ of~\eqref{ragno}. Let $Z\subset\PP(U)$ be the smooth quadric
\begin{equation*}
Z:=\{[\eta_0 u_0+\eta_1 u_1+\eta_2 u_2+\eta_3 u_3] \mid \eta_0 \eta_3-\eta_1^2+\eta_2^2=0\}.
\end{equation*}
Then $Z$ contains $D$ and is left-invariant by $\diag(t,1,1,t^{-1})$ for every $t\in\CC^{\times}$: it follows (see the proof of~\Ref{prp}{duequad}) that every lagrangian $A\in\lagr$ containing $\la\la i_{+}(Z)\ra\ra$ belongs to $\WW^{\psi}$. Let $\cR$ be the ruling of $Z$ by lines containing the line $\la [1,0,0,0],[0,1,-1,0]\ra $ and let $A_{\cR}$ be given by~\eqref{ragno}. A straightforward computation gives that
\begin{equation*}
\ov{W}=\la v_0-v_1,2v_2-v_3,v_4+v_5\ra.
\end{equation*}
(Notation as in the definition of $A_{\cR}$.) Thus $\ov{W}\in\cE_D$ and it belongs to the open orbit for the action of $\Aut(R_{\Theta})\cap G_{\cF_2}$ - see~\Ref{prp}{decoperp}. Thus there exists $g_0\in \Aut(R_{\Theta})\cap G_{\cF_2}$ such that $A_0:=g_0 A_{\cR}\in\MM^{\psi}$. We have $C_{W_{\infty},A_{\cR}}=\PP(W_{\infty})$ and hence $C_{W_{\infty},A_0}=\PP(W_{\infty})$. Thus $A_0\in \XX^{\psi}$. By~\Ref{crl}{duecomp} either $A_0\in \XX^{\psi}_{\cV}$ or else $A_0=A_{[0,1],{\bf L}_{\bf J}}$ for some ${\bf J}$: the latter is impossible because then $A_0$ would be unstable by~\Ref{prp}{asterix}, contradicting~\Ref{prp}{duequad}. Thus $A_0\in \XX^{\psi}_{\cV}$ i.e.~$A_0\in\TT^{\psi}$.
On the other hand $C_{W_m,A_0}=V((Y_0Y_2+Y_1^2)^3)$ by~\Ref{clm}{canederli}. We have proved~\eqref{abbasta}. 
\end{proof}
The result below follows at once from~\Ref{prp}{sparisce}.
\begin{crl}\label{crl:sparisce}
Let $A\in\WW^{\psi}_{\rm fix}$ be a $G_{\cF_2}$-semistable lagrangian with minimal $G_{\cF_2}$-orbit.  Suppose that  there exists   $\ov{W}\in\Theta_A$ such that
\begin{enumerate}
\item[(1)]
$\ov{W} \in\cE$ and hence $\ov{W} \in\cE_D$ by~\Ref{rmk}{perpdi}.
\item[(2)]
The $\Aut(R_{\Theta})\cap G_{\cF_2}$-orbit of $\ov{W}$ is the single  open orbit.
\item[(3)]
$C_{\ov{W},A}$ is either $\PP(\ov{W})$ or a sextic curve in the indeterminacy locus of Map~\eqref{persestiche}, i.e.~$[A]\in\gI$.
\end{enumerate}
 Then $[A]\in\gX_{\cV}$.
\end{crl}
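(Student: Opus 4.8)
The plan is to reduce the statement to \Ref{prp}{sparisce} by using the transitivity of $\Aut(R_{\Theta})\cap G_{\cF_2}$ on the open orbit of $\cE_D$ in order to move $\ov{W}$ to the standard plane $W_m$ of \eqref{mediow}. Since $\ov{W}$ lies in the single open orbit by hypothesis~(2), and since $W_m$ is a representative of that orbit (see the discussion following \eqref{mediow} and Item~(1) of \Ref{prp}{decoperp}), there is $g\in(\Aut(R_{\Theta})\cap G_{\cF_2})$ with $g\ov{W}=W_m$. As recalled just before \eqref{autocentro}, this group acts on $\WW^{\psi}_{\rm fix}$; hence $gA\in\WW^{\psi}_{\rm fix}$, and since $g$ acts through $\PGL(V)$ the orbit $G_{\cF_2}(gA)=G_{\cF_2}A$ is again semistable and minimal, while $[gA]=[A]$ in $\gM$. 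Thus it suffices to prove $[gA]\in\gX_{\cV}$, and after replacing $A$ by $gA$ we may assume from now on that $\ov{W}=W_m$.

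Next I would translate the three hypotheses into membership in $\NN^{\psi}$. Condition~(1) gives $\bigwedge^3 W_m\subset A$, i.e.\ $v_1\wedge v_3\wedge v_5\in A$, which is exactly the defining condition of $\MM^{\psi}$; so $A\in\MM^{\psi}$. For condition~(3), the construction $W\mapsto C_{W,A}$ is equivariant for the $\PGL(V)$-action, so before the reduction $C_{W_m,gA}=g(C_{\ov{W},A})$ is projectively equivalent to $C_{\ov{W},A}$ via the projectivity $\PP(\ov{W})\to\PP(W_m)$ induced by $g$; since the indeterminacy locus of the period map \eqref{persestiche} is $\PGL(W)$-invariant, which is the content of \Ref{rmk}{pershah}, condition~(3) says precisely that $C_{W_m,A}$ is either $\PP(W_m)$ or a sextic curve in the indeterminacy locus of \eqref{persestiche}. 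In other words $C_{W_m,A}$ is not a sextic in the regular locus of \eqref{persestiche}, and by the very definition of $\NN^{\psi}$ this means $A\in\NN^{\psi}$.

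Finally I would locate $A$ inside the decomposition of $\NN^{\psi}$ furnished by \Ref{prp}{tiroide}, namely $\NN^{\psi}=\{({\bf c},{\bf J})\mid c_0=0\}\cup\XX^{\psi}_{\cU}$. The component $\{c_0=0\}$ consists of the lagrangians $A_{[0,1],{\bf L}_{\bf J}}$, which are \emph{unstable} by \Ref{prp}{asterix}; since $A$ is semistable it cannot lie there, so $A\in\XX^{\psi}_{\cU}$. Now \Ref{prp}{sparisce} gives $\XX^{\psi}_{\cU}\subset\XX^{\psi}_{\cV}$, whence $A\in\XX^{\psi}_{\cV}\subset\XX_{\cV}$ and therefore $[A]=[gA]\in\gX_{\cV}$, as desired.

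As for the main obstacle: essentially all the substantive work has already been carried out in \Ref{prp}{tiroide} and \Ref{prp}{sparisce}, so the only genuinely delicate points here are bookkeeping. One must check that passing from $\ov{W}$ to $W_m$ by an element of $\Aut(R_{\Theta})\cap G_{\cF_2}$ preserves all three hypotheses and the $\PGL(V)$-class, and that the two alternatives in condition~(3) (namely $C_{\ov{W},A}=\PP(\ov{W})$ versus an indeterminacy sextic) are exactly the two alternatives packaged into the defining condition $a-b=0$ of $\NN^{\psi}$. The one step where the hypothesis of semistability is actually used is the exclusion of the $\{c_0=0\}$ component via \Ref{prp}{asterix}, which is what makes the reduction land cleanly in $\XX^{\psi}_{\cU}$.
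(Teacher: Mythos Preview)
Your proof is correct and is precisely the argument the paper intends: the paper simply states that the corollary ``follows at once from~\Ref{prp}{sparisce}'', and your write-up spells out the details of that deduction---moving $\ov{W}$ to $W_m$ via $\Aut(R_\Theta)\cap G_{\cF_2}$, landing in $\NN^{\psi}$, excluding the $c_0=0$ branch by semistability via~\Ref{prp}{asterix}, and applying~\Ref{prp}{tiroide} and~\Ref{prp}{sparisce}.
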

\n
{\it Proof of~\Ref{prp}{sghembi}.\/} 
The projective plane $\PP(\ov{W})$ belongs to one of the five $\Aut(R_{\Theta})\cap G_{\cF_2})$-orbits listed in~\Ref{prp}{decoperp}. If it belongs to the open dense orbit then $[A]\in\gX_{\cV}$ by~\Ref{crl}{sparisce}, if it belongs to one of the remaining orbits then $[A]\in\gX_{\cW}$ by~\Ref{prp}{abbecedario}, and hence  $[A]\in\gX_{\cV}$ by~\Ref{rmk}{diehard}. 
\qed
\subsubsection{Proof that $\gB_{\cF_2}\cap\gI= \gX_{\cV}$}\label{subsubsec:ultimopasso}
By definition $\gX_{\cV}\subset \gB_{\cF_2}\cap\gI$. It remains to prove that
\begin{equation}\label{grilli}
\gB_{\cF_2}\cap\gI\subset \gX_{\cV}.
\end{equation}
Let  $[A]\in\gB_{\cF_2}\cap\gI$ and suppose that $A$ has minimal $G_{\cF_2}$-orbit in ${\mathbb S}^{\sF,ss}_{\cF_2}$. 
 By~\Ref{prp}{buonasera} we may assume that $A\in\WW^{\psi}_{\rm fix}$.  \Ref{lmm}{schiavoni} gives that there exists $\ov{W}$ as in~\eqref{catalogo} such that  $C_{\ov{W},A}$ is not a sextic curve in the regular locus of Map~\eqref{persestiche}. If $\ov{W}=W_{\infty}$ then $[A]\in\gX_{\cV}$ by definition of $\gX_{\cV}$. If $\ov{W}=\la \alpha,\beta,\gamma\ra$ where $\alpha\in V_{01}$, $\beta\in V_{23}$ and $\gamma\in V_{45}$ then $[A]\in\gX_{\cV}$ by~\Ref{prp}{sghembi}. Lastly suppose that $\ov{W}=W_{0}$. We claim that
there exists $g\in\PGL(V)$  such that $A':=gA\in \WW^{\psi}_{\rm fix}$ and $C_{W_{\infty},A}$ is not a sextic curve in the regular locus of Map~\eqref{persestiche}. In fact consider the involution 
\begin{equation*}
\begin{matrix}
\PP^1 & \overset{\iota}{\lra} & \PP^1 \\
[\lambda,\mu] & \mapsto & [\mu,\lambda].
\end{matrix}
\end{equation*}
Then $g:=\bigwedge^2\iota\colon V\to V$ is an involution mapping $i_{+}(D)$ to itself and exchanging $W_{\infty}$ and $W_0$. Thus $[A]=[A']\in\gX_{\cV}$.  
\qed
\subsection{$\gX_{\cN_3}$}\label{subsec:anticipenne}
\setcounter{equation}{0}
We will determine the $G_{\cN_3}$-stable points of ${\mathbb S}^{\sF}_{\cN_3}$ - notation is as in~\Ref{subsec}{prelbound}. We will apply the Cone Decomposition Algorithm: this makes sense because ${\mathbb S}^{\sF}_{\cN_3}$ is a closed ($G_{\cN_3}$-invariant) subset of a product of Grassmannians.
Let   
  $\{\xi_2,\xi_3\}$ be a basis of  $V_{23}$. The isotypical summands of  $\bigwedge^3 \lambda_{\cN_3}$ with non-negative weights are the following: 
\begin{equation}\label{foiltitti}
\scriptstyle
\bigwedge^2 V_{01}\wedge V_{23},\ 
\la v_0\wedge v_1\wedge v_4,\ v_0\wedge \xi_2\wedge \xi_3 \ra,\ 
\la v_0\wedge v_1\wedge v_5,\ v_0\wedge \xi_2\wedge v_4,
\   v_0\wedge \xi_3\wedge v_4,\  v_1\wedge \xi_2\wedge \xi_3 \ra,\ 
\la v_0\wedge \xi_2\wedge v_5,\ v_0\wedge \xi_3\wedge v_5,
\   v_1\wedge \xi_2\wedge v_4,\  v_1\wedge \xi_3\wedge v_4 \ra. 
\end{equation}
The weights are (starting from the left) $3,2,1,0$. 
Let $A\in{\mathbb S}^{\sF}_{\cN_3}$. Let $A_i$ be the intersection of $A$ and the isotypical summand of weight $(3-i)$: then $A=\sum_{i=0}^6 A_i$. 
By definition
\begin{equation}
1=\dim A_0=\dim A_1=\dim A_5=\dim A_6,\quad 2=\dim A_2=\dim A_4= 
\dim A_3,\quad A_i\bot A_{6-i}.
\end{equation}
In particular
\begin{equation}\label{dimezzo}
A_0=[v_0\wedge v_1\wedge \gamma_0],\ A_6=[\gamma_0\wedge v_4\wedge v_5],
\quad 0\not=\gamma_0\in V_{23}.
\end{equation}
We let
\begin{equation}\label{stellestalle}
W_{\infty}:=\la v_0,v_1,\gamma_0\ra,\qquad W_{0}:=\la  \gamma_0,v_4,v_5 \ra.
\end{equation}
Thus $W_{\infty},W_0\in\Theta_A$. 
Let $\lambda$ be a $1$-PS of $G_{\cN_3}$. There exists a basis $\{\xi_2,\xi_3\}$   of  $V_{23}$ such that 
\begin{equation}\label{disturbato}
\lambda(t)=((t^{m_0},t^{m_1},t^{m_2}), \diag(t^r,t^{-r})),\qquad (m_0,m_1,m_2,r)\in(\ZZ^4\setminus\{(0,0,0,0)\}),\quad  r\ge 0.
\end{equation}
We denote such a $1$-PS by $(m_0,m_1,m_2,r)$. In the basis $\{v_0,v_1,\xi_2,\xi_3,v_4,v_5\}$ the action of $\lambda(t)$ on $V$ is given  by
\begin{equation}\label{cracovia}
\diag( t^{m_0},\  t^{2m_1},\  t^{r-m_0-m_1-m_2},\  t^{-r-m_0-m_1-m_2},  
\ t^{2m_2},\  t^{m_0}).
\end{equation}
Below are the weights of the action of $\bigwedge^3 \lambda(t)$ on the isotypical 
summands of~\eqref{foiltitti}:
\begin{equation}\label{pesozero}
\begin{matrix}
v_0\wedge v_1\wedge \xi_2 & v_0\wedge v_1\wedge \xi_3  \\
r+m_1-m_2   &  - r+m_1-m_2
\end{matrix}
\end{equation}
\begin{equation}\label{pesouno}
\begin{matrix}
v_0\wedge v_1\wedge v_4 & v_0\wedge \xi_2\wedge \xi_3  \\
m_0+2m_1 +2m_2   &  -m_0-2m_1 - 2m_2
\end{matrix}
\end{equation}
\begin{equation}\label{pesodue}
\begin{matrix}
v_0\wedge v_1\wedge v_5  &  v_0\wedge \xi_2\wedge v_4  & 
  v_0\wedge \xi_3\wedge v_4  &   v_1\wedge \xi_2\wedge \xi_3 \\
2m_0+2m_1 & r-m_1+m_2 & -r-m_1+m_2 & -2m_0 -2 m_2 
\end{matrix}
\end{equation}
\begin{equation}\label{pesotre}
\begin{matrix}
v_0\wedge \xi_2\wedge v_5 & v_0\wedge \xi_3\wedge v_5 & 
   v_1\wedge \xi_2\wedge v_4  &  v_1\wedge \xi_3\wedge v_4 \\
r+m_0-m_1-m_2 & -r+m_0-m_1-m_2  & r-m_0+m_1+m_2 & -r-m_0+m_1+m_2
\end{matrix}
\end{equation}
In particular $I_{-}(\lambda)\subset\{0,6\}$: by~\eqref{sommapend}  and~\eqref{caramellamu} 
 we get that
\begin{equation*}
\mu(A,\lambda)=2r(2d^{\lambda}_0(A_0)-1)+2|m_0+2m_1+2m_2| \cdot (2d^{\lambda}_0(A_1)-1)
+2\mu(A_2,\lambda)+\mu(A_3,\lambda). 
\end{equation*}
\begin{prp}\label{prp:stabennetre}
$A\in{\mathbb S}^{\sF}_{\cN_3}$ is not $G_{\cN_3}$-stable if and only if  one of the following holds:
\begin{enumerate}
\item[(1)]
$A_2\cap \la v_0\wedge v_1\wedge v_5,v_1\wedge \xi_2\wedge\xi_3  \ra \not=\{0\}$.
\item[(2)]
$A_2\cap ([v_0]\wedge V_{23}\wedge [v_4])\not=\{0\}$.
\item[(3)]
$v_0\wedge v_1\wedge v_4\in A_1$.
\item[(4)]
$[v_1]\wedge V_{23}\wedge [v_4]= A_3$.
\item[(5)]
$v_0\wedge \xi_2\wedge \xi_3\in A_1$.
\item[(6)]
$[v_0]\wedge V_{23}\wedge [v_5]= A_3$.
\item[(7)]
$A_3\cap \la v_0\wedge \gamma_0\wedge v_5, v_1\wedge \gamma_0\wedge v_4  \ra \not=\{0\}$.
\item[(8)]
$A_2\cap \la v_0\wedge v_1\wedge v_5,v_0\wedge \gamma_0\wedge v_4  \ra \not=\{0\}$.
\item[(9)]
There exists $0\not=\gamma\in V_{23}$ such that $A_2\cap \la v_0\wedge v_1\wedge v_5,v_0\wedge \gamma\wedge v_4  \ra \not=\{0\}$ and $v_0\wedge\gamma\wedge v_5\in A_3$.
\item[(10)]
$A_2\cap \la v_0\wedge \gamma_0\wedge v_4,v_1\wedge \xi_2\wedge \xi_3  \ra \not=\{0\}$.
\item[(11)]
There exists $0\not=\gamma\in V_{23}$ such that $A_2\cap \la v_0\wedge \gamma\wedge v_4,
v_1\wedge \xi_2\wedge \xi_3  \ra \not=\{0\}$ and $v_1\wedge\gamma\wedge v_4\in A_3$.
\end{enumerate}
\end{prp}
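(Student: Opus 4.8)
The strategy mirrors exactly the GIT analyses already carried out for $\cC_1$, $\cA$, $\cD$, $\cE_1$, $\cE_1^{\vee}$, $\cF_1$ and $\cF_2$: apply the Cone Decomposition Algorithm (\Ref{prp}{algcon}) to the action of $G_{\cN_3}=(\CC^{\times})^3\times SL(V_{23})$ on ${\mathbb S}^{\sF}_{\cN_3}$, which is legitimate because ${\mathbb S}^{\sF}_{\cN_3}$ is a closed $G_{\cN_3}$-invariant subset of a product of Grassmannians (see~\eqref{granprod} and Table~\eqref{spezzgruppi}). The proof splits into the two standard implications. For the \emph{easy direction} I would show that each of the eleven flag conditions (1)--(11) forces $A$ to be non-stable by exhibiting, for each condition, an explicit $1$-PS $\lambda$ (diagonal in a suitable basis $\{v_0,v_1,\xi_2,\xi_3,v_4,v_5\}$ adapted to the condition) with $\mu(A,\lambda)\ge 0$. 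The numerical function is computed from the weight tables~\eqref{pesozero}--\eqref{pesotre} together with the displayed formula
\begin{equation*}
\mu(A,\lambda)=2r(2d^{\lambda}_0(A_0)-1)+2|m_0+2m_1+2m_2|(2d^{\lambda}_0(A_1)-1)+2\mu(A_2,\lambda)+\mu(A_3,\lambda),
\end{equation*}
so each check is a short routine computation once the right $\lambda$ is named.

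\textbf{The converse via the Cone Decomposition Algorithm.}
For the harder direction I would first verify the hypotheses of~\Ref{prp}{algcon}: choose the maximal torus $T<G_{\cN_3}$ diagonal in the chosen basis and the standard cone $C$, list the ordering hyperplanes (the kernels of the differences of the characters appearing in~\eqref{pesozero}--\eqref{pesotre}), and check that each face of $C$ spans an ordering hyperplane and that the torus-part condition~(2) of~\Ref{prp}{algcon} holds. Passing to coordinates $(m_0,m_1,m_2,x)$ with $x=r$ and solving the linear systems cut out by maximal collections of independent ordering functions produces a finite list of ordering rays, hence a finite list of ordering $1$-PS's indexed by $(m_0,m_1,m_2,r)$. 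For each such ordering $1$-PS $\lambda$ I would tabulate the weights (as in Tables~\eqref{penduno}--\eqref{mudieuno} for $\cE_1$ and Tables~\eqref{primaeffedue}--\eqref{secondaeffedue} for $\cF_2$) and the resulting $\mu(A,\lambda)$ as an explicit linear function of the partial-type data $d^{\lambda}_i(A_j)$, then show that $\mu(A,\lambda)\ge 0$ forces one of Items~(1)--(11). By the numerical criterion~\Ref{thm}{hilmum} and~\Ref{prp}{algcon}, non-stability of $A$ is equivalent to the existence of $A'\in G_{\cN_3}A$ destabilized by an ordering $1$-PS, which completes the equivalence.

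\textbf{The main obstacle.}
The real difficulty, as for $\cE_1$ and $\cF_2$, is the bookkeeping in the converse: the $SL(V_{23})$-factor makes the weight space $V_{23}$ two-dimensional, so the ordering-$1$-PS list is larger and the case-by-case verification that ``$\mu(A,\lambda)\ge0 \Rightarrow$ one of (1)--(11)'' must be carried out for each ray, often after replacing $A$ by $\lim_{t\to0}\lambda(t)A$ and re-examining which flag condition survives (cf.~the boxed sub-cases in the proof of~\Ref{thm}{tuttisemi}). A secondary subtlety is that several of the conditions (for instance~(9) and~(11)) involve an auxiliary vector $\gamma\in V_{23}$ not equal to the distinguished $\gamma_0$ of~\eqref{dimezzo}, and disentangling these ``generic-$\gamma$'' conditions from the ``$\gamma=\gamma_0$'' conditions~(7), (8), (10) requires care in reading off exactly which decomposable or degenerate configuration a given $\lambda$ detects. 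I expect these to be the steps demanding the most attention, while the reduction machinery (choice of cone, ordering rays, duality symmetry among the conditions paralleling the self-duality already exploited for $\cF_2$) is routine given the earlier subsections.
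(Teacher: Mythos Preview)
Your proposal is correct and follows essentially the same approach as the paper: the proof applies the Cone Decomposition Algorithm to the $G_{\cN_3}$-action, lists the ordering hyperplanes arising from~\eqref{pesozero}--\eqref{pesotre}, exploits a Klein four-group symmetry $\lambda\mapsto\lambda',\lambda''$ (given by~\eqref{primprim}) on the set of ordering rays to cut down the casework, and then tabulates weights and numerical functions for the resulting finite list of ordering $1$-PS's (the paper's Tables~\eqref{primaennetre} and~\eqref{secondaennetre}) to match each $\mu(A,\lambda)\ge 0$ with one of Items~(1)--(11). The paper's argument is in fact more direct than you anticipate in your ``main obstacle'' paragraph: no passage to $\lim_{t\to 0}\lambda(t)A$ is needed here, the tables alone suffice.
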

\begin{proof}
We will apply the Cone Decomposition Algorithm. We choose the maximal torus $T< G_{\cN_3}$ to be 
 \begin{equation}
T=\{(u_1,u_2,u_3),\diag(s,s^{-1})) \mid u_i,s\in\CC^{\times}\}.
\end{equation}
(The second entry is diagonal with respect to  $\{\xi_2,\xi_3\}$.)  Thus
\begin{equation*}
{\check X}(T)_{\RR}:=\{(m_0,m_1,m_2,r)\in\RR^5 \},\quad
C:=\{(m_0,m_1,m_2,r)\in\RR^5\mid r\ge 0 \},
\end{equation*}
where notation is as in~\eqref{disturbato}. 
Equations~\eqref{pesozero}, \eqref{pesouno}, \eqref{pesodue} and~\eqref{pesotre} give that  $H\subset {\check X}(T)_{\RR}$ is an ordering hyperplane if and only if is equal to the kernel  of  one the following linear functions:
\begin{equation*}
\scriptstyle
r,\ m_0-m_1-m_2,\ m_0-m_1-m_2\pm r,\ m_0+2m_1+2m_2,\ 2m_0+m_1+m_2,
\ 2m_0-m_1+3m_2\pm r,\ 2m_0+3m_1-m_2\pm r. 
\end{equation*}
In particular the hypotheses of~\Ref{prp}{algcon} are satisfied. Notice also that if $\lambda=(m_0,m_1,m_2,r)$ is an ordering $1$-PS then so are
\begin{equation}\label{primprim}
\lambda':=(-m_0,-m_1,-m_2,r),\quad \lambda'':=(m_0,m_2,m_1,r).
\end{equation}
In other words Klein's group acts on the set of ordering rays. A computation gives that the ordering rays are spanned by
\begin{equation}\label{primogruppo}
\lambda_1:=(0,1,-1,0),\ \lambda_2:=(-1,1,1,0),\ \lambda_3:=(0,1,-1,4),\ \lambda_4:=(4,-1,-1,6),
\end{equation}
and
\begin{equation}\label{secondogruppo}
(0,1,1,2),\ (2,1,-2,3),\ (4,5,-1,0),\ 
(2,1,1,6),\ (8,1,-5,0),\ (-4,1,7,12)
\end{equation}
together with the $1$-PS's obtained from them by operating with  Klein's group, see~\eqref{primprim}. Table~\eqref{primaennetre} lists the weights of the tensors appearing in~\eqref{pesodue} and~\eqref{pesotre} for the action of each $\lambda_i$ and the $1$-PS's obtained from them acting with Klein's group. (We denote $v_0\wedge v_1\wedge v_5$ by $015$, $v_0\wedge v_1\wedge\xi_2$ by $012$ etc.) Similarly Table~\eqref{secondaennetre} lists the weights of the tensors appearing in~\eqref{pesodue} and~\eqref{pesotre} for the action of the ordering $1$-PS's of~\eqref{secondogruppo} and some of the $1$-PS's obtained acting with the Klein group. Tables~\eqref{primaennetre} and~\eqref{secondaennetre} give also the numerical function $\mu(A,\lambda)$ for $\lambda$ one of the $\lambda_i$'s or one of the $1$-PS's obtained from them acting with Klein's group and also for ordering $1$-PS's of~\eqref{secondogruppo} and some of their images for the Klein group. We explain our choice of ordering $1$-PS's in Table~\eqref{secondaennetre}. The sequence of weights  for the action of $\lambda'$ (or $\lambda''$) on the tensors appearing in~\eqref{pesodue} and~\eqref{pesotre} is obtained from that of $\lambda$ by changing signs (this does not mean that the weight of a single monomial changes sign !). It follows that if the weights are symmetric about $0$ then $\mu(A,\lambda)=\mu(A,\lambda')=\mu(A,\lambda'')$. This condition holds for the $1$-PS's of~\eqref{secondogruppo} except for $\lambda\in\{(4,5,-1,0), (8,1,-5,0),\ (-4,1,7,12)\}$. That explains why we have listed the numerical function
$\mu(A,\lambda')$ (which is equal to $\mu(A,\lambda'')$) for these $1$-PS's.
Going through Table~\eqref{primaennetre} one gets the following:
\begin{enumerate}
\item[($1'$)]
Item~(1) holds if and only if $d_0^{\lambda_1}(A_2)\ge 1$, in particular if it holds then $\mu(A,\lambda_1)\ge 0$. 
\item[($2'$)]
Item~(2) holds if and only if $d_0^{\lambda'_1}(A_2)\ge 1$, in particular if it holds then $\mu(A,\lambda'_1)\ge 0$.
\item[($3'$)]
Item~(3) holds if and only if $d_0^{\lambda_2}(A_1)\ge 1$, in particular if it holds then $\mu(A,\lambda_2)\ge 0$.
\item[($4'$)]
Item~(4) holds if and only if $d_0^{\lambda_2}(A_3)\ge 2$, in particular if it holds then $\mu(A,\lambda_2)\ge 0$.
\item[($5'$)]
Item~(5) holds if and only if $d_0^{\lambda'_2}(A_1)\ge 1$, in particular if it holds then $\mu(A,\lambda'_2)\ge 0$.
\item[($6'$)]
Item~(6) holds if and only if $d_0^{\lambda'_2}(A_3)\ge 2$, in particular if it holds then $\mu(A,\lambda'_2)\ge 0$.
\item[($7'$)]
Item~(7) holds if and only if $d_0^{\lambda_3}(A_0)\ge 1$ and $d_0^{\lambda_3}(A_3)\ge 1$, in particular if it holds then $\mu(A,\lambda_3)\ge 0$ (notice that $d_0^{\lambda_3}(A_2)\ge 1$ for arbitrary $A$).
\item[($8'$)]
Item~(8) holds if and only if $d_0^{\lambda_4}(A_0)\ge 1$ and $d_0^{\lambda_4}(A_2)\ge 1$, in particular if it holds then $\mu(A,\lambda_4)\ge 0$.
\item[($9'$)]
Item~(9) holds if and only if $d_0^{\lambda_4}(A_2)\ge 1$ and $d_0^{\lambda_4}(A_3)\ge 1$, in particular if it holds then $\mu(A,\lambda_4)\ge 0$.
\item[($10'$)]
Item~(10) holds if and only if $d_0^{\lambda'_4}(A_0)\ge 1$ and $d_0^{\lambda'_4}(A_2)\ge 1$, in particular if it holds then $\mu(A,\lambda'_4)\ge 0$.
\item[($11'$)]
Item~(11) holds if and only if $d_0^{\lambda'_4}(A_2)\ge 1$ and $d_0^{\lambda'_4}(A_3)\ge 1$, in particular if it holds then $\mu(A,\lambda'_4)\ge 0$.
\end{enumerate}
This proves that if one of Items(1)-(11) holds then $A$ is not $G_{\cN_3}$-stable. Next suppose that $A$ is not $G_{\cN_3}$-stable. By the Cone Decomposition Algorithm there exists an ordering $1$-PS $\lambda$ such that $\mu(A,\lambda)\ge 0$. Going through Tables~\eqref{primaennetre} and~\eqref{secondaennetre} one gets that one of Items~(1)-(11) holds. 
\end{proof}
The result below follows at once from~\Ref{prp}{stabennetre}.
\begin{crl}\label{crl:genennestab}
The generic $A\in{\mathbb S}^{\sF}_{\cN_3}$ is $G_{\cN_3}$-stable.
\end{crl}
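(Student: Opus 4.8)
The plan is to deduce this immediately from the complete description of the non-stable locus established in~\Ref{prp}{stabennetre}: that proposition asserts that $A\in{\mathbb S}^{\sF}_{\cN_3}$ fails to be $G_{\cN_3}$-stable precisely when one of the eleven conditions (1)--(11) holds. Since ${\mathbb S}^{\sF}_{\cN_3}$ is irreducible (see~\Ref{subsec}{prelbound}), it suffices to check that each of these eleven conditions cuts out a \emph{proper} closed subset; their union is then a proper closed subset, and its complement---the $G_{\cN_3}$-stable locus---is open, dense and in particular nonempty.

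First I would make the parameter structure explicit. By~\eqref{granprod} and~\eqref{dimezzo}, a point of ${\mathbb S}^{\sF}_{\cN_3}$ is determined by the data $(A_0,A_1,A_2,A_3)$: here $A_0=[v_0\wedge v_1\wedge\gamma_0]$ (and $A_6=[\gamma_0\wedge v_4\wedge v_5]$) is governed by the point $[\gamma_0]\in\PP(V_{23})$, $A_1$ is a line in the two-dimensional weight-$2$ summand $\la v_0\wedge v_1\wedge v_4,\,v_0\wedge\xi_2\wedge\xi_3\ra$, $A_2\in\Gr(2,\cdot)$ lies in the four-dimensional weight-$1$ summand, and $A_3\in\LL\GG(\cdot)$ in the four-dimensional symplectic weight-$0$ summand, while $A_4,A_5,A_6$ are forced by the orthogonality relations $A_i=A_{6-i}^{\bot}$. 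Thus ${\mathbb S}^{\sF}_{\cN_3}$ is a product of two projective lines, a Grassmannian and a Lagrangian Grassmannian, which both makes its irreducibility transparent and lets me test each condition one factor at a time.

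Next I would verify properness of each of (1)--(11). The containment conditions (3) and (5) require the line $A_1$ to equal one of the two coordinate lines $[v_0\wedge v_1\wedge v_4]$, $[v_0\wedge\xi_2\wedge\xi_3]$, i.e. a single point in the $\PP^1$ of choices for $A_1$; the equalities (4) and (6) fix $A_3$ to be a specified Lagrangian plane, a point of $\LL\GG(\cdot)$; and the incidence conditions (1), (2), (7), (8), (10) ask a two-plane ($A_2$ or $A_3$) to meet a \emph{fixed} two-dimensional subspace nontrivially, which is a Schubert divisor in the relevant Grassmannian. Each of these is visibly a proper closed condition, failing for a generic choice of the one factor it constrains. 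The cases needing slightly more care are (9) and (11), since there the bad locus is the image of an incidence variety involving the auxiliary parameter $\gamma\in\PP(V_{23})$: I would argue that although one gains the one-parameter family of $\gamma$'s, the pair of requirements ``$A_2$ meets a $\gamma$-dependent plane'' and ``$A_3$ contains a $\gamma$-dependent vector'' imposes two independent constraints, so the projection of the incidence locus back to ${\mathbb S}^{\sF}_{\cN_3}$ remains proper.

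The only genuine point to watch is that none of the eleven loci is forced to be all of ${\mathbb S}^{\sF}_{\cN_3}$---equivalently, that each incidence or containment locus has positive codimension rather than being the whole of the factor it constrains. This is a routine dimension count against the ambient $\PP^1$, $\Gr$ and $\LL\GG$ factors described above, and once it is settled the corollary follows at once: the $G_{\cN_3}$-stable locus is the complement, inside the irreducible variety ${\mathbb S}^{\sF}_{\cN_3}$, of finitely many proper closed subsets, hence is open, dense and nonempty.
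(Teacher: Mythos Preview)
Your proposal is correct and follows exactly the route the paper intends: the paper's proof is the single sentence ``follows at once from~\Ref{prp}{stabennetre}'', and your argument is a careful unpacking of that sentence, checking that each of the eleven conditions cuts out a proper closed subset of the irreducible parameter space. Your factor-by-factor dimension count (point conditions for (3)--(6), Schubert-divisor conditions for (1), (2), (7), (8), (10), and the incidence-variety argument for (9), (11)) is accurate and is precisely what the paper leaves to the reader.
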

\subsection{$\gX_{\cN_3}\cap \gI$}\label{subsec:ixboh}
\setcounter{equation}{0}
\subsubsection{Set-up and statement of the main results}\label{subsubsec:giallorosso}
The initial set-up is the same as in~\Ref{subsubsec}{carnevale}. Let $U$  be a complex vector-space of dimension $4$ and choose  an isomorphism 
\begin{equation}\label{lavatrice}
\psi\colon\bigwedge^2 U\overset{\sim}{\lra} V.
\end{equation}
Let $\{u_0,u_1,u_2,u_3\}$ be a basis of $U$ and  $\sF$ the basis of $V$   
  given by
\begin{equation}\label{sorrentino}
v_0=u_0\wedge u_1,\ v_1=u_0\wedge u_2,\ v_2=u_0\wedge u_3,\ v_3=u_1\wedge u_2,
\ v_4=u_1\wedge u_3,\ v_5=u_2\wedge u_3.
\end{equation}
Consider the action of $\CC^{\times}$ on $\PP(U)$ defined by $g(t):=\diag(t^3,t,t^{-1},t^{-3})$ in the basis 
$\{u_0,u_1,u_2,u_3\}$: then
\begin{equation}\label{cialtroni}
\bigwedge^2 g(t)=\lambda_{\cN_3}(t^2).  
\end{equation}
Let $C\subset\PP(U)$ be the  rational normal cubic curve
\begin{equation}\label{eccoci}
C:=\{[\lambda^3 u_0+\lambda^2\mu u_1+\lambda\mu^2 u_2+ \mu^3 u_3] \mid [\lambda,\mu]\in\PP^1 \}.
\end{equation}
and  $i_{+}\colon\PP(U)\hra \Gr(3,V)$  be the map of~\eqref{piumenomap}.   
 Then $i_{+}(C)$ is an irreducible curve (of Type ${\bf R}$ according to the classification of~\cite{ogtasso}) parametrizing pairwise incident projective planes. Let  
\begin{equation}\label{contienesest}
\YY^{\psi}:=\{ A\in\lagr \mid \Theta_A\supset i_{+}(C) \}.
\end{equation}
Let $t\in \CC^{\times}$: by~\eqref{cialtroni} $\lambda_{\cN_3}(t)$ defines a projectivity of $\PP(V)$ mapping $i_{+}(C)$ to itself. It follows that $\lambda_{\cN_3}$ defines  an action $\rho$ of $\CC^{\times}$ on $\YY^{\psi}$. Let $\wh{\YY}^{\psi}\subset\bigwedge^{10}(\bigwedge^3 V)$ be the affine cone over $\YY^{\psi}$: then $\rho$   lifts  to an action $\wh{\rho}$  on   $\wh{\YY}^{\psi}$. Let
\begin{equation}
\YY^{\psi}_{\rm fix}:=\{A\in\YY^{\psi} \mid \text{$\bigwedge^{10}A$ is in the fixed locus of $\wh{\rho}(t)$ for all $t\in\CC^{\times}$}  \}.
\end{equation}
We will give an explicit description of $\YY^{\psi}_{\rm fix}$ which is analogous to the description of $\WW^{\psi}_{\rm fix}$ given in~\Ref{subsubsec}{renziboy}. We start by explaining the entries in Table~\eqref{cibasi}. Let $\la\la i_{+}(C)\ra\ra \subset A_{+}(U)$ be the span of the affine cone over $i_{+}(C)$. Going through Table~\eqref{basipluck} one gets that a basis of $\la\la i_{+}(C)\ra\ra$ is given by the first seven entries of Table~\eqref{cibasi}. 
\begin{table}[tbp]
\caption{ Bases of $\la\la i_{+}(C)\ra\ra$ and of $\la\la i_{+}(C)\ra\ra^{\bot}$.}\label{cibasi}
\vskip 1mm
\centering
\renewcommand{\arraystretch}{1.60}
\begin{tabular}{lll}
\toprule
  $\alpha$-$\beta$ notation &  explicit expression &  action of $\lambda_{\cN_3}(t)$ \\
 \midrule
    $\alpha_{(2,0,0,0)}$ &    $ v_0\wedge v_1 \wedge v_2$ &    $t^3$  \\
     $\alpha_{(0,0,0,2)}$ &       $v_2\wedge v_4\wedge v_5$ &     $t^{-3}$ \\
  $\alpha_{(1,1,0,0)}$ &     $v_0\wedge(v_1\wedge v_4 - v_2\wedge v_3)$ &      $t^2$ \\
 $\alpha_{(0,0,1,1)}$ &  $v_5\wedge (v_1\wedge v_4+v_2\wedge v_3)$ &  $t^{-2}$ \\
 $\alpha_{(0,2,0,0)}+\alpha_{(1,0,1,0)}$ &     $v_0\wedge v_1\wedge v_5 + v_0\wedge v_3\wedge v_4 - v_1 \wedge v_2\wedge v_3$  &  $t$ \\
  $\alpha_{(1,0,0,1)}+\alpha_{0,1,1,0)}$ &     $ v_0\wedge v_2\wedge v_5+ v_0\wedge v_3\wedge v_5 -
  v_1\wedge v_2\wedge  v_4 + v_1\wedge v_3\wedge  v_4$ &     $1$ \\
   $\alpha_{(0,0,2,0)}+\alpha_{(0,1,0,1)}$ &   $v_0\wedge v_4\wedge v_5 + v_1\wedge v_3\wedge v_5 +
    v_2 \wedge v_3\wedge v_4$   &  $t^{-1}$ \\
    \midrule
    $\alpha_{(0,2,0,0)}-\alpha_{(1,0,1,0)}$ &     
    $-v_0\wedge v_1\wedge v_5 + v_0\wedge v_3\wedge v_4 + v_1 \wedge v_2\wedge v_3$  &  $t$ \\
  $\alpha_{(1,0,0,1)}-\alpha_{(0,1,1,0)}$ &     $ v_0\wedge v_2\wedge v_5 - v_0\wedge v_3\wedge v_5 -
  v_1\wedge v_2\wedge  v_4 - v_1\wedge v_3\wedge  v_4$ &     $1$ \\
 $\alpha_{(0,0,2,0)}-\alpha_{(0,1,0,1)}$ &   $-v_0\wedge v_4\wedge v_5 + v_1\wedge v_3\wedge v_5 -
    v_2 \wedge v_3\wedge v_4$   &  $t^{-1}$ \\
 $4\beta_{(0,0,2,0)}-2\beta_{(0,1,0,1)}$ &   $-2v_0\wedge v_1\wedge v_5 + 4 v_0\wedge v_2\wedge v_4 
 - 2 v_1 \wedge v_2\wedge v_3$   &  $t$ \\
  $\beta_{(1,0,0,1)}-\beta_{(0,1,1,0)}$ &     $ v_0\wedge v_2\wedge v_5 - v_0\wedge v_3\wedge v_5 +
  v_1\wedge v_2\wedge  v_4 + v_1\wedge v_3\wedge  v_4$ &     $1$ \\
$4\beta_{(0,2,0,0)}-2\beta_{(1,0,1,0)}$ &     $-2v_0\wedge v_4\wedge v_5 + 4 v_1\wedge v_2\wedge v_5 
+ 2 v_2 \wedge v_3\wedge v_4$  &  $t^{-1}$ \\
\bottomrule 
\end{tabular}
\end{table} 
It follows by a straightforward computation that  the elements of Table~\eqref{cibasi}
form a basis of $i_{+}(C)^{\bot}$.  Notice that each such element spans a subspace invariant under the action of  $\lambda_{\cN_3}(t)$ for $t\in\CC^{\times}$: the corresponding character of $\CC^{\times}$ is contained in the third column of Table~\eqref{cibasi}.  
Let $P_C\subset A_{+}(U)$   be the subspace spanned by  the elements of Table~\eqref{cibasi} which belong to lines $8$ through $10$ and 
 $Q_C\subset A_{-}(U)$ be the subspace spanned by  the elements  belonging to lines $11$ through $13$. Both $P_C$ and $Q_C$ are isotropic for $(,)_V$ and the symplectic form identifies one with the dual of the other; thus the restriction of $(,)_V$ to $P_C\oplus Q_C$ is a symplectic form. It follows that a lagrangian $A\in\lagr$ contains $ i_{+}(C)$ if and only if it is equal to $\la\la i_{+}(C)\ra\ra \oplus R$ where $R\in\LL\GG(P_C\oplus Q_C)$.  
Given ${\bf c}=[c_0,c_1]\in\PP^1$, ${\bf d}=[d_0,d_1]\in\PP^1$ we let 
\begin{multline}\label{errecidi}
R_{{\bf c},{\bf d}}:=\la c_0(\alpha_{(0,2,0,0)}-\alpha_{(1,0,1,0)})+c_1(4\beta_{(0,0,2,0)}-2\beta_{(0,1,0,1)}), \\ 
d_0(\alpha_{(1,0,0,1)}-\alpha_{(0,1,1,0)})+d_1(\beta_{(1,0,0,1)}-\beta_{(0,1,1,0)}), \\
c_0(\alpha_{(0,0,2,0)}-\alpha_{(0,1,0,1)}) + c_1(4\beta_{(0,2,0,0)}-2\beta_{(1,0,1,0)}) \ra
\end{multline}
and
\begin{equation}
A_{{\bf c},{\bf d}}:= \la\la i_{+}(C) \ra\ra \oplus R_{{\bf c},{\bf d}}.
\end{equation}
Looking at  the action of $\lambda_{\cN_3}(t)$ on  the given bases of $P_C$ and $Q_C$ one gets that 
\begin{equation}\label{prodpiuno}
 \YY^{\psi}_{\rm fix}=\{ A_{{\bf c},{\bf d}} \mid ({\bf c},{\bf d})\in\PP^1\times\PP^1 \}.
\end{equation}
Notice  that $A_{{\bf c},{\bf d}}$ is $\lambda_{\cN_3}$-split of reduced type $(1,1,2)$ (look at the action of $\CC^{\times}$ on the elements  of the bases of $\la\la i_{+}(C)\ra\ra$, $P_C$ and $Q_C$). Thus 
\begin{equation}\label{noioso}
\YY^{\psi}_{\rm fix}\subset {\mathbb S}^{\sF}_{\cN_3}.
\end{equation}
We will examine $C_{W_{\infty},A_{{\bf c},{\bf d}}}$ for $({\bf c},{\bf d})\in\PP^1\times\PP^1$. (See~\eqref{stellestalle} for the definition of $W_{\infty}$.) 
\begin{clm}\label{clm:tritan}
Let $A\in{\mathbb S}^{\sF}_{\cN_3}$. Let $\{X_0,X_1,X_2\}$ be the basis of $W_{\infty}^{\vee}$ dual to  $\{v_0,v_1,\gamma_0\}$. 
There exist $a_{i},b_{i}\in\CC$  for $i=1,2,3$ such that  
\begin{equation}
C_{W_{\infty},A}=V((b_{1} X_0X_2+a_{1} X_1^2)(b_{2} X_0X_2+a_{2} X_1^2)
(b_{3} X_0X_2+a_{3} X_1^2)).
\end{equation}
\end{clm}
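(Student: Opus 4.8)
The plan is to deduce the claim directly from the equivariance statement of~\Ref{clm}{azione} together with~\Ref{rmk}{pasquetta}. The relevant input is the $1$-PS $\lambda_{\cN_3}$, which in the basis $\sF$ acts as $\diag(t^2,t,1,1,t^{-1},t^{-2})$; in particular $V_{23}=\la v_2,v_3\ra$ is its weight-$0$ eigenspace, so $\gamma_0\in V_{23}$ is a $\lambda_{\cN_3}$-eigenvector of weight $0$. Consequently $W_{\infty}=\la v_0,v_1,\gamma_0\ra$ is $\lambda_{\cN_3}$-invariant, and $\lambda_{\cN_3}(t)$ restricts to $\diag(t^2,t,1)$ on $W_{\infty}$ in the basis $\{v_0,v_1,\gamma_0\}$.

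First I would record that $\lambda_{\cN_3}(t)$ fixes $\bigwedge^{10}A$. Since $A\in{\mathbb S}^{\sF}_{\cN_3}$ is $\lambda_{\cN_3}$-split of reduced type ${\bf d}_{\cN_3}=(1,1,2)$, formula~\eqref{simbolico} (see the $\XX^{\sF}_{\cN_3}$ row of Table~\eqref{stratflaguno}) gives $\mu({\bf d}_{\cN_3},\lambda_{\cN_3})=2(3\cdot 1+2\cdot 1+1\cdot 2-7)=0$, and by~\eqref{caramellamu} this number is precisely the weight with which $\lambda_{\cN_3}(t)$ acts on the Pl\"ucker line $\bigwedge^{10}A$. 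Hence $\lambda_{\cN_3}(t)\omega=\omega$ for $0\neq\omega\in\bigwedge^{10}A$. Next I would choose the $\lambda_{\cN_3}$-invariant complement $U:=\la \delta,v_4,v_5\ra$, where $\delta\in V_{23}$ is a weight-$0$ vector completing $\gamma_0$ to a basis of $V_{23}$; then $V=W_{\infty}\oplus U$ is a direct-sum decomposition preserved by $\lambda_{\cN_3}(t)=(g_{W_{\infty}},g_U)\in(GL(W_{\infty})\times GL(U))\cap SL(V)$, which fixes $\omega$.

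With these hypotheses in place I would invoke~\Ref{clm}{azione}. Writing $C_{W_{\infty},A}=V(P)$ with $P\in\Sym^6 W_{\infty}^{\vee}$, the claim yields $\ov{g}_{W_{\infty}}P=P$, where $\ov{g}_{W_{\infty}}=(\det g_{W_{\infty}})^{-1/3}g_{W_{\infty}}$. Since $\det g_{W_{\infty}}=t^3$, a direct computation gives $\ov{g}_{W_{\infty}}=t^{-1}\diag(t^2,t,1)=\diag(t,1,t^{-1})$ in the basis $\{v_0,v_1,\gamma_0\}$. As $t$ ranges over $\CC^{\times}$ the elements $\ov{g}_{W_{\infty}}$ sweep out exactly the maximal torus $\{\diag(t,1,t^{-1})\}<SL(W_{\infty})$ appearing in~\Ref{rmk}{pasquetta}(2). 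Therefore $P$ is fixed by that torus, and~\Ref{rmk}{pasquetta}(2) forces $P$ to factor as $(b_1X_0X_2+a_1X_1^2)(b_2X_0X_2+a_2X_1^2)(b_3X_0X_2+a_3X_1^2)$, which is the assertion.

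Since the argument is essentially bookkeeping, there is no serious obstacle; the only points requiring care are the vanishing $\mu({\bf d}_{\cN_3},\lambda_{\cN_3})=0$ (which supplies the hypothesis $g\omega=\omega$ of~\Ref{clm}{azione}) and the identification of $\ov{g}_{W_{\infty}}$ with the torus of~\Ref{rmk}{pasquetta}(2). I would also note the degenerate possibility $C_{W_{\infty},A}=\PP(W_{\infty})$, i.e.~$P=0$: this is covered by the statement by taking $a_i=b_i=0$, so it needs no separate treatment.
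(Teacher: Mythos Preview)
Your proof is correct and follows the same approach as the paper: apply \Ref{clm}{azione} to the $1$-PS $\lambda_{\cN_3}$ (which fixes $\bigwedge^{10}A$ and preserves $W_{\infty}$) and then invoke Item~(2) of \Ref{rmk}{pasquetta}. The paper's proof is simply a two-line version of what you wrote, asserting the key facts without the explicit weight computations you supplied.
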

 \begin{proof}
Let $t\in\CC^{\times}$: then $\lambda_{\cN_3}(t)$ fixes $\bigwedge^{10}A$, $W_{\infty}$ and $W_0$. Applying~\Ref{clm}{azione} and Item~(2) of~\Ref{rmk}{pasquetta} we get the result.  
 \end{proof}
Now let 
 $A_{{\bf c},{\bf d}}\in \YY^{\psi}_{\rm fix}$: then 
 \begin{equation*}
 W_{\infty}=i_{+}([1,0,0,0])=\la v_0,v_1,v_2\ra.
\end{equation*}
Let $\{X_0,X_1,X_2\}$  be as in~\Ref{clm}{tritan}.  
As $[\lambda,\mu]$ varies in $\PP^1$  the intersection $\PP(W_{\infty})\cap \PP(i_{+}([\lambda,\mu])$ traces out a dense open subset of $V(X_0 X_2-X_1^2)\subset \PP(W_{\infty})$. By~\Ref{crl}{cnesinerre} and~\Ref{clm}{tritan} we get that
\begin{equation}\label{contang}
C_{W_{\infty},A_{{\bf c},{\bf d}}}=V((X_0 X_2-X_1^2)^2(b X_0 X_2 + a X_1^2)).
\end{equation}
Our main object of interest is
\begin{equation*}
\VV^{\psi}:=\{ A_{{\bf c},{\bf d}}\in\YY^{\psi}_{\rm fix} \mid  
C_{W_{\infty},A_{{\bf c},{\bf d}}}=V(m(X_0 X_2-X_1^2)^3),\ m\in\CC \}.
\end{equation*}
In~\Ref{subsubsec}{copti} we will prove the following result.
\begin{prp}\label{prp:diagonale}
Let  $A_{{\bf c},{\bf d}}\in \YY^{\psi}_{\rm fix}$. Then $A_{{\bf c},{\bf d}}\in \VV^{\psi}$ if and only if 
 $c_1(c_0 d_1 + c_1 d_0)=0$. 
\end{prp}
By the above proposition  $\VV^{\psi}$ has two irreducible components. 
The following result gives geometric meaning to one of the  components.
\begin{clm}\label{clm:pippajenkins}
For any ${\bf d}\in\PP^1$ the lagrangian $A_{[1,0],{\bf d}}$ belongs to $\XX^{*}_{\cW}$.  Conversely, if $A\in \XX^{*}_{\cW}$ there exist ${\bf d}\in\PP^1$ and $g\in\PGL(V)$ such that $gA= A_{[1,0],{\bf d}}$.
\end{clm}
\begin{proof}
Let $\{\xi_0,\ldots,\xi_3\}$ be the basis of $U^{\vee}$ dual to $\{u_0,\ldots,u_3\}$ and 
\begin{equation}\label{alfieri}
Q_0:=V(\xi_0\xi_3-\xi_1\xi_2)\subset\PP(U). 
\end{equation}
The span $A_0:=\la\la i_{+}(Q_0)\ra\ra$ in $A_{+}(U)$ of the affine cone over $i_{+}(Q_0)$ is equal to 
$$\la\la i_{+}(C)\ra\ra\oplus \la (\alpha_{(0,2,0,0)}-\alpha_{(1,0,1,0)}),
(\alpha_{(0,0,2,0)}-\alpha_{(0,1,0,1)})\ra.$$
(Look at~\eqref{grancalc}.) Thus $A_0^{\bot}=A_0\oplus\la (\alpha_{(1,0,0,1)}-\alpha_{(0,1,1,0)}),(\alpha_{(1,0,0,1)}-\alpha_{(0,1,1,0)})\ra$. It follows that  $A_{[1,0],{\bf d}}$ belongs to $\XX^{*}_{\cW}(U)$ and that by varying ${\bf d}$ we get all elements of $\XX^{*}_{\cW}(U)$ up to the action of $\PGL(V)$. 
\end{proof}
We will be mainly concerned with the other irreducible component of $\VV^{\psi}$.
\begin{dfn}
Let $\XX^{\psi}_{\cZ}:=\{A_{{\bf c},{\bf d}} \mid c_0 d_1 + c_1 d_0 =0  \}$.  
\end{dfn}
\begin{dfn}
Let $\gX_{\cZ}\subset\gM$ be the set of points represented by semistable lagrangians  $A_{{\bf c},{\bf d}}\in\XX^{\psi}_{\cZ}$ (of course  $\gX_{\cZ}$ is independent of $\psi$). 
\end{dfn} 
By~\Ref{prp}{diagonale} we have
\begin{equation}\label{soporifero}
\gX_{\cZ}\subset \gX_{\cN_3}\cap\gI.
\end{equation}
Notice that $A_{[1,0],[1,0]}=A_{+}(U)$ and hence $\gy\in\gX_{\cZ}$.
Below is the main result of the present Subsection - it is obtained by putting together~\Ref{prp}{lavitola} and~\Ref{subsubsec}{arivato}.
\begin{prp}\label{prp:allafine}
$\gX_{\cZ}$ is an irreducible curve containing  $\gy$, $\gx$, $\gx^{\vee}$, and intersecting   $\gX_{\cV}$  in the single point $\gy$. Moreover   $\gX_{\cN_3}\cap \gI=\gX_{\cZ}\cup\gX_{\cW}$. 
\end{prp}
\subsubsection{Duality}
Let $\{x_0,\ldots,x_5\}$ be the basis of $V^{\vee}$ dual to $\{v_0,\ldots,v_5\}$ and $q\in\Sym^2 V^{\vee}$ be the non-degenereate quadratic form given  by $x_0 x_5-x_1 x_4+x_2 x_3$:  the Pl\"ucker quadric $\Gr(2,U)\subset\PP(\bigwedge^2 U)=\PP(V)$ is the zero-set of $q$. Let $L_q\colon V\overset{\sim}{\lra} V^{\vee}$ be the isomorphism defined by $q$ and
\begin{equation}
\begin{matrix}
\bigwedge^3 V & \overset{R_q}{\lra} & \bigwedge^3 V \\
\alpha & \mapsto & \bigwedge^3 L_{q}^{-1}\circ \delta_V(\alpha)
\end{matrix}
\end{equation}
(See~\eqref{specchio} for the definition of $\delta_V$.)   Tables~\eqref{listatiuno} and~\eqref{listatidue} list the values of $R_q$ on the monomials $v_i\wedge v_j\wedge v_k$ (denoted   $(ijk)$): they give that $R_q$ maps each of $A_{\pm}(U)$ to itself and
\begin{equation}\label{involuzione}
R_q|_{A_{+}(U)}=\Id_{A_{+}(U)},\qquad R_q|_{A_{-}(U)}=-\Id_{A_{-}(U)}.
\end{equation}
\begin{table}[tbp]\tiny
\caption{Values of  $R_{q}=\bigwedge^3 L_{q}^{-1}\circ \delta_V$, I.}\label{listatiuno}
\vskip 1mm
\centering
\renewcommand{\arraystretch}{1.60}
\begin{tabular}{rrrrrrrrrr}
\toprule
  $(012)$ &  $(013)$   &  $(014)$ &  $(015)$ & $(023)$ & $(024)$ & $(025)$ & $(034)$ & $(035)$ & $(045)$    \\
\midrule
 $(012)$ &  $-(013)$   &  $-(023)$ &  $-(123)$ & $-(014)$ & $-(024)$ & $-(124)$ & $(034)$ & $(134)$ & $(234)$    \\
\bottomrule 
\end{tabular}
\end{table} 
\begin{table}[tbp]\tiny
\caption{Values of  $R_{q}=\bigwedge^3 L_{q}^{-1}\circ \delta_V$, II.}\label{listatidue}
\vskip 1mm
\centering
\renewcommand{\arraystretch}{1.60}
\begin{tabular}{rrrrrrrrrr}
\toprule
  $(123)$ &  $(124)$   &  $(125)$ &  $(134)$ & $(135)$ & $(145)$ & $(234)$ & $(235)$ & $(245)$ & $(345)$    \\
\midrule
$-(015)$ &  $-(025)$   &  $-(125)$ &  $(035)$ & $(135)$ & $(235)$ & $(045)$ & $(145)$ & $(245)$ & $-(345)$       \\
\bottomrule 
\end{tabular}
\end{table} 
\begin{prp}\label{prp:dualecidi}
Let $({\bf c},{\bf d})\in\PP^1\times\PP^1$ and  ${\bf c}':=[c_0,-c_1]$, ${\bf d}':=[d_0,-d_1]$.
 Then $A_{{\bf c}',{\bf d}'}=R_q(A_{{\bf c},{\bf d}})$.
\end{prp}
\begin{proof}
Follows at once from~\eqref{involuzione} and the definition of $A_{{\bf c},{\bf d}}$. 
\end{proof}
The map $\PP(R_q)\colon \PP(\bigwedge^3 V)\to \PP(\bigwedge^3 V)$ given by the projectivization of $R_q$ maps $\Gr(3,V)$ to itself: we will describe the image via  $\PP(R_q)$ of certain special elements of $\Gr(3,V)$. Let $Q\subset\PP(U)$ be a smooth quadric. Let
\begin{equation}
T(Q),T'(Q)\subset\Gr(1,\PP(U))\subset\PP(\bigwedge^2 U)=\PP(V)
\end{equation}
be the two irreducible components of the family of lines on $Q$. Since $T(Q)$ is a smooth conic in $\PP(V)$ the affine cone over its span in $\PP(V)$ is a $3$-dimensional vector subspace of $V$ that we will denote $U(Q)$. Similarly we let $U'(Q)$  be the affine cone over the span of $T'(Q)$ in $\PP(V)$. 
\begin{prp}\label{prp:schiere}
Keeping notation as above, we have 
\begin{equation}\label{padoan}
R_q(\bigwedge^3 U(Q))=\bigwedge^3 U'(Q),\qquad R_q(\bigwedge^3 i_{+}(p))=\bigwedge^3  i_{+}(p).
\end{equation}
\end{prp}
\begin{proof}
Let us prove that
\begin{equation}\label{scambio}
\delta_V(\bigwedge^3 U(Q))=\bigwedge^3 L_q(\bigwedge^3 U'(Q)).
\end{equation}
Let $\bigwedge^3 L_q(\bigwedge^3 U'(Q))=f_1\wedge f_2\wedge f_3$ where $f_i\in V^{\vee}$. Then~\eqref{scambio} is equivalent to
\begin{equation}\label{rivarossi}
U(Q)=\Ann\la f_1,f_2,f_3\ra.
\end{equation}
Now $\Ann\la f_1,f_2,f_3\ra$ meets $\Gr(1,\PP(U))$ in the set of lines meeting each line of $T'(Q)$, and that set is $T(Q)$. Since $\PP(U(Q))$ meets  $\Gr(1,\PP(U))$ in the same set of lines (by definition of $U(Q)$), we get that~\eqref{rivarossi} holds. This proves the first equality of~\eqref{padoan}. The proof of the second equality is similar, we leave details to the reader.
\end{proof}
\subsubsection{Properties of $\gX_{\cZ}$}\label{subsubsec:copti}
In the present subsubsection we will prove~\Ref{prp}{diagonale} and after that the result below.
\begin{prp}\label{prp:lavitola}
$\gX_{\cZ}$ is an irreducible curve  containing $\gx$, $\gx^{\vee}$ and intersecting $\gX_{\cV}$ in the single point $\gy$. 
\end{prp}
In order to prove ~\Ref{prp}{diagonale} we will need to describe $\Theta_{A_{{\bf c},{\bf d}}}$ for $({\bf c},{\bf d})\in\PP^1\times\PP^1$. As a preliminary step we will show that there exist isomorphisms $\phi_i\colon \Sym^2 L\overset{\sim}{\lra} V$ for $i=1,2$  (here $L$ is a $3$-dimensional complex vector space)  such that $i_{+}(C)$ is contained in the image  of $\Theta_{A_k(L)}$ and  $\Theta_{A_h(L)}$  via the isomorphisms $\Gr(3,\Sym^2 L)\overset{\sim}{\lra}\Gr(3,V)$ associated to $\phi_1$ and $\phi_2$ respectively.  
 Let  $\nu\colon \Gr(1,\PP(U))\hra  \PP(\bigwedge^2 U)$ be   the Pl\"ucker map. We have  the embedding
\begin{equation*}
\begin{matrix}
\PP^2\cong C^{(2)} & \overset{\kappa}{\lra} & \PP(\bigwedge^2 U)=\PP(V) \\
z_1+z_2 & \mapsto & \nu(\la z_1,z_2 \ra)
\end{matrix}
\end{equation*}
where $\la z_1,z_2 \ra$ is the line spanned by $z_1,z_2$ (the projective tangent line to $C$ if $z_1=z_2$). Then $\kappa^{*}\cO_{\PP(V)}(1)\cong\cO_{\PP^2}(2)$. It follows that there exists an isomorphism 
\begin{equation}\label{crossroads}
\phi_2\colon\Sym^2 L\overset{\phi}{\lra}  V
\end{equation}
such that
\begin{equation}\label{elleopportuno}
\PP(\phi_2)(\cV_1(L))=\kappa(C^{(2)})
\end{equation}
where $\cV_1(L)$  is the Veronese surface of 
symmetric tensors of rank $1$ modulo scalars. Let $A_h(\phi_2)$ be the image of $A_h(L)$ via the isomorphism
$\bigwedge^3\phi_2\colon \bigwedge^3(\Sym^2 L)\overset{\sim}{\lra} \bigwedge^3 V$; thus $A_h(\phi_2)\in\lagr$. 
In order to describe the elements of  $\Theta_{A_h(\phi_2)}$ we give the folowing definition.
\begin{dfn}\label{dfn:attilio}
Let $Q\subset\PP(U)$ be a smooth quadric containing $C$. For $i=1,2$ we let $T_i(Q)$ be the family of lines  $L\subset Q$ such that $L\cdot C=i$ (the intersection takes place in $Q$).
\end{dfn}
If $Q\subset\PP(U)$ is a smooth quadric containing $C$ then $\nu(T_2(Q))$ is a conic lying in the Veronese surface $\kappa(C^{(2)})$; it follows that
\begin{equation}\label{trenothomas}
\Theta_{A_h(\phi_2)}  =  \{ \la\la T_2(Q) \ra \ra \mid \text{$Q\in |\cI_C(2) |$ smooth} \}\cup \{i_{+}(p) \mid p\in C \}.
\end{equation}
On the other hand~\Ref{prp}{schiere} gives that
\begin{equation}
R_q(\bigwedge^3 \la\la T_2(Q)\ra\ra)=\bigwedge^3 \la\la T_1(Q)\ra\ra, \qquad 
R_q(\bigwedge^3 i_{+}(p))=\bigwedge^3 i_{+}(p).
\end{equation}
Since  the right-hand side of~\eqref{trenothomas} is a family of  pairwise incident  $3$-dimensional subspaces of $V$ it follows that also 
\begin{equation*}
\{ \la\la T_1(Q) \ra \ra \mid \text{$Q\in |\cI_C(2) |$ smooth} \}\cup \{i_{+}(p) \mid p\in C \}
\end{equation*}
 is a family of  pairwise incident  $3$-dimensional subspaces of $V$. Since  $\delta_V(A_h(L))=\delta_V(A_k(L^{\vee}))$ (see (2.80) of~\cite{ogtasso}), it follows that there exists an isomorphism 
\begin{equation}\label{nursery}
\phi_1\colon\Sym^2 L\overset{\phi}{\lra}  V
\end{equation}
such that, letting $A_k(\phi_1)$ be the image of $A_k(L)$ via the isomorphism
$\bigwedge^3\phi_1$, we have
\begin{equation}\label{peppapig}
\Theta_{A_k(\phi_1)} =\{ \la\la T_1(Q) \ra \ra \mid \text{$Q\in |\cI_C(2) |$ smooth} \}\cup \{i_{+}(p) \mid p\in C \}.
\end{equation}
In particular we get that 
\begin{equation}\label{gasperini}
i_{+}(C)=\Theta_{A_{+}(U)}\cap \Theta_{A_k(\phi_1)}=\Theta_{A_{+}(U)}\cap \Theta_{A_h(\phi_2)}
=\Theta_{A_{k}(\phi_1)}\cap \Theta_{A_h(\phi_2)}.
\end{equation}
Before stating the next result we will  introduce some notation. By~\eqref{gasperini} we have $i_{+}(C)=\bigwedge^3\phi_1\circ k(D_1)$ where  $D_1\subset\PP(L)$ is a smooth conic.
The $1$-PS $\lambda_{\cN_3}$ is induced by a $1$-PS $\rho_1$ of $\SL(L)$ which maps the conic $D_1$ to itself: let $p_1,q_1,r\in\PP(L)$ be the fixed points for the action of $\rho_1$ on $\PP(L)$, with $p_1,q_1\in D_1$. Similarly we have $i_{+}(C)=\bigwedge^3\phi_2\circ h(D_2)$ where $D_2\subset\PP(L)$ is a smooth conic, and 
 $\lambda_{\cN_3}$ is induced by a $1$-PS $\rho_2$ of $\SL(L)$ mapping $D_2$ to itself. Let $p_2,q_2,r_2\in\PP(L^{\vee})$ be the fixed points for the action of $\rho_2$ on $\PP(L)$, with $p_2,q_2\in D_2$. Up to reordering $\{p_1,q_1\}$ and $\{p_2,q_2\}$ we have $W_{\infty}=\bigwedge^3\phi_1\circ k(p_1)=\bigwedge^3\phi_2\circ h(p_2)$ and $W_{0}=\bigwedge^3\phi_1\circ k(q_1)=\bigwedge^3\phi_2\circ h(q_2)$. 
The points $r_1,r_2$ are determined as follows. Let $Q_0\subset\PP(U)$ be the smooth quadric given by~\eqref{alfieri}.
Then 
\begin{equation*}
\bigwedge^3\phi_1\circ k(r_1)=\bigwedge^3\la \la T_1(Q_0) \ra \ra,\qquad 
\bigwedge^3\phi_2\circ  h(r_2)=\bigwedge^3\la \la T_2(Q_0) \ra \ra.
\end{equation*}
Since $\SL(L)$ acts trivially on $\bigwedge^{10}A_k(L)$ and  $\bigwedge^{10}A_h(L)$ we get that $\lambda_{\cN_3}$ acts trivially on $A_k(\phi_1)$ and on $A_h(\phi_2)$, i.e.
\begin{equation}\label{arcimboldo}
A_k(\phi_1), A_h(\phi_2) \in \YY^{\psi}_{\rm fix}.
\end{equation}
\begin{prp}\label{prp:misstexas}
Keep notation as above.
Let $A_{{\bf c},{\bf d}}\in\YY^{\psi}_{\rm fix}$. Then one of the following holds:
\begin{enumerate}
\item[(s)]
 $\dim\Theta_{A_{{\bf c},{\bf d}}}\ge 2$ and
\begin{enumerate}
 \item[(s1)]
 $c_1=0$ - in this case $A_{{\bf c},{\bf d}}$  belongs to $\XX^{*}_{\cW}$ by~\Ref{clm}{pippajenkins}, or 
\item[(s2)]
$({\bf c},{\bf d})=([1,1],[1,-1])$ - in this case $A_{{\bf c},{\bf d}}=A_k(\phi_1)$, or 
\item[(s3)]
$({\bf c},{\bf d})=([1,-1],[1,1])$ - in this case $A_{{\bf c},{\bf d}}=A_h(\phi_2)$. 
\end{enumerate}
\item[(t)]
 $\dim\Theta_{A_{{\bf c},{\bf d}}}=1$ and every irreducible component of $\Theta_{A_{{\bf c},{\bf d}}}$ is one of the following:
\begin{enumerate}
 \item[(t1)]
$ i_{+}(C)$, 
 \item[(t2)]
$\bigwedge^3\phi_1\circ  k(\la p_1,q_1\ra)$, $\bigwedge^3\phi_1\circ  k(\la r_1, p_1\ra)$ or 
$\bigwedge^3\phi_1\circ  k(\la r_1,q_1\ra)$,  
 \item[(t3)]
$\bigwedge^3\phi_2\circ  h(\la p_2,q_2\ra)$, $\bigwedge^3\phi_2\circ h(\la r_2, p_2\ra)$ or 
$\bigwedge^3\phi_2\circ h(\la r_2,q_2\ra)$,  
 \item[(t4)]
$i_{+}(\{[\xi_0 u_0+\xi_3 u_3] \mid [\xi_0,\xi_3]\in\PP^1 \})$,
 \item[(t5)]
 $\{\la \la T_1(Q_0) \ra \ra\}$ where $Q_0$ is given by~\eqref{alfieri}, 
 \item[(t6)]
 $\{\la \la T_2(Q_0) \ra \ra \}$.
\end{enumerate}
\end{enumerate}
Moreover $\la \la T_1(Q_0) \ra \ra$ is an element of $\Theta_{A_{{\bf c},{\bf d}}}$  if and only if $d_0+d_1=0$ and $\la \la T_2(Q_0) \ra \ra$ is an element of $\Theta_{A_{{\bf c},{\bf d}}}$  if and only if $d_0-d_1=0$.
\end{prp}
\begin{proof}
As is easily checked
\begin{equation*}
\{W\in\Gr(3,V) \mid W\cap i_{+}(p)\not=\{0\}\ \ \forall p\in C \}=\Theta_{A_{+}(U)}\cup\Theta_{A_k(\phi_1)}\cup 
\Theta_{A_h(\phi_2)}.
\end{equation*}
Since $i_{+}(C)\subset\Theta_{A_{{\bf c},{\bf d}}}$ it follows that 
\begin{equation}\label{contenutre}
\Theta_{A_{{\bf c},{\bf d}}}\subset \Theta_{A_{+}(U)}\cup\Theta_{A_k(\phi_1)}\cup 
\Theta_{A_h(\phi_2)}.
\end{equation}
Let $\Theta$ be an irreducible component of $\Theta_{A_{{\bf c},{\bf d}}}$. By~\eqref{contenutre} one of the following holds:
\begin{enumerate}
\item[(A)]
$\Theta\subset\Theta_{A_{+}(U)}$,
\item[(B)]
$\Theta\subset\Theta_{A_k(\phi_1)}$,
\item[(C)]
$\Theta\subset\Theta_{A_h(\phi_2)}$.
\end{enumerate}
Suppose that~(A) holds. Then $\Theta$  is an irreducible component of $i_{+}^{-1}\PP(A)$. Let $\{\xi_0,\ldots,\xi_3\}$ be the basis of $U^{\vee}$ dual to $\{v_0,\ldots,v_3\}$. Looking at~\eqref{grancalc} and~\eqref{errecidi}  we get that  $i_{+}^{-1}\PP(A)$  is one of 
$$C,\qquad Q_0,\qquad V(\xi_0\xi_2-\xi_1^2,\xi_1\xi_3-\xi_2^2).$$
 It follows that $\Theta$ is one of the following:
 \begin{enumerate}
\item[(A1)]
$\Theta_{A_{+}(U)}$ (and hence $A_{{\bf c},{\bf d}}=A_{+}(U)$),
\item[(A2)]
$i_{+}(Q_0)$,
\item[(A3)]
$i_{+}(C)$.
\item[(A4)]
$\{i_{+}([\xi_0 u_0 + \xi_3 u_3]) \mid [\xi_0,\xi_3]\in\PP^1 \}$,
\end{enumerate}
Next suppose that~(B) holds.   Then  $k^{-1}(\Theta_{A_{{\bf c},{\bf d}}})$ is an intersection of cubics containing the smooth conic $D_1$ and hence
$\Theta$ is one of the following:
 \begin{enumerate}
\item[(B1)]
$\Theta_{A_k(\phi_1)}$ (and hence $A_{{\bf c},{\bf d}}=A_k(\phi_1)$),
\item[(B2)]
$\bigwedge^3\phi_1\circ k(D_1)$($=i_{+}(C)$),
\item[(B3)]
$\bigwedge^3\phi_1\circ k(\la p_1, q_1 \ra)$,  $\bigwedge^3\phi_1\circ k(\la r_1, p_1\ra)$ or 
$\bigwedge^3\phi_1\circ k(\la r_1,q_1\ra)$,  
\item[(B4)]
$\{\bigwedge^3\phi_1\circ k(r_1)\}=\la \la T_1(Q_0) \ra \ra$.
\end{enumerate}
Lastly suppose that~(C) holds.   Arguing as above  we get that
$\Theta$ is one of the following:
 \begin{enumerate}
\item[(C1)]
$\Theta_{A_h(\phi_2)}$ (and hence $A_{{\bf c},{\bf d}}=A_h(\phi_2)$),
\item[(C2)]
$\bigwedge^3\phi_2\circ h(D_2)$($=i_{+}(C)$),
\item[(C3)]
$\bigwedge^3\phi_2\circ h(\la p_2,q_2\ra)$, $\bigwedge^3\phi_2\circ h(\la r_2, p_2\ra)$ 
or $\bigwedge^3\phi_2\circ h(\la r_2,q_2\ra)$,  
\item[(C4)]
$\{\bigwedge^3\phi_2\circ h(r_2)\}=\la \la T_2(Q_0) \ra \ra$.
\end{enumerate}
A quick glance at Items~(A1)-(A4), (B1)-(B4), (C1)-(C4) gives that if $\dim\Theta_{A_{{\bf c},{\bf d}}}\ge 2$ then one of (A1), (A2), (B1) or (C1) holds. 
A straightforward computation gives that~(A1) or~(A2)  holds if and only if $c_1=0$ (see~\eqref{errecidi}).
Next let's prove that  (B4) or (C4)  holds if and only if ${\bf d}=[1,-1]$ or ${\bf d}=[1,1]$ respectively. Let $Q_0$ be as in~\eqref{alfieri}: it  is a smooth quadric containing $C$. A computation gives that 
  \begin{equation}\label{stanlio}
\la \la T_1(Q_0) \ra \ra=\la  v_1, (v_2+v_3), v_4 \ra .
\end{equation}
It follows that $\la \la T_1(Q_0) \ra \ra$ is an element of $\Theta_{A_{{\bf c},{\bf d}}}$   if and only if
 $d_0+d_1=0$. 
 Similarly 
  \begin{equation}\label{ollio}
\la \la T_2(Q_0) \ra \ra=\la  v_0, v_2-v_3, v_5 \ra .
\end{equation}
(Notice: $R_{q}(\bigwedge^3 \la \la T_1(Q_0) \ra \ra)=\bigwedge^3 \la \la T_2(Q_0) \ra \ra$.)
It follows that $\Theta_{A_{{\bf c},{\bf d}}}$ contains $\la \la T_2(Q_0) \ra \ra$ if and only if
 $d_0-d_1=0$. 
 Next we will prove that $A_{{\bf c},{\bf d}}=A_k(\phi_1)$ if and only if $({\bf c},{\bf d})=([1,1],[1,-1])$. Suppose that $A_{{\bf c},{\bf d}}=A_k(\phi_1)$. Then $\la \la T_1(Q_0) \ra \ra$ is an element of $A_{{\bf c},{\bf d}}$ and hence  ${\bf d}=[1,-1]$ by the computation above. Let
 \begin{equation*}
 Q_1=V(\xi_0 \xi_2-\xi_1^2+\xi_1 \xi_3-\xi_2^2)\subset\PP(U).
\end{equation*}
 Thus $Q_1$ is another smooth quadric  containing $C$. A computation shows that 
\begin{equation*}
\la \la T_1(Q_1) \ra \ra =\la   v_0 + v_2,  v_1 + v_4,  v_2  + v_5 \ra .
\end{equation*}
It follows that $\la \la T_1(Q_1) \ra \ra$ is an element of $\Theta_{A_{{\bf c},{\bf d}}}$   if and only if 
  \begin{multline*}
\scriptstyle
  A_{{\bf c},{\bf d}}\ni 4(v_0 + v_2 ) \wedge (v_1 + v_4) \wedge (v_2 + v_5)= \\
\scriptstyle
=4\alpha_{(2,0,0,0)} +(\alpha_{(0,2,0,0)} + \alpha_{(1,0,1,0)}) - (\alpha_{(0,2,0,0)} - \alpha_{(1,0,1,0)}) -
(4\beta_{(0,0,2,0)} -2 \beta_{(0,1,0,1)}) +
\\
\scriptstyle
+ (\alpha_{(0,0,2,0)} + \alpha_{(0,1,0,1)}) - (\alpha_{(0,0,2,0)}-\alpha_{(1,0,1,0)}) -
(4\beta_{(0,2,0,0)} -2 \beta_{(1,0,1,0)}) + 4\alpha_{(0,0,0,2)}.
\end{multline*}
The above holds if and only if $c_0-c_1=0$. This proves that if $A_{{\bf c},{\bf d}}=A_k(\phi_1)$ then 
  $({\bf c},{\bf d})=([1,1],[1,-1])$, and by~\eqref{arcimboldo}  there exists such a $({\bf c},{\bf d})$, thus  $A_{{\bf c},{\bf d}}=A_k(\phi_1)$ if and only if $({\bf c},{\bf d})=([1,1],[1,-1])$. 
    By~\Ref{prp}{dualecidi} it follows that $A_{{\bf c},{\bf d}}=A_h(\phi_2)$  if and only if
  $({\bf c},{\bf d})=([1,-1],[1,1])$. This proves that if $\dim\Theta_{A_{{\bf c},{\bf d}}}\ge 2$ then one of~(s1), (s2) or (s3) holds. Now suppose that $\dim\Theta_{A_{{\bf c},{\bf d}}}=1$. We showed above that one of~(A3), (A4), (B3), (B4), (C3) or~(C4) holds, thus it is clear that one of~(t1) - (t6) holds.  We have also shown  that  $\la\la T_1(Q_0) \ra\ra\in \Theta_{A_{{\bf c},{\bf d}}}$  if and only if $d_0+d_1=0$ and that  $\la\la T_2(Q_0) \ra\ra\in \Theta_{A_{{\bf c},{\bf d}}}$  if and only if $d_0+d_1=0$.   
\end{proof}
\begin{crl}\label{crl:rettapunto}
 Let  $A_{{\bf c},{\bf d}}\in \YY^{\psi}_{\rm fix}$. Then $C_{W_{\infty},A_{{\bf c},{\bf d}}}=\PP(W_{\infty})$
  if and only if either $c_1=0$ or $({\bf c},{\bf d})=([1,1],[1,-1])$. 
\end{crl}
\begin{proof}
If $c_1=0$ or $({\bf c},{\bf d})=([1,1],[1,-1])$ then $C_{W_{\infty},A_{{\bf c},{\bf d}}}=\PP(W_{\infty})$ by~\Ref{prp}{misstexas} - see~\Ref{clm}{stregaest} and~\eqref{ibla}. Thus it remains to prove the converse. Suppose that $C_{W_{\infty},A_{{\bf c},{\bf d}}}=\PP(W_{\infty})$. By~\Ref{crl}{cnesinerre} it follows that $\cB(W_{\infty},A_{{\bf c},{\bf d}})=\PP(W_{\infty})$. Thus one of the following holds:
\begin{enumerate}
\item[(a)]
Given a generic $[v]\in\PP(W_{\infty})$ there exists $W\in\Theta_{A_{{\bf c},{\bf d}}}$ containing $v$. 
\item[(b)]
For any $[v]\in\PP(W_{\infty})$ we have  
\begin{equation}\label{vunelnucleo}
\dim(A_{{\bf c},{\bf d}}\cap S_{W_{\infty}}\cap F_{v})\ge 2.
\end{equation}
\end{enumerate}
If~(a) holds then $\dim \Theta_{A_{{\bf c},{\bf d}}}\ge 2$. By~\Ref{prp}{misstexas}, \eqref{ibla} and~\eqref{nopanino} we get that either $c_1=0$ or $({\bf c},{\bf d})=([1,1],[1,-1])$. Now  suppose that~(a) does not hold and that~(b) holds. Then
\begin{equation}\label{almenoquattro}
\dim(A_{{\bf c},{\bf d}}\cap S_{W_{\infty}})\ge 4
\end{equation}
and of course $c_1\not=0$.
A straightforward computation gives that~\eqref{almenoquattro} holds if and only if $d_1=0$ and in that case
\begin{multline}\label{guerrestellari}
\scriptstyle
A_{{\bf c},{\bf d}}\cap S_{W_{\infty}}= \la v_0\wedge v_1 \wedge v_2, \   
v_0\wedge v_1\wedge v_4- v_0\wedge v_2 \wedge v_3 , \\
\scriptstyle
(c_0+c_1)v_0\wedge v_1\wedge v_5 - 2 c_1  v_0\wedge v_2\wedge v_4 
-(c_0-c_1)v_1\wedge v_2 \wedge v_3, \   
 v_0\wedge v_2\wedge v_5 -  v_1\wedge v_2 \wedge v_4 \ra .
\end{multline}
Given~\eqref{guerrestellari}  one checks easily that the set of $[v]\in\PP(W_{\infty})$ for which~\eqref{vunelnucleo} holds is a proper subset of $\PP(W_{\infty})$, in fact the union of a line and a singleton: that is a contradiction.
\end{proof}
\n
{\it Proof of~\Ref{prp}{diagonale}.\/}
We start by noting that the coefficients $a,b$ appearing on the right-hand side of~\eqref{contang} are bihomogeneous polynomials in $({\bf c},{\bf d})$ of degrees $(2,1)$  - this is a consequence of the discussion that follows~\eqref{modem}. It follows that $\VV^{\psi}$ is the zero-set of $(a+b)\in H^0( \cO_{\PP^1}(2)\boxtimes\cO_{\PP^1}(1) )$. 
 By~\Ref{prp}{misstexas} and~\Ref{clm}{stregaest}  we know that $\{({\bf c},{\bf d}) \mid c_1=0\}$ is contained in $\VV^{\psi}$; it follows that there exists 
  $\sigma\in H^0(\cO_{\PP^1}(m)\boxtimes\cO_{\PP^1}(1))$  with $m\le 1$ such that
\begin{equation*}
\VV^{\psi}=\{A_{{\bf c},{\bf d}} \mid c_1=0\}\cup V(\sigma).
\end{equation*}
Let's show that $\sigma\not=0$. Suppose the contrary holds i.e.~that $\sigma=0$. It follows that the locus of $({\bf c},{\bf d})\in\PP^1\times\PP^1$ such that $C_{W_{\infty},A_{{\bf c},{\bf d}}}=\PP(W)$ is the zero-set of $a\in H^0( \cO_{\PP^1}(2)\boxtimes\cO_{\PP^1}(1) )$: that contradicts~\Ref{crl}{rettapunto}. This proves that $\sigma\not=0$. By~\Ref{prp}{misstexas} and~\eqref{ibla}, \eqref{nopanino} we have
\begin{equation}\label{bianca}
 ([1,1],[1,-1]),([1,-1],[1,1])\in V(\sigma).
\end{equation}
 It follows that $m=1$ i.e.
\begin{equation}\label{unouno}
\sigma\in H^0(\cO_{\PP^1}(1)\boxtimes\cO_{\PP^1}(1)).
\end{equation}
 It remains  to prove that   
\begin{equation}\label{eladiag}
V(\sigma)= \{A_{{\bf c},{\bf d}} \mid c_0 d_1 + c_1 d_0=0 \}.
\end{equation}
We will  show that 
\begin{equation}\label{trecentotre}
V(\sigma)\cap \{({\bf c},{\bf d}) \mid d_1=0\}= \{([1,0],[1,0])\}.
\end{equation}
Granting the above equality we get~\eqref{eladiag} by noting that  there is a single divisor in $|H^0(\cO_{\PP^1}(1)\boxtimes\cO_{\PP^1}(1))|$ whose zero-locus contains $([1,1],[1,-1])$, $([1,-1],[1,1])$ and $([1,0],[1,0])$ namely the right-hand side of~\eqref{eladiag}. 
  It remains to prove~\eqref{trecentotre}. 
By~\eqref{unouno} the intersection number of $V(\sigma)$ and the \lq\lq vertical\rq\rq line $\PP^1\times \{[1,0]\}$ is equal to $1$: thus in order to prove~\eqref{trecentotre} it suffices to show that if $c_1\not=0$ and $d_1=0$ then $A_{{\bf c},{\bf d}}\notin V(\sigma)$.
 Let $({\bf c},{\bf d})\in\PP^1\times \PP^1$: as is easily checked  $d_1=0$ if and only if 
\begin{equation}\label{banquet}
\Theta_{A_{{\bf c},{\bf d}}}\supset  i_{+}( \{[\xi_0 u_0+ \xi_3 u_3])\mid [\xi_0,\xi_3]\in\PP^3\}). 
\end{equation}
Now suppose that $d_1=0$ and $c_1\not=0$. By~\Ref{prp}{misstexas} we know that $\dim\Theta_{A_{{\bf c},{\bf d}}}= 1$. Thus the conic on the right-hand side of~\eqref{banquet}  is an irreducible component of $\Theta_{A_{{\bf c},{\bf d}}}$. Now let $p\in (C\setminus \{[1,0,0,0]\}$ be close to $[1,0,0,0]$ and set $W=i_{+}(p)$. By~\Ref{crl}{rettapunto} we know that $C_{W_{\infty},A_{{\bf c},{\bf d}}}\not=\PP(W_{\infty})$. By continuity it follows that $C_{W,A_{{\bf c},{\bf d}}}\not=\PP(W)$. On the other hand we see immediatly that $\cB(W,A_{{\bf c},{\bf d}})$ contains a conic and a line (the \lq\lq projections \rq\rq from $p$ of $C$ and $ \la [1,0,0,0],[0,0,0,1] \ra$ respectively). Thus $C_{W,A_{{\bf c},{\bf d}}}=2 D+2L$ where $D$ is a smooth conic and $L$ is a line (intersecting $D$ transversely). By continuity and~\eqref{contang} it follows that $C_{W_{\infty},A_{{\bf c},{\bf d}}}=V((X_0X_2-X_1^2)^2 X_1^2)$, in particular $({\bf c},{\bf d})\notin \VV^{\psi}$ and a fortiori $({\bf c},{\bf d})\notin V(\sigma)$. This proves that~\eqref{trecentotre} holds.
\qed
\begin{prp}\label{prp:lapiazza}
Let $A_{{\bf c},{\bf d}}\in  \YY^{\psi}_{\rm fix}$. Then $A_{{\bf c},{\bf d}}$ is not $G_{\cN_3}$-stable if and only if 
\begin{equation}
c_1 d_1 (c^2_0-c^2_1)=0.
\end{equation}
\end{prp}
\begin{proof}
A straightforward application of~\Ref{prp}{stabennetre}.
\end{proof}
\begin{crl}\label{crl:festa}
Let $A_{{\bf c},{\bf d}}\in \XX^{\psi}_{\cZ} $. Then  $A_{{\bf c},{\bf d}}$ is semistable with minimal $\PGL(V)$-orbit.
\end{crl}
\begin{proof}
If  $A_{{\bf c},{\bf d}}$ is $G_{\cN_3}$-stable then it has  minimal $\PGL(V)$-orbit by~\Ref{crl}{piupiccolo}.
By~\Ref{prp}{lapiazza}  $A_{{\bf c},{\bf d}}$ is not $G_{\cN_3}$-stable if and only if $({\bf c},{\bf d})$ is one of
$$([1,0],[1,0]),\qquad ([1,1],[1,-1]),\qquad ([1,-1],[1,1]).$$
Now   $A_{[1,0],[1,0]}$ is equal to $A_{+}(U)$,  $A_{[1,1],[1,-1]}$ is equal to $A_{k}(\phi_1)$ by~\Ref{prp}{misstexas} and  $A_{[1,-1],[1,1]}$ is equal to $A_{h}(\phi_2)$ by the same proposition. They all have  minimal $\PGL(V)$-orbits by~\Ref{prp}{vitolo}. 
\end{proof}
\n
{\it Proof of~\Ref{prp}{lavitola}.\/}
 $\gX_{\cZ}$ is irreducible  of dimension at most $1$ because $\XX^{\psi}_{\cZ}$ is irreducible of dimension $1$. 
 By~\Ref{prp}{misstexas}   $A_{[1,1],[1,-1]},A_{[1,-1],[1,1]}$ are equal to $A_k(\phi_1)$ and  $A_{h}(\phi_2)$ respectively; since 
  $([1,1],[1,-1]),([1,-1],[1,1])\in \XX^{\psi}_{\cZ}$ we get that $\gx,\gx^{\vee}\in\gX_{\cZ}$. Since $\gx\not=\gx^{\vee}$ it follows that  $\gX_{\cZ}$ is an irreducible  curve. Lastly let us prove that $\gX_{\cZ}\cap\gX_{\cV}=\{\gy\}$. Let $[A]\in \gX_{\cZ}\cap\gX_{\cV}$ and suppose that the $\PGL(V)$-orbit of $A$ is minimal. By~\Ref{crl}{festa} there exists $({\bf c},{\bf d})\in \XX^{\psi}_{\cZ}$ such that $A_{{\bf c},{\bf d}}$ is in the  $\PGL(V)$-orbit of $A$; it follows that $\Theta_{A_{{\bf c},{\bf d}}}$ contains a rational normal curve of degree $4$ (the curve $i_{+}(D)$ appearing in~\eqref{obiwan}). By ~\Ref{prp}{misstexas} we get that $c_1=0$ and hence $({\bf c},{\bf d})=([1,0],[1,0])$. Since $A_{[1,0],[1,0]}=A_{+}(U)$ we are done.
\qed
\subsubsection{Points of $\gX_{\cN_3}\cap\gI$ are represented by lagrangians in $\YY^{\psi}_{\rm fix}$}
In the present subsubsection we will prove the result below.
\begin{prp}\label{prp:friedkin}
Suppose that $A\in{\mathbb S}^{\sF}_{\cN_3}$ is semistable with minimal orbit and $[A]\in\gI$. There exist $g\in\PGL(V)$  such that $gA\in \YY^{\psi}_{\rm fix}$. 
\end{prp}
The proof of~\Ref{prp}{friedkin} will be given at the end of the present subsubsection.
\begin{lmm}\label{lmm:limitow}
  Suppose that $A\in{\mathbb S}^{\sF}_{\cN_3}$ is semistable with minimal orbit and  $[A]\in\gI$. There exists   
\begin{equation}\label{candia}
\ov{W}\in\{ W_{\infty},  \la v_0,\gamma,v_5\ra, \la v_1,\gamma,v_4\ra,  W_0\}, \qquad \gamma\in V_{23}
\end{equation}
such that $\ov{W}\in\Theta_A$ and  
$C_{\ov{W},A}$ is either $\PP(\ov{W})$ or a sextic curve in the indeterminacy locus of Map~\eqref{persestiche}.
\end{lmm}
\begin{proof}
By hypothesis there  exists  
 $W_{\star}\in\Theta_A$ such that 
$C_{W_{\star},A}$ is  either $\PP(W_{\star})$ or  a sextic curve in the indeterminacy locus of Map~\eqref{persestiche}.
Suppose that  $C_{W_{\star},A}=\PP(W_{\star})$. By~\Ref{prp}{senoncurva} we have $[A]\in \gX^{*}_{\cW}\cup\{\gx\}$. By~\Ref{clm}{stregaest} and~\eqref{ibla} we get that $C_{W,A}=\PP(W)$  for every $W\in\Theta_A$ in particular for $W=W_{\infty}$ (or $W=W_0$). Thus from now on we may assume that 
\begin{equation}\label{semprecurve}
\text{for all $W\in\Theta_A$  we have $C_{W,A}\not=\PP(W)$.}
\end{equation}
Taking $\lim_{t\to 0}\lambda_{\cN_3}(t)W$ we get that there exists $\ov{W}\in\Theta_A$  such that $C_{\ov{W},A}$ is a sextic curve in the indeterminacy locus of Map~\eqref{persestiche} and $\ov{W}$ is fixed by $\lambda_{\cN_3}(t)$ for all $t\in\CC^{\times}$. Thus $\ov{W}$ is the direct sum of $3$ irreducible summands for the representation $\lambda_{\cN_3}\colon\CC^{\times}\to \SL(V)$ i.e.~one of $W_{\infty}$, $W_0$ or
\begin{equation}\label{monteconero}
\la v_0,v_1,v_4\ra, \la v_0,v_1,v_5\ra,\la v_0,\gamma,v_4\ra, \la v_0,\gamma,v_5\ra, 
\la v_0,v_4,v_5\ra,\la v_1,\gamma,v_4\ra, \la v_1,\gamma,v_5\ra, \la v_1,v_4,v_5\ra, [v_i] \oplus V_{23}
\end{equation}
  where $\gamma\in V_{23}$. Let  $W_1\not=W_2\in\Theta_A$: by~\Ref{prp}{trilli} we get that $\dim(W_1\cap W_2)=1$.  Thus we may exclude from~\eqref{monteconero} all the subspaces which intersect one of $W_{\infty}$, $W_0$ in a $2$-dimensional space. It follows that  $\ov{W}$ is one of 
\begin{equation*}
W_{\infty}, \la v_0,\gamma,v_4\ra, \la v_0,\gamma,v_5\ra, \la v_1,\gamma,v_4\ra, \la v_1,\gamma,v_5\ra, 
W_0. 
\end{equation*}
It remains to prove that we cannot have $\ov{W}=\la v_0,\gamma,v_4\ra$ nor $\ov{W}=\la v_1,\gamma,v_5\ra$. Suppose first that $\ov{W}=\la v_0,\gamma,v_4\ra$. Then Item~(2) of~\Ref{prp}{stabennetre} holds and hence $\lim_{s\to 0}\lambda_1'(s)A$ exists and belongs to $\lagr^{ss}$ (if $\omega$ generates $\bigwedge^{10}A$ then $\lim_{s\to 0}\lambda_1'(s)\omega$ exists and is non-zero) - see~\eqref{primprim}, \eqref{primogruppo}  and Item~(${\rm 2}'$) in the proof of~\Ref{prp}{stabennetre}. 
 By hypothesis the orbit $\PGL(V)A$ is closed in ${\mathbb S}^{\sF,ss}_{\cN_3}$; thus we may replace $A$ by  $\lim_{s\to 0}\lambda_1'(s)A$ and hence we may assume that $\lambda_1'(s)$ acts trivially 
 on $\bigwedge^{10}A$ for every $s\in\CC^{\times}$. Let $C_{\ov{W},A}=V(P)$ where $0\not= P\in\CC[X,Y,Z]_6$ - here $\{X,Y,Z\}$ is the basis of $\ov{W}^{\vee}$ dual to $\{v_0,\gamma,v_4\}$. We know that $\lambda_1'(s)$ and $\lambda_{\cN_3}(t)$ act trivially on $\bigwedge^{10}A$ for $(s,t)\in\CC^{\times}\times\CC^{\times}$. 
Applying~\Ref{clm}{azione} we get that all elements of $\SL(\ov{W})$ given by $\diag(s^{-2}t^5,s^{-2}t^{-1},s^4 t^{-4})$  act trivially on $P$. It follows that $P=a X^2 Y^2 Z^2$ and by~\eqref{semprecurve} we have $a\not=0$, that is a contradiction.  Next suppose that $\ov{W}=\la v_1,\gamma,v_5\ra$. Then Item~(1)  of~\Ref{prp}{stabennetre} holds:  one
 excludes this case arguing as above. 
\end{proof}
\begin{prp}\label{prp:limitow}
Suppose that $A\in{\mathbb S}^{\sF}_{\cN_3}$ is semistable with minimal orbit and  $[A]\in\gI$. Then  $\dim\Theta_A\ge 1$. 
\end{prp}
\begin{proof}
By contradiction. Suppose that $\dim\Theta_A=0$. In particular 
\begin{equation}\label{rizzoli}
\text{if $W_1\not= W_2\in\Theta_A$ then $\dim(W_1\cap W_2)=1$.}
\end{equation}
Moreover $C_{W,A}$ is a sextic curve for every $W\in\Theta_A$ by~\Ref{crl}{senoncurva}. By~\Ref{lmm}{limitow} there exists $\ov{W}\in\Theta_A$ such that~\eqref{candia} holds and $C_{\ov{W},A}$ is a sextic curve in the indeterminacy locus of Map~\eqref{persestiche}. We claim that 
\begin{equation}\label{mondadori}
\dim S_{\ov{W}}\le 3.
\end{equation}
In fact suppose that~\eqref{mondadori} does not hold. Then $A\in\BB_{\cC_1}$: by~\Ref{prp}{taliare} we get that $A\in \PGL(V)A_{+}$, that is a contradiction because $\dim\Theta_{A_{+}}=3$. 
Let $\{w_0,w_1,w_2\}$ be the basis of $\ov{W}$  appearing 
in~\eqref{stellestalle}  or in~\eqref{candia}: thus $w_0=v_0$ if $\ov{W}=W_{\infty}$ or $\ov{W}=\la v_0,\gamma,v_5\ra$,  $w_0=v_1$ if $\ov{W}=\la v_1,\gamma,v_4\ra$, $w_0=\gamma_0$ if $\ov{W}=W_0$ etc. 
Let $\{X_0,X_1,X_2\}$ be the basis of $\ov{W}^{\vee}$ dual to $\{w_0,w_1,w_2\}$. The $1$-PS $\lambda_{\cN_3}$ acts trivially on $\bigwedge^{10}A$;   applying~\Ref{clm}{azione}   we get that $C_{\ov{W},A}=V(P)$ where 
\begin{equation}
P=(b_1X_0X_2+a_1 X_1^2)(b_2X_0X_2+a_2 X_1^2)(b_3X_0X_2+a_3 X_1^2).
\end{equation}
Since  $C_{\ov{W},A}$ is a sextic curve in the indeterminacy locus of Map~\eqref{persestiche} one gets that one of the following holds: 
\begin{enumerate}
\item[(1)]
$C_{\ov{W},A}=V((b X_0 X_2+a X_1^2)^3)$.
\item[(2)]
$C_{\ov{W},A}=V(X^2_0 X^2_2(b X_0 X_2+ X_1^2))$.
\item[(3)]
$C_{\ov{W},A}=V(X_1^4(b X_0 X_2+a X_1^2))$.
\end{enumerate}
Let $Z$ be the union of $1$-dimensional components of $\sing C_{\ov{W},A}$: in all of the above cases $Z$ is non-empty. 
By~\Ref{prp}{nonmalvagio} we have $Z\subset\cB(\ov{W},A)$. Arguing exactly as in the proof of~\Ref{prp}{corona} one shows that
\begin{equation}\label{ulisse}
\dim (A\cap S_{\ov{W}})= 3 
\end{equation}
and that Item~(1) or Item~(2) leads to a contradiction. 
  Lastly suppose that Item~(3) holds. Let $V=\ov{W}\oplus U$ where $U$ is $\lambda_{\cN_3}$-invariant. Let $\cV:=S_{\ov{W}}\cap(\bigwedge^2\ov{W}\wedge U)$. By~\eqref{ulisse} we have $\dim\cV=2$. View $\cV$ as a subspace of $\Hom(\ov{W},U)$ by choosing a volume form on $\ov{W}$: every  $\phi\in\cV$ has rank $2$ and $K(\cV)$ (notation as in~\eqref{tuttinuc}) is the line $V(X_1)$. By~\Ref{prp}{fascidege} we get that $\cV$ is $\GL(\ov{W})\times\GL(U)$-equivalent to $\cV_l$. Thus there exists a basis $\{u_0,u_1,u_2\}$ of $U$ such that
\begin{equation}\label{isabeau}
\cV=\la w_0\wedge w_1\wedge u_0+  w_0\wedge w_2\wedge u_1,\ 
w_0\wedge w_2\wedge u_2+  w_1\wedge w_2\wedge u_0 \ra.
\end{equation}
Up to scalars there is a unique non-zero element of $\cV$ mapping $w_0$ to $0$ and similarly  there is a unique (up to scalars) non-zero element of $\cV$ mapping $w_2$ to $0$: 
since $\cV$, $[w_0]$ and $[w_2]$ are $\lambda_{\cN_3}$-invariant it follows that  the two elements of $\cV$ appearing in~\eqref{isabeau} generate $\lambda_{\cN_3}$-invariant subspaces. Since each $w_i$ generates a $\lambda_{\cN_3}$-invariant subspace it follows that each $u_j$ generates a $\lambda_{\cN_3}$-invariant subspace. Considering the possible weights of the $u_j$'s we see that we cannot have $\ov{W}=\la v_0,\gamma,v_5\ra$ nor $\ov{W}=\la v_1,\gamma,v_4\ra$.   Suppose that $\ov{W}=W_{\infty}$. We may (and will) choose $v_2:=w_2=\gamma_0$ and $v_3$ to be a generator of the $\lambda_{\cN_3}$-invariant subspace of $U$. Considering the possible weights of the $u_j$'s we get that $u_0\in[v_4]$ , $u_1\in[v_3]$ and $u_2\in[v_5]$. Rescaling $v_3,v_4,v_5$ we get that
\begin{equation*}
\cV=\la v_0\wedge v_1\wedge v_4+  v_0\wedge v_2\wedge v_3,\ 
v_0\wedge v_2\wedge v_5+  v_1\wedge v_2\wedge v_4 \ra.
\end{equation*}
Thus $(v_0\wedge v_2\wedge v_5+  v_1\wedge v_2\wedge v_4 )\in A\cap S_{\ov{W}}$. Now $A\cap S_{\ov{W}}$ contains a $3$-dimensional subspace $R$ dictated by the condition $A\in\BB_{\cN_3}$ - see Table~\eqref{stratflaguno} - and $(v_0\wedge v_2\wedge v_5+  v_1\wedge v_2\wedge v_4 )\notin R$. Thus $\dim (A\cap S_{\ov{W}})\ge 4$ and that contradicts~\eqref{ulisse}.   It remains to deal with the case $\ov{W}=W_0$: it is similar to the case  $\ov{W}=W_{\infty}$.
\end{proof}
\begin{prp}\label{prp:piugiovane}
Suppose that $A\in{\mathbb S}^{\sF}_{\cN_3}$ is semistable with minimal orbit and that $[A]\in\gI$. Then  $\Theta_A$ contains $i_{+}(C)$ for some choice of Isomorphism~\eqref{lavatrice}. 
\end{prp}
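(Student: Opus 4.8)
The plan is to follow the same template established for the $\cF_2$-analysis in~\Ref{subsec}{ixvu}, in particular mimicking the structure of~\Ref{prp}{corona}. The overall strategy is a case-split on $\dim\Theta_A$. First I would dispose of the high-dimensional case: if $\dim\Theta_A\ge 2$, then by~\Ref{lmm}{sedimdue} we know $A$ is $\PGL(V)$-equivalent to an element of $\XX^{*}_{\cW}\cup \PGL(V)A_k\cup \PGL(V)A_h$. By~\Ref{rmk}{yemen} both $A_k$ and $A_h$ have $\Theta$ containing $i_{+}(C)$ for a suitable isomorphism~\eqref{lavatrice}, so I need only rule out (or absorb) the $\XX^{*}_{\cW}$ possibility. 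Here the analogue of~\Ref{lmm}{corona} is needed: I would check that $\gx,\gx^{\vee}\notin\gX_{\cN_3}$ is not quite what is wanted, but rather that if $A\in \XX^{*}_{\cW}$ lands in ${\mathbb S}^{\sF}_{\cN_3}$ then $\Theta_A=i_{+}(Z)$ with $Z\cong\PP^1\times\PP^1$, which contains plenty of conics but I would argue it also contains a curve of Type ${\bf R}$ — or, more directly, since the result only asserts existence of \emph{some} isomorphism with $i_{+}(C)\subset\Theta_A$, I would verify that a rational normal cubic sits inside $i_{+}(Z)$ (a smooth quadric surface contains rational normal cubics as $(1,2)$ or $(2,1)$ curves, whose images under $i_{+}$ have the right incidence type).

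For the main case $\dim\Theta_A\le 1$, I would invoke~\Ref{prp}{limitow} to conclude $\dim\Theta_A=1$, and then run the argument of~\Ref{prp}{corona} essentially verbatim. Let $\Theta$ be a $1$-dimensional irreducible component of $\Theta_A$. By Theorem~3.9 of~\cite{ogtasso} it belongs to one of the types $\cF_1,\cD,\cE_2,\cE_2^{\vee},{\bf Q},\cA,\cA^{\vee},\cC_2,{\bf R},{\bf S},{\bf T},{\bf T}^{\vee}$, and by Claim~3.22 of~\cite{ogtasso} if $\Theta$ has type $\cX$ then $A\in\BB_{\cX}$. If $\Theta$ has calligraphic type, the coincidences~\eqref{coincidenze} together with~\Ref{prp}{taliare}, \Ref{prp}{montalbano}, \Ref{prp}{primavera}, \Ref{prp}{versolinf}, \Ref{prp}{eoltre}, and~\Ref{prp}{casocattivo} force $[A]\in\gX_{\cW}\cup\{\gx,\gx^{\vee}\}$, each of which has $\dim\Theta_A\ge 2$, contradicting the case hypothesis. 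So $\Theta$ must be of type ${\bf Q},{\bf R},{\bf S},{\bf T}$ or ${\bf T}^{\vee}$.

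To pin down ${\bf R}$, the key tool is the action of $\lambda_{\cN_3}$: since $A$ is fixed by $\lambda_{\cN_3}$ (as $A\in{\mathbb S}^{\sF}_{\cN_3}$), each $\lambda_{\cN_3}(t)$ restricts to an automorphism of $\Theta$. If $\lambda_{\cN_3}(t)|_{\Theta}$ were trivial for all $t$, then examining the weights of $\lambda_{\cN_3}$ on $V$ would force $\Theta$ to be a line, hence $A\in\BB_{\cF_1}$, whence $[A]\notin\gI$ by~\Ref{prp}{trilli} — a contradiction. So generic $\lambda_{\cN_3}(t)|_{\Theta}$ is a nontrivial automorphism, giving $\Theta$ a dense $\CC^{\times}$-orbit and hence geometric genus $0$. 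I would then argue, exactly as in~\Ref{prp}{corona}, that no curve of type ${\bf S},{\bf T},{\bf T}^{\vee}$ admits a nontrivial $1$-PS of $\PGL(V)$ acting trivially on its Picard group, and that a type-${\bf Q}$ curve cannot carry the specific $\lambda_{\cN_3}$-action (whereas for $\cF_2$ it was type ${\bf Q}$ that survived — the difference in weights is what selects ${\bf R}$ here rather than ${\bf Q}$). This leaves $\Theta$ of type ${\bf R}$, i.e.\ projectively equivalent to $i_{+}(C)$, completing the proof.

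The main obstacle I anticipate is the elimination step for types ${\bf Q},{\bf S},{\bf T},{\bf T}^{\vee}$: one must verify carefully that the particular $1$-parameter subgroup $\lambda_{\cN_3}$ (with its asymmetric weights $(2,1,0,0,-1,-2)$) cannot act with a dense orbit on a type-${\bf Q}$ curve, even though for $\cF_2$ the subgroup $\lambda_{\cF_2}$ \emph{did} act on such a curve. The discriminating computation is on the induced action on $\PP(U)$ (via $\bigwedge^2 g$): a type-${\bf R}$ curve is $i_{+}$ of a rational normal cubic, and $\lambda_{\cN_3}$ corresponds to $\bigwedge^2\diag(t^3,t,t^{-1},t^{-3})$, which does fix such a cubic, whereas for a conic in $\PP(U)$ (type ${\bf Q}$) the weight pattern is incompatible. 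I would need to check this weight bookkeeping explicitly, and also confirm that the excluded types ${\bf S},{\bf T},{\bf T}^{\vee}$ genuinely admit no torus in their automorphism group acting trivially on $\Pic$, which is the cleanest uniform obstruction.
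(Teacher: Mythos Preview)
Your proposal is correct and follows essentially the same route as the paper. Two small remarks: the citation of~\Ref{prp}{casocattivo} in the calligraphic-type step is extraneous, since $\cF_2$ is not among the curve types in the classification of~\cite{ogtasso} (the relevant calligraphic types are $\cF_1,\cD,\cE_2,\cE_2^{\vee},\cA,\cA^{\vee},\cC_2$ only); and for the exclusion of type~${\bf Q}$ the paper streamlines the weight check by first invoking the argument of~\Ref{prp}{buonasera} to reduce to $\lambda_{\cN_3}$ being induced by a projectivity of~$\PP(U)$, after which the incompatibility with a conic is immediate.
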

\begin{proof}
  By~\Ref{prp}{limitow} we know that $\dim\Theta_A\ge 1$. 
  If $\dim\Theta_A\ge 2$ then  by~\Ref{lmm}{sedimdue}  we have $[A]\in \gX_{\cW}\cup\{\gx,\gx^{\vee}\}$, and we are done by~\Ref{clm}{pippajenkins} and~\Ref{prp}{lavitola}. 
  Thus from now on we may assume that $\dim\Theta_A= 1$. 
Let  $\Theta$ be a $1$-dimensional   irreducible  component  of $\Theta_A$.  By Theorem~3.9 of~\cite{ogtasso} the curve $\Theta$ belongs to one of the Types 
\begin{equation*}
\cF_1,\cD,\cE_2,\cE_2^{\vee},{\bf Q},\cA,\cA^{\vee},\cC_2,{\bf R},{\bf S},{\bf T},{\bf T}^{\vee}
\end{equation*}
defined in~\cite{ogtasso}.  Moreover if $\Theta$ if of calligraphic Type $\cX$ then $A\in\BB_{\cX}$ - see Claim~3.22 of~\cite{ogtasso}. Thus if $\Theta$ has calligraphic Type then $A\in \BB_{\cF_1}\cup\BB_{\cD}\cup\BB_{\cE_2}\cup\BB_{\cE_2^{\vee}}\cup\BB_{\cA}\cup\BB_{\cA^{\vee}}\cup\BB_{\cC_2}$; by~\eqref{coincidenze} we get that $[A]\in\gB_{\cA}\cup\gB_{\cC_1}\cup\gB_{\cD}\cup\gB_{\cE_1}\cup\gB_{\cE_1^{\vee}}$ and hence $[A]\in\gX_{\cW}\cup\{\gx,\gx^{\vee}\}$ by~\Ref{prp}{taliare}, \Ref{prp}{montalbano}, \Ref{prp}{primavera}, \Ref{prp}{versolinf} and~\Ref{prp}{eoltre}.  As noticed above it follows 
that $\Theta_A$ contains $i_{+}(C)$ for some choice of Isomorphism~\eqref{lavatrice}. 
Thus from now on we may assume that $\Theta$ is of Type ${\bf Q}$, ${\bf R}$, ${\bf S}$, ${\bf T}$ or   ${\bf T}^{\vee}$. Now notice that if
 $t\in\CC^{\times}$ then $\lambda_{\cN_3}(t)$ acts on $\Theta$ i.e.~$\lambda_{\cN_3}(t)|_{\Theta}$ is an automorphism of $\Theta$. Suppose  that $\lambda_{\cN_3}(t)|_{\Theta}$ is the identity for each $t\in\CC^{\times}$: looking at the action of $\lambda_{\cN_3}(t)$ on $V$ we get that $\Theta$ is a line and hence $A\in\BB_{\cF_1}$. By~\Ref{prp}{trilli} we have $\gB_{\cF_1}\cap\gI=\es$ and hence we get a contradiction.  It follows that if  $t\in\CC^{\times}$ is generic then $\lambda_{\cN_3}(t)|_{\Theta}$ is not the identity - in particular  there exist points in $\Theta$ with dense orbit and hence $\Theta$ has geometric genus $0$. We claim that there does not exist a $\Theta$ of Type ${\bf Q}$, ${\bf S}$, ${\bf T}$ or  ${\bf T}^{\vee}$  such that $\lambda_{\cN_3}(t)(\Theta)=\Theta$ for $t\in\CC^{\times}$. In fact suppose that $\Theta$ has type $\bf Q$. Then we may assume that $\Theta=i_{+}(D)$ where $D\subset\PP(U)$ is the conic given by~\eqref{buongusto}. Arguing as in the proof of~\Ref{prp}{buonasera} we may assume that each $\lambda_{\cN_3}(t)$ is induced by a projectivity of $\PP(U)$: as is easily checked that is impossible. On the other hand $\Theta$ cannot be of Type ${\bf S}$, ${\bf T}$ or  ${\bf T}^{\vee}$ because there is no $1$-PS of $\PGL(V)$  mapping such a curve to itself. (There is no copy of $\CC^{\times}$ in the automorphism group of such a curve acting trivially on the Picard group of the curve.)
 Thus we have proved that $\Theta$ is of Type ${\bf R}$: a curve of such type is equal (up to projectivities)  to $i_{+}(C)$ where $C$ is given by~\eqref{eccoci} and the proposition follows.
\end{proof}
\n
{\it Proof of~\Ref{prp}{friedkin}.\/} 
Assume first that $\dim\Theta_A\ge 2$. By~\Ref{lmm}{sedimdue}  we have $[A]\in \gX_{\cW}\cup\{\gx,\gx^{\vee}\}$ and the result follows from~\Ref{clm}{pippajenkins} and~\Ref{prp}{lavitola}.  It remains to deal with the case $\dim\Theta_A\le 1$: by~\Ref{prp}{piugiovane} there exists an irreducible component $\Theta$ of $\Theta_A$ which is projectively equivalent to $i_{+}(C)$.
The $1$-PS $\lambda^{\sF}_{\cN_3}$ fixes $A$ hence it acts on $\Theta$: the action is effective because the set of fixed points for the action of $\lambda^{\sF}_{\cN_3}$ on $\Gr(3,V)$ is a collection of  points and lines. The image $H$
 consists of the group of automorphisms fixing two (distinct) points $p,q\in\Theta$. 
On the other hand  by Theorem 3.9 of~\cite{ogtasso}  there exists $g\in\PGL(V)$ such that $g\Theta=i_{+}(C)$: we may choose $g$ so that $g(p)=i_{+}([1,0,0,0])$ and $g(q)=i_{+}([0,0,0,1])$. With this choice of $g$ 
the group $H$  gets identified with the group of automorphisms of $C$ fixing $[1,0,0,0]$ and $[0,0,0,1]$. Thus  $gA\in \YY^{\psi}$ by definition of $\YY^{\psi}$.
\qed
\subsubsection{Proof that $\gX_{\cN_3}\cap\gI=\gX_{\cW}\cup\gX_{\cZ}$}\label{subsubsec:arivato} 
We will prove (at the end of the present subsubsection) the following result.
 \begin{prp}\label{prp:pisanu}
Let  $A_{{\bf c},{\bf d}}\in \YY^{\phi}_{\rm fix}$. There exists $W\in\Theta_{A_{{\bf c},{\bf d}}}$ such that $C_{W,A_{{\bf c},{\bf d}}}$ is either $\PP(W)$ or a sextic in the indeterminacy locus of the period map~\eqref{persestiche}  if and only if  $A_{{\bf c},{\bf d}}\in \VV^{\psi}$. 
\end{prp}
The equality $\gX_{\cN_3}\cap\gI=\gX_{\cW}\cup\gX_{\cZ}$ follows from~\Ref{prp}{friedkin}, \Ref{prp}{pisanu} 
and~\Ref{clm}{pippajenkins}. 
 We will begin by analyzing  $A_{{\bf c},[1,\pm 1]}$. Let
\begin{equation}\label{patologico}
W_{+}:=\la \la T_2(Q_0) \ra \ra= \la v_0, v_2-v_3, v_5 \ra, \quad 
W_{-}:=\la \la T_1(Q_0) \ra \ra= \la v_1, v_2+v_3, v_4 \ra.
\end{equation}
By~\Ref{prp}{misstexas} we have $W_{\pm}\in\Theta_{A_{{\bf c},[1,\pm 1]}}$. 
\begin{clm}\label{clm:alistair}
Let $\{Z_0, Z_1, Z_2\}$ be the basis of $W_{\pm}^{\vee}$ dual to the basis of $W_{\pm}$ appearing in~\eqref{patologico}.  
There exist homogeneous quadratic polynomials $P_{\pm}, Q_{\pm}\in\CC[c_0,c_1]$ such that
\begin{equation}\label{formadici}
C_{W_{\pm},A_{{\bf c},[1,\pm 1]}}=V((Z_0 Z_2- Z_1^2)^2
(P_{\pm}({\bf c}) Z_0 Z_2 + Q_{\pm}({\bf c}) Z_1^2)).
\end{equation}
\end{clm}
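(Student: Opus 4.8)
The plan is to follow the pattern already used for $C_{W_\infty,A_{{\bf c},{\bf d}}}$ in \eqref{contang} and \Ref{clm}{tritan}, and to split the statement into three parts: (i) the sextic $C_{W_\pm,A_{{\bf c},[1,\pm1]}}$ is a product of three conics of the shape $b_iZ_0Z_2+a_iZ_1^2$; (ii) two of these three factors coincide and equal $Z_0Z_2-Z_1^2$, so that $(Z_0Z_2-Z_1^2)^2$ divides the defining polynomial; and (iii) the coefficients of the surviving conic are homogeneous quadratic in $(c_0,c_1)$. Throughout I would use that $W_+=\la\la T_2(Q_0)\ra\ra\in\Theta_{A_{{\bf c},[1,1]}}$ and $W_-=\la\la T_1(Q_0)\ra\ra\in\Theta_{A_{{\bf c},[1,-1]}}$ by \Ref{prp}{misstexas} (so that $C_{W_\pm,A_{{\bf c},[1,\pm1]}}$ is defined), and that $\lambda_{\cN_3}(t)$ fixes $\bigwedge^{10}A_{{\bf c},[1,\pm1]}$ because $A_{{\bf c},[1,\pm1]}\in\YY^\phi_{\rm fix}\subset{\mathbb S}^{\sF}_{\cN_3}$ by \eqref{noioso}.

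For part (i) I would invoke \Ref{clm}{azione}: since $\lambda_{\cN_3}(t)$ fixes $\bigwedge^{10}A$ and maps $W_\pm$ to itself, writing $C_{W_\pm,A}=V(P)$ the polynomial $P$ is invariant under the induced one-parameter subgroup of $\SL(W_\pm)$. Using the weights $(2,1,0,0,-1,-2)$ of $\lambda_{\cN_3}$ together with the explicit bases of \eqref{patologico}, on $W_+=\la v_0,v_2-v_3,v_5\ra$ the action is $\diag(t^2,1,t^{-2})$ and on $W_-=\la v_1,v_2+v_3,v_4\ra$ it is $\diag(t,1,t^{-1})$; after reparametrising the torus both generate the subgroup of \Ref{rmk}{pasquetta}(2). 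Hence $P=\prod_{i=1}^3(b_iZ_0Z_2+a_iZ_1^2)$ for suitable $a_i,b_i$, exactly as in \Ref{clm}{tritan}.

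Part (ii) is the double-conic argument of \eqref{contang}. Since $i_{+}(C)\subset\Theta_{A_{{\bf c},[1,\pm1]}}$ and $W_\pm$ is incident to every plane $\PP(i_{+}(p))$ for $p\in C$, the intersections $\PP(W_\pm)\cap\PP(i_{+}(p))$ sweep out a conic in $\PP(W_\pm)$; a short computation in the dual coordinates $\{Z_0,Z_1,Z_2\}$ identifies this conic with $V(Z_0Z_2-Z_1^2)$. By \Ref{crl}{cnesinerre} this conic lies in $\sing C_{W_\pm,A}$ whenever $C_{W_\pm,A}\ne\PP(W_\pm)$, so $Z_0Z_2-Z_1^2$ must occur at least twice among the three factors found in part (i); note that $Z_0Z_2-Z_1^2$ is itself of the admissible shape $bZ_0Z_2+aZ_1^2$, so this is consistent, and the surviving factor is the residual conic $P_\pm({\bf c})Z_0Z_2+Q_\pm({\bf c})Z_1^2$.

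Finally, for part (iii) I would argue through the equivariant map $\Phi$ of \eqref{vatussi}: by \eqref{peetscoffee}--\eqref{storietese} the defining polynomial equals $\Phi(\omega_0)$ for a generator $\omega_0$ of $\bigwedge^9(A_{{\bf c},[1,\pm1]}/\bigwedge^3W_\pm)$. By \eqref{errecidi} exactly two of the ten spanning vectors of $A_{{\bf c},[1,\pm1]}$ vary with ${\bf c}$, each homogeneously linearly, while $\bigwedge^3W_\pm$ is a fixed vector; hence the generator $\omega$ of $\bigwedge^{10}A_{{\bf c},[1,\pm1]}$, and therefore $\omega_0$, is homogeneous of degree $2$ in $(c_0,c_1)$. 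Since $\Phi$ is linear the coefficients of $\Phi(\omega_0)$ are homogeneous quadratic in ${\bf c}$, and after removing the ${\bf c}$-independent factor $(Z_0Z_2-Z_1^2)^2$ so are $P_\pm$ and $Q_\pm$, which is \eqref{formadici}. I expect the homogeneity in part (iii) to be automatic once $A_{{\bf c},{\bf d}}$ is seen to depend projective-linearly on ${\bf c}$, so the only step requiring genuine care is the coordinate identification in part (ii): verifying, in the dual basis attached to \eqref{patologico}, that the incidence conic is precisely $V(Z_0Z_2-Z_1^2)$ and thus furnishes a true double factor. This is routine bookkeeping with the explicit data of Table~\eqref{cibasi}, \eqref{errecidi} and \eqref{patologico}, and constitutes the main, though elementary, obstacle.
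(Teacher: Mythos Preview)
Your proposal is correct and follows essentially the same approach as the paper: (i) apply \Ref{clm}{azione} to get the product-of-three-conics form, (ii) use \Ref{crl}{cnesinerre} together with the fact that $\{W_\pm\cap i_+(p)\mid p\in C\}=V(Z_0Z_2-Z_1^2)$ to force the double factor, and (iii) invoke \eqref{starbucks}/\eqref{peetscoffee} (equivalently the linearity of $\Phi$ in \eqref{vatussi}) for the quadratic dependence on $(c_0,c_1)$. Your justification of (iii) is in fact more explicit than the paper's, which simply cites \eqref{starbucks} and \eqref{peetscoffee}; your observation that exactly two of the ten spanning vectors depend (linearly) on ${\bf c}$ is the content behind that citation, and it is worth noting that for ${\bf d}=[1,\pm1]$ the $\bf d$-dependent generator of $R_{{\bf c},{\bf d}}$ is precisely a scalar multiple of a generator of $\bigwedge^3 W_\pm$, so no basis change is needed when passing from $\omega$ to $\omega_0$.
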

\begin{proof}
Applying~\Ref{clm}{azione} to the action of $\lambda_{\cN_3}$ on $W_{\pm}$ we get that $C_{W_{\pm},A_{{\bf c},[1,\pm 1]}}$ has equation $f_{\bf c}:=\prod_{i=1}^3( b_i ({\bf c})Z_0 Z_2 + a_i ({\bf c}) Z_1^2)$. 
Let  $p\in C$; by~\Ref{crl}{cnesinerre} the differential of $f_{\bf c}$ vanishes at  $W_{\pm}\cap i_{+}(p)$. Since 
\begin{equation}\label{arcuri}
\{ W_{\pm}\cap i_{+}(p) \mid p\in C \}= V(Z_0 Z_2- Z_1^2)
\end{equation}
we get that~\eqref{formadici} holds. 
We may assume that $P^{\pm},Q^{\pm}$ are homogeneous polynomials of degree $2$ (beware that they are determined only up to a common scalar factor)  by~\eqref{starbucks} and~\eqref{peetscoffee}.
\end{proof}
\begin{prp}\label{prp:kevin}
Let notation be as in~\Ref{clm}{alistair}.
The  point with $Z$-coordinates $[0,1,0]$
\begin{enumerate}
\item[(1)] 
 belongs to $C_{W_{+},A_{{\bf c},[1, 1]}}$ if and only if ${\bf c}=[3,-1]$,
\item[(2)] 
 belongs to $C_{W_{-},A_{{\bf c},[1, -1]}}$ if and only if ${\bf c}=[1,1]$.
\end{enumerate}
 Moreover 
 \begin{equation}\label{barcavela}
C_{W_{+},A_{[3,-1],[1, 1]}}=V((Z_0 Z_2- Z_1^2)^2  Z_0 Z_2 ),\quad 
C_{W_{-},A_{[1,1],[1, -1]}}=\PP(W_{-}).
\end{equation}
\end{prp}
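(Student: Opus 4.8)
The statement to prove is \Ref{prp}{kevin}, which concerns the two special lagrangians $A_{{\bf c},[1,\pm 1]}$ and the behaviour of the sextic curves $C_{W_{\pm},A_{{\bf c},[1,\pm 1]}}$ at the distinguished point $[0,1,0]$ (in $Z$-coordinates). By \Ref{clm}{alistair} we already know that
\begin{equation*}
C_{W_{\pm},A_{{\bf c},[1,\pm 1]}}=V((Z_0 Z_2-Z_1^2)^2(P_{\pm}({\bf c})Z_0 Z_2+Q_{\pm}({\bf c})Z_1^2)),
\end{equation*}
so the point $[0,1,0]$ lies on the curve precisely when the residual conic $P_{\pm}({\bf c})Z_0 Z_2+Q_{\pm}({\bf c})Z_1^2$ vanishes there, i.e.~when $Q_{\pm}({\bf c})=0$. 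Thus the whole proposition reduces to computing the quadratic polynomials $Q_{+}$ and $Q_{-}$ (up to scalar), locating their roots in $\PP^1_{\bf c}$, and then reading off the residual conic at those roots to obtain the final two equalities.

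\textbf{First steps.} The plan is to compute $C_{W_{\pm},A_{{\bf c},[1,\pm 1]}}$ explicitly in a neighbourhood of $[0,1,0]$ using \Ref{prp}{primisarto}, exactly as in the other case-by-case analyses of the paper. Concretely, I would fix the basis $\{v_0,v_2-v_3,v_5\}$ of $W_{+}$ (and $\{v_1,v_2+v_3,v_4\}$ of $W_{-}$), choose a complementary $V_0\in\Gr(5,V)$ transversal to $A_{{\bf c},[1,\pm 1]}$, and write down $F_{v}\cap A_{{\bf c},[1,\pm 1]}$ for $v$ a generic representative of $[0,1,0]$, using the explicit generators of $R_{{\bf c},{\bf d}}$ from~\eqref{errecidi} together with the basis of $\la\la i_{+}(C)\ra\ra$ from Table~\eqref{cibasi}. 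The multiplicity of $C_{W_{\pm},A_{{\bf c},[1,\pm 1]}}$ at $[0,1,0]$, hence membership of $[0,1,0]$ in the residual conic, is governed by $\dim(A_{{\bf c},[1,\pm 1]}\cap F_{v})$ via \Ref{crl}{molteplici} and the vanishing-of-$\det\psi^{v_0}_{\,\cdot}$ criterion of \Ref{prp}{primisarto}. Because $[0,1,0]$ lies on the doubled conic $V(Z_0 Z_2-Z_1^2)$ (by~\eqref{arcuri}), it is automatically a singular point, so the issue is whether it is a node of the residual conic or a point where the residual conic passes through — this is a single $2\times 2$ Pl\"ucker-determinant computation in the quadratic form $\psi^{v_0}_{w}$ restricted to the relevant kernel.

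\textbf{Extracting the roots and the residual conics.} Having the $Z$-local expansion, I would set $Z_0=Z_2=0$, $Z_1\neq 0$ and solve $Q_{\pm}({\bf c})=0$; the claim is that the unique root is ${\bf c}=[3,-1]$ for the $+$ case and ${\bf c}=[1,1]$ for the $-$ case. The values $[3,-1]$ and $[1,1]$ are not a priori symmetric, so I expect the computation to hinge on the precise (asymmetric) coefficients appearing in~\eqref{errecidi} and in Table~\eqref{cibasi}; the duality operator $R_{\phi}$ of \Ref{prp}{dualecidi}, together with \eqref{stanlio}--\eqref{ollio}, should let me deduce the $-$ case from the $+$ case by the substitution ${\bf c}\mapsto[c_0,-c_1]$, ${\bf d}\mapsto[d_0,-d_1]$, reducing the work by half. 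Finally, once the two roots are known, I would substitute back into \Ref{clm}{alistair}: for ${\bf c}=[1,1]$ in the $+$ case one checks $P_{+}([1,1])\neq 0$ while $Q_{+}([1,1])\neq 0$ is incompatible with membership — rather, one reads off directly that $C_{W_{+},A_{[1,1],[1,1]}}=V((Z_0 Z_2-Z_1^2)^2 Z_0 Z_2)$, and for ${\bf c}=[1,1]$ in the $-$ case that $C_{W_{-},A_{[1,1],[1,-1]}}=\PP(W_{-})$ (the residual conic degenerates, forcing the whole plane), which is consistent with \Ref{prp}{misstexas} item (t5)/(t6) and \Ref{crl}{rettapunto}.

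\textbf{Main obstacle.} The hard part will be the bookkeeping in the explicit Pl\"ucker computation: assembling $F_{v}\cap A_{{\bf c},[1,\pm 1]}$ correctly from the mixed $\alpha$--$\beta$ generators of Table~\eqref{cibasi} and~\eqref{errecidi}, keeping track of the ${\bf c}$-dependence through the symmetric map $\psi^{v_0}_{w}$, and correctly identifying the residual conic after factoring out the doubled conic $(Z_0 Z_2-Z_1^2)^2$. The asymmetry of the answers $[3,-1]$ versus $[1,1]$ signals that sign conventions and the exact numerical coefficients ($4$'s and $2$'s) in Table~\eqref{cibasi} matter, so the determinant $Q_{\pm}({\bf c})$ must be computed without shortcuts; the duality symmetry $R_{\phi}$ is the one structural tool I would lean on to cross-check the two computations against each other and to catch arithmetic slips. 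Everything else — the membership criterion and the final reading-off of \eqref{barcavela} — follows mechanically from \Ref{clm}{alistair}, \Ref{prp}{primisarto} and \Ref{crl}{cnesinerre}.
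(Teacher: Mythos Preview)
Your duality shortcut via $R_{\phi}$ is a genuine gap: it does not do what you want. By~\Ref{prp}{dualecidi} the map $R_{\phi}$ sends $A_{{\bf c},[1,1]}$ to $A_{[c_0,-c_1],[1,-1]}$, so if the $-$ case were obtainable from the $+$ case by this substitution, the answer $[3,-1]$ for $W_{+}$ would predict $[3,1]$ for $W_{-}$, not $[1,1]$. The reason the shortcut fails is that $R_{\phi}=\bigwedge^3 L_{\phi}^{-1}\circ\delta_V$ is \emph{not} of the form $\bigwedge^3 g$ for any $g\in\GL(V)$ (check Tables~\eqref{listatiuno}--\eqref{listatidue}: it sends $(014)$ to $-(023)$ while fixing $(012)$), so it does not induce a projectivity of $\PP(V)$ and hence gives no direct relation between $C_{W_{+},A}$ and $C_{W_{-},R_{\phi}(A)}$. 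The two cases require separate computations, as you yourself half-suspect when you note the asymmetry of the answers. There is also a small slip: $[0,1,0]$ does \emph{not} lie on the doubled conic $V(Z_0 Z_2-Z_1^2)$ (just plug in: $0\cdot 0-1=-1$); your first sentence had it right that membership is governed solely by the residual conic, but the subsequent ``automatically a singular point'' reasoning is based on a false premise.

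The paper's proof is much more direct than your \Ref{prp}{primisarto} plan and avoids any delicate bookkeeping. The key observation you are missing is that the point $[0,1,0]$ in $W_{+}$ is $[v_2-v_3]$, which is $\lambda_{\cN_3}$-\emph{invariant}; hence the multiplication map $A_{{\bf c},[1,1]}\to\bigwedge^4 V$ by $(v_2-v_3)$ is $\CC^{\times}$-equivariant, and its kernel $F_{v_2-v_3}\cap A_{{\bf c},[1,1]}$ decomposes as a direct sum over the weight spaces $A_{{\bf c},[1,1]}(t^m)$. This replaces a $9\times 9$ determinant by a handful of $1$- and $2$-dimensional checks, from which ${\bf c}=[3,-1]$ drops out immediately. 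The same idea with $v_2+v_3$ handles $W_{-}$. For~\eqref{barcavela}: in the $+$ case one checks $[v_2-v_3]\notin\cB(W_{+},A_{[3,-1],[1,1]})$, so by~\Ref{prp}{nonmalvagio} the point is an ordinary node and the residual conic is $Z_0 Z_2$ (not the triple conic); in the $-$ case one simply invokes item~(s2) of~\Ref{prp}{misstexas}, which identifies $A_{[1,1],[1,-1]}$ with $A_k(L)$, and then~\eqref{ibla} gives $C_{W_{-},A_k}=\PP(W_{-})$ directly.
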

\begin{proof}
The  point in $\PP(W_{+})$ with $Z$-coordinates $[0,1,0]$ is $[v_2-v_3]$. 
By definition $[v_2-v_3]\in C_{W_{+},A_{{\bf c},[1, 1]}}$ if and only if $\dim(F_{v_2-v_3}\cap A_{{\bf c},[1, 1]})\ge 2$.
Thus the proposition is proved by a computation. A priori we need to compute the zeroes of  a $9\times 9$ determinant with entries functions of $c_0,c_1$.  We explain why  the computation  breaks up into  a series of trivial calculations. 
The intersection $F_{v_2-v_3}\cap A_{{\bf c},[1, 1]}$ is the kernel of the multiplication map 
\begin{equation}\label{beajoda}
\begin{matrix}
A_{{\bf c},[1, 1]} & \lra & \bigwedge^4 V \\
\alpha & \mapsto & (v_2-v_3)\wedge\alpha
\end{matrix}
\end{equation}
Both $A_{{\bf c},[1,1]}$ and $\bigwedge^4 V$ are $\CC^{\times}$-modules because $\lambda_{\cN_3}$ acts on them; let $A_{{\bf c},[1,1]}(t^m)\subset A_{{\bf c},[1,1]}$ be the weight-$m$ susbpace. Map~\eqref{beajoda} is $\CC^{\times}$-equivariant because $(v_2-v_3)$ is $\lambda_{\cN_3}$-invariant;  hence its kernel is the direct-sum of the kernels of the multiplication maps  $A_{{\bf c},[1, 1]}(t^m) \to \bigwedge^4 V$. The kernels of these maps are readily computed. One gets that if $m\notin\{0,\pm 1\}$ the kernel is trivial for all ${\bf c}$,
\begin{equation}\label{oriolo}
F_{v_2-v_3}\cap A_{{\bf c},[1,1]}(t)=
\begin{cases}
\{0\} & \text{if ${\bf c}\not=[3,-1] $,} \\
[(v_2-v_3)\wedge(v_0 \wedge v_4 - v_1\wedge v_3)] & \text{if ${\bf c}=[3,-1] $,}
\end{cases}
\end{equation}
\begin{equation}\label{manziana}
F_{v_2-v_3}\cap A_{{\bf c},[1,1]}(t^{-1})=
\begin{cases}
\{0\} & \text{if ${\bf c}\not=[3,-1] $,} \\
[(v_2-v_3)\wedge(v_1 \wedge v_5 - v_3\wedge v_4)] & \text{if ${\bf c}=[3,-1] $.}
\end{cases}
\end{equation}
 Moreover the invariant part of $F_{v_2-v_3}\cap A_{{\bf c},[1,1]}$ is spanned by $(v_2-v_3)\wedge v_0\wedge v_5$. 
It follows that   $[v_2-v_3]\in  C_{W_{+},A_{{\bf c},[1, 1]}}$ if and only if ${\bf c}=[3,-1]$. In addition we see that $[v_2-v_3]\notin\cB(W_{+},A_{{\bf c},[1, 1]})$: by~\Ref{prp}{nonmalvagio} we get that $C_{W_{+},A_{[3,-1],[1, 1]}}$ has an ordinary node at $[v_2-v_3]$ and hence the first equality of~\eqref{barcavela} holds. Similar computations show that  $[v_2+v_3]\in C_{W_{-},A_{{\bf c},[1, -1]}}$ (notice: $[v_2+v_3]$ is the point of $\PP(W_{-})$ with $Z$-coordinates $[0,1,0]$) if and only if ${\bf c}=[1,1]$. The second equality of~\eqref{barcavela} holds because by~\Ref{prp}{misstexas} we know that $A_{[1,1],[1, -1]}=A_k(\phi_1)$.
\end{proof}
\begin{crl}\label{crl:kevin}
Let $\{Z_0, Z_1, Z_2\}$ be the basis of $W_{\pm}^{\vee}$ dual to the basis of $W_{\pm}$ appearing in~\eqref{patologico}.  
Then
\begin{equation*}
C_{W_{\pm},A_{[1,0],[1,\pm 1]}}=V((Z_0 Z_2- Z_1^2)^3).
\end{equation*}
\end{crl}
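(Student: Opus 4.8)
The plan is to read off the shape of $C_{W_\pm,A_{{\bf c},[1,\pm 1]}}$ from~\Ref{clm}{alistair}, extract one numerical fact from~\Ref{prp}{kevin}, and then let the structural results~\Ref{prp}{misstexas} and~\Ref{clm}{stregaest} force the answer. By~\Ref{clm}{alistair} I may write
\begin{equation*}
C_{W_\pm,A_{{\bf c},[1,\pm 1]}}=V\bigl((Z_0 Z_2-Z_1^2)^2(P_\pm({\bf c})Z_0 Z_2+Q_\pm({\bf c})Z_1^2)\bigr),
\end{equation*}
with $P_\pm,Q_\pm\in\CC[c_0,c_1]$ homogeneous of degree $2$. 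Evaluating this sextic at the point with $Z$-coordinates $[0,1,0]$ returns the value $Q_\pm({\bf c})$, so $[0,1,0]\in C_{W_\pm,A_{{\bf c},[1,\pm 1]}}$ exactly when $Q_\pm({\bf c})=0$. Thus~\Ref{prp}{kevin} says precisely that $Q_+$ vanishes only at ${\bf c}=[3,-1]$ and $Q_-$ only at ${\bf c}=[1,1]$. Neither is identically zero (otherwise $[0,1,0]$ would lie on the curve for every ${\bf c}$), so each is a nonzero quadratic with a single double root lying away from $[1,0]$; in particular $Q_\pm([1,0])\neq 0$.

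Next I would specialize to ${\bf c}=[1,0]$, so that $c_1=0$. By Item~(s1) of~\Ref{prp}{misstexas} this forces $A_{[1,0],[1,\pm 1]}\in\XX^{*}_{\cW}$, and by the final assertion of~\Ref{prp}{misstexas} the plane $W_+=\la\la T_2(Q_0)\ra\ra$ lies in $\Theta_{A_{[1,0],[1,1]}}$ (since $d_0-d_1=0$) while $W_-=\la\la T_1(Q_0)\ra\ra$ lies in $\Theta_{A_{[1,0],[1,-1]}}$ (since $d_0+d_1=0$). The proof of~\Ref{clm}{stregaest} establishes that for every $A\in\XX^{*}_{\cW}$ and every $W\in\Theta_A$ the curve $C_{W,A}$ is either $\PP(W)$ or a triple conic $3D$; transporting by the projectivity that carries $A_{[1,0],[1,\pm 1]}$ into $\XX^{*}_{\cW}(U)$ (under which $C_{W,A}$ only changes by a projectivity of $\PP(W)$), I conclude that $C_{W_\pm,A_{[1,0],[1,\pm 1]}}$ is either $\PP(W_\pm)$ or a triple conic.

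It remains to rule out the degenerate alternative $C_{W_\pm,A_{[1,0],[1,\pm 1]}}=\PP(W_\pm)$, and this is where the first paragraph enters: such an equality would force the cubic factor $P_\pm([1,0])Z_0 Z_2+Q_\pm([1,0])Z_1^2$ to vanish identically, contradicting $Q_\pm([1,0])\neq 0$. Hence $C_{W_\pm,A_{[1,0],[1,\pm 1]}}$ is a triple conic $V(q^3)$; comparing with the formula of~\Ref{clm}{alistair}, the irreducible quadratic $Z_0 Z_2-Z_1^2$ divides $q^3$ to order at least $2$ and therefore divides $q$, so $q\propto Z_0 Z_2-Z_1^2$ and $C_{W_\pm,A_{[1,0],[1,\pm 1]}}=V((Z_0 Z_2-Z_1^2)^3)$, as claimed. (As a shortcut one could prove only the $+$ case and obtain the $-$ case by duality: by~\Ref{prp}{dualecidi} together with $R_\phi(\bigwedge^3 W_-)=\bigwedge^3 W_+$, the pair $(A_{[1,0],[1,1]},W_+)$ is carried onto $(A_{[1,0],[1,-1]},W_-)$.) The only genuinely delicate step is the exclusion of $\PP(W_\pm)$: once~\Ref{clm}{stregaest} narrows the possibilities to $\PP(W_\pm)$ or a triple conic, the entire statement hinges on the single numerical input $Q_\pm([1,0])\neq 0$ read off from~\Ref{prp}{kevin}, and the precise conic is then forced by~\Ref{clm}{alistair}.
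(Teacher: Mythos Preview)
Your proof is correct and takes a different route from the paper's. Both arguments begin by using~\Ref{prp}{kevin} to exclude $C_{W_\pm,A}=\PP(W_\pm)$. From there, the paper argues directly: for each $[v]$ on the conic $V(Z_0Z_2-Z_1^2)$ it shows $\dim(F_v\cap A_{[1,0],[1,\pm 1]})\ge 4$ and invokes~\Ref{crl}{molteplici}. The key geometric input is that such a $[v]$ corresponds to a line $L\subset Q_0$, and since $\PP(A)\supset i_+(Q_0)$ the planes $i_+(q)$ for $q\in L$ all pass through $[v]$, giving a $3$-dimensional contribution to $F_v\cap A$ in addition to $\bigwedge^3 W_\pm$. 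You instead observe that $c_1=0$ places $A$ in $\XX^*_\cW$ via~\Ref{prp}{misstexas} and invoke the $\PP(W)$/triple-conic dichotomy from the proof of~\Ref{clm}{stregaest}. The underlying geometry ($A\supset i_+(Q_0)$) is the same; you reuse the earlier analysis while the paper redoes a tailored version of it. A minor simplification: no transport is needed, since $A_{[1,0],[1,\pm 1]}$ already lies in $\XX^*_\cW(U)$ with respect to $\phi$ (indeed $\Theta_A\supset i_+(Q_0)$), and $W_\pm$ is precisely one of the two special planes $W_1,W_2$ appearing in the proof of~\Ref{clm}{stregaest}, so that proof already gives the triple-conic conclusion without the dichotomy step.
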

\begin{proof}
By~\Ref{prp}{kevin} we know that $C_{W_{\pm},A_{[1,0],[1,\pm 1]}}\not=\PP(W_{\pm})$. Thus (see~\Ref{crl}{molteplici}) it suffices to show that
\begin{equation}\label{guargaglini}
\text{$\dim(F_v\cap A_{[1,0],[1,\pm 1]})\ge 4$ if $[v]=W_{\pm}\cap i_{+}(p)$, $p\in C$.}
\end{equation}
Let $[v]$ be as above: then $v=\phi(\tau_0\wedge \tau_1)$ where $\tau_0,\tau_1\in U$ and $\PP(\la \tau_0,\tau_1\ra)$ is a line contained in $Q_0$. 
Given $q\in \PP(\la \tau_0,\tau_1\ra)$ we let  $\alpha_q\in\bigwedge^3 V$ be a generator of $\bigwedge^3 i_{+}(q)=[\alpha_q]$: then $\alpha_q\in F_v\cap A_{[1,0],[1,\pm 1]}$. As $q$ varies in $\PP(\la \tau_0,\tau_1\ra)$ the elements $\alpha_q$ span a $3$-dimensional subsapace of $F_v\cap A_{[1,0],[1,\pm 1]}$ which does not contain a generator of $\bigwedge^3 W_{\pm}$; inequality~\eqref{guargaglini} follows. 
\end{proof}
\begin{lmm}\label{lmm:soglistessi}
If $({\bf c},{\bf d})\in\PP^1\times\PP^1$ then $C_{W_{\infty},A_{{\bf c},{\bf d}}}$ is projectively equivalent to  $C_{W_0,A_{{\bf c},{\bf d}}}$.
\end{lmm}
\begin{proof}
Let $\iota$ be  the involution  of $\PP^1$  mapping $[\lambda,\mu]$ to $[\mu,\lambda]$. Equation~\eqref{eccoci} identifies $\PP^1_{[\lambda,\mu]}$ with $C$: thus we may regard $\iota$ as an involution of $C$. In turn $\iota$ induces the involution on $\PP(U)$ given by $[u_0,u_1,u_2,u_3]\mapsto [u_3,u_2,u_1,u_0]$ and also an involution
$\varphi\in \SL(V)$ via the isomorphism $\psi\colon \bigwedge^2 U\overset{\sim}{\lra} V$ of~\eqref{lavatrice}. A straightforward computation gives that 
\begin{equation}
\varphi(A_{{\bf c},{\bf d}})=A_{{\bf c},{\bf d}},\qquad ({\bf c},{\bf d})\in\PP^1\times\PP^1.
\end{equation}
Since $\varphi(W_{\infty})=W_0$ this proves the lemma.
\end{proof}
\n
{\it Proof of~\Ref{prp}{pisanu}.\/} 
Let $A_{{\bf c},{\bf d}}\in \VV^{\psi}$; then  $C_{W_{\infty},A_{{\bf c},{\bf d}}}$ is either $\PP(W_{\infty})$ or a sextic in the indeterminacy locus of~\eqref{persestiche} by definition of $\VV^{\psi}$. Now assume that there exists $W\in\Theta_{A_{{\bf c},{\bf d}}}$ such that $C_{W,A_{{\bf c},{\bf d}}}$ is either $\PP(W)$ or a sextic in the indeterminacy locus of~\eqref{persestiche}. If $\dim\Theta_{A_{{\bf c},{\bf d}}}\ge 2$ then $A_{{\bf c},{\bf d}}\in \VV^{\psi}$  by~\Ref{prp}{misstexas} and~\Ref{clm}{pippajenkins}. Thus we may assume that $\dim\Theta_{A_{{\bf c},{\bf d}}}= 1$. 
Since the $1$-PS $\lambda_{\cN_3}$ acts on $\Theta_{A_{{\bf c},{\bf d}}}$ we may assume that $W$ is fixed by $\lambda_{\cN_3}(t)$ for all $t\in\CC^{\times}$.  Going through Items~(t1) - (t6) of~\Ref{prp}{misstexas} we get that $W$ is one of $W_{\infty},W_0, W_{+}, W_{-} $. If $W\in\{W_{\infty},W_0\}$ then $A_{{\bf c},{\bf d}}\in \VV^{\psi}$ by definition and by~\Ref{lmm}{soglistessi}. Next let us consider $W_{+}$. By~\Ref{prp}{misstexas} we know that $W_{+}\in \Theta_{W,A_{{\bf c},{\bf d}}}$ if and only if ${\bf d}=[1,1]$, moreover  $C_{W_{+},A_{{\bf c},[1,1]}}$ is a sextic for every ${\bf c}\in\PP^1$  by~\Ref{prp}{kevin}. By~\Ref{clm}{alistair} and~\Ref{crl}{rettapunto}  it follows that we have a regular map 
\begin{equation}\label{zingara}
\begin{matrix}
\PP^1 & \lra & |\cO_{\PP(W_{+})}(6)| \\
{\bf c} & \mapsto & C_{W_{+},A_{{\bf c},[1,1]}}
\end{matrix}
\end{equation}
with image a line and $\bf c$ has  degree $2$ onto its image. Let $Z_0,Z_1,Z_2$ be the homogeneous coordinates on $\PP(W_{+})$ introduced above.  Map~\eqref{zingara} sends $[1,0]$ to $V((Z_0Z_2-Z_1^2)^3)$ by~\Ref{crl}{kevin} and it sends $[1,-1]$ to the same sextic by~\Ref{prp}{misstexas} and~\eqref{nopanino}. Since Map~\eqref{zingara} is of degree $2$ onto a line it follows that no other ${\bf c}$ is mapped to $V((Z_0Z_2-Z_1^2)^3)$ i.e.~if ${\bf c}\notin\{[1,0],[1,-1]\}$ then $C_{W_{+},A_{{\bf c},[1,1]}}$ is a sextic which is not in the indeterminacy locus of the period map~\eqref{persestiche}. 
 By~\Ref{prp}{diagonale} both $([1,0],[1,1])$ and $([1,-1],[1,1])$ belong to $\VV^{\psi}$. Lastly we consider $W_{-}$. By~\Ref{prp}{misstexas} we know that $W_{-}\in \Theta_{W,A_{{\bf c},{\bf d}}}$ if and only if ${\bf d}=[1,-1]$.   By~\Ref{prp}{kevin} we know that  $C_{W_{-},A_{{\bf c},[1,-1]}}=\PP(W_{-})$ if and only if ${\bf c}=[1,1]$ moreover $C_{W_{-},A_{[1,0],[1,-1]}}=V((Z_0 Z_2-Z_1^2)^3)$ by~\Ref{crl}{kevin}.  By~\Ref{clm}{alistair} it follows that 
 \begin{enumerate}
\item[(a)]
 $C_{W_{-},A_{{\bf c},[1,-1]}}=V((Z_0 Z_2-Z_1^2)^3)$ for all ${\bf c}\not=[1,1]$ or else
\item[(b)]
 $C_{W_{-},A_{{\bf c},[1,-1]}}=V((Z_0 Z_2-Z_1^2)^3)$ only for ${\bf c}=[1,0]$.
\end{enumerate}
 A computation gives that the point in $\PP(W_{-})$ with $Z$-coordinates $[1,0,1]$ (i.e.~$[v_1+v_4]$) belongs to $C_{W_{-},A_{[1,-1],[1,-1]}}$: in fact
\begin{multline}
\scriptstyle
F_{v_1+v_4}\ni 4(v_1+v_4)\wedge( v_0\wedge v_2 - v_2\wedge v_3 - v_2\wedge v_5)=
4\alpha_{(0,0,0,2)}-(\alpha_{(0,0,2,0)}+\alpha_{(0,1,0,1)})+
((\alpha_{(0,0,2,0)}-\alpha_{(0,1,0,1)})-(4\beta_{(0,2,0,0)}-2\beta_{(1,0,1,0)})) + \\
\scriptstyle
+(\alpha_{(0,2,0,0)}+\alpha_{(1,0,1,0)})-
((\alpha_{(0,2,0,0)}-\alpha_{(1,0,1,0)})-(4\beta_{(0,0,2,0)}-2\beta_{(0,1,0,1)})) - 4\alpha_{(2,0,0,0)}\in
A_{[1,-1],[1,-1]}.
\end{multline}
 Thus Item~(b) holds; since $([1,0],[1,-1])\in\VV^{\psi}$ this finishes the proof.
\qed
\clearpage
\appendix
\section{Elementary auxiliary results}\label{sec:discquad}
\subsection{Discriminant of quadratic forms}\label{subsec:esorcista}
\setcounter{equation}{0}
Let $U$ be a complex vector-space of finite dimension  $d$. We view $\Sym^2 U^{\vee}$ as the vector-space of quadratic forms on $U$. 
Given $q_{*}\in \Sym^2 U^{\vee}$
we let $\Phi$ be the polynomial on the vector-space $\Sym^2 U^{\vee}$ defined by $\Phi(q):=\det(q_{*}+q)$. Of course $\Phi$  is  defined up to  multiplication  by a non-zero scalar, moreover it depends on $q_{*}$ although that does not show up in the notation. Let 
\begin{equation}\label{granfi}
\Phi=\Phi_0+\Phi_1+\ldots +\Phi_d,
\qquad \Phi_i\in \Sym^i (\Sym^2 U)
\end{equation}
be the decomposition into homogeneous components. 
 We will be interested in giving \lq\lq intrinsic\rq\rq 
 descriptions of the loci 
 \begin{equation}\label{frankenstein}
\{q\in \Sym^2 U^{\vee} \mid 0=\Phi_0(q)=\ldots=\Phi_j(q)\}.
\end{equation}
 Of course all one needs to do is to expand a determinant: the  point is to give a meaningful   interpretation of the result.   We introduce some notation.  Given $q\in \Sym^2 U^{\vee}$ we let 
 \begin{equation}
 \wt{q}\colon U\to U^{\vee},\qquad (v,w)_q:=\la \wt{q}(v),w\ra 
\end{equation}
  be the associated  symmetric map 
and symmetric bilinear form respectively (here $\la f,v\ra:=f(v)$ for $f\in U^{\vee}$ and $v\in U$). Let $K:=\ker q$; then $\wt{q}$ may be viewed as  a (symmetric)  map $\wt{q}\colon (U/K)\to \Ann K$.
The {\bf dual} quadratic form $q^{\vee}$ is the quadratic form associated to the symmetric map 
\begin{equation*}
\wt{q}^{-1}\colon \Ann K\to (U/K).
\end{equation*}
 Thus $q^{\vee}\in \Sym^2 (U/K)$. We denote by $\wedge^i q$ the quadratic form induced by $q$ on $\bigwedge^i U$. 
\begin{rmk}\label{rmk:quadrest}
If $\alpha=v_1\wedge\ldots\wedge v_i$ is a decomposable vector of $\bigwedge^i U$ then $\wedge^i q(\alpha)$ is equal to the determinant of $q|_{\la v_1,\ldots, v_i\ra}$ with respect to the basis $\{v_1,\ldots, v_i\}$. 
\end{rmk}
The following is   well-known
 (it follows from a straightforward  computation).
\begin{prp}\label{prp:conodegenere}
Let $q_{*}\in\Sym^2 U^{\vee}$
 and 
\begin{equation}
K:=\ker(q_{*}), \qquad k:=\dim K. 
\end{equation}
  Let $\Phi_i$ be the polynomials appearing in~\eqref{granfi}. Then
\begin{itemize}
\item[(1)]
$\Phi_i=0$ for $i<k$, and
\item[(2)]
there exists $c\not=0$ such that $\Phi_k(q)=c\det(q|_K)$. 
\end{itemize}
\end{prp}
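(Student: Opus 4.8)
\textbf{Proof proposal for Proposition~\ref{prp:conodegenere}.}
The plan is to compute the homogeneous components $\Phi_i$ of $\Phi(q)=\det(q_{*}+q)$ by choosing an adapted basis and expanding the determinant, then to read off the first two non-vanishing terms. First I would choose a basis $\{u_1,\ldots,u_d\}$ of $U$ so that $K=\ker(q_{*})=\langle u_1,\ldots,u_k\rangle$ and so that $q_{*}$ restricted to the complementary span $\langle u_{k+1},\ldots,u_d\rangle$ is non-degenerate; this is possible precisely because $\dim K=k$. In this basis the symmetric matrix $M(q_{*})$ is block-diagonal with a $k\times k$ zero block in the upper-left corner and an invertible $(d-k)\times(d-k)$ block $N$ in the lower-right corner. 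Writing $M(q)=(a_{ij})$ for the matrix of the variable quadratic form $q$, the object to analyze is $\det(M(q_{*})+t\,M(q))$ as a polynomial in the entries $a_{ij}$, where I temporarily insert a scaling parameter $t$ to keep track of homogeneity: the coefficient of $t^i$ is exactly $\Phi_i(q)$ up to the overall non-zero constant coming from the choice of basis.

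The key step is a multilinearity/cofactor expansion of the determinant $\det(M(q_{*})+t\,M(q))$. Expanding the determinant as a sum over ways of choosing, in each row, either an entry of $M(q_{*})$ or $t$ times an entry of $M(q)$, each monomial in $t^i$ is a sum of products in which exactly $i$ rows contribute entries from $M(q)$. Because the first $k$ rows and columns of $M(q_{*})$ vanish, any term using fewer than $k$ factors from $M(q)$ must contain a zero entry from one of the first $k$ rows, hence vanishes; this gives Item~(1), namely $\Phi_i=0$ for $i<k$. For the coefficient of $t^k$, the only surviving contributions are those in which all $k$ of the \lq\lq$q$-rows\rq\rq\ are precisely the first $k$ rows (and, by symmetry of the determinant expansion in rows versus columns, the first $k$ columns as well), while the remaining $(d-k)$ rows and columns contribute the fixed block $N$ from $M(q_{*})$. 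This factors the $t^k$-coefficient as $(\det N)\cdot\det\bigl((a_{ij})_{1\le i,j\le k}\bigr)$, and the second factor is exactly $\det(q|_K)$ computed in the basis $\{u_1,\ldots,u_k\}$ of $K$. Setting $c:=\det N\ne 0$ (times the fixed basis-change scalar) yields Item~(2), i.e.\ $\Phi_k(q)=c\det(q|_K)$.

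The remaining point is to check that the constant $c$ is genuinely non-zero and independent of $q$, and that the identification of the surviving minor with $\det(q|_K)$ is basis-independent up to a global scalar. Non-vanishing of $c$ is immediate since $N$ is invertible by the choice of complementary subspace. The apparent dependence on the chosen splitting $U=K\oplus\langle u_{k+1},\ldots,u_d\rangle$ only affects $c$ and the overall normalization of $\Phi$, not the statement, since $\Phi$ itself is only defined up to a non-zero multiplicative scalar; a change of basis rescales both sides of $\Phi_k(q)=c\det(q|_K)$ consistently because $\det(q|_K)$ transforms by the square of the determinant of the change of basis on $K$, which can be absorbed into $c$. I expect the only mild obstacle to be bookkeeping in the cofactor expansion, namely verifying that mixed terms using some but not all of the first $k$ rows really do cancel or vanish; this is handled cleanly by the observation that such a term necessarily picks up a zero entry from an unused row among the first $k$, so no genuine cancellation is even needed. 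The computation is entirely elementary, so I would simply record the adapted-basis expansion and the block factorization rather than grinding through indices.
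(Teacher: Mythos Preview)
The proposal is correct and takes essentially the same approach the paper indicates: the paper declares the result well-known and says it \lq\lq follows from a straightforward computation\rq\rq, and indeed in the proof of the subsequent~\Ref{prp}{zeronucleo} it uses exactly your adapted basis (even diagonalizing the invertible block $N$ to the identity). Your row-multilinearity expansion and the resulting block-triangular factorization are precisely the intended computation; the parenthetical about columns is unnecessary but harmless, since the vanishing of the lower-left block of $M(q_{*})$ already gives the factorization $\det(tA)\det N$.
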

Keep notation and hypotheses as in~\Ref{prp}{conodegenere}.
Let $\cV_K\subset \Sym^2 U^{\vee}$ be the subspace of quadratic forms whose restriction to $K$ vanishes. Given $q\in\cV_K$ we have $\wt{q}(K)\subset \Ann K$ and hence it makes sense to consider the restriction of $q_{*}^{\vee}$ to $\wt{q}(K)$. 
\begin{prp}\label{prp:zeronucleo}
Keep notation and hypotheses as in~\Ref{prp}{conodegenere}.
 There exists $c\not=0$ such that
\begin{equation}
\Phi_{2k}(q)=c\det(q_{*}^{\vee}|_{\wt{q}(K)}),\qquad q\in\cV_K.
\end{equation}
In particular by~\Ref{rmk}{quadrest} we have that $\Phi_{2k}(q)=0$ if and only if the restriction of $q_{*}^{\vee}$ to $\wt{q}(K)$ is degenerate.
\end{prp}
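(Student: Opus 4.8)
The plan is to reduce everything to a block-matrix computation by choosing an adapted basis, and then to recognize the resulting block determinant through the Schur complement formula. First I would fix a complement $M$ to $K$, so that $U=K\oplus M$, and pick a basis $\{e_1,\dots,e_k\}$ of $K$ together with $\{e_{k+1},\dots,e_d\}$ of $M$. In this basis $q_*$ has matrix $\begin{pmatrix}0&0\\0&B\end{pmatrix}$ with $B$ the nondegenerate Gram matrix of $q_*|_M$ (nondegenerate precisely because $K=\ker q_*$). For $q\in\cV_K$ the condition $q|_K=0$ forces the top-left $k\times k$ block to vanish, so $q$ has matrix $\begin{pmatrix}0&C\\ C^t&D\end{pmatrix}$ with $C$ of size $k\times(d-k)$ and $D$ symmetric. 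I would record at this point the two identifications that make the right-hand side explicit: $\wt q(K)\subset\Ann K$ is spanned by the rows of $C$ (since $\wt q(e_i)=\sum_b C_{i,b}\,e_{k+b}^\vee$), and $q_*^\vee\in\Sym^2 M$ has matrix $B^{-1}$; consequently the Gram matrix of $q_*^\vee$ on the spanning family $\{\wt q(e_i)\}$ is exactly $C B^{-1} C^t$, whose determinant is $\det(q_*^\vee|_{\wt q(K)})$.

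The core step is a degree analysis of $\Phi(q)=\det(q_*+q)=\det\begin{pmatrix}0&C\\ C^t&B+D\end{pmatrix}$ restricted to $\cV_K$. Expanding the determinant by the Leibniz formula, I would observe that because the top-left $k\times k$ block is zero, every surviving permutation must send each of the $k$ "small" rows into the large columns (producing $k$ factors from the top-right block $C$) and, dually, must fill each small column from a large row (producing $k$ factors from the bottom-left block $C^t$). Hence each nonzero term carries $C$-degree exactly $2k$, while the remaining $d-2k$ large-row/large-column entries come from $B+D$. Since entries of $C$ and $D$ are all linear in $q$ and $B$ is constant, the total $q$-degree of a term is its $C$-degree plus its $D$-degree, so the minimal degree is $2k$, attained precisely by terms with $D$-degree $0$. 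This shows $\Phi_i|_{\cV_K}=0$ for $i<2k$ (consistent with, and refining, \Ref{prp}{conodegenere}) and that $\Phi_{2k}(q)$ for $q\in\cV_K$ equals the leading term, obtained by deleting $D$:
\begin{equation*}
\Phi_{2k}(q)=\det\begin{pmatrix}0&C\\ C^t&B\end{pmatrix}.
\end{equation*}

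Finally I would apply the Schur complement formula: since $B$ is invertible,
\begin{equation*}
\det\begin{pmatrix}0&C\\ C^t&B\end{pmatrix}=\det(B)\,\det\bigl(-C B^{-1} C^t\bigr)=(-1)^k\det(B)\,\det\bigl(C B^{-1} C^t\bigr),
\end{equation*}
so that $\Phi_{2k}(q)=c\,\det(q_*^\vee|_{\wt q(K)})$ with $c=(-1)^k\det(B)\neq 0$; the overall scalar ambiguity of $\Phi$ only rescales $c$. The last sentence of the statement then follows, using \Ref{rmk}{quadrest} to read $\det(q_*^\vee|_{\wt q(K)})$ as the restricted quadratic form's discriminant. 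I do not expect a genuine obstacle here: the only care needed is the bookkeeping that isolates the degree-$2k$ part (justifying that dropping $D$ is exactly the right truncation) and the remark that the identity is one of polynomials in the entries of $q$, so the Gram-determinant description is valid even where $\wt q|_K$ degenerates, both sides then vanishing simultaneously.
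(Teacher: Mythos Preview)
Your proof is correct and follows essentially the same approach as the paper: choose an adapted basis so that $q_*$ has block-diagonal matrix, observe that the top-left $k\times k$ block of $q$ vanishes for $q\in\cV_K$, and extract the degree-$2k$ part of the determinant. The only cosmetic differences are that the paper normalizes the complementary block to the identity (i.e.~takes $B=I_{d-k}$) and then recognizes the resulting $(-1)^k\det(CC^t)$ via a Cauchy--Binet sum $\sum_J(\det M_{{\bf k},J})^2$ identified with $\wedge^k(q_*^\vee)$ evaluated on $\wt q(u_1)\wedge\cdots\wedge\wt q(u_k)$, whereas you keep $B$ general and invoke the Schur complement directly; the two computations are equivalent.
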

\begin{proof}
Choose a basis $\{u_1,\ldots,u_d\}$ of $U$ such that $K=\la u_1,\ldots,u_k\ra$ and $\wt{q}_{*}(u_i)=u_i^{\vee}$ for $k<i\le d$. Let $q\in\cV_K$ and let $M$ be the matrix of $q$ in the chosen basis - thus the upper-left $k\times k$ subminor of $M$ is zero. Expanding $\det(q_{*}+tq)$ we get that
\begin{equation*}
\det(q_{*}+tq)\equiv (-1)^k t^{2k}\sum_{J}(\det M_{{\bf k},J})^2\pmod{t^{2k+1}}
\end{equation*}
where $M_{{\bf k},J}$ is the $k\times k$ submatrix of $M$ determined by the first $k$ rows and the columns  indicized by $J=(j_1,j_2,\ldots, j_k)$. The claim follows because 
\begin{equation*}
\sum_{J}(\det M_{{\bf k},J})^2=\wedge^k (q_{*}^{\vee})(\wt{q}(u_1)\wedge\ldots\wt{q}(u_k)).
\end{equation*}
\end{proof}
\begin{rmk}\label{rmk:zeronucleo}
Keep notation and hypotheses as in~\Ref{prp}{zeronucleo}. Suppose in addition that $k=1$ and set $K=\ker q_{*}=\la e_1\ra$. Let $q\in\cV_K$ i.e.~$q(e_1)=0$. Since $\ker q_{*}=\la e_1 \ra$ there exists $e_2\in U$ (well-defined modulo $\la e_1 \ra$) such that $\wt{q}(e_1)=\wt{q}_{*}(e_2)$. An equivalent formulation of~\Ref{prp}{zeronucleo} (in this case) is that $\Phi_2(q)=0$ if and only if $q_{*}(e_2)=0$. 
\end{rmk}
\subsection{Quadratic forms of corank $2$}\label{subsec:migliorini}
\setcounter{equation}{0}
In the present subsection $q_{*}\in \Sym^2 U^{\vee}$ will be  a quadratic form such that
\begin{equation}\label{scivolando}
\cork(q_{*})=2,\qquad K:=\ker(q_{*}).
\end{equation}
Let $\Phi_0,\ldots,\Phi_d$ be the polynomials (well-defined up to multiplication by a non-zero scalar) associated to $q_{*}$. Let $q\in \Sym^2 U^{\vee}$; by~\Ref{prp}{conodegenere} we know that $\Phi_i(q)=0$ for $i\le 1$ and moreover  $\Phi_2(q)=0$ if and only if $q|_K$ is degenerate. We will describe the loci of $q$ (subject perhaps to some a priori condition) such that $\Phi_i(q)=0$ for higher $i$. 
\begin{clm}\label{clm:abetedaddario}
Suppose that~\eqref{scivolando} holds. Let $q\in \Sym^2 U^{\vee}$ and keep notation and hypotheses as above. Suppose moreover that $\Phi_2(q)=0$ i.e.~$q|_K$ is degenerate. Then $\Phi_3(q)=0$ if and only if there exists $0\not=e\in K$ such that
\begin{equation}\label{fagioli}
\wt{q}(e)\in\Ann(K),\qquad q_{*}^{\vee}(\wt{q}(e))=0.
\end{equation}
(Notice that the  equation makes sense because of the first condition.)
\end{clm}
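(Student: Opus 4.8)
The plan is to reduce the statement to an explicit determinant expansion, in the spirit of the proof of \Ref{prp}{zeronucleo}, but extracting the coefficient of $t^3$ rather than of $t^{2k}$. First I would fix a basis $\{u_1,\ldots,u_d\}$ of $U$ adapted to $q_{*}$: since $\cork(q_{*})=2$ I may take $K=\la u_1,u_2\ra$ and arrange $\wt{q}_{*}(u_i)=u_i^{\vee}$ for $3\le i\le d$, so that in this basis $q_{*}$ has matrix $\diag(0,0,1,\ldots,1)$. Writing the matrix of $q$ in block form
\begin{equation*}
M=\begin{pmatrix} A & B \\ B^t & C\end{pmatrix},\qquad A\ (2\times 2),\quad B\ (2\times(d-2)),
\end{equation*}
the hypothesis $\Phi_2(q)=0$ translates, via \Ref{prp}{conodegenere}, into $\det A=0$, i.e.\ $A=q|_K$ has rank at most $1$.

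Next I would compute $\det(q_{*}+tq)$ modulo $t^4$. Since $I+tC$ is invertible for small $t$, the Schur complement gives
\begin{equation*}
\det(q_{*}+tq)=\det(I+tC)\cdot t^2\det\!\big(A-tB(I+tC)^{-1}B^t\big).
\end{equation*}
Expanding $(I+tC)^{-1}=I-tC+O(t^2)$ and using that $A$ is $2\times 2$, one gets $\det(A-tBB^t+O(t^2))=\det A-t\,\mathrm{tr}(\mathrm{adj}(A)BB^t)+O(t^2)$; since $\det A=0$ and $\det(I+tC)=1+O(t)$, the coefficient of $t^3$ is, up to the overall scalar defining $\Phi$,
\begin{equation*}
\Phi_3(q)=-\mathrm{tr}\!\big(\mathrm{adj}(A)\,BB^t\big).
\end{equation*}
It then remains to read off the two conditions of the claim. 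The first condition, $\wt{q}(e)\in\Ann(K)$ with $0\ne e\in K$, says precisely that $e$ lies in the radical of $q|_K$, i.e.\ that its coordinate column $v^0=(x_1,x_2)^t$ lies in $\ker A$; such $e$ exists exactly because $\det A=0$. When $A$ has rank $1$, the matrix $\mathrm{adj}(A)$ is symmetric of rank $1$ with image $\ker A$, hence $\mathrm{adj}(A)=\lambda\,v^0(v^0)^t$ with $v^0$ spanning $\ker A$ and $\lambda\ne 0$, so $\mathrm{tr}(\mathrm{adj}(A)BB^t)=\lambda\,(v^0)^tBB^tv^0$. Finally, in the chosen basis $\wt{q}(e^0)$ has coordinates $(v^0)^tB$ in the basis $u_3^{\vee},\ldots,u_d^{\vee}$ of $\Ann(K)$, and $q_{*}^{\vee}$ is the standard form there, so $(v^0)^tBB^tv^0=q_{*}^{\vee}(\wt{q}(e^0))$ by \Ref{rmk}{quadrest} and the definition of $q_{*}^{\vee}$. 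This identifies $\Phi_3(q)=0$ with $q_{*}^{\vee}(\wt{q}(e^0))=0$, which is the second condition, and $e^0$ is the (essentially unique) admissible $e$.

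The one point requiring separate care — and the main, though mild, obstacle — is the degenerate sub-case $A=0$, i.e.\ $q|_K\equiv 0$. Then $\mathrm{adj}(A)=0$, so $\Phi_3(q)=0$ automatically, while the radical of $q|_K$ is all of $K$, and I must still exhibit the asserted $e$. Here $e\mapsto q_{*}^{\vee}(\wt{q}(e))$ is a quadratic form on the two-dimensional space $K$, and over $\CC$ every binary quadratic form has a nonzero isotropic vector; hence an admissible $0\ne e$ indeed exists and both sides of the equivalence hold. Apart from this check, the argument is the routine block expansion above, the only genuine content being the coordinate-free translation of $\mathrm{tr}(\mathrm{adj}(A)BB^t)$ into $q_{*}^{\vee}(\wt{q}(e))$.
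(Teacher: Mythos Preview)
Your proof is correct and follows essentially the same route as the paper: both arguments fix a basis diagonalizing $q_{*}$, expand $\det(q_{*}+tq)$ to identify $\Phi_3(q)$, and then split into the two sub-cases $q|_K=0$ and $\rk(q|_K)=1$. The paper handles the case $q|_K=0$ by appealing to \Ref{prp}{zeronucleo} (which already gives $\Phi_i(q)=0$ for $i<2k=4$) and then argues exactly as you do that a binary quadratic form over $\CC$ has a nontrivial zero; in the rank-$1$ case the paper simply says \lq\lq expanding $\det(q_{*}+tq)$\rq\rq\ and leaves the computation implicit.

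Your treatment is slightly more streamlined: the Schur-complement expansion yields the closed formula $\Phi_3(q)=-\mathrm{tr}(\mathrm{adj}(A)\,BB^t)$ valid uniformly, and the case split only enters when you interpret $\mathrm{adj}(A)$ (as $\lambda\,v^0(v^0)^t$ when $\rk A=1$, and as $0$ when $A=0$). This makes the link between $\Phi_3(q)$ and $q_{*}^{\vee}(\wt{q}(e))$ fully transparent. The two arguments are equivalent in substance; yours is a mild organizational improvement.
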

\begin{proof}
Suppose that $q|_K=0$. Then $\Phi_3(q)=0$ by~\Ref{prp}{zeronucleo}. On the other hand $\wt{q}(e)\in\Ann(K)$ for all $e\in K$ and hence we may define a quadratic form $Q$ on $K$ by setting $Q(v):=q_{*}^{\vee}(\wt{q}(v))$; since $\dim K=2$ it follows that there exists a non-trivial zero of $Q$ i.e.~a solution of~\eqref{fagioli}. Now suppose that $q|_K=0$ has rank $1$ and let $\la e\ra=\ker(q|_K)$. There exists a basis $\{u_1,\ldots,u_d\}$  of $U$ such that $K=\la u_1,u_2\ra$, $e=u_1$  and the matrix associated to $q_{*}$ is diagonal: $\wt{q}_{*}(u_i)=u_i^{\vee}$ for $2<i\le d$. Expanding $\det(q_{*}+tq)$ as function of $t$ one gets that  $\Phi_3(q)=0$ if and only if~\eqref{fagioli} holds.
\end{proof}
Next we assume  that  
\begin{equation}\label{barbapapa}
q|_K=0.
\end{equation}
First we introduce some notation. Given $w\in K$ we have $\wt{q}(w)\in\Ann K$ by~\eqref{barbapapa} and hence there exists $e(q;w)$ such that 
\begin{equation}\label{equw}
\wt{q}(w)=\wt{q}_{*}(e(q;w)).
\end{equation}
Of course $e(q;w)$ is determined modulo $K$.
\begin{clm}\label{clm:bendef}
Suppose that~\eqref{scivolando} holds. Let $q\in\Sym^2 U^{\vee}$ such that~\eqref{barbapapa} holds. Let $v\in K$ and suppose that $\wt{q}(v)\in\ker(q_{*}^{\vee}|_{\wt{q}(K)})$ i.e.
\begin{equation}\label{utile}
(e(q;v),e(q;w))_{q_{*}}=0\qquad\forall w\in K.
\end{equation}
 Then 
\begin{equation*}
(w,e(q;v))_q=0\qquad\forall w\in K
\end{equation*}
 and hence $q(e(q;v))$ is well-defined although $e(q;v)$ is defined modulo $K$.
\end{clm}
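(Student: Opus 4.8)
The plan is to reduce the whole statement to one symmetric identity relating $q$, evaluated on the vectors $e(q;\cdot)$, to the form $q_{*}$, and then to read off both the asserted vanishing and the well-definedness from that single identity together with the symmetry of $q_{*}$. First I would record what the hypothesis says concretely. By construction of the dual form $q_{*}^{\vee}$ its associated symmetric map is $\wt{q}_{*}^{-1}\colon\Ann K\to U/K$, so by~\eqref{equw} we have $\wt{q}_{*}^{-1}(\wt{q}(w))=e(q;w)\ (\mathrm{mod}\ K)$ for every $w\in K$. Hence for $v,w\in K$,
\[
(\wt{q}(v),\wt{q}(w))_{q_{*}^{\vee}}=\la\wt{q}(v),e(q;w)\ra=\la\wt{q}_{*}(e(q;v)),e(q;w)\ra=(e(q;v),e(q;w))_{q_{*}},
\]
which is exactly the reformulation of the hypothesis $\wt{q}(v)\in\ker(q_{*}^{\vee}|_{\wt{q}(K)})$ already written out in~\eqref{utile}: the hypothesis is that $(e(q;v),e(q;w))_{q_{*}}=0$ for all $w\in K$.

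The key computation is the identity
\[
(w,e(q;v))_q=(e(q;w),e(q;v))_{q_{*}},\qquad w\in K.
\]
To obtain it I would simply expand the left-hand side using the definition $(v,w)_q=\la\wt{q}(v),w\ra$ and the defining relation~\eqref{equw}: $(w,e(q;v))_q=\la\wt{q}(w),e(q;v)\ra=\la\wt{q}_{*}(e(q;w)),e(q;v)\ra=(e(q;w),e(q;v))_{q_{*}}$. By symmetry of $q_{*}$ the right-hand side equals $(e(q;v),e(q;w))_{q_{*}}$, which vanishes for every $w\in K$ by~\eqref{utile}. This yields the asserted vanishing $(w,e(q;v))_q=0$ for all $w\in K$.

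For the final assertion I would note that $e(q;v)$ is determined only modulo $K$, and that replacing $e(q;v)$ by $e(q;v)+k$ with $k\in K$ alters $q(e(q;v))$ by a combination of the cross term $(k,e(q;v))_q$ and $q(k)$. The cross term vanishes by the displayed vanishing applied with $w=k\in K$, and $q(k)=0$ by~\eqref{barbapapa} since $k\in K$; hence $q(e(q;v))$ does not depend on the chosen representative, and is well-defined.

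I expect no genuine obstacle here: the content is entirely formal, and the only care required is bookkeeping, namely checking that each quantity I write down is insensitive to the fact that $e(q;\cdot)$ is defined only modulo $K$. The one conceptual point — and the place where the two sides of the claim are linked — is the symmetry of $q_{*}$, which converts the hypothesis (the vector $e(q;v)$ paired against all $e(q;w)$) into the vanishing of every $e(q;w)$ paired against the fixed $e(q;v)$; that single reversal supplies simultaneously the stated equation and the invariance of $q(e(q;v))$ under the modulo-$K$ ambiguity.
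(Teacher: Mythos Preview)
Your proof is correct and follows the same computation as the paper: the core identity $(w,e(q;v))_q=\la\wt{q}(w),e(q;v)\ra=\la\wt{q}_{*}(e(q;w)),e(q;v)\ra=(e(q;v),e(q;w))_{q_{*}}$ is exactly what the paper writes, after which~\eqref{utile} gives the vanishing. You are more explicit than the paper in two places---you spell out the use of symmetry of $q_{*}$ and you give the argument for why $q(e(q;v))$ is independent of the representative---both of which the paper leaves implicit in the word ``hence''.
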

\begin{proof}
We have
\begin{equation*}
(w,e(q;v))_q=\la \wt{q}(w),e(q;v)\ra=\la \wt{q}_{*}(e(q;w)),e(q;v)\ra=(e(q;v),e(q;w))_{q_{*}}.
\end{equation*}
The last expression vanishes by~\eqref{utile}.  
\end{proof}
\begin{prp}\label{prp:faraone}
Suppose that~\eqref{scivolando} holds. Let $q\in \Sym^2 U^{\vee}$. Assume that  $q|_K=0$ and hence $\Phi_i(q)=0$ for $i<4$ (see~\Ref{prp}{zeronucleo}). 
Suppose moreover that $\Phi_4(q)=0$ i.e.~$q_{*}^{\vee}|_{\wt{q}(K)}$ is degenerate (see~\Ref{prp}{zeronucleo}). 
Then $\Phi_5(q)=0$
if and only if there exists $0\not=v\in K$ such that~\eqref{utile} holds and moreover
$q(e(q;v))=0$.
\end{prp}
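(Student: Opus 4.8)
The plan is to proceed exactly as in the proof of \Ref{clm}{abetedaddario}, reducing the computation of the leading term of $\det(q_{*}+tq)$ to a single coordinate determinant after a judicious choice of basis. Under the hypotheses $\cork(q_{*})=2$, $q|_K=0$, and degeneracy of $q_{*}^{\vee}|_{\wt{q}(K)}$, the polynomial $\Phi$ vanishes to order at least $5$ in $t$, and the statement is that the vanishing of the order-$5$ coefficient $\Phi_5(q)$ is governed by an explicit rank condition. First I would fix a basis $\{u_1,\ldots,u_d\}$ of $U$ such that $K=\la u_1,u_2\ra$, that $u_1=v$ realizes the kernel vector furnished by~\eqref{utile}, and that $\wt{q}_{*}(u_i)=u_i^{\vee}$ for $2<i\le d$, so that the matrix of $q_{*}$ is diagonal with its first two entries zero. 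In this basis the matrix of $q_{*}+tq$ has a controlled block structure: the upper-left $2\times2$ block is $t$ times the (zero) matrix $q|_K$, and the interaction between $K$ and its $q_{*}$-complement is recorded by the vectors $e(q;u_1)$, $e(q;u_2)$ via~\eqref{equw}.

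The key steps, in order, are as follows. I would expand $\det(q_{*}+tq)$ as a polynomial in $t$ using the Laplace/multilinearity expansion along the first two rows and columns, exactly mirroring the bookkeeping in \Ref{clm}{abetedaddario} and \Ref{prp}{zeronucleo}. The order-$4$ term reproduces $\det(q_{*}^{\vee}|_{\wt{q}(K)})$ up to a nonzero constant (this is \Ref{prp}{zeronucleo} with $k=2$), and the hypothesis $\Phi_4(q)=0$ together with~\eqref{utile} forces $\wt{q}(v)\in\ker(q_{*}^{\vee}|_{\wt{q}(K)})$, i.e.\ the degeneracy is witnessed by $v=u_1$. I would then isolate the coefficient of $t^5$; the crucial simplification is that \Ref{clm}{bendef} guarantees $q(e(q;v))$ is a well-defined scalar, because $(w,e(q;v))_q=0$ for all $w\in K$, so the potentially ambiguous diagonal entry coming from $e(q;v)$ (defined only modulo $K$) collapses to an honest invariant. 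A direct but short computation should then show $\Phi_5(q)=c\cdot q(e(q;v))\cdot\det(q_{*}^{\vee}|_{\wt{q}(K)/\la\wt{q}(v)\ra})$ up to a nonzero constant, or more simply that $\Phi_5(q)$ is proportional to $q(e(q;v))$ once the order-$4$ term already vanishes; so $\Phi_5(q)=0$ iff $q(e(q;v))=0$ for the kernel direction $v$ supplied by~\eqref{utile}.

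The main obstacle I expect is the careful extraction of the $t^5$ coefficient and verifying it really reduces to the single scalar $q(e(q;v))$ rather than a more complicated combination. The subtlety is that $e(q;v)$ is only defined modulo $K$, so a priori the expansion could produce cross-terms depending on the choice of representative; the whole point of invoking \Ref{clm}{bendef} is to kill precisely those cross-terms, and I would need to track the basis coordinates carefully enough to confirm that the $t^5$ coefficient factors through the two independent conditions \eqref{utile} (which we have assumed via $\Phi_4(q)=0$) and $q(e(q;v))=0$. The routine but error-prone part is matching the combinatorics of which $2\times2$ and $3\times3$ minors contribute at order $5$; I would handle this by the same diagonalizing-basis trick that made the order-$3$ and order-$4$ cases transparent, reducing the verification to inspecting a small number of entries rather than expanding the full $d\times d$ determinant.
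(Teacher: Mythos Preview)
Your overall strategy---choose a basis adapted to $K$ and to the distinguished vector $v$ from~\eqref{utile}, then expand $\det(q_{*}+tq)$ and read off the $t^5$ coefficient---is the same as the paper's, and in the generic case it goes through essentially as you describe. The paper's basis is slightly more specialized than yours (it arranges $\wt{q}_{*}(u_3)=u_4^{\vee}$, $\wt{q}_{*}(u_4)=u_3^{\vee}$ and $\wt{q}(u_1)=u_3^{\vee}$, $\wt{q}(u_2)=u_5^{\vee}$), which makes the $t^5$ coefficient equal to the single matrix entry $a_{44}=q(u_4)=q(e(q;v))$; with your diagonal basis the bookkeeping is a little different but the outcome is the same.

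There is, however, a genuine gap: you implicitly assume that $q_{*}^{\vee}|_{\wt{q}(K)}$ has rank exactly $1$, so that the kernel direction $v$ satisfying~\eqref{utile} is unique up to scalar. Your claimed identity $\Phi_5(q)\propto q(e(q;v))$ cannot hold as stated when the rank is $0$, because then \emph{every} $v\in K$ satisfies~\eqref{utile} and the right-hand side depends on the choice of $v$ while the left-hand side does not. The paper handles this by splitting into three cases. If $\wt{q}|_K$ is not injective, pick $0\not=v\in K$ with $\wt{q}(v)=0$; then $v\in\ker(q_{*}+tq)$ for all $t$, so $\Phi\equiv 0$ and one may take $e(q;v)=0$. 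If $\wt{q}|_K$ is injective but $q_{*}^{\vee}|_{\wt{q}(K)}=0$, a short computation gives $\Phi_5(q)=0$ directly, and the existence of $v$ with $q(e(q;v))=0$ follows because $v\mapsto q(e(q;v))$ is a quadratic form on the $2$-dimensional space $K$ and therefore has a nontrivial zero. Only in the remaining rank-$1$ case does your formula $\Phi_5(q)=c\cdot q(e(q;v))$ (with $c\not=0$) hold, and there it gives the ``if and only if'' at once. Once you add these two degenerate cases your argument is complete.
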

\begin{proof}
Suppose first that $\wt{q}|_K$ is not injective. Then $\det(q_{*}+tq)=0$ for all $t$, in particular $\Phi_5(q)=0$. On  the other let $v\in K$ such that $\wt{q}(v)=0$. Then $e(q;v)=0$; thus~\eqref{utile} holds and   $q(e(q;v))=0$. Next suppose that $\wt{q}|_K$ is injective and $q_{*}^{\vee}|_{\wt{q}(K)}$ has rank $0$. A straightforward computation gives that $\Phi_5(q)=0$. Now~\eqref{utile} holds for arbitrary $v\in K$; since $\dim K=2$ there exists $0\not=v\in K$ such that $q(e(q;v))=0$. Lastly suppose that $\wt{q}|_K$ is injective and $q_{*}^{\vee}|_{\wt{q}(K)}$ has rank $1$. There exists a basis $\{u_1,\ldots,u_d\}$ of $U$ such that $K=\la u_1,u_2\ra$, 
\begin{equation*}
\wt{q}_{*}(u_i)=u^{\vee}_{7-i}\quad  i=3,4,\qquad
 \wt{q}_{*}(u_i)=u^{\vee}_i \quad 4<i\le d
\end{equation*}
 and $\wt{q}(u_1)=u_3^{\vee}$, $\wt{q}(u_2)=u_5^{\vee}$. Thus $\la \wt{q}(u_1)\ra=\ker(q_{*}^{\vee}|_{\wt{q}(K)})$ and $e(q;u_1)=u_4$. Let $A=(a_{ij})$ be the matrix of $q$ with respect to the chosen basis. A straightforward computation gives that
\begin{equation*}
\det(q_{*}+tq)\equiv a_{44} t^5\pmod{t^6}
\end{equation*}
Since $a_{44}=q(u_4)=q(e(q;u_1))$ that finishes the proof of  the proposition.
\end{proof}
Lastly we will consider the restriction of $\Phi$ to affine planes containing $q_{*}$ and subject to a certain hypothesis.
\begin{ass}\label{ass:pazienza}
$r,s\in\Sym^2 U^{\vee}$ and  the following hold:
\begin{enumerate}
\item[(1)]
$r|_K=0$ and $s|_K$ has rank $1$ with kernel spanned by $v$,
\item[(2)]
the subspace $\la \wt{r}(v),\wt{s}(v)\ra\subset\Ann K$ has dimension $2$ and when we restrict  $q_{*}^{\vee}$ we get a quadratic form of rank $1$ with kernel spanned by $\wt{r}(v)$, 
\item[(3)]
the restriction of $q_{*}^{\vee}$  to $\wt{r}(K)$ is degenerate.
\end{enumerate}
\end{ass}
Suppose that $r,s$ satisfy~\Ref{ass}{pazienza}; by~\Ref{prp}{conodegenere}, \Ref{clm}{abetedaddario} and~\Ref{prp}{zeronucleo} we have 
\begin{equation}\label{parabiago}
\det(q_{*}+xr+ys)\equiv c_{03}y^3+c_{31}x^3 y+c_{22}x^2 y^2+c_{13}xy^3 
+c_{04}y^4 \pmod{(x,y)^5}
\end{equation}
\begin{clm}\label{clm:labomba}
Suppose that~\eqref{scivolando} holds and moreover $r,s$ satisfy~\Ref{ass}{pazienza}, in particular~\eqref{parabiago} holds. Then $c_{31}=0$ if and only if $r(e(r;v))=0$ where $v$ is as in Item~(1) of~\Ref{ass}{pazienza} and $e(r;v)$ is as in~\eqref{equw} with $q$ replaced by $r$.
\end{clm}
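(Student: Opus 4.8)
The plan is to prove the claim by the same adapted-basis determinant computation used in \Ref{prp}{faraone}, now carried out with two parameters $x,y$ and with the coefficient $c_{31}$ of $x^3y$ in~\eqref{parabiago} as the target. First I would build a basis $\{u_1,\ldots,u_d\}$ of $U$ encoding all of the data of \Ref{ass}{pazienza} simultaneously. Set $u_1:=v$ and complete to a basis $\{u_1,u_2\}$ of $K=\ker(q_*)$; by Item~(1) of \Ref{ass}{pazienza} the form $s|_K$ is then supported on $u_2$, and I normalise $s(u_2,u_2)=1$. Since $r|_K=0$ we have $\wt{r}(v)\in\Ann K$, and by Item~(2) this vector is a nonzero $q_*^\vee$-isotropic vector; as $q_*^\vee$ is nondegenerate on $\Ann K$ I may choose a hyperbolic pair $u_3,u_4$ in a $q_*$-orthogonal complement of $K$ with $\wt{q}_*(u_3)=u_4^\vee$, $\wt{q}_*(u_4)=u_3^\vee$ and $\wt{r}(v)=u_3^\vee$; this forces $e(r;v)=u_4$ in the notation of~\eqref{equw}. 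Using the rank-$1$ statement of Item~(2) I normalise the remaining $q_*$-orthonormal vectors $u_5,\ldots,u_d$ so that $\wt{s}(v)=u_5^\vee$. Unwinding the definitions, Item~(3) (degeneracy of $q_*^\vee$ on $\wt{r}(K)$) becomes the single scalar equation $r(u_2,u_4)=0$, while the quantity to be detected is $r(e(r;v))=r(u_4,u_4)$.

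With this basis the matrices $Q,R,S$ of $q_*,r,s$ have rows and columns $1,2$ of $Q$ equal to zero, $Q$ nondegenerate on $K'=\langle u_3,\ldots,u_d\rangle$, and only a short list of nonzero entries in rows $1,2$ of $R$ and $S$ (namely $R_{13}=1$, $S_{22}=S_{15}=1$, and $R_{24}=0$ by Item~(3)). Since the lower-right block $Q_{K'K'}+xR_{K'K'}+yS_{K'K'}$ is invertible for small $(x,y)$, I would factor $\det(q_*+xr+ys)=\det(Q_{K'K'}+\cdots)\cdot\det C(x,y)$, where $C(x,y)$ is the $2\times 2$ Schur complement on $K$. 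The first factor is $\det Q_{K'K'}\cdot(1+O(x,y))$ with $\det Q_{K'K'}\ne 0$, so it only reshuffles lower-order terms of $\det C$; in particular, since $\det C$ starts at the cubic term $-y^3$, this factor cannot create an $x^3y$ term. Hence it suffices to read off the $x^3y$ coefficient of $\det C=C_{11}C_{22}-C_{12}C_{21}$, exactly as the proofs of \Ref{prp}{zeronucleo} and \Ref{clm}{abetedaddario} reduced lower-order coefficients to intrinsic quantities.

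The computation of $\det C$ is where the assumptions do their work. The chosen normal forms give $C_{11}=-y^2+O(3)$ and $C_{22}=y+O(2)$, and expanding the linear correction $-Q_{K'K'}^{-1}(xR_{K'K'}+yS_{K'K'})Q_{K'K'}^{-1}$ inside $C_{11}$ one finds that the $x^3$ part of the cubic term of $C_{11}$ equals, up to sign, the $u_4$-component of $R_{K'K'}u_4$, i.e. $r(u_4,u_4)=r(e(r;v))$; thus $C_{11}C_{22}$ contributes a term proportional to $r(e(r;v))\,x^3y$. The only possible competitor is $C_{12}C_{21}$, a product of two forms whose leading parts are quadratic; an $x^3y$ monomial could arise only as $(x^2)\cdot(xy)$, and the $x^2$-coefficient of $C_{12}$ (and of $C_{21}$) is exactly $-r(u_2,u_4)$, which vanishes by Item~(3). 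Therefore the $x^3y$ coefficient of $\det C$, and hence $c_{31}$, is a nonzero constant times $r(e(r;v))$, proving the equivalence. The main obstacle is not any single calculation but the combined bookkeeping: one must verify that the adapted basis realises Items~(1)--(3) simultaneously, and that, with Item~(3) in force, the cubic Schur term $r(u_4,u_4)$ is the unique surviving source of $x^3y$.
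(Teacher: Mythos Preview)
Your proof is correct and follows essentially the same route as the paper: both choose an adapted basis $\{u_1,\ldots,u_d\}$ encoding Items~(1)--(3) of \Ref{ass}{pazienza} (with $u_1=v$, $e(r;v)=u_4$, and Item~(3) translated as $a_{24}=r(u_2,u_4)=0$), and then extract the $x^3y$ coefficient of $\det(q_*+xr+ys)$, identifying it with $a_{44}=r(u_4,u_4)=r(e(r;v))$. The only difference is organisational: the paper simply writes out the $d\times d$ matrix and asserts that a direct expansion yields $\det(q_*+xr+ys)=y^3+a_{44}x^3y+\ldots$, whereas you factor through the $2\times 2$ Schur complement $C(x,y)$ on $K$ and track the $x^3y$ term through $C_{11}C_{22}-C_{12}^2$. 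Your Schur-complement argument is a bit more transparent about \emph{why} Item~(3) is exactly what is needed to kill the competing $C_{12}^2$ contribution (via the vanishing of the $x^2$ coefficient $-a_{24}$ of $C_{12}$), and about why the factor $\det(Q_{K'K'}+\cdots)$ cannot interfere (it only multiplies $-y^3$ by linear terms, producing $xy^3$ and $y^4$ but no $x^3y$). The paper's version is shorter but leaves these cancellations implicit in ``a computation gives''.
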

\begin{proof}
We may choose a basis $\{u_1,\ldots,u_d\}$ of $U$ such that the following hold
\begin{enumerate}
\item[(a)]
$K=\la u_1,u_2\ra$, $\wt{q}_{*}(u_i)=u^{\vee}_{7-i}$ for $i=3,4$ and $\wt{q}_{*}(u_i)=u^{\vee}_{i}$ for $4<i\le d$,
\item[(b)]
the matrix associated to $r$ in the chosen basis is $A=(a_{ij})$ with $a_{1j}=\delta_{3j}$ and $a_{22}=a_{24}=0$,
\item[(c)]
the matrix associated to $s$ in the chosen basis is $B=(b_{ij})$ with $b_{1j}=\delta_{5j}$ and $b_{22}=1$.
\end{enumerate}
 Let  $m_{ij}:=(a_{ij}x+b_{ij}y)$; then  $q_{*}+x r+y s$ is equal to 
\begin{equation*}
\begin{pmatrix}
0 & 0 & x & 0 &  y & 0 & \cdots & 0  \\
0 & y & m_{23} & b_{24}y &  m_{25} & m_{26}  &\cdots & m_{2d} \\
x & m_{32} & m_{33} & 1+m_{34} &  m_{35} &  m_{36} & \cdots  & m_{3d} \\
0 & b_{42}y & 1+m_{43} & m_{44} & m_{45} &  m_{46}  &\cdots  & m_{4d} \\
y & m_{52} & m_{53} & m_{54} & 1+m_{55} &  m_{56}  &\cdots  & m_{5d} \\
0 & m_{62} & m_{63} & m_{64} & m_{65} & 1+m_{66} & \cdots  & m_{6d} \\
\vdots &  \vdots  &  \vdots  & \vdots & \vdots & \vdots & \ddots & \vdots &      \\
0 & m_{d2} & m_{d3} & m_{d4} & m_{d5}  & m_{d6}   &\cdots  & 1+m_{dd} \\
\end{pmatrix}
\end{equation*}
A  computation gives that
\begin{equation*}
\det(q_{*}+x r+y s)=y^3+ a_{44}x^3y+\ldots
\end{equation*}
Now $a_{44}=r(u_4)$. On the other hand $\wt{q}_{*}(u_4)=u_3^{\vee}=\wt{r}(u_1)$ i.e.~$u_4=e(r;u_1)$; since $\la u_1\ra=\ker(s|_{K})$  that proves the claim.
\end{proof}
\subsection{Pencils of degenerate linear maps}\label{subsec:primaelem}
\setcounter{equation}{0}
Let ${\mathfrak g}{\mathfrak l}(3)$ be the space of $3\times 3$ complex matrices. Let ${\mathfrak g}{\mathfrak l}(3)_r\subset{\mathfrak g}{\mathfrak l}(3)$ be the closed subset of matrices of rank at most $r$. Let 
\begin{equation}\label{pigrande}
P:=\{\cV\in\Gr(2,{\mathfrak g}{\mathfrak l}(3))\mid 
\cV\subset({\mathfrak g}{\mathfrak l}(3)_2\setminus
{\mathfrak g}{\mathfrak l}(3)_1)\}.
\end{equation}
In other words an element of $P$ is a $2$-dimensional space of $3\times 3$ complex matrices whose non-zero elements have rank $2$.
Multiplication on the left and the right defines an action of $GL_3(\CC)\times GL_3(\CC)$ on $P$; we are interested in  the orbits for this action.
First we give three explicit elements of $P$. Let
\begin{equation}
f:=\left(
\begin{matrix}
0 & 1 & 0 \\
1 & 0 & 0 \\
0 & 0 & 0
\end{matrix}
\right),\quad
g:=\left(
\begin{matrix}
0 & 0 & 1 \\
0 & 0 & 0 \\
1 & 0 & 0
\end{matrix}
\right),
h:=\left(
\begin{matrix}
1 & 0 & 0 \\
0 & 0 & 1 \\
0 & 0 & 0
\end{matrix}
\right).
\end{equation}
Let 
\begin{eqnarray}
\cV_l:= & \la f,g\ra, \label{eccofade1}\\
\cV_c:= & \la f,h\ra, \label{eccofade2}\\
\cV_p:= & \la f^t,h^t\ra. \label{eccofade3}
\end{eqnarray}
Then $\cV_l,\cV_c,\cV_p\in P$; we claim that the orbits of these elements are pairwise distinct. To see why we introduce a piece of notation: given $\cV\in P$ let $K(\cV)\subset\PP^2$ be defined by
\begin{equation}\label{tuttinuc}
K(\cV):=\{\ker f\mid [f]\in\PP(\cV)\}.
\end{equation}
(This makes sense precisely because $rk(f)=2$ for every $[f]\in\PP(\cV)$.) 
If $\cV,\cV'\in P$ belong to the same orbit then $K(\cV)$ and $K(\cV')$ belong to the same $PGL_3(\CC)$-orbit.
A straightforward computation shows that
\begin{equation}\label{famnuclei}
K(\cV_l)=V(x),\qquad
K(\cV_c)=V(x^2-yz),\qquad
K(\cV_p)=V(x,y).
\end{equation}
(Here $[x,y,z]$ are the standard homogeneous coordinates on $\PP^2$.)
Since the above subsets of $\PP^2$ are pairwise not projectively equivalent we get that the orbits of $\cV_l,\cV_c,\cV_p$ are  pairwise distinct. One more piece of notation: if $\cV\in P$ we let $\cV^t:=\{f^t\mid f\in\cV\}$. 
\begin{prp}\label{prp:fascidege}
Keep notation as above. Let $\cV\in P$; then $\cV$ is $GL_3(\CC)\times GL_3(\CC)$-equivalent to one and only one of $\cV_l,\cV_c,\cV_p$.
\end{prp}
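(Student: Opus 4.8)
The uniqueness part (\lq\lq at most one\rq\rq) is already contained in the computation \eqref{famnuclei}, since $K(\cV_l),K(\cV_c),K(\cV_p)$ are a line, a smooth conic and a point, hence pairwise non-projectively-equivalent, and $K(\cV)$ transforms by $\PGL_3$ within a fixed orbit. So the real task is existence: every $\cV\in P$ is $\GL_3\times\GL_3$-equivalent to one of $\cV_l,\cV_c,\cV_p$. The plan is to read the orbit off from the \emph{kernel map}. Because every nonzero $f\in\cV$ has rank $2$, the assignment $[f]\mapsto[\ker f]$ is a well-defined morphism $\kappa\colon\PP(\cV)\cong\PP^1\to\PP(U)=\PP^2$ whose image is $K(\cV)$ set-theoretically, and dually $[f]\mapsto[\ker f^t]$ is a morphism $\PP^1\to\PP(U^\vee)$ with image $K(\cV^t)$.

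To see the possible shapes of $\kappa$, fix a basis $\{f_0,g_0\}$ of $\cV$ and write $f=af_0+bg_0$. The entries of the adjugate $\mathrm{adj}(f)$ are the $2\times2$ minors of $f$, hence homogeneous of degree $2$ in $(a,b)$, and $\mathrm{adj}(f)$ has rank $1$ for every $[a:b]$ because $f$ has rank $2$. Over the UFD $\CC[a,b]$ I would factor this rank-$1$ matrix of quadrics as $\mathrm{adj}(f)=U(a,b)\,W(a,b)^{t}$ with $U,W$ primitive homogeneous vectors satisfying $\deg U+\deg W=2$, where $\langle U\rangle=\ker f$ and $\langle W\rangle=\ker f^{t}$. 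Thus $\kappa$ is given by the linear system $\langle U\rangle$, its degree $d_1:=\deg U$ lies in $\{0,1,2\}$, and the cokernel map $K(\cV^t)$ has complementary degree $2-d_1$. Accordingly $K(\cV)$ is a point ($d_1=0$), a line ($d_1=1$), or a smooth conic ($d_1=2$, where $\kappa$ is the complete, base-point-free degree-$2$ map). Since $d_1$ is intrinsic, it is an invariant of the orbit, and the three cases are exactly the three loci of \eqref{famnuclei}.

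It remains to normalize within each value of $d_1$. The conic case reduces to the point case by transposition: if $d_1=2$ then $\cV^t$ has $d_1=0$, and since $\cV_l^t=\cV_l$ (the generators of $\cV_l$ in \eqref{eccofade1} are symmetric) and $\cV_c^t=\cV_p$, it suffices to treat $d_1=0$ and $d_1=1$, transporting the $d_1=0$ normal form back through transpose. For $d_1=0$ all members kill a common point $p$; choosing coordinates with $p=e_3$ makes the third column vanish, so $\cV$ is a pencil of rank-$2$ maps $U/\langle e_3\rangle\to U$ whose cokernel map is the degree-$2$ Veronese embedding onto a smooth conic. Adapting the left $\GL_3$ and the right $\GL_2$ acting on the two nonzero columns to this parametrization should bring $\cV$ to $\cV_p$, whence the $d_1=2$ case gives $\cV\sim\cV_p^t=\cV_c$. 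For $d_1=1$ the kernels sweep a line $\ell$ and, dually, the images share a common point $q$; one chooses the basis of $\cV$ so that the two distinguished kernels are $e_3,e_2$, normalizes $\ell=V(x)$, and is reduced to fixing the position of $q$.

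The main obstacle I anticipate is precisely the line case: one must show the relative position of $\ell$ and $q$ is forced. I expect a short direct computation to show that if $q\in\ell$ then some nonzero member of $\cV$ drops to rank $1$, contradicting $\cV\in P$; hence $q\notin\ell$ always, which is consistent with the explicit $\cV_l$, where $\ell=K(\cV_l)=V(x)$ while the common image point is $[1:0:0]\notin\ell$. Granting $q\notin\ell$, moving the incident-free pair $(\ell,q)$ to standard position and using the residual torus normalizes $\cV$ to $\cV_l$, so the line case is a single orbit. The only other routine inputs are the adjugate factorization over $\CC[a,b]$ and the bookkeeping of which subgroup of $\GL_3\times\GL_3$ survives after fixing $p$ (a parabolic inducing $\GL_2$ on the quotient) or after fixing $\ell$; these together with the three normalizations complete the proof.
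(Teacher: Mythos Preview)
Your adjugate factorisation is a clean way to extract the invariant $d_1\in\{0,1,2\}$, and the resulting trichotomy matches the paper's case analysis on whether $K(\cV)$ and $K(\cV^t)$ are singletons --- with the bonus that the impossible case ``both singletons'' is excluded automatically by $d_1+(2-d_1)=2$, whereas the paper argues it separately. The normalisation in each case is then essentially the paper's.

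There is, however, a genuine confusion in your treatment of $d_1=1$. The line $\ell=K(\cV)$ lives in the \emph{source} $\PP^2$, while the point $q=\bigcap_{f\in\cV}\im f$ lives in the \emph{target} $\PP^2$: these are different projective spaces, and the two factors of $\GL_3\times\GL_3$ move them independently. There is no intrinsic ``relative position of $\ell$ and $q$'', the condition ``$q\in\ell$'' is not well-posed, and the rank-drop argument you propose to exclude it does not make sense as stated. (Your check that $[1:0:0]\notin V(x)$ for $\cV_l$ relies on the accidental identification of source and target via the standard basis; it has no invariant content.) Once this is clear the case is straightforward: use the right $\GL_3$ to put the two chosen kernels at $\la e_3\ra,\la e_2\ra$ and, independently, the left $\GL_3$ to put the corresponding images at $V(z),V(y)$. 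This is precisely the paper's normalisation in its case~(1), after which expanding $\det(s\alpha+t\beta)\equiv 0$ kills two entries and one normalises to $\cV_l$.

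A smaller point: your assertion that the degree-$2$ kernel (or cokernel) map is an embedding onto a smooth conic --- rather than a $2:1$ cover of a line --- is correct but not automatic from ``base-point-free of degree~$2$''. It needs the hypothesis that no member of $\cV$ has rank $\le 1$: if the three quadrics $u_i$ were linearly dependent one produces a rank-drop.
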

\begin{proof}
It suffices to prove that if $\cV\in P$ then $\cV$ is equivalent to one of $\cV_l,\cV_c,\cV_p$. 
A priori there are four possible cases:
\begin{itemize}
\item[(1)]
neither $K(\cV)$ nor $K(\cV^t)$ is a singleton,
\item[(2)]
$K(\cV)$  is not a singleton, $K(\cV^t)$ is a singleton,
\item[(3)]
$K(\cV)$  is  a singleton, $K(\cV^t)$ is not a singleton,
\item[(4)]
both $K(\cV)$ and $K(\cV^t)$ are singletons.
\end{itemize}
Assume that Item~(1) holds. Then $\cV$ is equivalent to $\la\alpha,\beta\ra$ where $Ker(\alpha)=\la (0,0,1)\ra$, $\im(\alpha)=V(z)$ and $Ker(\beta)=\la (0,1,0)\ra$, $\im(\beta)=V(y)$. Thus
\begin{equation}
\alpha:=\left(
\begin{matrix}
a & b & 0 \\
c & d & 0 \\
0 & 0 & 0
\end{matrix}
\right),\quad
\beta:=\left(
\begin{matrix}
m & 0 & n \\
0 & 0 & 0 \\
p & 0 & q
\end{matrix}
\right).
\end{equation}   
Expanding $0\equiv det(s\alpha+t\beta)$ we get that $0=d=q$. Furthermore $bc\not=0$ and $np\not=0$ because $2=rk(\alpha)=rk(\beta)$. Then it is easy to show that there exist $M,N\in GL_3(\CC)$ such that $M\alpha N=f$ and $M\beta N=g$. Thus $\cV$ is equivalent to $\cV_l$. Now suppose that 
 Item~(2) holds: 
an argument similar to that given above shows that $\cV$ is equivalent to $\cV_c$. On the other hand if Item~(3) holds then Item~(2) holds with $\cV$ replaced by $\cV^t$; since $\cV_p=\cV_c^t$  we get that $\cV$ is equivalent to $\cV_p$. Finally suppose that Item~(4) holds. We may assume that
 $K(\cV)=\la (0,0,1)\ra$ and $K(\cV^t)=V(z)$. Then $\cV\subset{\mathfrak g}{\mathfrak l}_2(\CC)$; since $\dim\cV=2$ there exists  $0\not=f\in\cV$ such that $rk( f)<2$, that is a contradiction. Thus Item~(4) cannot hold.
\end{proof}
\begin{rmk}\label{rmk:antisim}
Any $2$-dimensional subspace of ${\mathfrak o}_3(\CC)$ is an element of  $P$;  such a subspace is equivalent to $\cV_l$.
\end{rmk}
\clearpage
\section{Tables}\label{sec:tavpit}
%
%\FloatBarrier
%
\begin{sidewaystable}[t]\scriptsize
\caption{Ordering $1$-PS's up to duality, I}\label{critisotuno}
\centering
\renewcommand{\arraystretch}{1.60}
% [inline block 0: 14 envs, 50691 chars -> data_tex | \begin{tabular}{| l  | c |c |c|c|c|c|} \hline...]

\end{sidewaystable}   
\FloatBarrier
\FloatBarrier

\begin{thebibliography}{AMS}
\small{
%
\bibitem{beau} A.~Beauville, \emph{Vari\'etes
K\"ahleriennes dont la premi\'ere classe de Chern est
nulle\/}, J.~Differential 
geometry 18, 1983, pp.~755-782.
%
\bibitem{beaudon} A.~Beauville - R.~Donagi,
\emph{La vari\'et\'es des droites d'une
hypersurface cubique
 de dimension $4$\/},  C.~R.~Acad.~Sci.~Paris S\'er.~I
 Math.~301, 1985, pp.~703-706.
%
\bibitem{decoproc}
C.~De Concini - C.~Procesi, \emph{Topics in hyperplane arrangements, polytopes and box-splines\/},  Universitext, Springer (2011).
%
\bibitem{debvoi} O.~Debarre - C.~Voisin, \emph{Hyper-K\"ahler fourfolds and Grassmann geometry\/},
J.~Reine Angew.~Math.~649, 2010, pp.~63-87.
%
\bibitem{epw} D.~Eisenbud - S.~Popescu -
C.~Walter, \emph{Lagrangian subbundles and codimension 3 subcanonical
subschemes\/},  Duke Math.~J.~107, 2001, pp.~427-467.
%
\bibitem{ferretti} A.~Ferretti,  \emph{The Chow ring of double EPW sextics\/},  
Math.~Z.~272, 2012, pp.~1137-1164.
%
\bibitem{friedman} R.~Friedman,  \emph{A new proof of the Global Torelli Theorem for $K3$ surfaces\/},  Ann.~of Math.~120, 1984, pp.~237-269.
%
\bibitem{hulek} V.~Gritsenko, K.~Hulek,  G.K.~Sankaran, \emph{Moduli spaces of irreducible symplectic manifolds\/},  
Compos.~Math.~146, 2010, pp.~404-434.
%
\bibitem{huyglobtor} D.~Huybrechts,  \emph{A global Torelli theorem for hyperk\"ahler manifolds (after Verbitsky)\/},  S\'eminare Bourbaki, Ast\'erisque 348, 2012, pp.~375-403.
%
\bibitem{huylehn} D.~Huybrechts, M.~Lehn, \emph{The geometry of moduli spaces of shaves\/},  Aspects of Mathematics E 31, Vieweg (1997).
%
\bibitem{iliman} A.~Iliev - L.~Manivel, \emph{Fano manifolds of degree ten and EPW sextics\/},  
Annales scientifiques de l'Ecole Normale Sup\'erieure 44, 2011, pp.~393-426.   
%
\bibitem{iliran1} A.~Iliev - K.~Ranestad, \emph{
 $K3$ surfaces of genus 8 and varieties of sums of powers of cubic fourfolds\/},   
 Trans.~Amer.~Math.~Soc.~353, 2001, pp.~1455-1468. 
%
\bibitem{iliran2} A.~Iliev - K.~Ranestad, \emph{
 Addendum to \lq\lq $K3$ surfaces of genus 8 and varieties of sums of powers of cubic fourfolds\rq\rq\/},   C.~R.~Acad.~Bulgare Sci.~60, 2007,  pp.~1265-1270. 
 %
\bibitem{laza1} R.~Laza, \emph{The moduli space of cubic fourfolds\/}, J.~Algebraic Geom.~18, 2009,  pp.~511-545. 
%
\bibitem{laza2} R.~Laza, \emph{The moduli space of cubic fourfolds via the period map\/},  Ann.~of Math.~172, 2010, pp.~673-711. 
%
\bibitem{llsv} M.~Lehn - C~Lehn - C.~Sorger - D.~van Straten, \emph{Twisted cubics on cubic fourfolds\/},  
Mathematics arXiv: 1305.0178
%
\bibitem{looijhyp} E.~Looijenga, \emph{Compactifications defined by arrangements II: locally symmetric varieties  of Type IV\/},  Duke Math.~J.~118, 2003, pp.~157-181. 
%
\bibitem{looij} E.~Looijenga, \emph{The period map for cubic fourfolds\/},  Invent.~Math.~177, 2009, pp.~213-233. 
%
\bibitem{lunafissi} D.~Luna. \emph{Adh\'erence d'orbite et invariants\/},   
Inventiones math.~29,1975, pp.~231-238. 
%
\bibitem{markman1} E.~Markman. \emph{Integral constraints on the monodromy group of the hyperK\"ahler resolution of a symmetric product of a $K3$ surface\/},  Internat.~J.~Math.~21, 2010, pp.~169-223.
%
\bibitem{markman2} E.~Markman. \emph{A survey of Torelli and monodromy results for holomorphic-symplectic varieties\/},  Complex and Differential Geometry
Conference,   Springer Proceedings in Mathematics
Vol.~8, 2011, pp.~257-322. 
%
\bibitem{morin} U.~Morin, \emph{Sui sistemi di piani a due a due incidenti\/}, Atti del Reale Istituto Veneto di Scienze, Lettere ed Arti LXXXIX, 1930, pp.~907-926.
%
\bibitem{mum} D.~Mumford - J.~Fogarty - F.~Kirwan, \emph{Geometric invariant theory\/}, 3rd edition, Ergebnisse der Mathematik und ihrer Grenzgebiete (2)  34. Springer (1994).
%
\bibitem{ogprimo} K.~O'Grady, \emph{Desingularized
moduli spaces of sheaves on a K3\/}, J.~fur die
reine und angew.~Math.~512, 1999, pp.~49-117.
%
\bibitem{ogsecondo} K.~O'Grady, \emph{A new six-dimensional
irreducible symplectic variety\/}, J.~Algebraic
Geom.~12  (2003),  pp.~435-505.
%
\bibitem{og2} K.~O'Grady, \emph{Irreducible symplectic 4-folds and
Eisenbud-Popescu-Walter sextics\/},  Duke Math.~J.~34, 2006,  pp.~99-137. 
%
\bibitem{og4} K.~O'Grady, \emph{Dual double EPW-sextics and their periods\/}, 
Pure Appl.~Math.~Q.~4, 2008,  no.~2,  427--468. 
%
\bibitem{ogtasso} K.~O'Grady, \emph{EPW-sextics: taxonomy\/}, 
Manuscripta Math.~138, 2012, pp.~221Ð272.
%
\bibitem{ogdoppio} K.~O'Grady, \emph{Double covers of EPW-sextics\/},  Michigan Math.~J.~62 (2013),  143-184. 
%
\bibitem{ogperiodi} K.~O'Grady, \emph{Periods of double EPW-sextics\/},  
Mathematics arXiv:1203.6495.  
%
\bibitem{rap2} A.~Rapagnetta,
 \emph{On the Beauville form of the known irreducible symplectic varieties\/}, Math.~Ann.~340,  2008,  pp.~77-95. 
%
\bibitem{shah} J.~Shah, \emph{A complete moduli space for $K3$ surfaces of Degree $2$\/},  
Ann.~of Math.~112, 1980, pp.~485-510. 
%
\bibitem{verb} M.~Verbitsky,  \emph{A global Torelli theorem for hyperk\"ahler manifolds\/},  
arXiv:0908.4121.
%
\bibitem{claire} C.~Voisin, \emph{Th\'eor\`eme de Torelli pour les cubiques de $P\sp 5$\/},  Invent.~Math.~86, 1986,  pp.~577-601. Erratum Invent.~Math.~172, 2008, pp.~455-458.

 }
%
\end{thebibliography}
\end{document}